\newcommand{\cycl}{\mathrm{cycl}}
\newcommand{\FF}{\mathrm{FF}}
\newcommand{\Dhsa}[1]{\D_{\hat\solid}^a(\ri^+_{#1})}
\newcommand{\DYwz}[1]{\D_{\hat\solid}(\mathcal{Y}_{[0,\infty),{#1}})} 
\newcommand{\DY}[1]{\D_{\hat\solid}(\mathcal{Y}_{(0,\infty),{#1}})}
\newcommand{\RH}{\mathrm{RH}}
\newcommand{\calC}{\mathcal{C}}
\newcommand*{\relrelbarsep}{.386ex}
\newcommand*{\relrelbar}{%
  \mathrel{%
    \mathpalette\@relrelbar\relrelbarsep
  }%
}
\newcommand*{\@relrelbar}[2]{%
  \raise#2\hbox to 0pt{$\m@th#1\relbar$\hss}%
  \lower#2\hbox{$\m@th#1\relbar$}%
}
\providecommand*{\rightrightarrowsfill@}{%
  \arrowfill@\relrelbar\relrelbar\rightrightarrows
}
\providecommand*{\leftleftarrowsfill@}{%
  \arrowfill@\leftleftarrows\relrelbar\relrelbar
}
\providecommand*{\xrightrightarrows}[2][]{%
  \ext@arrow 0359\rightrightarrowsfill@{#1}{#2}%
}
\providecommand*{\xleftleftarrows}[2][]{%
  \ext@arrow 3095\leftleftarrowsfill@{#1}{#2}%
}
\DeclarePairedDelimiter{\abs}{\lvert}{\rvert}
\newcommand{\noloc}{\nobreak\mskip6mu plus1mu\mathpunct{}\nonscript\mkern-\thinmuskip{:}\mskip2mu\relax} 
\newcommand{\Sp}{\mathrm{Sp}}
\renewcommand{\Pr}{\mathrm{Pr}}
\newcommand{\DFF}{\mathcal{D}_\FF}
\newcommand{\isom}{\cong}
\newcommand{\xto}[1]{\mathbin{\xrightarrow{#1}}} 
\newcommand{\isoto}{\xto\sim}
\DeclareMathOperator{\Hom}{Hom}
\DeclareMathOperator{\Fun}{Fun}
\DeclareMathOperator*{\colim}{colim}
\renewcommand{\emptyset}{\varnothing}
\renewcommand{\subset}{\subseteq}
\newcommand{\comp}{\mathbin{\circ}}
\DeclareMathOperator{\id}{id}
\newcommand{\pr}{\mathrm{pr}}
\newcommand{\injto}{\mathrel{\hookrightarrow}}
\newcommand{\surjto}{\mathrel{\twoheadrightarrow}}
\newcommand{\Z}{{\mathbb{Z}}}
\newcommand{\N}{\mathbb{N}}
\newcommand{\Q}{\mathbb{Q}}
\newcommand{\F}{\mathbb{F}}
\newcommand{\divides}{\mathbin{|}}
\newcommand{\GL}{\mathrm{GL}}
\DeclareMathOperator{\End}{End}
\DeclareMathOperator{\coker}{coker}
\newcommand{\tensor}{\otimes}
\DeclareMathOperator{\Sym}{Sym}
\DeclareMathOperator{\Aut}{Aut}
\DeclareMathOperator{\Spec}{Spec}
\newcommand{\Spf}{\mathrm{Spf}\,}
\newcommand{\A}{\mathbf{A}}
\DeclareMathOperator{\Cat}{\mathsf{Cat}}
\DeclareMathOperator{\Nuc}{Nuc}
\DeclareMathOperator{\IHom}{\underline{\Hom}}
\newcommand{\an}{{\mathrm{an}}} 
\newcommand{\et}{{\mathrm{et}}}
\newcommand{\proet}{{\mathrm{proet}}} 
\newcommand{\qproet}{{\mathrm{qproet}}} 
\newcommand{\Qproet}{{\mathrm{Qproet}}} 
\newcommand{\ri}{\mathcal O} 
\newcommand{\mm}{\mathfrak m} 
\DeclareMathOperator{\Spa}{Spa}
\DeclareMathOperator{\Spd}{Spd}
\newcommand{\opp}{{\mathrm{op}}} 
\newcommand{\solid}{{\scalebox{0.5}{$\square$}}}
\DeclareMathOperator{\Mod}{Mod}
\newcommand{\Perf}{\mathrm{Perf}}
\newcommand{\Perfd}{\mathrm{Perfd}}
\newcommand{\D}{\mathcal D} 
\DeclareMathOperator{\Comm}{Comm} 
\DeclareMathOperator{\AnSpec}{AnSpec}
\DeclareMathOperator{\Ani}{Ani}
\newcommand{\oc}{{\mathrm{oc}}} 
\DeclareMathOperator{\fib}{fib}
\DeclareMathOperator{\cofib}{cofib}
\DeclareMathOperator{\Corr}{Corr}
\DeclareMathOperator{\vStacks}{vStack}
\newcommand{\dimtrg}{\mathrm{dim.trg}} 
\newcommand{\nuc}{{\mathrm{nuc}}} 
\DeclareMathOperator{\Bun}{Bun}
\DeclareMathOperator{\CAlg}{CAlg} 
\newcommand{\perfDisc}{\widetilde{\mathbb D}}
\newcommand{\perfPunctDisc}{\widetilde{\mathbb D}^\times}
\newcommand{\perfBall}[1][]{\widetilde{\mathbb B}^{#1}}
\DeclareMathOperator{\DSuave}{SD}
\newcommand{\lax}{\mathrm{lax}}
\theoremstyle{plain}
\newtheorem{theorem}{Theorem}[subsection]
\newtheorem{theorem*}{Theorem}
\newtheorem{proposition}[theorem]{Proposition}
\newtheorem{proposition*}[theorem*]{Proposition}
\newtheorem{corollary}[theorem]{Corollary}
\newtheorem{lemma}[theorem]{Lemma}
\theoremstyle{definition}
\newtheorem{definition}[theorem]{Definition}
\newtheorem{definition*}[theorem*]{Definition}
\newtheorem{example}[theorem]{Example}
\newtheorem{examples}[theorem]{Examples}
\newtheorem{remark}[theorem]{Remark}
\newtheorem{remarks}[theorem]{Remarks}
\newtheorem{hypothesis*}[theorem*]{Hypothesis}
\numberwithin{equation}{theorem}
\numberwithin{figure}{section}
\numberwithin{table}{section}
\newlist{thmenum}{enumerate}{1}
\setlist[thmenum]{label=(\roman*), ref=\thetheorem.(\roman*)}
\newlist{propenum}{enumerate}{1}
\setlist[propenum]{label=(\roman*), ref=\theproposition.(\roman*)}
\newlist{corenum}{enumerate}{1}
\setlist[corenum]{label=(\roman*), ref=\thecorollary.(\roman*)}
\newlist{lemenum}{enumerate}{1}
\setlist[lemenum]{label=(\roman*), ref=\thelemma.(\roman*)}
\newlist{examplesenum}{enumerate}{1}
\setlist[examplesenum]{label=(\alph*), ref=\theexamples.(\alph*)}
\newlist{remarksenum}{enumerate}{1}
\setlist[remarksenum]{label=(\roman*), ref=\theremarks.(\roman*)}
\newlist{defenum}{enumerate}{1}
\setlist[defenum]{label=(\alph*), ref=\thedefinition.(\alph*)}
\title{A $6$-functor formalism for solid quasi-coherent sheaves on the Fargues--Fontaine curve}
\author{Johannes Anschütz \and Arthur-C\'esar Le Bras  \and Lucas Mann}
\date{\today}
\begin{document}

\maketitle

\begin{abstract}
   We develop a $6$-functor formalism $\D_{[0,\infty)}(-)$ with $\Z_p$-linear coefficients on small v-stacks, and discuss consequences for duality and finiteness for pro-\'etale cohomology of rigid-analytic varieties of general pro-\'etale $\Q_p$-local systems as well as first examples motivated by a potential $p$-adic analog of Fargues--Scholze's geometrization program of the local Langlands correspondence.
\end{abstract}

\tableofcontents

\section{Introduction}
\label{sec:introduction}

\subsection{Pro-\'etale cohomology of rigid-analytic varieties: finiteness and duality}
Let $p$ be a prime and let $C$ be a complete algebraically closed non-archimedean extension of $\Q_p$. Let $X$ be a smooth rigid space over $C$. One main object of interest of this paper is the pro-\'etale cohomology of $X$ with coefficients the ``constant'' sheaf $\Q_p$\footnote{This sheaf sends $S \in X_\proet$ to $C(|S|,\Q_p)$. It would be more appropriate to denote it $\underline{\Q_p}$ rather than $\Q_p$ , but we will not do so, to keep the notation easy on the eye.} or more generally a finite rank pro-\'etale $\Q_p$-local system $\mathbb{L}$ on $X$. When $X$ is also assumed to be proper and of pure dimension $d$, the cohomology groups $H^i(X_\proet, \Q_p)$, $i\geq 0$, are finite dimensional $\Q_p$-vector spaces and one has for all $i\geq 0$ a perfect pairing
$$
H^i(X_\proet, \Q_p) \otimes_{\Q_p} H^{2d-i}(X_\proet, \Q_p) \to \Q_p(-d).
$$
Indeed, in this case, 
$$
R\Gamma(X_\proet, \Q_p) = \left(\varprojlim_n R\Gamma(X_\proet,\Z/p^n) \right)[1/p]
$$
and so the statement follows from the results of Gabber-Zavyalov and Mann (\cite{zavyalov-poincare-duality}, \cite{mann-mod-p-6-functors}). The same arguments apply when $\Q_p$ is replaced by any pro-\'etale $\Q_p$-local system $\mathbb{L}$ of the form $\mathbb{L}=\mathbb{L}_0 \otimes_{\Z_p} \Q_p$, with $\mathbb{L}_0$ a finite rank pro-\'etale $\Z_p$-local system.

However, these nice properties break down completely when one removes the assumption that $X$ is proper or the assumption that the pro-\'etale $\Q_p$-local system $\mathbb{L}$ admits a stable lattice, as illustrated by the following two examples.

\begin{example} \label{sec:pro-etale-cohomology-example-introduction-affine-line}
Let $X=\mathbb{A}_C^1$ be the analytic affine line over $\mathrm{Spa}(C)$. The cohomology groups $H^i(X_\proet,\Q_p)$ can be computed explicitly (\cite[Th\'eor\`eme 1.5]{le2018espaces}, \cite[Theorem 1]{colmez2020cohomology}). They are zero in degrees $i\geq 2$, and
$$
    H^0(X_\proet,\Q_p)=\Q_p, \qquad H^1(X_\proet,\Q_p)= \mathcal{O}(X)/C(-1).
$$
Since $X$ is Stein, one can also make a natural guess for what compactly supported pro-\'etale $\Q_p$-cohomology is. Set:
$$
    R\Gamma_c(X_\proet,\Q_p) := \colim_U \fib(R\Gamma(X_\proet,\Q_p) \to R\Gamma((X\backslash \overline{U})_\proet,\Q_p)),
$$
where the colimit runs over quasi-compact open subspaces $U$ of $X$. 
Moreover, one can again compute the cohomology groups $H_c^i(X_\proet,\Q_p)$ in this case (\cite[Example A.2]{colmez2021p}). They vanish for $i\neq 2$ and
$$
    H_c^2(X_\proet,\Q_p) = \Q_p(-1) \oplus \mathcal{O}_{\mathbb{P}_C^1,\infty}(-1).
$$ 
This example shows that pro-\'etale $\Q_p$-cohomology is much bigger than pro-\'etale $\Z_p$-cohomology (which vanishes in positives degrees for the affine line). The shape of the cohomology groups with and without support also does not seem compatible with the existence of a perfect duality pairing between them (in one case, we see cohomology in two degrees, but only in one degree in the other case).
\end{example}

\begin{example}
  \label{sec:pro-etale-cohomology-example-introduction-lubin-tate}
Let $X=\mathbb{P}_C^1$ be the projective line over $\mathrm{Spa}(C)$. In this example (that we learnt from David Hansen) we will, to avoid confusion, ignore Tate twists and reserve the notation $(-)$ for twists by the usual ample line bundle on $X$. The Gross-Hopkins period map from the infinite-level Lubin-Tate space of a height $2$ and dimension $1$ connected $p$-divisible group $H$ over $\overline{\mathbb{F}}_p$ is a pro-\'etale $\mathrm{GL}_2(\Q_p)$-torsor over $X$, along which we can descend the rank $2$ trivial local system, equipped with the standard action of $\mathrm{GL}_2(\Q_p)$, to a rank $2$ pro-\'etale $\Q_p$-local system $\mathbb{L}$ on $X$. We sketch the computation of its cohomology. The local system $\mathbb{L}$ lives in a short exact sequence of abelian sheaves on $X_\proet$:
$$
0 \to \mathbb{L} \to \mathbb{B}^{\varphi^2=p} \to \widehat{\mathcal{O}}_X(1) \to 0,
$$
where $\mathbb{B}$ denotes the period sheaf from \cite[D\'efinition 8.2]{le2018espaces} and $\widehat{\mathcal{O}}_X$ the completed structure sheaf on $X_\proet$ (the existence of this exact sequence is essentially a rephrasement of the \'etale-crystalline comparison between the Tate module and the Dieudonn\'e crystal of the universal $p$-divisible group). The cohomology groups of the middle and right terms can be explicitly computed. For the middle term, an application of Scholze's primitive comparison theorem (\cite[Theorem 1.3]{rigid-p-adic-hodge}) implies that
$$
R\Gamma(X_\proet, \mathbb{B}^{\varphi^2=p}) = R\Gamma(X_\proet,\Q_p) \otimes_{\Q_p} B^{\varphi^2=p}
$$
while for the right term one can use Scholze's computation $R^i\nu_\ast \widehat{\mathcal{O}}_X = \Omega_{X/C}^i$, $i\geq 0$. One deduces from this that $H^i(X_\proet,\mathbb{L})$ vanishes in degrees $i\neq 1,2$ and that
$$
H^1(X_\proet,\mathbb{L}) \cong C^2/B^{\varphi^2=p} ~~; \quad H^2(X_\proet,\mathbb{L}) \cong B^{\varphi^2=p}.
$$
This example shows that the cohomology of a finite rank pro-\'etale $\Q_p$-local system can be highly infinite-dimensional over $\Q_p$, even for a smooth proper rigid variety. It also illustrates, like the previous example was already, the fact that (compactly supported) pro-\'etale cohomology is not invariant by change of the complete algebraically closed base field $C$, hence does not satisfy base change.
\end{example}

However, contemplating these at first confusing examples reveals some interesting features. While both of them deal with the cohomology of pro-\'etale local systems, the computations show that the cohomology groups appearing have some ``coherent'' flavor. This in particular leads to contributions which are most naturally seen as $C$-vector spaces rather than $\Q_p$-vector spaces. Thinking of these cohomology groups as a mixture of $\Q_p$-vector spaces and $C$-vector spaces restores some patterns. In the first example, one observes that the two copies of $\Q_p$ appearing match with each other, while the two other pieces could correspond to each other by $C$-linear duality, resembling Grothendieck-Serre duality. In the second example, some finiteness does hold, as long as one allows finite-dimensional $C$-vector spaces in addition to $\Q_p$-vector spaces: indeed, $B^{\varphi^2=p}$ is an extension of $C$ by $\Q_p^2$.

Such combinations of finite-dimensional $\Q_p$-vector spaces and $C$-vector spaces (informally speaking) are not unfamiliar in $p$-adic Hodge theory: they are the $C$-points of Banach-Colmez spaces. What makes the latter ubiquitous is their close relation to the fundamental geometric object of $p$-adic Hodge theory, the Fargues--Fontaine curve: Banach-Colmez spaces are (relative) cohomology groups of coherent sheaves on the Fargues--Fontaine curve. The sheaf $\Q_p$ is the Banach-Colmez space associated to the structure sheaf on the Fargues--Fontaine curve, while the sheaf $\widehat{\mathcal{O}}$ is the Banach-Colmez space of the skyscraper sheaf at the closed point of the curve corresponding to the untilt $C$ of $C^\flat$. From these two, one cooks up other Banach-Colmez spaces and the computation from the second example, more specifically the exact sequence used in it, comes from a simple short exact sequence of coherent sheaves on the (relative) Fargues--Fontaine curve. The same can be said of the first example, where the cohomology can be computed by using another exact sequence:
$$
0 \to \Q_p(1) \to \mathbb{B}^{\varphi=p} \to \widehat{\mathcal{O}}_X \to 0
$$
(the ``fundamental exact sequence'' of $p$-adic Hodge theory), which also has a simple description in terms of the Fargues--Fontaine curve.

The Fargues--Fontaine curve therefore appears in disguise in both situations. It also suggests an explanation for them. A Banach--Colmez space, infinite-dimensional as it may look, comes from a \textit{coherent} sheaf on the Fargues--Fontaine curve, i.e. a ``finite type'' object. Also, the (derived) linear dual of the structure sheaf is itself, while the (derived) linear dual of a skyscraper sheaf is itself put in cohomological degree $1$, displaying a degree shift similar to what happens in the first example\footnote{If one takes into account the Galois equivariant structure, a Tate twist by $-1$ also occurs in the latter case, explaining as well the Tate twists in the first example.}.   

Finally, the first example also shows that one needs to remember some topological structure on the cohomology groups: if one wants the $C$-linear duality mentioned above to function, one has to understand it in some suitable topological sense. 

This paper pushes all these observations to their natural end, leading to a very general framework in which these two examples do not appear as pathological. Roughly speaking, we upgrade everything  to solid quasi-coherent sheaves on relative Fargues--Fontaine curves and propose to formulate and prove all statements at this level. The previous examples hopefully provide justification for this change of perspective. It is also sensible from a more abstract point of view: according to Scholze's vision on global shtukas and cohomology (\cite{scholze_rio}), $p$-adic cohomology of a space $X$ should be viewed as quasi-coherent cohomology on a space ``$X\times \Spf(\Z_p)$'', where the product is taken over a (heuristic) deeper base. If $X=\Spa(R,R^+)$ is a perfectoid space of characteristic $p$, then a convincing candidate of the hypothetical space ``$X\times \Spf(\Z_p)$'' is the space
\[
    \mathcal{Y}_{[0,\infty),X} := \Spa(W(R^+))\setminus V([\pi]),
\]
where $W(-)$ denotes the $p$-typical Witt vectors and $\pi\in R$ a pseudo-uniformizer. Moreover, solid mathematics is the natural language to deal with quasi-coherent sheaves on analytic spaces and their six operations.
  
\begin{remark}
The phenomena discussed above are reminiscent of what happens for flat cohomology of $\mu_p$ on schemes over a perfect field $k$ of characteristic $p$, where one sees both $\mathbb{F}_p$-vector spaces and $k$-vector spaces in the cohomology. Milne has shown that Poincar\'e duality works well in this setup, if one upgrades this cohomology to a functor valued in the derived category of perfect unipotent group schemes rather than mere $\mathbb{F}_p$-vector spaces. This perspective has been revisited and generalized to syntomic cohomology of characteristic $p$ schemes or $p$-adic formal schemes in the recent work of Bhatt-Lurie, where they phrase everything as coherent duality on a certain formal stack $X^{\mathrm{syn}}$, called the syntomification of $X$, attached to the (formal) scheme $X$. Our approach is very parallel, replacing the syntomification by the relative Fargues--Fontaine curve and perfect unipotent group schemes by Banach-Colmez spaces.
\end{remark}

\begin{remark}
\label{rmk-intro:motivation-p-adic-langlands}
Besides understanding in a conceptual manner the properties of pro-\'etale $\Q_p$-cohomology of rigid spaces, another important motivation for this work was the desire to find a suitable category of coefficients for a $p$-adic version of Fargues-Scholze. See \Cref{rmk-intro-D_0-infty-useful} below.
\end{remark}

\subsection{Main results}
We now explain the main results proved in this paper. Scholze has introduced in \cite{etale-cohomology-of-diamonds} the category of small v-stacks as a geometric framework in $p$-adic geometry, and most notably he has developed a powerful 6-functor formalism on them for $\ell$-adic coefficents for a prime $\ell\neq p$, which is based on \'etale cohomology. We provide a $p$-adic analogue.

\begin{theorem} \label{sec:introduction-1-main-theorem-introduction}
There exists a 6-functor formalism $S\mapsto \D_{[0,\infty)}(S)$ on the category $\vStacks$ of small v-stacks with the following properties:
\begin{thmenum}
    \item The implicit class of distinguished morphisms is given by the $!$-able maps in the sense of \Cref{def:shriekable-maps}.

    \item The underlying functor $\vStacks^\opp\to \Pr^L,\ S\mapsto \D_{[0,\infty)}(S)$ satisfies hypercomplete v-descent.

    \item \label{rslt:intro-6ff-value-on-nice-perfectoid} If $Y=S$ is a perfectoid space which admits a morphism of finite $\dimtrg$ to a totally disconnected perfectoid space, then $\D_{[0,\infty)}(S)\cong \D_{\hat\solid}(\mathcal{Y}_{[0,\infty),S})$ is the category of (modified in the sense of \Cref{sec:defin-d_hats--1-definition-of-d-hat-solid}) solid quasi-coherent sheaves on the stably uniform analytic adic space $\mathcal{Y}_{[0,\infty), S}$.

    \item \label{rslt:intro-smooth-map-is-cohom-smooth} If $f\colon X'\to X$ is a smooth morphism of analytic adic spaces over $\Q_p$, then $f$\footnote{At least in this introduction, we won't distinguish in notation between a morphism of adic spaces and the associated morphism of diamonds.} is $\D_{[0,\infty)}$-smooth, with explicit dualizing complex.

    \item \label{rslt:into-smoothness-of-Spd-Qp} The morphism $\Spd(\Q_p)/\varphi^\Z \to \Spd(\mathbb{F}_p)$ is cohomologically smooth and proper, with explicit dualizing complex.
\end{thmenum}
\end{theorem}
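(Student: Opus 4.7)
The plan is to construct the formalism in three layered stages. First, on quasi-compact (quasi-separated) affinoid perfectoid spaces $S = \Spa(R,R^+)$, set $\D_{[0,\infty)}(S) := \D_{\hat\solid}(\mathcal{Y}_{[0,\infty),S})$; symmetric monoidal tensor product, internal Hom, and pullback along maps of such $S$ come for free from the functoriality of $S \mapsto \mathcal{Y}_{[0,\infty),S}$ together with the solid formalism. This already gives a presheaf of symmetric monoidal stable $\infty$-categories on (a full subcategory of) $\AffPerfd$ and realizes (iii). Next, promote this presheaf to a 6-functor formalism in the sense of \cite{mann-mod-p-6-functors} (or Heyer--Mann) by exhibiting a suitable pre-6-functor structure: $f_!$ must be constructed for at least two generating classes — separated étale maps (where $f_! = $ extension by zero, essentially because étale maps of adic spaces induce étale maps on $\mathcal{Y}_{[0,\infty)}$) and proper maps (where $f_! = f_\ast$, which requires a proper base change statement on the Fargues--Fontaine side). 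Verifying the base-change/projection formula for these two classes and checking that they are closed under composition and base change is what allows the abstract machinery to produce a 6-functor formalism whose distinguished class is the closure under composition of these two — i.e. the class of $!$-able maps as defined in \Cref{def:shriekable-maps}, giving (i).

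The second stage is hypercomplete v-descent, which is simultaneously used to extend from affinoid perfectoids to all small v-stacks and to establish (ii). Concretely, one has to show that $S \mapsto \D_{\hat\solid}(\mathcal{Y}_{[0,\infty),S})$ is a hypercomplete v-sheaf on affinoid perfectoids; this should follow from v-hyperdescent for (modified) solid quasi-coherent sheaves on the spaces $\mathcal{Y}_{[0,\infty),S}$, reducing by the usual strategy (pass to totally disconnected covers, use that $\mathcal{Y}_{[0,\infty),-}$ commutes with appropriate cofiltered limits, and apply almost-purity-style descent for solid modules over the relevant Banach $\Q_p$-algebras). Once hypercomplete descent is in hand, one right-Kan-extends along $\AffPerfd \hookrightarrow \vStacks$ and checks that the $f_!$ from stage one extends to all $!$-able maps between v-stacks, appealing again to the abstract 6-functor formalism machinery.

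For (iv), given a smooth morphism $f\colon X' \to X$ of analytic adic spaces over $\Q_p$, factor it étale-locally through a relative polydisc $\Ball^d_X \to X$. Étale maps of adic spaces yield étale maps at the level of $\mathcal{Y}_{[0,\infty)}$, hence are cohomologically étale in our formalism (so their dualizing complex is the unit). The remaining case is the relative polydisc, where cohomological smoothness should be checked by explicitly computing the fiber sequence for the closed/open complement decomposition of $\mathcal{Y}_{[0,\infty),\Ball^d_X}$, identifying $\omega$ as a twist of the structure sheaf shifted by $d$ and tensored with the appropriate line bundle coming from the relative cotangent complex. This yields the explicit dualizing complex. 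For (v), observe that $\Spd(\Q_p) \to \Spd(\F_p)$ factors as $\Spd(\Q_p) \to \Spd(\Q_p)/\varphi^\Z \to \Spd(\F_p)$, where the first map is a Frobenius-torsor and $\Spd(\Q_p) \to \Spd(\F_p)$ is cohomologically smooth by (iv) (it is essentially the perfectoid open unit disc over $\Spd(\F_p)$). Frobenius acts properly discontinuously with quotient $\Spd(\Q_p)/\varphi^\Z$, so descending cohomological smoothness along this $\Z$-torsor (which is itself cohomologically smooth) gives cohomological smoothness of the quotient map. Properness is inherited by descent along the same torsor once one verifies properness of $\Spd(\Q_p) \to \Spd(\F_p)$ (this uses that the quotient is "compact in the Frobenius direction"); the dualizing complex is identified by tracking the $\varphi$-action on $\omega_{\Spd(\Q_p)/\Spd(\F_p)}$.

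The main obstacles will be two. First, establishing hypercomplete v-descent of modified solid quasi-coherent sheaves on $\mathcal{Y}_{[0,\infty),S}$ with enough functoriality to verify the pre-6-functor axioms — in particular the compatibility of proper pushforward with arbitrary v-base change and the projection formula — since the spaces $\mathcal{Y}_{[0,\infty),S}$ are not themselves perfectoid and standard descent technology must be ported to this setting. Second, pinning down the explicit dualizing complex in (iv) requires a careful computation in the solid category for the relative polydisc, making sense of the resulting line bundle in terms of period sheaves on the Fargues--Fontaine curve; this computation is what then feeds into (v) and explains where the Tate-twist data comes from in the dualizing complex of $\Spd(\Q_p)/\varphi^\Z \to \Spd(\F_p)$.
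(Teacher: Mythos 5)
Your high-level skeleton (define on affinoid perfectoids, prove descent, apply the abstract machinery) is the right shape, but you are missing the paper's key structural move and, more seriously, part (v) of your argument contains an error.

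The paper does not attempt to prove descent for $\D_{\hat\solid}(\mathcal{Y}_{[0,\infty),S})$ "directly" by porting solid descent technology to the non-perfectoid spaces $\mathcal{Y}_{[0,\infty),S}$. Instead it first constructs an entire $6$-functor formalism $\D^a_{\hat\solid}(\ri^+_{(-)})$ on untilted small v-stacks (Section 3), built on results of \cite{anschuetz_mann_descent_for_solid_on_perfectoids}, and then \emph{reduces} the Fargues--Fontaine theory to it via the following trick (\cref{sec:v-descent-d_ffs-1-modules-under-o-e-infty} and \cref{rslt:v-descent-for-D-0-infty}): the space $\mathcal{Y}_{[0,\infty),S}\times_{\Spa(\Z_p)}\Spa(\Z_{p,\infty})$ \emph{is} perfectoid, $\D_{\hat\solid}$ of that space identifies with $\Mod_{\Z_{p,\infty}}(\D_{\hat\solid}(\mathcal{Y}_{[0,\infty),S}))$, and $\Z_p\to \Z_{p,\infty}$ is descendable because it splits. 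All of v-descent, étale $j_!$, and proper $f_\ast$ are then obtained by bootstrapping from the perfectoid $\ri^{+a}$-formalism. Your description of descent is vague precisely where the paper's argument has content, because it does not identify this reduction.

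The serious gap is in your treatment of (v). You assert that $\Spd(\Q_p)\to\Spd(\F_p)$ is cohomologically smooth "by (iv)" since it looks like a perfectoid open unit disc. But (iv) concerns smooth morphisms of analytic adic spaces \emph{over} $\Q_p$; here $\Spd(\F_p)$ is not over $\Q_p$, so (iv) simply does not apply. After base-changing along $\Spa C\to\Spd(\F_p)$, the map becomes $\perfPunctDisc_C/\Gamma\to \Spa C$ with $\Gamma=\Z_p$; the difficulty is entirely in the $\Gamma$-quotient, not in the perfectoid space itself. The paper explicitly describes this as "the (surprisingly subtle) arithmetic example" and proves \cref{sec:case-texorpdfstr-smoothness-of-spd-q-p} by a lengthy hands-on computation: covering by perfectoid annuli, reducing mod $\pi$, and showing via an explicit bound on $(\gamma_m-\id)$-cohomology of the graded pieces $A_n/A_{n-1}$ that the relevant invariants are weakly almost perfect. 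No shortcut through (iv) is available.

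Finally, your approach to (iv) also diverges from the paper's. You propose reducing to the polydisc and computing via an open/closed decomposition of some $\mathcal{Y}_{[0,\infty),\Ball^d_X}$ (which is not even defined, since $\Ball^d_X$ is not perfectoid). The paper instead reduces to the torus $\mathbb{T}_C$ and either appeals to the mod-$\pi$ comparison with \cite[Theorem 3.10.17]{mann-mod-p-6-functors} (made possible by the $\D^a_{\hat\solid}$ formalism via \cref{rslt:smooth-and-proper-for-D-0-infty-can-be-checked-mod-pi}) or gives a direct argument using the profinite-group-quotient suaveness criterion (\cref{rslt:checking-smoothness-of-quotient-by-profin-group-criterion}) and the almost-dualizability lemma \cref{rslt:direct-sum-dualizability-in-almost-world}. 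The dualizing complex is identified via the Riemann--Hilbert functor of \cref{sec:riem-hilb-funct}, a tool that does not appear in your outline.
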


The 6-functor formalism $\D_{[0,\infty)}(-)$ is constructed in \cref{rslt:6ff-for-D-0-infty}, which also proves (i), (ii) and (iii). Part (iv) is proved in \cref{rslt:smooth-maps-of-adic-spaces-are-cohom-smooth} and \cref{rslt:dualizing-complex-on-smooth-adic-space}. Part (v) is proved in \cref{sec:case-texorpdfstr-smoothness-of-spd-q-p} and \cref{sec:case-texorpdfstr-smoothness-of-spd-q-p-mod-hi}.

\begin{remark}
Similarly, there exists a six-functor formalism $S\mapsto \D_{(0,\infty)}(S)$ in which $\mathcal{Y}_{[0,\infty),S}$ gets replaced by $\mathcal{Y}_{(0,\infty),S}$. Its existence and properties follow formally from the statements for $\D_{[0,\infty)}(-)$.
\end{remark}

In words: the category of (modified) solid quasi-coherent sheaves on $\mathcal{Y}_{[0,\infty),S}$ considered on (a nice enough class of) perfectoid spaces satisfies strong descent properties, allowing to define it for any small v-stack, and comes equipped with a full $6$-functor formalism. In \cite{anschuetz_mann_descent_for_solid_on_perfectoids}, the first and third authors have established strong descent results for (modified) solid quasi-coherent sheaves on perfectoid spaces and we rely critically on these results. More precisely, rather formal arguments in \Cref{sec:abstract-results-6} and \Cref{sec:v-descent-d_ffs} reduce the assertion to the construction of a suitable $6$-functor formalism
\[
    S\mapsto \Mod_{\Z_p^\cycl}(\D_{[0,\infty)}(S)) \quad \underset{\textrm{if $S$ is nice}}{=} \D_{\hat\solid}(\mathcal{Y}_{[0,\infty),S}\times_{\Spa(\Z_p)}\Spa(\Z_p^\cycl))
\]
using \Cref{sec:v-descent-d_ffs-1-modules-under-o-e-infty}. Now, if $S$ is a perfectoid space of characteristic $p$, then $\mathcal{Y}_{[0,\infty),S}\times_{\Spa(\Z_p)}\Spa(\Z_p^\cycl)$ is a perfectoid space over $\Z_p$. As $\D_{\hat\solid}(-)$ is referring here to (a slight modification of) solid quasi-coherent sheaves, it suffices to construct a $6$-functor formalism for solid quasi-coherent sheaves on arbitrary perfectoid spaces or small v-stacks over $\mathrm{Spd}(\Z_p)$ (we call small v-stacks with a morphism to $\Spd(\Z_p)$ \textit{untilted small v-stacks}). In fact, we develop a stronger $6$-functor formalism for solid quasi-coherent $\ri^{+a}$-cohomology on untilted small v-stacks in \Cref{sec:6ff-for-Dhsa}, where $\mathcal{O}$ is the untilted structure sheaf. We note that our formalism extends the one in \cite{mann-mod-p-6-functors}, but in fact it relies on it as we commonly reduce statements for $\ri^{+a}$ to statements for $\ri^{+a}/\pi$.

\begin{remark}
As made visible by the statement of \Cref{sec:introduction-1-main-theorem-introduction}, we make heavy use of higher categories, higher algebra and condensed mathematics. This is unavoidable if one wants to formulate the results in their natural generality. We refer to the introduction of \cite{mann-mod-p-6-functors} for an introduction to this circle of ideas and the advantages and necessity to use this machinery for the kind of questions studied in this paper. 
\end{remark}

As consequences of \Cref{sec:introduction-1-main-theorem-introduction} and formal properties of $6$-functor formalisms, we can derive the following consequences for pro-\'etale cohomology of rigid-analytic varieties.

\begin{theorem} \label{sec:introduction-1-consequences-for-rigid-analytic-varieties}
For $X \in \vStacks$, denote $\DFF(X,\Q_p)=\D_{(0,\infty)}(X/\varphi^\Z)$.
\begin{thmenum}
    \item For any small v-stack $X$, the category $\DFF(X,\Q_p)$ contains fully faithfully and compatibly with colimits and symmetric monoidal structures, the category of nuclear overconvergent $\Q_p$-sheaves on $X$, in the sense of \Cref{sec:an-overc-riem-3-v-descent-for-d-nuc} (in particular, it contains fully faithfully pro-\'etale $\Q_p$-local systems).  
    
    \item Let $f\colon X\to Y$ be a morphism of analytic adic spaces over $\Q_p$. If $f$ is smooth of dimension $d$, then
    $$
        f^! = f^\ast(d)[2d]\colon \DFF(Y,\Q_p)\to \DFF(X,\Q_p).
    $$
    
    \item If $f$ is as in the previous point and also assumed to be proper, then
    \[
        f_\ast\colon \DFF(X,\Q_p) \to \DFF(Y,\Q_p)
    \]
    sends dualizable objects to dualizable objects.
\end{thmenum}
\end{theorem}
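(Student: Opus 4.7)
The plan is to deduce all three statements from \Cref{sec:introduction-1-main-theorem-introduction} via base change from $\mathcal{Y}_{[0,\infty)}$ to the open subspace $\mathcal{Y}_{(0,\infty)}$ and the subsequent quotient by $\varphi^{\Z}$. Parts (ii) and (iii) should be essentially formal consequences, while part (i) carries most of the geometric content.

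For part (ii), I would start from part (iv) of \Cref{sec:introduction-1-main-theorem-introduction}: a smooth morphism $f$ of analytic adic spaces over $\Q_p$ is $\D_{[0,\infty)}$-cohomologically smooth with an explicit dualizing complex $\omega_f$. This cohomological smoothness passes to $\D_{(0,\infty)}$ by restriction along the open immersion $\mathcal{Y}_{(0,\infty)}\hookrightarrow\mathcal{Y}_{[0,\infty)}$, and survives the quotient by the free $\varphi^{\Z}$-action. To identify $\omega_f$ with $\mathbf{1}(d)[2d]$, I would work étale-locally on $Y$, factor $f$ through a standard étale map to a relative ball, and combine the ``$[d]$''-shift coming from smoothness of adic spaces over $\Q_p$ with the additional $(1)[2]$-contribution supplied by the cohomological smoothness of $\Spd(\Q_p)/\varphi^{\Z}\to\Spd(\F_p)$ from part (v) of the main theorem.

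Part (iii) is a general consequence of 6-functor formalisms once $f$ is both cohomologically smooth and cohomologically proper. Properness gives $f_\ast=f_!$ and the projection formula, while smoothness provides the identification $f^{!}(-)\cong f^{\ast}(-)(d)[2d]$ from (ii). For any dualizable $A\in\DFF(X,\Q_p)$ with dual $A^{\vee}$, a candidate dual of $f_\ast(A)$ is then $f_\ast(A^{\vee}\otimes\omega_f)\cong f_\ast(A^{\vee})(d)[2d]$; the evaluation and coevaluation maps are assembled from the unit of the $(f_!,f^!)$-adjunction and base change, and the zigzag identities follow formally.

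Part (i) is the main obstacle. By the hypercomplete v-descent of both sides (part (ii) of the main theorem and the definition of nuclear overconvergent $\Q_p$-sheaves recalled in \Cref{sec:an-overc-riem-3-v-descent-for-d-nuc}), I would reduce to constructing a fully faithful, colimit-preserving, symmetric monoidal functor on a strictly totally disconnected perfectoid chart $S$ of $X$. There, $\DFF(S,\Q_p)\cong\D_{\hat\solid}(\mathcal{Y}_{(0,\infty),S}/\varphi^{\Z})$ by part (iii) of the main theorem, and the candidate embedding sends a pro-\'etale $\Q_p$-local system $\mathbb{L}$ to the $\varphi$-equivariant solid sheaf $\mathbb{L}\cpltensor_{\Q_p}\mathcal{O}_{\mathcal{Y}_{(0,\infty),S}}$ on the relative Fargues--Fontaine curve, extended to general nuclear overconvergent sheaves by the universal property of nuclear solid completions. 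Full faithfulness reduces to the Banach--Colmez-type computation identifying the $\varphi$-invariant global sections of the structure sheaf of the relative Fargues--Fontaine curve with $\Q_p$; compatibility with colimits and tensor products is then automatic from the construction. The bulk of the work therefore lies in matching the nuclearity condition on the source with nuclearity in the solid sense on the target, and in checking that the resulting assignment is compatible with the v-descent data on both sides.
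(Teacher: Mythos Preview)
Your treatments of (i) and (iii) are close to the paper's. For (iii) the paper says exactly what you say: once $f$ is cohomologically smooth and proper, preservation of dualizable objects is formal in any 6-functor formalism. For (i) the paper's construction (\cref{rslt:construction-of-RH-functors}) follows your outline---reduce by v-descent to strictly totally disconnected $S$, then build the functor from the ``Banach--Colmez'' computation---but packages it more abstractly: it computes the $\D_\solid(\Q_p)$-enriched endomorphism object of the unit in $\DFF(S,\Q_p)$ and identifies it with $C(S,\Q_p)$ (not just $\Q_p$; you need the $S$-dependence to glue), whence the functor is obtained by a universal property. Full faithfulness then follows from a general rigidity criterion (\cref{rslt:automatic-fully-faithfulness-for-rigid-cat}) together with compactness of the unit (\cref{rslt:dualizable-object-on-FF-curve-is-compact}).

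Your plan for (ii) has a genuine gap. The map $f\colon X\to Y$ is between adic spaces over $\Q_p$, and the dualizing complex is computed relative to $Y$; the cohomological smoothness of $\Spd(\Q_p)/\varphi^{\Z}\to\Spd(\F_p)$ from part (v) of the main theorem simply does not enter this diagram and cannot supply a ``$(1)[2]$-contribution per coordinate''. The full twist-and-shift $(d)[2d]$ must come entirely from the relative geometry of $f$. The paper's argument (\cref{rslt:dualizing-complex-on-smooth-adic-space}) is quite different: it first shows that $\DFF(-,\Z_p)$ is \emph{geometric} in Zavyalov's sense (invertible objects on $\mathbb{P}^1_X$ descend to $X$, proved via the Riemann--Hilbert functor from part (i)), and then runs Zavyalov's deformation-to-the-normal-cone argument to reduce to computing $s^!1[2d]$ for the unit section of affine space, which is determined by the known $\Z_p$-cohomology of $\mathbb{A}^1_{\mathbb{C}_p}$. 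Note in particular the logical order: the paper uses the Riemann--Hilbert functor of (i) as an ingredient in identifying the dualizing complex for (ii), rather than treating the three parts independently.
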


Part (i) is proved in \cref{rslt:construction-of-RH-functors}, part (ii) is a reformulation of \cref{rslt:intro-smooth-map-is-cohom-smooth} and part (iii) is formal in any 6-functor formalism (cf. \cref{proper-pushforward-of-q-p-local-systems}).

\begin{remark}
\label{rmk-intro:Zp-coefficients}
An analogous result holds integrally, using $\DFF(-,\Z_p):=\D_{[0,\infty)}(-/\varphi^\Z)$ instead.
\end{remark}

Let $C$ be a complete algebraically closed non-archimedean extension of $\Q_p$. The first point of \Cref{sec:introduction-1-consequences-for-rigid-analytic-varieties} tells us that in particular we can compute the cohomology of a pro-\'etale $\Q_p$-local system $\mathbb{L}$ on a rigid variety $X$ over $C$ as the pushforward of the object corresponding to $\mathbb{L}$ in $\DFF(X,\Q_p)$ and point (iii) together with \cref{rslt:intro-6ff-value-on-nice-perfectoid} show that these cohomology groups are naturally the $C$-points of Banach-Colmez spaces if $f$ is proper and smooth.

Point (ii) of \Cref{sec:introduction-1-consequences-for-rigid-analytic-varieties} gives Poincaré duality for the pro-\'etale cohomology of $X$. For partially proper smooth rigid spaces over a geometric point\footnote{Although that should not be necessary, we will assume for this that the base complete algebraically closed field is the completed algebraic closure of a complete discretely valued non-archimedean extension of $\Q_p$ with perfect residue field.}, we make this rather abstract Poincar\'e duality result more explicit, at the level of the v-site, see \Cref{sec:misc}. Hence the two examples stated at the beginning of this introduction appear as a special case of our general results.

Using the cohomological smoothness and identification of the dualizing sheaf for $\Spd(\Q_p)/\varphi^\Z \to \Spd(\mathbb{F}_p)$ (by \cref{rslt:into-smoothness-of-Spd-Qp}), one also gets ``arithmetic duality'' theorems for the pro-\'etale cohomology of smooth rigid spaces over a finite extension of $\Q_p$. 

\begin{remark}[Related work]
The fact that the cohomology groups of a (finite rank) pro-\'etale $\Q_p$-local system on a smooth proper rigid space over $C$ are Banach-Colmez spaces seems to have been known to experts in the field since \cite{kedlaya2016finiteness}, but we are not aware of a written reference.

Poincar\'e duality for the $\F_p$-cohomology of proper smooth rigid analytic varieties has been established independently by Zavyalov \cite{zavyalov-poincare-duality}, building on previous work of Gabber, and the third author \cite{mann-mod-p-6-functors}. See also \cite{li2024relative} for another approach (in the setting of Zariski-constructible \'etale sheaves). Our approach in this paper is modeled on \cite{mann-mod-p-6-functors}, which not only proves duality in the proper smooth case, but provides a full $6$-functor formalism. 

Less has been known about duality for $\Z_p$- or $\Q_p$-coefficients. \cite{lan2023rham} proves duality for the cohomology of an \'etale $\Q_p$-local system with a stable lattice on Zariski open subspaces of proper smooth rigid varieties over a discretely valued field (these results are generalized in \cite{li2024relative}). More recently, Colmez, Gilles and Nizio{\l} have obtained and announced arithmetic and geometric duality theorems for the pro-\'etale $\Q_p$-cohomology of smooth Stein rigid spaces over $\mathbb{C}_p$ \cite{colmez2023arithmetic}, \cite{colmez2024duality}. The duality results we obtain for the pro-\'etale $\Q_p$-cohomology of these spaces at the level of the v-site in \Cref{sec:misc} are very parallel to their formulation (see \Cref{relation-colmez-gilles-niziol}).
\end{remark}

\begin{remark}
\label{rmk-intro-D_0-infty-useful}
From the applications to finiteness and duality results for pro-\'etale $\Q_p$-cohomology, one may be tempted to believe that the $6$-functor formalism $S \mapsto \DFF(S,\Q_p)$ (or its $\Z_p$-version) is the most important one. But we rather think of $S \mapsto \D_{[0,\infty)}(S)$ (from which one formally obtains $S \mapsto \D_{(0,\infty)}(S)$ and $S\mapsto \DFF(S,\Q_p)$) as the fundamental one. 
In contrast to \cite{mann-mod-p-6-functors} the $6$-functor formalism $S\mapsto \D_{[0,\infty)}(S)$ from \Cref{sec:introduction-1-main-theorem-introduction} does not require the choice of a pseudo-uniformizer. In particular, it makes sense to evaluate it on interesting small v-stacks like $\Spd(\F_p)$, $\mathrm{Div}^1$ or $\Bun_G$ for a reductive group $G$ over $\Q_p$. This is explored in \Cref{sec:examples}, where we analyze a few examples which show that this $6$-functor formalism might be useful in investigating $p$-adic analogues of Fargues' conjecture. Rather than developing the theory in general, we tried to highlight interesting analogies and differences with the $\ell$-adic case. In particular, it is a highly interesting question whether the spectral action from \cite{fargues-scholze-geometrization} (or variants of it) can also be constructed in this context. 
\end{remark}

\paragraph{Plan of the paper.}
In \Cref{sec:recoll-d_hats-}, we recall the crucial $\D_{\hat\solid}(-)$- and $\Dhsa{(-)}$-formalisms, developed in \cite{anschuetz_mann_descent_for_solid_on_perfectoids}. Furthermore, we establish in \Cref{sec=6ff-for-ohat-cohomology} a full $6$-functor formalism for $\Dhsa{(-)}$ on v-stacks on all perfectoid spaces and briefly discuss cohomological smoothness and cohomological properness for $\Dhsa{(-)}$ in relation to the corresponding notions for $\D^a_\solid(\ri^+/\pi)$. This $6$-functor formalism for $\mathcal{O}^{+a}$-cohomology is only used in this paper as a tool to define and study the $6$-functor formalism $\D_{[0,\infty)}$, but it should be of independent interest. 

In \Cref{sec:the-6ff-formalism-D-0-infty}, we introduce the main players $\D_{[0,\infty)}(-)$ and $\D_{(0,\infty)}(-)$ (and their variants with Frobenius $\DFF(-,\Z_p)$ and $\DFF(-,\Q_p)$) and prove their hypercomplete v-descent. We prove \Cref{sec:introduction-1-main-theorem-introduction} by a combination of \Cref{sec:abstract-results-6} and \Cref{sec=6ff-for-ohat-cohomology}, and \Cref{sec:introduction-1-consequences-for-rigid-analytic-varieties} follows as a (rather) formal consequence. Regarding cohomological smoothness, in addition to smooth morphisms of analytic adic spaces over $\Q_p$ and $\Spd(\Q_p) \to \Spd(\F_p)$, mentioned in \Cref{sec:introduction-1-main-theorem-introduction}, we also discuss examples of classifying stacks of $p$-adic Lie groups and Banach-Colmez spaces.

The categories $\DFF(-,\Z_p)$ and $\DFF(-,\Q_p)$ are defined by v-descent of quasi-coherent sheaves on Fargues--Fontaine curves and may not look like what the naive guess for categories of $\Z_p$- or $\Q_p$-sheaves on small v-stacks could have been: one would at first perhaps rather be tempted to look at sheaves of $\Z_p$- or $\Q_p$-modules on the (small) pro-\'etale or (big) v-site. There is no good $6$-functor formalism for these naive guesses. Nevertheless, the categories we define are related to them. Roughly speaking, the relation comes from the fact that the relative Fargues--Fontaine curve for a perfectoid space lives over $S$ (as a diamond), the pushforward of the structure sheaf being the sheaf $\Q_p$, and that this gives pull and push functors in both directions. The actual constructions are more involved and we discuss them in more detail in \Cref{sec:an-overc-riem}.

Finally, in \Cref{sec:applications} we establish our desired applications, which come in two guises. First, we make more concrete, using the results of \Cref{sec:an-overc-riem}, the abstract finiteness and duality results coming out of the $6$-functor formalism from \Cref{sec:the-6ff-formalism-D-0-infty}, in the particular case where we consider a smooth partially proper (e.g., Stein) rigid space over a geometric point. Second, we discuss examples coming from Fargues-Scholze's geometrization of the Langlands correspondence. We do not try to set up a $p$-adic version of their work, but rather to highlight through simple examples new interesting features (and bugs) of our formalism for this kind of questions. Instead of reproducing our findings in the introduction, we invite the reader to consult \Cref{sec:examples}.

\paragraph{Acknowledgements.}
\label{sec:acknowledgements}

We thank Dustin Clausen, David Hansen, Wies{\l}awa Nizio{\l}, Juan Esteban Rodr\'iguez Camargo, Peter Scholze and Felix Zillinger for discussions during the preparation of this paper. We thank David Hansen, Wies{\l}awa Nizio{\l} and Felix Zillinger for their comments on a preliminary draft. We especially thank Wies{\l}awa Nizio{\l} for pointing out a gap in an earlier version of the paper. The results of this paper were first discussed among the authors after an interesting conference in Oberwolfach on non-archimedean geometry in Oberwolfach in early 2022, and the authors thank its organizers. This paper is part of the DFG-Sachbeihilfe 534205068. The first author thanks the IH\'{E}S for its hospitality during a stay where parts of this paper were written. Moreover, the third author was partially supported by ERC Consolidator Grant 770936:NewtonStrat.

\paragraph{Notations and conventions.} \label{sec:notat-conv}

We will use the following notation.

\begin{itemize}
    \item $\omega_1$ denotes the first uncountable cardinal.
    
    \item If $X$ and $Y$ are topological spaces, $C(X,Y)$ denotes the set of continuous maps from $X$ to $Y$.
    
    \item $\Perf_{\F_p}$ is the category of perfectoid spaces over $\F_p$.
    
    \item Small v-stacks on $\Perf_{\F_p}$ together with a map to $\Spd(\Z_p)$ are usually identified with small v-stacks on $\mathrm{Perfd}$. To emphasize this, we call the latter ``untilted small v-stacks''. Similarly, we identify diamonds over $\Spd(\Z_p)$ with suitable v-sheaves on $\Perfd$ and if so call the latter ``untilted diamonds''. This terminology follows \cite[Definition 3.2.1]{mann-mod-p-6-functors} and \cite[Definition 4.13]{anschuetz_mann_descent_for_solid_on_perfectoids}. We denote by $\vStacks^\sharp$ the category of untilted small v-stacks. We equip untilted small v-stacks with the untilted structure sheaf, which sends an arbitrary perfectoid space $T$ over $\Z_p$ to $\mathcal{O}(T)$.

    \item We work fully derived, hence each functor $f^\ast, f_\ast, \IHom,\otimes,...$ is always assumed to be the derived one (unless mentioned otherwise). In the same spirit, in the context of $\D(\Z)$-linear categories, $\Hom$ will usually denote the $\D(\Z)$-enriched version (i.e. what is classically denoted ``$\operatorname{RHom}$'').
    
    \item We work completely in the $\infty$-categorical framework. In particular we refer to $\infty$-categories simply as ``categories'' and so on.
\end{itemize}
For technical convenience we fix an implicit cut-off cardinal $\kappa$ (in the sense of \cite[Section 4]{etale-cohomology-of-diamonds}), and assume all our perfectoid spaces, and condensed sets to be $\kappa$-small. In particular, for a Huber pair $(A,A^+)$ its associated category $\D_\solid(A,A^+)$ (\cite[Theorem 3.28]{andreychev-condensed-huber-pairs}) is generated by a \textit{set} of compact objects. Passing to the filtered colimit over all $\kappa$'s implies the statements in general.

\section{Supplements on \texorpdfstring{$\D_{\hat\solid}(-)$}{D\_hatsolid(-)}}
\label{sec:recoll-d_hats-}

In this section we collect necessary results from \cite{anschuetz_mann_descent_for_solid_on_perfectoids} on the category of (modified) solid quasi-coherent modules. Moreover, we supplement the theory by establishing several results which are necessary to develop the six functor formalism $S\mapsto \D_{[0,\infty)}(S)$ for solid quasi-coherent sheaves on the Fargues--Fontaine curve.
\subsection{Definition of \texorpdfstring{$\D_{\hat\solid}(-)$}{D\_hatsolid(-)} for adic rings}
\label{sec:defin-d_hats-}

Let $(A,A^+)_\solid$ be an adic analytic ring in the sense of \cite[Definition 2.1]{anschuetz_mann_descent_for_solid_on_perfectoids}, i.e., $A$ is a condensed, animated ring $A$, which is $I$-complete for some finitely generated ideal $I\subseteq \pi_0(A)(\ast)$ and the derived quotient $A/I$ is discrete, while the analytic ring structure is obtained by requiring that the elements in a subring $A^+\subseteq \pi_0(A)(\ast)$ are solid.

We note that $A^+$ is not required in this section to be integrally closed in $\pi_0(A)(\ast)$ or that it contains the topologically nilpotent elements (for the $I$-adic topology). Consequently, $(A,A^+)_\solid\cong (A,\widetilde{A^+})_\solid$ where $\widetilde{A^+}\subseteq \pi_0(A)(\ast)$ is the integral closure of $A^++A^{\circ\circ}$, where $A^{\circ\circ}\subseteq \pi_0(A)(\ast)$ denotes the ideal of topologically nilpotent elements.

In \cite[Definition 2.4]{anschuetz_mann_descent_for_solid_on_perfectoids} we introduced ``modified solid quasi-coherent sheaves'' for $(A,A^+)_\solid$, which recall now. In the following let $\D_\solid(A,A^+)$ denote the category of $(A,A^+)_\solid$-complete (derived) $A$-modules.

\begin{definition}[{\cite[Definition 2.4]{anschuetz_mann_descent_for_solid_on_perfectoids}}]
  \label{sec:defin-d_hats--3-definition-of-modified-solid-quasi-coherent-sheaves}
  We set $\D_{\hat\solid}(A,A^+):=\mathrm{Ind}(\mathcal{C})$ for $\mathcal{C}\subseteq \D_\solid(A,A^+)$ the full stable subcategory spanned by the $I$-adic completions $P^\wedge_I$ for $P\in \D_\solid(A,A^+)$ compact.
\end{definition}

The natural completion functor $\D_\solid(A,A^+)^\omega\to \mathcal{C}$, $P \mapsto P^\wedge_I$ on compact objects extends to a symmetric monoidal functor $\alpha^\ast\colon \D_{\solid}(A,A^+)\to \D_{\hat\solid}(A,A^+)$, which admits a colimit-preserving, conservative right adjoint $\alpha_\ast$ (\cite[Lemma 2.5]{anschuetz_mann_descent_for_solid_on_perfectoids}). By the same lemma, there exists a natural $t$-structure on $\D_{\hat\solid}(A,A^+)$ for which $\alpha_\ast$ is $t$-exact and commutes with truncations.

\begin{proposition} \label{sec:defin-d_hats--4-comparison-for-modified-modules}
Let $(A,A^+)_\solid\to (B,B^+)_\solid$ be a morphism of adic analytic rings.
\begin{propenum}
    \item If $A$ is discrete or $A^+$ is a finitely generated $\Z$-algebra, then the functor $\alpha^\ast\colon \D_{\solid}(A,A^+)\to \D_{\hat\solid}(A,A^+)$ is an equivalence.

    \item \label{rslt:hat-solid-Nuc-same-as-solid-Nuc} The functors $\alpha^\ast,\alpha_\ast$ induce a symmetric monoidal equivalence
    \begin{align*}
      \D^\nuc_\solid(A,A^+) = \D^\nuc_{\hat\solid}(A,A^+)
    \end{align*}
    on nuclear objects. Moreover, $M\in \D_{\hat\solid}(A,A^+)$ is nuclear if $\alpha_\ast M$ is nuclear.

    \item \label{rslt:hat-solid-relative-tensor-in-PrL} If $B^+$ is a finitely generated $A^+$-algebra, then the natural functor
    \begin{align*}
      \D_\solid(B,B^+) \tensor_{\D_{\solid}(A,A^+)}\D_{\hat\solid}(A,A^+) \isoto \D_{\hat\solid}(B,B^+)
    \end{align*}
    is an equivalence.
\end{propenum}
\end{proposition}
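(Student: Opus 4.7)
My plan is to treat the three parts in order, each reducing to a statement about the natural completion functor $P \mapsto P^\wedge_I$ on compact (or nuclear) objects of $\D_\solid(A, A^+)$. The common principle is that, under the stated hypotheses, either every relevant object is already derived $I$-adically complete (so that $\mathcal C = \D_\solid(A,A^+)^\omega$) or nuclearity forces the completion comparison to be an equivalence.

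For part (i), if $A$ is discrete I would argue that $I$-completeness in the condensed sense, combined with discreteness of both $A$ and $A/I$, forces the $I$-adic filtration to stabilize; hence the derived $I$-adic completion is the identity functor on all of $\D_\solid(A,A^+)$. Then $\mathcal C = \D_\solid(A,A^+)^\omega$ and $\alpha^\ast$ is an equivalence. If instead $A^+$ is a finitely generated $\Z$-algebra, I would use Noetherianness of $A^+$ together with the description of compact solid modules in terms of profinite sets to reduce to the standard fact that compact complexes built out of finitely generated $A^+$-modules are automatically derived $I$-adically complete.

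For part (ii), the plan is to show that $\alpha^\ast$ and $\alpha_\ast$ restrict to mutually inverse symmetric monoidal equivalences on nuclear objects. A nuclear object is a filtered colimit along trace-class maps, and the key lemma is that any trace-class map factors through an object of $\mathcal C$ up to equivalence (the tensor element defining a trace-class map can be moved into a completed internal Hom and then into an $I$-adic completion of its source/target). This simultaneously shows that nuclear objects are $I$-complete and identifies the two nuclear subcategories; symmetric monoidality is automatic since $\alpha^\ast$ is symmetric monoidal. The last assertion, that $M$ is nuclear whenever $\alpha_\ast M$ is, then follows by conservativity of $\alpha_\ast$ and unwinding this equivalence on the nuclear presentation of $\alpha_\ast M$.

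For part (iii), I would exploit the universal property of the tensor product in $\catPrL$: the comparison functor is determined by sending the unit of $\D_{\hat\solid}(A,A^+)$ to $B$, viewed as the distinguished compact generator of $\D_{\hat\solid}(B,B^+)$. Because $B^+$ is finitely generated over $A^+$, the ideal defining the modified category on $B$ agrees up to radicals with the image of $I$, so the base-changed completion coincides with the intrinsic completion on $B$. The main obstacle sits precisely here: one has to verify that the abstractly defined tensor product in $\catPrL$ has the expected set of compact generators, namely the $I$-adic completions of compact solid $B$-modules. This amounts to computing internal Homs in the Ind-category $\D_{\hat\solid}(B,B^+)$ and checking their compatibility with the Ind-extension of the completion functor, and it is exactly where the finite generation hypothesis on $B^+$ over $A^+$ is essential---otherwise new completion directions appear in $B$ that are not controlled by $\D_{\hat\solid}(A,A^+)$.
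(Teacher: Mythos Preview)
The paper's own proof is almost entirely by citation: parts (i), the equivalence in (ii), and (iii) are simply referred to \cite[Lemma 2.9, Proposition 2.17, Proposition 2.21]{anschuetz_mann_descent_for_solid_on_perfectoids}. Only the final clause of (ii) (``$M$ nuclear if $\alpha_\ast M$ is'') receives an argument in the paper. So your proposal is attempting considerably more than the paper does here, and the interesting comparison is with the one piece the paper actually proves.

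There is a genuine gap in your treatment of (i) for $A$ discrete. The claim that ``$I$-completeness \dots\ combined with discreteness of both $A$ and $A/I$ forces the $I$-adic filtration to stabilize; hence the derived $I$-adic completion is the identity functor on all of $\D_\solid(A,A^+)$'' is false. Take $A=\Z_p$ with the discrete condensed structure and $I=(p)$: then $A$ is derived $p$-complete, $A/I=\F_p$ is discrete, but $(p^n)$ never stabilizes and there are plenty of non-$p$-complete solid $A$-modules (e.g.\ $\Q_p$). What actually makes (i) work when $A$ is discrete is that the ideal $I$ in the definition of an adic analytic ring is existential: one may simply take $I=0$, making the completion functor the identity on compacts---but this relies on the independence of $\D_{\hat\solid}$ from the choice of $I$, which is established in the reference and which your argument bypasses. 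Your sketch for the case $A^+$ finitely generated over $\Z$ is also off: compact objects of $\D_\solid(A,A^+)$ are built from $(A,A^+)_\solid[S]$ for profinite $S$, not from finitely generated $A^+$-modules, so the reduction you propose does not apply as stated.

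For the second assertion in (ii), the paper's argument is more direct than yours: given $\alpha_\ast M$ nuclear and a compact $P=\alpha^\ast Q\in\D_{\hat\solid}(A,A^+)$, one computes $\Hom(P,M)=\Hom(Q,\alpha_\ast M)=(Q^\vee\otimes\alpha_\ast M)(\ast)$, so every $P\to M$ factors through $\alpha^\ast$ of a trace-class map $Q\to\alpha_\ast M$ and is therefore trace-class. Your route via conservativity can be made to work, but it requires an extra step you do not spell out: you need the counit $\alpha^\ast\alpha_\ast M\to M$, observe that $\alpha_\ast$ of it is an isomorphism (since $\alpha_\ast\alpha^\ast\simeq\id$ on nuclear objects by the first part), and then invoke conservativity. ``Unwinding'' alone does not furnish the comparison map.
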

\begin{proof}
These statements are \cite[Lemma 2.9]{anschuetz_mann_descent_for_solid_on_perfectoids}, \cite[Proposition 2.17]{anschuetz_mann_descent_for_solid_on_perfectoids} and \cite[Proposition 2.21]{anschuetz_mann_descent_for_solid_on_perfectoids} except for the second assertion in (ii). Assume that $\alpha_\ast M$ is nuclear, and let $P\in \D_{\hat\solid}(A,A^+)$ be compact. Then we may assume $P=\alpha^\ast Q$ for a compact object $Q\in \D_\solid(A,A^+)$. Now, $\Hom_{\D_{\hat\solid}(A,A^+)}(P,M) = \Hom_{\D_{\solid}(A,A^+)}(Q,\alpha_\ast M) = (Q^\vee\otimes_{(A,A^+)_\solid} \alpha_\ast M)(\ast)$ because $\alpha_\ast M$ is nuclear. But this implies that each morphism $P\to M$ is trace-class as it can be factored as $P=\alpha^\ast Q\xto{\alpha^\ast(f)} \alpha^\ast \alpha_\ast M\to M$ for a trace class morphism $f\colon Q\to \alpha_\ast M$.
\end{proof}

A serious advantage of $\D_{\hat\solid}$ over $\D_\solid$ is the adic descent theorem \cite[Theorem 2.30]{anschuetz_mann_descent_for_solid_on_perfectoids}, which is crucial for the strong descent results in \cite{anschuetz_mann_descent_for_solid_on_perfectoids}. Another advantage is the preservation of completeness under base change (\cite[Lemma 2.12.(iv)]{anschuetz_mann_descent_for_solid_on_perfectoids}). Although crucial for the technical backbone of this paper, we invite the reader to not take the difference between $\D_{\hat\solid}$ and $\D_\solid$ as very important for this paper.

\subsection{Definition of \texorpdfstring{$\D_{\hat\solid}(-)$}{D\_hatsolid(-)} for stably uniform analytic adic spaces} \label{sec:defin-d_hats--1}

The following definitions are taken from \cite[Definition 2.34, Definition 2.39]{anschuetz_mann_descent_for_solid_on_perfectoids}.

\begin{definition}
  \label{sec:defin-d_hats--1-definition-of-d-hat-solid}
  Let $Z$ be a stably uniform analytic adic space.
  \begin{defenum}
  \item If $Z=\Spa(A,A^+)$ is affinoid, then we set
    \[
      \D_{\hat\solid}(Z):=\mathrm{Mod}_{A}(\D_{\hat\solid}(A^\circ,A^+)),
    \]
    where the right hand side refers to $A$-modules in the category from \Cref{sec:defin-d_hats--3-definition-of-modified-solid-quasi-coherent-sheaves}.
    
  \item In general, $\D_{\hat\solid}(Z)$ is defined as the limit in $\Cat$ of the categories $\D_{\hat\solid}(U)$ for $U\subseteq Z$ open affinoid.
    \item If $f\colon Z'\to Z$ is a morphism of stably uniform analytic adic spaces, then we denote by $f^\ast\colon \D_{\hat\solid}(Z)\to \D_{\hat\solid}(Z')$ the natural induced base change functor (\cite[Lemma 2.7]{anschuetz_mann_descent_for_solid_on_perfectoids}), and by $f_\ast\colon \D_{\hat\solid}(Z')\to \D_{\hat\solid}(Z)$ its right adjoint. 
  \end{defenum}
\end{definition}

By \cite[Theorem 2.38]{anschuetz_mann_descent_for_solid_on_perfectoids} the functor $Z\mapsto \D_{\hat\solid}(Z)$ satisfies descent for the analytic topology, and in particular \cref{sec:defin-d_hats--1-definition-of-d-hat-solid} is unambiguous.

\begin{remark} \label{rmk:nuclear-objects-on-stably-uniform-adic-space}
By \cref{rslt:hat-solid-Nuc-same-as-solid-Nuc}, $\D^\nuc_{\hat\solid}(Z) = \D^\nuc_\solid(Z)$ if $Z = \Spa(A,A^+)$ is an affinoid and stably uniform analytic adic space. By \cite[Theorem 5.42]{andreychev-condensed-huber-pairs} we can conclude that nuclear objects in $\D_{\hat\solid}(Z)$ satisfy analytic descent and hence globalize to the category
\begin{align*}
    \D^\nuc_{\hat\solid}(Z) \subset \D_{\hat\solid}(Z)
\end{align*}
on any stably uniform analytic adic space $Z$. As dualizable objects in $\D_{\hat\solid}(Z)$ are automatically nuclear, we can conclude that the category of dualizable objects in $\D_{\hat\solid}(Z)$ is equivalent to the category of perfect complexes on $Z$, i.e., perfect complexes over $A$ (\cite[Corollary 5.51.1]{andreychev-condensed-huber-pairs}).
\end{remark}

As fiber products of uniform analytic adic spaces are not necessarily stably uniform (\cite[Example 2.43]{anschuetz_mann_descent_for_solid_on_perfectoids}), we do not develop a full six functor formalism for $\D_{\hat\solid}(-)$ on them. In the following, we nevertheless discuss classes of ``proper maps'' and ``open immersions'' that will be used later.

\begin{lemma} \label{sec:defin-d_hats--1-characterization-of-proper-morphisms}
Let $f\colon Z'=\Spa(A',A^{\prime,+})\to Z=\Spa(A,A^+)$ be a morphism of stably uniform affinoid analytic adic spaces. Then the following are equivalent:
\begin{lemenum}
    \item $A^{\prime,+}$ is the completed integral closure of $A^++A^{\prime,\circ\circ}$ in $A'$,
    \item $f_\ast\colon \D_{\solid}(Z')\to \D_{\solid}(Z)$ satisfies the projection formula,
    \item $f$ is proper in the sense of \cite[Definition 18.1]{etale-cohomology-of-diamonds} (if $f$ is a morphism of adic spaces over $\Spa(\Z_p)$). 
\end{lemenum}
If these conditions are satisfied, then $f_\ast\colon \D_{\hat\solid}(Z')\to \D_{\hat\solid}(Z)$ satisfies the projection formula and for any morphism $g\colon W\to Z$ with $W$ stably uniform such that $W':=W\times_{Z}Z'$ is stably uniform with projections $f'\colon W'\to W, g'\colon W'\to Z'$, the base change morphism $g^\ast f_\ast\to f'_\ast g^{\prime,\ast}$ of functors $\D_{\hat\solid}(Z')\to \D_{\hat\solid}(W)$ is an isomorphism.
\end{lemma}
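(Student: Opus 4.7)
The plan is to prove the equivalences in the order $(\mathrm{i}) \Leftrightarrow (\mathrm{iii})$ and $(\mathrm{i}) \Leftrightarrow (\mathrm{ii})$, and then deduce the additional assertions about $\D_{\hat\solid}$ and base change from standard manipulations with analytic rings.

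For $(\mathrm{i}) \Leftrightarrow (\mathrm{iii})$: since $A'$ is uniform, the completed integral closure of $A^+ + A^{\prime,\circ\circ}$ inside $A'$ is automatically closed, so it coincides with the plain integral closure. The resulting condition on $A^{\prime,+}$ is precisely the classical Huber-type criterion characterizing proper morphisms of affinoid analytic adic spaces, as recalled and rephrased in the diamond-theoretic setting in \cite{etale-cohomology-of-diamonds}. Hence this step is purely a matter of matching definitions.

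For $(\mathrm{i}) \Leftrightarrow (\mathrm{ii})$: the key observation is that condition (i) says exactly that $(A', A^{\prime,+})_\solid$ is the adic analytic ring ``induced'' from $(A, A^+)_\solid$ along the underlying ring map $A \to A'$, in the sense that $A^{\prime,+}$ is the smallest integrally closed subring of $A'$ containing the image of $A^+$ together with all topologically nilpotent elements (the minimal datum needed to form a Huber pair). In Clausen--Scholze analytic ring theory this inducedness is precisely the condition under which $f_\ast \colon \D_\solid(Z')\to \D_\solid(Z)$ is $\D_\solid(Z)$-linear, equivalently satisfies the projection formula. The nontrivial direction $(\mathrm{ii})\Rightarrow (\mathrm{i})$ is the main obstacle I anticipate: it should be extracted by applying the projection formula against carefully chosen test modules built from integral elements of $A'$ over $A^+$, where the formula forces such elements to already lie in $A^{\prime,+}$.

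Finally, the projection formula for $f_\ast \colon \D_{\hat\solid}(Z')\to \D_{\hat\solid}(Z)$ follows from the one for $\D_\solid$ by applying the symmetric monoidal completion functor $\alpha^\ast$ and invoking \cref{rslt:hat-solid-relative-tensor-in-PrL} to identify the relevant relative tensor products in the modified setting. For the base change claim, the integral-closure condition (i) descends to $f' \colon W' \to W$ under the hypothesis that $W'$ is stably uniform, so $f'_\ast$ likewise satisfies the projection formula; the base change isomorphism $g^\ast f_\ast \isoto f'_\ast g^{\prime,\ast}$ then results from a standard formal argument that, given the projection formulas on both sides, reduces the claim to testing against a generating family of compact objects on $\D_{\hat\solid}(W)$.
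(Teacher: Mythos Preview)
Your proposal is correct and follows essentially the same route as the paper. Two minor calibrations: the equivalence $(\mathrm{i})\Leftrightarrow(\mathrm{iii})$ is not pure definition-matching---the paper argues via the valuative criterion \cite[Proposition~18.3]{etale-cohomology-of-diamonds} in one direction and the comparison $|Z^{\prime,\diamond}|\cong|Z'|$ together with the canonical compactification $Z'\to\overline{Z'}^{/Z}$ in the other---and your anticipated ``main obstacle'' $(\mathrm{ii})\Rightarrow(\mathrm{i})$ is in fact a general property of morphisms of analytic rings once (i) is identified (via \cite[Proposition~13.16]{scholze-analytic-spaces}) with having the induced analytic ring structure, so no bespoke test-module argument is needed.
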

\begin{proof}
Note that by \cite[Proposition 13.16]{scholze-analytic-spaces} (i) is equivalent to the statement that the analytic ring $(A',A^{',+})_\solid$ has the induced analytic ring structure from $(A,A^+)_\solid$. With this replacement, the equivalence of (i) and (ii) is a general property of morphisms of analytic rings. We have that (i) implies (iii) using the valuative criterion \cite[Proposition 18.3]{etale-cohomology-of-diamonds}. Assume (iii). By \cite[Lemma 15.6]{etale-cohomology-of-diamonds} we have $|Z^{\prime,\diamond}|\cong |Z^\prime|$. Now (iii) implies that the natural morphism $Z^\prime\to \overline{Z'}^{/Z}$ is a homeomorphism. This implies (i) and hence finishes the proof of the equivalence of (i), (ii) and (iii).

Assume now that (i), (ii) hold. Then by \cref{rslt:hat-solid-relative-tensor-in-PrL} the natural functor $\D_{\hat\solid}(Z)\otimes_{\D_\solid(Z)}\D_\solid(Z')\to \D_{\hat\solid}(Z')$ is an equivalence, i.e., $\D_{\hat\solid}(Z') = \Mod_{A'}(\D_{\hat\solid}(Z))$. This easily implies the projection formula. Moreover, the proof of \cite[Lemma 3.2.5]{camargo2024analytic} shows that this satisfies base change for $\D_\solid$. Using \cref{rslt:hat-solid-relative-tensor-in-PrL} again, we can deduce base change for $\D_{\hat\solid}$. 
\end{proof}

\begin{lemma} \label{rslt:open-immersion-induces-categorical-open-immersion-on-hatsolid}
Let $j\colon U\to Z$ be an open immersion of stably uniform analytic adic spaces. Then the natural functor $\D_\solid(U)\otimes_{\D_\solid(Z)}\D_{\hat\solid}(Z)\to \D_{\hat\solid}(U)$ is an equivalence, and $j^\ast$ is an open immersion in the category $\Sym$ of presentably symmetric monoidal categories (\cite[Definition 6.3]{condensed-complex-geometry}). In particular, $j^\ast$ admits a fully faithful left adjoint $j_!$ satisfying the projection formula. If $f\colon W\to Z$ is a morphism with $W$ stably uniform, then for $V:=U\times_Z W$ with projections $j'\colon V\to W$, $f'\colon V\to U$ the natural morphism $j'_!f^{\prime,\ast}\to f^\ast j_!$ of functors $\D_{\hat\solid}(U)\to \D_{\hat\solid}(W)$ is an equivalence. If $f$ satisfies the equivalent conditions of \Cref{sec:defin-d_hats--1-characterization-of-proper-morphisms}, then furthermore the natural morphism $j_!f^{\prime}_\ast\to f_\ast j_!^\prime$ of functors $\D_{\hat\solid}(V)\to \D_{\hat\solid}(Z)$  is an isomorphism.
\end{lemma}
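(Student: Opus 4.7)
The plan is to reduce to the affinoid case, establish the key tensor-product identification $\D_\solid(U)\otimes_{\D_\solid(Z)}\D_{\hat\solid}(Z)\isoto \D_{\hat\solid}(U)$, and then deduce all remaining assertions by formal manipulations with open immersions in $\Sym$.

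By analytic descent for $\D_{\hat\solid}(-)$ (\cite[Theorem 2.38]{anschuetz_mann_descent_for_solid_on_perfectoids}), and since any open immersion can be covered by rational subsets after passing to an affinoid cover, I may assume that $Z=\Spa(A,A^+)$ is affinoid and $U=\Spa(B,B^+)\subseteq Z$ is a rational subset, so that $(B^\circ,B^+)_\solid$ is obtained from $(A^\circ,A^+)_\solid$ by a rational localization.

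The core step is to prove
\[
    \D_\solid(U)\otimes_{\D_\solid(Z)} \D_{\hat\solid}(Z)\isoto \D_{\hat\solid}(U).
\]
Unwinding $\D_{\hat\solid}(Z)=\Mod_A(\D_{\hat\solid}(A^\circ,A^+))$ (and similarly for $U$), and invoking base change for $\D_\solid$ along rational localizations (Andreychev), this reduces to
\[
    \D_\solid(B^\circ,B^+)\otimes_{\D_\solid(A^\circ,A^+)} \D_{\hat\solid}(A^\circ,A^+)\isoto \D_{\hat\solid}(B^\circ,B^+).
\]
I would verify this by comparing compact generators: the objects $P^\wedge_I$ with $P$ compact in $\D_\solid(B^\circ,B^+)$ and $I$ a defining ideal (which may be chosen to be the image of a defining ideal of $A$) generate the target category, and one reduces to the case that $P$ is pulled back from $\D_\solid(A^\circ,A^+)$; then the key point is that $I$-adic completion commutes with the rational localization $(A^\circ,A^+)\to (B^\circ,B^+)$, because $(B^\circ,B^+)$ is itself $I$-adic. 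I expect this compatibility to be the main technical obstacle, since in contrast to \cref{rslt:hat-solid-relative-tensor-in-PrL}, $B^+$ is in general not a finitely generated $A^+$-algebra, so one cannot appeal to that proposition directly and must instead argue on the level of generators.

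Once this equivalence is established, the remaining assertions follow formally. The functor $j^*\colon\D_\solid(Z)\to\D_\solid(U)$ is an open immersion in $\Sym$ in the sense of \cite[Definition 6.3]{condensed-complex-geometry}, realized by an idempotent commutative algebra $1_U\in\D_\solid(Z)$; base changing this idempotent algebra along $\D_\solid(Z)\to\D_{\hat\solid}(Z)$ produces an idempotent algebra in $\D_{\hat\solid}(Z)$ whose category of modules identifies with $\D_{\hat\solid}(U)$ by the previous step. This exhibits $j^*\colon \D_{\hat\solid}(Z)\to \D_{\hat\solid}(U)$ as an open immersion in $\Sym$, so a fully faithful left adjoint $j_!$ exists and satisfies the projection formula. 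The base change $j'_!f'^*\isoto f^*j_!$ for arbitrary $f\colon W\to Z$ with $W$ stably uniform is then immediate, since open immersions in $\Sym$ are stable under arbitrary base change (the witnessing idempotent algebra pulls back). Finally, when $f$ is proper in the sense of \cref{sec:defin-d_hats--1-characterization-of-proper-morphisms}, the equivalence $j_!f'_*\isoto f_*j'_!$ is a standard diagram chase combining base change for $f_*$ along $j$ (which is part of the conclusion of that lemma), full faithfulness of $j_!$ and $j'_!$, and the base change for $j_!$ just established.
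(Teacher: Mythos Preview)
Your approach is correct and structurally the same as the paper's. The paper simply outsources the work: the tensor-product equivalence (your ``core step'') is quoted directly from \cite[Proposition~2.37]{anschuetz_mann_descent_for_solid_on_perfectoids} together with analytic descent, the open-immersion-in-$\Sym$ claim from \cite[Lemma~2.35]{anschuetz_mann_descent_for_solid_on_perfectoids}, and the remaining base-change and proper-compatibility statements from base change along $\D_\solid(Z)\to\D_{\hat\solid}(Z)$ combined with the argument in \cite[Lemma~3.2.5]{camargo2024analytic}. Your direct verification via compact generators and the compatibility of $I$-adic completion with rational localization is exactly the content that \cite[Proposition~2.37]{anschuetz_mann_descent_for_solid_on_perfectoids} encapsulates, so you have correctly identified the crux but need not redo it from scratch. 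One small terminological wrinkle: in the conventions used here, the open immersion is witnessed by a \emph{co}idempotent coalgebra $j_!(1)$ (equivalently, by the idempotent algebra cutting out the closed complement), not by an idempotent algebra whose module category is $\D_{\hat\solid}(U)$; this does not affect your argument, since either structure is preserved under base change in $\Pr^L$.
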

\begin{proof}
The first assertion is implied by \cite[Proposition 2.37]{anschuetz_mann_descent_for_solid_on_perfectoids} and analytic descent \cite[Theorem 2.38]{anschuetz_mann_descent_for_solid_on_perfectoids}. The second follows from \cite[Lemma 2.35]{anschuetz_mann_descent_for_solid_on_perfectoids}. The last assertions follow from the first by base change along $\D_\solid(Z)\to \D_{\hat\solid}(Z)$ and the proof of \cite[Lemma 3.2.5]{camargo2024analytic}. 
\end{proof}

\begin{remark}
  \label{sec:defin-d_hats--1-if-sousperfectoid-can-extend-to-etale}
  If $Z, W$ in \Cref{rslt:open-immersion-induces-categorical-open-immersion-on-hatsolid} are sousperfectoid, then the same assertions hold true if $j\colon U\to Z$ is only assumed to be \'etale (except for fully faithfulness of $j_!$). In fact, the assertions are local on $Z$ and thus $j$ can be assumed to be a composition of an open immersion and of a finite \'etale map. The case of a finite \'etale map follows now from \Cref{sec:defin-d_hats--1-characterization-of-proper-morphisms}.
\end{remark}

\subsection{Boundedness conditions}
\label{sec:boundedness}

In this short paragraph, we recall a few definitions regarding boundedness that will be frequently used in the sequel. We recall from \Cref{sec:notat-conv} that we will usually view small v-stacks on $(\Perf_{\F_p})_{/\Spd(\Z_p)}$ as v-stacks on the site $\Perfd$ of all perfectoid spaces over $\Z_p$. We will call the latter ``untilted small v-stacks''.

\begin{remark} \label{sec:defin-d_hats--3-remark-geometric-properties-of-small-v-Stacks}
Given an untilted small v-stack $T$, we can forget the structure morphism to $\Spd(\Z_p)$ to define the tilt $T^\flat$ of $T$, which is a small v-stack on $\Perf_{\F_p}$. When referring to geometric properties of untilted small v-stacks (like properness, \'etaleness, and $p$-boundedness as in \cref{def:p-bounded-morphism}), these are understood to be properties of the tilt. We stress that in general $T$ and $T^\flat$ are equipped with different structure sheaves (e.g. the one on $T$ could be of characteristic $0$).
\end{remark}

In the next section, we will introduce a category $\Dhsa{T}$ for any untilted small v-stack $T$. Following \cite[Section 3.6]{mann-mod-p-6-functors} it would be natural to try to prove qcqs base change or the projection formula for proper, $p$-bounded morphisms in this context ($p$-bounded being defined below). We would however face a problem here: we don't have a good control of $\Dhsa{T}$ without $T$ being close to a spatial untilted diamond. Hence, it seems natural to ask morphisms to be representable in spatial diamonds. This however has the drawback that it is not known if compactifications of spatial diamonds are again spatial. A similar issue arises for $\ell$-adic étale sheaves and has been solved in that setting: Following \cite{mod-ell-stacky-6-functors} (and \cite[Proposition 5.6]{mann-nuclear-sheaves}) we will make use of so-called \emph{prespatial diamonds}. We recall that a qcqs untilted diamond $X$ is prespatial if there exists a spatial subdiamond $X_0\subseteq X$ such that $X_0(K,\ri_K)=X(K,\ri_K)$ for all perfectoid fields $K$ (\cite[Definition 3.1]{mod-ell-stacky-6-functors}), and that (among other properties) prespatial untilted diamonds are stable under fiber products (\cite[Lemma 3.4]{mod-ell-stacky-6-functors}).

Using the notion of prespatial diamonds, we introduced in \cite[Definition 3.7]{anschuetz_mann_descent_for_solid_on_perfectoids} the following class of morphisms of small v-stacks, providing a notion of ``cohomological boundedness''.

\begin{definition} \label{def:p-bounded-morphism}
A morphism $f\colon T'\to T$ of small v-stacks is called \emph{$p$-bounded} if
\begin{enumerate}[(i)]
    \item $f$ is locally separated and representable in prespatial diamonds,
    \item after pullback to any prespatial diamond, $f$ has finite $\dimtrg$,
    \item for each strictly totally disconnected perfectoid space $Z$ over $T$, there exists some $d\geq 0$ such that for each maximal point $z'$ in the prespatial diamond $T'\times_T Z$ the $p$-cohomological dimension of $z'$ is bounded by $d$.
\end{enumerate}
\end{definition}

As a warning we mention that in \cite[Definition 3.5.5.(ii)]{mann-mod-p-6-functors} a different (more complicated) notion of ``$p$-boundedness'' is introduced, which we will call $+$-boundedness in the following. The relation between both notions is clarified by the next theorem.

\begin{theorem} \label{sec:funct-da_h--2-+-boundedness-in-representable-case-is-equivalent-to-p-boundedness}
Let $f\colon T'\to T$ be a locally separated morphism of small v-stacks which is representable in prespatial diamonds. Then $f$ is $+$-bounded if and only if $f$ is $p$-bounded.
\end{theorem}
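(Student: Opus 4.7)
The plan is to compare the two definitions component by component, using the representability hypothesis to reduce $+$-boundedness to the simpler conditions of \Cref{def:p-bounded-morphism}. The first observation is that both notions are detectable after pullback along strictly totally disconnected perfectoid spaces $Z \to T$: this is baked into the statement of $p$-boundedness, and it is a standard feature of Mann's $+$-boundedness since strictly totally disconnected spaces form a basis of the v-topology. After such a reduction we are comparing conditions on a locally separated morphism $g\colon X \to Z$ where $Z$ is strictly totally disconnected and $X = T'\times_T Z$ is a prespatial diamond (stability of prespatial diamonds under fiber product, \cite[Lemma 3.4]{mod-ell-stacky-6-functors}, is what makes this reduction legitimate).

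Next, I would unpack \cite[Definition 3.5.5.(ii)]{mann-mod-p-6-functors} in this specialized setting. Its content amounts to: (a) $g$ is locally of finite $\dimtrg$, and (b) there is a uniform bound on the $p$-cohomological dimension of every geometric point of $X$. Condition (a) matches (ii) of \Cref{def:p-bounded-morphism} verbatim. The remaining point is to show that the uniform bound in (b) is equivalent to the bound at \emph{maximal} points appearing in condition (iii) of \Cref{def:p-bounded-morphism}.

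For this, I would choose a spatial subdiamond $X_0 \subseteq X$ realizing the prespatial structure of $X$, so that the maximal points of $X$ are precisely those of $X_0$ and in particular sit at the top of each specialization chain. Since $|X_0|$ is a spectral space of finite Krull dimension (from finite $\dimtrg$) and every point of $|X|$ specializes, via $|X_0|$, to a maximal point, it suffices to show that the $p$-cohomological dimension does not increase along analytic specializations inside $X$. This semi-continuity is a standard fact for analytic adic spaces over $\Spa(\Z_p)$ and passes to prespatial diamonds via the pro-étale cover by affinoid perfectoids, as in the proof of \cite[Proposition 21.16]{etale-cohomology-of-diamonds}. Combining (a) with this semi-continuity, the bound at maximal points propagates to a uniform global bound, giving the desired equivalence.

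The main obstacle will be the semi-continuity step: one must be careful that the cohomological dimension being controlled is the \emph{$p$-}cohomological dimension (not just the $\ell$-adic one) and that the argument works correctly for prespatial diamonds, which are not themselves honest adic spaces. Here I would reduce to the spatial subdiamond $X_0$ and then to an affinoid perfectoid pro-étale cover, where the cohomological dimension of points is directly computable from the residue fields; specialization of analytic points corresponds to inclusion of residue fields (up to completion), and the $p$-cohomological dimension of a complete non-archimedean field is monotone under such inclusions. Once this is established, both implications of the theorem follow immediately from matching the clauses of the two definitions.
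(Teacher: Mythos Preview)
The paper does not give an independent argument here: it simply cites \cite[Proposition 4.24]{anschuetz_mann_descent_for_solid_on_perfectoids}. So there is no ``paper's approach'' to compare against beyond noting that the real work lives in that reference.

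Your sketch, however, has two genuine gaps.

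First, your unpacking of $+$-boundedness into ``(a) finite $\dimtrg$ and (b) uniform $p$-cohomological dimension bound at all geometric points'' is not a restatement of \cite[Definition~3.5.5(ii)]{mann-mod-p-6-functors}; it is essentially the conclusion you are trying to prove. Mann's definition is phrased in terms of bounding cohomological amplitude of pushforwards along (towers of) certain covers, and the paper explicitly flags it as ``more complicated'' than \Cref{def:p-bounded-morphism}. Reducing that definition, under the representability hypothesis, to your (a)+(b) is where the substance of \cite[Proposition~4.24]{anschuetz_mann_descent_for_solid_on_perfectoids} lies. You have asserted this reduction without argument.

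Second, the semi-continuity step is not established. You invoke \cite[Proposition~21.16]{etale-cohomology-of-diamonds}, but that result concerns $\ell$-cohomological dimension for $\ell\neq p$; its proof does not carry over to $\F_p$-coefficients. Your fallback claim, that $p$-cohomological dimension of complete non-archimedean fields is monotone along the residue-field maps induced by analytic specializations, is not a standard fact and is not true in the generality you need without further input. (There is also a direction slip: points \emph{generalize} to maximal points, they do not specialize to them.) Even granting the right direction, propagating the bound from maximal points to all points requires a genuine argument specific to $p$-torsion coefficients, and this is precisely what the cited proposition supplies.
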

\begin{proof}
This is proven in \cite[Proposition 4.24]{anschuetz_mann_descent_for_solid_on_perfectoids}.
\end{proof}

Let us note that $p$-boundedness of a morphism is a condition which is easier to check in practice than $+$-boundedness. For example it is satisfied for morphisms of finite $\dimtrg$ between qcqs perfectoid spaces, and stable under quotients by profinite groups of finite $p$-cohomological dimension as long as the quotient is representable in prespatial diamonds. In particular, weakly perfectly finite type morphisms between perfectoid spaces are $p$-bounded, as are morphisms between untilted diamonds of rigid-analytic varieties. Therefore \Cref{sec:funct-da_h--2-+-boundedness-in-representable-case-is-equivalent-to-p-boundedness} simplifies parts of the discussion in \cite[Section 3.5]{mann-mod-p-6-functors}. 

\begin{remarks} \label{sec:funct-da_h--1-stability-of-p-boundedness}
\begin{remarksenum}
    \item By \cite[Lemma 3.13]{anschuetz_mann_descent_for_solid_on_perfectoids} $p$-bounded morphisms are stable under composition and base change.

    \item \label{rslt:qcqs-qproet-map-is-p-bounded} Every qcqs quasi-pro-\'etale morphism $f\colon T'\to T$ is $p$-bounded. Indeed, \cite[Proposition 9.6]{etale-cohomology-of-diamonds} implies that $f$ is representable in prespatial diamonds while $\dimtrg(f)=0$ and, if $T$ is strictly totally disconnected, then each maximal point of $T'$ is the adic spectrum of an algebraically closed perfectoid field.

    \item Assume that $T$ is a $p$-bounded prespatial untilted diamond, i.e., there exists some $d\geq 0$ such that for each $\mathcal{F}\in \D_\et(T,\F_p)$ which is concentrated in degree $0$ we have $\Hom_{\D_\et(T,\F_p)}(\F_p,\mathcal{F}) \in \D^{\leq d}$. If $f\colon T'\to T$ is a $p$-bounded morphism, then $T'$ is a $p$-bounded prespatial untilted diamond (\cite[Definition 3.11]{anschuetz_mann_descent_for_solid_on_perfectoids}).

    \item If $f\colon Y\to X$, $g\colon Z\to Y$ are morphisms of untilted small v-stacks, and $f$, $f\circ g$ are $p$-bounded, then $g$ is $p$-bounded. Indeed, quasi-compact injections are quasi-pro-\'etale by \cite[Lemma 7.19]{etale-cohomology-of-diamonds}, and thus $p$-bounded. Factoring $g$ over the graph of $g$ as $Z\to Z\times_{X}Y\to Y$ shows that $g$ is $p$-bounded as $Z\to Z\times_X Y$ is (and $p$-bounded morphisms are stable under composition and base change by (i)).

    \item Let $f\colon T'\to T$ be a locally separated morphism of untilted small v-stacks which is representable in prespatial diamonds and which has locally bounded dimension. Then by \Cref{sec:funct-da_h--2-+-boundedness-in-representable-case-is-equivalent-to-p-boundedness} and \cite[Lemma 3.5.10.(ii)]{mann-mod-p-6-functors} $p$-boundedness of $f$ can be checked v-locally on $T$. (One can also avoid the use of the (difficult) \Cref{sec:funct-da_h--2-+-boundedness-in-representable-case-is-equivalent-to-p-boundedness} and directly check condition (iii) in \Cref{def:p-bounded-morphism}.)
\end{remarksenum}
\end{remarks}

\section{A 6-functor formalism for \texorpdfstring{$\mathcal{O}^+$}{O\textasciicircum+}-cohomology on untilted small v-stacks} \label{sec=6ff-for-ohat-cohomology}

In this section we define the category $\D^a_{\hat\solid}(-)$ on untilted small v-stacks and establish a $6$-functor formalism for it. The discussion relies on \cite[Section 3.6]{mann-mod-p-6-functors} and \cite{heyer-mann-6ff}.

The results proved here provide a $6$-functor formalism for solid quasi-coherent $\mathcal{O}^{+a}$-cohomology (and thus also for $\mathcal{O}$-cohomology) on untilted small v-stacks. In terms of definitions, only the $\mathcal{O}$-linear version is used in the next section of the paper, which builds the theory on the Fargues--Fontaine curve. But, as already mentioned in the introduction, the proofs of many results require to work integrally, to be able to reduce modulo a pseudo-uniformizer and invoke the results of \cite{mann-mod-p-6-functors}. We develop the $\D^a_{\hat\solid}(-)$-formalism a bit further than what will be strictly necessary for the results of the next sections, since we believe it is also interesting in its own right. However, in the interest of brevity, we discuss essentially no examples; they will be discussed for the 6-functor formalism $\D_{[0,\infty)}(-)$.

\subsection{Definition of \texorpdfstring{$\D_{\hat\solid}^a(-)$}{D\textasciicircum a\_hatsolid(-)} for untilted small \texorpdfstring{$v$}{v}-stacks} \label{sec:defin-d_hats--2}

For perfectoid spaces, one can define a well-behaved almost category of modified solid $\mathcal{O}^+$-modules. From now on we assume that for a Huber pair $(A,A^+)$ the ring $A^+$ is open and integrally closed in $A$ (contrary to \Cref{sec:defin-d_hats-}). Moreover, we view $A^+$ as a condensed ring through its natural topology.

\begin{definition}
  \label{sec:defin-d_hats--3--definition-of-d-a-hat-solid-for-perfectoids}
  Let $Z=\Spa(A,A^+)$ be an affinoid perfectoid space. We set
  \[
    \D^a_{\hat\solid}(A^+):=\D^a(A^+)\otimes_{\D(A^+)} \D_{\hat\solid}(A^+,A^+),
  \]
  where $\D^a(A^+)$ is the almost category for the ideal $A^{\circ\circ}\subseteq A^+$ of topologically nilpotent elements, and $\D(A^+)$ the usual derived category of $A^+$-modules. Moreover, $\D_{\hat\solid}$ refers to \Cref{sec:defin-d_hats-}.
\end{definition}

This construction satisfies very strong descent results. We recall that the v-topology on the category $\Perfd$ of perfectoid spaces over $\Z_p$ is generated by disjoint unions and surjections of affinoid perfectoid spaces (see \cite[Definition 8.1.(iii)]{etale-cohomology-of-diamonds}).

\begin{theorem} \label{rslt:Dhsa-is-sheaf}
There exists a unique hypercomplete v-sheaf of categories
\begin{align*}
    \Perfd^\opp \to \Cat, \qquad Z \mapsto \Dhsa{Z},
\end{align*}
such that for each affinoid perfectoid space $Z=\Spa(A,A^+)$, whose tilt admits a morphism of finite $\dimtrg$ to a totally disconnected perfectoid space, we have $\Dhsa{Z} \cong \D^a_{\hat\solid}(A^+)$ compatibly with pullback.
\end{theorem}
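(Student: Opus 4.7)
The plan is to construct $\Dhsa{-}$ by first establishing hypercomplete v-descent of the explicit formula on a base of the topology, right Kan extending to all of $\Perfd$, and then identifying the extension with $\D^a_{\hat\solid}(A^+)$ on the broader class of ``nice'' affinoid perfectoids specified in the statement. Uniqueness is essentially automatic: a hypercomplete v-sheaf of categories is determined by its values on any v-covering class of objects, and strictly totally disconnected affinoid perfectoids form such a class, trivially satisfying the finite-$\dimtrg$ hypothesis via the identity map.

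For existence, the crucial input is that on the full subcategory of strictly totally disconnected affinoid perfectoids, the assignment $\Spa(A,A^+) \mapsto \D_{\hat\solid}(A^+, A^+)$ satisfies hypercomplete v-descent, by the adic descent theorem of \cite{anschuetz_mann_descent_for_solid_on_perfectoids}. To pass to the almost version, I would use that $\D^a_{\hat\solid}(A^+) = \D^a(A^+) \otimes_{\D(A^+)} \D_{\hat\solid}(A^+, A^+)$ is a symmetric monoidal Verdier localization, and that this localization is compatible with pullback along v-morphisms of such perfectoids because the ideal of topologically nilpotent elements pulls back correctly (one reduces to checking on a pseudouniformizer). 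Hence the almost version also satisfies hypercomplete v-descent on this base, and right Kan extension along the inclusion into $\Perfd$ produces the desired hypercomplete v-sheaf $\Dhsa{-}$.

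For the identification on a nice affinoid $Z = \Spa(A, A^+)$ with $Z^\flat \to T$ of finite $\dimtrg$ to a totally disconnected $T$, I would pick a strictly totally disconnected v-cover $\tilde T \to T$ and pull back to get $\tilde Z \to Z$, which is again strictly totally disconnected. Čech descent along $\tilde Z \to Z$ computes $\Dhsa{Z}$ from the values on strictly totally disconnected spaces, and the same Čech diagram computes $\D^a_{\hat\solid}(A^+)$ via \cref{rslt:hat-solid-relative-tensor-in-PrL} applied to the terms of the nerve, where the finite-$\dimtrg$ assumption ensures the relevant base change formulas hold. The main obstacle throughout is controlling the interplay between the almost localization and v-descent: one must verify that pullback along the Čech cover preserves the almost setup, and that the almost category of a descent limit is itself the limit of almost categories. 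A secondary subtlety is that v-covers of perfectoids are typically not finitely generated as algebras, so in the identification step one must rely on the full v-descent results of \cite{anschuetz_mann_descent_for_solid_on_perfectoids} rather than purely formal consequences of \cref{rslt:hat-solid-relative-tensor-in-PrL}.
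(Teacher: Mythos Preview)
The paper does not prove this statement here; the proof is a one-line citation to \cite[Theorem~1.1]{anschuetz_mann_descent_for_solid_on_perfectoids}, of which this is the main result. Your outline is a reasonable high-level sketch of the strategy employed in that companion paper, with one correction worth flagging.

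You write that hypercomplete v-descent on strictly totally disconnected affinoids follows from ``the adic descent theorem'' of \cite{anschuetz_mann_descent_for_solid_on_perfectoids}. This is a misattribution: adic descent (\cite[Theorem~2.30]{anschuetz_mann_descent_for_solid_on_perfectoids}) concerns descent along completions with respect to ideals, not along v-covers. The actual argument couples adic descent with the already-established v-descent for $\D^a_\solid(\ri^+/\pi)$ from \cite{mann-mod-p-6-functors}: one reduces modulo a pseudouniformizer where v-descent is known, and then uses the adic descent theorem (and the fact that $\D_{\hat\solid}$ is generated by complete objects) to lift back. The paper itself hints at this in the remark following \cref{sec:defin-d_hats--4-comparison-for-modified-modules} and in \cref{rslt:Dsha-comparison-to-Mann-thesis}. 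You correctly identify at the end that the identification step on ``nice'' affinoids cannot go through \cref{rslt:hat-solid-relative-tensor-in-PrL} and must instead invoke the full v-descent machinery of the companion paper; so in effect your sketch already concedes that the real content lies there.
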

\begin{proof}
This is the main result of \cite{anschuetz_mann_descent_for_solid_on_perfectoids}. More precisely, it is \cite[Theorem 1.1]{anschuetz_mann_descent_for_solid_on_perfectoids}.
\end{proof}

\begin{definition}
Let $f\colon T'\to T$ be a morphism of untilted small v-stacks.
\begin{defenum}
    \item \label{def:Dhsa} We set $\Dhsa{T}$ as the value of the sheaf of categories from \Cref{rslt:Dhsa-is-sheaf} on $T$. This is a stable presentably symmetric category (recall that we have fixed some cut-off cardinal $\kappa$ in \Cref{sec:notat-conv}). We denote the symmetric monoidal structure by $\tensor$.
    
    \item We set $f^\ast\colon \Dhsa{T} \to \Dhsa{T'}$ as the restriction functor for this sheaf.
    
    \item We set $f_\ast\colon \Dhsa{T'} \to \Dhsa{T}$ as the right adjoint to $f^\ast$, which exists by the adjoint functor theorem.
\end{defenum}
\end{definition}

\begin{remarks}
\begin{remarksenum}
    \item \label{rslt:Dsha-comparison-to-Mann-thesis} If $Z = \Spa(A,A^+)$ is affinoid perfectoid and $\pi \in A$ a pseudo-uniformizer, then the functor $\Spa(B,B^+) \mapsto B^+/\pi$ on affinoid perfectoid spaces over $Z$ defines a ring object $\ri^+_Z/\pi \in \Dhsa{Z}$ and by \cite[Remark 4.5]{anschuetz_mann_descent_for_solid_on_perfectoids} we have $\Mod_{\ri^+_Z/\pi}(\Dhsa{Z}) \cong \D^a_{\solid}(\ri^+_{Z}/\pi)$, where the latter is defined in \cite[Definition 3.1.3]{mann-mod-p-6-functors}. This property (together with the generation of $\Dhsa{Z}$ by complete objects if $Z$ is sufficiently nice, see \cref{rslt:Dhsa-generated-by-complete-objects} below) will allow us to reduce many questions to results in \cite{mann-mod-p-6-functors}.

    \item Let $T$ be an untilted small v-stack. Following \cite[Definition 3.3.1]{mann-mod-p-6-functors} we can define (left/right) bounded objects in $\D^a_{\hat\solid}(\mathcal{O}^+_T)$ as those which are (left/right) bounded when pulled back to $\D^a_{\hat\solid}(A^+)$ for any totally disconnected space $Z=\Spa(A,A^+)$ over $T$. We note that thanks to \cite[Proposition 3.1.20]{mann-mod-p-6-functors}, \cite[Corollary 2.14]{anschuetz_mann_descent_for_solid_on_perfectoids}, \cite[Proposition 3.2.13.(ii)]{mann-mod-p-6-functors} the proofs of \cite[Lemma 3.3.2, Proposition 3.3.3]{mann-mod-p-6-functors} apply and show that (left/right) boundedness can be checked v-locally on $T$. We note that there does not seem to be a good t-structure on $\Dhsa{T}$ in general, as the pullback functors are not t-exact.
\end{remarksenum}
\end{remarks}

In the spirit of \cref{rslt:Dsha-comparison-to-Mann-thesis} we next introduce the notion of (adically) complete objects in $\Dhsa{T}$:

\begin{definition} \label{def:complete-object-in-Dhsa}
Let $T$ be an untilted small v-stack. An object $M\in \Dhsa{T}$ is called \textit{complete} if for any morphism $f\colon T'=\Spa(A,A^+)\to T$ with $T'$ a totally disconnected perfectoid space the pullback $f^\ast M\in \Dhsa{T'}=\D^a_{\hat\solid}(A^+)$ is $\pi$-adically complete for some (equivalently any) pseudo-uniformizer $\pi\in A$. 
\end{definition}

For properties of $\pi$-adic completeness in $\D^a_{\hat\solid}(A^+)$ we refer to \cite[Lemma 2.12]{anschuetz_mann_descent_for_solid_on_perfectoids}. We check that \Cref{def:complete-object-in-Dhsa} is unambiguous in the following sense:

\begin{lemma} \label{rslt:completeness-in-Dhsa-properties}
\begin{lemenum}
    \item \label{rslt:completeness-in-Dhsa-is-local} If $f\colon T'\to T$ is a morphism of small v-stacks and $M \in \Dhsa{T}$, then $f^\ast M$ is complete if $M$ is complete. The converse holds if $f$ is a v-cover.
    
    \item \label{rslt:completeness-in-Dhsa-given-by-adic-completeness} If $g\colon T\to Z=\Spa(A,A^+)$ is a morphism with $Z$ affinoid perfectoid and $M\in \Dhsa{T}$, then $M$ is complete if and only if for some (equivalently any) pseudo-uniformizer $\pi\in A$ the object $M$ is $\pi$-adically complete in the $\D^a_{\hat\solid}(A^+)$-linear category $\Dhsa{T}$, i.e., the inverse limit of the diagram $(\ldots \overset{\pi}{\to}M\overset{\pi}{\to} M)$ vanishes.
\end{lemenum}
\end{lemma}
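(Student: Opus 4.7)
The plan is to combine hypercomplete v-descent for $\Dhsa{-}$ (\Cref{rslt:Dhsa-is-sheaf}) with the affinoid preservation result for $\pi$-adic completeness under base change, \cite[Lemma 2.12.(iv)]{anschuetz_mann_descent_for_solid_on_perfectoids}. The forward direction of (i) is immediate: for any $h\colon T''\to T'$ with $T''$ totally disconnected, the composite $f\circ h\colon T''\to T$ is itself a totally disconnected test object for $M$, so $(f\circ h)^\ast M = h^\ast f^\ast M$ is $\pi$-adically complete by the completeness of $M$, showing $f^\ast M$ is complete.

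The key auxiliary statement I will establish is a v-local criterion for $\pi$-adic completeness: if $h\colon W\to T$ is a v-cover of untilted small v-stacks both mapping to $\Spa(A,A^+)$, then $M\in\Dhsa{T}$ is $\pi$-adically complete in $\Dhsa{T}$ if and only if $h^\ast M$ is $\pi$-adically complete in $\Dhsa{W}$. The forward direction descends \cite[Lemma 2.12.(iv)]{anschuetz_mann_descent_for_solid_on_perfectoids}, which gives preservation of $\pi$-adic completeness along any morphism of affinoid perfectoids. The converse uses that $h^\ast$ is conservative along a v-cover (in the descent presentation $\Dhsa{T}=\lim_\Delta \Dhsa{W^\bullet}$ every $M|_{W^n}$ is a further pullback of $M|_{W^0}$, so $M|_{W^0}=0$ forces $M=\lim_n M|_{W^n}=0$), combined with the observation that $M$ is $\pi$-adically complete iff the canonical map $M\to \widehat M=\lim_n M/\pi^n$ is an equivalence, and that this is preserved by pullback because the affinoid result ensures $h^\ast\widehat M=\widehat{h^\ast M}$.

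With this criterion in place, both (ii) and the converse of (i) follow by propagating $\pi$-adic completeness across a short zig-zag. For (ii) forward, pick a v-cover $h\colon W\to T$ by a totally disconnected perfectoid; then $h^\ast M$ is $\pi$-adically complete by completeness, so the criterion upgrades this to $\pi$-adic completeness of $M$ in $\Dhsa{T}$. For (ii) converse, given a totally disconnected $k\colon T'\to T$, choose an affinoid perfectoid v-cover $Z\to T$ and refine $Z\times_T T'\to T'$ by a v-cover $Z'\to Z\times_T T'$ with $Z'$ affinoid perfectoid; the criterion on $Z\to T$ gives $\pi$-adic completeness of $M|_Z$, the affinoid tool on $Z'\to Z$ propagates it to $M|_{Z'}$, and the criterion on $Z'\to T'$ yields the desired $\pi$-adic completeness of $k^\ast M$. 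For the converse of (i), given a totally disconnected $g\colon Z\to T$, refine $Z\times_T T'\to Z$ by a totally disconnected v-cover $Z''$; then $Z''\to T'$ factors, so $M|_{Z''}=(f^\ast M)|_{Z''}$ is $\pi$-adically complete by completeness of $f^\ast M$, and the criterion on the v-cover $Z''\to Z$ concludes.

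The principal difficulty is the v-local criterion, and specifically verifying that pullback along a v-cover interacts correctly with the inverse limit computing $\pi$-adic completion. This is exactly what the modified solid formalism $\D_{\hat\solid}$ of \cite{anschuetz_mann_descent_for_solid_on_perfectoids} was engineered to guarantee, via its preservation result for completeness under base change.
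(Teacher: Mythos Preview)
Your overall strategy is sound and close to the paper's, but there is a genuine gap in your key auxiliary statement. You assert that for a v-cover $h\colon W\to T$ (with $T$ a general small v-stack over $\Spa(A,A^+)$) one has $h^\ast\widehat M=\widehat{h^\ast M}$, and you attribute this to \cite[Lemma 2.12.(iv)]{anschuetz_mann_descent_for_solid_on_perfectoids}. But that result only covers morphisms between affinoid perfectoids (or adic analytic rings). For general $T$, the category $\Dhsa{T}$ is defined by v-descent, and the pullback $h^\ast$ is a \emph{left} adjoint, so there is no reason it should commute with the inverse limit defining $\widehat M$. The same issue afflicts your forward direction of the auxiliary, which you say ``descends'' the affinoid result without explaining how. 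Note also that in your \v{C}ech nerve $W^\bullet$ of $h$, the higher terms $W^n=W^{\times_T(n+1)}$ are typically not totally disconnected, so you cannot invoke the affinoid result on the transition maps either.

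The paper sidesteps this by working with a v-\emph{hypercover} $T_\bullet\to T$ in which every $T_n$ is a disjoint union of totally disconnected spaces. Then all transition maps are between totally disconnected spaces, so the affinoid result does apply to them; this is what allows the componentwise completions $(\widehat{f_n^\ast M})_n$ to form a cartesian section. For the implication ``complete $\Rightarrow$ $\pi$-adically complete'', the paper avoids the issue altogether by using \emph{pushforwards} $f_{n,\ast}$ (right adjoints, hence limit-preserving): since each $f_n^\ast M$ is $\pi$-complete by definition, so is each $f_{n,\ast}f_n^\ast M$, and hence so is $M=\varprojlim_\Delta f_{n,\ast}f_n^\ast M$. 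For the converse, the paper builds $N:=\varprojlim_\Delta f_{n,\ast}(\widehat{f_n^\ast M})$ directly and checks $M\to N$ is an isomorphism by reducing mod $\pi$ and then pulling back. Your argument for part (i) converse is fine, since there the auxiliary is only invoked for maps between totally disconnected spaces.
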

\begin{proof}
The first part of (i) is clear by definition. Assume that $f$ is a v-cover and $f^\ast M$ is complete. Then we may reduce to the case that $T'=\Spa(B,B^+), T=\Spa(A,A^+)$ are totally disconnected. We note that \cite[Proposition 3.1.20]{mann-mod-p-6-functors} and \cite[Corollary 2.14]{anschuetz_mann_descent_for_solid_on_perfectoids} imply that the pullback $f^\ast$ commutes with $\pi$-adic completions for any pseudo-uniformizer $\pi\in A$ (this does not need that $f$ is a v-cover). Since $f^*$ is also conservative, we deduce that $M$ is $\pi$-adically complete if $f^\ast M$ is so.

Let us prove (ii). Let $f_\bullet\colon T_\bullet\to T$ be the v-hypercover by a disjoint union of totally disconnected spaces. As each $f_{n,\ast}$ preserves $\pi$-adic completeness, and $M\cong \varprojlim_{\Delta} f_{n,\ast}f_n^{\ast} M$, we see that $M$ complete implies that $M$ is $\pi$-adically complete. Assume conversely that $M$ is $\pi$-adically complete. We already checked that pullback for morphisms between (disjoint unions of) totally disconnected spaces preserves $\pi$-adic completions. This implies that if $\widehat{(-)}$ denotes $\pi$-adic completion, then $(\widehat{f_{n}^\ast M})$ defines a descent datum, i.e., a cartesian section, and that there is a natural morphism $M\to N:=\varprojlim_{\Delta} f_{n,\ast}(\widehat{f^\ast_nM})$ with $N$ $\pi$-adically complete in $\Dhsa{T}$. As $f_n^\ast N\cong \widehat{f^\ast_n M}$ by descent, it suffices to check that $M\to N$ is an isomorphism. By $\pi$-adic completeness of $M, N$ this can be checked mod $\pi$, and then after pullback along $f^\ast$, where it is clear. This finishes the proof. 
\end{proof}

\begin{corollary} \label{rslt:complete-objects-in-Dhsa-summary}
Complete objects in $\Dhsa{(-)}$ form a hypercomplete sub-v-sheaf and if the tilt of an affinoid perfectoid space $T = \Spa(A,A^+)$ admits a morphism of finite $\dimtrg$ to a totally disconnected perfectoid space, then for any pseudo-uniformizer $\pi$ on $T$ completeness in $\Dhsa{T} = \D^a_{\hat\solid}(A^+)$ agrees with $\pi$-adic completeness.
\end{corollary}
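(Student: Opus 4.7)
The plan is to deduce both assertions from \Cref{rslt:completeness-in-Dhsa-properties}, which does essentially all the work.

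For the second assertion, I would simply specialize \cref{rslt:completeness-in-Dhsa-given-by-adic-completeness} to the case $g = \id_T$ with $T = Z = \Spa(A,A^+)$ satisfying the hypothesis that its tilt admits a morphism of finite $\dimtrg$ to a totally disconnected perfectoid space. Under this hypothesis, \Cref{rslt:Dhsa-is-sheaf} provides the identification $\Dhsa{T} = \D^a_{\hat\solid}(A^+)$, and \cref{rslt:completeness-in-Dhsa-given-by-adic-completeness} then says exactly that an object $M \in \Dhsa{T}$ is complete in the sense of \Cref{def:complete-object-in-Dhsa} if and only if it is $\pi$-adically complete in $\D^a_{\hat\solid}(A^+)$ for some (equivalently any) pseudo-uniformizer $\pi$ on $T$.

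For the first assertion, I would define a sub-presheaf of categories $T \mapsto \Dhsa{T}^{\cpl} \subseteq \Dhsa{T}$ whose objects are the complete ones. The first half of \cref{rslt:completeness-in-Dhsa-is-local} guarantees that the pullback functors $f^\ast$ in the $\Dhsa{(-)}$-formalism restrict to functors on complete objects, so that this indeed defines a sub-presheaf. To verify that it is a hypercomplete v-sheaf, let $f_\bullet\colon T_\bullet \to T$ be a v-hypercover and consider a compatible family of complete objects $M_n \in \Dhsa{T_n}^{\cpl}$. Since $\Dhsa{(-)}$ itself is a hypercomplete v-sheaf by \Cref{rslt:Dhsa-is-sheaf}, these glue to an object $M \in \Dhsa{T}$ with $f_0^\ast M \cong M_0$ complete. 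As $f_0\colon T_0 \to T$ is a v-cover, the converse part of \cref{rslt:completeness-in-Dhsa-is-local} implies that $M$ is itself complete, i.e., lies in $\Dhsa{T}^{\cpl}$. This proves hypercomplete v-descent for the sub-presheaf of complete objects.

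There is no substantial obstacle here: the only content beyond formalities is the converse direction in \cref{rslt:completeness-in-Dhsa-is-local}, which has already been established in the previous lemma by reduction to a v-hypercover by totally disconnected spaces and by using that pullbacks between such spaces commute with $\pi$-adic completions (via \cite[Proposition 3.1.20]{mann-mod-p-6-functors} and \cite[Corollary 2.14]{anschuetz_mann_descent_for_solid_on_perfectoids}). The corollary is therefore mostly a bookkeeping statement packaging the preceding lemma into a globally convenient form.
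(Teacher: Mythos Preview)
Your proposal is correct and follows the same approach as the paper: the paper's proof simply says that the v-descent claim follows immediately from v-descent for $\Dhsa{(-)}$ together with \cref{rslt:completeness-in-Dhsa-is-local}, and that the second claim is a special case of \cref{rslt:completeness-in-Dhsa-given-by-adic-completeness}. Your version just spells these two invocations out in more detail.
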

\begin{proof}
The claim about v-descent follows immediately from v-descent for $\Dhsa{(-)}$ and \cref{rslt:completeness-in-Dhsa-is-local}. The second claim is a special case of \cref{rslt:completeness-in-Dhsa-given-by-adic-completeness}.
\end{proof}

We remark that in contrast to $\pi$-adic completeness, completeness in $\Dhsa{T}$ in the sense of \cref{def:complete-object-in-Dhsa} is absolute, i.e. does not need the existence of a base. For example, it makes sense to speak about completeness in $\Dhsa{\Spd(\F_p)}$, which might be surprising at first glance. The following important result, taken from \cite{anschuetz_mann_descent_for_solid_on_perfectoids}, allows to make many reductions to the case of complete sheaves:

\begin{proposition} \label{rslt:Dhsa-generated-by-complete-objects-and-pullback-cocts}
Let $f\colon Y\to Z$ be a separated, $p$-bounded morphism of prespatial untilted diamonds. Assume that $Z = \Spa(A,A^+)$ is an affinoid perfectoid space whose tilt admits a morphism of finite $\dimtrg$ to a totally disconnected perfectoid space, and let $\pi\in A$ be a pseudo-uniformizer. Then the following hold true:
\begin{propenum}
    \item \label{rslt:Dhsa-generated-by-complete-objects} $\Dhsa{Y}$ is generated under colimits by right-bounded $\pi$-complete objects.

    \item $f_\ast\colon \Dhsa{Y} \to \Dhsa{Z}$ commutes with colimits.
\end{propenum}
\end{proposition}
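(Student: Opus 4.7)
The strategy is to reduce both statements to the mod-$\pi$ theory of \cite[Section 3.6]{mann-mod-p-6-functors}. The bridge is \cref{rslt:Dsha-comparison-to-Mann-thesis}, which identifies $\Mod_{\ri^+/\pi}(\Dhsa{T})$ with $\D^a_\solid(\ri^+_T/\pi)$, so that right-bounded $\pi$-complete objects in $\Dhsa{Y}$ are controlled by their reductions modulo $\pi^n$, $n\geq 1$, which live in the already well-understood mod-$\pi$ category. Observe first that by \cref{sec:funct-da_h--1-stability-of-p-boundedness} $Y$ is itself a $p$-bounded prespatial untilted diamond whose base satisfies the hypotheses of \cref{rslt:Dhsa-is-sheaf}, so that $\Dhsa{Y}$ is described v-locally by concrete categories $\D^a_{\hat\solid}(B^+)$ as in \cref{sec:defin-d_hats--3--definition-of-d-a-hat-solid-for-perfectoids}.

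I would prove (ii) first, but only for the restricted class of right-bounded $\pi$-complete objects. Since $f_\ast$ commutes with $\pi$-adic limits (being a right adjoint), on bounded $\pi$-complete objects one can test commutation with colimits modulo $\pi^n$. This identifies the problem with pushforward in $\D^a_\solid(\ri^+/\pi)$, where $p$-bounded pushforwards commute with all colimits by \cite[Section 3.6]{mann-mod-p-6-functors}; crucially, $p$-boundedness of $f$ provides a uniform bound on the cohomological amplitude of $f_\ast$ after base change to strictly totally disconnected spaces, which is precisely what enables this mod-$\pi$ result.

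For (i), I would fix a v-cover $g\colon V = \bigsqcup V_i \to Y$ by totally disconnected affinoid perfectoids $V_i = \Spa(B_i,B_i^+)$ of finite $\dimtrg$ over $Z$. On each $V_i$, $\Dhsa{V_i} = \D^a_{\hat\solid}(B_i^+)$ is by construction generated under colimits by $\pi$-adic completions of compact objects in $\D_\solid(B_i^+,B_i^+)$ (cf.\ \cref{sec:defin-d_hats--3-definition-of-modified-solid-quasi-coherent-sheaves}), which are right-bounded and $\pi$-complete. The map $g$ is $p$-bounded (inherited from $f\circ g$ by the composition/cancellation properties recalled in \cref{sec:funct-da_h--1-stability-of-p-boundedness}), so by the preliminary case of (ii) just established, $g_\ast$ commutes with colimits on these objects and preserves $\pi$-completeness; it shifts right-boundedness only by a fixed amount controlled by the $p$-cohomological dimension of $g$. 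Any $M \in \Dhsa{Y}$ is recovered from v-descent as $M = \lim_{[n]\in\Delta} g_{n,\ast}g_n^\ast M$, and the uniform cohomological amplitude of the $g_{n,\ast}$ allows this totalization to be rewritten, degree by degree, as a colimit of pushforwards of right-bounded $\pi$-complete generators.

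Finally, (ii) for arbitrary $M\in\Dhsa{Y}$ is deduced from (i) together with the preliminary case: writing $M$ as a colimit of right-bounded $\pi$-complete objects and using that colimits commute with colimits. The main obstacle is the passage from the Čech/v-descent totalization to a genuine colimit presentation in (i); this hinges on exploiting the uniform cohomological bound provided by $p$-boundedness to truncate limits to finite stages in each cohomological degree, parallel to the strategy employed in the proof of \cite[Proposition 3.3.3]{mann-mod-p-6-functors}.
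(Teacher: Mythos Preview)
The paper's proof is a one-line citation of \cite[Proposition 4.33]{anschuetz_mann_descent_for_solid_on_perfectoids}, so there is nothing in-paper to compare against; you are reconstructing the argument behind that reference, and the ingredients you assemble---reduction modulo $\pi$ via \cref{rslt:Dsha-comparison-to-Mann-thesis}, v-descent from an affinoid perfectoid cover, and the uniform cohomological bound supplied by $p$-boundedness---are the correct ones.

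The gap is in the formulation and use of your ``preliminary (ii)''. You propose to verify that $\colim f_*(M_i) \to f_*(\colim M_i)$ is an isomorphism by reducing modulo $\pi^n$, but neither side is $\pi$-adically complete: a colimit of $\pi$-complete objects is typically not complete, and $f_*$ of an incomplete object is not recovered from its mod-$\pi^n$ reductions. Hence agreement modulo every $\pi^n$ does not force the map to be an isomorphism. The same problem contaminates your use of preliminary (ii) for the cover $g$ inside the argument for (i), and your final step: once (i) is known, writing each term $N_j$ of an arbitrary diagram as a colimit of complete objects does not, by itself, let you commute $f_*$ past the outer colimit---that would require either functorial presentations or prior cocontinuity of $f_*$, which is precisely what you are trying to prove.

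The bounded-amplitude totalization you correctly flag as ``the main obstacle'' is indeed the crux, but it must be exploited more directly than through a preliminary version of (ii). What it actually yields is that $\Dhsa{Y}$ is compactly generated, with compact generators that are right-bounded and $\pi$-complete; (i) is then a consequence, and (ii) follows by showing that $f^*$ preserves compact objects, a statement that \emph{can} be checked via the mod-$\pi$ theory. This is the shape of the argument the cited reference carries out.
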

\begin{proof}
This is proven in \cite[Proposition 4.33]{anschuetz_mann_descent_for_solid_on_perfectoids} (use \cref{rslt:completeness-in-Dhsa-given-by-adic-completeness} to identify completeness with $\pi$-adic completeness).
\end{proof}

With the basic functors $f^*$, $f_*$, $\tensor$ and a good notion of completeness at hand, we can now prove some basic compatibilities between all of these notions. The following result shows a very general base-change and colimit-preservation property for the pushforward (although it is slightly weaker than \cite[Proposition 3.5.15]{mann-mod-p-6-functors} as we have to assume representability in prespatial diamonds):

\begin{proposition} \label{rslt:Dsha-pushforward-colim-and-base-change}
Let $f\colon T^\prime\to T$ be a morphism of untilted small v-stacks. Assume that $f$ is $p$-bounded (and in particular qcqs).
\begin{propenum}
    \item \label{rslt:Dsha-pushforward-preserve-colim} The functor $f_\ast\colon \Dhsa{T'}\to \Dhsa{T}$ preserves colimits and right-bounded objects.

    \item \label{rslt:Dsha-pushfoward-base-change} Let 
    \[\begin{tikzcd}
        {W'} & {T'} \\
        W & T
        \arrow["{g'}", from=1-1, to=1-2]
        \arrow["g", from=2-1, to=2-2]
        \arrow["f", from=1-2, to=2-2]
        \arrow["{f'}", from=1-1, to=2-1]
    \end{tikzcd}\]
    be a cartesian diagram of untilted small v-stacks. Then the natural transformation
    \begin{align*}
        g^\ast f_\ast\to f'_\ast g^{\prime,\ast}\colon \Dhsa{T'}\to \Dhsa{W}
    \end{align*}
    is an isomorphism.
\end{propenum}
\end{proposition}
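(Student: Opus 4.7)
The plan is to reduce both assertions, via hypercomplete v-descent of $\Dhsa{(-)}$, to the case where $T=\Spa(A,A^+)$ is an affinoid perfectoid whose tilt admits a morphism of finite $\dimtrg$ to a totally disconnected perfectoid space (the ``nice'' setting of \Cref{rslt:Dhsa-generated-by-complete-objects-and-pullback-cocts}), and then to invoke, after reduction modulo a pseudo-uniformizer $\pi$, the corresponding results for $\D^a_\solid(\ri^+/\pi)$ in \cite[Section 3.6]{mann-mod-p-6-functors}. Thanks to stability of $p$-boundedness under base change (\Cref{sec:funct-da_h--1-stability-of-p-boundedness}(i)) and the equivalence of $p$-boundedness with $+$-boundedness for morphisms representable in prespatial diamonds (\Cref{sec:funct-da_h--2-+-boundedness-in-representable-case-is-equivalent-to-p-boundedness}), the hypotheses of that reference are in force throughout, and $T'$ is automatically a prespatial untilted diamond whenever $T$ is of the nice form above.

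In the nice-base case, colimit preservation of $f_\ast$ in (i) is \Cref{rslt:Dhsa-generated-by-complete-objects-and-pullback-cocts}(ii). For base change (ii), after also v-localising $W$ to a nice affinoid, both sides of the transformation $g^\ast f_\ast \to f'_\ast g^{\prime,\ast}$ preserve colimits (using (i) for $f'$, which remains $p$-bounded), so by \Cref{rslt:Dhsa-generated-by-complete-objects} it suffices to check the map on right-bounded $\pi$-complete generators $M$. Using that pullback commutes with $\pi$-completion (cf.\ \cite[Proposition 3.1.20]{mann-mod-p-6-functors}, \cite[Corollary 2.14]{anschuetz_mann_descent_for_solid_on_perfectoids}) and that pushforward preserves completeness, both $g^\ast f_\ast M$ and $f'_\ast g^{\prime,\ast} M$ are $\pi$-complete, hence the isomorphism can be tested modulo $\pi$. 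Via \Cref{rslt:Dsha-comparison-to-Mann-thesis} this becomes the base-change isomorphism for $\D^a_\solid(\ri^+/\pi)$ along the $p$-bounded morphism $f$, which is already established in \cite[Section 3.6]{mann-mod-p-6-functors}. Preservation of right-boundedness in (i) is handled in the same spirit: reduce to the nice base, then to right-bounded $\pi$-complete generators, then to the mod-$\pi$ statement, and apply the corresponding result of \emph{loc.\ cit.}

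From the nice-base case one deduces the general case by descent: (ii) is v-local on both $T$ and $W$ by construction of $\Dhsa{(-)}$ as a v-sheaf of categories, and once (ii) is known, (i) for arbitrary $T$ follows because, after pullback along a v-cover of $T$ by nice affinoids, the base-change isomorphism translates the identity $f_\ast(\colim_i M_i) = \colim_i f_\ast M_i$ into its nice-base counterpart; preservation of right-boundedness is v-local and transfers in the same way. The main obstacle will be the mild circularity between (i) and (ii) in this step, which has to be untangled by settling the nice-base versions of both statements in parallel---the generation by $\pi$-complete objects of \Cref{rslt:Dhsa-generated-by-complete-objects} and the reduction modulo $\pi$ through \Cref{rslt:Dsha-comparison-to-Mann-thesis} serve as the common technical tool throughout.
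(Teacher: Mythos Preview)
Your proposal is correct and follows essentially the same approach as the paper: v-descend on the base(s), invoke \Cref{rslt:Dhsa-generated-by-complete-objects-and-pullback-cocts} to reduce to right-bounded $\pi$-complete generators, then pass modulo $\pi$ and appeal to the mod-$\pi$ theory in \cite{mann-mod-p-6-functors}. Two small remarks: the paper reduces all the way to \emph{strictly totally disconnected} $T$ and $W$ (a special case of your ``nice'' affinoids), and it explicitly notes that $p$-bounded maps are only \emph{locally} separated, so one must use qcqs-ness of $T'$ to reduce to the separated case before applying \Cref{rslt:Dhsa-generated-by-complete-objects-and-pullback-cocts}---you should make this step explicit. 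Also, the precise reference for the mod-$\pi$ base change is \cite[Proposition~3.5.15]{mann-mod-p-6-functors}, not Section~3.6.
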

\begin{proof}
The assertion reduces by v-descent formally to the case that $T$ and $W$ are strictly totally disconnected. As $p$-bounded morphisms are assumed to be locally separated, we may assume that $T'$ is separated (using that it is qcqs). By \Cref{rslt:Dhsa-generated-by-complete-objects-and-pullback-cocts} we can conclude that all categories in question are generated by right-bounded, complete objects and that all functors in question commute with colimits and preserve completeness of right-bounded objects. Hence, we can reduce modulo a pseudo-uniformizer and  apply \cite[Proposition 3.5.15]{mann-mod-p-6-functors} to conclude (implicitly we used \Cref{sec:funct-da_h--2-+-boundedness-in-representable-case-is-equivalent-to-p-boundedness} to assure that $f$ is $+$-bounded).  
\end{proof}

We can also derive the following restricted version of the projection formula that holds for general (not necessarily proper) morphisms $f$. In that result, recall that for an affinoid perfectoid space $T = \Spa(A, A^+)$ we denote by $\Nuc(\D_{\hat\solid}(A^+))$ the category of nuclear objects in $\D_{\hat\solid}(A^+)$, which by \cref{rslt:hat-solid-Nuc-same-as-solid-Nuc} agrees with nuclear objects in $\D_\solid(A^+)$. By \cite[Lemma~2.18]{anschuetz_mann_descent_for_solid_on_perfectoids} these are exactly the $(A^+)_\solid$-modules that can be written as the colimit of bounded objects which are $\pi$-adically complete and discrete mod $\pi$ for a pseudo-uniformizer $\pi\in A$.

\begin{lemma} \label{sec:funct-da_h--2-restricted-projection-formula-for-nuclear-stuff}
Let $f\colon T'\to T$ be a qcqs $p$-bounded morphism of untilted small v-stacks. Assume that $T = \Spa(A,A^+)$ is an affinoid perfectoid space. Then for every $M \in \Nuc(\D_{\hat\solid}(A^+))$ and $N \in \Dhsa{T'}$ the natural morphism
\begin{align*}
    M^a\otimes f_\ast(N)\to f_\ast(f^\ast(M^a)\otimes N)
\end{align*}
is an isomorphism. 
\end{lemma}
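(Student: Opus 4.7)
The plan is to reduce the claim to the analogous projection formula modulo a pseudo-uniformizer $\pi \in A$, which is established in \cite{mann-mod-p-6-functors}. First, both functors $M\mapsto M^a\otimes f_\ast(N)$ and $M\mapsto f_\ast(f^\ast(M^a)\otimes N)$ preserve colimits in $M$: the former since $\otimes$ does, and the latter because $f_\ast$ preserves colimits by \cref{rslt:Dsha-pushforward-preserve-colim} (applicable since $f$ is qcqs $p$-bounded). Using the description of $\Nuc(\D_{\hat\solid}(A^+))$ recalled just before the statement, every nuclear $M$ is a filtered colimit of bounded, $\pi$-adically complete objects that are discrete modulo $\pi$. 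We may hence assume $M$ itself is of this form.

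Next, for such $M$ one has $M\cong R\lim_n M/\pi^n$ with each level bounded in a uniform range of cohomological degrees and $\pi^n$-torsion. From this I would deduce that both $M^a\otimes f_\ast(N)$ and $f_\ast(f^\ast(M^a)\otimes N)$ are derived $\pi$-adically complete (for the second side, using that $f_\ast$ commutes with limits as a right adjoint); by derived Nakayama it therefore suffices to check that the natural map is an isomorphism modulo~$\pi$. Since $(-)/\pi$ commutes with $f^\ast$, with tensor products, and with $f_\ast$ (again by \cref{rslt:Dsha-pushforward-preserve-colim}), the map modulo $\pi$ identifies with the canonical morphism
\[
    (M/\pi)^a\otimes f_\ast(N/\pi)\longrightarrow f_\ast\bigl(f^\ast((M/\pi)^a)\otimes (N/\pi)\bigr)
\]
in $\D^a_\solid(\ri_T^+/\pi) \cong \Mod_{\ri_T^+/\pi}(\Dhsa{T})$, cf.\ \cref{rslt:Dsha-comparison-to-Mann-thesis}. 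Now $M/\pi$ is a bounded discrete $A^+/\pi$-module, and $f$ is $+$-bounded by \cref{sec:funct-da_h--2-+-boundedness-in-representable-case-is-equivalent-to-p-boundedness}. This is then a special case of the projection formula for qcqs $+$-bounded morphisms against bounded discrete (in particular nuclear) mod-$\pi$ coefficients established in \cite{mann-mod-p-6-functors}.

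The principal obstacle I expect is justifying the $\pi$-adic completeness of both sides after the initial reduction to $M$ bounded, $\pi$-complete, and discrete modulo $\pi$. This is the pivotal step making the Nakayama-style reduction to the mod-$\pi$ situation valid; it rests on carefully tracking that the bounded, discrete-mod-$\pi$ structure of $M$ makes the $\pi$-adic tower $(M/\pi^n)$ interact well with tensor products and with $f_\ast$ (the latter being a right adjoint, hence commuting with limits). Once this is cleanly set up, the rest of the argument is a routine reduction to the known mod-$\pi$ projection formula.
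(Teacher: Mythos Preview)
Your approach has a genuine gap at precisely the step you flag as the principal obstacle: the claimed $\pi$-adic completeness of both sides, after reducing $M$ to a bounded $\pi$-complete object that is discrete mod $\pi$, is false for general $N \in \Dhsa{T'}$. Completeness of $M$ does not propagate through a tensor product with an arbitrary object, and the limit $M \cong \varprojlim_n M/\pi^n$ does not commute with $-\otimes f_\ast(N)$ or with $-\otimes N$. For a concrete instance take $f=\id_T$ and $N = (A^+[1/\pi])^a$; then $M^a \otimes f_\ast(N)$ has $\pi$ acting invertibly and is not $\pi$-complete (unless zero), so the Nakayama reduction is unavailable. To salvage the argument one would also have to reduce $N$ to right-bounded complete generators via \cref{rslt:Dhsa-generated-by-complete-objects}, which requires a $\dimtrg$ hypothesis on $T$ not assumed here, hence a preliminary v-descent step using \cref{rslt:Dsha-pushfoward-base-change}; and one would then still need a mod-$\pi$ restricted projection formula for \emph{non-proper} $f$, which would itself require justification.

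The paper bypasses all of this with a short categorical argument: $f^\ast$ is a colimit-preserving $\D_{\hat\solid}(A^+)$-linear functor between $\D_{\hat\solid}(A^+)$-module categories, and $\Nuc(\D_{\hat\solid}(A^+))$ is rigid symmetric monoidal. A general fact about modules over rigid categories (\cite[Lemma~3.33(ii)]{anschuetz_mann_descent_for_solid_on_perfectoids}) then says that any colimit-preserving right adjoint of such a functor is automatically linear over the rigid subcategory; since $f_\ast$ preserves colimits by \cref{rslt:Dsha-pushforward-preserve-colim}, this yields exactly the projection formula for nuclear $M$.
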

\begin{proof}
Note that $f^\ast\colon \Dhsa{T}\to \Dhsa{T'}$ is (by symmetric monoidality) a $\D_{\hat\solid}(A^+)$-linear, colimit preserving functor of $\D_{\hat\solid}(A^+)$-module categories. By \cite[Lemma 3.32]{anschuetz_mann_descent_for_solid_on_perfectoids} and \cite[Lemma 2.18]{anschuetz_mann_descent_for_solid_on_perfectoids} the category $\mathrm{Nuc}(\D_{\hat\solid}(A^+))$ is a rigid symmetric monoidal category. Hence, the assertion follows from \cite[Lemma 3.33(ii)]{anschuetz_mann_descent_for_solid_on_perfectoids} because $f_\ast$ preserves colimits by \cref{rslt:Dsha-pushforward-preserve-colim}. 
\end{proof}

\subsection{Six functors for \texorpdfstring{$\D^a_{\hat\solid}(-)$}{D\textasciicircum a\_hatsolid(-)}} \label{sec:6ff-for-Dhsa}

In the following we will introduce the 6-functor formalism for $\Dhsa{(-)}$. Using the general construction techniques of 6-functor formalisms (see e.g. \cite[\S3]{heyer-mann-6ff}) we are reduced to verifying the base-change property and the projection formula for étale and proper maps. Let us start with the étale case:

\begin{lemma} \label{rslt:Dhsa-etale-lower-shriek-properties}
Let $j\colon U\to T$ be an \'etale morphism of untilted small v-stacks. Then the functor $j^\ast\colon \D^a_{\hat\solid}(\ri^+_T)\to \D^a_{\hat\solid}(\ri^+_U)$ admits a left adjoint
\[
  j_!\colon \D^a_{\hat\solid}(\ri^+_U)\to \D^a_{\hat\solid}(\ri^+_T),
\]
which satisfies the projection formula. Moreover, for every morphism $f\colon T'\to T$ of untilted small v-stacks with base change $f'\colon U':=T'\times_T U\to U$, $j'\colon U'\to T'$ the natural morphism
\[
  j_!'f^{\prime,\ast}\to f^\ast j_!
\]
of functors $\D^a_{\hat\solid}(\ri^+_{U})\to \D^a_{\hat\solid}(\ri^+_{T'})$ is an isomorphism. Finally, if $j$ is qcqs then $j_!$ preserves right-bounded complete objects.
\end{lemma}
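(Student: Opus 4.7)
The plan is to reduce v-locally to an étale morphism of well-behaved affinoid perfectoid spaces, then construct $j_!$ by reducing modulo powers of a pseudo-uniformizer (invoking the 6-functor formalism of \cite[Section~3.6]{mann-mod-p-6-functors}), and finally lift back to $\Dhsa{T}$ via completion. All the claims in the lemma (existence of $j_!$, projection formula, base change, preservation of right-bounded complete objects) can be checked v-locally by \Cref{rslt:Dhsa-is-sheaf}, so we may assume $T=\Spa(A,A^+)$ and $U=\Spa(A',A^{\prime,+})$ are affinoid perfectoid spaces whose tilts admit morphisms of finite $\dimtrg$ to totally disconnected perfectoid spaces. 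Then $\Dhsa{T}=\D^a_{\hat\solid}(A^+)$ and similarly for $U$. We may further reduce to $j$ being qcqs, in which case $j$ is $p$-bounded by \cref{rslt:qcqs-qproet-map-is-p-bounded}.

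Fix a pseudo-uniformizer $\pi\in A$, which restricts to a pseudo-uniformizer on $U$. By \cref{rslt:Dsha-comparison-to-Mann-thesis}, for each $n\ge 1$ and $Z\in\{T,U\}$ we have identifications $\Mod_{\ri^+_Z/\pi^n}(\Dhsa{Z})\cong \D^a_\solid(\ri^+_Z/\pi^n)$, and the 6-functor formalism in \cite[Section~3.6]{mann-mod-p-6-functors} provides a left adjoint $j_!^{(n)}$ to the mod-$\pi^n$ pullback $j^{*,(n)}$ satisfying the projection formula and base change. These functors are compatible with the natural restriction-of-scalars maps as $n$ varies.

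By \cref{rslt:Dhsa-generated-by-complete-objects}, right-bounded $\pi$-complete objects generate $\Dhsa{U}$ under colimits. For such $N$ I would define
\begin{align*}
    j_!(N) \;:=\; \varprojlim_n\, j_!^{(n)}\bigl(N\otimes^L_{\ri^+_U}\ri^+_U/\pi^n\bigr),
\end{align*}
which is right-bounded and $\pi$-complete (hence complete in the sense of \cref{def:complete-object-in-Dhsa}) in $\Dhsa{T}$, and then extend $j_!$ to arbitrary objects of $\Dhsa{U}$ by colimits. The adjunction $j_!\dashv j^\ast$ is checked by testing against right-bounded $\pi$-complete objects and taking the limit over $n$ of the mod-$\pi^n$ adjunctions. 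The projection formula and the base-change isomorphism $j_!'f^{\prime,\ast}\cong f^\ast j_!$ are likewise reduced modulo $\pi^n$ to the corresponding statements in \cite[Section~3.6]{mann-mod-p-6-functors}, using that $f^\ast$ and $j^\ast$ commute with derived $\pi$-adic completions (as in the proof of \cref{rslt:Dhsa-generated-by-complete-objects-and-pullback-cocts}). Preservation of right-bounded complete objects for qcqs $j$ is manifest from the explicit formula.

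The main obstacle is the delicate interplay between the mod-$\pi^n$ adjunctions and derived inverse limits needed to show that the above $j_!$, defined only on complete generators, genuinely extends to a left adjoint on all of $\Dhsa{U}$ and that its key properties (projection formula, base change) glue coherently across the generators; this is precisely where we rely on the strong v-descent properties of $\Dhsa{(-)}$ and on the fact that $j^\ast$ and the relevant base-change functors preserve colimits and $\pi$-completions.
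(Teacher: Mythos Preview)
Your approach has a genuine gap in the construction of $j_!$. You define $j_!(N) := \varprojlim_n j_!^{(n)}(N/\pi^n)$ on right-bounded $\pi$-complete $N$ and then propose to ``extend by colimits,'' but this extension is not well-defined without further argument: right-bounded complete objects are not compact in $\Dhsa{U}$, so a colimit-preserving extension from them is not automatic. More concretely, your verification of the adjunction $\Hom(j_!N, M) \cong \Hom(N, j^*M)$ via mod-$\pi^n$ reduction only treats \emph{complete} $M$; for non-complete $M$ your argument says nothing, and there is no reason the inverse-limit formula should represent $\Hom(N, j^*(-))$ on all of $\Dhsa{T}$. Your final paragraph flags exactly this difficulty but does not resolve it.

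The paper takes a different route. After the same v-local reduction to strictly totally disconnected spaces, it proves instead that $j^*$ preserves \emph{all} limits and then invokes the adjoint functor theorem. The crucial observation, absent from your argument, is a decomposition: for any diagram $(M_i)$ one splits each $M_i$ via the fiber sequence $N_i \to M_i \to M_i^\wedge$. The completion part is handled mod $\pi$ essentially as you propose, but the fiber $N_i$ has $\pi$ acting invertibly and hence lies in $\Mod_{\mathcal{O}_T}(\Dhsa{T}) = \D_{\hat\solid}(T)$; on this full subcategory $j^*$ is a categorical open immersion by \Cref{sec:defin-d_hats--1-if-sousperfectoid-can-extend-to-etale}, so it preserves limits there. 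The projection formula is handled by the same case-split (one variable an $\mathcal{O}$-module versus both variables $\pi$-torsion). Finally, the preservation of right-bounded complete objects under $j_!$ is more delicate than your one-line claim: the paper reduces to a rational subset $\{|f|\ge 1\}$ of a totally disconnected space, identifies $B^{+a}$ almost with $A^+\langle 1/f\rangle$, and exhibits $j_! B^{+a}$ as the almostification of a compact object via an explicit categorical open immersion on $\D_{\hat\solid}$, from which the general case follows by the projection formula.
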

\begin{proof}
Regarding base-change the formal arguments in the proof of \cite[Lemma 3.6.2]{mann-mod-p-6-functors} can be applied here as well and reduce the assertion to the case that $T, U, T'$ are strictly totally disconnected. In particular, $f, j$ are quasi-compact and separated. We note that if $j_!, j'_!$ exist, then necessarily base change holds. Namely, as in \cite[Lemma 3.6.2]{mann-mod-p-6-functors} it suffices to show that $j^\ast f_\ast\cong f^{\prime}_\ast j^{\prime, \ast}$ (via the natural map). Both sides commute however with colimits (as $f_\ast, f^\prime_\ast$ are just forgetful functors) and send compact objects to complete objects (using \cite[Corollary 2.14]{anschuetz_mann_descent_for_solid_on_perfectoids}). By \Cref{rslt:Dsha-comparison-to-Mann-thesis} we can therefore reduce to the base change claim in the proof of \cite[Lemma 3.6.2]{mann-mod-p-6-functors}.

Now we prove existence of $j_!$. For this it is sufficient to show that $j^\ast$ commutes with limits. We note that \cite[Corollary 2.14]{anschuetz_mann_descent_for_solid_on_perfectoids} implies that $j^\ast$ preserves completions and that inverse limits of complete objects are again complete. From \cite[Lemma 3.6.2]{mann-mod-p-6-functors} we can therefore conclude that $j^\ast$ commutes with inverse limits of complete objects. Let $M_i,i\in I$, be a diagram in $\D^a_{\hat\solid}(\ri^+_T)$, and let $M_i^\wedge$ be the completion of $M_i$ and $N_i:=\mathrm{fib}(M_i\to M_i^\wedge)$. Set $M:=\varprojlim_{i\in i}M_i$ and $N:=\varprojlim_{i\in I} N_i$. As we have argued above, $j^\ast(\varprojlim_{i\in I} M_i^\wedge)\cong \varprojlim_{i\in I}j^\ast(M_i^\wedge)$. Hence, it suffices to see that
\[
    j^\ast(N)\cong \varprojlim\limits_{i\in I} j^\ast(N_i).
\]
Note that we have a fiber sequence $N\to M\to M^{\wedge}\cong \varprojlim_{i\in I} M_i^\wedge$. Now, $N_i\in \Mod_{\mathcal{O}_T}(\D^a_{\hat\solid}(\ri^+_T)) = \D_{\hat\solid}(T)$ for each $i$ and the full subcategory $\D_{\hat\solid}(T)\subseteq \D^a_{\hat\solid}(\ri^+_T)$ of $\ri$-modules is stable under limits and colimits (because $\ri$ is idempotent). Hence, the assertion follows from \Cref{sec:defin-d_hats--1-if-sousperfectoid-can-extend-to-etale}.

To show that $j_!$ satisfies the projection formula we can first reduce to the case that $X,U$ are strictly totally disconnected perfectoid spaces (by the arguments in \cite[Lemma 3.6.4]{mann-mod-p-6-functors}). In fact, we may assume that $U=\Spa(B,B^+)\to X=\Spa(A,A^+)$ is a qcqs open immersion. Let $M\in \D^a_{\hat\solid}(B^+)$ and $N\in \D^a_{\hat\solid}(A^+)$. If $M$ (resp.\ $N$) is a $B$ (resp.\ $A$)-module, then both sides depend only on $M, N\otimes_{A^+}A$ (resp.\ $M\otimes_{B^+} B, N$) and the projection formula holds as $j^\ast\colon \mathrm{Mod}_A(\D^a_{\hat\solid}(A^+))\to \mathrm{Mod}_B(\D^a_{\hat\solid}(B^+))$ is a categorical open immersion (cf. \Cref{rslt:open-immersion-induces-categorical-open-immersion-on-hatsolid}). Hence, we may assume (by commuting colimits through both sides of the projection formula) that $M, N$ are $\pi$-torsion for some pseudo-uniformizer $\pi\in A$. In particular, both sides of the projection formula are complete in this case (as they are torsion), and hence it suffices to check the statement after $A^+/\pi\otimes_{A^+}(-)$ on both sides. Then the assertion follows from \cite[Lemma 3.6.4]{mann-mod-p-6-functors}.

For the last statement we can reduce to the case that $j\colon U = \Spa(B, B^+) \injto T = \Spa(A, A^+)$ is an open immersion of strictly totally disconnected spaces. By \cite[Lemma 3.6.1]{mann-mod-p-6-functors} each quasi-compact open subset of a totally disconnected space is a finite union of rational open subsets of the form $\{|f|\geq 1\}$ for some $f\in A^+$. Hence, we may assume that $U=\{|f|\geq 1\}$ is of this form. By \cite[Lemma 3.6.1]{mann-mod-p-6-functors} we can conclude that $\mathcal{O}^+_{T}/\pi(U) \cong A^+/\pi[1/f]$, and by \cite[Theorem 3.24]{etale-cohomology-of-diamonds} we see that $B^+/\pi$ is almost isomorphic to $(\mathcal{O}^+_T/\pi)(U)$. Altogether, we get that $A^+\langle 1/f\rangle\to B^+$ is an almost isomorphism (where $A^+\langle 1/f\rangle$ is potentially animated).

We now claim that the pullback $\widetilde j^*\colon \D_{\hat\solid}(A^+) \to \D_{\hat\solid}(A^+\langle 1/f \rangle)$ is a categorical open immersion. Namely, for a pseudo-uniformizer $\pi\in A$ we have
\[
    (A^+\langle 1/f\rangle)_\solid\cong A^+_\solid\otimes_{\Z_p[[\pi]][T]_\solid} (\Z_p[[\pi]][T^{\pm 1}],\Z_p[[\pi]][T^{-1}])_\solid,
\]
where the morphism $\Z_p[[\pi]][T]_\solid \to A_\solid^+$ sends $T$ to $f$, by \cite[Proposition 4.11]{andreychev-condensed-huber-pairs}. In particular, $D_\solid(A^+)\to D_\solid(A^+\langle 1/f\rangle)$ is a categorical open immersion defined by the $\pi$-adically complete idempotent solid $A^+$-algebra $\Z_p[[\pi]][[T]]\otimes_{\Z_p[[\pi]][T]_\solid}A^+_\solid$. Using \cite[Lemma 2.35]{anschuetz_mann_descent_for_solid_on_perfectoids} and \cite[Proposition 2.21.(ii)]{anschuetz_mann_descent_for_solid_on_perfectoids} this implies the same for $\widetilde j^*$. In particular $\widetilde j^*$ admits a left adjoint $\widetilde j_!$ whose almostification agrees with $j_!\colon \D^a_{\hat\solid}(B^+) \to \D^a_{\hat\solid}(A^+)$. Hence $j_! B^{+a} = (\widetilde j_! A^+\langle 1/f \rangle)^a$, and since $\widetilde j_! A\langle 1/f\rangle^+$ is compact, we deduce that $j_! B^{+a}$ is (right-)bounded and complete. Thus, the projection formula and \cite[Lemma~2.12(iii)]{anschuetz_mann_descent_for_solid_on_perfectoids} imply that $j_!$ preserves right-bounded complete objects.
\end{proof}

Next we discuss proper $p$-bounded morphisms of untilted small v-stacks. We already get base-change from \cref{rslt:Dsha-pushfoward-base-change}, so it only remains to check the projection formula:

\begin{lemma} \label{rslt:Dhsa-projection-formula-for-proper-map}
Let $f\colon T'\to T$ be a proper $p$-bounded morphism of untilted small v-stacks. Then $f_\ast$ satisfies the projection formula, i.e., for $\mathcal{M}\in \D^a_{\hat\solid}(\ri^+_T)$ and $\mathcal{N}\in \D^a_{\hat\solid}(\ri^+_{T'})$ the natural morphism
\[
    \mathcal{M}\otimes f_\ast(\mathcal{N})\to f_\ast(f^\ast\mathcal{M}\otimes \mathcal{N})
\]
is an isomorphism.
\end{lemma}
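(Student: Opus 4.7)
The plan is to reduce the projection formula to the already-known analogue in the mod $\pi$ setting of \cite{mann-mod-p-6-functors} via a sequence of standard reductions. First I would apply v-descent: by base change for $f_\ast$ from \cref{rslt:Dsha-pushfoward-base-change} (using that $f$ is qcqs and $p$-bounded), both sides of the projection morphism are compatible with v-pullback along $T_0 \to T$, and v-descent of $\Dhsa{(-)}$ then lets me assume $T = \Spa(A, A^+)$ is strictly totally disconnected. Fix a pseudo-uniformizer $\pi \in A$.

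Next I would reduce to the case where $\mathcal{M}$ and $\mathcal{N}$ are right-bounded and $\pi$-complete. Both sides of the projection map commute with colimits in each variable separately, since $f^\ast$ and $\otimes$ preserve colimits and $f_\ast$ does so by \cref{rslt:Dsha-pushforward-preserve-colim}. By \cref{rslt:Dhsa-generated-by-complete-objects} the categories $\Dhsa{T}$ and $\Dhsa{T'}$ are generated under colimits by right-bounded $\pi$-adically complete objects, so I may assume both $\mathcal{M}$ and $\mathcal{N}$ are of this form.

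Then I would reduce modulo $\pi$. Under these assumptions $f_\ast \mathcal{N}$ is right-bounded (again by \cref{rslt:Dsha-pushforward-preserve-colim}) and $\pi$-complete (as $f_\ast$, being a right adjoint, preserves limits and hence $\pi$-completeness). The tensor products $\mathcal{M}\otimes f_\ast\mathcal{N}$ and $f^\ast\mathcal{M}\otimes\mathcal{N}$ are right-bounded, and although not \emph{a priori} $\pi$-complete, for right-bounded objects the natural map into the derived $\pi$-completion is an isomorphism modulo $\pi$, so the proposed comparison map is an isomorphism if and only if its reduction modulo $\pi$ is. Using that $f_\ast$ commutes with the cofiber sequence given by multiplication by $\pi$ (a finite limit), this mod-$\pi$ reduction becomes, via the identification $\Mod_{\ri^+/\pi}(\Dhsa{(-)}) = \D^a_\solid(\ri^+/\pi)$ from \cref{rslt:Dsha-comparison-to-Mann-thesis}, exactly the projection formula for $\mathcal{M}/\pi$ and $\mathcal{N}/\pi$ in the $\D^a_\solid(\ri^+/\pi)$-formalism along $f$. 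By \cref{sec:funct-da_h--2-+-boundedness-in-representable-case-is-equivalent-to-p-boundedness} the proper $p$-bounded morphism $f$ is $+$-bounded, so this last projection formula is known from \cite[Proposition~3.6.8]{mann-mod-p-6-functors} (or rather from the construction of the 6-functor formalism therein).

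The main obstacle I anticipate is the subtle interplay between derived $\pi$-completion and derived tensor products at the step of reducing modulo $\pi$: tensor products of $\pi$-complete objects need not be $\pi$-complete on the nose, so one cannot simply say that both sides of the projection morphism lie in the $\pi$-complete subcategory. The remedy is to exploit the right-boundedness systematically, either by passing to derived $\pi$-completions of the tensor products (which are well-behaved in the bounded range) or by observing that the defect of $\pi$-completeness vanishes modulo $\pi$ for right-bounded objects, making the mod $\pi$ check unambiguous. Given this, the other reductions are formal consequences of the structural results collected in the preceding subsections.
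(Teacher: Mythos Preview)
Your approach is the same as the paper's: reduce to $T$ (strictly) totally disconnected via base-change, reduce $\mathcal{M}$ and $\mathcal{N}$ to right-bounded $\pi$-complete objects via colimit-preservation and \cref{rslt:Dhsa-generated-by-complete-objects}, then invoke the mod-$\pi$ projection formula from \cite{mann-mod-p-6-functors} (the paper cites Lemma~3.6.5 there rather than Proposition~3.6.8). The paper's three-line proof does not spell out the mod-$\pi$ reduction at all; you do, and you correctly flag the subtlety.

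There is, however, a genuine gap in your justification of that step. You assert that for right-bounded objects the comparison map $\alpha$ is an isomorphism iff $\alpha/\pi$ is, citing that $X \to \widehat{X}$ is an isomorphism mod $\pi$. But this implication fails: if $C = \cofib(\alpha)$ has $C/\pi = 0$, you only know $\pi$ acts invertibly on $C$, and a right-bounded object on which $\pi$ is invertible need not vanish (e.g.\ $C = A^+[1/\pi]$). Neither of your two proposed remedies closes this: passing to completions only gives $\widehat{\alpha}$ an isomorphism, and the ``defect vanishes mod $\pi$'' observation is exactly the insufficient fact just discussed. What is actually needed is that both sides of $\alpha$ are already $\pi$-complete, so that $C$ is $\pi$-complete and hence zero. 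This holds because tensor products of right-bounded $\pi$-complete objects in $\D^a_{\hat\solid}(A^+)$ are again $\pi$-complete; this is part of \cite[Lemma~2.12]{anschuetz_mann_descent_for_solid_on_perfectoids}, which the paper invokes elsewhere for exactly this purpose (cf.\ the proofs of \cref{rslt:Dhsa-etale-lower-shriek-properties} and \cref{sec:f-smooth-f-complete-f-smooth-objects}). With that input your argument goes through.
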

\begin{proof}
By \cref{rslt:Dsha-pushfoward-base-change} we may assume that $T$ is totally disconnected. By \Cref{rslt:Dhsa-generated-by-complete-objects-and-pullback-cocts} we can conclude that $f_\ast$ commutes with colimits and that $\D^a_{\hat\solid}(\ri^+_{T'})$ is generated by complete objects. Therefore we can reduce to \cite[Lemma 3.6.5]{mann-mod-p-6-functors}.
\end{proof}

Now we are in the position to establish the 6-functor formalism for $\Dhsa{(-)}$. Following more or less \cite[Section 3.6]{mann-mod-p-6-functors} (and \cite{mann-nuclear-sheaves}) we do this in two steps: first, we handle morphisms between locally spatial diamonds, and then we pass to stacky maps between untilted small v-stacks.

A convenient class of morphisms of untilted small v-stacks which will admit $!$-functors is the following class of maps.
  
\begin{definition} \label{def:lpbc-maps}
Let $f\colon T'\to T$ be a morphism of small v-stacks. Then $f$ is called $lpbc$\footnote{=Locally P-Bounded and Compactifiable} if for all locally spatial diamonds $Y$ and maps $Y\to T$ the v-stack $Y' := T'\times_T Y$ is a locally spatial diamond and locally in the analytic topology on $Y'$ the map $Y' \to Y$ is $p$-bounded and compactifiable.
\end{definition}

\begin{remark} \label{sec:6-funct-da_h-2-qcqs-and-lpbc}
We note that qcqs lpbc morphisms are representable in \textit{spatial} diamonds. In particular not every $p$-bounded compactifiable morphism is lpbc. We furthermore note that following \Cref{sec:defin-d_hats--3-remark-geometric-properties-of-small-v-Stacks} a morphism of untilted small v-stacks is called lpbc if its tilt is.
\end{remark}

The lpbc maps from \Cref{def:lpbc-maps} are closely related to the bdcs maps from \cite[Definition 3.6.9]{mann-mod-p-6-functors} (recall that the notions of $p$-boundedness differ!). In fact, bdcs maps are lpbc (see \Cref{sec:six-funct-texorpdfst-bdcs-is-lpbc} below), and the definitions differ only for a technical reason: we required $p$-bounded maps to be qcqs and representable in prespatial diamonds, however asking the existence of an analytic cover by subspaces with a qcqs morphism to the target is unnecessarily restrictive. In the construction of the 6-functor formalism below, the class of lpbc maps falls out quite naturally as well. This explains why we deviate from \cite{mann-mod-p-6-functors} here.

Before we come to the construction of the 6-functor formalism, we establish some basic properties of lpbc maps. The following criteria are useful for checking that a morphism is lpbc:

\begin{lemma} \label{sec:exampl-da_h-criterion-to-check-lpbc}
Let $f\colon T'\to T$ be a morphism of untilted small v-stacks. Assume that $f$ is representable in (untilted) locally spatial diamonds, compactifiable and locally of bounded dimension (\cite[Definition~3.5.3]{mann-mod-p-6-functors}). Assume furthermore that after base change along any morphisms $Y\to T$ with $Y$ any (untilted) strictly totally disconnected perfectoid space $Y\to T$, the morphism $f$ is analytic locally $p$-bounded. Then $f$ is lpbc.
\end{lemma}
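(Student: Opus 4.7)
The assertion is analytic local on $Y$ (and on $Y'$), so I would first reduce to the case where $Y$ is a spatial diamond: cover $Y$ by spatial open subspaces, base-change, and work one at a time. Since $f$ is representable in locally spatial diamonds, $Y' = T' \times_T Y$ is then a locally spatial diamond, and I would choose an analytic cover $Y' = \bigcup_i V_i$ by qcqs (hence spatial) open subdiamonds. Using the hypothesis that $f$ is compactifiable, after an analytic refinement of each $V_i$ I may assume $V_i \hookrightarrow P_i \to Y$ factors as an open immersion into a space proper over $Y$; in particular $V_i \to Y$ is compactifiable (and locally separated, so that the $p$-boundedness machinery will apply). Thus only $p$-boundedness of each $V_i \to Y$ remains to be verified.

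For this I plan to use the v-local nature of $p$-boundedness (\Cref{sec:funct-da_h--1-stability-of-p-boundedness}(v)): since $V_i \to Y$ is locally separated, representable in prespatial diamonds (being a morphism between spatial diamonds), and has locally bounded dimension (inherited from $f$), it suffices to find a v-cover $Z \to Y$ such that $V_i \times_Y Z \to Z$ is $p$-bounded. I would take $Z \to Y$ to be a v-cover by a strictly totally disconnected perfectoid space, which exists because $Y$ is spatial.

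By the hypothesis applied to $Z \to T$, the base change $Y' \times_Y Z \to Z$ admits an analytic cover $Y' \times_Y Z = \bigcup_k W_k$ with each $W_k \to Z$ being $p$-bounded. Intersecting with $V_i \times_Y Z$, which is qcqs (as it is the fiber product of qcqs spaces over $Y$), I obtain a cover $V_i \times_Y Z = \bigcup_k (W_k \cap (V_i \times_Y Z))$ that admits a finite subcover. Each piece of the subcover is a qcqs open inside $W_k$, and such opens inherit $p$-boundedness over $Z$ (since the conditions in \Cref{def:p-bounded-morphism} transfer to qcqs open subspaces). Hence $V_i \times_Y Z \to Z$ is covered by finitely many qcqs analytic opens that are $p$-bounded over $Z$, and a direct check against \Cref{def:p-bounded-morphism} shows that a finite analytic cover by $p$-bounded pieces forces the whole map to be $p$-bounded: maximal points lie in one of the pieces, so the $p$-cohomological dimension bound is the maximum of the individual bounds, and $\dimtrg$ is the maximum likewise. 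This then gives $p$-boundedness of $V_i \to Y$, completing the proof.

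The main obstacle I anticipate is technical rather than conceptual: ensuring that the opens $V_i$ can be taken small enough to be representable in prespatial diamonds (so that the v-local criterion applies) while simultaneously having a compactifiable factorization, and checking carefully that $p$-boundedness descends under a \emph{finite} analytic cover. The latter is why it was essential to exploit the quasi-compactness of $V_i \times_Y Z$ to pass from the merely analytic-local $p$-boundedness given by hypothesis to a global statement on $V_i \times_Y Z \to Z$.
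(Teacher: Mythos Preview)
Your proof is correct and follows the same strategy as the paper's: reduce to a spatial base, cover the pullback by spatial opens $V_i$, and verify $p$-boundedness of each $V_i$ v-locally via a strictly totally disconnected cover using \Cref{sec:funct-da_h--1-stability-of-p-boundedness}(v). The paper is terser and absorbs your finite-cover argument (passing from analytic-local to global $p$-boundedness on a qcqs space) into a single clause; your anticipated obstacles are non-issues, since spatial diamonds are already prespatial, and compactifiability of $V_i \to Y$ follows directly from $f$ being compactifiable together with stability under base change and composition with qcqs open immersions---no refinement is needed.
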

\begin{proof}
  Let $X\to T$ be a morphism from a spatial diamond $X$, and let $Y\to X$ be a quasi-pro-\'etale surjection from a strictly totally disconnected perfectoid space. Then $X':=X\times_T T'$ is a compactifiable locally spatial diamond. Let $X'=\bigcup_{i\in I} V_i$ be an open analytic cover by spatial diamonds. It suffices to see that for each $i\in I$ the morphism $V_i\to X$ is $p$-bounded. This follows from point (v) in \Cref{sec:funct-da_h--1-stability-of-p-boundedness} because the base change of $V_i$ to $Y$ is $p$-bounded.
\end{proof}

\begin{lemma} \label{sec:six-funct-texorpdfst-bdcs-is-lpbc}
  Let $f\colon T'\to T$ be a bdcs map of small v-stacks (in the sense of \cite[Definition 3.6.9]{mann-mod-p-6-functors}). Then $f$ is lpbc.
\end{lemma}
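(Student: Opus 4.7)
The plan is to unwind the definitions on both sides and then invoke the key comparison \Cref{sec:funct-da_h--2-+-boundedness-in-representable-case-is-equivalent-to-p-boundedness} between $+$-boundedness and $p$-boundedness.

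First, I would fix a morphism $Y \to T$ with $Y$ a locally spatial diamond and form $Y' := T' \times_T Y$. By the definition of bdcs in \cite[Definition 3.6.9]{mann-mod-p-6-functors}, the map $f$ is representable in locally spatial diamonds and becomes, analytically locally on the source after any such pullback, compactifiable, locally of bounded dimension, and $+$-bounded in the sense of \cite[Definition 3.5.5.(ii)]{mann-mod-p-6-functors}. Hence $Y'$ is a locally spatial diamond and one obtains an analytic open cover $Y' = \bigcup_i V_i$ by qcqs open subdiamonds $V_i$ such that each $f_i\colon V_i \to Y$ is compactifiable, of bounded dimension, and $+$-bounded. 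Compactifiability of each $f_i$ is exactly one of the two properties required by \Cref{def:lpbc-maps}.

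It remains to upgrade $+$-boundedness of $f_i$ to $p$-boundedness in the sense of \Cref{def:p-bounded-morphism}. For this I would verify the hypotheses of \Cref{sec:funct-da_h--2-+-boundedness-in-representable-case-is-equivalent-to-p-boundedness}: the map $f_i\colon V_i \to Y$ must be locally separated and representable in prespatial diamonds. Since $V_i$ is a qcqs open subdiamond of the locally spatial diamond $Y'$, it is spatial, and in particular any qcqs base change of $f_i$ along a map from a spatial diamond $Z \to Y$ gives a qcqs open subdiamond of the locally spatial diamond $T' \times_T Z$, hence a spatial (so prespatial) diamond; thus $f_i$ is representable in prespatial diamonds. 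Local separatedness is inherited from the local separatedness of spatial diamonds (or directly from the bdcs definition, which requires the analytic-local pieces to be separated). Applying the theorem then gives that $f_i$ is $p$-bounded.

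Combining these two conclusions yields that each $f_i\colon V_i \to Y$ is both $p$-bounded and compactifiable, so $f\colon T' \to T$ satisfies the lpbc property as required. The only real subtlety is tracking the definitions carefully so that the analytic cover provided by the bdcs property has pieces qcqs enough to apply \Cref{sec:funct-da_h--2-+-boundedness-in-representable-case-is-equivalent-to-p-boundedness}; this is immediate because bdcs requires the cover to witness $+$-boundedness on qcqs pieces, and for such pieces the ambient locally spatial structure makes them spatial.
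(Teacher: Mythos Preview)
Your proof has the right core idea---invoke \Cref{sec:funct-da_h--2-+-boundedness-in-representable-case-is-equivalent-to-p-boundedness} to convert the $+$-boundedness supplied by the bdcs hypothesis into the $p$-boundedness required for lpbc---and this is precisely the engine of the paper's proof. However, your direct verification of the lpbc condition for an arbitrary locally spatial diamond $Y$ has a gap at the step where you check that $f_i\colon V_i \to Y$ is representable in prespatial diamonds. You assert that for a spatial diamond $Z$ over $Y$ the fiber product $V_i \times_Y Z$ is a \emph{qcqs} open of the locally spatial diamond $T'\times_T Z$, but this is not automatic: although $V_i$ is spatial and $f_i$ is separated (being compactifiable), a locally spatial diamond $Y$ need not be quasi-separated, so one cannot conclude that $V_i \to Y$ is quasi-compact, and hence $V_i\times_Y Z$ need not be qcqs. (You also only test against spatial $Z$, whereas representability in prespatial diamonds requires all prespatial $Z$.)

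The paper avoids this difficulty by an extra reduction. It first uses that lpbc is analytically local on the source to assume $f$ itself is compactifiable, and then applies the criterion \Cref{sec:exampl-da_h-criterion-to-check-lpbc} to reduce to the case where the base $T$ is strictly totally disconnected. At that point $T$ is affinoid perfectoid, hence spatial; passing to a spatial open $V\subset T'$, the map $V\to T$ is separated with qcqs source and quasi-separated target, hence qcqs, and representability in prespatial diamonds follows from the stability of prespatial diamonds under fiber products (since now $T$ itself is prespatial). Only then does the paper apply \Cref{sec:funct-da_h--2-+-boundedness-in-representable-case-is-equivalent-to-p-boundedness}. Your argument can be repaired by inserting this reduction (or equivalently by first reducing the lpbc condition to spatial $Y$, which is what the criterion lemma implicitly does).
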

\begin{proof}
  The property of being lpbc is analytically local on the source (as follows directly from the definition). By definition of bdcs, we may therefore assume that $f$ is compactifiable (and in particular separated). Moreover, bdcs maps are representable in locally spatial diamonds, locally of bounded dimension and the class of bdcs maps is stable under base change. By \Cref{sec:exampl-da_h-criterion-to-check-lpbc}, we may therefore assume that $T$ is a strictly totally disconnected perfectoid space. Then $T'$ is a locally spatial diamond, and we may furthermore assume that $f$ is qcqs. Then \Cref{sec:funct-da_h--2-+-boundedness-in-representable-case-is-equivalent-to-p-boundedness} implies that $f$ is $p$-bounded, and hence $f$ is lpbc.
\end{proof}
  
We record the following stability properties of lpbc maps which are both useful in practice and necessary for setting up the 6-functor formalism.

\begin{lemma} \label{sec:6-funct-da_h-1-properties-of-lpbc-maps}
\begin{lemenum}
    \item lpbc maps are stable under composition and base change.
    \item The property of being lpbc is analytically local on source and target.
    \item Every étale morphism is lpbc.
    \item Let $f\colon Y\to X$, $g\colon Z\to Y$ be morphisms of untilted small v-stacks. If $f,f\circ g$ are lpbc, then $g$ is lpbc.
\end{lemenum}
\end{lemma}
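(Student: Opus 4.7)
Parts (i)--(iii) should follow essentially formally from the definition of lpbc together with the stability properties of $p$-bounded maps collected in \Cref{sec:funct-da_h--1-stability-of-p-boundedness} and elementary properties of compactifiable maps. For (i), base change is immediate from iterating pullbacks in \Cref{def:lpbc-maps}, while for the composition $T''\xto{f_2}T'\xto{f_1}T$ of lpbc maps I would pick a locally spatial diamond $Y\to T$, use lpbc of $f_1$ to cover $Y\times_T T'$ analytically by subdiamonds $V_\alpha$ which are $p$-bounded and compactifiable over $Y$, then apply lpbc of $f_2$ to each $V_\alpha\to T'$, and invoke stability of $p$-boundedness under composition (\Cref{sec:funct-da_h--1-stability-of-p-boundedness}(i)). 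Part (ii) is immediate since the defining condition specifies only local analytic structure on the pullback to test objects. For (iii), given an étale $f\colon T'\to T$, after base change to a locally spatial diamond the map remains étale; by \cite[Proposition 10.1]{etale-cohomology-of-diamonds} étale maps are analytic-locally on the source compositions of open immersions and finite étale maps, both of which are qcqs quasi-pro-étale (hence $p$-bounded by \Cref{rslt:qcqs-qproet-map-is-p-bounded}) and compactifiable.

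The substantial part is (iv). My plan is to factor $g$ through its graph
\[
    Z\xto{\Gamma_g} Z\times_X Y \xto{p_2} Y
\]
and to verify that each factor is lpbc, so that $g$ is lpbc by (i). The second projection $p_2$ is the base change of the lpbc map $f\circ g\colon Z\to X$ along $f\colon Y\to X$, hence lpbc by (i). The graph $\Gamma_g$ is the base change of the relative diagonal $\Delta_f\colon Y\to Y\times_X Y$ along $(g,\id_Y)\colon Z\times_X Y\to Y\times_X Y$, so by (i) it suffices to show that $\Delta_f$ is lpbc.

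The main obstacle is thus to establish that $\Delta_f$ is lpbc. To check the defining condition I would pick an arbitrary locally spatial diamond $W\to Y\times_X Y$, corresponding to a pair of maps $w_1,w_2\colon W\to Y$ with $f\circ w_1 = f\circ w_2$. The pullback $W\times_{Y\times_X Y}Y$ is the equalizer of $w_1$ and $w_2$, which can be rewritten as the pullback of the relative diagonal $\Delta_{(W\times_X Y)/W}\colon W\times_X Y\to (W\times_X Y)\times_W (W\times_X Y)$ along the pair of sections of $W\times_X Y\to W$ determined by $w_1$ and $w_2$. Since $f$ is lpbc, the base-changed map $W\times_X Y\to W$ is analytically locally $p$-bounded, hence in particular locally separated, so $\Delta_{(W\times_X Y)/W}$ is analytically locally a locally closed immersion. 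Locally closed immersions of small v-stacks are analytically locally qcqs monomorphisms, which are quasi-pro-étale by \cite[Lemma 7.19]{etale-cohomology-of-diamonds}, hence $p$-bounded by \Cref{rslt:qcqs-qproet-map-is-p-bounded}; they are clearly compactifiable (being analytically locally compositions of open and closed immersions). Piecing this together via (i) and (ii) shows that the equalizer is lpbc over $W$, yielding that $\Delta_f$ is lpbc and completing the proof.
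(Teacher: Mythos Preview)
Your argument is correct, but for part (iv) it takes a noticeably longer route than the paper. The paper reduces immediately to the case where $X,Y,Z$ are spatial diamonds (using that lpbc maps are representable in locally spatial diamonds, so the relevant fiber products such as $Z\times_Y W$, $Y\times_X W$, $Z\times_X W$ are all locally spatial, and then shrinking to spatial opens), and then simply cites the ready-made cancellation properties: $p$-boundedness has cancellation by \Cref{sec:funct-da_h--1-stability-of-p-boundedness}(iv), and compactifiability has cancellation by \cite[Proposition~22.3(viii)]{etale-cohomology-of-diamonds}. You instead rebuild this cancellation from scratch at the lpbc level, via the graph factorization $Z\to Z\times_X Y\to Y$ and the diagonal $\Delta_f$; this is exactly the strategy already used inside the proof of \Cref{sec:funct-da_h--1-stability-of-p-boundedness}(iv), so you are essentially reproving that remark rather than invoking it. Your rewriting of the pullback of $\Delta_f$ as a pullback of the diagonal of $W\times_X Y\to W$ along $(s_1,s_2)$ is valid, and your conclusion via \cite[Lemma~7.19]{etale-cohomology-of-diamonds} and \cref{rslt:qcqs-qproet-map-is-p-bounded} is fine, but the detour through $(W\times_X Y)\times_W(W\times_X Y)$ is unnecessary once the cancellation lemmas for $p$-bounded and compactifiable maps are available. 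The paper's approach is shorter and cleaner; yours has the virtue of being self-contained and making explicit why the diagonal of an lpbc map is again lpbc.
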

\begin{proof}
Part (i) and (ii) are clear. Because quasi-compact open subdiamonds form a basis for the analytic topology of a locally spatial diamond, it suffices to show (iii) under the additional assumption that the étale morphism is quasi-compact. As étale morphisms are by definition locally separated, we may as well assume it to be separated. Now, each quasi-compact, separated \'etale morphism is $p$-bounded (\Cref{sec:funct-da_h--1-stability-of-p-boundedness}) and compactifiable (\cite[Proposition 22.3.(vi)]{etale-cohomology-of-diamonds}). For part (iv) we may reduce to the case $X,Y,Z$ are spatial diamonds. Then $g$ is $p$-bounded by \Cref{sec:funct-da_h--1-stability-of-p-boundedness} and compactifiable (\cite[Proposition 22.3.(viii)]{etale-cohomology-of-diamonds}).
\end{proof}

With the above preparations we can now construct the 6-functor formalism for $\Dhsa{(-)}$. As mentioned above, we can employ recent advances in the abstract theory of 6-functor formalisms to make the construction rather formal:

\begin{theorem} \label{rslt:6ff-for-Dhsa}
There exist a class $E$ of maps in the category $\vStacks^\sharp$ of untilted small v-stacks, and a 6-functor formalism $\Dhsa{(-)}$ on $(\vStacks^\sharp, E)$ with the following properties:
\begin{thmenum}
    \item When restricted to $\vStacks^{\sharp,\opp}$ the 6-functor formalism $\Dhsa{(-)}$ coincides with the functor $\Dhsa{(-)}\colon \vStacks^{\sharp,\opp} \to \Cat$ from \cref{rslt:Dhsa-is-sheaf}.

    \item The pair $(\vStacks^\sharp, E)$ is a geometric setup in the sense of \cite[Remark 2.1.2]{heyer-mann-6ff}, i.e. $E$ is stable under pullback and composition and if $f$ and $g$ are composable maps such that $f, f \comp g \in E$ then $g \in E$.

    \item $E$ contains all lpbc maps. Moreover, if $j\colon U \to X$ is étale then $j_!$ agrees with the functor from \cref{rslt:Dhsa-etale-lower-shriek-properties}, and if $f\colon Y \to X$ is lpbc and proper then $f_! = f_*$.

    \item $E$ is $!$-local on source and target, i.e. a map $f\colon Y \to X$ lies in $E$ as soon as there is some universal $!$-cover for $Y$ and $X$ on which $f$ lies in $E$ (cf.\ \cite[Definition~3.4.6]{heyer-mann-6ff}).
    
    \item $E$ is $\ast$-local on the target in the following sense: if $f\colon Y\to X$ is a morphism of small v-stacks, such that for any spatial diamond $X'$ over $X$ which admits a morphism to a strictly totally disconnected perfectoid space the base change $Y\times_X X'\to X'$ lies in $E$, then $f$ in $E$.
\end{thmenum}
\end{theorem}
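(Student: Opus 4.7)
The plan is to apply the abstract 6-functor formalism construction machinery of \cite{heyer-mann-6ff} to the hypercomplete v-sheaf of presentably symmetric monoidal categories $\Dhsa{(-)}\colon \vStacks^{\sharp,\opp}\to \Cat$ from \cref{rslt:Dhsa-is-sheaf}. The inputs required by this machinery are already in place: \cref{rslt:Dhsa-etale-lower-shriek-properties} provides the left adjoint $j_!$ for every \'etale map $j$ together with the projection formula and compatible base change, while \cref{rslt:Dsha-pushfoward-base-change} and \cref{rslt:Dhsa-projection-formula-for-proper-map} yield base change and the projection formula for $f_*$ along proper $p$-bounded maps. After identifying the two constructions on their common overlap (proper \'etale maps, where $j_! = j_*$), one obtains a pre-6-functor formalism on the subcategory $I_0$ of maps admitting a factorization ``\'etale composed with proper $p$-bounded''.

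Next, I would extend to lpbc maps. Given an lpbc map $f\colon Y\to X$, after passing to an analytic cover of $Y$ and $X$ we may assume $f$ is qcqs, representable in spatial diamonds, $p$-bounded and compactifiable (\Cref{sec:6-funct-da_h-2-qcqs-and-lpbc}). Huber's compactification then provides a factorization $Y\hookrightarrow \bar{Y}\to X$ with $\bar{Y}\to X$ proper and representable in prespatial diamonds and $Y\hookrightarrow\bar Y$ an open immersion; using \Cref{sec:funct-da_h--2-+-boundedness-in-representable-case-is-equivalent-to-p-boundedness}, the compactification remains $p$-bounded, so this factorization places $f$ into $I_0$. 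Since $\Dhsa{(-)}$ satisfies hypercomplete v-descent (and hence analytic descent), the analytically-local construction glues to yield a 6-functor formalism on all lpbc maps, and in particular (iii) follows from the very definition of $f_!$ on the étale and proper pieces.

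Finally, to obtain the maximal class $E$ satisfying the $!$-locality in (iv), I would invoke the $!$-descent formalism of \cite[Definition~3.4.6]{heyer-mann-6ff}, which automatically produces from the 6-functor formalism on lpbc maps a class $E$ containing all lpbc maps, stable under composition, base change and $2$-out-of-$3$ in the sense of (ii), and $!$-local on source and target. Property~(v) then follows from combining hypercomplete v-descent of $\Dhsa{(-)}$ with the $\ast$-locality of the operations built so far: after a v-cover by spatial diamonds equipped with a map to a strictly totally disconnected space, the hypothesis of (v) places the pullback into the already-constructed class, and the required coherences descend along $\ast$-pullback using v-descent and the fact that strictly totally disconnected spaces generate the v-topology.

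The main obstacle I expect lies in the second step, namely checking that proper lpbc pushforward really agrees with the $!$-pushforward produced by the abstract machinery. This reduces to verifying that $j_! = j_*$ for proper \'etale maps and that the base change and projection formula coherences from \cref{rslt:Dhsa-projection-formula-for-proper-map} and \cref{rslt:Dhsa-etale-lower-shriek-properties} patch together on $I_0$ in the exact form required by \cite{heyer-mann-6ff}. Both reduce, via \cref{rslt:Dhsa-generated-by-complete-objects-and-pullback-cocts} and completeness/reduction mod a pseudo-uniformizer, to the analogous (already established) statements in $\D_\solid^a(\mathcal{O}^+/\pi)$ from \cite{mann-mod-p-6-functors}, so the remaining work is relatively formal once this reduction strategy is set up carefully.
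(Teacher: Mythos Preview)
Your overall strategy is sound but differs from the paper's in a way worth noting. The paper does \emph{not} build the formalism directly on $\vStacks^\sharp$; instead it first restricts to the category $\mathcal C$ of prespatial diamonds admitting a morphism to a strictly totally disconnected space, with the class $bdc$ of $p$-bounded compactifiable maps, and applies \cite[Proposition~3.3.3]{heyer-mann-6ff} to the suitable decomposition $I=$ open immersions, $P=$ proper maps. Only afterwards does it invoke \cite[Theorem~3.4.11]{heyer-mann-6ff} to extend to $(\vStacks^\sharp,E)$, which automatically delivers (ii), (iv), (v) and the containment of lpbc maps. This two-step route avoids the analytic gluing you propose and makes the compactification step clean: in $\mathcal C$ the compactification $\bar Y$ is again a prespatial diamond admitting the required map, so the decomposition condition is immediate. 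Your approach of factoring locally and gluing by analytic descent could be made to work, but you would have to redo by hand what Theorem~3.4.11 packages.

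Two specific points where your sketch is looser than the paper. First, the compatibility of $j_!$ and proper pushforward on the overlap (your ``condition that $j_!=j_*$ for proper \'etale maps'') does not require a reduction mod $\pi$: the paper notes it is automatic from \cite[Corollary~3.3.5]{heyer-mann-6ff}. Second, the claim that proper lpbc implies $f_!=f_*$ is more delicate than your last paragraph suggests. The paper does not reduce this to mod $\pi$ either; it applies \cite[Lemma~4.6.4]{heyer-mann-6ff} twice, first establishing cohomological properness for proper lpbc \emph{monomorphisms} (where $\Delta_f$ is an isomorphism, so one only needs the map $f_!\to f_*$ to be an isomorphism, checkable after base change to spatial diamonds where the construction already gives it), and then bootstrapping to general proper lpbc $f$ using that $\Delta_f$ is now known to be cohomologically proper. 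Your sketch elides this two-step argument.
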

  
\begin{proof}
Let $\mathcal C$ be the category of prespatial diamonds, which admit a morphism to a strictly totally disconnected perfectoid space and let $bdc$ be the class of $p$-bounded compactifiable maps in $\mathcal C$. Let furthermore $I, P \subset bdc$ denote the classes of open immersions and proper maps, respectively. Then $I$ and $P$ forms a suitable decomposition of $bdc$ (in the sense of \cite[Definition~3.3.2]{heyer-mann-6ff}) and we can thus employ \cite[Proposition~3.3.3]{heyer-mann-6ff} to construct a 6-functor formalism $\Dhsa{(-)}$ on the geometric setup $(\mathcal C, bdc)$. Here condition (a) is satisfied by \cref{rslt:Dhsa-etale-lower-shriek-properties}, condition (b) is satisfied by \cref{rslt:Dhsa-projection-formula-for-proper-map,rslt:Dsha-pushfoward-base-change} and condition (c) is automatic by \cite[Corollary~3.3.5]{heyer-mann-6ff}.

We restrict the 6-functor formalism on $\mathcal C$ to the full subcategory spanned by the spatial diamonds. Since spatial diamonds (admitting a morphism to a strictly totally disconnected perfectoid space) form a basis of the v-site of small v-stacks, we can use \cite[Theorem~3.4.11]{heyer-mann-6ff} to extend $\Dhsa{(-)}$ to a 6-functor formalism on $(\vStacks^\sharp, E)$ satisfying (i), where $E$ is a collection of morphisms satisfying (ii) and (iv). Moreover, containment in $E$ can be checked after every pullback to a spatial diamond admitting a morphism to a strictly totally disconnected perfectoid space (thus (v) holds) and $E$ contains all compactifiable $p$-bounded maps of spatial diamonds (using \Cref{sec:funct-da_h--1-stability-of-p-boundedness} to get rid of the assumption that there exists a morphism to a strictly totally disconnected perfectoid space). Together with the fact that by construction every qcqs open immersion is cohomologically étale and hence open covers form universal $!$-covers (see \cite[Lemma~4.7.1]{heyer-mann-6ff}) we easily deduce that every lpbc map lies in $E$.

We now show that étale maps $j\colon U \to X$ are cohomologically étale, i.e. satisfy $j^! = j^*$. By \cite[Lemma~4.6.3.(ii)]{heyer-mann-6ff} étaleness is local on the target, so we may assume that $X$ is a strictly totally disconnected space. By loc. cit. étaleness is also cohomologically étale local on the source, but locally on $U$, the map $j$ becomes a qcqs open immersion and is thus cohomologically étale by construction.

The only statement left to be proven is the claim that a proper, lpbc morphism $f\colon Y\to X$ is cohomologically proper and hence satisfies $f_! = f_\ast$. If $X$ is a spatial diamond, which admits a morphism to a strictly totally disconnected perfectoid space, then the statement follows by construction of the $6$-functor formalism. To show the claim we apply now \cite[Lemma 4.6.4]{heyer-mann-6ff} twice. Assume first that $f$ is a monomorphism, i.e., that the diagonal $\Delta_f$ is an isomorphism. Then we need to see that the natural map $f_!\to f_\ast$ is an isomorphism of functors. This however can be checked after base change to a spatial diamond admitting a morphism to a strictly totally disconnected perfectoid space. But as remarked before this case is settled.
If $f$ is again a general proper, lpbc morphism, then $\Delta_f$ is a proper, lpbc monomorphism and hence cohomologically proper. Now the same argument as before proves that $f$ is cohomologically proper.
\end{proof}

We recall that a morphism $g\colon Z\to Y$ in $E$ is of $!$-descent if the natural functor $\D(Y)\to \varprojlim_n \D(Z^{\times {n+1}/Y})$ is an equivalence, where the implicit functors are given by $!$-pullback. We note that this limit is formed in $\mathrm{Pr}^R$, which is antiequivalent to $\Pr^L$ (\cite[Corollary 5.5.3.4]{lurie-higher-topos-theory}). Thus, the condition is equivalent to $\D(Y)$ being the colimit (in $\Pr^L$) of the $\D(Z^{\times {n+1} /Y})$ along $!$-pushforwards.
The condition for a morphism $f\colon Y\to X$ of untilted small v-stacks to be of universal $!$-descent may look restrictive as $!$-descent has to be checked for the pullback of $f$ to any untilted small v-stack over $X$. We will nevertheless show in \Cref{sec:6-functor-formalism} that many (non-lpbc) morphisms occuring in practice are $\D^a$-$!$-able in the sense below.

\begin{definition} \label{sec:6-funct-da_h-1-definition-p-shriekable-morphism}
A morphism $f\colon T'\to T$ of untilted small v-stacks is called \emph{$\D^a$-$!$-able} if it lies in the minimal class of morphisms $E$ in \cref{rslt:6ff-for-Dhsa}.
\end{definition}

\begin{remark} \label{sec:six-funct-texorpdfst-remark-on-tameness}
The proof of \cite[Theorem 3.4.11]{heyer-mann-6ff} gives a rather concrete recipe for constructing a class $E$ satisfying the conditions in \Cref{rslt:6ff-for-Dhsa} by enlarging the class of compactifiable $p$-bounded morphisms of spatial diamonds (admitting a morphism to a strictly totally disconnected perfectoid space) to be $\ast$- and $!$-local in the sense in \Cref{rslt:6ff-for-Dhsa}. Moreover, the recipe gives a class which is tame in the sense that each morphism in $E$ (with target a spatial diamond, which admits a morphism to a strictly totally disconnected perfectoid space) is $!$-locally on the source given by a compactifiable $p$-bounded morphism of spatial diamonds. 
\end{remark}

\subsection{Cohomological smoothness and cohomological properness}
\label{sec:cohom-smooth-cohom}

Having established the full 6-functor formalism for $\D^a_{\hat\solid}(-)$ in \Cref{rslt:6ff-for-Dhsa} we now aim to provide discussions of $f$-suave and $f$-prim objects (as introduced in \cite[Definition 1.3.5]{heyer-mann-6ff}).

We recall that \cite[Definition 3.2.2]{mann-mod-p-6-functors} has introduced the notion of a pseudo-uniformizer $\pi$ on an untilted small v-stack. If $\pi$ is fixed on some base untilted small v-stack $T$, then from \Cref{rslt:6ff-for-Dhsa} one obtains the $6$-functor formalism
\[
  T'\mapsto \mathrm{Mod}_{\ri^{+a}/\pi}(\D^a_{\hat\solid}(T')), 
\]
which by \Cref{rslt:Dsha-comparison-to-Mann-thesis} agrees with the one in \cite[Theorem 3.6.12]{mann-mod-p-6-functors} resp.\ \cite[Proposition~2.6]{mod-p-stacky-6-functors}, up to restricting the class of $p$-fine maps to the $\D^a$-$!$-able ones. Given $M\in \Dhsa{T'}$ we will write $M/\pi\in \D^a_{\solid}(\ri^+_{T'}/\pi)$ for its reduction. 

\begin{proposition} \label{sec:f-smooth-f-complete-f-smooth-objects}
Let $f\colon T'\to T$ be a $\D^a$-$!$-able morphism of untilted small v-stacks, let $\pi$ be a pseudo-uniformizer on $T$. Let $M\in \Dhsa{T'}$ be an object. 
\begin{propenum}
    \item If $M$ is $f$-suave, then $M$ is complete and $M/\pi$ is $f$-suave in the $6$-functor formalism on $\D^a_{\solid}(\ri^+_{(-)}/\pi)$.
    \item Assume that $M$ is complete, of finite Tor-amplitude and that $M/\pi$ is bounded and $f$-suave in $\D^a_{\solid}(\ri^+_{T'}/\pi)$. Then $M$ is $f$-suave.
\end{propenum}
\end{proposition}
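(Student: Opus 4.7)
The plan is to use the reduction-mod-$\pi$ functor as a morphism of $6$-functor formalisms and invoke abstract preservation results for suave objects. The commutative algebra object $\ri^{+a}_T/\pi \in \Dhsa{T}$ gives, via \Cref{rslt:Dsha-comparison-to-Mann-thesis}, a colimit-preserving symmetric monoidal functor
\[
    R\colon \Dhsa{(-)} \to \Mod_{\ri^{+a}/\pi}(\Dhsa{(-)}) \cong \D^a_\solid(\ri^+_{(-)}/\pi),
\]
and this should upgrade to a morphism of $6$-functor formalisms over the class of $\D^a$-$!$-able maps. In particular, $R$ should commute with $f^!$ since passing to modules over a commutative algebra object is a standard way to produce a morphism of $6$-functor formalisms in the machinery of \cite{heyer-mann-6ff}. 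Granting this, the $f$-suaveness of $M/\pi = R(M)$ follows from the general fact that morphisms of $6$-functor formalisms preserve suave objects, yielding the second half of (i).

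For the completeness assertion in (i), the key intermediate fact is that $f^!\mathbf{1}$ is complete in $\Dhsa{T'}$, where $\mathbf{1} = \ri^{+a}_T$. Since completeness is v-local by \Cref{rslt:complete-objects-in-Dhsa-summary}, I would reduce via the construction of $\D^a$-$!$-able maps in \Cref{rslt:6ff-for-Dhsa} and \Cref{sec:six-funct-texorpdfst-remark-on-tameness} to the case where $f$ is $!$-locally a compactifiable $p$-bounded map of spatial diamonds, decomposed into an open immersion $j$ (for which $j_!$ preserves right-bounded complete objects by \Cref{rslt:Dhsa-etale-lower-shriek-properties}) and a proper $p$-bounded map $g$ (for which $g_\ast = g_!$ preserves complete right-bounded objects by \Cref{rslt:Dsha-pushforward-preserve-colim} combined with the mod-$\pi$ statement of \cite{mann-mod-p-6-functors}). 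Dualizing, $f^!$ should preserve complete objects. Then the suave dual $D_f(M) := \IHom(M, f^!\mathbf{1})$ is an internal Hom into a complete object and hence complete, and the biduality $M \cong \IHom(D_f(M), f^!\mathbf{1})$ for suave objects exhibits $M$ itself as such an internal Hom, proving that $M$ is complete.

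For part (ii), I would show that the natural transformation
\[
    D_f(M) \tensor f^\ast(N) \to \IHom(M, f^!(N))
\]
is an isomorphism for all $N \in \Dhsa{T}$. The hypotheses that $M$ is complete and of finite Tor-amplitude imply that both sides commute with $\pi$-adic limits in $N$ and that the sources are $\pi$-adically complete in a controlled way. Devissage along the fiber sequences $\pi^n/\pi^{n+1} \to \ri/\pi^{n+1} \to \ri/\pi^n$ then reduces the question to the statement modulo $\pi$. By the compatibility of $R$ with the full $6$-functor structure, this mod-$\pi$ transformation coincides with the corresponding one for $M/\pi$ in the formalism on $\D^a_\solid(\ri^+_{(-)}/\pi)$; here the boundedness of $M/\pi$ ensures that the formation of $D_f$ and $\IHom$ interact well with the reduction. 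Since $M/\pi$ is $f$-suave by hypothesis, the transformation mod $\pi$ is an isomorphism, and (ii) follows.

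The hardest step will be establishing that $R$ is genuinely a morphism of $6$-functor formalisms on the class of $\D^a$-$!$-able maps (in particular that it commutes with $f^!$), together with the preservation of completeness under $f^!$. Both rely on a careful traversal of the explicit construction in \Cref{rslt:6ff-for-Dhsa}: one verifies the relevant statements first for compactifiable $p$-bounded maps of spatial diamonds, where they follow from the results recalled in \Cref{sec:defin-d_hats--2} together with the mod-$\pi$ formalism of \cite{mann-mod-p-6-functors}, and then one propagates them through the $!$-local gluing in \cite{heyer-mann-6ff} that defines the $\D^a$-$!$-able class.
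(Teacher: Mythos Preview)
Your approach to the second half of (i) --- that $M/\pi$ is $f$-suave because reduction mod $\pi$ is a morphism of $6$-functor formalisms --- is correct and matches the paper's (which invokes \cite[Corollary~4.4.13]{heyer-mann-6ff}). But your completeness argument takes an unnecessary detour and has a gap: you propose to show that $j_!$ and $g_\ast$ preserve complete objects and then ``dualize'' to conclude that $f^!$ does. Preservation of completeness by a left adjoint says nothing directly about its right adjoint, so this step does not go through as written. The paper's route is immediate: $f^!$ is a right adjoint and therefore preserves limits, hence $\pi$-adic completeness; thus $f^!\mathbf{1}$ is complete, and then $M = \DSuave_f(\DSuave_f(M)) = \IHom(\DSuave_f(M), f^!\mathbf{1})$ is complete as an internal Hom into a complete object. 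No decomposition or tameness reduction is needed.

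There is a more serious gap in (ii). The condition you propose to verify --- that $D_f(M)\otimes f^\ast N \to \IHom(M,f^! N)$ is an isomorphism for every $N\in\Dhsa{T}$ --- is a \emph{consequence} of $f$-suaveness (cf.\ \cite[Lemma~4.4.17]{heyer-mann-6ff}), not a characterization of it. The criterion the paper uses, from \cite[Lemma~4.4.5]{heyer-mann-6ff}, lives on $T'\times_T T'$: one must show that the single map
\[
    \alpha\colon p_1^\ast M \otimes p_2^\ast \DSuave_f(M)\to \IHom(p_2^\ast M, p_1^! M)
\]
is an isomorphism. Your condition only tests $M$ against objects pulled back from $T$, whereas suaveness probes $M$ against all of $T'$ via the self-fibre-product; there is no reason to expect these to be equivalent. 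Even granting your criterion, your d\'evissage is problematic: the condition ranges over arbitrary $N$, including non-complete ones, so one cannot simply reduce modulo $\pi$. The advantage of the paper's approach is that $\alpha$ is a single map between two specific objects; one checks that both source and target are complete (this is precisely where completeness of $M$, finite Tor-amplitude, and boundedness of $M/\pi$ are used), and then reduces modulo $\pi$, invoking \Cref{sec:suppl-6-funct-base-change-for-6-functor-formalisms} to identify the resulting map with the suaveness witness for $M/\pi$.
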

\begin{proof}
Assume that $M$ is $f$-suave. Then $M$ is complete as $M = \DSuave_f(\DSuave_f(M))$ is Verdier self dual by \cite[Lemma~4.4.4(i)]{heyer-mann-6ff} (here $\DSuave_f(-) = \IHom(-,f^!\ri^{+a})$ by \cite[Lemma~4.4.5]{heyer-mann-6ff}). The $f$-suaveness of $M/\pi$ is formally implied by $f$-suaveness of $M$, e.g. by \cite[Corollary~4.4.13]{heyer-mann-6ff}. This finishes the proof of (i).

To prove (ii), assume that $M$ is complete and that $M/\pi$ is bounded, discrete and $f$-suave for $\D^a_{\solid}(\ri^+_{(-)}/\pi)$. Then $\DSuave_f(M)$ is complete (as $\ri^{+a}$ is). By \cite[Lemma~4.4.5]{heyer-mann-6ff} it suffices to see that the natural morphism
\[
    \alpha\colon p_1^\ast M \tensor p_2^\ast \DSuave_f(M)\to \IHom(p_2^\ast M, p_1^! M)
\]
is an isomorphism, where $p_1, p_2\colon T'\times_T T'\to T'$ are the two projections. By completeness of $M$ the object $\IHom(p_2^\ast M, p_1^! M)$ is complete. By \cref{rslt:completeness-in-Dhsa-is-local} $p_1^\ast M$, $p_2^\ast \DSuave_f(M)$ are complete. As $M/\pi$ is bounded and $M$ of finite Tor-amplitude, we can conclude that $M$ is bounded and that $p_1^\ast M \tensor p_2^\ast \DSuave_f(M)$ is complete (using \cite[Lemma 2.12]{anschuetz_mann_descent_for_solid_on_perfectoids}). Hence, checking that $\alpha$ is an isomorphism can be done modulo $\pi$, where it follows from $f$-suaveness of $M/\pi$ and \Cref{sec:suppl-6-funct-base-change-for-6-functor-formalisms}.
\end{proof}

As a consequence of \Cref{sec:f-smooth-f-complete-f-smooth-objects} the $f$-suaveness of $\mathcal{O}^{+a}$ can be checked modulo $\pi$. We can derive the following consequence for cohomological smoothness.

\begin{lemma} \label{sec:f-smooth-f-checking-cohom-smoothness-mod-pi}
Let $f\colon T'\to T$ be a $\D^a$-$!$-able morphism of untilted small v-stacks. Assume that $\pi$ is a pseudo-uniformizer on $T$. Then $f$ is cohomologically smooth for $\D^a_{\solid}(\ri^+_{(-)}/\pi)$ if and only if $f$ is cohomologically smooth for $\Dhsa{(-)}$. 
\end{lemma}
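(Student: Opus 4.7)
The statement is an immediate unpacking of \Cref{sec:f-smooth-f-complete-f-smooth-objects} applied to the unit object $M = \ri^{+a}$ on $T'$, since cohomological smoothness of $f$ (in either of the formalisms $\Dhsa{(-)}$ or $\D^a_\solid(\ri^+_{(-)}/\pi)$) is by definition $f$-suaveness of the respective structure sheaf, and reduction mod $\pi$ of $\ri^{+a}$ yields $\ri^{+a}/\pi$.

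For the ``only if'' direction, assume $\ri^{+a}$ is $f$-suave in $\Dhsa{(-)}$. Then \Cref{sec:f-smooth-f-complete-f-smooth-objects}(i) directly implies that $\ri^{+a}/\pi$ is $f$-suave in $\D^a_\solid(\ri^+_{(-)}/\pi)$, i.e. $f$ is cohomologically smooth for the mod $\pi$ formalism.

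For the ``if'' direction, suppose $\ri^{+a}/\pi$ is $f$-suave in $\D^a_\solid(\ri^+_{(-)}/\pi)$. We apply \Cref{sec:f-smooth-f-complete-f-smooth-objects}(ii) to $M = \ri^{+a}$. The hypotheses are easily verified: $\ri^{+a}$ is complete by definition (being $\pi$-adically complete, cf.\ \Cref{rslt:completeness-in-Dhsa-summary}), it has Tor-amplitude $[0,0]$ as it is the unit of the symmetric monoidal structure, and its reduction $\ri^{+a}/\pi$ is concentrated in degree $0$ hence bounded, and $f$-suave by assumption. Thus (ii) yields that $\ri^{+a}$ is $f$-suave, i.e. $f$ is cohomologically smooth for $\Dhsa{(-)}$.

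There is essentially no obstacle: the real content has already been absorbed into \Cref{sec:f-smooth-f-complete-f-smooth-objects}, whose proof does the work of comparing the two notions of suaveness by reducing the isomorphism test for the Suave map modulo $\pi$ using completeness and finite Tor-amplitude. The present lemma just specializes to $M = \ri^{+a}$ and checks the trivial hypotheses.
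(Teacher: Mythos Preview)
There is a genuine gap. Cohomological smoothness of $f$ is \emph{not} simply $f$-suaveness of the unit $\ri^{+a}$: it additionally requires that the dualizing complex $f^!\ri^{+a}$ be \emph{invertible}. Your argument only addresses the first condition. The paper's proof also begins by invoking \Cref{sec:f-smooth-f-complete-f-smooth-objects} to settle the suaveness part, but then spends most of its effort on the invertibility of $f^!\ri^{+a}$, which does not follow formally from that proposition.

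Specifically, one must show that if $f^!(\ri^{+a}/\pi)$ is invertible in $\D^a_\solid(\ri^+_{T'}/\pi)$ then $f^!\ri^{+a}$ is invertible in $\Dhsa{T'}$. The paper argues as follows: after reducing to $\pi\mid p$, the object $f^!(\ri^{+a}/\pi)$ carries a $\varphi$-module structure, and \cite[Theorem~3.9.23]{mann-mod-p-6-functors} forces it to be bounded and discrete. One then checks that $M:=f^!\ri^{+a}$ and $\IHom(M,\ri^{+a})$ are bounded (since this can be tested mod $\pi$), so that $M\otimes\IHom(M,\ri^{+a})$ is complete; the evaluation map to $\ri^{+a}$ is then an isomorphism because it is one mod $\pi$. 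None of this is contained in \Cref{sec:f-smooth-f-complete-f-smooth-objects}, and your proposal omits it entirely.
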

\begin{proof}
By \Cref{sec:f-smooth-f-complete-f-smooth-objects} $\mathcal{O}^{+a}$ is $f$-suave if and only if $\mathcal{O}^{+a}/\pi$ is. Thus it suffices to check that $f^! \ri^+$ is invertible in $\Dhsa{T'}$ if and only if $f^!(\ri^{+a}/\pi)$ is invertible in $\D^a_\solid(\ri^+_{T'}/\pi)$. The ``only if'' part is clear. Assume now that $f^!\mathcal{O}^{+a}/\pi$ is invertible in $\D^a_\solid(\ri^+_{T'}/\pi)$. This holds then for any choice of $\pi$, and hence we may assume that $\pi|p$. By the existence of a $\varphi$-module structure on $f^!\mathcal{O}^{+a}/\pi$ and \cite[Theorem 3.9.23]{mann-mod-p-6-functors} we may conclude that $f^!\mathcal{O}^{+a}/\pi$ is bounded and discrete. From here we can conclude by completeness of $f^!\mathcal{O}^{+a}$ that $M:=f^!\mathcal{O}^{+a}$ is invertible. Indeed, $M \tensor \IHom(M,\ri^{+a})$ is complete by the same argument as in \Cref{sec:f-smooth-f-complete-f-smooth-objects} (note that $M$ and $\IHom(M,\ri^{+a})$ are bounded as this can be checked mod $\pi$ where $M$ is invertible) and the natural morphism $M\otimes \IHom(M,\ri^{+a})\to \ri^{+a}$ is an isomorphism mod $\pi$. This finishes the proof.
\end{proof}

\begin{remark} \label{sec:f-smooth-f-cohomological-etaleness}
Another consequence of \cref{sec:f-smooth-f-checking-cohom-smoothness-mod-pi} is that cohomological étaleness for $\Dhsa{(-)}$ is equivalent to cohomological étaleness for $\D^a_\solid(\ri^+/\pi)$ (here cohomological étaleness refers to \cite[Definition~4.6.1(a)]{heyer-mann-6ff}).
\end{remark}

Unfortunately, the situation is not as good for $f$-prim objects and cohomologically proper morphisms. Namely, let $f\colon T'\to T$ be a $\D^a$-$!$-able morphism of untilted small v-stacks. Then $M \in \Dhsa{T'}$ is $f$-prim if and only if the natural morphism
\[
    f_!(M\otimes p_{2,\ast}\IHom(p_1^\ast M,\Delta_!(\ri^{+a}_{T'})))\to f_\ast(\IHom(M,M))
\]
is an isomorphism after applying $\Gamma(T,-)$ (see \cite[Lemma~4.4.6]{heyer-mann-6ff}). However, tracing completeness through such a statement is more difficult because the $!$-pushforwards $f_!$ and $\Delta_!$ are involved. In fact, preservations of completeness under $!$-pushforward seems to be quite seldom. We do have the following positive results:

\begin{lemma} \label{sec:f-smooth-f-shriek-pushforward-preserves-completeness}
Let $f\colon T'\to T$ be a $\D^a$-$!$-able morphism of untilted small v-stacks. Let $\pi$ be a pseudo-uniformizer on $T$. Let $M\in \Dhsa{T'}$ be right-bounded, complete and discrete modulo $\pi$. Assume that $f$ is $p$-bounded and compactifiable. Then $f_! M$ is complete.
\end{lemma}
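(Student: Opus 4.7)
The strategy is to factor $f$ using its compactifiability and handle the open immersion and proper parts separately. Using that $f$ is $p$-bounded (hence qcqs), I would write $f=\bar f\comp j$ with $j\colon T'\hookrightarrow \overline{T'}$ a qcqs open immersion into a compactification and $\bar f\colon \overline{T'}\to T$ proper. Both maps are $\D^a$-$!$-able, and since $\bar f$ is proper, $\bar f_!=\bar f_\ast$ by \cref{rslt:6ff-for-Dhsa}. Hence $f_! M = \bar f_\ast(j_! M)$.

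I would then reduce to the case that $T=\Spa(A,A^+)$ is strictly totally disconnected, since completeness is v-local on the target (\cref{rslt:complete-objects-in-Dhsa-summary}) and $f_!$ satisfies base change by the 6-functor formalism (\cref{rslt:6ff-for-Dhsa}); all hypotheses on $M$ are preserved under pullback. In this situation, \cref{rslt:completeness-in-Dhsa-given-by-adic-completeness} identifies completeness with $\pi$-adic completeness, equivalently with the vanishing of the limit $\lim(\ldots \xto{\pi} N \xto{\pi} N)$. The claim thus splits into two steps: first, that $j_! M$ is right-bounded and complete, and second, that $\bar f_\ast$ takes complete objects to complete objects.

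The first step is precisely the conclusion of the last statement of \cref{rslt:Dhsa-etale-lower-shriek-properties}, which applies because $j$ is a qcqs étale map: $j_!$ then preserves right-bounded complete objects. The second step is essentially formal: $\bar f_\ast$ is a right adjoint (to $\bar f^\ast$), hence preserves limits, and therefore preserves the vanishing limit condition characterizing $\pi$-adic completeness. Together, these give that $f_! M = \bar f_\ast(j_! M)$ is complete.

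The main obstacle is the preservation of completeness by the left adjoint $j_!$ in the first of these two steps. This is the non-formal input, already established in the last paragraph of the proof of \cref{rslt:Dhsa-etale-lower-shriek-properties} via an explicit computation of $j_!\ri_U^{+a}$ for open immersions of the form $\{\abs*{f}\ge 1\}$ in a totally disconnected affinoid, combined with the projection formula. I remark that the hypothesis that $M$ be discrete modulo $\pi$ does not appear to enter the outline above; it likely serves as a convenient auxiliary condition for companion or subsequent statements (for instance for strengthened conclusions about boundedness of $f_!M$, which would require a mod-$\pi$ finiteness input).
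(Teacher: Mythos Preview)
Your proof is correct and follows essentially the same strategy as the paper: reduce to $T$ strictly totally disconnected, factor $f$ through its canonical compactification, use that $\bar f_\ast = \bar f_!$ preserves $\pi$-adic completeness (being a right adjoint), and invoke the last part of \cref{rslt:Dhsa-etale-lower-shriek-properties} for the qcqs open immersion $j$. Your observation that the hypothesis ``discrete modulo $\pi$'' does not enter the argument is accurate; the paper's proof does not use it either.
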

\begin{proof}
We may assume that $T$ is a strictly totally disconnected space. Factoring $f$ over its canonical compactification reduces to the case that $f$ is a qcqs open immersion (but $T$ no longer a strictly totally disconnected space), because for any morphism of untilted small v-stacks $\ast$-pushforward over a perfectoid space preserves completeness (using \cref{rslt:completeness-in-Dhsa-given-by-adic-completeness} to identify completeness with adic completeness here). If $f$ is a qcqs open immersion then the claim follows from the last part of \cref{rslt:Dhsa-etale-lower-shriek-properties}.
\end{proof}

\begin{corollary} \label{sec:f-smooth-f-checking-cohomological-properness}
A $p$-bounded and compactifiable morphism is cohomologically proper for the $\Dhsa{(-)}$-formalism if and only if it is for the $\D^a_{\solid}(\ri^+/\pi)$-formalism. 
\end{corollary}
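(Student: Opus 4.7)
The "only if" direction is essentially formal: the base-change functor $-\tensor_{\ri^{+a}} \ri^{+a}/\pi\colon \Dhsa{(-)}\to \D^a_\solid(\ri^+_{(-)}/\pi)$ is a symmetric monoidal natural transformation of 6-functor formalisms on $\D^a$-$!$-able morphisms (by \cref{rslt:Dsha-comparison-to-Mann-thesis} and compatibility of the two constructions restricted to the common class of $\D^a$-$!$-able maps), and such morphisms of 6-functor formalisms preserve $f$-prim objects. Hence cohomological properness transfers from $\Dhsa{(-)}$ to the mod-$\pi$ formalism.

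For the "if" direction, assume $\ri^{+a}/\pi$ is $f$-prim for $\D^a_\solid(\ri^+/\pi)$. By \cite[Lemma~4.4.6]{heyer-mann-6ff}, $\ri^{+a}$ is $f$-prim for $\Dhsa{(-)}$ iff the natural map
\[
    \alpha\colon f_!\bigl(p_{2,\ast}\Delta_!\ri^{+a}_{T'}\bigr)\to f_\ast\ri^{+a}_{T'}
\]
becomes an isomorphism after applying $\Gamma(T,-)$, where $\Delta\colon T'\to T'\times_T T'$ and $p_2\colon T'\times_T T'\to T'$ are the diagonal and second projection (we have used $\IHom(p_1^\ast\ri^{+a},\Delta_!\ri^{+a})=\Delta_!\ri^{+a}$ and $\IHom(\ri^{+a},\ri^{+a})=\ri^{+a}$). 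The strategy is to prove that both sides of $\alpha$ are complete objects in $\Dhsa{T}$, which reduces the isomorphism check to mod $\pi$, where it holds by the hypothesis.

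Completeness of the right-hand side follows since $\ri^{+a}_{T'}$ is complete and $\ast$-pushforwards preserve completeness; this is checked v-locally on $T$, where completeness becomes $\pi$-adic completeness (\cref{rslt:completeness-in-Dhsa-given-by-adic-completeness}), which is preserved by $\ast$-pushforwards as they commute with limits. For the left-hand side I would apply \cref{sec:f-smooth-f-shriek-pushforward-preserves-completeness} to $f$ (which is $p$-bounded and compactifiable by hypothesis) with input $p_{2,\ast}\Delta_!\ri^{+a}_{T'}$. Since $f$ is locally separated, $\Delta$ is a locally closed immersion, which is itself $p$-bounded and compactifiable, so that $\Delta_!\ri^{+a}_{T'}$ is defined and well-controlled. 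One then needs to verify that $p_{2,\ast}\Delta_!\ri^{+a}_{T'}$ is right-bounded, complete, and discrete mod $\pi$; this is done by analyzing the behavior of the locally-closed-immersion functor $\Delta_!$ and the $\ast$-pushforward $p_{2,\ast}$, and reducing mod $\pi$ to a known statement in $\D^a_\solid(\ri^+/\pi)$. Once completeness of both sides is established, the iso check for $\alpha$ descends modulo $\pi$, and the mod-$\pi$ reduction of $\alpha$ agrees canonically with the corresponding $f$-prim criterion for $\ri^{+a}/\pi$ in $\D^a_\solid(\ri^+/\pi)$ (by the compatibility used in the "only if" direction), which is an iso by assumption.

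The main obstacle I foresee is the verification that $p_{2,\ast}\Delta_!\ri^{+a}_{T'}$ genuinely satisfies the hypotheses of \cref{sec:f-smooth-f-shriek-pushforward-preserves-completeness}, in particular discreteness modulo $\pi$; this requires careful bookkeeping of how $\Delta_!$ for a locally closed immersion interacts with the second projection pushforward in the almost solid setting, and in tracking the compatibility of the canonical map $\alpha$ with mod-$\pi$ reduction through the various functors $f_!$, $p_{2,\ast}$ and $\Delta_!$.
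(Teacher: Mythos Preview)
Your approach is essentially the same as the paper's: the paper's proof just cites \cref{sec:f-smooth-f-shriek-pushforward-preserves-completeness} and \cite[Lemma~4.4.6]{heyer-mann-6ff} and says to argue as in \cref{sec:f-smooth-f-complete-f-smooth-objects}, and you have correctly unpacked what that means.

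Regarding your stated obstacle: the hypothesis ``discrete modulo $\pi$'' in \cref{sec:f-smooth-f-shriek-pushforward-preserves-completeness} is not actually used in its proof. Inspecting that proof, one factors $f$ through its canonical compactification as $\bar f\circ j$; then $j_!$ preserves right-bounded complete objects by the last part of \cref{rslt:Dhsa-etale-lower-shriek-properties}, and $\bar f_\ast$ preserves completeness. Neither step invokes discreteness. So it suffices to know that $p_{2,\ast}\Delta_!\ri^{+a}_{T'}$ is right-bounded and complete, which follows since $\Delta$ and $p_2$ are $p$-bounded compactifiable (so $\Delta_!$ preserves right-bounded complete objects by the same argument, and $p_{2,\ast}$ preserves both right-boundedness and completeness by \cref{rslt:Dsha-pushforward-preserve-colim} and the usual limit argument). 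The compatibility of $\alpha$ with reduction mod $\pi$ is unproblematic: $f_!$ and $\Delta_!$ commute with $-/\pi$ formally, and $p_{2,\ast}$, $f_\ast$ do as well because they are colimit-preserving for $p$-bounded maps.
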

\begin{proof}
This follows from \Cref{sec:f-smooth-f-shriek-pushforward-preserves-completeness} and \cite[Lemma~4.4.6]{heyer-mann-6ff} by arguing as in \cref{sec:f-smooth-f-complete-f-smooth-objects}.
\end{proof}
 
It would be interesting to extend \Cref{sec:f-smooth-f-shriek-pushforward-preserves-completeness} to stacky maps in order to analyze $f$-primness on classifying stacks.

\section{The 6-functor formalism \texorpdfstring{$\D_{[0,\infty)}(-)$}{D\_[0,∞)(-)}} \label{sec:the-6ff-formalism-D-0-infty}

In this section we will define the categories $\D_{[0,\infty)}(S)$  and $\D_{(0,\infty)}(S)$, for a small v-stack $S$ over $\F_p$.  We also establish a full $6$-functor formalism (in the sense of \cite[Definition A.5.7]{mann-mod-p-6-functors}) for them. 

\subsection{Fargues--Fontaine curves}
\label{sec:farg-font-curv}

We recall the relevant Fargues--Fontaine curves for this paper.

\begin{definition} \label{sec:d_ffs-e-perfectoid-definition-of-fargues-fontaine-curves}
Let $S$ be a perfectoid space over $\F_p$.
\begin{defenum}
    \item We let $ \mathcal{Y}_{[0,\infty),S}$ be the analytic adic space over $\Spa(\Z_p)$ constructed in \cite[II.1.1]{fargues-scholze-geometrization} for $S$ (and $\Q_p$). We let $\varphi\colon \mathcal{Y}_{[0,\infty),S}\to \mathcal{Y}_{[0,\infty),S}$ be the Frobenius on $\mathcal{Y}_{[0,\infty),S}$.
    
    \item We set $ \mathcal{Y}_{(0,\infty),S}:= \mathcal{Y}_{[0,\infty),S} \times_{\Spa(\Z_p)}\Spa(\Q_p)$, and $\FF_S:=\mathcal{Y}_{(0,\infty),S}/\varphi^\Z$.
\end{defenum}
\end{definition}

More explicitly, if $S=\Spa(R,R^+)$ is an affinoid perfectoid space and $\varpi\in R$ a pseudo-uniformizer, then we have
\[
    \mathcal{Y}_{[0,\infty),S}=\Spa(W(R^+))\setminus V([\varpi])
\]
where the Witt vectors are endowed with the $(p,[\varpi])$-adic topology, the morphism $\varphi$ is induced by the Frobenius of $R^+$, and
\[
  \mathcal{Y}_{(0,\infty),S}=\Spa(W(R^+))\setminus V([\varpi]p).
\]

In the following we recall several basic properties of the relative Fargues--Fontaine curve, as developed in \cite{scholzegeometrization}.

\begin{lemma} \label{sec:d_ffs-e-perfectoid-properties-of-fargues-fontaine-curves}
Let $S$ be a perfectoid space over $\F_p$.
\begin{lemenum}
  \item The action of $\varphi$ on $\mathcal{Y}_{(0,\infty),S}$ is properly discontinuous. In particular, the morphism $\mathcal{Y}_{(0,\infty),S}\to \FF_S$ admits local sections.

  \item \label{rslt:open-immersion-on-S-induces-open-immersion-on-Y-S} If $S'\to S$ is an open immersion, then $\mathcal{Y}_{[0,\infty),S'}\to \mathcal{Y}_{[0,\infty),S}, \mathcal{Y}_{(0,\infty),S'}\to \mathcal{Y}_{(0,\infty),S}, \FF_{S'}\to \FF_S$ are open immersions.

  \item If $S'\to S$ is proper, then $\mathcal{Y}_{[0,\infty),S'}\to \mathcal{Y}_{[0,\infty),S}, \mathcal{Y}_{(0,\infty),S'}\to \mathcal{Y}_{(0,\infty),S}, \FF_{S'}\to \FF_S$ induce proper morphisms on the associated diamonds.
\end{lemenum}
\end{lemma}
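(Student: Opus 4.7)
\textbf{Part (i).} I would prove proper discontinuity by introducing a radius function. Work locally with $S = \Spa(R, R^+)$ affinoid with pseudo-uniformizer $\varpi \in R$, and define the continuous map $\kappa_\varpi\colon |\mathcal{Y}_{(0,\infty),S}| \to (0,\infty)$ by
\[
\kappa_\varpi(x) = \frac{\log|[\varpi](x)|}{\log|p(x)|}.
\]
The Frobenius satisfies $\kappa_\varpi \comp \varphi = p\cdot\kappa_\varpi$, and the quasi-compact preimages $\mathcal{Y}_{[a,b],S} := \kappa_\varpi^{-1}([a,b])$ (for closed intervals $[a,b] \subset (0,\infty)$) form an open cover of $\mathcal{Y}_{(0,\infty),S}$. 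For fixed $[a,b]$ we have $\varphi^n(\mathcal{Y}_{[a,b],S}) = \mathcal{Y}_{[p^na,p^nb],S}$, which is disjoint from $\mathcal{Y}_{[a,b],S}$ for $|n|$ large. This gives proper discontinuity, and the existence of local sections for $\mathcal{Y}_{(0,\infty),S}\to \FF_S$ is a formal consequence. In the general (non-affinoid) case, glue the radius functions using a v-cover by affinoids.

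\textbf{Part (ii).} The plan is to reduce to the affinoid case and compute explicitly. If $S = \Spa(R,R^+)$ is affinoid and $S' = \Spa(R\la f_1/g,\ldots,f_n/g\ra, R'^+) \subset S$ is a rational open, then $R'^+$ is an explicit completion of a localization of $R^+$, and one checks by direct computation that $W(R'^+)$ (with the appropriate topology) is identified with the $(p,[\varpi])$-adic completion of $W(R^+)\la [f_1]/[g], \ldots, [f_n]/[g]\ra$. Consequently $\mathcal{Y}_{[0,\infty),S'}$ is the rational open subset $\{|[f_i](x)| \leq |[g](x)| \neq 0\}$ of $\mathcal{Y}_{[0,\infty),S}$, yielding an open immersion of adic spaces. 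The claim for $\mathcal{Y}_{(0,\infty),S'}\to \mathcal{Y}_{(0,\infty),S}$ follows by further intersecting with the open locus $\{p \neq 0\}$, and the claim for $\FF_{S'}\to \FF_S$ follows because the resulting open immersion is $\varphi$-stable and descends to the $\varphi^\Z$-quotient.

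\textbf{Part (iii).} Here I would invoke the identification $\mathcal{Y}_{[0,\infty),S}^\diamond \isom S\times \Spd(\Z_p)$ (as v-sheaves over $\Spd(\F_p)$) established in \cite{scholzegeometrization}. Under this identification, the map $\mathcal{Y}_{[0,\infty),S'}^\diamond \to \mathcal{Y}_{[0,\infty),S}^\diamond$ is the base change of $S'\to S$ along the projection to $S$. Since properness of morphisms of small v-stacks is stable under base change, the first assertion follows. The analogous identification $\mathcal{Y}_{(0,\infty),S}^\diamond\cong S\times \Spd(\Q_p)$ handles the second. For the third, one forms the cartesian square
\[
\begin{tikzcd}
\mathcal{Y}_{(0,\infty),S'}^\diamond \arrow[r] \arrow[d] & \mathcal{Y}_{(0,\infty),S}^\diamond \arrow[d] \\
\FF_{S'}^\diamond \arrow[r] & \FF_S^\diamond
\end{tikzcd}
\]
in which the right-hand vertical map is a v-cover (by part (i)), and applies v-descent of properness on the target.

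The main potential obstacle is the Witt vector computation in part (ii), which although standard requires a careful check that $W(-)$ interacts with rational localizations as expected; for (i) and (iii) the arguments are essentially formal given the results recalled in the previous section.
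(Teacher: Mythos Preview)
Your proposal is correct and follows essentially the same approach as the paper. The paper simply cites \cite[Propositions II.1.16, II.1.3, II.1.2]{fargues-scholze-geometrization} for parts (i)--(iii) respectively, and your sketch accurately reproduces the content of those propositions: the radius function argument for (i), the explicit Witt vector computation showing rational opens pull back to rational opens for (ii), and the product formula $\mathcal{Y}_{[0,\infty),S}^\diamond \cong S \times_{\Spd(\F_p)} \Spd(\Z_p)$ together with stability of properness under base change for (iii).
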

\begin{proof}
  The first statement is \cite[Proposition II.1.16]{fargues-scholze-geometrization}. The second statement is implied by \cite[Proposition II.1.3]{fargues-scholze-geometrization}. The third statement follows from the formula $\mathcal{Y}_{[0,\infty),S}^\diamond \cong S\times_{\Spd(\F_p)} \Spd(\Z_p)$ (\cite[Proposition II.1.2]{fargues-scholze-geometrization}) because proper morphisms of small v-stacks are stable under base change.
\end{proof}

We recall that the first assertion of \Cref{sec:d_ffs-e-perfectoid-properties-of-fargues-fontaine-curves} is proven in the case that $S=\Spa(R,R^+)$ affinoid perfectoid by constructing a continuous morphism
\[
  \kappa_{\varpi}\colon |\mathcal{Y}_{[0,\infty),S}|\to [0,\infty)
\]
of topological spaces, which intertwines $\varphi$ with multiplication by $p$ and sends the Cartier divisor $(p-[\varpi])$ for a chosen pseudo-uniformizer $\varpi\in R$ to $1$. If $I\subseteq [0,\infty)$ is a compact interval ($I\neq [0,0]$), then the interior $\mathcal{Y}_{I,S}$ of $\kappa_{\varpi}^{-1}(I)$ is open affinoid (\cite[Proposition II.1.16]{fargues-scholze-geometrization}). More concretely, assuming $I=[0,r]$ with $r=\frac{a}{b}$, $a,b\in \mathbb{N}_{>0}$, then
\[
  \mathcal{Y}_{I,S}=\Spa(A_I,A^+_I)
\]
with $A_I=A^+_I[1/[\varpi]]$ and $A^+_I$ the $[\varpi]$-adic completion of $W(R^+)[\frac{p}{[\varpi^{1/a}]^b}]$. An important property of the (relative) Fargues--Fontaine curve is that it is close to being perfectoid. More precisely, it has the following canonical cover by a perfectoid space:

\begin{lemma} \label{sec:farg-font-curv-1-perfectoid-cover-by-perfectoid}
Let $S\in \Perf_{\F_p}$. Let $\Q_{p,\infty}$ be the completion of $\bigcup_{n\geq 0} \Q_p(p^{1/p^n})$ and $\Z_{p,\infty}$ its ring of integers. Then the spaces
\[
    \mathcal{Y}_{[0,\infty),S}\times_{\Spa(\Z_p)}\Spa(\Z_{p,\infty}), \quad \mathcal{Y}_{(0,\infty),S}\times_{\Spa(\Q_p)}\Spa(\Q_{p,\infty}), \quad \FF_S\times_{\Spa(\Q_p)}\Spa(\Q_{p,\infty})
\]
are perfectoid.
\end{lemma}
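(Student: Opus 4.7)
First, I would reduce all three assertions to the perfectoidness of the first space, $\mathcal{Y}_{[0,\infty),S}\times_{\Spa(\Z_p)}\Spa(\Z_{p,\infty})$. By \Cref{rslt:open-immersion-on-S-induces-open-immersion-on-Y-S} and stability of open immersions under base change, $\mathcal{Y}_{(0,\infty),S}\times\Spa(\Q_{p,\infty})$ is an open subspace of $\mathcal{Y}_{[0,\infty),S}\times\Spa(\Z_{p,\infty})$, and since the perfectoid property is analytically local the second statement follows from the first. For the third, the Frobenius action on the product (trivial on the second factor) remains properly discontinuous by \Cref{sec:d_ffs-e-perfectoid-properties-of-fargues-fontaine-curves}(i), so the map $\mathcal{Y}_{(0,\infty),S}\times\Spa(\Q_{p,\infty})\to\FF_S\times\Spa(\Q_{p,\infty})$ admits analytic-local sections, and the target is perfectoid as soon as the source is.

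Next, I may assume $S=\Spa(R,R^+)$ is affinoid perfectoid with pseudo-uniformizer $\varpi$; by the analytic cover $\mathcal{Y}_{[0,\infty),S}=\bigcup_{r\in\mathbb{Q}_{>0}}\mathcal{Y}_{[0,r],S}$, it is enough to show that $\Spa(B,B^+):=\mathcal{Y}_{[0,r],S}\times\Spa(\Z_{p,\infty})$ is perfectoid for each rational $r=a/b>0$. Using the explicit recipe recalled before the lemma, $B^+$ is the $[\varpi]$-adic completion of $W(R^+)[p/[\varpi^{1/a}]^b]\otimes_{\Z_p}\Z_{p,\infty}$. Since $R^+$ is perfect, set $\pi:=[\varpi^{b/(pa)}]\in B^+$; then $\pi^p=[\varpi^{b/a}]=[\varpi^{1/a}]^b$ and $p/\pi^p\in B^+$ by construction, so $\pi$ is a pseudo-uniformizer of $B$ satisfying $\pi^p\mid p$. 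Consequently, $(B,B^+)$ will be a perfectoid Tate--Huber pair in the sense of \cite[Definition~3.1]{etale-cohomology-of-diamonds} as soon as the relative Frobenius $B^+/\pi\to B^+/\pi^p$ is surjective (up to almost elements).

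The main obstacle is this Frobenius surjectivity. I would reduce it to checking a set of topological generators of $B^+/\pi$: the Teichmüller lifts $[x]$ with $x\in R^+$ have $p$-th roots $[x^{1/p}]$ since $R^+$ is perfect; the roots $p^{1/p^n}\in \Z_{p,\infty}$ are available for all $n$; and the defining element $p/[\varpi^{1/a}]^b$ admits the $p$-th root $p^{1/p}/[\varpi^{1/(pa)}]^b$, which lies in $B^+$ by the same norm estimate that puts $p/[\varpi^{1/a}]^b$ in $B^+$. A conceptually cleaner alternative that I would ultimately prefer is to argue via tilting: the ring $\Z_{p,\infty}$ is perfectoid with tilt $\F_p[[t^{1/p^\infty}]]$ and canonical untilt $t^\sharp=p$, and the diamond of $\mathcal{Y}_{[0,\infty),S}\times\Spa(\Z_{p,\infty})$ is represented by the evidently perfectoid characteristic~$p$ space $S\times_{\Spa(\F_p)}\Spa(\F_p[[t^{1/p^\infty}]])$ carrying a canonical untilt to $\Z_{p,\infty}$; identifying that untilt with $\mathcal{Y}_{[0,\infty),S}\times\Spa(\Z_{p,\infty})$ via the universal property of Witt vectors closes the argument and bypasses the bookkeeping of fractional powers. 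Either way, the non-trivial point is the careful simultaneous tracking of $p$-power roots of $p$, of $[\varpi]$, and of the mixed element $p/[\varpi^{1/a}]^b$ modulo $\pi$.
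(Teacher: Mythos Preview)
The paper's own proof is a single citation to \cite[Proposition~II.1.1]{fargues-scholze-geometrization} for the first space, followed by ``the other assertions follow easily from that''. Your reduction of the second and third statements to the first is correct and matches this, though your appeal to \Cref{rslt:open-immersion-on-S-induces-open-immersion-on-Y-S} is misplaced: that lemma concerns open immersions in $S$, whereas all you need here is that $\mathcal{Y}_{(0,\infty),S}=\mathcal{Y}_{[0,\infty),S}\setminus V(p)$ is open by definition.

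Your explicit treatment of the first space is effectively a sketch of the cited Fargues--Scholze proposition, but one step is imprecise. You describe $B^+$ as the $[\varpi]$-adic completion of $W(R^+)[p/[\varpi^{1/a}]^b]\otimes_{\Z_p}\Z_{p,\infty}$; for a fiber product of affinoid adic spaces the $+$-ring is instead the \emph{completed integral closure} of that tensor product in $B$ (the paper recalls this convention in the proof of \Cref{sec:v-descent-d_ffs-1-modules-under-o-e-infty}). This correction in fact repairs your argument: writing $z:=p^{1/p}/[\varpi^{1/(pa)}]^b$, one has $z^p=p/[\varpi^{1/a}]^b\in B^+$ and $z\in B$, so integral closedness gives $z\in B^+$ directly. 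Your ``same norm estimate'' justification only yields $z\in B^\circ$, which is a priori weaker and not enough. With $z\in B^+$ in hand, your Frobenius-surjectivity check on generators goes through (mod $\pi^p$ one has $p=0$, so Frobenius is additive and it suffices to treat generators). Fargues--Scholze package the outcome as an almost isomorphism $\mathcal{O}^+(\mathcal{Y}_{[0,r],S,\infty})\overset{a}{\cong} A_{\mathrm{inf}}(R^+)[(p/[\varpi])^{1/p^\infty}]^{\wedge_{[\varpi]}}$, which the present paper quotes explicitly later. Your tilting alternative is morally the same computation dressed differently: identifying the untilt of $S\times_{\Spa(\F_p)}\Spa(\F_p[[t^{1/p^\infty}]])$ with $\mathcal{Y}_{[0,\infty),S}\times\Spa(\Z_{p,\infty})$ on the nose still requires the same ring-level check, so it does not genuinely bypass the bookkeeping.
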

\begin{proof}
The first assertion follows from \cite[Proposition II.1.1]{fargues-scholze-geometrization} and the other assertions follow easily from that.
\end{proof}

\begin{remark} \label{sec:farg-font-curv-1-base-change-agrees-with-base-change-in-analytic-rings}
The fiber products in \Cref{sec:farg-font-curv-1-perfectoid-cover-by-perfectoid} agree with their derived version in the sense of solid mathematics, i.e. locally the occuring (derived) solid tensor products of solid rings is concentrated in degree $0$. Indeed, this follows from $p$-complete flatness of $\Z_{p,\infty}$ over $\Z_p$, the fact that the occuring derived $p$-completions are already $p$-adically separated in this case and the preservation of $p$-completeness for the solid tensor product (see \cite[Proposition 2.12.10]{mann-mod-p-6-functors}).
\end{remark}

In a similar vein, we can use \cref{sec:farg-font-curv-1-perfectoid-cover-by-perfectoid} to show the following compatibility of fiber products.

\begin{lemma} \label{sec:farg-font-curv-1-commutation-with-pullbacks}
The functor $S\mapsto \mathcal{Y}_{[0,\infty),S}$ from perfectoid spaces in characteristic $p$ to analytic adic spaces commutes with fiber products.
\end{lemma}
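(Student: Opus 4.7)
The plan is to reduce to the affinoid perfectoid case, verify the claim after base-changing along $\Spa(\Z_{p,\infty}) \to \Spa(\Z_p)$ so that everything becomes perfectoid, and then descend via the diamond description. Concretely, by \cref{rslt:open-immersion-on-S-induces-open-immersion-on-Y-S} the formation of $\mathcal{Y}_{[0,\infty),-}$ is compatible with open immersions in $S$, and fiber products of perfectoid spaces are computed locally, so we may assume $S_0, S_1, S_2$ are affinoid perfectoid over $\F_p$. Write $S := S_1 \times_{S_0} S_2$, which exists as an affinoid perfectoid space. Functoriality applied to the projections $S \to S_i$ yields a canonical comparison morphism
\[
    \alpha\colon \mathcal{Y}_{[0,\infty),S} \longto \mathcal{Y}_{[0,\infty),S_1} \times_{\mathcal{Y}_{[0,\infty),S_0}} \mathcal{Y}_{[0,\infty),S_2}
\]
(understood a priori at the level of v-sheaves), and we must show that the right-hand side exists as an analytic adic space and that $\alpha$ is an isomorphism.

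Next, I base-change along $\Spa(\Z_{p,\infty}) \to \Spa(\Z_p)$. By \cref{sec:farg-font-curv-1-perfectoid-cover-by-perfectoid}, each of the spaces $\mathcal{Y}_{[0,\infty),S_i} \times_{\Spa(\Z_p)} \Spa(\Z_{p,\infty})$ as well as $\mathcal{Y}_{[0,\infty),S} \times_{\Spa(\Z_p)} \Spa(\Z_{p,\infty})$ is affinoid perfectoid. Fiber products in perfectoid spaces exist and are themselves perfectoid, computed via tilting as completed tensor products of the tilts of the underlying Huber pairs. Using that $W(-)$ commutes with completed (derived) tensor products of perfect $\F_p$-algebras, together with \cref{sec:farg-font-curv-1-base-change-agrees-with-base-change-in-analytic-rings} to ensure that derived and underived tensor products agree in the relevant $p$-complete setting, the fiber product on the right-hand side of $\alpha$ base-changed to $\Spa(\Z_{p,\infty})$ is affinoid perfectoid, and its tilt agrees with the tilt of the base change of $\mathcal{Y}_{[0,\infty),S}$. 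In particular, the base change of $\alpha$ is an isomorphism of affinoid perfectoid spaces.

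Finally, to conclude descent to $\Spa(\Z_p)$, I invoke the diamond description. By \cite[Proposition II.1.2]{fargues-scholze-geometrization} one has $\mathcal{Y}_{[0,\infty),S}^\diamond \cong S \times_{\Spd(\F_p)} \Spd(\Z_p)$, which on diamonds reduces the claim to a tautological property of fiber products, so $\alpha^\diamond$ is an isomorphism. Since both sides of $\alpha$ are sousperfectoid analytic adic spaces over $\Spa(\Z_p)$ (they admit perfectoid covers after base change to $\Spa(\Z_{p,\infty})$, which moreover splits the relevant Huber pair), the diamond functor is fully faithful on them and $\alpha$ is an isomorphism of analytic adic spaces.

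The main obstacle I anticipate is the comparison step in Step~2: one must verify carefully that the perfectoid fiber product tilts to the tensor product obtained from the Witt-vector description of $\mathcal{Y}_{[0,\infty),-}$, which requires the interaction of $W(-)$ with completed tensor products of perfect $\F_p$-algebras in the correct topologies. Once this is pinned down, the descent Step~3 is essentially formal, either through the diamond identification or directly via pro-finite-\'etale descent along $\Spa(\Z_{p,\infty}) \to \Spa(\Z_p)$.
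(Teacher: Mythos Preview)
Your approach is essentially the same as the paper's—base change to $\Spa(\Z_{p,\infty})$, use the diamond description, and descend—but the paper's execution is cleaner in two ways that are worth noting.

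First, your Step~2 (the Witt-vector computation you flag as the main obstacle) is entirely avoided in the paper. The paper never computes tilts or tensor products of Witt vectors; it goes straight to diamonds after base change. Since $(\mathcal{Y}_{[0,\infty),S_i})_\infty^\diamond \cong S_i \times \Spd(\Z_{p,\infty})$, the comparison on diamonds is a tautology about fiber products, and because the diamond functor is an \emph{equivalence} on perfectoid spaces (not merely fully faithful), this immediately gives that $\alpha_\infty$ is an isomorphism. Your Step~2 and Step~3 are thus doing overlapping work; you only need the diamond argument. (A small correction: the $\mathcal{Y}_{[0,\infty),S_i}$ are not affinoid, so ``affinoid perfectoid'' should read ``perfectoid''.)

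Second, for the descent from $\Z_{p,\infty}$ back to $\Z_p$, you invoke full faithfulness of the diamond functor on sousperfectoid spaces. This is plausible but not quite a standard black box in the form you state it. The paper instead observes that $\Z_{p,\infty}$ is a descendable algebra in $\D_\solid(\Z_p)$ (indeed the map splits), so the base-change functor $(-)_\infty$ is conservative on analytic adic spaces; thus $\alpha_\infty$ being an isomorphism forces $\alpha$ to be one. This argument is self-contained and sidesteps any subtlety about the diamond functor on non-perfectoid spaces.
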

\begin{proof}
To simplify notation, we write $Z_S:=\mathcal{Y}_{[0,\infty),S}$ in this proof, and $(-)_\infty:=(-)\times_{\Spa(\Z_p)}\Spa(\Z_{p,\infty})$. Let $S_1\to S, S_2\to S$ be morphisms of perfectoid spaces in characteristic $p$. We need to see that the natural morphism $\Phi\colon Z_{S_1\times_{S} S_2}\to Z_{S_1}\times_{Z_S} Z_{S_2}$ is an isomorphism as analytic adic spaces. As $\Z_{p,\infty}$ is a descendable algebra in $\D_{\solid}(\Z_p)$ the base change functor $(-)_\infty$ is conservative on analytic adic spaces. Hence, it suffices to show that $\Phi_\infty$ is an isomorphism. Now, $\Phi_\infty$ is a morphism between the perfectoid spaces $(Z_{S_1}\times_{Z_S} Z_{S_2})_\infty\cong Z_{S_1,\infty}\times_{Z_{S,\infty}}Z_{S_2,\infty}$ and $(Z_{S_1\times_S S_2})_\infty$. Passing to the associated diamond $(-)^\diamond$ is a conservative functor on perfectoid spaces which commutes with fiber products, and $Z_{S_i,\infty}^\diamond\cong S_i\times \Spd(\Z_{p,\infty})$. Thus, it suffices to see that the natural morphism $(S_1\times_S S_2)\times \Spd(\Z_{p,\infty})\to (S_1\times \Spd(\Z_{p,\infty}))\times_{(S\times \Spd(\Z_{p,\infty}))}(S_2\times \Spd(\Z_{p,\infty}))$ is an isomorphism, which is clear.
\end{proof}

\subsection{v-descent for \texorpdfstring{$\D_{[0,\infty)}(-)$}{D\_[0,∞)(-)}} \label{sec:v-descent-d_ffs}

Let $S \in \Perf_{\F_p}$. By \Cref{sec:farg-font-curv-1-perfectoid-cover-by-perfectoid} the adic spaces $\mathcal{Y}_{[0,\infty),S}$, $\mathcal{Y}_{(0,\infty),S}$ and $\FF_S$ are sousperfectoid and hence stably uniform (see \cite[Proposition 6.3.4]{scholze-berkeley-lectures}). Thus, given $S\in \Perf_{\F_p}$ the categories
\[
    \D_{\hat\solid}(\mathcal{Y}_{[0,\infty),S}), \D_{\hat\solid}(\mathcal{Y}_{(0,\infty),S}), \D_{\hat\solid}(\FF_S)  
\]
are defined in \Cref{sec:defin-d_hats--1}. In the following we show that these categories satisfy v-descent. We start with the following preliminary observation. As in the previous subsection, we denote by $\Q_{p,\infty}$ the completion of $\bigcup_{n\geq 0} \Q_p(p^{1/p^n})$ and by $\Z_{p,\infty}$ its ring of integers. 

\begin{lemma} \label{sec:v-descent-d_ffs-1-modules-under-o-e-infty}
Let $S\in \Perf_{\F_p}$. Then the pushforward along $\mathcal{Y}_{[0,\infty),S} \times_{\Spa(\Z_p)} \Spa(\Z_{p,\infty})\to \mathcal{Y}_{[0,\infty),S}$ induces an equivalence
\[
   \D_{\hat\solid}(\mathcal{Y}_{[0,\infty),S} \times_{\Spa(\Z_p)} \Spa(\Z_{p,\infty})) \isoto \Mod_{\Z_{p,\infty}}(\D_{\hat\solid}(\mathcal{Y}_{[0,\infty),S})).
\]
Similarly, we have equivalences
\begin{align*}
    \D_{\hat\solid}(\mathcal{Y}_{(0,\infty),S} \times_{\Spa(\Q_p)} \Spa(\Q_{p,\infty})) &\isoto \Mod_{{\Q_{p,\infty}}}(\D_{\hat\solid}(\mathcal{Y}_{(0,\infty),S})),\\
    \D_{\hat\solid}(\FF_S \times_{\Spa(\Q_p)} \Spa(\Q_{p,\infty})) &\isoto \Mod_{{\Q_{p,\infty}}}(\D_{\hat\solid}(\FF_S)). 
\end{align*}
\end{lemma}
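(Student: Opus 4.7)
The three equivalences admit essentially parallel proofs, so I would focus on the first. The second follows by restriction along the open immersion $\Spa(\Q_p) \injto \Spa(\Z_p)$ (which pulls back to an open immersion of the base changes), using \Cref{rslt:open-immersion-induces-categorical-open-immersion-on-hatsolid} together with \Cref{sec:d_ffs-e-perfectoid-properties-of-fargues-fontaine-curves}; the third follows from the second by further analytic descent along the cover $\mathcal{Y}_{(0,\infty),S} \to \FF_S$, which admits local sections by the first part of \Cref{sec:d_ffs-e-perfectoid-properties-of-fargues-fontaine-curves}.

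For the first equivalence, both sides satisfy analytic descent on $\mathcal{Y}_{[0,\infty),S}$ by \cite[Theorem 2.38]{anschuetz_mann_descent_for_solid_on_perfectoids}, so I would reduce to the case of a rational affinoid $\mathcal{Y}_{I,S} = \Spa(A, A^+)$, whose base change $\mathcal{Y}_{I,S} \times_{\Spa(\Z_p)} \Spa(\Z_{p,\infty})$ is affinoid perfectoid by \Cref{sec:farg-font-curv-1-perfectoid-cover-by-perfectoid} (and computes the honest fiber product by \Cref{sec:farg-font-curv-1-commutation-with-pullbacks}); write $(A_\infty, A_\infty^+)$ for its Huber pair. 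The plan is then to write $\Z_{p,\infty}$ as the ($p$-completed) filtered colimit of its finite subextensions $\Z_p[p^{1/p^n}]$. Since each $\Z_p \to \Z_p[p^{1/p^n}]$ is module-finite, \Cref{rslt:hat-solid-relative-tensor-in-PrL} applies at each finite level to give
\[
    \D_{\hat\solid}(A[p^{1/p^n}], A^+[p^{1/p^n}]) \isoto \Mod_{\Z_p[p^{1/p^n}]}(\D_{\hat\solid}(A, A^+)).
\]
Taking a filtered colimit in $n$, the right-hand side becomes $\Mod_{\Z_{p,\infty}}(\D_{\hat\solid}(A, A^+))$: modules over a filtered colimit of algebras in a presentable symmetric monoidal category agree with the colimit of the corresponding module categories, and the colimit $\colim_n \Z_p[p^{1/p^n}]$ computed inside the automatically $p$-complete category $\D_{\hat\solid}(A, A^+)$ recovers $\Z_{p,\infty}$.

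The main technical obstacle is to identify the colimit of the left-hand sides with $\D_{\hat\solid}(A_\infty, A_\infty^+)$. For this I would appeal to the adic descent theorem \cite[Theorem 2.30]{anschuetz_mann_descent_for_solid_on_perfectoids}, which makes $\D_{\hat\solid}$ insensitive to the distinction between the naive filtered colimit $\colim_n (A[p^{1/p^n}], A^+[p^{1/p^n}])$ and its $p$-adic completion $(A_\infty, A_\infty^+)$. The delicate point is that $\D_{\hat\solid}$ of a filtered colimit of adic analytic rings is not in general the colimit of the individual $\D_{\hat\solid}$'s, but the completed/modified nature of $\D_{\hat\solid}$ (together with the fact that each transition map is module-finite, so perfectly well-behaved) should make the identification go through in our specific $p$-complete setting.
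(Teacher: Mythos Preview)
Your reduction to an affinoid piece $\mathcal{Y}_{I,S}=\Spa(A_I,A_I^+)$ is correct and matches the paper. The divergence comes afterwards, and the detour through finite subextensions introduces a genuine gap.

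The paper applies \cite[Proposition 2.21]{anschuetz_mann_descent_for_solid_on_perfectoids} (i.e.\ \Cref{rslt:hat-solid-relative-tensor-in-PrL}) \emph{directly} to the map $(A_I,A_I^+)\to (A_{I,\infty},A_{I,\infty}^+)$. The point you are missing is the one recorded at the start of \Cref{sec:defin-d_hats-}: the analytic ring $(B,B^+)_\solid$ only depends on the integral closure of $B^+ + B^{\circ\circ}$. Since $A_{I,\infty}^+$ is the completed integral closure of $A_I^+$ inside $A_{I,\infty}$, one has $(A_{I,\infty},A_{I,\infty}^+)_\solid \cong (A_{I,\infty},A_I^+)_\solid$, and with this choice of $B^+$ the ``finitely generated'' hypothesis in \Cref{rslt:hat-solid-relative-tensor-in-PrL} is trivially satisfied. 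One then reads off
\[
\D_{\hat\solid}(A_{I,\infty},A_{I,\infty}^+)\;\cong\;\Mod_{A_{I,\infty}}\bigl(\D_{\hat\solid}(A_I,A_I^+)\bigr)\;=\;\Mod_{\Z_{p,\infty}}\bigl(\D_{\hat\solid}(\mathcal{Y}_{I,S})\bigr),
\]
using that $A_{I,\infty}\cong A_I\widehat\otimes_{\Z_p}\Z_{p,\infty}$ agrees with the solid tensor product (cf.\ \Cref{sec:farg-font-curv-1-base-change-agrees-with-base-change-in-analytic-rings}). Ironically, the very observation that makes your finite-level step work (namely that $p^{1/p^n}$ is topologically nilpotent, so adjoining it does not change the analytic ring structure) already works at the infinite level.

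Your proposed workaround has two concrete problems. First, $\D_{\hat\solid}(A,A^+)$ is \emph{not} ``automatically $p$-complete'': it is $\mathrm{Ind}$ of completions of compacts, so filtered colimits are not completed. The colimit $\colim_n \Z_p[p^{1/p^n}]$ computed inside it is the uncompleted union, not $\Z_{p,\infty}$; you would still owe an argument that the completion map induces an equivalence of module categories. Second, the adic descent theorem \cite[Theorem~2.30]{anschuetz_mann_descent_for_solid_on_perfectoids} concerns descent along reductions modulo the ideal of definition, not the behaviour of $\D_{\hat\solid}$ under filtered colimits or passage to completions, so invoking it for your ``delicate point'' is a non sequitur.
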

\begin{proof}
By \cref{rslt:open-immersion-on-S-induces-open-immersion-on-Y-S} and \cite[Theorem 2.38]{anschuetz_mann_descent_for_solid_on_perfectoids} both sides satisfy analytic descent in $S$. Hence, we may assume that $S=\Spa(R,R^+)$ is affinoid, and by a further analytic descent we can replace $\mathcal{Y}_{[0,\infty),S}$ by $\mathcal{Y}_{I,S}$ for some compact interval $I\subseteq [0,\infty)$ (for the radius function associated with some pseudo-uniformizer $\varpi\in R$). In particular, we may assume that $\mathcal{Y}_{I,S}=\Spa(A_I,A_I^+)$ is affinoid. Then $\Spa(\Z_{p,\infty})\times_{\Spa(\Z_p)}\mathcal{Y}_{I,S}$ is affinoid perfectoid (by the proof of \cite[II.1.1]{fargues-scholze-geometrization}), and 
\begin{align*}
    \Spa(\Z_{p,\infty})\times_{\Spa(\Z_p)}\mathcal{Y}_{I,S} = \Spa(A_{I,\infty},A_{I,\infty}^+)
\end{align*}
with $A_{I,\infty}\cong A_I\widehat{\otimes}_{\Z_p} \Z_{p,\infty}$ the completed tensor product (equivalently the solid tensor product) and $A_{I,\infty}^+$ the completed integral closure of $A_I^+$. By \cite[Proposition 2.21]{anschuetz_mann_descent_for_solid_on_perfectoids} we can now conclude the result. We note that the tensor product agrees here with the derived version (see \Cref{sec:farg-font-curv-1-base-change-agrees-with-base-change-in-analytic-rings}). 
\end{proof}

Using \cref{sec:v-descent-d_ffs-1-modules-under-o-e-infty} we can now deduce v-descent of the assignment $S\mapsto \D_{\hat\solid}(\mathcal{Y}_{[0,\infty),S})$ by reducing it to the v-descent for $S \mapsto \Dhsa{S}$:

\begin{theorem} \label{rslt:v-descent-for-D-0-infty}
There exists a unique hypercomplete v-sheaf $S\mapsto \D_{[0,\infty)}(S)$ of categories on $\Perf_{\F_p}$ such that for $S\in \Perf_{\F_p}$, which admits a morphism of finite $\dimtrg$ to a totally disconnected perfectoid space, we have $\D_{[0,\infty)}(S)\cong \D_{\hat\solid}(\mathcal{Y}_{[0,\infty),S})$ compatibly with pullback.
\end{theorem}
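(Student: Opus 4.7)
The plan is to define $\D_{[0,\infty)}(S)$ for a general small v-stack $S$ by hypercomplete v-sheafification of the presheaf $S \mapsto \D_{\hat\solid}(\mathcal{Y}_{[0,\infty),S})$ that is already defined on the full subcategory of ``nice'' perfectoid spaces (those admitting a morphism of finite $\dimtrg$ to a totally disconnected perfectoid space). Uniqueness is automatic from the universal property of sheafification, so the real work is to verify that this presheaf is already a hypercomplete v-sheaf on the nice locus; this will guarantee that sheafification does not alter the value on nice $S$. Note that the nice locus is closed under fiber products (by the argument used for strictly totally disconnected spaces), so the Čech nerve of a v-hypercover stays within this class.

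Fix such a v-hypercover $S_\bullet \to S$ by nice perfectoid spaces. The goal is to show that
\[
   \D_{\hat\solid}(\mathcal{Y}_{[0,\infty),S}) \isoto \lim_{n\in \Delta} \D_{\hat\solid}(\mathcal{Y}_{[0,\infty),S_n}).
\]
The first step is to base-change along $\Z_p \to \Z_{p,\infty}$ and translate the statement into one about $\Dhsa{(-)}$. By \cref{sec:v-descent-d_ffs-1-modules-under-o-e-infty}, applied termwise, the right-hand side after taking $\Z_{p,\infty}$-modules becomes a limit of $\Dhsa{\mathcal{Y}_{[0,\infty),S_n} \times_{\Spa(\Z_p)} \Spa(\Z_{p,\infty})}$, and analogously for the left-hand side. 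Using that the formation of module categories commutes with limits of presentable categories, and using \cref{sec:farg-font-curv-1-commutation-with-pullbacks} to identify $\mathcal{Y}_{[0,\infty),S_\bullet} \times \Spa(\Z_{p,\infty})$ with the Čech nerve of the v-cover of perfectoid spaces $\mathcal{Y}_{[0,\infty),S_0} \times \Spa(\Z_{p,\infty}) \to \mathcal{Y}_{[0,\infty),S} \times \Spa(\Z_{p,\infty})$, the claim after $\tensor \Z_{p,\infty}$ reduces directly to hypercomplete v-descent of $\Dhsa{(-)}$, which is \cref{rslt:Dhsa-is-sheaf}.

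The second step is to descend the equivalence from $\Z_{p,\infty}$-modules back to $\Z_p$-modules. For this I would invoke descendability (in the sense of Mathew) of $\Z_p \to \Z_{p,\infty}$ as solid rings: the solid cotensor coalgebra $\Z_{p,\infty}^{\tensor_\solid \bullet+1}$ recovers $\Z_p$ by taking the totalization, and the higher terms are again perfectoid. Combined with Step 1 applied to the (compatible) Čech nerve of $\Z_p \to \Z_{p,\infty}$ on both sides of the desired equivalence, this upgrades the $\Z_{p,\infty}$-linear equivalence to a $\Z_p$-linear one. Alternatively, since $\Spa(\Z_{p,\infty}) \to \Spa(\Z_p)$ is itself a v-cover, one can bootstrap $\Dhsa{}$-descent applied to the combined cover $\mathcal{Y}_{[0,\infty),S_0} \times \Spa(\Z_{p,\infty}) \to \mathcal{Y}_{[0,\infty),S}$.

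The main obstacle I anticipate is this second step: one must be careful that $\Z_{p,\infty}/\Z_p$ really behaves as a descendable (equivalently: good enough v-cover) extension at the level of $\D_{\hat\solid}$-module categories rather than just at the level of underlying rings, and that the identification of module categories with $\Dhsa{(-)}$ provided by \cref{sec:v-descent-d_ffs-1-modules-under-o-e-infty} is compatible with the cosimplicial structure defining descent. The first step is essentially formal once \cref{sec:v-descent-d_ffs-1-modules-under-o-e-infty}, \cref{sec:farg-font-curv-1-commutation-with-pullbacks} and \cref{rslt:Dhsa-is-sheaf} are in hand.
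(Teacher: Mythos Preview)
Your proposal is correct and follows essentially the same route as the paper: reduce to the perfectoid level via \cref{sec:v-descent-d_ffs-1-modules-under-o-e-infty}, invoke v-descent of $\Dhsa{(-)}$ from \cref{rslt:Dhsa-is-sheaf}, and then descend along $\Z_p\to\Z_{p,\infty}$ by descendability. The paper organizes this slightly differently (it first produces the v-sheaf at the $\Z_{p,\infty}$-level by composing $\D_{\hat\solid}(\mathcal O_{(-)})$ with the v-cover-preserving functor $S\mapsto \mathcal Y_{[0,\infty),S}\times_{\Spa(\Z_p)}\Spa(\Z_{p,\infty})$, rather than citing \cref{sec:farg-font-curv-1-commutation-with-pullbacks}), and it is more explicit about your anticipated obstacle: descendability of $\Z_p\to\Z_{p,\infty}$ holds because the map splits, and v-descent for each $\Mod_{P_n}(\D_{\hat\solid}(\mathcal Y_{[0,\infty),(-)}))$ is justified via nuclearity of $P_n$ over $\Z_{p,\infty}$. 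One small slip: in your Step~1 the category appearing after taking $\Z_{p,\infty}$-modules is $\D_{\hat\solid}$ (equivalently $\Mod_{\mathcal O}\Dhsa{}$), not $\Dhsa{}$ itself, though your reduction to \cref{rslt:Dhsa-is-sheaf} is of course still valid.
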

\begin{proof}
The results of \cite{anschuetz_mann_descent_for_solid_on_perfectoids} recalled in \Cref{sec=6ff-for-ohat-cohomology} give a hypercomplete v-sheaf $Z\mapsto \D^a_{\hat\solid}(\ri^+_Z)$ on the category $\Perfd$ of all perfectoid spaces over $\Z_p$. This formally implies that $Z\mapsto \D_{\hat\solid}(\mathcal{O}_Z) = \Mod_{\mathcal{O}_Z}(\D^a_{\hat\solid}(\ri^+_Z))$ is also a hypercomplete v-sheaf on the category $\Perfd$ of all perfectoid spaces over $\Z_p$. Composing with the functor
\[
    \Perf_{\F_p}\to \Perfd, \qquad S \mapsto \mathcal{Y}_{[0,\infty),S} \times_{\Spa(\Z_p)} \Spa(\Z_{p,\infty}),
\]
which preserves v-covers, yields a hypercomplete v-sheaf on $\Perf_{\F_p}$ sending $S$ to $$\D_{\hat\solid}(\mathcal{O}_{\mathcal{Y}_{[0,\infty),S} \times_{\Spa(\Z_p)} \Spa(\Z_{p,\infty})}).$$ If $S$ admits a morphism of finite $\dimtrg$ to a totally disconnected perfectoid space, then by \Cref{rslt:Dhsa-is-sheaf} (using analytic descent for $\D_{\hat\solid}$ and that $\Spd(\Z_{p,\infty})\times S\to S$ has finite $\dimtrg$)
\[
    \D_{\hat\solid}(\mathcal{O}_{\mathcal{Y}_{[0,\infty),S} \times_{\Spa(\Z_p)} \Spa(\Z_{p,\infty})})\cong \D_{\hat\solid}(\mathcal{Y}_{[0,\infty),S} \times_{\Spa(\Z_p)} \Spa(\Z_{p,\infty})),
\]
which agrees with $\mathrm{Mod}_{\Z_{p,\infty}}(\D_{\hat\solid}(\mathcal{Y}_{[0,\infty),S}))$ by \Cref{sec:v-descent-d_ffs-1-modules-under-o-e-infty}.
Now, $\Z_{p,\infty}$ is a descendable algebra object in $\D_{\solid}(\Spa(\Z_p))$ because the map $\Z_p \to \Z_{p,\infty}$ splits. Letting $P_n$ denote the $n$-fold (solid) tensor product of $\Z_{p,\infty}$ over $\Z_p$, then by descendability of $\Z_p \to \Z_{p,\infty}$ the functor 
\[
    \D_{\hat\solid}(\mathcal{Y}_{[0,\infty),S})\to \varprojlim\limits_{n\in \Delta}\mathrm{Mod}_{P_n}(\D_{\hat\solid}(\mathcal{Y}_{[0,\infty),S}))
\]
is an equivalence. Note that each term in the limit satisfies descent for $S\in \Perf_{\F_p}$ (admitting a morphism of finite $\dimtrg$ to a totally disconnected perfectoid space), cf.\ the argument in \cite[Lemma 2.40]{anschuetz_mann_descent_for_solid_on_perfectoids} (here we use that $P_n$ is a nuclear $P_0=\Z_{p,\infty}$-algebra for each map $[0]\to [n]$ in $\Delta$). We can conclude that for each v-hypercover $S_\bullet\to S$ with $S, S_n, n\in \Delta,$ admitting morphisms of finite $\dimtrg$ to totally disconnected perfectoid spaces the canonical functor
\[
    \D_{\hat\solid}(\mathcal{Y}_{[0,\infty),S})\to \varprojlim\limits_{n\in \Delta}(\D_{\hat\solid}(\mathcal{Y}_{[0,\infty),S_n}))
\]
is an equivalence. This finishes the proof.
\end{proof}

\begin{definition} \label{def:D-0-infty}
\begin{defenum}
    \item Using \cref{rslt:v-descent-for-D-0-infty} we let 
    \begin{align*}
        \Perf_{\F_p}^\opp \to \Cat, \qquad S \mapsto \D_{[0,\infty)}(S)
    \end{align*}
    be the unique hypercomplete v-sheaf on the category of small v-stacks over $\Spd(\F_p)$ such that $\D_{[0,\infty)}(S)\cong \D_{\hat\solid}(\mathcal{Y}_{[0,\infty),S})$, compatibly with pullback, for any perfectoid space $S\in \Perf_{\F_p}$ which admits a morphism of finite $\dimtrg$ to a totally disconnected perfectoid space.

    \item If $f\colon S'\to S$ is a morphism of small v-stacks, then we denote by
    \begin{align*}
        f^\ast\colon \D_{[0,\infty)}(S) \rightleftarrows \D_{[0,\infty)}(S') \noloc f_\ast
    \end{align*}
    the restriction morphism $f^\ast$ for the v-sheaf $\D_{[0,\infty)}(-)$ and its its right adjoint $f_\ast$.
\end{defenum}
\end{definition}

\begin{remark} \label{def:D-0-infty-without-0}
Replacing in \Cref{def:D-0-infty} (and \Cref{rslt:v-descent-for-D-0-infty}) the space $\mathcal{Y}_{[0,\infty),S}$ by $\mathcal{Y}_{(0,\infty),S}$ leads to the variant
\begin{align*}
    \Perf_{\F_p}^\opp \to \Cat, \qquad S \mapsto \D_{(0,\infty)}(S).
\end{align*}
\end{remark}

Simlarly to \cref{def:D-0-infty-without-0} one could also introduce a version of the sheaf categories where we replace $S$ by $\FF_S$. We will however take a slightly different approach to this category:

\begin{definition} \label{def:ff-categories}
For $S$ a small v-stack, we define
\[
    \DFF(S,\Z_p) := \D_{[0,\infty)}(S/\varphi^\Z), \qquad \DFF(S,\Q_p) := \D_{(0,\infty)}(S/\varphi^{\Z}).
\]
Here $\varphi$ denotes the canonical Frobenius on $S$.
\end{definition}

\begin{remarks}
\begin{remarksenum}
    \item If $S$ is a perfectoid space which admits a morphism of finite $\dimtrg$ to a totally disconnected perfectoid space, then $\DFF(S,\Q_p) = \D_{\hat\solid}(\mathcal{Y}_{(0,\infty),S})^\varphi = \D_{\hat\solid}(\FF_S)$ by analytic descent of $\D_{\hat\solid}(-)$.

    \item For a small v-stack $S$ the quotient $S/\varphi^\Z$ does no longer take values in groupoids (but 2-groupoids), hence falls outside the applicability of \Cref{def:D-0-infty}. But in fact, we can extend $\D_{[0,\infty)}(-)$ to small v-stacks of anima without substantial problems, and so we gloss over this technical point. In practice, we only use the quotient $S/\varphi^\Z$ in cases where $S$ is a small v-sheaf.

    \item \label{rmk:choice-coefficients-Qp} In all the above, it would have been possible without any extra difficulty to replace the Fargues--Fontaine curves for the local field $\Q_p$ by Fargues--Fontaine curves for the local field $L$, if $L$ is a given finite extension of $\Q_p$ (or a local field of positive characteristic!). This entails to replacing Witt vectors by ramified Witt vectors and would lead to $\mathcal{O}_L$- or $L$-linear categories $\D_{[0,\infty),L}(-)$, $\D_{(0,\infty),L}(-)$, $\DFF(-,\mathcal{O}_L)$, $\DFF(-,L)$. In order to keep the notation simple we stick to the case $L=\Q_p$.
\end{remarksenum}
\end{remarks}

Using the results for the pushforward functors in the $\Dhsa{(-)}$ formalism we can formally deduce similar properties for $f_\ast$ in the setting of sheaves on the Fargues--Fontaine curve. It is convenient to allow coefficients in an arbitrary nuclear $\Z_p$-algebra $\Lambda$.

\begin{proposition} \label{rslt:D-0-infty-pushforward-colim-and-base-change}
Let $f\colon S^\prime\to S$ be a morphism of small v-stacks. Assume that $f$ is $p$-bounded (and in particular qcqs). Let $\Lambda$ be a nuclear $\Z_p$-algebra.
\begin{propenum}
    \item \label{rslt:D-0-infty-pushforward-preserve-colim} The functor $f_\ast\colon \Mod_\Lambda\D_{[0,\infty)}(S')\to \Mod_\Lambda\D_{[0,\infty)}(S)$ preserves colimits and is $\D_\nuc(\Lambda)$-linear.

    \item \label{rslt:D-0-infty-pushforward-base-change} If 
    \[\begin{tikzcd}
        {T'} & {S'} \\
        T & S
        \arrow["{g'}", from=1-1, to=1-2]
        \arrow["g", from=2-1, to=2-2]
        \arrow["f", from=1-2, to=2-2]
        \arrow["{f'}", from=1-1, to=2-1]
    \end{tikzcd}\]
    is a cartesian diagram of small v-stacks, then the natural transformation
    \[
        g^\ast f_\ast\to f'_\ast g^{\prime,\ast}\colon \Mod_\Lambda \D_{[0,\infty)}(S')\to \Mod_\Lambda \D_{[0,\infty)}(T)
    \]
    is an isomorphism.
\end{propenum}
\end{proposition}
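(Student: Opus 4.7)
The plan is to deduce both statements from the analogous results for the $\Dhsa{(-)}$-formalism (\cref{rslt:Dsha-pushforward-colim-and-base-change}) by descent along the descendable map $\Z_p\to\Z_{p,\infty}$.

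First, the case of general $\Lambda$ reduces to the case $\Lambda=\Z_p$. The forgetful functor $\Mod_\Lambda\D_{[0,\infty)}(-)\to\D_{[0,\infty)}(-)$ is conservative, preserves both limits and colimits, and intertwines the $\Lambda$-linear pushforward and pullback with the underlying ones; hence colim-preservation and base change for $f_\ast$ descend from $\Z_p$- to $\Lambda$-coefficients. The $\D_\nuc(\Lambda)$-linearity is then obtained, once colim-preservation is in hand, by the argument of \cref{sec:funct-da_h--2-restricted-projection-formula-for-nuclear-stuff} applied verbatim in the $\D_{[0,\infty)}$-setting (which needs only rigidity of $\D_\nuc(\Lambda)$ and the colim-preservation of $f_\ast$).

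Assume now $\Lambda=\Z_p$. The proof of \cref{rslt:v-descent-for-D-0-infty}, combined with \cref{sec:v-descent-d_ffs-1-modules-under-o-e-infty} (extended to higher tensor powers of $\Z_{p,\infty}$ by iteration), provides for every small v-stack $T$ over $\F_p$ a natural identification
\[
    \D_{[0,\infty)}(T)\;\cong\;\varprojlim_{[n]\in\Delta}\;\Mod_{P_n}\bigl(\D_{[0,\infty)}(T)\bigr)\;\cong\;\varprojlim_{[n]\in\Delta}\;\D_{\hat\solid}\bigl(\mathcal{O}_{T\times\Spd(P_n)}\bigr),
\]
where $P_n$ is the $n$-fold solid tensor product of $\Z_{p,\infty}$ over $\Z_p$ and $T\times\Spd(P_n)$ is the resulting untilted small v-stack. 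By \cref{sec:farg-font-curv-1-perfectoid-cover-by-perfectoid}, each term on the right may in turn be described as the $\Mod_\mathcal{O}\Dhsa{(-)}$-value on a perfectoid untilted small v-stack, and all identifications are functorial in $T$ and compatible with pullback.

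Applied to our morphism $f\colon S'\to S$, the induced morphism at each level of the cosimplicial diagram is a base change of $f$ and hence remains $p$-bounded by \cref{sec:funct-da_h--1-stability-of-p-boundedness}. Therefore \cref{rslt:Dsha-pushforward-colim-and-base-change} yields termwise colim-preservation and base change on $\Dhsa$; the restricted projection formula \cref{sec:funct-da_h--2-restricted-projection-formula-for-nuclear-stuff}, applied to the nuclear structure sheaf $\mathcal{O}$, lifts both properties to the $\Mod_\mathcal{O}$-refinement. Passing to the limit in $\Delta$ preserves them, since the limit is formed in $\Pr^R$ where colim-preservation and base-change assertions can be checked termwise. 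The main obstacle is the careful identification of $f_\ast$ on $\D_{[0,\infty)}$ with the limit of the termwise pushforwards under the descent identification; this follows from the compatibility of pullback with the identification, together with adjunction.
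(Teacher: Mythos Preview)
Your approach is correct and rests on the same mechanism as the paper's---descent along the descendable map $\Z_p\to\Z_{p,\infty}$ to reduce to \cref{rslt:Dsha-pushforward-colim-and-base-change}---but the paper organizes the reduction more economically. Instead of first passing from general $\Lambda$ down to $\Z_p$ via the forgetful functor and then running the full cosimplicial limit over the \v{C}ech nerve (which forces you to identify $\Mod_{P_n}(\D_{[0,\infty)}(-))$ geometrically for every $n$), the paper proves a two-directional change-of-coefficients principle: if the claim holds for $\Lambda$ then it holds for any $\Lambda'$ over $\Lambda$ (since colimit-preservation plus rigidity of $\D_\nuc(\Lambda)$ gives $\D_\nuc(\Lambda)$-linearity of $f_\ast$, whence $f_\ast^\Lambda\otimes_\Lambda\Lambda'=f_\ast^{\Lambda'}$), and conversely if $\Lambda\to\Lambda'$ is descendable and the claim holds for $\Lambda'$, then the right adjoints $f_\ast^{\Lambda'_n}$ are compatible with the cosimplicial transition maps (by the forward direction applied to each $\Lambda'_n\to\Lambda'_m$) and hence descend. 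This reduces everything to the \emph{single} case $\Lambda=\Z_{p,\infty}$, where \cref{sec:v-descent-d_ffs-1-modules-under-o-e-infty} gives the identification with $\D_{\hat\solid}$ of a perfectoid space directly and \cref{rslt:Dsha-pushforward-colim-and-base-change} concludes. Your version works, but the ``iteration of \cref{sec:v-descent-d_ffs-1-modules-under-o-e-infty} to higher $P_n$'' and the limit-in-$\Pr^R$ step are precisely what the paper's packaging avoids; in particular, your claim that the limit of the termwise $f_\ast$'s computes the global $f_\ast$ already presupposes the Beck--Chevalley compatibility that the paper isolates as the ``forward'' direction of its argument.
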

\begin{proof}
Let $\Lambda \to \Lambda'$ be a morphism of nuclear $\Z_p$-algebras. If both assertions are true for $\Lambda$, then they are true for $\Lambda'$ as well. Indeed, if $f_\ast = f_\ast^\Lambda\colon \Mod_\Lambda \D_{[0,\infty)}(S')\to \Mod_\Lambda \D_{[0,\infty)}(S)$ commutes with colimits, then it is $\D_\nuc(\Lambda)$-linear (by rigidity of $\D_\nuc(\Lambda)$, \cite[Lemma 3.33.]{anschuetz_mann_descent_for_solid_on_perfectoids}) and hence the base change $f_\ast^\Lambda \tensor_\Lambda \Lambda'$ agrees with $f_\ast^{\Lambda'}$. Conversely, if $\Lambda \to \Lambda'$ is a descendable morphism of nuclear $\Z_p$-algebras then we claim that the assertion for $\Lambda'$ implies the one for $\Lambda$. Let $[n]\mapsto \Lambda'_n$ be the \v{C}ech nerve for $\Lambda \to \Lambda'$. Then the functor
\[
    \Mod_\Lambda \D_{[0,\infty)}(-) \isoto \varprojlim_{[n]\in \Delta} \Mod_{\Lambda'_n} \D_{[0,\infty)}(-)
\]
is an equivalence. Now the right adjoints $f_\ast^{\Lambda'_n}$ satisfy base-change by the above argument, and hence descend to the right adjoint $f_\ast^\Lambda$. In particular, $f_\ast^\Lambda$ commutes with colimits and base change.

Altogether the above arguments allow us to reduce the claim to the case $\Lambda = \Z_{p,\infty}$. In this case we may by \Cref{sec:v-descent-d_ffs-1-modules-under-o-e-infty} reduce the assertion to \Cref{rslt:Dsha-pushforward-colim-and-base-change}.
\end{proof}

An important subcategory of $\D_{[0,\infty)}(S)$ is formed by the nuclear objects, which are roughly the ``ind-Banach sheaves''. In practice almost all of the relevant objects one deals with are of this form and they enjoy some particularly nice properties. Let us therefore introduce a corresponding notation.

\begin{definition}
Let $S$ be a small v-stack. An object $M\in \D_{[0,\infty)}(S)$ (or $\D_{(0,\infty)}(S)$) is nuclear if its pullback to any perfectoid space which admits a morphisms of finite $\dimtrg$ to a totally disconnected perfectoid space, is nuclear in the sense of \cref{rmk:nuclear-objects-on-stably-uniform-adic-space}. We add a superscript $(-)^\nuc$ to denote the respective full subcategories of nuclear objects.
\end{definition}

\begin{lemma} \label{sec:texorpdfstr-desc-tex-nuclear-sheaves-for-hypercomplete-v-sheaf}
The functors $S\mapsto \D^\nuc_{[0,\infty)}(S)$ and $S\mapsto \D^\nuc_{(0,\infty)}(S)$ are hypercomplete v-sheaves.
\end{lemma}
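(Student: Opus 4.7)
The plan is to exhibit $\D^\nuc_{[0,\infty)}(-)$ as a sub-v-sheaf of the hypercomplete v-sheaf $\D_{[0,\infty)}(-)$ from \cref{rslt:v-descent-for-D-0-infty}, by showing that nuclearity is preserved by pullback and is v-local. Since containment in the full subcategory of nuclear objects is a property (not extra structure), this will immediately imply the descent statement.

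First I would verify that pullback preserves nuclearity. For a morphism between nice affinoid perfectoid spaces, the pullback functor $f^\ast\colon \D_{\hat\solid}(\mathcal{Y}_{[0,\infty),S})\to \D_{\hat\solid}(\mathcal{Y}_{[0,\infty),S'})$ is colimit-preserving and symmetric monoidal, hence it sends trace-class morphisms to trace-class morphisms and thus nuclear objects to nuclear objects. By the very definition of nuclearity on a small v-stack (being nuclear after pullback to every sufficiently nice perfectoid), this extends to arbitrary morphisms of v-stacks, so $\D^\nuc_{[0,\infty)}(-)$ is a well-defined sub-presheaf.

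For the descent statement, let $S_\bullet\to S$ be a v-hypercover and let $M\in \D_{[0,\infty)}(S)$ be an object with $M|_{S_n}$ nuclear for every $n$. Fix an affinoid perfectoid $T\to S$ admitting a morphism of finite $\dimtrg$ to a totally disconnected perfectoid space; it suffices to show $M|_T$ is nuclear. Refining $T\times_S S_\bullet\to T$ analytically produces a v-cover $T'\to T$ with $T'$ an affinoid perfectoid space of the same nice type and with a factorization $T'\to S_0$. By the previous paragraph $M|_{T'}$ is nuclear, so the task reduces to descending nuclearity along the v-cover $T'\to T$ inside $\D_{\hat\solid}(\mathcal{Y}_{[0,\infty),T})$.

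This v-descent for nuclearity on the Fargues--Fontaine curve is the main obstacle. My approach is to use \cref{sec:v-descent-d_ffs-1-modules-under-o-e-infty} to identify nuclearity in $\D_{\hat\solid}(\mathcal{Y}_{[0,\infty),T})$ with $\Z_{p,\infty}$-linear nuclearity on the perfectoid space $\mathcal{Y}_{[0,\infty),T}\times_{\Spa(\Z_p)}\Spa(\Z_{p,\infty})$, then invoke \cref{rslt:hat-solid-Nuc-same-as-solid-Nuc} to transport the question to the almost category $\Dhsa{-}$ which is already known to satisfy hypercomplete v-descent by \cref{rslt:Dhsa-is-sheaf}. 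Combining this descent with the explicit characterization of nuclear objects as filtered colimits of bounded $\pi$-adically complete objects discrete modulo $\pi$ (used in \cref{sec:funct-da_h--2-restricted-projection-formula-for-nuclear-stuff}) reduces the problem to descending the property of being such a generator, which can be checked directly using $\pi$-completeness compatibility under pullback (\cref{rslt:completeness-in-Dhsa-properties}). Finally, the statement for $\D^\nuc_{(0,\infty)}(-)$ follows either by repeating the argument verbatim or formally by restricting along the open immersion $\mathcal{Y}_{(0,\infty),S}\subset \mathcal{Y}_{[0,\infty),S}$ via \cref{rslt:open-immersion-induces-categorical-open-immersion-on-hatsolid}.
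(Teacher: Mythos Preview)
Your overall strategy is sound and matches the paper's: exhibit $\D^\nuc_{[0,\infty)}(-)$ as a full sub-presheaf of the hypercomplete v-sheaf $\D_{[0,\infty)}(-)$ and then verify that nuclearity is v-local. The first two paragraphs are essentially correct.

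The third paragraph, however, contains a genuine gap. Your final move is to invoke the characterization of nuclear $(A^+)_\solid$-modules as filtered colimits of bounded, $\pi$-adically complete, discrete-mod-$\pi$ objects and then claim this ``reduces the problem to descending the property of being such a generator, which can be checked directly using $\pi$-completeness compatibility under pullback.'' This does not work: knowing that $M|_{T'}$ is a filtered colimit of such basic objects gives you a diagram over $T'$, not over $T$. The properties defining an \emph{individual} generator (boundedness, completeness, discreteness mod $\pi$) may well be v-local by \cref{rslt:completeness-in-Dhsa-properties}, but there is no mechanism here to descend the \emph{diagram}, and hence no way to exhibit $M$ itself as such a colimit. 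The characterization you invoke is a presentation statement, not a pointwise-checkable property. Relatedly, your appeal to \cref{rslt:hat-solid-Nuc-same-as-solid-Nuc} to ``transport the question to the almost category $\Dhsa{-}$'' is misdirected: that result compares nuclear objects in $\D_{\hat\solid}$ and $\D_\solid$, and says nothing about the almost category.

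The paper sidesteps this by directly citing the argument of \cite[Lemma~3.30]{anschuetz_mann_descent_for_solid_on_perfectoids} for the v-descent of nuclearity, after first establishing analytic descent of nuclear modules on stably uniform analytic adic spaces (via \cref{sec:defin-d_hats--4-comparison-for-modified-modules}, \cite[Theorem~2.38]{anschuetz_mann_descent_for_solid_on_perfectoids}, and again the argument of \cite[Lemma~3.30]{anschuetz_mann_descent_for_solid_on_perfectoids}). To complete your approach you would need to either reproduce the content of that lemma or find a genuinely intrinsic, v-local characterization of nuclearity---something your current argument does not supply.
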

\begin{proof}
We first note that \Cref{sec:defin-d_hats--4-comparison-for-modified-modules}, \cite[Theorem 2.38]{anschuetz_mann_descent_for_solid_on_perfectoids} and the argument of \cite[Lemma 3.30]{anschuetz_mann_descent_for_solid_on_perfectoids} imply analytic descent of nuclear modules on stably uniform analytic adic spaces. Then the argument of \cite[Lemma 3.30]{anschuetz_mann_descent_for_solid_on_perfectoids} shows hypercomplete v-descent of $\D^\nuc_{[0,\infty)}(-)$ on perfectoid spaces (similarly for $\D^\nuc_{(0,\infty)}(-)$). We additionally note that nuclearity can be checked by pullback to a single cover by perfectoid spaces which admit morphisms of finite $\dimtrg$ to totally disconnected perfectoid spaces.
\end{proof}

\subsection{Six functors for \texorpdfstring{$\D_{[0,\infty)}(-)$}{D\_[0,∞)(-))}} \label{sec:6-functor-formalism}

In this section we will establish a full $6$-functor formalism (in the sense of \cite[Definition A.5.7]{mann-mod-p-6-functors}) for the functor $S\mapsto \D_{[0,\infty)}(S)$ on small v-stacks $S$. It will be convenient to allow coefficients in an arbitrary nuclear $\Z_p$-algebra $\Lambda$.

As in \cref{sec:6ff-for-Dhsa} we first need to establish base-change and projection formula for étale and proper maps and then use the formal construction results in \cite[\S3]{heyer-mann-6ff} to get the desired 6-functor formalism. In \cref{rslt:D-0-infty-pushforward-colim-and-base-change} we already proved qcqs base-change. The following results provide the missing properties:

\begin{lemma} \label{sec:6-functors-dywzs-etale-morphisms-for-dywz}
Let $j\colon U\to S$ be an étale morphism of small v-stacks and $\Lambda$ a nuclear $\Z_p$-algebra. Then the functor $j^\ast =j_\Lambda^\ast = \Lambda \tensor_{\Z_p} j^\ast\colon \Mod_\Lambda\D_{[0,\infty)}(S)\to \Mod_\Lambda\D_{[0,\infty)}(U)$ admits a left adjoint
\[
    j_!=j_!^\Lambda\colon \Mod_\Lambda \D_{[0,\infty)}(U)\to \Mod_\Lambda \D_{[0,\infty)}(S),
\]
which satisfies the projection formula. Moreover, for every morphism $f\colon S'\to S$ of small v-stacks with base change $f'\colon U':=S'\times_S U\to U$, $j'\colon U'\to S'$ the natural morphism
\[
    j_!'f^{\prime,\ast}\to f^\ast j_!
\]
of functors $\Mod_\Lambda \D_{[0,\infty)}(U) \to \Mod_\Lambda \D_{[0,\infty)}(S')$ is an isomorphism.
\end{lemma}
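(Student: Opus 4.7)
The plan is to reduce the statement to its analogue in the $\Dhsa{(-)}$-formalism for untilted small v-stacks, where it is a consequence of \Cref{rslt:Dhsa-etale-lower-shriek-properties} (equivalently, the full $6$-functor formalism of \Cref{rslt:6ff-for-Dhsa}).

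First I would reduce to the case $\Lambda = \Z_p$. Following the opening argument of the proof of \Cref{rslt:D-0-infty-pushforward-colim-and-base-change}, $\Mod_\Lambda \D_{[0,\infty)}(-)$ is obtained from $\D_{[0,\infty)}(-)$ by base change along $\D_\nuc(\Z_p) \to \D_\nuc(\Lambda)$ in $\Pr^L$, compatibly with pullback. Any colimit-preserving $\D_\nuc(\Z_p)$-linear adjunction base-changes to a colimit-preserving $\D_\nuc(\Lambda)$-linear adjunction, and both the projection formula and the $\ast$-base change isomorphism transport to the $\Lambda$-version since they are expressed via colimit-preserving functors. Hence it suffices to treat $\Lambda = \Z_p$.

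Second, I would invoke the canonical equivalence $\D_{[0,\infty)}(T) \cong \Mod_\mathcal{O}(\Dhsa{T \times_{\Spd(\F_p)} \Spd(\Z_p)})$, where $\mathcal{O}$ denotes the untilted structure sheaf. This is implicit in the construction carried out in the proof of \Cref{rslt:v-descent-for-D-0-infty}: for a perfectoid $T$ admitting a morphism of finite $\dimtrg$ to a totally disconnected perfectoid, the identification $(\mathcal{Y}_{[0,\infty),T})^\diamond \cong T \times \Spd(\Z_p)$ from \cite[Proposition~II.1.2]{fargues-scholze-geometrization} combined with $\D_{\hat\solid}(\mathcal{Y}_{[0,\infty),T}) = \Mod_\mathcal{O}(\Dhsa{(\mathcal{Y}_{[0,\infty),T})^\diamond})$ produces the equivalence, and by hypercomplete v-descent of both sides (\Cref{rslt:v-descent-for-D-0-infty} and \Cref{rslt:Dhsa-is-sheaf}) it extends to all small v-stacks, compatibly with pullback. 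The étale map $j\colon U \to S$ then corresponds to the étale map $\tilde j\colon U \times \Spd(\Z_p) \to S \times \Spd(\Z_p)$ of untilted small v-stacks, and by \Cref{rslt:Dhsa-etale-lower-shriek-properties} the pullback $\tilde j^\ast\colon \Dhsa{S \times \Spd(\Z_p)} \to \Dhsa{U \times \Spd(\Z_p)}$ admits a colimit-preserving left adjoint $\tilde j_!$ satisfying the projection formula and $\ast$-base change.

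Finally, I would descend these structures to $\Mod_\mathcal{O}$. Since $\mathcal{O}$ is an idempotent algebra object, $\Mod_\mathcal{O}(\Dhsa{-})$ is the full subcategory of objects $M$ with $\mathcal{O} \tensor M \isoto M$. Applying the projection formula for $\tilde j_!$ with the pulled-back slot equal to $\mathcal{O}$ (and using $\tilde j^\ast \mathcal{O} = \mathcal{O}$) shows that $\tilde j_!$ preserves $\mathcal{O}$-modules; its restriction yields the desired functor $j_!\colon \D_{[0,\infty)}(U) \to \D_{[0,\infty)}(S)$, left adjoint to $j^\ast$. The projection formula and the $\ast$-base change for $j_!$ follow by restricting the corresponding statements for $\tilde j_!$ to $\mathcal{O}$-modules. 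The main technical obstacle is the careful justification of the identification $\D_{[0,\infty)}(-) \cong \Mod_\mathcal{O}(\Dhsa{(-) \times \Spd(\Z_p)})$ at the stacky level, which amounts to matching two hypercomplete v-descent procedures; once granted, the rest of the argument is entirely formal.
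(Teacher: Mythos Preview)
Your approach is correct but takes a longer route than the paper's. After the common reduction to $\Lambda = \Z_p$, the paper simply invokes \Cref{sec:defin-d_hats--1-if-sousperfectoid-can-extend-to-etale}: an \'etale morphism $j\colon U \to S$ of small v-stacks induces (after the implicit v-descent reduction to nice perfectoid $S$, $U$, $S'$) an \'etale morphism $\mathcal{Y}_{[0,\infty),U} \to \mathcal{Y}_{[0,\infty),S}$ of sousperfectoid analytic adic spaces, where the existence of $j_!$ with projection formula and base change is already known for $\D_{\hat\solid}(-)$. Your detour through $\Dhsa{(-) \times \Spd(\Z_p)}$ and the idempotent algebra $\mathcal{O}$ is valid and has the virtue of reusing \Cref{rslt:Dhsa-etale-lower-shriek-properties} as a black box (so the v-descent bookkeeping is already packaged), but it forces you to justify the stacky identification $\D_{[0,\infty)}(T) \cong \Mod_{\mathcal{O}}(\Dhsa{T \times \Spd(\Z_p)})$, which is not stated in this form in the paper (the paper only establishes the $\Z_{p,\infty}$-variant in \Cref{sec:v-descent-d_ffs-1-modules-under-o-e-infty} and \Cref{sec:definition--able-1-existence-of-class-of-shriekable-maps}). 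The paper's shortcut avoids this by working directly on the adic-space side, where $\D_{\hat\solid}$ is defined intrinsically and \Cref{sec:defin-d_hats--1-if-sousperfectoid-can-extend-to-etale} applies without any intermediate identification.
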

\begin{proof}
Base change reduces to the case that $\Lambda = \Z_p$ (note that the projection formula for $j_!$ implies that $j_!$ is $\D_\nuc(\Z_p)$-linear). Then the assertion follows from \Cref{sec:defin-d_hats--1-if-sousperfectoid-can-extend-to-etale}.
\end{proof}

\begin{lemma} \label{-projection-formula-for-dywz}
Let $f\colon S'\to S$ be a proper $p$-bounded morphism of small v-stacks and let $\Lambda$ be a nuclear $\Z_p$-algebra. Then $f_\ast\colon \Mod_\Lambda\D_{[0,\infty)}(S')\to \Mod_\Lambda \D_{[0,\infty)}(S)$ satisfies the projection formula, i.e. for $\mathcal{M}\in \Mod_\Lambda \D_{[0,\infty)}(S)$ and $\mathcal{N}\in \Mod_\Lambda \D_{[0,\infty)}(S')$ the natural morphism
\[
    \mathcal{M}\otimes f_\ast(\mathcal{N})\to f_\ast(f^\ast\mathcal{M}\otimes \mathcal{N})
\]
is an isomorphism.
\end{lemma}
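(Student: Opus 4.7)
The plan is to mirror exactly the strategy used for \cref{rslt:D-0-infty-pushforward-colim-and-base-change}: first reduce the coefficient ring to $\Lambda = \Z_{p,\infty}$, then identify $\Mod_{\Z_{p,\infty}}\D_{[0,\infty)}(-)$ with the $\Dhsa{-}$-category on a suitable base-changed Fargues--Fontaine curve, and finally invoke the projection formula already established in \cref{rslt:Dhsa-projection-formula-for-proper-map}. The projection formula propagates well through change of coefficient ring: if it holds for $\Z_{p,\infty}$, then descendability of $\Z_p \to \Z_{p,\infty}$ yields it for $\Z_p$ via the Čech nerve, and base change along $\Z_p \to \Lambda$ (using that $f_\ast^{\Z_p}$ preserves colimits and hence is $\D_\nuc(\Z_p)$-linear) yields it for an arbitrary nuclear $\Z_p$-algebra $\Lambda$. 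So the essential case is $\Lambda = \Z_{p,\infty}$.

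Next I would use v-descent of $\D_{[0,\infty)}(-)$ in the base variable $S$, together with qcqs base-change (\cref{rslt:D-0-infty-pushforward-base-change}) and the stability of proper and $p$-bounded morphisms under base change (\cref{sec:funct-da_h--1-stability-of-p-boundedness}, \cref{sec:defin-d_hats--1-characterization-of-proper-morphisms}), to reduce to the case where $S$ is an affinoid perfectoid space admitting a morphism of finite $\dimtrg$ to a totally disconnected perfectoid space. In this situation \cref{sec:v-descent-d_ffs-1-modules-under-o-e-infty} supplies a natural equivalence
\[
    \Mod_{\Z_{p,\infty}}\D_{[0,\infty)}(S) \cong \Dhsa{\mathcal{Y}_{[0,\infty),S}\times_{\Spa(\Z_p)}\Spa(\Z_{p,\infty})},
\]
and similarly after base-changing to $S'$. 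Since this equivalence is compatible with pullback by construction, it is automatically compatible with the right-adjoint pushforwards, so the projection formula for $f_\ast^{\Z_{p,\infty}}$ translates into the projection formula for the pushforward along the induced map of untilted diamonds over $\Spd(\Z_{p,\infty})$.

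Finally, by \cref{sec:farg-font-curv-1-commutation-with-pullbacks} the formation of Fargues--Fontaine curves commutes with fibre products, and by \cref{sec:d_ffs-e-perfectoid-properties-of-fargues-fontaine-curves} properness of $S'\to S$ induces properness of the map of associated untilted diamonds; base-changing to $\Spd(\Z_{p,\infty})$ preserves both properness and $p$-boundedness (\cref{sec:funct-da_h--1-stability-of-p-boundedness}). Hence the map of untilted diamonds to which we must apply \cref{rslt:Dhsa-projection-formula-for-proper-map} is proper and $p$-bounded, and the lemma applies.

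The main obstacle, which is mild, lies in verifying the last step: namely that properness and $p$-boundedness of $f$ genuinely transport along the construction $S \mapsto \mathcal{Y}_{[0,\infty),S}^\diamond\times\Spd(\Z_{p,\infty}) \simeq S\times\Spd(\Z_{p,\infty})\times_{\Spd(\F_p)}\Spd(\Z_p)$, so that \cref{rslt:Dhsa-projection-formula-for-proper-map} really does apply to the induced map. Once this compatibility is in place, the rest of the argument is formal.
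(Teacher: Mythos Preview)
Your proposal is correct and follows essentially the same route as the paper: reduce in the coefficient ring to $\Lambda=\Z_{p,\infty}$ via base change and descent (using \cref{rslt:D-0-infty-pushforward-preserve-colim} to ensure $f_\ast$ is $\D_\nuc(\Lambda)$-linear), then invoke \cref{rslt:Dhsa-projection-formula-for-proper-map}. Your version spells out more explicitly why that last invocation is legitimate---namely that the tilt of the induced map on $\mathcal{Y}_{[0,\infty),-}\times_{\Spa(\Z_p)}\Spa(\Z_{p,\infty})$ is the base change $f\times\id$ and hence inherits properness and $p$-boundedness---whereas the paper leaves this implicit; your intermediate v-descent reduction to $S$ being a nice affinoid perfectoid is harmless but not strictly required, since the identification of \cref{sec:v-descent-d_ffs-1-modules-under-o-e-infty} globalizes by v-descent.
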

\begin{proof}
By \cref{rslt:D-0-infty-pushforward-preserve-colim} we know that $f_\ast$ is $\Lambda$-linear. Using base change and descent in $\Lambda$ as in the proof of \Cref{rslt:D-0-infty-pushforward-colim-and-base-change} we can reduce to the case that $\Lambda = \Z_{p,\infty}$. Then the assertion follows from \Cref{rslt:Dhsa-projection-formula-for-proper-map}.
\end{proof}

We can now construct the $6$-functor formalism for $\Mod_\Lambda \D_{[0,\infty)}(S)$. The formulation including ``change-of-coefficients'' is taken from \cite[Proposition 5.14]{mann-nuclear-sheaves}.

\begin{theorem} \label{rslt:6ff-for-D-0-infty}
There exists a class $E$ of morphisms in $\vStacks \times \CAlg(\D_\nuc(\Z_p))^\opp$ and a $\D_\solid(\Z_p)$-linear 6-functor formalism
\[
    \D_{[0,\infty)}(-,-)\colon \Corr(\vStacks\times \CAlg(\D_\nuc(\Z_p))^\opp, E)\to \Pr^L_{\D_\solid(\Z_p)}
\]
satisfying the following properties:
\begin{thmenum}
    \item The underlying functor 
    \begin{align*}
        \vStacks^\opp\times \CAlg(\D_\nuc(\Z_p))\to \Pr^L_{\D_\solid(\Z_p)}
    \end{align*}
    sends $(S,\Lambda)\in \vStacks\times \CAlg(\D_\nuc(\Z_p))$ to $\Mod_\Lambda\D_{[0,\infty)}(S)$ and morphisms to pullback/base change functors.
    
    \item $E$ is stable under pullback and composition, and if $f,g$ are composable maps such that $f,f\circ g\in E$, then $g\in E$.
  
    \item Let $f\colon (S',\Lambda')\to (S,\Lambda)$ be given. If $S'\to S$ is lpbc and $\Lambda \to \Lambda'$ an isomorphism, then $f\in E$. If $f\in E$, then the morphism $\Lambda \to \Lambda'$ is an isomorphism.
    
    \item Every étale map $j\colon S' \to S$ of small v-stacks is cohomologically étale, and thus $j_!$ agrees with the functor $j_!$ from \cref{sec:6-functors-dywzs-etale-morphisms-for-dywz}.
    
    \item Every proper lpbc map of small v-stacks is cohomologically proper, and thus $f_! = f_\ast$.
    
    \item $E$ is $\ast$-local on the target in the sense that if $f\colon (S',\Lambda')\to (S,\Lambda)$ is a morphism with $\Lambda \to \Lambda'$ an isomorphism, then $f$ lies in $E$ if it does so after pullback to any spatial diamond over $S$ which admits a morphism to a strictly totally disconnected perfectoid space.
    
    \item $E$ is $!$-local on the source and target.
\end{thmenum}
\end{theorem}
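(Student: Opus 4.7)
The plan follows closely the strategy used for \cref{rslt:6ff-for-Dhsa}. First, I would construct the 6-functor formalism on the geometric setup $\mathcal{C}$ consisting of prespatial diamonds admitting a morphism to a strictly totally disconnected perfectoid space, equipped with the class $bdc$ of $p$-bounded compactifiable maps. The decomposition of $bdc$ into qcqs open immersions $I$ and proper maps $P$ gives a suitable decomposition in the sense of \cite[Definition~3.3.2]{heyer-mann-6ff}, so I can apply \cite[Proposition~3.3.3]{heyer-mann-6ff}. Its hypotheses (base change and projection formula for étale and proper $p$-bounded maps) are precisely \cref{sec:6-functors-dywzs-etale-morphisms-for-dywz} together with \cref{-projection-formula-for-dywz} and \cref{rslt:D-0-infty-pushforward-base-change}.

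To incorporate the coefficient variable $\Lambda \in \CAlg(\D_\nuc(\Z_p))^\opp$ I would follow the blueprint of \cite[Proposition~5.14]{mann-nuclear-sheaves}: work with pairs $(S,\Lambda)$ and treat coefficient-morphisms $\Lambda \to \Lambda'$ as cohomologically étale (scalar extension). The crucial point enabling this is that $\D_\nuc(\Z_p)$ is rigid symmetric monoidal, and that all pushforwards $f_\ast$ along $p$-bounded geometric maps are $\D_\nuc(\Z_p)$-linear by \cref{rslt:D-0-infty-pushforward-preserve-colim}, so that base change in the coefficient direction is automatic. After building the formalism on $\mathcal{C} \times \CAlg(\D_\nuc(\Z_p))^\opp$, restricting it to spatial diamonds admitting a morphism to a strictly totally disconnected perfectoid space and applying the extension theorem \cite[Theorem~3.4.11]{heyer-mann-6ff} produces a 6-functor formalism on $\vStacks \times \CAlg(\D_\nuc(\Z_p))^\opp$ together with a class $E$ automatically satisfying properties (ii), (vi), (vii). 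Property (iii), that lpbc maps with $\Lambda \isoto \Lambda'$ lie in $E$, then follows because $E$ contains $bdc$-maps on spatial diamonds and qcqs open covers form universal $!$-covers (using that qcqs open immersions are cohomologically étale by construction, cf. \cite[Lemma~4.7.1]{heyer-mann-6ff}).

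For property (iv), I would reduce as in the $\Dhsa$ case via locality of cohomological étaleness on source and target (\cite[Lemma~4.6.3(ii)]{heyer-mann-6ff}) to a qcqs open immersion between strictly totally disconnected spaces, which is cohomologically étale by construction. The main obstacle will be property (v): that every proper lpbc map is cohomologically proper. Following \cref{rslt:6ff-for-Dhsa}, I would argue in two steps via \cite[Lemma~4.6.4]{heyer-mann-6ff}. First, for a proper lpbc monomorphism $f$, the natural map $f_! \to f_\ast$ can be checked after pullback to spatial diamonds admitting a map to a strictly totally disconnected perfectoid space, where it holds by construction of the formalism on $\mathcal{C}$. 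Second, for a general proper lpbc map, the diagonal $\Delta_f$ is a proper lpbc monomorphism, hence cohomologically proper by step one, and a second application of \cite[Lemma~4.6.4]{heyer-mann-6ff} concludes. The principal technical difficulty throughout is tracking compatibility between the geometric and coefficient directions under v-descent, base change along $\Z_p \to \Z_{p,\infty}$, and reduction to spatial diamonds; this is uniformly resolved by the $\D_\nuc(\Z_p)$-linearity of all functors in sight.
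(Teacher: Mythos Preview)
Your proposal is correct and follows essentially the same route as the paper: reduce to the proof of \cref{rslt:6ff-for-Dhsa} by invoking \cref{rslt:D-0-infty-pushforward-base-change}, \cref{sec:6-functors-dywzs-etale-morphisms-for-dywz}, and \cref{-projection-formula-for-dywz} in place of their $\Dhsa{(-)}$ analogues, with the coefficient direction handled by the construction of \cite[Proposition~3.5.22]{heyer-mann-6ff} (which underlies the formulation of \cite[Proposition~5.14]{mann-nuclear-sheaves} that you cite). The only point you do not mention explicitly is the $\D_\solid(\Z_p)$-linearity of the resulting formalism, which the paper observes is automatic by \cite[Lemma~3.2.5]{heyer-mann-6ff}.
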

\begin{proof}
Using \cref{rslt:D-0-infty-pushforward-base-change,sec:6-functors-dywzs-etale-morphisms-for-dywz,-projection-formula-for-dywz} the proof of \cref{rslt:6ff-for-Dhsa} applies here as well and shows the existence of the 6-functor formalism. The extra functoriality on nuclear $\Z_p$-algebras can be obtained analogously to the construction in \cite[Proposition~3.5.22]{heyer-mann-6ff}. Moreover, note that the $\D_\solid(\Z_p)$-linearity of the 6-functor formalism is automatic by \cite[Lemma~3.2.5]{heyer-mann-6ff}. 
\end{proof}

\begin{definition} \label{def:6ff-for-D-0-shriekable-maps}
A morphism $f\colon S'\to S$ of small v-stacks is called $\D_{[0,\infty)}$-$!$-able if $(f,\mathrm{Id}_{\Z_p})$ lies in the minimal class $E$ satisfying the conditions in \Cref{rslt:6ff-for-D-0-infty}.
\end{definition}

We note that \Cref{def:6ff-for-D-0-shriekable-maps} and \Cref{sec:6-funct-da_h-1-definition-p-shriekable-morphism} are probably not comparable as the involved notions of universal $!$-descent differ.
However, we'd like to have a stronger notion of a $!$-able map than $\D_{[0,\infty)}$-$!$-able maps, in order to compare the $\D_{[0,\infty)}$- and the $\D^a_{\hat\solid}$-formalism after base change to $\Z_{p,\infty}$. We develop such a notion in the following.

We note that for any small v-stack $S$ there exists by v-descent an untilted small v-stack, written $\mathcal{Y}_{[0,\infty),S}\times_{\Spa \Z_p}\Spa(\Z_{p,\infty})$, extending the case that $S$ is a perfectoid space (but only in the perfectoid case it is associated with an analytic space over $\Spa(\Z_{p,\infty})$). 

\begin{lemma} \label{sec:definition--able-1-existence-of-class-of-shriekable-maps}
There exists a class $E$ of morphisms of small v-stacks with the following properties:
\begin{lemenum}
    \item $E$ is stable under pullback and composition, and if $f,g$ are composable maps such that $f, f\circ g\in E$, then $g\in E$.

    \item Each lpbc-morphism in the sense of \Cref{def:lpbc-maps} lies in $E$.

    \item $E$ is $\ast$-local on the target in the sense that if $f\colon S'\to S$ is a morphism of small v-stacks whose base change to every spatial diamond over $S$ which admits a morphism to a strictly totally disconnected perfectoid space lies in $E$, then $f$ lies in $E$.

    \item $E$ is $!$-local on the source in the following sense: assume $f\colon S'\to S, g\colon S''\to S'$ are morphisms of small v-stacks, such that $f\circ g, g$ lie in $E$ and the morphism $\mathcal{Y}_{[0,\infty),S''}\times_{\Spa \Z_p}\Spa(\Z_{p,\infty})\to \mathcal{Y}_{[0,\infty),S'}\times_{\Spa \Z_p}\Spa(\Z_{p,\infty})$ of untilted small v-stacks is $\D^a$-$!$-able and satisfies universal $!$-descent for $\D^a_{\hat\solid}$. Then $f\in E$.
    
    \item $E$ is $!$-local on the target in the following sense: assume $f\colon S'\to S$, $g\colon S''\to S$ are morphisms of small v-stacks such that $\mathcal{Y}_{[0,\infty),S''}\times_{\Spa \Z_p}\Spa(\Z_{p,\infty})\to \mathcal{Y}_{[0,\infty),S}\times_{\Spa \Z_p}\Spa(\Z_{p,\infty})$ is $\D^a$-$!$-able and satisfies universal $!$-descent for $\D^a_{\hat\solid}$. If $g$ and the base change of $f$ along $g$ lie in $E$, then $f$ lies in $E$.
    
    \item If $f\colon S'\to S$ lies in $E$, then $f$ is $\D_{[0,\infty)}$-$!$-able, and the morphism $\mathcal{Y}_{[0,\infty),S'}\times_{\Spa(\Z_p)}\Spa(\Z_{p,\infty})\to \mathcal{Y}_{[0,\infty),S}\times_{\Spa(\Z_p)}\Spa(\Z_{p,\infty})$ is $\D^a_{\hat\solid}$-$!$-able.
    
    \item The two 6-functor formalisms
    \[
        \Corr(\vStacks,E)\to \Corr(\vStacks, \text{$\D_{[0,\infty)}$-!-able})\xto{\Mod_{\Z_{p,\infty}}\D_{[0,\infty)}(-)} \Cat
    \]
    and
    \[
        \Corr(\vStacks,E)\xto{\mathcal{Y}_{[0,\infty),(-)}\times \Spa(\Z_{p,\infty})} \Corr(\vStacks^\sharp,\text{$\D^a_{\hat\solid}$-!-able})\xto{\Mod_{\ri}\D^a_{\hat\solid}(-)} \Cat 
    \]
    are uniquely isomorphic if the restrictions to $\vStacks^\opp$ are identified through the natural isomorphism \Cref{sec:v-descent-d_ffs-1-modules-under-o-e-infty}.
\end{lemenum}
\end{lemma}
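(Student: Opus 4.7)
The plan is to construct $E$ as the minimal class of morphisms of small v-stacks that contains the lpbc morphisms and is closed under the stability/locality properties (i)--(v), then verify (vi) and (vii) inductively. Existence of such a minimal class can be established by transfinite induction on a stratification of $E$, modeled on the construction in \cite[Proposition 3.4.11]{heyer-mann-6ff}: at each stage one adjoins morphisms forced by (i), (iii), (iv), (v), iterated until the procedure stabilizes.

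For the base case, we show that if $f\colon S'\to S$ is lpbc, then $f$ lies in $E$ and satisfies (vi). The key point is that the induced morphism $\tilde f\colon \mathcal{Y}_{[0,\infty),S'}\times_{\Spa(\Z_p)}\Spa(\Z_{p,\infty}) \to \mathcal{Y}_{[0,\infty),S}\times_{\Spa(\Z_p)}\Spa(\Z_{p,\infty})$ is lpbc as a morphism of untilted small v-stacks. This uses \Cref{sec:farg-font-curv-1-commutation-with-pullbacks} (to see that the construction $S\mapsto \mathcal{Y}_{[0,\infty),S}\times\Spa(\Z_{p,\infty})$ preserves fiber products), together with the preservation of the properties of being representable in locally spatial diamonds, compactifiable, of locally bounded dimension and $p$-bounded under this functor, checked via reduction to $\Perf_{\F_p}$ and \Cref{sec:exampl-da_h-criterion-to-check-lpbc}. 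Hence $f$ is $\D_{[0,\infty)}$-$!$-able by \Cref{rslt:6ff-for-D-0-infty} and $\tilde f$ is $\D^a_{\hat\solid}$-$!$-able by \Cref{rslt:6ff-for-Dhsa}. The compatibility (vii) on lpbc maps is verified on generators: for étale maps both $j_!$'s are the uniquely determined left adjoints to pullback and hence agree under the identification of pullbacks via \Cref{sec:v-descent-d_ffs-1-modules-under-o-e-infty}; for proper lpbc maps both $!$-pushforwards coincide with the $\ast$-pushforward, which commutes with the $\Z_{p,\infty}$-base change by \Cref{rslt:D-0-infty-pushforward-base-change} and \Cref{rslt:Dsha-pushfoward-base-change}.

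For the inductive step, each of the closure operations (iii)--(v) adds morphisms whose $!$-functors are canonically determined by those on previously added morphisms: in (iii) via $\ast$-locality on the target (as in \cite[Theorem 3.4.11]{heyer-mann-6ff}), in (iv) via $!$-descent on the source, and in (v) via $!$-descent on the target. Both 6-functor formalisms are extended in parallel by exactly the same descent procedure. Since the comparison functor $\Mod_{\Z_{p,\infty}}\D_{[0,\infty)}(-)\cong \Mod_{\ri}\D^a_{\hat\solid}(\mathcal{Y}_{[0,\infty),-}\times \Spa(\Z_{p,\infty}))$ from \Cref{sec:v-descent-d_ffs-1-modules-under-o-e-infty} is $\D_\solid(\Z_p)$-linear and preserves all limits appearing in the descent diagrams (being induced by pullback along a map of v-stacks), it intertwines the two descent procedures. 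Consequently the isomorphism (vii) propagates through the induction.

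The main obstacle lies in this last verification: one has to confirm that the abstract $!$-descent extension procedure is functorial with respect to morphisms of 6-functor formalisms, so that running it simultaneously on $\D_{[0,\infty)}(-)$ and on $\D^a_{\hat\solid}(\mathcal{Y}_{[0,\infty),-}\times \Spa(\Z_{p,\infty}))$ produces the same result up to the equivalence on the underlying $\opp$-functors. The cleanest way to formalize this is to set up both constructions as $6$-functor formalisms on $\Corr(\vStacks, E)$ (using the abstract machinery of \cite[\S3]{heyer-mann-6ff}), uniquely determined by their values on lpbc morphisms together with the compatibility with descent; the uniqueness then yields (vii) by construction.
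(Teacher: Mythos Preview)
Your overall strategy matches the paper's: build $E$ by a transfinite closure procedure starting from lpbc morphisms, following the recipe of \cite[Theorem~3.4.11]{heyer-mann-6ff} with the $!$-locality conditions replaced by the stronger versions (iv)--(v), and then propagate the comparison (vii) through the induction. The paper streamlines your treatment of (vii) by invoking \cite[Proposition~3.4.8]{heyer-mann-6ff}, which packages the ``uniquely determined by values on the initial class plus descent'' argument you sketch at the end; but this is a difference of citation, not of content.

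There is one genuine gap in your inductive verification of (vi). Conditions (iv) and (v) are formulated in terms of universal $!$-descent for $\D^a_{\hat\solid}$ on the curve side, i.e.\ for the morphism $\mathcal{Y}_{[0,\infty),S''}\times\Spa(\Z_{p,\infty}) \to \mathcal{Y}_{[0,\infty),S'}\times\Spa(\Z_{p,\infty})$. When you add a morphism $f$ to $E$ via (iv) or (v), the $\D^a_{\hat\solid}$-$!$-ability part of (vi) propagates directly from the $!$-locality of $\D^a_{\hat\solid}$-$!$-able maps. But the $\D_{[0,\infty)}$-$!$-ability part does not follow automatically: you need to know that the given morphism $g$ (or its analogue) also satisfies universal $!$-descent for $\D_{[0,\infty)}$, and the hypothesis only gives you $\D^a_{\hat\solid}$-descent on the curve. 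Your sentence ``both 6-functor formalisms are extended in parallel by exactly the same descent procedure'' elides this asymmetry. The missing implication is supplied by descendability of $\Z_p \to \Z_{p,\infty}$ in $\D_\solid(\Z_p)$: universal $!$-descent for $\D^a_{\hat\solid}$ on the curve yields it for $\Mod_{\ri}\D^a_{\hat\solid}(\mathcal{Y}_{[0,\infty),-}\times\Spa(\Z_{p,\infty})) \cong \Mod_{\Z_{p,\infty}}\D_{[0,\infty)}(-)$, and descendability then lets you descend $!$-descent from $\Mod_{\Z_{p,\infty}}\D_{[0,\infty)}$ to $\D_{[0,\infty)}$ itself. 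Once you insert this observation, your argument is complete.
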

\begin{proof}
  As in \Cref{rslt:6ff-for-D-0-infty} and \Cref{rslt:6ff-for-Dhsa} we use \cite[Theorem 3.4.11]{heyer-mann-6ff}, but with the following modifications:
  the $!$-locality condition in (iii) and (iv) in loc.\ cit.\ is replaced by the stronger notion as used in (iv) and (v) stated here, i.e. universal $!$-descent is asked for maps $f\colon S'\to S$ such that $\mathcal{Y}_{[0,\infty),S'}\times_{\Spa \Z_p}\Spa(\Z_{p,\infty})\to \mathcal{Y}_{[0,\infty),S}\times_{\Spa \Z_p}\Spa(\Z_{p,\infty})$ satisfies universal $!$-descent for $\D^a_{\hat\solid}$ (note that this involves requiring $!$-descent on the $\D^{a,+}_{\hat\solid}$-categories after base change to \textit{any} untilted small v-stack over $\mathcal{Y}_{[0,\infty),S}\times_{\Spa \Z_p}\Spa(\Z_{p,\infty})$, not just whose coming from a morphism $T\to S$ of small v-stacks).
  With similar modifications, the proof of \cite[Theorem 3.4.11]{heyer-mann-6ff} goes through and provides us with a class satisfying (i)-(v). We may also assume that condition (vi) is satisfied because a morphism $f\colon S'\to S$ satisfying the stronger universal $!$-descent as above in particular satisfies universal $!$-descent for $\D_{[0,\infty)}$ (using that $\Z_p\to \Z_{p,\infty}$ is descendable in $\D_{\solid}(\Z_p)$).
  The final assertion (vii) follows from \cite[Proposition 3.4.8]{heyer-mann-6ff}, and the construction of $\D_{[0,\infty)}$- resp.\ $\D^a_{\hat\solid}$-$!$-able maps through \cite[Theorem 3.4.11]{heyer-mann-6ff}: namely, both 6-functor formalisms agree by construction on compactifiable $p$-bounded morphisms (using $\Lambda$-linearity of pushforward, see \cref{rslt:D-0-infty-pushforward-preserve-colim}, to handle the proper case), and this propagates by \cite[Proposition 3.4.8]{heyer-mann-6ff} to the minimal class $E$ satisfying (i)-(vi), and which is tame in the above stronger sense (the existence of such a class follows from the proof of \cite[Theorem 3.4.11]{heyer-mann-6ff}).
\end{proof}

\begin{definition} \label{def:shriekable-maps}
We call a map of small v-stacks $!$-able if it lies in the minimal class of morphisms $E$ satisfying the conditions in \Cref{sec:definition--able-1-existence-of-class-of-shriekable-maps}.
\end{definition}

Whenever there can arise confusion we clarify if we consider a $!$-able, a $\D_{[0,\infty)}$-$!$-able, or a $\D^a_{\hat\solid}$-$!$-able map in the sense of \Cref{def:shriekable-maps}, \Cref{def:6ff-for-D-0-shriekable-maps} or \Cref{sec:6-funct-da_h-1-definition-p-shriekable-morphism}. 

In practice, the strongest form (as in \Cref{def:shriekable-maps}) is usually satisfied. In fact, in the following we list several useful examples of $!$-able maps.

\begin{proposition} \label{sec:exampl-da_h-morphisms-of-weakly-finite-type}
Let $R$ be a perfect $\F_p$-algebra which is perfectly of finite type over $\F_p$. Then the morphism $f\colon \Spd(R) \to \Spd(\mathbb{F}_p)$ is lpbc, and hence $!$-able. Similarly, for any finite set $R_0\subseteq R$ the morphism $\Spd(R,R_0)\to \Spd(\F_p)$ is lpbc, and hence $!$-able.
\end{proposition}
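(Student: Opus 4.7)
The plan is to verify the criterion in Lemma~\ref{sec:exampl-da_h-criterion-to-check-lpbc}, which reduces lpbc to four conditions: representability in locally spatial diamonds, compactifiability, locally bounded dimension, and analytic-local $p$-boundedness after base change to any strictly totally disconnected perfectoid space. Having set this up, the strategy is a two-step reduction: first from an arbitrary perfectly finitely generated $R$ to a perfect polynomial algebra via a closed immersion, and then from a perfect polynomial algebra to a single perfectoid disc via the product structure.

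For the first reduction, I would write $R$ as the perfection of an ordinary finite-type algebra, $R = (\F_p[x_1,\ldots,x_n]/I)^{\mathrm{perf}}$, so that $R$ is a quotient of the perfect polynomial algebra $P = \F_p[x_1^{1/p^\infty},\ldots,x_n^{1/p^\infty}]$ by a radical ideal $J$. The surjection $P\twoheadrightarrow R$ gives a closed immersion of v-sheaves $\Spd(R, R_0)\hookrightarrow \Spd(P, R_0\cup\{x_1,\ldots,x_n\})$. Closed immersions of this shape are proper, representable in spatial diamonds, of relative dimension zero, and trivially $p$-bounded; they are in particular lpbc. By stability of lpbc maps under composition (Lemma~\ref{sec:6-funct-da_h-1-properties-of-lpbc-maps}), it suffices to handle $\Spd(P, R_0')\to \Spd(\F_p)$.

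For the second reduction, I would exploit the product decomposition
\[
    \Spd(P, \{x_1,\ldots,x_n\}) \cong \Spd(\F_p[x_1^{1/p^\infty}],\{x_1\}) \times_{\Spd(\F_p)}\cdots\times_{\Spd(\F_p)} \Spd(\F_p[x_n^{1/p^\infty}],\{x_n\})
\]
and use stability of lpbc under composition and base change to reduce to the one-variable case, where the source is the perfectoid closed disc $\widetilde{\Disc}_{\F_p}$. Its base change along any map from a strictly totally disconnected $\Spa(A,A^+)$ is the affinoid perfectoid space $\Spa(A\langle x^{1/p^\infty}\rangle, A^+\langle x^{1/p^\infty}\rangle)$, which is weakly perfectly of finite type over $\Spa(A,A^+)$, and is therefore $p$-bounded by Remark~\ref{sec:funct-da_h--1-stability-of-p-boundedness} and compactifiable as it is affinoid. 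The variant $\Spd(R)\to\Spd(\F_p)$ (corresponding to the discrete topology, or equivalently, to the filtered union over finite $R_0$) reduces to the $\Spd(R,R_0)$ case because lpbc is analytic-local on the source.

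The main obstacle is essentially bookkeeping: making precise the Huber-pair topology encoded in $(R,R_0)$ so that $\Spd(R,R_0)$ is unambiguously a closed subdiamond of an $n$-fold product of perfectoid discs, and ensuring that the closed immersion in the first reduction really is proper and of relative dimension zero (which ultimately follows from the fact that a surjection of perfect rings yields an honest closed immersion of v-sheaves). Once this is in place, the four hypotheses of Lemma~\ref{sec:exampl-da_h-criterion-to-check-lpbc} are inherited directly from the standard properties of the perfectoid closed disc, and the conclusion of the proposition is then formal via $!$-ability of lpbc maps from Theorem~\ref{rslt:6ff-for-Dhsa}(iii) (transported to the $\D_{[0,\infty)}$-setting through Lemma~\ref{sec:definition--able-1-existence-of-class-of-shriekable-maps}(ii)).
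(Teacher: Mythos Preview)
Your overall strategy---invoke Lemma~\ref{sec:exampl-da_h-criterion-to-check-lpbc}, embed into perfect polynomial space, and reduce to the perfectoid disc---is exactly the paper's approach. But you have the relationship between $\Spd(R)$ and $\Spd(R,R_0)$ backwards, and this causes two concrete problems.

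First, with the paper's convention $\Spd(R)=\Spd(R,\F_p)$, the space $\Spd(R,R_0)$ is the \emph{open} sublocus of $\Spd(R)$ cut out by $|r|\le 1$ for $r\in R_0$; as $R_0$ grows, $\Spd(R,R_0)$ \emph{shrinks}. So $\Spd(R)$ is not a filtered union of the $\Spd(R,R_0)$; taking $R_0=\emptyset$ already gives all of $\Spd(R)$. Your final reduction step is therefore vacuous in one direction and false in the other.

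Second, and relatedly, your closed immersion $\Spd(R,R_0)\hookrightarrow\Spd(P,\tilde R_0\cup\{x_1,\dots,x_n\})$ need not exist: a $B$-point of the source sends each $r\in R_0$ into $B^+$, but nothing forces the images of the generators $x_i$ to land in $B^+$ unless those images already lie in the integral closure of $\F_p[R_0]$. For arbitrary finite $R_0$ this fails, so the map does not factor through the closed polydisk.

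The fix is simply to reverse the order, as the paper does: first handle $\Spd(R)$ by embedding (after base change to a strictly totally disconnected $S$) as a Zariski-closed subspace of the \emph{open} affine space $\mathbb A^n_S$---Zariski-closed immersions are qcqs quasi-pro-\'etale hence $p$-bounded, and $\mathbb A^n_S\to S$ is visibly $p$-bounded---and then observe that $\Spd(R,R_0)\to\Spd(R)$ is an open immersion, hence lpbc, and compose.
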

\begin{proof}
We use \Cref{sec:exampl-da_h-criterion-to-check-lpbc}. Namely, $f$ is representable in locally spatial diamonds, compactifable and of locally bounded dimension (the last two properties use that $R$ is of perfectly finite type). After base change to a strictly totally disconnected space $S$ we can embed $S\times \Spd(R)$ as a Zariski-closed subspace into some affine space $\mathbb{A}^{n}_S$. Zariski-closed immersions are qcqs quasi-pro-\'etale, and hence $p$-bounded (see \cref{rslt:qcqs-qproet-map-is-p-bounded}), so it suffices to see that $\mathbb{A}^n_S\to S$ is $p$-bounded, but this is clear.
As $R_0$ is a finite set, the morphism $\Spd(R,R_0)\to \Spd(R)$ is an open immersion, and hence lpbc by \Cref{sec:6-funct-da_h-1-properties-of-lpbc-maps}. Thus, $\Spd(R,R_0)\to \Spd(\F_p)$ is lpbc as desired.
\end{proof}

\begin{proposition} \label{sec:exampl-da_h-morphism-from-classifying-stack-shriek-able}
Let $H$ be a locally profinite group with virtually finite $p$-cohomological dimension.\footnote{By this we mean that there exists an open subgroup with finite $p$-cohomological dimension.} Then the morphism $f\colon \Spd(\mathbb{F}_p)/H \to \Spd(\mathbb{F}_p)$ is $!$-able.
\end{proposition}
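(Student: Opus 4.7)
The plan is to first reduce to the case where $H$ is profinite of finite $p$-cohomological dimension, and then apply $!$-locality on the source (condition (iv) of \Cref{sec:definition--able-1-existence-of-class-of-shriekable-maps}) to the canonical cover $g\colon \Spd(\mathbb{F}_p) \to \Spd(\mathbb{F}_p)/H$. For the reduction, I would pick a compact open subgroup $H_0 \subseteq H$ of finite $p$-cohomological dimension, which exists because $H$ is locally profinite, virtually of finite $p$-cd, and closed subgroups of profinite groups of finite $p$-cd again have finite $p$-cd. The set $H/H_0$ is then discrete, so the pullback of $\Spd(\mathbb{F}_p)/H_0 \to \Spd(\mathbb{F}_p)/H$ along the v-cover $\Spd(\mathbb{F}_p) \to \Spd(\mathbb{F}_p)/H$ identifies with the étale map $\bigsqcup_{H/H_0}\Spd(\mathbb{F}_p) \to \Spd(\mathbb{F}_p)$. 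Hence $\Spd(\mathbb{F}_p)/H_0 \to \Spd(\mathbb{F}_p)/H$ is étale, thus lpbc by \Cref{sec:6-funct-da_h-1-properties-of-lpbc-maps}, and in particular $!$-able; stability under composition reduces us to $H=H_0$ profinite of finite $p$-cohomological dimension.

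In this reduced case, I would apply $!$-locality on the source to $g$ and $f$. Since $H$ is profinite, the iterated fiber products $\Spd(\mathbb{F}_p) \times_{\Spd(\mathbb{F}_p)/H} \cdots \times_{\Spd(\mathbb{F}_p)/H} \Spd(\mathbb{F}_p)$ are of the form $\underline{H}^n$, which are qcqs quasi-pro-étale over $\Spd(\mathbb{F}_p)$ and therefore $p$-bounded by \Cref{rslt:qcqs-qproet-map-is-p-bounded}, hence lpbc and $!$-able. Consequently $g$ itself is $!$-able and $f\circ g = \mathrm{id}$ is trivially so. The same qcqs quasi-pro-étale structure ensures that the induced morphism
\[
    g'\colon \mathcal{Y}_{[0,\infty),\Spd(\mathbb{F}_p)}\times_{\Spa(\Z_p)}\Spa(\Z_{p,\infty}) \longrightarrow \mathcal{Y}_{[0,\infty),\Spd(\mathbb{F}_p)/H}\times_{\Spa(\Z_p)}\Spa(\Z_{p,\infty})
\]
is $\D^a_{\hat\solid}$-$!$-able. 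What remains to verify for an application of \Cref{sec:definition--able-1-existence-of-class-of-shriekable-maps}(iv) is that $g'$ satisfies universal $!$-descent for $\D^a_{\hat\solid}$.

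I expect this last point to be the main obstacle. My approach would be: since $g'$ is (pro-)étale, it is cohomologically étale by \Cref{sec:f-smooth-f-cohomological-etaleness}, so $!$-descent for $\D^a_{\hat\solid}$ is equivalent to $*$-descent, which reduces to descendability of the unit $\mathcal{O}^{+a} \to g'_\ast \mathcal{O}^{+a}$ universally over the base. By \Cref{rslt:Dhsa-generated-by-complete-objects-and-pullback-cocts} such descendability can be checked modulo a chosen pseudo-uniformizer $\pi$, in which case we land in the mod-$p$ formalism of \cite{mann-mod-p-6-functors}; there $g'_\ast \mathcal{O}^{+a}/\pi$ computes the continuous $H$-cohomology of $\mathcal{O}^{+a}/\pi$, which lies in a uniformly bounded range of cohomological degrees thanks to the hypothesis on $H$. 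The desired descendability and hence universal $!$-descent should then follow along the lines of the classifying-stack argument in the mod-$p$ six-functor formalism (compare e.g.\ \cite[Section 3.8]{mann-mod-p-6-functors}). The delicate technical point is to verify that this boundedness of $H$-cohomology is preserved after arbitrary base change on the Fargues--Fontaine side; after that check, the full argument assembles via \Cref{sec:definition--able-1-existence-of-class-of-shriekable-maps}(iv).
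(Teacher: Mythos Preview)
Your overall strategy is close to the paper's, but the crucial step breaks down. The claim that the pro-\'etale torsor $g'\colon T\to T/H$ is cohomologically \'etale is false for infinite profinite $H$, and \Cref{sec:f-smooth-f-cohomological-etaleness} does not assert this (it only says cohomological \'etaleness can be checked mod $\pi$). Concretely, for $g\colon S\times\underline{H}\to S$ one has $g_!1=g_*1=C(H,\mathcal{O}^{+a})$, whence $\Hom(1,g^!1)=\Hom(C(H,\mathcal{O}^{+a}),\mathcal{O}^{+a})$ is the space of $\mathcal{O}^{+a}$-valued measures on $H$, whereas $\Hom(1,1)=C(H,\mathcal{O}^{+a})$; these differ for infinite $H$, so $g^!1\neq 1$. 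Thus your reduction of $!$-descent to $*$-descent and then to descendability of $\mathcal{O}^{+a}\to g'_\ast\mathcal{O}^{+a}$ is not valid. (Incidentally, $*$-descent along $g'$ is automatic from v-descent of $\D^a_{\hat\solid}$; the content lies entirely in $!$-descent.)

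The paper instead first applies $\ast$-locality to reduce to a strictly totally disconnected base $S$ (so a pseudo-uniformizer is available on the $\mathcal{Y}$-side), reduces to compact $H$ via \'etale $!$-descent (\cite[Lemma~4.7.1]{heyer-mann-6ff}), and then argues that $T\to T/H$ is proper and lpbc and proves $!$-descent via a comodule argument: using the symmetric monoidal functor $\D_\solid(\Z_p[[\pi]])\to \D^a_{\hat\solid}(\mathcal{O}^+_T)$ it suffices that $1\to C(H,\Z_p[[\pi]])$ has descent in $C(H,\Z_p[[\pi]])$-comodules, which is checked uniformly modulo $(p^n,\pi^n)$ using finite $p$-cohomological dimension (as in \cite[Lemma~3.11]{mod-p-stacky-6-functors}). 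A minor separate point: your first reduction is not ``stability under composition'' but rather $!$-locality on the source applied to the \'etale surjection $\Spd(\F_p)/H_0\to\Spd(\F_p)/H$.
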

\begin{proof}
By the $\ast$-locality of $!$-able maps as in \Cref{rslt:6ff-for-Dhsa}, we may base change to a strictly totally disconnected perfectoid space $S$ over $\Spd(\F_p)$. We claim that the morphism
\begin{align*}
    g\colon \mathcal{Y}_{[0,\infty),S}\times_{\Spa(\Z_p)}\Spa(\Z_{p,\infty}) &\to \mathcal{Y}_{[0,\infty), S/H}\times_{\Spa(\Z_p)}\Spa(\Z_{p,\infty}) \\&\qquad= (\mathcal{Y}_{[0,\infty),S}\times_{\Spa(\Z_p)}\Spa(\Z_{p,\infty}))/H
\end{align*}
is of universal $!$-descent for $\D^a_{\hat\solid}$. In fact, we claim this for $\mathcal{Y}_{[0,\infty),S}\times_{\Spa(\Z_p)}\Spa(\Z_{p,\infty})$ replaced by any untilted small v-stack $T$, which admits a pseudo-uniformizer $\pi$. We may assume that $H$ is compact and of finite $p$-cohomological dimension (because \'etale covers satisfy $!$-descent, cf.\ \cite[Lemma 4.7.1]{heyer-mann-6ff}). We note that $g\colon T \to T/H$ is proper and lpbc. Now, the chosen pseudo-uniformizer $\pi$ on $T$ yields a symmetric monoidal functor $\D_\solid(\Z_p[[\pi]])\to \D^a_{\hat\solid}(\mathcal{O}_T^+)$ and similarly to the proof of \cite[Lemma 3.11]{mod-p-stacky-6-functors}, it suffices to see that the morphism $1\to C(H,\Z_p[[\pi]])$ has descent in the category of $C(H,\Z_p[[\pi]])$-comodules in $\D_\solid(\Z_p[[\pi]])$. This can be checked modulo $(p^n,\pi^n)$ for $n\geq 0$ (provided that the bound is uniform in $n$). As in \cite[Lemma 3.11]{mod-p-stacky-6-functors} this follows from the assumption that $H$ has finite $p$-cohomological dimension.
\end{proof}

Another source of examples of $!$-able maps comes from the theory of Banach-Colmez spaces. Fix a finite extension $E$ of $\Q_p$. Following Fargues-Scholze's notation, if $S$ is a perfectoid space over the residue field $k_E$ of $E$, and $\mathcal{E}_0$, $\mathcal{E}_1$ are vector bundles on $\FF_{S,E}$, with a map $\mathcal{E}_1 \to \mathcal{E}_0$, we denote by 
$$
    \mathcal{BC}([\mathcal{E}_1 \to \mathcal{E}_0]): T \in \Perf_S \mapsto \mathbb{H}^0(\FF_{T,E},[\mathcal{E}_1 \to \mathcal{E}_0]_{|_{\FF_{T,E}}}).
$$
We will assume that $\mathcal{E}_1$ has only negative Harder-Narasimhan slopes.

\begin{proposition} \label{sec:exampl-da_h-bc-space-shriekable}
With the above notations, the morphism $f\colon \mathcal{BC}([\mathcal{E}_1 \to \mathcal{E}_0]) \to S$ is $!$-able.
\end{proposition}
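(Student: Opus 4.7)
The plan is a dévissage. Since the class of $!$-able maps is stable under composition and base change and is $\ast$-local on the target (\cref{sec:definition--able-1-existence-of-class-of-shriekable-maps}), we may work v-locally on $S$ throughout. The rough strategy is to replace $[\mathcal{E}_1 \to \mathcal{E}_0]$ first by the cohomology objects $\mathcal{E}_0$ and $\mathcal{E}_1[1]$ separately, and then, after v-localization, by the Harder--Narasimhan summands $\mathcal{O}(\lambda)$; the remaining base cases are then handled individually.

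To carry out the first reduction, observe that the two-term complex $[\mathcal{E}_1 \to \mathcal{E}_0]$ yields a distinguished triangle of v-sheaves
\[
    \mathcal{BC}(\mathcal{E}_0) \longto \mathcal{BC}([\mathcal{E}_1 \to \mathcal{E}_0]) \longto \mathcal{BC}(\mathcal{E}_1[1]),
\]
in which the first map exhibits the middle term as a torsor over $\mathcal{BC}(\mathcal{E}_1[1])$ under the pullback of the group $\mathcal{BC}(\mathcal{E}_0)$. After v-localization on $\mathcal{BC}(\mathcal{E}_1[1])$, the projection $\mathcal{BC}([\mathcal{E}_1 \to \mathcal{E}_0]) \to \mathcal{BC}(\mathcal{E}_1[1])$ becomes the base change of $\mathcal{BC}(\mathcal{E}_0) \to S$, so by composition with $\mathcal{BC}(\mathcal{E}_1[1]) \to S$ it suffices to prove $!$-ability of $\mathcal{BC}(\mathcal{E}_0) \to S$ and of $\mathcal{BC}(\mathcal{E}_1[1]) \to S$ separately, with $\mathcal{E}_0$ of non-negative slopes and $\mathcal{E}_1$ of strictly negative slopes.

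Invoking v-descent of the classification of vector bundles on $\FF_{S,E}$, we may further v-localize $S$ to assume that $\mathcal{E}_0$ and $\mathcal{E}_1$ split as direct sums of line bundles $\mathcal{O}(\lambda)$. Iterating the same torsor argument reduces to the base cases $\mathcal{BC}(\mathcal{O}(\lambda)) \to S$ for $\lambda \geq 0$ and $\mathcal{BC}(\mathcal{O}(\mu)[1]) \to S$ for $\mu < 0$. For $\lambda > 0$, $\mathcal{BC}(\mathcal{O}(\lambda))$ is representable by a relative perfectoid open ball over $S$, and the projection to $S$ is smooth and compactifiable, hence lpbc by \cref{sec:exampl-da_h-criterion-to-check-lpbc}; for $\lambda = 0$, $\mathcal{BC}(\mathcal{O}) = \underline{\Q_p}_S$ is a disjoint union of pro-finite-\'etale covers of $S$, in particular qcqs quasi-pro-\'etale analytic-locally, hence lpbc by \cref{rslt:qcqs-qproet-map-is-p-bounded}.

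The remaining case $\mathcal{BC}(\mathcal{O}(\mu)[1]) \to S$ with $\mu < 0$ is the main obstacle. The idea is that, v-locally, $\mathcal{BC}(\mathcal{O}(\mu)[1])$ sits in a short exact sequence relating it to the positive-slope Banach--Colmez group $\mathcal{BC}(\mathcal{O}(-\mu))$ modulo a $\underline{\Q_p}$-lattice, coming from the long exact sequence attached to Fargues--Fontaine analogues of the usual fundamental exact sequence; in particular, $\mathcal{BC}(\mathcal{O}(\mu)[1])$ is v-locally a classifying stack for the Banach--Colmez group $\mathcal{BC}(\mathcal{O}(-\mu))$. With such a presentation at hand, one mimics the proof of \cref{sec:exampl-da_h-morphism-from-classifying-stack-shriek-able}: after base change to a strictly totally disconnected $S$, $!$-descent along the quotient map reduces to the descendability of the comonad coming from the action of $\mathcal{BC}(\mathcal{O}(-\mu))$ on $\mathcal{O}_{\mathcal{Y}_{[0,\infty),S}} \widehat{\otimes}_{\Z_p} \Z_{p,\infty}$. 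The key technical input, and the hardest step, is the analogue of the finite $p$-cohomological dimension hypothesis from \cref{sec:exampl-da_h-morphism-from-classifying-stack-shriek-able}, namely a uniform bound on the amplitude of this comonad; this should follow from the fact that $\mathcal{BC}(\mathcal{O}(-\mu))$ is a finite-dimensional perfectoid affine space and therefore admits a finite Koszul-type resolution by free modules, reducing the question to the previously treated cases.
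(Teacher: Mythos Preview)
Your d\'evissage is unnecessary, and the negative-slope step is wrong. The paper's proof is a one-liner: by \cite[Proposition~II.3.5(i)]{fargues-scholze-geometrization} the map $f$ is representable in locally spatial diamonds and partially proper (hence compactifiable), so by \cref{sec:exampl-da_h-criterion-to-check-lpbc} it suffices to bound the $p$-cohomological dimension of the maximal points after base change to strictly totally disconnected $S$, which reduces to a geometric point and is easy in terms of the ranks and degrees. Thus $f$ is lpbc and in particular $!$-able. No splitting into Harder--Narasimhan pieces, no torsor descent, no classifying stacks are needed.

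Your treatment of $\mathcal{BC}(\mathcal{O}(\mu)[1])$ for $\mu<0$ is the real problem. This object is \emph{not} a classifying stack: it is the v-sheaf $T\mapsto H^1(\FF_{T,E},\mathcal{O}(\mu))$, a positive-dimensional locally spatial diamond over $S$. The presentation you allude to has the roles reversed: choosing (v-locally) a short exact sequence $0\to\mathcal{O}(\mu)\to\mathcal{E}\to\bigoplus_j\mathcal{O}_{S_j^\sharp}\to 0$ with $\mathcal{E}$ semistable of slope $0$, one obtains
\[
0\longto \mathbb{L}\longto \textstyle\bigoplus_j \mathbb{A}^{1,\diamond}_{S_j^\sharp}\longto \mathcal{BC}(\mathcal{O}(\mu)[1])\longto 0
\]
with $\mathbb{L}$ an $E$-local system, i.e.\ $\mathcal{BC}(\mathcal{O}(\mu)[1])$ is a quotient of a product of affine lines by a locally profinite group, not $\mathcal{BC}(\mathcal{O}(-\mu))$ modulo a lattice and certainly not $B\mathcal{BC}(\mathcal{O}(-\mu))$ (compare the proof of \cref{prop:classifying-stacks-bc-spaces-coh-smooth}). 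Consequently the comonad/descent argument you sketch, modeled on \cref{sec:exampl-da_h-morphism-from-classifying-stack-shriek-able}, is aimed at the wrong object; there is no $\mathcal{BC}(\mathcal{O}(-\mu))$-action on $S$ whose quotient is $\mathcal{BC}(\mathcal{O}(\mu)[1])$, and the ``Koszul'' remark has no content here. With the correct presentation one could in principle argue via lpbc for $\bigoplus_j\mathbb{A}^{1,\diamond}_{S_j^\sharp}\to S$ and universal $!$-descent for the $\mathbb{L}$-quotient, but that is exactly the kind of detour the paper avoids by verifying lpbc for $f$ directly.
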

\begin{proof}
By \cite[Proposition II.3.5.(i)]{fargues-scholze-geometrization} the morphism $f$ is representable in locally spatial diamonds and partially proper (hence compactifiable). Using \Cref{sec:exampl-da_h-criterion-to-check-lpbc} it suffices to see that $f$ is $p$-bounded, which is easy: one can reduce to the case that $S=\Spa(C)$ for an algebraically closed, non-archimedean field and show that the cohomological dimension of points on $\mathcal{BC}([\mathcal{E}_1 \to \mathcal{E}_0])$ is bounded in terms of the ranks and degrees of $\mathcal{E}_1, \mathcal{E}_0$.
\end{proof}

\begin{remark}
In \Cref{sec:geometric-examples-coh-smooth-morphisms} we will also discuss classifying stacks of Banach-Colmez spaces.
\end{remark}

Having discussed many examples of $!$-able maps, we finish this subsection with the following useful criterion for checking cohomological properness and smoothness in $\D_{[0,\infty)}(-)$ by reducing the question modulo a pseudo-uniformizer.

\begin{lemma} \label{rslt:smooth-and-proper-for-D-0-infty-can-be-checked-mod-pi}
Let $f\colon S'\to S$ be a $!$-able morphism of small v-stacks. Assume that $\pi$ is a pseudo-uniformizer on $S$. If $f$ is cohomologically smooth (resp.\ cohomologically proper) for $\D^a_\solid(\ri^+_{(-)}/\pi)$, then $f$ is cohomologically smooth (resp.\ cohomologically proper) for $\D_{[0,\infty)}(-)$.
\end{lemma}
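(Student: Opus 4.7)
The plan is to reduce the question to the mod-$\pi$ criteria for the $\D^a_{\hat\solid}$-formalism established in \Cref{sec:f-smooth-f-checking-cohom-smoothness-mod-pi} and \Cref{sec:f-smooth-f-checking-cohomological-properness}. The bridge is descent along $\Z_p \to \Z_{p,\infty}$, after which one passes from $\D_{[0,\infty)}(-)$ to the $\D^a_{\hat\solid}$-formalism on the associated untilted small v-stacks.

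First I would use that $\Z_p \to \Z_{p,\infty}$ is descendable in $\D_\solid(\Z_p)$, so that cohomological smoothness and properness for $\D_{[0,\infty)}(-)$ can be checked after base change to $\Z_{p,\infty}$ — a standard \v{C}ech-descent argument for the dualizing object and the diagonal condition, applied to the $6$-functor formalism of \Cref{rslt:6ff-for-D-0-infty}. By parts (vi) and (vii) of \Cref{sec:definition--able-1-existence-of-class-of-shriekable-maps}, this base-changed formalism is identified with $\D^a_{\hat\solid}(-)$ on the untilted small v-stacks $\tilde S := \mathcal{Y}_{[0,\infty),S}\times_{\Spa(\Z_p)}\Spa(\Z_{p,\infty})$, and the induced morphism $\tilde f\colon \tilde{S'}\to \tilde S$ is $\D^a_{\hat\solid}$-$!$-able. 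It therefore suffices to show that $\tilde f$ is cohomologically smooth (resp.\ proper) in $\D^a_{\hat\solid}(-)$.

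Next, the Teichmüller $[\pi]$ is a pseudo-uniformizer on $\tilde S$ (since it is a topologically nilpotent unit on $\mathcal{Y}_{[0,\infty),S}$ by construction). Now \Cref{sec:f-smooth-f-checking-cohom-smoothness-mod-pi} (resp.\ \Cref{sec:f-smooth-f-checking-cohomological-properness}, applied after reducing to the $p$-bounded compactifiable case by tameness of the class of $!$-able maps, cf.\ \Cref{sec:six-funct-texorpdfst-remark-on-tameness}) reduces the claim to cohomological smoothness (resp.\ properness) of $\tilde f$ for the formalism $\D^a_\solid(\ri^+_{(-)}/[\pi])$. But modulo $[\pi]$ the Witt-vector and $\Spa(\Z_{p,\infty})$-directions collapse back onto $S$ via the standard identification $W(R^+)/[\pi]\cong R^+$, so the mod-$[\pi]$ formalism on $\tilde S$ should match a $\Z_{p,\infty}/p$-base change of Mann's mod-$\pi$ formalism on $S$. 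Applying the hypothesis and descending once more along the descendable map $\F_p\to \Z_{p,\infty}/p$ then transfers the desired property to $\tilde f$.

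The main obstacle I foresee is the last matching step: verifying that the two $6$-functor formalisms $\D^a_\solid(\ri^+_{(-)}/\pi)$ on small v-stacks and $\D^a_\solid(\ri^+_{(-)}/[\pi])$ on the untilted $\tilde S$-picture agree as $6$-functor formalisms (and not merely pointwise), so that cohomological smoothness and properness of $f$ translate into those of $\tilde f$ through the intermediate $\Z_{p,\infty}/p$-base change. Pointwise agreement is the Witt-vector computation above, but one must still track the compatibility of $!$-functors and dualizing objects through the different classes of $!$-able maps involved in the two constructions.
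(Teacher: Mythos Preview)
Your overall route is exactly the paper's: pass to $\Z_{p,\infty}$-coefficients via descent (\Cref{rslt:descent-of-6ff}), identify $\Mod_{\Z_{p,\infty}}\D_{[0,\infty)}(-)$ with $\Mod_\ri\D^a_{\hat\solid}(\widetilde{(-)})$ via \Cref{sec:definition--able-1-existence-of-class-of-shriekable-maps}(vii), and then invoke \Cref{sec:f-smooth-f-checking-cohom-smoothness-mod-pi}, \Cref{sec:f-smooth-f-checking-cohomological-properness} and \Cref{sec:suppl-6-funct-base-change-for-6-functor-formalisms}. You are right that the link between the hypothesis on $S$ and what is needed on $\tilde S$ is the one substantive point; the paper compresses it into a single sentence. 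Your handling of the restriction in \Cref{sec:f-smooth-f-checking-cohomological-properness} to $p$-bounded compactifiable maps (via tameness) is also a correct refinement that the paper leaves implicit.

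However, your proposed bridge is incorrect. The identity $W(R^+)/[\pi]\cong R^+$ is false (you are thinking of $W(R^+)/p\cong R^+$). A direct computation on the affinoid chart $\mathcal Y_{[0,1],S}\times_{\Spa(\Z_p)}\Spa(\Z_{p,\infty})$ shows that reducing the integral structure sheaf modulo $[\pi]$ gives $(R^+/\pi)[T]\otimes_{\F_p}(\Z_{p,\infty}/p)$ with an extra polynomial variable, so the mod-$[\pi]$ formalism on $\tilde S$ is \emph{not} a $\Z_{p,\infty}/p$-base change of Mann's formalism on $S$; your ``descending once more along $\F_p\to\Z_{p,\infty}/p$'' also points in the wrong direction. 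The correct bridge uses no Witt-vector computation: Mann's formalism $\D^a_\solid(\ri^+/\varpi)$ depends only on $\ri^+/\varpi$, hence only on the tilt, and the tilt of $\tilde f$ is the base change $f\times\id_{\Spd(\Z_{p,\infty})}$ (since the diamond of $\tilde S$ is $S\times_{\Spd(\F_p)}\Spd(\Z_{p,\infty})$). Cohomological smoothness and properness are stable under base change in any 6-functor formalism, so the hypothesis on $f$ transfers to $\tilde f^\flat$ directly. Any mismatch between $\pi$ and $\tilde\pi^\flat$ is absorbed by the ``if and only if'' in \Cref{sec:f-smooth-f-checking-cohom-smoothness-mod-pi}; \Cref{sec:suppl-6-funct-base-change-for-6-functor-formalisms} then transports smoothness/properness from $\D^a_{\hat\solid}$ to its localization $\Mod_\ri\D^a_{\hat\solid}$, which is the paper's intended use of that lemma.
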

\begin{proof}
By \Cref{sec:definition--able-1-existence-of-class-of-shriekable-maps} and \Cref{rslt:descent-of-6ff} we may work with $S\mapsto \Mod_{\Z_{p,\infty}}(\D_{[0,\infty)}(S))$. Then the assertion follows by \Cref{sec:f-smooth-f-checking-cohom-smoothness-mod-pi} and \cref{sec:f-smooth-f-checking-cohomological-properness} together with \Cref{sec:suppl-6-funct-base-change-for-6-functor-formalisms}.
\end{proof}

\begin{remark} \label{sec:6-funct-texorpdfstr-variant-without-zero}
The results of this subsection work verbatim for $\D_{(0,\infty)}(-)$, i.e., for $\mathcal{Y}_{[0,\infty),S}$ replaced by $\mathcal{Y}_{(0,\infty),S}$, and yield a $\D_\solid(\Q_p)$-linear $6$-functor formalism $\D_{(0,\infty)}(-)$ on small v-stacks.
Alternatively, set $R:=\D_{[0,\infty)}(\Spd(\F_p))$. As $\Spd(\F_p)$ is the terminal v-stack, the $6$-functor formalism $S\mapsto \D_{[0,\infty)}(S)$ is $R$-linear. In $R$ one can now by descent define an idempotent algebra $A$, which has the property that its pullback $A_S$ to $\D_{[0,\infty)}(S)=\D_{\hat\solid}(\mathcal{Y}_{[0,\infty),S})$ is the integral Robba ring for any perfectoid space $S$, which admits a morphism of finite $\dimtrg$ to a totally disconnected space, i.e., $A_S$ is the ring of functions converging in a neighborhood of the locus $\{p=0\}$ in $\mathcal{Y}_{[0,\infty),S}$.
We can then set $R':=R/\mathrm{Mod}_A(R)$, and consider the base change $R'\otimes_{R}\D_{[0,\infty)}(-)$ as in \Cref{sec:suppl-6-funct-base-change-for-6-functor-formalisms}. This base change is then $\D_{(0,\infty)}(-)$. Here, the implicit class of $!$-able maps is still the one from \Cref{def:shriekable-maps}.
\end{remark}

\subsection{Geometric examples of cohomologically smooth morphisms} \label{sec:geometric-examples-coh-smooth-morphisms}
 
In this subsection we provide examples of cohomological smooth morphisms arising from geometry. The next subsection will discuss the arithmetic example of $\Spd(\Q_p)\to \Spd(\F_p)$.

We will make use of the following abstract criterion for checking cohomological smoothness based on the results in \cite{heyer-mann-6ff}. We make use of the solid 6-functor formalism on schemes (see e.g. \cite[\S2.9]{mann-mod-p-6-functors}) which we implicitly extend to stacks via \cite[Theorem~3.4.11]{heyer-mann-6ff} (using the $\D_\solid$-topology, i.e. where covers are univseral $!$- and $*$-covers; this includes smooth covers by \cite[Lemma~4.7.1]{heyer-mann-6ff}). Also, in the following result we implicitly identify a profinite group $\Gamma$ with the group scheme $\Spec C(\Gamma,\Z)$.

\begin{lemma} \label{rslt:checking-smoothness-of-quotient-by-profin-group-criterion}
Let $\Gamma$ be a profinite group. Let $A$ be a (classical) ring such that $\Gamma$ has finite cohomological dimension over $A$, and let $\widetilde{A}$ be an $A$-algebra with smooth action by $\Gamma$. Then $f\colon \Spec(\widetilde{A})/\Gamma \to \Spec(A)$ is $\D_\solid(-)$-suave if and only if for every compact open subgroup $H\subseteq \Gamma$ the invariants $\Gamma(H,\widetilde{A}) \in \D(A)$ are dualizable.
\end{lemma}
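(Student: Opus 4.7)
The approach is to factor $f$ through the classifying stack $B\Gamma_A$ and reduce the question to the behavior of $\Gamma$-invariants.

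I would first write $f$ as the composition
\[
\Spec(\widetilde{A})/\Gamma \xrightarrow{h} B\Gamma_A \xrightarrow{\pi} \Spec(A),
\]
where $h$ classifies the $\Gamma$-torsor $\Spec(\widetilde{A}) \to \Spec(\widetilde{A})/\Gamma$. The finite cohomological dimension of $\Gamma$ over $A$ ensures that $\pi$ is cohomologically smooth and cohomologically proper, with an invertible dualizing complex; in particular the $f$-suaveness of the unit reduces to the $h$-suaveness of the unit on $\Spec(\widetilde{A})/\Gamma$.

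Next, I would reduce to the case where $\Gamma$ is compact. For any compact open $H \subseteq \Gamma$ the quotient $\Gamma/H$ is finite, so $\Spec(\widetilde{A})/H \to \Spec(\widetilde{A})/\Gamma$ is finite étale (hence cohomologically smooth and proper); thus $\D_\solid(-)$-suaveness of $f$ is equivalent to $\D_\solid(-)$-suaveness of $f_H\colon \Spec(\widetilde{A})/H \to \Spec(A)$. Moreover, for two compact open subgroups $H_2 \trianglelefteq H_1$, the formula $\Gamma(H_1, \widetilde{A}) = \Gamma(H_1/H_2, \Gamma(H_2, \widetilde{A}))$, combined with the finite cohomological dimension of the finite group $H_1/H_2$ in $\D(A)$, implies that dualizability of $\Gamma(H, \widetilde{A})$ is independent of the chosen compact open $H$. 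So I may fix $\Gamma = H_0$ compact and reduce the statement to: $f$ is $\D_\solid(-)$-suave iff $\Gamma(\Gamma, \widetilde{A})$ is dualizable in $\D(A)$.

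For the ``only if'' direction I would invoke the Verdier duality provided by $f$-suaveness of the unit: combined with the cohomological properness of $\pi$ and the structure of $h$, the pushforward $f_*\ri = R\Gamma(\Gamma, \widetilde{A})$ is forced to be dualizable in $\D(A)$ via the duality pairing $f_*(f^!\ri)$ that $f$-suaveness provides. For the ``if'' direction, I would use the smooth-action decomposition $\widetilde{A} = \colim_K \widetilde{A}^K$ over compact open normal $K \trianglelefteq \Gamma$ to describe $\Spec(\widetilde{A})/\Gamma$ as an inverse limit of the finite-group quotient stacks $\Spec(\widetilde{A}^K)/(\Gamma/K)$. For each finite layer, the suaveness of the corresponding map $\Spec(\widetilde{A}^K)/(\Gamma/K) \to \Spec(A)$ can be verified directly from dualizability of the $(\Gamma/K)$-invariants---which follows from the assumed dualizability of $\Gamma(K, \widetilde{A})$---together with finite cohomological dimension. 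One then passes to the limit, controlled by the finite cohomological dimension of $\Gamma$.

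The main obstacle I foresee is the ``if'' direction, specifically the transition from the finite-group layers to the profinite limit. Each finite-group quotient is tractable, but assembling the Verdier duality data compatibly in the limit requires careful bookkeeping: one must show that the candidate dualizing complex on $\Spec(\widetilde{A})/\Gamma$ constructed as a limit indeed satisfies the global self-duality isomorphism. The subtlety is precisely that $\widetilde{A}$ itself is in general \emph{not} dualizable over $A$, so one cannot reduce $h$-suaveness by naive pullback along the torsor $\Spec(\widetilde{A}) \to \Spec(\widetilde{A})/\Gamma$; the duality structure must be built intrinsically from the invariants, with the finite cohomological dimension hypothesis playing the crucial role of uniformly bounding the Čech complexes computing the transition maps in the inverse limit.
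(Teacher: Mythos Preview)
Your approach has real gaps and differs substantially from the paper's.

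The claim that $H_1/H_2$ has finite cohomological dimension over $A$ is false in general (take $\Gamma = \Z_p$, $A = \F_p$, so $H_1/H_2 \cong \Z/p$), so your reduction to a single $H$ does not go through. The same obstruction hits your ``if'' direction: you want suaveness of each finite layer $\Spec(\widetilde A^K)/(\Gamma/K) \to \Spec(A)$ to follow from dualizability of invariants plus finite cohomological dimension, but $\Gamma/K$ need not have finite cohomological dimension over $A$, so that step is unjustified; and even granting the finite layers, suaveness does not pass through limits formally, as you acknowledge. Your ``only if'' direction is also incomplete: suaveness of $1$ alone does not force $f_*1$ to be dualizable; you need $1$ to be $f$-prim as well, which does hold here (via the prim cover $\Spec(\widetilde A) \to \Spec(\widetilde A)/\Gamma$ and finite cohomological dimension) but is precisely the content you skip.

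The paper does not factor through $B\Gamma_A$ or argue by a limit. It applies the abstract criterion \cite[Lemma~4.4.14]{heyer-mann-6ff}: one exhibits a family of $f$-prim objects $Q_H := g_{H,*}1 = \Ind^\Gamma_H \widetilde A$ (for $g_H\colon \Spec(\widetilde A)/H \to \Spec(\widetilde A)/\Gamma$) such that the functors $\pi_{2,*}\IHom(\pi_1^*Q_H,-)$ are jointly conservative. Primness of $Q_H$ reduces along the proper smooth $g_H$ to primness of $1$ for $\Spec(\widetilde A)/H \to \Spec(A)$, established from the prim cover and finite cohomological dimension of $H$. Conservativity is reduced along $X \to S/\Gamma$ to the case $\widetilde A = A$, where it amounts to $M = \varinjlim_H M^H$ for smooth $\Gamma$-modules. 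The cited lemma then delivers the equivalence directly, with dualizability of $f_*Q_H = \Gamma(H,\widetilde A)$ as the output condition. The profinite structure enters only through the smoothness of the action and the conservativity of the $Q_H$; no limit argument is needed.
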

\begin{proof}
Set $X := \Spec(\widetilde{A})/\Gamma$ and $S := \Spec(A)$. For $H\subseteq \Gamma$ we set $Q_H:=\mathrm{Ind}^\Gamma_H \widetilde A$, or more precisely, $Q_H=g_{H,\ast} 1$, where $g_H\colon X_H:=\Spec(\widetilde{A})/H \to X$ is the base change of the proper, smooth morphism $\Spec(\Z)/H \to \Spec(\Z)/\Gamma$. It suffices to check that the collection $Q_H$ for $H\subseteq \Gamma$ running through the compact subgroups, satisfies the assumptions of \cite[Lemma~4.4.14]{heyer-mann-6ff}. As $g_H$ is proper and smooth, the object $Q_H$ is $f$-prim if $1$ is $(X_H\to S)$-prim (see \cite[Lemma~4.5.16]{heyer-mann-6ff}). By the finite cohomological dimension assumption, the latter follows from \cite[Corollary~4.7.5(i)]{heyer-mann-6ff} using the prim cover $\Spec(\widetilde A) \to \Spec(\widetilde A)/\Gamma$ (see \cite[Proposition~5.2.5]{heyer-mann-6ff} and the proof of \cite[Proposition~5.3.2]{heyer-mann-6ff}). Next, we have to see that the collection of functors
\[
    \pi_{2,\ast}\IHom(\pi^\ast_1Q_H,-)\colon \D_\solid(X\times_S X)\to \D_\solid(X)
\]
is conservative. This may be checked after base change along morphisms $\Spec(B) \to X$ (as $\pi^\ast_1Q_H$ is $\pi_2$-prim), and then reduces by renaming $B$ to $A$ to the assertion that the collection of functors $f_\ast(\IHom(Q_H,-))\colon \D_\solid(X)\to \D_\solid(S)$ is conservative. Let $h\colon X\to S/\Gamma$ be the natural map. Then $h_\ast$ is conservative and $Q_H=h^\ast(\mathrm{Ind}^\Gamma_H 1)$. This reduces the assertion of conservativity to the case that that $\widetilde{A}=A$ with trivial $\Gamma$-action. But any $M\in \D_\solid(S/\Gamma)$ is isomorphic to $\varinjlim_H M^H$ with $M^H=\IHom(\mathrm{Ind}^\Gamma_H 1,M)$ because $C(\Gamma,\Z)\otimes A = \varinjlim_H C(\Gamma/H,\Z) \otimes A$. This shows the desired conservativity. Thus we may conclude by \cite[Lemma~4.4.14]{heyer-mann-6ff}.
\end{proof}

\begin{remark}
\begin{remarksenum}
    \item \Cref{rslt:checking-smoothness-of-quotient-by-profin-group-criterion} extends easily to the setting of analytic rings in the sense of Clausen--Scholze, as the argument is mostly formal.

    \item Given an almost setup (in the sense of \cite[Definition~2.2.1]{mann-mod-p-6-functors}, then the assertion of \cref{rslt:checking-smoothness-of-quotient-by-profin-group-criterion} also holds true for $\D^a_\solid(-)$.
\end{remarksenum}
\end{remark}

The following result might be surprising at first -- it shows that the almost world can be rather subtle.

\begin{lemma} \label{rslt:direct-sum-dualizability-in-almost-world}
Let $A$ be a (classical) ring and let $(a_1)\subseteq (a_2)\subseteq \ldots \subseteq I=\bigcup_{n\geq 0}(a_n)\subseteq A$ be an idempotent ideal, i.e., the derived tensor product of $I$ with itself is again $I$. Assume that $K_n$ are perfect complexes over $A$ such that $a_n\cdot H^i(K)=0$ for $i\in \Z$. Then $K:=\bigoplus_{n\geq 0} K_n$ is dualizable in $\D^a(A)$.
\end{lemma}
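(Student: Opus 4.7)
The plan is to take $K^\vee := \bigoplus_n K_n^\vee$ as the candidate dual of $K$ in $\D^a(A)$, where $K_n^\vee := \IHom_A(K_n, A)$. Since each $K_n$ is perfect and hence dualizable in $\D(A)$, one has $\IHom_A(K_n, N) \cong K_n^\vee \otimes_A N$ for every $N$, and therefore
\[
    \IHom_A(K, N) \cong \prod_n (K_n^\vee \otimes_A N) \qquad \text{while} \qquad K^\vee \otimes_A N \cong \bigoplus_n (K_n^\vee \otimes_A N).
\]
The desired dualizability thus reduces to showing that, for every $N$, the canonical map
\[
    \bigoplus_n (K_n^\vee \otimes_A N) \longrightarrow \prod_n (K_n^\vee \otimes_A N)
\]
is an almost isomorphism; equivalently, that the quotient $\prod/\bigoplus$ is annihilated by $I$ on cohomology.

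To verify this, first use the hypercohomology spectral sequence $\Ext^p_A(H^{-q}(K_n), A) \Rightarrow H^{p+q}(K_n^\vee)$, together with a further spectral sequence against $N$, to see that each cohomology group $H^i(K_n^\vee \otimes_A N)$ admits a finite filtration whose graded pieces are killed by $a_n$. Hence it is annihilated by $(a_n)^{\ell_n}$ for some $\ell_n$ bounded in terms of the amplitude of $K_n$. Given $x \in I = \bigcup_m (a_m)$ and a tuple $(h_n)_n \in \prod_n H^i(K_n^\vee \otimes_A N)$, one wants $xh_n = 0$ for all but finitely many $n$. Derived idempotence of $I$ gives $I = I^{\ell}$ as ideals for every $\ell \geq 1$, so for the chosen $\ell_n$ the element $x$ admits a finite decomposition $x = \sum_j y_1^{(j)} \cdots y_{\ell_n}^{(j)}$ with $y_i^{(j)} \in I$. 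The ascending chain condition on $(a_m)$ then forces every $y_i^{(j)}$ into $(a_n)$ for $n$ sufficiently large, giving $x \in (a_n)^{\ell_n}$ and thus $xh_n = 0$. Consequently $x \cdot (h_n)_n \in \bigoplus_n H^i(K_n^\vee \otimes_A N)$, which is the required $I$-annihilation.

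The main subtlety is that $\ell_n$ may itself depend on $n$, so the threshold of ``sufficiently large $n$'' is a function of $n$ through $\ell_n$, which at first sight makes the estimate circular. The essential role of derived idempotence of $I$ (as opposed to mere $I^2 = I$) is to ensure that factorizations $x \in I^\ell$ of arbitrary depth exist and can be arranged uniformly enough, compatibly with the ascending chain $(a_m) \subseteq (a_{m+1})$, for the argument to close; this is the step I expect to require the most care in a full write-up.
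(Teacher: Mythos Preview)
Your approach is essentially the paper's. The paper also first shows that $\bigoplus_n K_n \to \prod_n K_n$ is an almost isomorphism (for any $m$ and $k>m$ one has $a_m\in(a_k)$, so $a_m$ kills $H^i(K_k)$, hence every element of $I$ kills all but finitely many coordinates of the cohomology of the cofiber). It then concludes in one line that $K^\vee\otimes K\to\IHom(K,K)$ is an isomorphism ``as each $K_n$ is dualizable'', invoking a dualizability criterion \cite[Lemma~6.2]{mann-nuclear-sheaves} so that only the single object $N=K$ needs to be tested rather than all $N$. Unwinding that one line leads to exactly the map $\bigoplus_m\IHom(K,K_m)\to\prod_m\IHom(K,K_m)$, which is your $\bigoplus\to\prod$ comparison with $N=K$; so the substance is the same.

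The subtlety you isolate is genuine, and the paper's terse second step does not visibly avoid it: from $a_n\cdot H^i(K_n)=0$ one only gets that $H^i(K_n^\vee\otimes N)$ is killed by $a_n^{\ell_n}$ with $\ell_n$ the cohomological amplitude of $K_n$, and your circularity worry about $\ell_n$ depending on $n$ is exactly the obstacle. The resolution in practice is that in the paper's application each $K_b$ is a two-term Koszul complex $[A\xrightarrow{a_n}A]$, for which multiplication by $a_n$ is \emph{null-homotopic}, so $a_n$ kills $K_n$ in $\D(A)$ and not merely on cohomology. Under that stronger hypothesis $a_n$ kills $K_n^\vee\otimes N$ in $\D(A)$ for every $N$, hence for $k>m$ the element $a_m\in(a_k)$ annihilates $H^i(K_k^\vee\otimes N)$ on the nose and no powers enter. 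Equivalently, a uniform bound $\ell_n\le L$ suffices: then one fixes a single factorization $x\in I=I^L$ and your argument closes. So your proof is correct once the hypothesis is read as ``$a_n\cdot K_n=0$ in $\D(A)$'' (or the amplitudes are bounded), which is precisely how the lemma is used.
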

Here, $\D^a(A)$ denotes the classical derived category of almost $A$-modules, with almost refering to $I$.
\begin{proof}
We note that the morphism $\bigoplus_{n\geq 0} K_n \to \prod_{n\geq 0} K_n$ is an almost isomorphism. Indeed, for fixed $m\geq 0$, the objects
\[
    \bigoplus_{k>m}K_k,\quad \prod_{k>m}K_k
\]
are killed by $a_m$. From here, we see that the natural morphism
\[
    K^\vee\otimes K\to \IHom(K,K)
\]
in $\D^a(A)$ is an isomorphism as each $K_n$ is dualizable. This implies that $K$ is dualizable by \cite[Lemma 6.2]{mann-nuclear-sheaves}.
\end{proof}

With the above preparations at hand, we can now come to the promised discussion of cohomological smoothness in our 6-functor formalisms. We start with the cohomological smoothness of smooth morphisms of analytic adic spaces over $\Q_p$.

\begin{theorem} \label{rslt:smooth-maps-of-adic-spaces-are-cohom-smooth}
Let $g\colon Y\to X$ be a smooth morphism of analytic adic spaces over $\Q_p$. Then $f:=g^\diamond\colon Y^\diamond\to X^\diamond$ is $\D_{[0,\infty)}$-smooth.
\end{theorem}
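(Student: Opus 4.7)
The plan is to reduce the cohomological smoothness of $f$ in $\D_{[0,\infty)}(-)$ to the already available cohomological smoothness of smooth morphisms in the mod-$\pi$ formalism $\D^a_\solid(\ri^+_{(-)}/\pi)$ from \cite{mann-mod-p-6-functors}, using the bridge provided by \Cref{rslt:smooth-and-proper-for-D-0-infty-can-be-checked-mod-pi}. The plan thus has two tasks: first, establish that $f$ is $!$-able in the strong sense of \Cref{def:shriekable-maps}; second, invoke Mann's smoothness after reducing modulo a pseudo-uniformizer.

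For the $!$-ability, I would exploit the étale-local structure of smooth morphisms of analytic adic spaces over $\Q_p$: étale locally on $Y$, the morphism $g$ factors as an étale map $Y' \to \mathbb{B}^n_X$ to a relative closed ball followed by the projection $\mathbb{B}^n_X \to X$. Étale morphisms of analytic adic spaces induce étale morphisms of diamonds, and the latter are lpbc (hence $!$-able) by \Cref{sec:6-funct-da_h-1-properties-of-lpbc-maps}(iii). Since étale covers satisfy universal $!$-descent in both of the 6-functor formalisms in play via \Cref{rslt:6ff-for-D-0-infty}(iv) and \Cref{rslt:6ff-for-Dhsa}(iv), the $!$-ability is source-local along étale maps in the strong sense required by \Cref{sec:definition--able-1-existence-of-class-of-shriekable-maps}(iv). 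This reduces the question to showing that the relative ball projection $\mathbb{B}^n_X \to X$ is $!$-able, for which I would apply \Cref{sec:exampl-da_h-criterion-to-check-lpbc}: the projection is representable in locally spatial diamonds, partially proper (hence compactifiable), of locally bounded dimension, and analytically locally $p$-bounded after any base change to a strictly totally disconnected perfectoid space (the fibers become open subdiamonds of rigid varieties over an algebraically closed perfectoid field, whose maximal points have bounded $p$-cohomological dimension). Hence the projection is lpbc and therefore $!$-able.

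Once $!$-ability is in hand, the second task is immediate: picking any pseudo-uniformizer $\pi$ on $X$, \Cref{rslt:smooth-and-proper-for-D-0-infty-can-be-checked-mod-pi} reduces cohomological smoothness of $f$ for $\D_{[0,\infty)}(-)$ to cohomological smoothness for $\D^a_\solid(\ri^+_{(-)}/\pi)$, which is Mann's cohomological smoothness of smooth morphisms from \cite{mann-mod-p-6-functors}. I expect the main obstacle to be the first task, and specifically verifying the strong form of universal $!$-descent appearing in \Cref{sec:definition--able-1-existence-of-class-of-shriekable-maps}: it is asked not merely on the $\D_{[0,\infty)}$-side but also after base change along $\mathcal{Y}_{[0,\infty),(-)} \times \Spa(\Z_{p,\infty})$ at the level of $\D^a_{\hat\solid}$. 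However, since étale maps and lpbc maps are well-behaved under that sharpening to untilted v-stacks (and one has the full $\D^a_{\hat\solid}$-6-functor formalism of \Cref{rslt:6ff-for-Dhsa} to rely on), this verification should be feasible by tracking through the local structure of smooth morphisms.
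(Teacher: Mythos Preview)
Your approach is correct; the paper explicitly acknowledges it at the start of its own proof, writing that by \Cref{rslt:smooth-and-proper-for-D-0-infty-can-be-checked-mod-pi} the result follows immediately from \cite[Theorem~3.10.17]{mann-mod-p-6-functors}. Your $!$-ability argument is sound, with one quibble: the closed ball projection is not partially proper, but it is separated and qcqs, hence compactifiable; alternatively one simply notes that smooth morphisms of adic spaces are bdcs and invokes \Cref{sec:six-funct-texorpdfst-bdcs-is-lpbc}. The strong $!$-descent worry you flag for étale covers is indeed harmless: étale maps are cohomologically étale in both the $\D_{[0,\infty)}$ and $\D^a_{\hat\solid}$ formalisms, so étale surjections satisfy universal $!$-descent on both sides, giving the condition in \Cref{sec:definition--able-1-existence-of-class-of-shriekable-maps}(iv).

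The paper, however, elects not to cite Mann's theorem as a black box and instead gives a new self-contained argument. After reducing to the torus $\mathbb{T}_C \to \Spa(C)$ and passing mod $\pi$, they rewrite the mod-$\pi$ perfectoid torus as $\Spec(\widetilde{A})/\Gamma$ with $\widetilde{A} = (\ri_C/\pi)[T^{\pm 1/p^\infty}]$ and $\Gamma = \Z_p$, and apply the abstract criterion \Cref{rslt:checking-smoothness-of-quotient-by-profin-group-criterion}: suaveness of $\Spec(\widetilde A)/\Gamma \to \Spec A$ reduces to showing that $\Gamma(H,\widetilde{A})$ is almost-dualizable over $A$ for each open $H \subseteq \Gamma$, which they verify via an explicit eigenspace decomposition together with the surprising \Cref{rslt:direct-sum-dualizability-in-almost-world}. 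Invertibility of the dualizing complex is then checked by hand. What this buys the paper is twofold: it streamlines the argument from \cite{mann-mod-p-6-functors}, and more importantly it rehearses exactly the method that is indispensable for the much harder computation for $\Spd(\Q_p) \to \Spd(\F_p)$ in \Cref{sec:case-texorpdfstr-smoothness-of-spd-q-p}, where no prior result is available to cite. Your route is shorter for this particular theorem; the paper's is an investment in technique.
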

\begin{proof}
By \cref{rslt:smooth-and-proper-for-D-0-infty-can-be-checked-mod-pi} the claim follows immediately from \cite[Theorem~3.10.17]{mann-mod-p-6-functors}, but we decided to provide a new and more conceptual proof here, using \cref{rslt:checking-smoothness-of-quotient-by-profin-group-criterion} to streamline the argument.

Using that étale morphisms are cohomologically smooth (by \Cref{rslt:6ff-for-D-0-infty}), and descent of cohomologically smoothness (see \cite[Lemma 4.4.9]{heyer-mann-6ff}), we may reduce to the case of the torus $Y=\mathbb{T}_{C}\to X=\Spa(C)$ for a non-archimedean, algebraically closed extension $C$ of $\Q_p$. We fix a pseudo-uniformizer $\pi\in C$ and set $R:=\mathcal{O}_C/\pi$. As mentioned above, by \Cref{rslt:smooth-and-proper-for-D-0-infty-can-be-checked-mod-pi} it is sufficient to check that $f$ is $\D^a_{\solid}(\mathcal{O}^+_{(-)}/\pi)$-smooth (note that $f$ is lpbc, and hence $!$-able). We set $A:=R[T^{\pm 1}]=\mathcal{O}_C\langle T^{\pm 1}\rangle/\pi$ and $\widetilde{A}:=R[T^{\pm 1/p^\infty}]$. Fix $\varepsilon:=(1,\zeta_p,\ldots)\in C^\flat$ with $\zeta_p\neq 1$, and let $\Gamma:=\Z_p\cdot \sigma$ act on $\widetilde{A}$ as usual, i.e., $\sigma(T^{a/p^j})={\varepsilon^{a/p^j}}^\sharp T^{a/p^j}$. Here, $\sharp\colon C^\flat\cong \varprojlim_{x\mapsto x^p}C\to C$ is the projection to the first component. Unraveling the definitions, it is sufficient to check that the morphism
\[
    \Spec(\widetilde A)/\Gamma \to \Spec(R)
\]
of stacks over $\Spec(R)$ is $\D^a_\solid$-smooth. By smoothness of $\Spec(A) \to \Spec(R)$ it suffices to check that $h\colon \Spec(\widetilde{A})/\Gamma \to \Spec(A)$ is $\D^a_\solid$-smooth. Let us first check that this map is $\D^a_\solid$-suave, for which by \Cref{rslt:checking-smoothness-of-quotient-by-profin-group-criterion} it is sufficient to see that for any open subgroup $\Gamma'$ the invariants $\Gamma(\Gamma', \widetilde{A})$ (a priori a solid $A$-module) are a dualizable object in $\D^a(A)$. We may reduce to the case that $\Gamma' = \Gamma$. Then
\[
    \Gamma(\Gamma,\widetilde{A})=[\widetilde{A}\xto{\gamma-1} \widetilde{A}]=\bigoplus\limits_{a\in \Z[1/p]} [R\cdot T^a \xto{{\varepsilon^{a}}^\sharp-1} R\cdot T^a],
\]
which as an $A$-module is the direct sum of the complexes
\[
    K_b:=\bigoplus\limits_{a\in b+\Z} [R\cdot T^a\xto{{\varepsilon^{a}}^\sharp-1} R\cdot T^a]
\]
over $b\in \Z[1/p]/\Z$. Writing $b=c/p^j$ with $c\in \Z$, the complex $K_b$ is isomorphic to $[A\cdot T^{c/p^j}\xto{\zeta_{p^j}^c-1}A\cdot T^{c/p^j}]$ as a complex of $A$-modules, and hence perfect over $A$. Moreover, $K_b$ is killed by $\zeta_{p^j}^c-1$. This implies by \Cref{rslt:direct-sum-dualizability-in-almost-world} that $\bigoplus_{b\in \Z[1/p]/\Z} K_b$ is again dualizable in $\D^a$. This finishes the proof that $h$ is $\D^a_\solid$-suave.
\[
    \Gamma(\Gamma,\widetilde{A})=[\widetilde{A}\xto{\gamma-1} \widetilde{A}]=\bigoplus\limits_{a\in \Z[1/p]} [R\cdot T^a \xto{{\varepsilon^{a}}^\sharp-1} R\cdot T^a],
\]
which as an $A$-module is the direct sum of the complexes
\[
    K_b:=\bigoplus\limits_{a\in b+\Z} [R\cdot T^a\xto{{\varepsilon^{a}}^\sharp-1} R\cdot T^a]
\]
over $b\in \Z[1/p]/\Z$. Writing $b=c/p^j$ with $c\in \Z$, the complex $K_b$ is isomorphic to $[A\cdot T^{c/p^j}\xto{\zeta_{p^j}^c-1}A\cdot T^{c/p^j}]$ as a complex of $A$-modules, and hence perfect over $A$. Moreover, $K_b$ is killed by $\zeta_{p^j}^c-1$. This implies by \Cref{rslt:direct-sum-dualizability-in-almost-world} that $\bigoplus_{b\in \Z[1/p]/\Z} K_b$ is again dualizable in $\D^a(A)$. This finishes the proof that $h$ is $\D^a$-suave.

It remains to show that the dualizing complex $\omega_h = h^! 1$ is invertible. By construction $\omega_h$ is an object in $\D^a_\solid(\Spec(\widetilde A)/\Gamma)$, i.e. it is a (solid) almost $\widetilde A$-module equipped with a smooth $\Gamma$-action. To prove its invertibility, it is enough to check the invertibility of the underlying $\widetilde A^a$-module. By \cite[Lemma~3.4.19]{mann-mod-p-6-functors} this underlying module is computed as $\omega_h = \varinjlim_{H \subseteq \Gamma} \omega_h^H$. For every compact open subgroup $H \subseteq \Gamma$ we denote $h_H\colon \Spec(\widetilde A)/H_m \to \Spec(A)$ and observe
\begin{align*}
    \omega_h^H = h_{H*} h_H^! 1 = \IHom(h_{H!} 1, 1) = \IHom_A(\widetilde A^H, A) =: (\widetilde A^H)^\vee,
\end{align*}
where we used that $h_H$ is $\D^a_\solid$-prim with trivial codualizing complex. The right-hand side can be explicitly computed as above, from which we deduce that $\omega_h \isom \widetilde A[1]$, as desired.
\end{proof}

We now turn to the identification of the dualizing complex that was omitted in the proof of \Cref{rslt:smooth-maps-of-adic-spaces-are-cohom-smooth}. We will mostly follow the proof of \cite[Theorem 3.10.20]{mann-mod-p-6-functors} (see \cite[Theorem 1.2.18]{zavyalov-poincare-duality} for a similar approach) applied to the $6$-functor formalism $\DFF(-,\Z_p)$
on small v-stacks over $\Q_p$. We will use results on a Riemann-Hilbert functor, that will be proven in \Cref{sec:riem-hilb-funct}.

\begin{lemma} \label{sec:poincare-duality-pro-1-d-x-z-p-is-geometric}
The 6-functor formalism $X\mapsto \DFF(X,\Z_p)$ is geometric in the sense of \cite[Definition 4.2.9]{zavyalov-poincare-duality}, i.e., smooth morphisms of analytic adic spaces over $\Q_p$ are cohomologically smooth and for any small v-stack $X$ each invertible object in $\DFF(\mathbb{P}^1_{X},\Z_p)$ is pulled back from $\DFF(X,\Z_p)$. 
\end{lemma}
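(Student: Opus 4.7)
The first claim---that smooth morphisms of analytic adic spaces over $\Q_p$ are $\DFF(-,\Z_p)$-smooth---is exactly \Cref{rslt:smooth-maps-of-adic-spaces-are-cohom-smooth}. It remains to show that every invertible object in $\DFF(\mathbb{P}^1_X,\Z_p)$ is pulled back from $X$. Let $\pi\colon \mathbb{P}^1_X\to X$ denote the projection. By the first claim $\pi$ is $\DFF(-,\Z_p)$-smooth, and since $\mathbb{P}^1_{\Q_p}\to \Spa(\Q_p)$ is proper and lpbc, the base change $\pi$ is again proper and lpbc; hence $\pi$ is cohomologically proper by \Cref{rslt:6ff-for-D-0-infty}(v). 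Given an invertible $L\in \DFF(\mathbb{P}^1_X,\Z_p)$, the goal is to show that the counit $\pi^\ast\pi_\ast L\to L$ is an isomorphism and that $\pi_\ast L$ is invertible. My plan is to reduce this to the analogous statement in the $\D^a_\solid(\mathcal{O}^+/\pi_0)$-formalism, which follows from \cite{mann-mod-p-6-functors}.

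The reduction consists of three steps. First, by hypercomplete v-descent (\Cref{rslt:v-descent-for-D-0-infty}) I may assume $X$ is a strictly totally disconnected perfectoid space, and then \Cref{sec:definition--able-1-existence-of-class-of-shriekable-maps}(vii) combined with the descendability of $\Z_p\to \Z_{p,\infty}$ transfers the question to the Frobenius-equivariant $\D^a_{\hat\solid}$-formalism on the untilt $\mathcal{Y}_{[0,\infty),\mathbb{P}^1_X}\times_{\Spa\Z_p}\Spa\Z_{p,\infty}$. Second, invertible objects are nuclear and hence complete, so $L$ is complete; by cohomological properness of $\pi$ together with \Cref{sec:f-smooth-f-shriek-pushforward-preserves-completeness}, $\pi_\ast L$ and $\pi^\ast\pi_\ast L$ are complete as well. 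Third, fix a pseudo-uniformizer $\pi_0$ on $X$; then $L/\pi_0$ is a Frobenius-equivariant invertible object in the mod-$\pi_0$ formalism, and invoking the mod-$p$ $\mathbb{P}^1$-invariance of invertible objects yields an invertible $\overline{M}$ on $X/\pi_0$ with $\pi^\ast\overline{M}\isom L/\pi_0$. A derived Nakayama argument, using the completeness established in the second step, lifts this identification to the integral setting, producing the desired invertible object in $\DFF(X,\Z_p)$.

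The main obstacle is this final derived Nakayama step: one must show that the cone of $\pi^\ast\pi_\ast L\to L$ is complete and vanishes mod $\pi_0$ (and hence vanishes by $\pi_0$-adic completeness), and that $\pi_\ast L$ has bounded Tor-amplitude before concluding its invertibility. This parallels the lifting argument in \Cref{sec:f-smooth-f-checking-cohom-smoothness-mod-pi} but is more delicate since $\pi_\ast$ (rather than $\pi^!$) is involved, and $\pi_\ast$ does not automatically preserve boundedness or interact cleanly with reduction mod $\pi_0$. The proof relies on the base change and projection formula from \Cref{rslt:D-0-infty-pushforward-colim-and-base-change,-projection-formula-for-dywz} together with the preservation of completeness under proper pushforward to ensure that mod-$\pi_0$ invertibility propagates to the integral level.
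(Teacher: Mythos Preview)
Your overall strategy of reducing to the mod-$\pi_0$ formalism is not unreasonable in spirit, but the execution has real gaps and the paper takes a completely different (and much shorter) route.

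The paper's argument uses the Riemann--Hilbert functor $\RH_{\Z_p}$ from \Cref{rslt:construction-of-RH-functors}. The key point is \cref{rslt:RH-identifies-dualizable-objects}: $\RH_{\Z_p}$ induces an equivalence on dualizable objects. Hence any invertible $\mathbb{L}\in \DFF(\mathbb{P}^1_X,\Z_p)$ is of the form $\RH_{\Z_p}(L)$ for a dualizable $L\in \D_\nuc(\mathbb{P}^1_X,\Z_p)$. Since $\D_\nuc(-,\Z_p)$ is essentially nuclear $\Z_p$-sheaves (on a strictly totally disconnected base, $\D_\nuc(C(S,\Z_p))$), the question becomes classical: one checks directly, using the $t$-structure on $\omega_1$-solid sheaves, that $f^{\nuc,\ast}\mathcal{H}^0(f^\nuc_\ast L)\to L$ is an isomorphism. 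Thus the problem reduces to simple connectivity of $\mathbb{P}^1$ at the level of $\Z_p$-local systems, and no lifting or completeness argument on the Fargues--Fontaine side is needed.

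Your approach, by contrast, has several issues. First, you speak of ``completeness'' and ``reduction mod $\pi_0$'' for objects in $\DFF(-,\Z_p)$, but these notions are not intrinsic there; they only make sense after passing to the $\D^a_{\hat\solid}(\mathcal{O}^+)$-picture via base change to $\Z_{p,\infty}$, and you conflate the two levels. Second, the claim ``invertible objects are nuclear and hence complete'' is not justified: nuclearity in $\DFF$ does not imply $\pi_0$-adic completeness in any obvious sense, and you would need to verify the hypotheses of \Cref{sec:f-smooth-f-shriek-pushforward-preserves-completeness} (right-bounded, discrete mod $\pi_0$) rather than just assert them. Third, you yourself flag that the Nakayama lifting step---showing that $\pi_\ast L$ is bounded and that reduction mod $\pi_0$ commutes with $\pi_\ast$ here---is incomplete. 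This step is genuinely nontrivial and you have not supplied the argument. Even if patched, your route would be substantially longer than the paper's one-line reduction via Riemann--Hilbert.
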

\begin{proof}
Cohomologically smoothness of smooth morphisms was established in \Cref{rslt:smooth-maps-of-adic-spaces-are-cohom-smooth}. Let $X$ be a small v-stack and $\mathbb{L}\in \DFF(\mathbb{P}^1_X,\Z_p)$ be invertible. By the $\Z_p$-Riemann-Hilbert functor constructed in \Cref{rslt:construction-of-RH-functors}, $\mathbb{L}=\RH_{\Z_p,X^\diamond}(L)$ for a dualizable object $L\in \D_\nuc(X,\Z_p)$. We have a natural morphism $f^{\nuc,\ast}f^\nuc_{\ast}(L)\to L$ and using (after pullback to a strictly totally disconnected space) the $t$-structure on $\omega_1$-solid sheaves \cite[Proposition 3.16]{anschuetz_mann_descent_for_solid_on_perfectoids} (that preserves dualizable objects as can be checked in this case) we can check that it induces an isomorphism $f^{\nuc,\ast}\mathcal{H}^0(f^\nuc_{\ast}(L))\to L$. This finishes the proof.
\end{proof}

\begin{theorem} \label{rslt:dualizing-complex-on-smooth-adic-space}
Let $g\colon Y\to X$ be a smooth morphism of analytic adic spaces over $\Q_p$, which is equidimensional of dimension $d$. Then, in $\DFF(Y,\Z_p)$, $(g^\diamond)^!(1)\cong 1(d)[2d]$ with $-(d)$ refering to the Tate twist.  
\end{theorem}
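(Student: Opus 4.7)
The plan is to follow the strategy of \cite[Theorem~3.10.20]{mann-mod-p-6-functors} / \cite[Theorem~1.2.18]{zavyalov-poincare-duality}, now applied to the geometric $6$-functor formalism $\DFF(-,\Z_p)$ from \Cref{sec:poincare-duality-pro-1-d-x-z-p-is-geometric}. By \Cref{rslt:smooth-maps-of-adic-spaces-are-cohom-smooth} the pullback $f := g^\diamond$ is cohomologically smooth, so $\omega_f := f^!(1)$ is automatically an invertible object of $\DFF(Y,\Z_p)$. The content of the theorem is therefore not invertibility but the identification with $1(d)[2d]$.

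First I would reduce to a single universal case. Since invertible objects are determined étale-locally and the formation of $f^!$ commutes with étale localization on the source (étale maps are cohomologically étale, with $j^! = j^*$), we may replace $g$ by any étale-local model. Every smooth morphism of equidimensional relative dimension $d$ admits étale-locally a factorization through $\mathbb{A}^d_X \to X$, so by multiplicativity $\omega_{g_1\comp g_2} \isom g_2^\ast \omega_{g_1}\tensor \omega_{g_2}$ it suffices to treat the projection $\pi\colon \mathbb{A}^1_X\to X$, since the $\mathbb{A}^d_X$ case then follows by iterated base change from the $d = 1$ case (and the Tate twists add up). By further excision it suffices to compute $p^!(1)$ for $p\colon \mathbb{P}^1_X\to X$: the open embedding $\mathbb{A}^1_X \injto \mathbb{P}^1_X$ is cohomologically étale, so $\omega_\pi$ is the restriction of $\omega_p$.

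Next I would compute $\omega_p$. Since $p$ is proper and cohomologically smooth, $\omega_p$ is invertible in $\DFF(\mathbb{P}^1_X,\Z_p)$. By \Cref{sec:poincare-duality-pro-1-d-x-z-p-is-geometric} (the ``geometric'' axiom), every invertible object of $\DFF(\mathbb{P}^1_X,\Z_p)$ is of the form $q^\ast \mathcal{M}\tensor \mathcal{L}^{\tensor n}$ for some invertible $\mathcal{M}\in \DFF(X,\Z_p)$ and integer $n$, where $\mathcal{L}$ is a fixed generator of the Picard group of $\mathbb{P}^1_X$. Computing $p_\ast \omega_p \cong p_! \omega_p$ via Mayer--Vietoris along the standard cover $\mathbb{P}^1_X = U_0 \cup U_\infty$ (with intersection $\mathbb{G}_{m,X}^\diamond$) and using the smooth base change and projection formula from \Cref{rslt:6ff-for-D-0-infty} pins down both the integer $n$ (i.e.\ the degree on fibers) and the shift, while $\mathcal{M}$ is pulled back from $X$ and thus of the form $1[k]$ by an identity-component argument.

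The only serious obstacle is the identification of the twist itself with the Tate twist $(1)$ defined via $\mathcal{O}(1)$ on the relative Fargues--Fontaine curve. To handle it, I would reduce to the universal case $X = \Spa(C)$ for a complete algebraically closed extension $C$ of $\Q_p$, where the Riemann--Hilbert comparison of \Cref{rslt:construction-of-RH-functors} identifies the pro-étale cohomology $H^2(\mathbb{P}^1_C, \Z_p) \isom \Z_p(-1)$ (a classical computation, e.g.\ via the primitive comparison theorem) with $p_\ast \omega_p$ evaluated at a point under Poincaré duality. This matches $\omega_p = p^\ast 1(1)[2]\tensor \mathcal{O}_{\mathbb{P}^1}(-2)$, giving $\omega_\pi = 1(1)[2]$ after restriction, and hence $\omega_g = 1(d)[2d]$ in general.
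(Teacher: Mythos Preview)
Your overall strategy matches the paper's: both follow \cite[Theorem~3.10.20]{mann-mod-p-6-functors}/Zavyalov, reducing \'etale-locally to affine space and invoking the geometric axiom \Cref{sec:poincare-duality-pro-1-d-x-z-p-is-geometric}. However, you misread that axiom. It asserts that \emph{every} invertible object of $\DFF(\mathbb{P}^1_X,\Z_p)$ is pulled back from $X$---there is no nontrivial relative generator $\mathcal{L}$ and no integer $n$ to determine. Your final formula $\omega_p = p^\ast 1(1)[2] \tensor \mathcal{O}_{\mathbb{P}^1}(-2)$ is a category error: $\mathcal{O}_{\mathbb{P}^1}(-2)$ is a coherent line bundle, not an object of $\DFF(\mathbb{P}^1_X,\Z_p)$, and you are conflating the coherent relative dualizing complex $\mathcal{O}(-2)[1]$ with the one in this \'etale-type formalism. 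The correct statement is simply $\omega_p = p^\ast \mathcal{M}$ for some invertible $\mathcal{M}\in \DFF(X,\Z_p)$.

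Once this is fixed, only the identification of $\mathcal{M}$ remains. The paper does not use Mayer--Vietoris on $\mathbb{P}^1$; it instead pulls back along the unit section $s\colon \Spd(\Q_p) \to \mathbb{A}^{d,\diamond}_{\Q_p}$. Since $\omega_f = f^\ast \mathcal{M}$ one has $s^\ast \omega_f = \mathcal{M}$, while $s^!\omega_f = (f\circ s)^!1 = 1$; combining these with $s^!(f^\ast \mathcal{M}) = \mathcal{M}\tensor s^!1$ gives $\mathcal{M} = (s^!1)^{-1}$, and $s^!1$ is then computed from the known $\Z_p$-cohomology of $\mathbb{A}^1_{\mathbb{C}_p}$ via \Cref{rslt:construction-of-RH-functors}. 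Your proposed computation of $p_\ast\omega_p$ would also determine $\mathcal{M}$ in principle, but the section argument is the one actually carried out in the references you cite.
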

As the morphism $Z\to Z/\varphi^\Z$ is \'etale for any small v-stack $Z$, \Cref{rslt:dualizing-complex-on-smooth-adic-space} implies that the same formula holds for the $\D_{[0,\infty)}(-)$-formalism (and therefore also for $\D_{(0,\infty)}(-)$).
\begin{proof}
Given \Cref{sec:poincare-duality-pro-1-d-x-z-p-is-geometric} the proof of \cite[Theorem 3.10.20]{mann-mod-p-6-functors} works in this case as well, and reduces the assertion to the calculation of $s^!1[2d]$ for the unit section $\Spd(\Q_p)\to \mathbb{A}^{d,\diamond}_{\Q_p}$, which has the expected form (by \Cref{rslt:construction-of-RH-functors} and the known $\Z_p$-cohomology of $\mathbb{A}^1_{\mathbb{C}_p}$).
\end{proof}

Another important example for smooth morphisms comes from classifying stacks of $p$-adic Lie groups. 

\begin{proposition} \label{prop:classifying-stack-loc-profinite-coh-smooth}
Let $H$ be a $p$-adic Lie group (or more generally a locally profinite group which has locally finite $p$-cohomological dimension and is virtually $p$-Poincaré, in the sense of \cite[Definition 3.15]{mod-p-stacky-6-functors}), then $\Spd(\F_p)/H \to \Spd(\F_p)$ is $\D_{[0,\infty)}$-smooth.
\end{proposition}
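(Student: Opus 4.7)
The plan is to reduce this smoothness assertion to the corresponding statement in the $\D^a_\solid(\ri^+/\pi)$ formalism, where the result is already known from \cite{mod-p-stacky-6-functors}. First, note that $\D_{[0,\infty)}$-smoothness of $f\colon \Spd(\F_p)/H \to \Spd(\F_p)$ is a meaningful question: indeed, $H$ has virtually finite $p$-cohomological dimension (this is the case for $p$-adic Lie groups and is part of the hypothesis in the more general setting), so \Cref{sec:exampl-da_h-morphism-from-classifying-stack-shriek-able} already guarantees that $f$ is $!$-able.

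Next, I would use the fact that cohomological smoothness is $\ast$-local on the target (using the universal property of the $*$-pullback functors in a 6-functor formalism, e.g.\ \cite[Lemma~4.4.9]{heyer-mann-6ff}). Hence it suffices to verify $\D_{[0,\infty)}$-smoothness after base change along any v-cover $S \to \Spd(\F_p)$ by a strictly totally disconnected perfectoid space $S = \Spa(R,R^+)$. Such $S$ carries a pseudo-uniformizer $\pi \in R$, and $!$-ability is preserved under base change by \Cref{sec:definition--able-1-existence-of-class-of-shriekable-maps}. We are thus reduced to showing that the morphism $f_S\colon S/H \to S$ is $\D_{[0,\infty)}$-smooth.

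At this point I would invoke \Cref{rslt:smooth-and-proper-for-D-0-infty-can-be-checked-mod-pi}, which reduces checking $\D_{[0,\infty)}$-smoothness of the $!$-able morphism $f_S$ to checking cohomological smoothness in the $\D^a_\solid(\ri^+_{(-)}/\pi)$ formalism. For this mod $\pi$ formalism, cohomological smoothness of $S/H \to S$ for $H$ a $p$-adic Lie group (or, more generally, a locally profinite group that is virtually $p$-Poincaré and has locally finite $p$-cohomological dimension) is precisely the content of the stacky 6-functor results of \cite{mod-p-stacky-6-functors} (this is the very reason the notion of virtually $p$-Poincaré groups is formulated in \cite[Definition~3.15]{mod-p-stacky-6-functors}). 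Applying that result concludes the argument.

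The main (very mild) obstacle is ensuring that the relevant notions of cohomological smoothness transfer cleanly between the three 6-functor formalisms in play ($\D_{[0,\infty)}$, $\Dhsa{(-)}$ after base change to $\Spa(\Z_{p,\infty})$, and $\D^a_\solid(\ri^+/\pi)$), and in particular that the base-change/descent to a strictly totally disconnected base is lossless for a non-representable, stacky morphism such as $\Spd(\F_p)/H \to \Spd(\F_p)$. This is taken care of by the $\ast$-locality on the target built into the class of $!$-able maps (\Cref{sec:definition--able-1-existence-of-class-of-shriekable-maps}) together with the comparison statement \Cref{sec:definition--able-1-existence-of-class-of-shriekable-maps}(vii) identifying $\D_{[0,\infty)}$-smoothness with $\Dhsa{(-)}$-smoothness after base change to $\Spa(\Z_{p,\infty})$, so the reductions are all formally justified.
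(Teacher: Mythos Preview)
Your proof is correct and follows essentially the same approach as the paper: establish $!$-ability via \Cref{sec:exampl-da_h-morphism-from-classifying-stack-shriek-able}, base change to obtain a pseudo-uniformizer (the paper base changes to a perfectoid field $K$ via \cite[Lemma~4.5.7]{heyer-mann-6ff} rather than a strictly totally disconnected space, but this is immaterial), and then invoke \Cref{rslt:smooth-and-proper-for-D-0-infty-can-be-checked-mod-pi} together with \cite[Theorem~3.16, Theorem~3.18]{mod-p-stacky-6-functors}. Your final paragraph unpacking the comparison of formalisms is more explicit than necessary, since \Cref{rslt:smooth-and-proper-for-D-0-infty-can-be-checked-mod-pi} already packages that reduction.
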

\begin{proof}
We already proved $!$-ability in \Cref{sec:exampl-da_h-morphism-from-classifying-stack-shriek-able}. Cohomological smoothness can be checked after base change to a perfectoid field $K$ (see \cite[Lemma~4.5.7]{heyer-mann-6ff}). It then follows from \Cref{rslt:smooth-and-proper-for-D-0-infty-can-be-checked-mod-pi} and \cite[Theorem 3.16, Theorem 3.18]{mod-p-stacky-6-functors}.
\end{proof}

Finally, we discuss examples coming from the theory of Banach-Colmez spaces. Fix a finite extension $E$ of $\Q_p$. If $S$ is a perfectoid space over the residue field $k_E$ of $E$, and $\mathcal{E}_0, \mathcal{E}_1$ are vector bundles on $\FF_{S,E}$, with a map $\mathcal{E}_1 \to \mathcal{E}_0$, we denote by 
$$
    \mathcal{BC}([\mathcal{E}_1 \to \mathcal{E}_0]): T \in \mathrm{Perf}_S \mapsto \mathbb{H}^0(\FF_{T,E},[\mathcal{E}_1 \to \mathcal{E}_0]_{|_{\FF_{T,E}}}).
$$
Recall that for $\ell$-adic coefficients, $\ell \neq p$, if $\mathcal{E}_1$ only has negative Harder-Narasimhan slopes and $\mathcal{E}_0$ only has positive slopes, $\mathcal{BC}([\mathcal{E}_1 \to \mathcal{E}_0])$ is cohomologically smooth (cf. \cite[Proposition II.3.5]{fargues-scholze-geometrization}). This is not true in $\D_{[0,\infty)}(-)$: for $E=\Q_p$, we have $\mathcal{BC}(\mathcal{O}(1))= \Spd \mathbb{F}_p[[t]]$ and hence 
$$
    \D_{(0,\infty)}(\mathcal{BC}(\mathcal{O})(1))  = \D_{\solid}(\widetilde{\mathbb{D}}_{\Q_p})
$$
with $\widetilde{\mathbb{D}}_{\Q_p}$ the (pre)perfectoid open unit disk over $\Q_p$ (this will be justified later, see \Cref{rmk:result-of-zillinger-bc(O(1))} and the discussion in \Cref{ex:classifying-stack-bc-O(1)}), which is not cohomologically smooth (as the dualizing complex of $\widetilde{\mathbb{D}}_{\Q_p}$ over the unit disc $\mathbb{D}_{\Q_p}$ is not invertible). 

\begin{proposition} \label{prop:classifying-stacks-bc-spaces-coh-smooth}
In the above situation, assume that $\mathcal{E}_1$ only has negative Harder-Narasimhan slopes and that $\mathcal{E}_0$ only has non-negative slopes. Then the classifying stack 
$$
    S/\mathcal{BC}([\mathcal{E}_1 \to \mathcal{E}_0])
$$
is $\D_{[0,\infty)}$-smooth over $S$. 
\end{proposition}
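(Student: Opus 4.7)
The plan is to prove cohomological smoothness of $f\colon BG := S/G \to S$, with $G := \mathcal{BC}([\mathcal{E}_1 \to \mathcal{E}_0])$, by combining the $!$-ability provided by \cref{sec:exampl-da_h-bc-space-shriekable} with a dévissage on the Harder--Narasimhan slopes of the defining complex of vector bundles. The $!$-ability of $f$ itself follows at once: the structure map $G \to S$ is $!$-able by \cref{sec:exampl-da_h-bc-space-shriekable}, hence so is the $G$-torsor $p\colon S \to BG$, and since this torsor is a universal $!$-cover of $BG$, the conclusion is a consequence of the $!$-locality on the source for $!$-able maps in \cref{sec:definition--able-1-existence-of-class-of-shriekable-maps}.

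The engine of the argument is the following extension principle: given a short exact sequence $1 \to G_1 \to G \to G_2 \to 1$ of $!$-able group v-sheaves over $S$ with both $BG_1$ and $BG_2$ cohomologically $\D_{[0,\infty)}$-smooth over $S$, the classifying stack $BG$ is also smooth over $S$. Indeed, the induced map $BG \to BG_2$ is a $G_1$-gerbe, and its base change along the smooth $G_2$-torsor $S \to BG_2$ is the map $BG_1 \to S$; since cohomological smoothness is $!$-local on the target (\cite[Lemma~4.5.7]{heyer-mann-6ff}), $BG \to BG_2$ is smooth, and composing with $BG_2 \to S$ concludes.

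I then apply this principle to the short exact sequence
\[
    0 \to \mathcal{BC}(\mathcal{E}_0) \to G \to \mathcal{BC}(\mathcal{E}_1[1]) \to 0
\]
arising from the distinguished triangle $\mathcal{E}_0 \to [\mathcal{E}_1 \to \mathcal{E}_0] \to \mathcal{E}_1[1]$ (exactness uses $H^0(\mathcal{E}_1) = 0$ for strictly negative slopes and $H^1(\mathcal{E}_0) = 0$ for non-negative slopes on the relative Fargues--Fontaine curve), and iterate along the Harder--Narasimhan filtrations of $\mathcal{E}_0$ and $\mathcal{E}_1$. After a pro-étale localization on $S$ to split the isoclinic graded pieces, this reduces the problem to the basic cases $B\mathcal{BC}(\mathcal{O}(\lambda))$ for $\lambda \geq 0$ and $B\mathcal{BC}(\mathcal{O}(\lambda)[1])$ for $\lambda < 0$. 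The slope $0$ case $\mathcal{BC}(\mathcal{O}) = \underline{\Q_p}$ is handled immediately by the $p$-adic Lie group result \cref{prop:classifying-stack-loc-profinite-coh-smooth}, and the remaining positive- and shifted-negative-slope Banach--Colmez groups can be further decomposed, via fundamental exact sequences on the Fargues--Fontaine curve, into extensions of constant $\Q_p$-local systems (again smooth by \cref{prop:classifying-stack-loc-profinite-coh-smooth}) and ``pure positive'' additive Banach--Colmez groups representable by perfectoid polydiscs.

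The main obstacle is therefore the classifying stack of such a ``pure positive'' group, the prototype being $\mathcal{BC}(\mathcal{O}(1)) = \Spd \F_p[[t]]$. My strategy here is to invoke \cref{rslt:smooth-and-proper-for-D-0-infty-can-be-checked-mod-pi} to reduce the smoothness question modulo a pseudo-uniformizer, and then to apply the almost variant of the group-theoretic criterion \cref{rslt:checking-smoothness-of-quotient-by-profin-group-criterion}: the required dualizability of the invariants of the formal polydisc group on $\ri^{+a}/\pi$ should follow from an explicit Koszul-type computation in the spirit of \cite[Section~3.10]{mann-mod-p-6-functors}, exploiting the fact that the relevant action of the pro-étale $\Q_p$-factor produces, after Koszul resolution, exactly the kind of direct sum of complexes whose almost dualizability is established in \cref{rslt:direct-sum-dualizability-in-almost-world}.
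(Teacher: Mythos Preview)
Your reduction runs aground at the ``pure positive'' classifying stacks $B\mathcal{BC}(\mathcal{O}(\lambda))$ for $\lambda > 0$, and the proposed fix does not work: the criterion \cref{rslt:checking-smoothness-of-quotient-by-profin-group-criterion} is about a \emph{profinite} group $\Gamma$ acting smoothly on an algebra, whereas $\mathcal{BC}(\mathcal{O}(\lambda))$ is a connected perfectoid group --- there is no profinite $\Gamma$ here whose invariants you could compute, and no ``pro-\'etale $\Q_p$-factor'' to Koszul-resolve. The paper avoids this case altogether by a different d\'evissage for $\mathcal{E}_0$: rather than the Harder--Narasimhan filtration, it uses (locally on $S$, via \cite[Proposition~II.3.1]{fargues-scholze-geometrization}) a short exact sequence $0 \to \mathcal{O}(-1)^r \to \mathcal{E}_0 \to \mathcal{F} \to 0$ with $\mathcal{F}$ semistable of slope $0$, giving $0 \to \underline{E}^s \to \mathcal{BC}(\mathcal{E}_0) \to \mathcal{BC}(\mathcal{O}(-1)[1])^r \to 0$. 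The key is that $\mathcal{BC}(\mathcal{O}(-1)[1])$ is $\D_{[0,\infty)}$-smooth \emph{as a space over $S$} --- it is a quotient of an affine space $\mathbb{A}^{1,\diamond}_{S^\sharp}$ by an $E$-local system, as established in the $\mathcal{E}_1$ part of the argument --- so its classifying stack is smooth a fortiori, and the extension principle finishes.

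There is also a gap in your $!$-ability step: the $G$-torsor $S \to BG$ is not a universal $!$-cover merely because $G \to S$ is $!$-able; one needs universal $!$-descent along the torsor, which is not automatic. The paper handles this with care, intertwining $!$-ability and smoothness: for the $\mathcal{E}_1[1]$ factor it first proves $\mathcal{BC}(\mathcal{E}_1[1]) \to S$ is cohomologically smooth (hence the torsor $S \to S/\mathcal{BC}(\mathcal{E}_1[1])$ gives $!$-descent), and for the $\mathcal{E}_0$ factor it again exploits the exact-sequence trick to reduce to the smooth $\mathbb{A}^1$ and to local systems, where $!$-descent is available.
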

\begin{proof}
We first check $!$-ability for the morphism $S/\mathcal{BC}([\mathcal{E}_1\to \mathcal{E}_0])\to S$. We may check this statement locally in the analytic topology on $S$. By the assumption on the slopes, we have a short exact sequence
\[
    0\to \mathcal{BC}(\mathcal{E}_0)\to \mathcal{BC}([\mathcal{E}_1\to \mathcal{E}_0])\to \mathcal{BC}(\mathcal{E}_1[1])\to 0.
\]
This reduces the assertion to the following statements: 1) $S/\mathcal{BC}(\mathcal{E}_0) \to S$ is $!$-able, and 2) the morphism $h\colon S\to S/\mathcal{BC}(\mathcal{E}_1[1])$ is of universal $!$-descent (in the strong form as required in \Cref{sec:definition--able-1-existence-of-class-of-shriekable-maps}). Namely, given 2) the morphism $S/\mathcal{BC}(\mathcal{E}_1[1])\to S$ is $!$-able, and by pullback along $h$ the desired $!$-ability of $S/\mathcal{BC}([\mathcal{E}_1\to \mathcal{E}_0]) \to S$ reduces to 1).

We first show that 1) holds. Let $d$ be the degree of $\mathcal{E}_0$. From the proof of \cite[Proposition II.3.1]{fargues-scholze-geometrization} we see that there exists (locally in the analytic topology on $S$) a short exact sequence
\[
    0\to \mathcal{E}_2\to \mathcal{E}_0\to \bigoplus\limits_{i=1}^d \mathcal{O}_{S_i^\sharp}\to 0, 
\]
where $S_i^\sharp, i=1,\ldots, d,$ are untilts of $S$ over $E$, and $\mathcal{E}_2$ is semi-stable of slope $0$. \Cref{rslt:smooth-maps-of-adic-spaces-are-cohom-smooth} shows that the morphism $\mathcal{A}^{1,\diamond}_{S_i^\sharp}\to S$ is cohomologically smooth. This implies that the morphism $h_i\colon S\to S/\mathbb{A}^{1,\diamond}_{S_i^\sharp}$ satisfies universal $!$-descent for any $i=1,\ldots, d$. Pulling back along the product of the $h_i's$, we can reduce to the case $\mathcal{E}_0=\mathcal{E}_2$, i.e., that $\mathcal{E}_2$ is semi-stable of slope $0$. Let $r$ be the rank of $\mathcal{E}_2$. By the proof of \Cref{sec:exampl-da_h-morphism-from-classifying-stack-shriek-able} the morphism $S\to S/\mathrm{GL}_r(E)$ is of universal $!$-descent, and thus there exists a morphism $S'\to S$ of universal $!$-descent, such that the $E$-local system $\mathcal{BC}(\mathcal{E}_2)$ on $S$ is trivial on $S'$. As we may check $!$-ability of $S/\mathcal{BC}(\mathcal{E}_2)\to S$ after pullback to $S'$, this reduces to the case that $\mathcal{E}_2$ is trivial, where we can apply \Cref{sec:exampl-da_h-morphism-from-classifying-stack-shriek-able}.

Let us now check statement 2). It suffices to see that $\mathcal{BC}(\mathcal{E}_1[1])\to S$ is $\D_{[0,\infty)}$-smooth. Arguing for $\mathcal{E}_1^\vee$ as in the previous assertion there exists a short exact sequence
\[
    0 \to \mathcal{E}_1\to \mathcal{E}_3\to \bigoplus\limits_{j=1,\ldots, d'} \mathcal{O}_{S_j^\sharp} \to 0
\]
for untilts $S_j^\sharp, j=1,\ldots, d':=-\mathrm{deg}(\mathcal{E}_1)$ and $\mathcal{E}_3$ semistable of slope $0$. This yields the short exact sequence
\[
    0 \to \mathbb{L}:=\mathcal{BC}(\mathcal{E}_3)\to Z:=\bigoplus\limits_{j=1,\ldots} \mathbb{A}^{1,\diamond}_{S_j^\sharp}\to \mathcal{BC}(\mathcal{E}_1[1])\to 0.
\]
The morphism $Z\to S$ is cohomologically smooth by \Cref{rslt:smooth-maps-of-adic-spaces-are-cohom-smooth}, and the morphism $g\colon S/\mathbb{L} \to S$ is cohomologically smooth by \Cref{sec:exampl-da_h-morphism-from-classifying-stack-shriek-able} (in fact we already know $!$-ability of $g$ from the previous case, and hence the claim is v-local on $S$, which reduces to the case that $\mathbb{L}$ is trivial). The factorization $\mathcal{BC}(\mathcal{E}_1[1])\cong Z/E \to S/E \to S$ yields the desired smoothness. This finishes the proof.

Finally, we prove cohomological smoothness. The short exact sequence
$$
0 \to \mathcal{BC}(\mathcal{E}_0) \to \mathcal{BC}([\mathcal{E}_1 \to \mathcal{E}_0]) \to \mathcal{BC}(\mathcal{E}_1][1]) \to 0
$$
reduces us to prove cohomological smoothness for the classifying stacks of $\mathcal{BC}(\mathcal{E}_0)$ and $\mathcal{BC}(\mathcal{E}_1[1])$. We already established that $\mathcal{BC}(\mathcal{E}_1[1])$ is $\D_{[0,\infty)}$-smooth, so a fortiori its classifying stack is. So it only remains to prove that the classifying stack of $\mathcal{BC}(\mathcal{E}_0)$ is $\D_{[0,\infty)}$-smooth. The statement to be proven being local on $S$, we can apply \cite[Proposition II.3.1]{fargues-scholze-geometrization} and assume that $\mathcal{E}_0$ sits in a short exact sequence
$$
0 \to \mathcal{O}_{\FF_{S,E}}(-1)^r \to \mathcal{E}_0 \to \mathcal{F} \to 0
$$
for some $r\geq 0$ and some vector bundle $\mathcal{F}$ semi-stable of degree $0$ at every geometric point. Localizing further for the pro-\'etale topology, we can assume that $\mathcal{F}=\mathcal{O}_{\FF_{S,E}}^s$ is trivial. We get a short exact sequence
$$
0 \to \underline{E}^s \to \mathcal{BC}(\mathcal{E}_0) \to \mathcal{BC}(\mathcal{O}_{\FF_{S,E}}(-1)[1])^r \to 0.
$$
The classifying stack of $\underline{E}^s$ is $\D_{[0,\infty)}$-smooth by \Cref{prop:classifying-stack-loc-profinite-coh-smooth} and we already proved that $\mathcal{BC}(\mathcal{O}_{\FF_{S,E}}(-1)[1])$ is $\D_{[0,\infty)}$-smooth, and thus so also is its classifying stack. This finishes the proof.
\end{proof}

\subsection{The case of \texorpdfstring{$\Spd(\Q_p)$}{Spd(Qp)}} \label{sec:example:-div1}

We know discuss the (surprisingly subtle) arithmetic example $\Spd(\Q_p)\to \Spd(\F_p)$. Let us first establish smoothness of this map. Afterwards we will determine the dualizing complex. 

\begin{theorem} \label{sec:case-texorpdfstr-smoothness-of-spd-q-p}
The map $\Spd \Q_p \to \Spd(\F_p)$ is $\D_{[0,\infty)}$-smooth and the dualizing complex is concentrated in cohomological degree $-2$.
\end{theorem}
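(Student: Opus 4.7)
The plan is to realize $\Spd(\Q_p)$ as a v-sheaf quotient $\Spa(\Q_p^{\mathrm{cycl},\flat})/\Z_p^\times$, using the Galois theory of the cyclotomic extension $\Q_p^{\mathrm{cycl}}/\Q_p$ (with Galois group $\Z_p^\times$ acting via the cyclotomic character), and factor $f$ through the classifying stack $B\Z_p^\times := \Spd(\F_p)/\Z_p^\times$ as
\[
    \Spd(\Q_p) \xto{g} B\Z_p^\times \xto{h} \Spd(\F_p).
\]

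For $!$-ability, the map $h$ is $!$-able by \Cref{sec:exampl-da_h-morphism-from-classifying-stack-shriek-able}, since $\Z_p^\times$ is a locally profinite group with finite $p$-cohomological dimension. The base change of $g$ along the $\Z_p^\times$-torsor $\Spd(\F_p) \to B\Z_p^\times$ is the structure map $\Spa(\Q_p^{\mathrm{cycl},\flat}) \to \Spd(\F_p)$ of the perfectoid field $\Q_p^{\mathrm{cycl},\flat}$; this map is lpbc by \Cref{sec:exampl-da_h-criterion-to-check-lpbc}, and hence $!$-able. Invoking the $!$- and $*$-locality of the class of $!$-able maps (\Cref{sec:definition--able-1-existence-of-class-of-shriekable-maps}) then yields $!$-ability of $g$ and hence of $f$.

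For cohomological smoothness and the computation of the dualizing complex, we use the $*$-locality on the target and base change to a strictly totally disconnected perfectoid space $S$ over $\Spd(\F_p)$ admitting a pseudo-uniformizer $\pi$. Using the identification $S \times \Spd(\Q_p) = \mathcal{Y}^\diamond_{(0,\infty),S}$ (from the relative Fargues--Fontaine picture), the base change of $f$ becomes the projection $\tilde f\colon \mathcal{Y}^\diamond_{(0,\infty),S} \to S$. By \Cref{rslt:smooth-and-proper-for-D-0-infty-can-be-checked-mod-pi}, both cohomological smoothness and the dualizing complex can be verified in the $\D^a_\solid(\ri^+_{(-)}/\pi)$-formalism. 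The mod-$\pi$ statement should then follow from Mann's duality results for analytic adic spaces in \cite[Section~3.10]{mann-mod-p-6-functors}: the relative Fargues--Fontaine curve $\mathcal{Y}_{(0,\infty),S}$ is $1$-dimensional analytic (i.e., of real dimension $2$) over its base, so its dualizing complex is a line bundle placed in cohomological degree $-2$.

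The main obstacle is making precise sense of ``mod $\pi$'' for $\tilde f$: the map $\mathcal{Y}^\diamond_{(0,\infty),S} \to S$ does not come from a map of analytic adic spaces (the source is over $\Q_p$, the target over $\F_p$), so one must carefully identify the mod-$\pi$ reduction at the level of $\D^a_\solid(\ri^+/\pi)$ and match it with a concrete $1$-dimensional perfectoid-over-perfectoid situation to which Mann's machinery applies. Once this identification is made, the cotangent complex of the relative Fargues--Fontaine curve is (essentially) a line bundle, which gives the expected invertibility and cohomological concentration of $f^!(1)$ in degree $-2$.
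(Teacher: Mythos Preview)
Your proposal correctly identifies the overall strategy (pass to a perfectoid cover and reduce mod $\pi$), but the crucial step you flag as ``the main obstacle'' is in fact the entire content of the proof, and it is \emph{not} handled by \cite[Section~3.10]{mann-mod-p-6-functors}. That section proves cohomological smoothness for \emph{smooth morphisms of analytic adic spaces over $\Q_p$}. The projection $\mathcal{Y}^\diamond_{(0,\infty),S} \to S$ is not such a morphism: the source lives over $\Q_p$, the target over $\F_p$, and there is no intermediate smooth adic map. After reducing mod $\pi$, one obtains a quotient of a \emph{perfectoid} annulus (over $\ri_C/\pi$) by a $\Z_p$-action, and perfectoid spaces are the opposite of smooth---their cotangent complex vanishes (almost), so there is no ``line bundle in degree $-2$'' coming from differentials. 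The expected shift by $2$ must instead arise from the interaction of the perfectoid geometry with the $\Z_p$-action, and this is not something any existing black-box provides.

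The paper's proof takes the same reduction (to $\perfPunctDisc_C/\Z_p \to \Spa(C)$, then mod $\pi$), but then carries out several pages of explicit hand computation: writing down the $\Z_p$-action $\gamma \cdot t = (1+t)^{1+p\gamma}-1$ on coordinates of the annulus $R[T^{1/p^\infty},S^{1/p^\infty}]/(ST-\varpi)$, applying the abstract criterion of \Cref{rslt:checking-smoothness-of-quotient-by-profin-group-criterion} (which requires checking that $\widetilde A^{H_m}$ is almost-dualizable over $A$ for every open $H_m \subseteq \Z_p$), and proving this via a delicate $\varepsilon$-approximation argument showing the invariants are weakly almost perfect. The invertibility of the dualizing complex is then established by a further explicit analysis of $\varinjlim_m (\widetilde A^{H_m})^\vee$. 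Your sentence ``once this identification is made, the cotangent complex of the relative Fargues--Fontaine curve is (essentially) a line bundle'' is precisely where the argument breaks: there is no such cotangent complex here, and the identification you defer is the theorem.
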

\begin{proof}
Fix some algebraically closed non-archimedean field $C$ of characteristic $p$. Then the $\D_{[0,\infty)}$-cohomological smoothness of $\Spd \Q_p \to \Spd(\F_p)$ can be checked after pullback along the v-cover $\Spa C \to  \Spd(\F_p)$ (see \cite[Lemma~4.5.7]{heyer-mann-6ff}). As in the proof of \cite[Proposition~24.5]{etale-cohomology-of-diamonds} we have
\begin{align*}
    \Spd \Q_p \times \Spa C = \perfPunctDisc_C/\Gamma,
\end{align*}
where $\Gamma = \Z_p$ and $\perfPunctDisc_C$ is the punctured perfectoid open unit disc over $C$, so we reduce to showing that the map $\perfPunctDisc_C/\Gamma \to \Spa C$ is $\D_{[0,\infty)}$-cohomologically smooth. We proceed in a similar way as in the torus case.

If $t$ is the coordinate on $\perfDisc_C$ then $\gamma \in \Gamma$ acts on $t$ by $\gamma \cdot t = (1 + t)^{1 + p\gamma} - 1$. In particular, for fixed $\gamma$ the action is given by a power series in $t$ which starts with $t + \dots$, which shows that for all rational radii $0 < r < s < 1$ the $\Gamma$-action on $\perfPunctDisc_C$ restricts to a $\Gamma$-action on the perfectoid annulus $\perfBall[r,s]_C \subset \perfDisc_C$ where $r \le \abs t \le s$. Since these annuli form an open cover of $\perfPunctDisc_C$, it is enough to show that each map $\perfBall[r,s]_C/\Gamma \to \Spa C$ is $p$-cohomologically smooth. We now fix $r$ and $s$. After rescaling the coordinate on $\perfBall[r,s]_C$ we have $\perfBall[r,s]_C = \Spa(C\langle T^{1/p^\infty}, \frac{\varpi}{T}^{1/p^\infty}\rangle)$ for some pseudo-uniformizer $\varpi \in \mm_C$ depending on $\frac sr$, and there is a pseudo-uniformizer $\pi \in \mm_C$ (depending on $r$) such that $\gamma \in \Gamma$ acts on $T$ by $T \mapsto \frac1{\pi} ((1 + \pi T)^{1+p\gamma} - 1)$. We set
\begin{align*}
    R &:= \ri_C/\pi,\\
    A &:= R[T, S],\\
    A_n &:= R[T^{1/p^n}, S^{1/p^n}]/(S^{1/p^n} T^{1/p^n} - \varpi^{1/p^n}),\\
    \widetilde A &:= \varinjlim_n A_n.
\end{align*}
One checks that $\widetilde A = \ri^+(\perfBall[r,s]_C)/\pi$ (e.g. observe that the version of $\widetilde A$ without modding out by $\pi$ is a perfectoid ring). In order to shorten notation we will from now on abbreviate $T_n = T^{1/p^n}$, $\pi_n = \pi^{1/p^n}$ and so on. The element $\gamma = 1 \in \Gamma$ acts on $T_n$ and $S_n$ as follows:
\begin{align*}
    \gamma T_n &= T_n + \pi_n^{p-1} T_n^p + \pi_n^p T_n^{p+1},\\
    \gamma S_n &= S_n \cdot \sum_{k=0}^\infty (\pi_n^{p-1} T_n^{p-1} + \pi_n^p T_n^p)^k.
\end{align*}
As in the proof of \cref{rslt:smooth-maps-of-adic-spaces-are-cohom-smooth}, the claim reduces to showing that the map
\begin{align*}
    \Spec(\widetilde A)/\Gamma \to \Spec(R)
\end{align*}
is $\D^a_\solid$-smooth. Since $\Spec(A_\solid)$ is cohomologically smooth over $\Spec(R)$ and $\Gamma$ acts trivially on $A$, it suffices to check that the map $h\colon \Spec(\widetilde A)/\Gamma \to \Spec(A)$ is $\D^a_\solid$-smooth. We make the following preliminary observations, for each $n \ge 0$:
\begin{enumerate}[(a)]
    \item $A_n$ is perfect as an $A$-module\footnote{We warn the reader that $A_n$ is not perfect as an $A_0$-module, although it is of course finitely presented.} and $A_n^\vee := \IHom_A(A_n, A) \isom A_n[-1]$ as $A_n$-modules.

    \item The map $A_n \injto A_{n+1}$ is split injective, where the splitting $A_{n+1} \to A_n$ is given by forgetting the coefficients of $T_{n+1}^k$ and $S_{n+1}^k$ for all $k$ coprime to $p$.
\end{enumerate}
With these preparations at hand, we now prove the following crucial claim:
\begin{itemize}
    \item[($*$)] For every $\varepsilon \in \mm_C$ there is an integer $k \ge 0$ such that for every integer $m \ge 0$ the $A_m$-module
    \begin{align*}
        \cofib(A_{m+k} \injto \widetilde A)^{H_m}
    \end{align*}
    is killed by $\varepsilon$, where $H_m := p^m \Z_p \subset \Gamma$.
\end{itemize}
To prove claim ($*$) we will first prove an intermediate claim: For all integers $n, m \ge 0$ all the cohomologies of the complex $(A_n/A_{n-1})^{H_m}$ are killed by $\varepsilon_{n,m} := (\varpi_n \pi_n)^{p^{m+1} + p}$. Let us first argue why the intermediate claim implies ($*$). By using the long exact cohomology sequence of the fiber sequence
\begin{align*}
    (A_{n+\ell}/A_{n-1})^{H_m} \to (A_{n+\ell+1}/A_{n-1})^{H_m} \to (A_{n+\ell+1}/A_{n+\ell})^{H_m}
\end{align*}
for $\ell \ge 0$, we inductively deduce that all cohomologies of the complex $(A_{n+\ell}/A_{n-1})^{H_m}$ are killed by $\varepsilon_{n,m,\ell} := \varepsilon_{n,m} \varepsilon_{n+1,m} \dots \varepsilon_{n+\ell,m}$. In particular all cohomologies are killed by $\varepsilon_{n,m}^2$ and by passing to the colimit over $\ell$ we deduce that all cohomologies of $(\widetilde A/A_{n-1})^{H_m}$ are killed by $\varepsilon_{n,m}^2$. Since this complex lives in only two degrees, we deduce that $(\widetilde A/A_{n-1})^{H_m}$ is killed by $\varepsilon_{n,m}^4$ (see \cite[Lemma~05QP]{stacks-project}). This easily implies ($*$).

We now prove the intermediate claim, so let $n, m \ge 0$ be given. We denote from now on $q := p^{m+1}$ and $\gamma_m := p^m \in \Gamma$. Then
\begin{align*}
    (A_n/A_{n-1})^{H_m} = \fib(A_n/A_{n-1} \xto{\gamma_m - \id} A_n/A_{n-1}),
\end{align*}
i.e. the two cohomologies of $(A_{n}/A_{n-1})^{H_m}$ are given by the kernel and cokernel of the map $\gamma_m - \id$. We will separately show that both are killed by $\varepsilon_n := \varepsilon_{n,m} = (\varpi_n\pi_n)^{q+p}$. To this end, we first make the following preliminary computations: For an integer $k > 0$ we have
\begin{align*}
    (\gamma_m - \id) T_n^k &= k \pi_n^{q-1} T_n^{q+k-1} + k \pi_n^q T_n^{q+k} + \pi_n^{2q-2} T_n^{2q+k-2} r(T_n),\\
    (\gamma_m - \id) S_n^k &= S_n^k (k \pi_n^{q-1} T_n^{q-1} + k \pi_n^q T_n^q + \pi_n^{2q-2} T_n^{2q-2} r'(T_n)),
\end{align*}
for certain polynomials $r(T_n)$ and $r'(T_n)$ in $T_n$.

We now handle the kernel of $\gamma_m - \id$ on $A_{n}/A_{n-1}$, so fix some element $f$ in there. The element $f$ has a unique representation of the form $f = \sum_{k>0} b_k S_n^k + \sum_{k>0} a_k T_n^k$, where we only need to consider $k$ that are coprime to $p$ (otherwise $S_n^k$ and $T_n^k$ lie in $A_{n-1}$ and are thus killed in $A_n/A_{n-1}$) and of course only finitely many $b_k$ and $a_k$ are non-zero. We need to show that $\varepsilon_n f = 0$. Suppose this is not the case and suppose that there is some $k > 0$ such that $\varepsilon_n b_k \ne 0$. We consider the maximal such $k$. First assume $k > q$ and let $k_0 < k$ be the largest integer such that $p \divides k_0$. Then by the above formulas and the assumption on $k$, for every $\ell = k_0, \dots, k-1$ the coefficient of $S_n^{l-q+1}$ in $(\gamma_m - \id) f$ is
\begin{align}
    b_\ell \ell \pi_n^{q-1} \varpi_n^{q-1} + b_{\ell+1} (\ell+1) \pi_n^q \varpi_n^q. \label{eq:Div1-smoothness-computation-of-coefficient}
\end{align}
All of these coefficients must be $0$ because $l-q+1$ is not divisible by $p$ and hence $S_n^{l-q+1}$ does not lie in $A_{n-1}$. For $\ell = k_0$ this shows $\pi_n^q \varpi_n^q b_{k_0+1} = 0$. By induction on $\ell$ we deduce that $\pi_n^{q+\ell-k_0} \varpi_n^{q+\ell-k_0} b_{\ell+1} = 0$ for $\ell = k_0, \dots, k-1$. In particular $\varepsilon_n b_k = 0$, contradiction! This shows that $k < q$. Take $k_0$ as before. Then for $\ell = k_0, \dots, k-1$ the coefficient of $T_n^{q-1-\ell}$ in $\varpi_n^{q-\ell-1} (\gamma_m - \id) f$ is as in \cref{eq:Div1-smoothness-computation-of-coefficient}, so we can argue as before. This altogether shows that $\varepsilon_n b_k = 0$ for all $k$. Now suppose that $\varepsilon_n a_k \ne 0$ for some $k$ and pick the smallest such $k$. Let $k_0 > k$ be the smallest integer such that $p \divides k$. For $\ell = k+1, \dots, k_0$ we consider the coefficient of $T_n^{q+\ell-1}$ in $\varpi_n^{q+p} (\gamma_m - \id) f$ and argue again as above in order to deduce $\varepsilon_n a_k = 0$. We conclude that $\varepsilon_n f = 0$, finishing the proof that $H^0((A_n/A_{n-1})^H)$ is killed by $\varepsilon_n$.

It remains to show that the cokernel of $\gamma_m - \id$ on $A_n/A_{n-1}$ is killed by $\varepsilon_n = \varpi_n^{q+p} \pi_n^{q+p}$. For this we need to see that $\varepsilon_n S_n^k = 0$ and $\varepsilon_n T_n^k = 0$ in the cokernel, for each integer $k > 0$ that is coprime to $p$. In fact it is enough to show that $\varepsilon_n S_n^k$ and $\varepsilon_n T_n^k$ are multiples of $\pi_n \varepsilon_n$ in the cokernel---then inductively they are multiples of $\pi_n^\ell \varepsilon_n$ for all $\ell \ge 1$ and hence they must be $0$. Let us first consider $\varepsilon_n T_n^k$ for $k > q$ coprime to $p$. By the above computation of $(\gamma_m - \id) T_n^{k-q}$ we deduce that in the cokernel we have
\begin{align*}
    \varepsilon_n T_n^k = \frac{\varepsilon_n}{\pi_n} T_n^{k-1} + \pi_n^{q-2} \varepsilon_n k^{-1} r(T_n).
\end{align*}
If $k - 1$ is divisible by $p$ then $T_n^{k-1}$ lies in $A_{n-1}$ and hence vanishes in the cokernel, and we are done. If not, we apply the same procedure to $k-1$ and so on. After at most $p-1$ steps we arrive at the desired conclusion. One can similarly handle $\varepsilon_n T_n^k$ for $k < q$ and $\varepsilon_n S_n^k$ for $k > 0$ by employing the above computation of $(\gamma_m - \id) S_n^k$. This finally finishes the proof of the intermediate claim and hence the proof of claim ($*$) above.

We now come back to the proof of the $\D^a_\solid$-smoothness of the map $h\colon \Spec(\widetilde A)/\Gamma \to \Spec(A)$. Let us first check that $h$ is $\D^a_\solid$-suave. By \cref{rslt:checking-smoothness-of-quotient-by-profin-group-criterion} this reduces to showing that for all $m \ge 0$ the $A$-module $\widetilde A^{H_m}$ is dualizable in $\D^a_\solid(A)$. By \cite[Proposition~3.7.5(iii)]{mann-mod-p-6-functors} this reduces to showing that $\widetilde A^{H_m}$ is weakly almost perfect i.e. for every $\varepsilon \in \mm_C$ it is an $\varepsilon$-retract of a perfect $A$-module. But from ($*$) it follows easily that $\widetilde A^{H_m}$ is an $\varepsilon$-retract of $A_{m+k}^{H_m}$ and the latter is indeed a perfect $A$-module (by observation (a) above and the explicit formula for $H_m$-cohomology).

To finish the proof, it remains to show that the dualizing complex $\omega_h = h^! 1$ of $h$ is invertible and concentrated in degree $0$ (as the dualizing complex of $A$ over $R$ is concentrated in cohomological degree $-2$). As in the proof of \cref{rslt:smooth-maps-of-adic-spaces-are-cohom-smooth} we compute
\begin{align*}
    \omega_h = \varinjlim_{m \ge 0} (\widetilde A^{H_m})^\vee = \varinjlim_{m\ge0} \big[ \widetilde A^\vee \xto{\gamma_m^\vee - \id} \widetilde A^\vee \big][1] = \varinjlim_{m\ge0} (\widetilde A^\vee)^{H_m}[1].
\end{align*}
Here the $\widetilde A$-module structure on the colimit comes as the colimit of the $A_m$-module structures on $\widetilde A^{H_m}$ (where we observe that $H_m$ acts trivially on $A_m$). Note that using observation (a) we have for all $n \ge 0$
\begin{align*}
    \widetilde A^\vee = \IHom_A(\widetilde A, A) = \IHom_{A_n}(\widetilde A, \IHom_A(A_n, A)) = \IHom_{A_n}(\widetilde A, A_n)[-1]
\end{align*}
Using observations (a) and (b) we further note that $\widetilde A^\vee = \prod_n \IHom_A((A_n/A_{n-1}), A)$ and this complex is concentrated in cohomological degree $1$. Altogether we see that $\widetilde A^\vee$ is up to a shift by $-1$ simply the (classical) $A_n$-module of $A_n$-linear maps $\widetilde A \to A_n$. In particular $\omega_h$ is a priori concentrated in degrees $0$ and $1$. We claim that $\omega_h$ is concentrated in degree $0$ and invertible in $\D^a(\widetilde A)$.

We first show that $H^1(\omega_h) = 0$. By the above description of $\omega_h$ we see that $H^1(\omega_h) = \varinjlim_m H^1(\omega_{h,m})$, where $H^1(\omega_{h,m}) = \coker(\gamma_m^\vee - \id\colon \widetilde A^\vee[1] \to \widetilde A^\vee[1])$. Here the transition maps are given by
\begin{align*}
    \alpha_m := (\gamma_m^\vee - \id)^{p-1}\colon H^1(\omega_{h,m}) \to H^1(\omega_{h,m+1})
\end{align*}
Fix some $m \ge 0$, some $f \in H^1(\omega_{h,m})$ (which we identify with an $A_m$-linear map $\widetilde A \to A_m$) and some $\varepsilon \in \mm_C$. We need to see that the image of $\varepsilon f$ in $H^1(\omega_{h,m'})$ is $0$ for $m' \gg m$. We fix some $k \ge 0$ as in claim ($*$) and we claim that $\alpha_m(\varepsilon f)$ is divisible by $\varepsilon \pi_{k+1}$. This is enough to conclude, because then we can run the same argument for $m+1$ in place of $m$ to deduce that $\alpha_{m+1}(\alpha_m(\varepsilon f))$ is divisible by $\varepsilon \pi_{k+1}^2$ and so on.

Let $f' := \alpha_m f$ and write $f' = f'_1 + f'_2$, where $f'_1 := f' \comp i_{m+k}$, with $i_{m+k}$ being the composition $\widetilde A \surjto A_{m+k} \injto \widetilde A$; in other words, $f'_1$ is obtained from $f'$ by forgetting the values of $f'$ on $S_n$ and $T_n$ for $n > m+k$. By ($*$) the fiber of $\pr\colon (\widetilde A^\vee)^{H_{m+1}} \to (A_{m+k}^\vee)^{H_{m+1}}$ is killed by $\varepsilon$. Since $\pr(f'_2) = \pr(f' - f'_1) = 0$, we deduce that $\varepsilon f'_2 = 0$. Thus it only remains to prove that $f'_1$ is divisible by $\pi_{k+1}$. Since $\alpha_m$ is an iterated composition of $(\gamma_m^\vee - \id)$, it is now enough to show that the restriction of $(\gamma_m^\vee - \id) f$ to $A_{m+k}$ is divisible by $\pi_{k+1}$. But for all $\ell \ge 0$ we have
\begin{align*}
    ((\gamma_m^\vee - \id) f)(T_{m+k}^\ell) = f((\gamma_m - \id) T_{m+k}^\ell), \qquad ((\gamma_m^\vee - \id) f)(S_{m+k}^\ell) = f((\gamma_m - \id) S_{m+k}^\ell),
\end{align*}
so the claim reduces to the observation that the formulas for $(\gamma_m - \id) T_{m+k}^\ell$ and $(\gamma_m - \id) S_{m+k}^\ell$ are divisible by $\pi_{k+1}$.

We have shown that $H^1(\omega_h) = 0$, so in particular $\omega_h = H^0(\omega_h)$ is concentrated in degree $0$. We now show that $\omega_h$ is invertible in $\D^a(\widetilde A)$. For $n \ge 0$ let $B_n := C\langle T_n\rangle[T_n^{-1}]$. Then for $k \ge 0$ the ring $B_{n+k}$ is free as a $B_n$-module with basis given by $T_{n+k}^\ell$ for $\ell = 0, \dots, p^k-1$. There is a different basis of $B_{n+k}$ over $B_n$ given by the elements $(1 + \pi_{n+k} T_{n+k})^\ell$ for $\ell = 0, \dots, p^k - 1$. In particular, projecting to the coefficient of $1$ yields a splitting $B_{n+k} \to B_n$ of the canonical inclusion. Now let $B^+_n \subset B_n$ be the $\ri_C$-subalgebra generated by $T_n$ and $\frac{\varpi_n}{T_n}$, i.e. $B^+_n \isom \ri_C\langle T_n\rangle[\frac{\varpi_n}{T_n}]$ and $B^+_n/\pi = A_n$. We observe that the splitting $B_{n+k} \to B_n$ restricts to a $B^+_n$-linear map $\pi_n B^+_{n+k} \to B^+_n$ (because the relevant base-change matrices have a maximum of $\pi_n$ in the denominator) and hence to an $A_n$-linear map $\pi_n A_{n+k} \to A_n$. These maps are compatible for varying $k$ and thus induce an $A_n$-linear map $\pi_n \widetilde A \to A_n$. Now note that for an integer $m \ge 0$ we have
\begin{align*}
    \gamma_m (1 + \pi_{n+k} T_{n+k})^\ell = (1 + \pi_{n+k} T_{n+k})^{\ell(p^m + 1)}.
\end{align*}
We deduce that for $m \ge n$ (so that $H_m$ acts trivially on $A_n$) the map $\pi_n\widetilde A \to A_n$ from above is $H_m$-equivariant. We thus obtain a map $(A_n^\vee)^{H_m} \to (\widetilde A^\vee)^{H_m}$ and by evaluating this on $H^1$ and passing to the colimit over $m$ we obtain an $A_n$-linear map $A_n \to \omega_h$ and hence an induced $\widetilde A$-linear map $s_n\colon \widetilde A \to \omega_h$. Pick some $\varepsilon \in \mm_C$ and let $n \ge 0$ be big enough so that $\pi_n \divides \varepsilon$. We claim that $\fib(s_n)$ is killed by $\varepsilon^2$, or equivalently that both $\ker(s_n)$ and $\coker(s_n)$ are killed by $\varepsilon$.

To handle the kernel of $s_n$, suppose that $s_n(a) = 0$ for some $a \in \widetilde A$. Pick $n' \ge n$ big enough so that $a \in A_{n'}$. Then $s_n(a) = 0$ implies that the $A_n$-linear map
\begin{align}
    A_{n'} \xto{\pi_n a} \pi_n A_{n'} \to A_n \label{eq:composition-of-section-in-invertibility-on-Spa-Qp-proof}
\end{align}
is zero, where the second map is the section from above. By writing $\pi_n a$ in coordinates for the basis $(1 + \pi_{n'} T_{n'})^\ell$ from above (more precisely, we lift $\pi_n a$ to $B^+_{n'}$ and then use the above basis for $B_{n'}$) and plugging $(1 + \pi_{n'} T_{n'})^{-\ell}$ into the above zero map, we deduce that $\pi_n a = 0$; hence also $\varepsilon a = 0$. This shows that $\ker(s_n)$ is killed by $\varepsilon$.

To handle the cokernel of $s_n$, fix some $f \in \omega_h$. We have to show that $\varepsilon f$ lies in the image of $s_n$. Pick $m \ge n$ big enough so that $f \in \omega_{h,m}$ and pick $k \ge 0$ as in ($*$) for the given $\varepsilon$. We may view $f$ as an $A_n$-linear $H_m$-equivariant map $\widetilde A \to A_n$ and by our choice of $k$ we know that $\varepsilon f$ factors over the canonical projection $\widetilde A \to A_{m+k}$. Thus in order to show that $\varepsilon f$ lies in the image of $s_n$, it is enough to show the same claim after restricting to $A_{m+k}$ in place of $\widetilde A$. Thus for $n' := m + k$ we have reduced the claim to showing that for an $A_n$-linear map $f\colon A_{n'} \to A_n$ there is some $a \in A_{n'}$ such that $\varepsilon f$ is equal to the composition in \cref{eq:composition-of-section-in-invertibility-on-Spa-Qp-proof}. But the appropriate coordinates for $a$ in terms of the basis $(1 + \pi_{n'} T_{n'})^\ell$ are given by evaluating $\frac{\varepsilon}{\pi_n} f$ on $(1 + \pi_{n'} T_{n'})^{-\ell}$. This finishes the proof that $\coker(s_n)$ is killed by $\varepsilon$.

Altogether we have shown that for every $\varepsilon \in \mm_C$ there is a $\widetilde A$-linear map $\widetilde A \to \omega_h$ whose fiber is killed by $\varepsilon$. This shows that $\omega_h$ is an $\varepsilon$-retract of $\widetilde A$ for every $\varepsilon$, hence $\omega_h$ is weakly almost perfect. By \cite[Proposition~3.7.11]{mann-mod-p-6-functors} we conclude that $\omega_h$ is dualizable in $\D^a(\widetilde A)$. Thus by \cite[Remark~4.5.12]{heyer-mann-6ff} we deduce that $\omega_h$ is invertible, as desired.
\end{proof}

\begin{corollary} \label{sec:case-texorpdfstr-smoothness-of-spd-q-p-mod-hi}
The map $\Spd \Q_p/\varphi^{\Z} \to \Spd(\F_p)$ is $\D_{[0,\infty)}$-smooth and $\D_{[0,\infty)}$-proper, with dualizing complex $\omega \in \DFF(\Spd \Q_p,\Z_p)$ given by
$$
    \omega = \mathrm{RH}_{\Z_p}(\chi_{\mathrm{cycl}})[2],
$$
where $\chi_{\mathrm{cycl}}$ denotes the cyclotomic character.
\end{corollary}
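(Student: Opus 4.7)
The plan is to establish cohomological smoothness and cohomological properness separately, and then to identify the dualizing complex via the Riemann--Hilbert equivalence.

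For smoothness, the quotient map $q\colon \Spd \Q_p \to \Spd \Q_p/\varphi^{\Z}$ is a torsor under the discrete group $\underline{\Z}$, hence an étale v-cover. By \Cref{rslt:6ff-for-D-0-infty} étale maps are cohomologically étale, so $q$ is of universal $!$-descent, and cohomological smoothness descends along such covers. Combined with the $\D_{[0,\infty)}$-smoothness of $\Spd \Q_p \to \Spd(\F_p)$ supplied by \Cref{sec:case-texorpdfstr-smoothness-of-spd-q-p}, this yields $\D_{[0,\infty)}$-smoothness of the quotient.

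For properness, recall that $\Spd \Q_p/\varphi^{\Z} \cong \mathrm{Div}^1$ is proper over $\Spd(\F_p)$ as small v-stack in the classical sense. The map is lpbc (proper, and $p$-bounded after pullback to any strictly totally disconnected base using \Cref{sec:funct-da_h--1-stability-of-p-boundedness}), hence $!$-able. By \Cref{rslt:smooth-and-proper-for-D-0-infty-can-be-checked-mod-pi}, cohomological properness can be tested modulo a pseudo-uniformizer, where it reduces to the proper base change theorem for $\mathrm{Div}^1 \to \Spd(\F_p)$ in the mod-$p$ six-functor formalism of \cite{mann-mod-p-6-functors}.

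To identify the dualizing complex $\omega$: it is invertible in $\DFF(\Spd \Q_p,\Z_p) = \D_{[0,\infty)}(\Spd \Q_p/\varphi^{\Z})$, and pulling back along the cohomologically étale cover $q$ we recover the dualizing complex of $\Spd \Q_p \to \Spd(\F_p)$, which is concentrated in cohomological degree $-2$ by \Cref{sec:case-texorpdfstr-smoothness-of-spd-q-p}. Hence $\omega[2]$ is a rank-one invertible nuclear sheaf on $\Spd \Q_p/\varphi^{\Z}$, and by the Riemann--Hilbert equivalence (\Cref{rslt:construction-of-RH-functors}) corresponds to a continuous rank-one $\Z_p$-representation of the étale fundamental group of $\Spd \Q_p/\varphi^{\Z}$, i.e.\ of the Weil group $W_{\Q_p}$. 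The main obstacle is to identify this character as $\chi_{\mathrm{cycl}}$: one approach is to test against Poincaré duality for a smooth proper rigid-analytic variety over a finite extension of $\Q_p$, using compatibility of the dualizing complex under composition together with \Cref{rslt:dualizing-complex-on-smooth-adic-space}, whose Tate twist translates to $\chi_{\mathrm{cycl}}$ under Riemann--Hilbert; alternatively, one may extract the descent datum for $\omega_{\Spd \Q_p/\Spd \F_p}$ along the Frobenius action directly from the explicit model of the dualizing complex constructed in the proof of \Cref{sec:case-texorpdfstr-smoothness-of-spd-q-p}.
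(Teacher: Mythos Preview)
Your smoothness argument matches the paper's (the étale cover $q$ makes the reduction to \Cref{sec:case-texorpdfstr-smoothness-of-spd-q-p} transparent).

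For properness there is a genuine issue: \Cref{rslt:smooth-and-proper-for-D-0-infty-can-be-checked-mod-pi} requires a pseudo-uniformizer on the target, and $\Spd(\F_p)$ has none, so you cannot invoke it here. Fortunately your own sentence already contains the fix: once you have checked that $\Spd \Q_p/\varphi^{\Z}\to\Spd(\F_p)$ is proper and lpbc, \Cref{rslt:6ff-for-D-0-infty}(v) gives cohomological properness directly, with $f_! = f_\ast$. This is exactly what the paper does; no reduction mod $\pi$ is needed.

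For the identification of the character, the paper takes a different and sharper route than either of your two suggestions. Rather than testing against Poincaré duality for an auxiliary variety or unwinding the explicit model, it first observes that it suffices to identify the rationalization $\chi\otimes\Q_p$, and then works in $\DFF(\Spd\Q_p,\Q_p)$, where the question becomes: for which rank-one $(\varphi,\Gamma)$-modules over the Robba ring is the induced duality pairing perfect? Classical computations of cohomology of rank-one $(\varphi,\Gamma)$-modules (e.g.\ \cite[Proposition~6.2.8]{kedlaya2014cohomology}) show that only the cyclotomic character has the required non-vanishing of $H^0$ and $H^2$ to make duality work. This is more efficient than your approaches, which either rely on a compatibility with \Cref{rslt:dualizing-complex-on-smooth-adic-space} whose proof is logically downstream, or on extracting equivariance data from the rather intricate explicit model in the proof of \Cref{sec:case-texorpdfstr-smoothness-of-spd-q-p}.

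A small correction: the invertible object corresponds via $\RH_{\Z_p}$ to a rank-one object of $\D_\nuc(\Spd\Q_p,\Z_p)$, hence to a character of $\mathrm{Gal}(\overline{\Q_p}/\Q_p)$, not of the Weil group.
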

\begin{proof}
Cohomological smoothness immediately follows from \Cref{sec:case-texorpdfstr-smoothness-of-spd-q-p}. From \Cref{rslt:construction-of-RH-functors} proved below we conclude that $\omega=\mathrm{RH}_{\Z_p}(\mathbb{L}_\chi)[2]$ for a $\Z_p$-local system on $\Spd(\Q_p)$ associated with a continuous character $\chi\colon \mathrm{Gal}(\overline{\Q_p}/\Q_p)\to \Z_p^\times$. It suffices to identify the $\Q_p$-local system $\mathbb{L}_{\chi}[1/p]$, and hence to identify the image of $\omega$ in $\D_{(0,\infty)}(\Spd(\Q_p)/\varphi^\Z)$. Here, we can invoke the classical calculation of cohomology of rank $1$ $(\varphi,\Gamma)$-modules (e.g., \cite[Proposition 6.2.8]{kedlaya2014cohomology}) to see that $\chi$ must be given by the cyclotomic character $\chi_{\cycl}$. More precisely, it is sufficient to know which characters have non-trivial $H^0, H^2$, and then one sees that only the cyclotomic character can yield the duality. The $\D_{[0,\infty)}(-)$-properness follows from \Cref{rslt:6ff-for-D-0-infty} as $\Spd(\Q_p)/\varphi^\Z\to \Spd(\F_p)$ is proper and lpbc.
\end{proof}

\begin{remark}
Let $A\in \CAlg(\D_{[0,\infty)}(\Spd(\F_p)))$, e.g., $A$ could be pulled back from $\D_\solid(\Z_p)$. Using the results of \Cref{sec:abstract-results-6} we obtain the $6$-functor formalism $S\mapsto \mathrm{Mod}_A \D_{[0,\infty)}(S)$ on small v-stacks. For suitable $A$, dualizable objects in $\mathrm{Mod}_A\D_{[0,\infty)}(\Spd(\Q_p)/\varphi^\Z)$ should identify with (perfect complexes) of $(\varphi,\Gamma)$-modules with coefficients in $A$ (see \Cref{relation-phi-gamma-modules} for the case $A=\Q_p$). Hence, \Cref{sec:case-texorpdfstr-smoothness-of-spd-q-p} should imply duality and finiteness statements for families of $(\varphi,\Gamma)$-modules. See \cite{liu2007cohomology} and the recent paper \cite{mikami2024varphi}.
\end{remark}

\section{Quasi-coherent sheaves on Fargues--Fontaine curves and categories of \texorpdfstring{$\Z_p$}{Z\_p}- and \texorpdfstring{$\Q_p$}{Q\_p}-sheaves} \label{sec:an-overc-riem}

One of the main objectives of this text is to find a nice conceptual framework for understanding properties, such as finiteness or duality, of the pro-\'etale $\Q_p$-cohomology of rigid analytic spaces (or with $\Z_p$-coefficients, but for this short introduction we stick to the case of $\Q_p$). We just achieved the construction of a $6$-functor formalism for $\D_{(0,\infty)}(-)$. The goal of this section is therefore twofold. First, we need to see how to interpret the pro-\'etale cohomology of $\Q_p$, or of more general pro-\'etale $\Q_p$-local systems, inside this $6$-functor formalism. This is realized by the construction, valid for any small v-stack $S$, of a fully faithful Riemann-Hilbert functor from a certain category of nuclear $\Q_p$-sheaves on $S$ to $\DFF(S,\Q_p)=\D_{(0,\infty)}(S/\varphi^{\Z})$, similar to the results of \cite[\S 3.9]{mann-mod-p-6-functors}. This in particular tells us that we compute pro-\'etale $\Q_p$-cohomology inside our $6$-functor formalism $\DFF(-,\Q_p)$. Coupled with appropriate cohomological smoothness and properness results, this leads to finiteness and duality statements, but formulated at the level of $\DFF(-,\Q_p)$. To deduce more familiar looking statements, we therefore need to ``go back'' to the world of $\Q_p$-sheaves and interpret the results obtained there. The second objective of this section is to make this precise. Here, we do not obtain the most general, desirable results, but rather statements tailored to the applications we need.  

\subsection{The Riemann--Hilbert functors}
\label{sec:riem-hilb-funct}

In this subsection we want to construct $\Z_p$- and $\Q_p$-versions of a Riemann--Hilbert functor for overconvergent sheaves. The discussion is similar to the one leading to \cite[Theorem 3.9.23]{mann-mod-p-6-functors}. We note that for $\F_p$-coefficients our Riemann--Hilbert functor takes values in $\varphi$-modules for the tilted structure sheaf $\ri^\flat$ rather than for the sheaf $\ri^{+,\flat,a}/\pi$, see the end of this paragraph for a short related discussion.

\begin{definition} \label{sec:an-overc-riem-2-d-nuc-for-a-diamond}
Let $S$ be a diamond. We denote by $\D_\nuc(S,\Z_p)$ the value of the hypercomplete quasi-pro-\'etale sheaf from \cite[Lemma 4.17]{anschuetz_mann_descent_for_solid_on_perfectoids}, i.e., if $S$ is a $p$-bounded spatial diamond, then $\D_\nuc(S,\Z_p)$ is equivalent to the category of nuclear objects in the category of $\omega_1$-solid sheaves on $S_\qproet$ (see \cite[Definition~4.13]{anschuetz_mann_descent_for_solid_on_perfectoids}). For a nuclear $\Z_p$-algebra $\Lambda$ we set $\D_\nuc(S,\Lambda) := \Mod_\Lambda \D_\nuc(S,\Z_p)$.
\end{definition}

If $S$ is a strictly totally disconnected perfectoid space, then \cite[Lemma 4.16]{anschuetz_mann_descent_for_solid_on_perfectoids} implies that $\D_\nuc(S,\Z_p)\cong \D_\nuc(C(S,\Z_p))$, where $C(S,\Z_p)$ denotes the $p$-complete and nuclear $\Z_p$-algebra of continuous functions $|S|\to \Z_p$ (equivalently, of continuous functions $\pi_0(S)\to \Z_p$).

\begin{remark}
We see that $\mathrm{Mod}_{\F_p} \D_{\nuc}(S,\Z_p)\cong \D_{\et}(S,\F_p)^{\mathrm{oc}}$ is equivalent to the category of overconvergent \'etale $\F_p$-sheaves constructed in \cite[Definition 3.9.17]{mann-mod-p-6-functors}. Hence we think of $\D_\nuc(S,\Lambda)$ as a category of ``overconvergent étale $\Lambda$-sheaves''. It embeds into the category of pro-étale sheaves of $\Lambda$-modules and contains all pro-étale $\Lambda$-local systems. We stress that $\D_\nuc(S,\Z_p)$ is very different to the category $\D_\FF(S,\Z_p)$ (or to the category of nuclear objects in the latter).
\end{remark}

\begin{lemma} \label{sec:an-overc-riem-3-v-descent-for-d-nuc}
The functor $S\mapsto \D_\nuc(S,\Z_p)$ is a hypercomplete v-sheaf on the category of diamonds. In particular, it extends uniquely to a hypercomplete v-sheaf $S\mapsto \D_\nuc(S,\Z_p)$ on the category of small v-stacks.
\end{lemma}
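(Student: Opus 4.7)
The plan is to upgrade the known hypercomplete quasi-pro-étale descent (from \cite[Lemma 4.17]{anschuetz_mann_descent_for_solid_on_perfectoids}) to hypercomplete v-descent, and then extend formally to small v-stacks. First I would reduce to checking the descent condition for v-hypercovers $S_\bullet \to S$ where $S$ and each $S_n$ is a disjoint union of strictly totally disconnected perfectoid spaces. Any diamond admits a quasi-pro-étale cover by such a disjoint union, so by the already established qpé descent we may assume $S$ itself is strictly totally disconnected, and then refine any v-hypercover of $S$ by one whose terms are disjoint unions of strictly totally disconnected perfectoid spaces (using that fiber products of such over a strictly totally disconnected space are again disjoint unions of strictly totally disconnected spaces).

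The heart of the argument is then the observation that a qcqs v-cover $f\colon T'\to T$ between strictly totally disconnected perfectoid spaces is automatically quasi-pro-étale. Indeed, on connected components such a morphism is $\Spa(L,L^+) \to \Spa(K,K^+)$ where both $K,L$ are algebraically closed perfectoid fields; any such map is pro-étale, and the map on $\pi_0$ is merely a surjection of profinite sets, which is clearly qpé. Consequently the v-hypercover $S_\bullet \to S$ obtained in the previous step is in fact a qpé-hypercover, and the hypercomplete qpé-descent of $\D_\nuc(-,\Z_p)$ stated in \cite[Lemma 4.17]{anschuetz_mann_descent_for_solid_on_perfectoids} gives the desired equivalence $\D_\nuc(S,\Z_p) \isoto \lim_{[n]\in \Delta} \D_\nuc(S_n,\Z_p)$.

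The extension to small v-stacks is then formal: we right Kan extend along the inclusion of diamonds into small v-stacks, using that every small v-stack admits a v-cover (indeed a surjective morphism) from a disjoint union of strictly totally disconnected perfectoid spaces, hence from a diamond. Hypercomplete v-descent on diamonds propagates to hypercomplete v-descent on small v-stacks by standard arguments (see e.g.\ the proof of an analogous statement in \cite[Proposition 2.37]{anschuetz_mann_descent_for_solid_on_perfectoids}).

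The main obstacle is the observation that qcqs v-covers between strictly totally disconnected perfectoid spaces are qpé, together with its compatibility with the definition of $\D_\nuc$. If the reduction to qpé covers turned out to be subtle (for instance, because of behavior at non-analytic specializations), an alternative is to argue at the level of the ring $C(S,\Z_p) = C(\pi_0(S),\Z_p)$: v-descent for $\D_\nuc(-,\Z_p)$ on strictly totally disconnected spaces would then reduce to descent for nuclear modules along the map $C(T,\Z_p) \to C(T',\Z_p)$ associated to a surjection of profinite sets $T' \surjto T$, which can be proved directly using that such a map admits a continuous set-theoretic section and is therefore of universal descent for nuclear coefficients.
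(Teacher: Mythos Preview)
Your main approach contains a genuine error: the claim that a v-cover between strictly totally disconnected perfectoid spaces is automatically quasi-pro-\'etale is false. Take any proper extension $K \hookrightarrow L$ of algebraically closed perfectoid fields; then $\Spa(L,L^+) \to \Spa(K,K^+)$ is a v-cover of strictly totally disconnected spaces, but it is not pro-\'etale (and hence not quasi-pro-\'etale, since the base is already strictly totally disconnected). Indeed, every connected pro-\'etale cover of $\Spa(K)$ with $K$ algebraically closed is an isomorphism, because \'etale $K$-algebras are finite products of copies of $K$. So the reduction to qp\'e-hyperdescent does not go through.

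Your fallback is closer to the paper's approach, but it also has a gap: not every surjection of profinite sets admits a continuous section. If that were true, every profinite set would be projective in compact Hausdorff spaces, hence extremally disconnected; but $\Z_p$, for instance, is infinite, compact, metrizable and therefore not extremally disconnected. The paper does reduce to the profinite picture, passing from a v-hypercover $S_\bullet \to S$ of strictly totally disconnected spaces to the induced hypercover $\pi_0(S_\bullet) \to \pi_0(S)$ of profinite sets and using $\D_\nuc(S,\Z_p) \cong \D_\nuc(C(\pi_0(S),\Z_p))$. But at that point it invokes a genuine descent theorem for nuclear modules, namely \cite[Theorem 3.9(ii)]{mann-nuclear-sheaves}, rather than a splitting. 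That result is where the actual content lies; your sketch would need to replace the false splitting claim with an appeal to (or proof of) such a descent statement.
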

\begin{proof}
By quasi-pro-étale descent, we are reduced to showing v-hyperdescent on strictly totally disconnected spaces, so let $f_\bullet\colon S_\bullet\to S$ be a v-hypercover consisting of strictly totally disconnected perfectoid spaces. Then $g_\bullet:=\pi_0(f_\bullet)\colon T_\bullet:=\pi_0(S_\bullet)\to T:=\pi_0(S)$ is a v-hypercover of profinite sets. As $\D_\nuc(S_n,\Z_p)\cong \D_\nuc(C(T_n,\Z_p))\cong \mathrm{Mod}_{C(T_n,\Z_p)}(\D_\nuc(\Z_p))$ it suffices to show that $T\mapsto \D_\nuc(C(T,\Z_p))$ is a hypercomplete v-sheaf on the pro-\'etale site of profinite sets. This is implied by \cite[Theorem 3.9.(ii)]{mann-nuclear-sheaves} (and \cite[Remark 3.10]{mann-nuclear-sheaves} to identify the two different notions of nuclear sheaves).
\end{proof}

Before we can come to the promised construction of the Riemann--Hilbert functors, we need the following preparations.

\begin{lemma} \label{rslt:dualizable-object-on-FF-curve-is-compact}
Let $S$ be a qcqs perfectoid space, which is of characteristic $p$ and which admits a morphism of finite $\dimtrg$ to a totally disconnected perfectoid space. Then each dualizable object in $\DFF(S,\Z_p)$ or $\DFF(S,\Q_p)$ is compact, even compact in the enriched sense over $\D_\solid(\Z_p)$.
\end{lemma}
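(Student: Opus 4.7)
The plan is to give a short, essentially formal proof resting on three ingredients: (i) under the hypothesis on $S$ one has the concrete descriptions $\DFF(S,\Q_p)\cong\D_{\hat\solid}(\FF_S)$ and $\DFF(S,\Z_p)\cong\D_{\hat\solid}(\mathcal{Y}_{[0,\infty),S})^\varphi$; (ii) both categories lie in $\Pr^L_{\D_\solid(\Z_p)}$, so their tensor products preserve colimits in each variable; (iii) the unit $\Z_p\in\D_\solid(\Z_p)$ is compact.

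First I would unpack the hypothesis on $S$ using the remarks following \Cref{def:ff-categories} together with analytic descent for $\D_{\hat\solid}$ to obtain the concrete descriptions above. By \Cref{rslt:6ff-for-D-0-infty}(i) both categories are objects of $\Pr^L_{\D_\solid(\Z_p)}$ (in the $\Q_p$-case one in fact lands in $\Pr^L_{\D_\solid(\Q_p)}$ by formal base change). In particular, they are presentably symmetric monoidal and $\D_\solid(\Z_p)$-linear, so the tensor product preserves colimits in each variable and the $\D_\solid(\Z_p)$-enriched internal Hom $\IHom(-,-)$ exists as a right adjoint to $-\otimes M$.

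Next, for any dualizable object $M$ in such a category $\mathcal{C}$, the evaluation and coevaluation maps produce a natural isomorphism
\[
    \IHom(M,-)\simeq M^\vee\otimes(-).
\]
Since $M^\vee\otimes(-)$ is a left adjoint in a presentably symmetric monoidal structure, it preserves all colimits; hence $\IHom(M,-)$ preserves all colimits, and in particular filtered colimits. This is exactly the assertion that $M$ is compact in the $\D_\solid(\Z_p)$-enriched sense. To deduce ordinary compactness I would use that $\Z_p$ is a compact object of $\D_\solid(\Z_p)$ (being one of the standard compact generators), combined with the identification $\Map_{\mathcal{C}}(M,N)=\Map_{\D_\solid(\Z_p)}(\Z_p,\IHom(M,N))$: both $\Map_{\D_\solid(\Z_p)}(\Z_p,-)$ and $\IHom(M,-)$ preserve filtered colimits, so their composition does too.

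No serious obstacle is expected; the argument is entirely formal once the $\D_\solid(\Z_p)$-enriched presentably symmetric monoidal structure on $\DFF(S,-)$ is in place (provided by \Cref{rslt:6ff-for-D-0-infty}). The one subtle point to verify is that dualizability with respect to the $\D_\solid(\Z_p)$-linear symmetric monoidal structure coincides with dualizability in the underlying closed symmetric monoidal $\infty$-category, which is automatic because the enrichment is compatible with the ambient symmetric monoidal structure.
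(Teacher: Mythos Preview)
Your argument has a genuine gap: it reduces compactness of a dualizable $M$ to compactness of the unit $1$, but never establishes that $1$ is compact. Concretely, you conflate the internal Hom $\IHom_{\mathcal C}(M,-)\colon\mathcal C\to\mathcal C$ (right adjoint to $-\otimes M$) with the enriched Hom $\Hom^{\D_\solid(\Z_p)}_{\mathcal C}(M,-)\colon\mathcal C\to\D_\solid(\Z_p)$. The identity $\IHom_{\mathcal C}(M,-)\simeq M^\vee\otimes(-)$ is correct for dualizable $M$ and does preserve colimits, but this only gives
\[
\Map_{\mathcal C}(M,N)\simeq\Map_{\mathcal C}(1,M^\vee\otimes N),\qquad
\Hom^{\D_\solid(\Z_p)}_{\mathcal C}(M,N)\simeq\Hom^{\D_\solid(\Z_p)}_{\mathcal C}(1,M^\vee\otimes N),
\]
so both forms of compactness for $M$ reduce to the corresponding statement for $1$. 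Your identity $\Map_{\mathcal C}(M,N)=\Map_{\D_\solid(\Z_p)}(\Z_p,\IHom(M,N))$ does not type-check as written, and replacing it by the correct $\Map_{\mathcal C}(1,\IHom(M,N))$ exposes the missing step.

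Being an object of $\Pr^L_{\D_\solid(\Z_p)}$ does \emph{not} force the unit to be compact. Indeed, the remark immediately after the lemma notes that the statement fails for $\D_{[0,\infty)}(S)$ and $\D_{(0,\infty)}(S)$, which are equally objects of $\Pr^L_{\D_\solid(\Z_p)}$; a purely formal argument like yours would apply to those categories too and is therefore insufficient. The paper's proof supplies exactly the missing ingredient: after reducing to affinoid $S$ and choosing a radius function, one presents $\DFF(S,\Z_p)$ (resp.\ $\DFF(S,\Q_p)$) as an equalizer of $\D_{\hat\solid}$ on two \emph{affinoid} pieces of the curve, where the structure sheaf is compact; since the equalizer is a finite limit, the unit in $\DFF(S,-)$ is compact. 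This is precisely where the quotient by $\varphi^\Z$ is used and why the analogous statement fails without it.
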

\begin{proof}
We may reduce to the case that $S$ is affinoid. By assumption, $\DFF(S,\Z_p)$ resp.\ $\DFF(S,\Q_p)$ identify with the category of $\varphi$-modules on $\mathcal{Y}_{[0,\infty),S}$ resp.\ $\mathcal{Y}_{(0,\infty),S}$. Fix a pseudo-uniformizer $\pi$ on $S$ with associated coordinate function $\kappa\colon \mathcal{Y}_{[0,\infty),S}\to [0,\infty)$. We know that $\kappa(\varphi(-))=p\kappa(-)$. For an intervall $I\subseteq [0,\infty)$ let $U_{I}$ be (the interior of) $\kappa^{-1}(I)$. By analytic descent for $\D_{\hat\solid}$ (see \Cref{sec:defin-d_hats--1}) we can conclude that
\[
    \DFF(S,\Z_p)\cong \mathrm{eq}(\D_{\hat\solid}(U_{[0,1]})\rightrightarrows \D_{\hat\solid}(U_{[0,1/p]})), 
\]
where the two morphisms are induced by the inclusion $U_{[0,1/p]}\to U_{[0,1]}$ and the inverse of the Frobenius $U_{[0,1/p]}\overset{\varphi}{\to} U_{[0,1/p]}\to U_{[0,1]}$ (followed by restriction). As $U_{[0,1]}, U_{[0,1/p]}$ are affinoid, their structure sheaf is compact in $\D_{\hat\solid}$. As the above equalizer is a finite limit we can conclude that $1\in \DYwz{S/\varphi^\Z}$ is compact. This implies that each dualizable object in $\DFF(S,\Z_p)$ is compact. The argument for $\DFF(S,\Q_p)$ is similar by glueing $\mathcal{Y}_{(0,\infty),S}/\varphi^\Z$ from $U_{[1,p]}$ along the Frobenius on the ``boundary'' $U_{[1]}\cup U_{[p]}$. The compactness in the enriched sense, i.e., that the enriched homomorphisms out of a dualizable object preserve colimits, follows similarly.
\end{proof}

\begin{remark}
We note that \cref{rslt:dualizable-object-on-FF-curve-is-compact} is wrong for $\D_{[0,\infty)}(S)$ or $\D_{(0,\infty)}(S)$.
\end{remark}

\begin{lemma} \label{rslt:automatic-fully-faithfulness-for-rigid-cat}
Let $F\colon \calC\to \mathcal{R}$ be a morphism in $\CAlg(\Pr^L_{\mathrm{Sp}})$. Assume that $\calC$ is rigid and that $1\in \mathcal{R}$ is compact. Then $F$ is fully faithful if and only if the natural morphism $1_{\mathcal{C}}\to \Hom^{\calC}_{\mathcal{R}}(1,1)$ of the unit in $\calC$ to the $\calC$-linear endomorphism object of $1\in \mathcal{R}$ is an isomorphism.
\end{lemma}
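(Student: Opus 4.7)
The plan is to translate the statement into the language of the adjunction $F \dashv G$. Since $F\in \Pr^L$, it admits a right adjoint $G\colon \mathcal{R}\to \calC$, and unwinding the defining property of the enriched hom yields $G(Y)=\Hom^{\calC}_{\mathcal{R}}(1_\mathcal{R},Y)$; in particular $R:=\Hom^{\calC}_{\mathcal{R}}(1,1)=G(1_\mathcal{R})$, and the natural map $1_\calC \to R$ of the statement is precisely the unit $\eta_1\colon 1_\calC \to GF(1_\calC)$ of the adjunction at $1_\calC$. The ``only if'' direction is then immediate, since fully faithfulness of $F$ is equivalent to $\eta$ being a natural equivalence.

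For the converse, the first key step will be the projection formula for dualizable objects: for every dualizable $Z \in \calC$ and every $Y \in \mathcal{R}$, the natural map $G(Y)\otimes Z \to G(Y\otimes F(Z))$ is an equivalence. This is verified by Yoneda testing against $W\in \calC$, using dualizability of $Z$ and symmetric monoidality of $F$ to identify both sides with $\Hom_\mathcal{R}(F(W)\otimes F(Z)^\vee,Y)$. Applied to $Y=1_\mathcal{R}$, this gives a natural equivalence $R\otimes Z\isoto GF(Z)$ under which $\eta_Z$ corresponds to $\eta_1\otimes \id_Z$. Hence, if $\eta_1$ is an equivalence, then so is $\eta_Z$ for every dualizable $Z$.

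The final step is to promote this from dualizable $Z$ to all $Z\in \calC$. Since $\calC$ is rigid, it is compactly generated and every compact object is dualizable, so it suffices to show that $\id$ and $GF$ both commute with colimits. For $\id$ this is trivial and for $GF$ it reduces, since $F\in \Pr^L$, to $G$ commuting with colimits. As a right adjoint between stable presentable categories, $G$ preserves finite limits and hence (by stability) finite colimits, so we only have to check filtered colimits. This is precisely where the hypothesis that $1\in \mathcal{R}$ is compact enters: for any dualizable $Z\in \calC$ we have
\[
    \Hom_\mathcal{R}(F(Z),-)\cong \Hom_\mathcal{R}(1_\mathcal{R},F(Z)^\vee\otimes -),
\]
which preserves filtered colimits because $\Hom_\mathcal{R}(1_\mathcal{R},-)$ does by the compactness assumption and $F(Z)^\vee\otimes (-)$ is colimit-preserving. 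Thus $F$ sends compact (= dualizable) objects of $\calC$ to compact objects of $\mathcal{R}$, which by adjunction forces $G$ to preserve filtered colimits. The main conceptual point to manage is the interplay between the enriched hom, rigidity of $\calC$, and ordinary compactness of $1_\mathcal{R}$; no individual step is technically hard, and the argument is essentially a formal combination of the projection formula in a rigid category with compact generation.
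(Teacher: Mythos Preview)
Your proof is correct and follows essentially the same approach as the paper's. The only difference is presentational: the paper cites an external lemma (Lemma~4.20 of \cite{anschuetz_mann_descent_for_solid_on_perfectoids}) to conclude directly that the right adjoint $G$ is colimit-preserving and $\calC$-linear, and then obtains $GF(X)\cong X\otimes G(1)$ for all $X$ in one step; you instead unpack this by first proving the projection formula $G(Y)\otimes Z\cong G(Y\otimes F(Z))$ for dualizable $Z$ via Yoneda, then using compactness of $1_\mathcal{R}$ to show $G$ preserves filtered colimits, and finally extending by rigid compact generation. Both routes arrive at the same identification $GF(X)\cong X\otimes G(1)$ and the same conclusion.
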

\begin{proof}
Let $G$ be the right adjoint of $F$. By \cite[Lemma 4.20.(iii)]{anschuetz_mann_descent_for_solid_on_perfectoids} the functor $G$ commutes with colimits. By \cite[Lemma 4.20.(ii)]{anschuetz_mann_descent_for_solid_on_perfectoids} the functor $G$ is therefore $\calC$-linear. Now, $F$ is fully faithful if and only if for all $X\in \calC$ the natural morphism $X\to GF(X)$ is an isomorphism. Now, $GF(X)\cong G(F(X)\otimes 1)\cong X\otimes GF(1)$, and hence $F$ is fully faithful if and only if the natural morphism $1\to GF(1)=G(1)$ is an isomorphism. If $X\in \mathcal{C}$, then
\[
    \Hom_{\calC}(X,GF(1))\cong \Hom_{\mathcal{R}}(F(X)\otimes 1, 1),
\]
i.e. $GF(1)$ is the $\calC$-linear endomorphism object of $1\in \mathcal{R}$. This finishes the assertion.
\end{proof}

We can now come to the construction of our Riemann--Hilbert functors, relating nuclear $\Z_p$- and $\Q_p$-sheaves to the 6-functor formalism of quasi-coherent sheaves on the Fargues--Fontaine curve.

\begin{theorem} \label{rslt:construction-of-RH-functors}
Let $S$ be a small v-stack. There exist symmetric monoidal, colimit preserving, fully faithful functors
\begin{align*}
    \RH_{S,\Z_p}&\colon \D_\nuc(S,\Z_p)\to \DFF(S,\Z_p),\\
    \RH_{S,\Q_p}&\colon \D_\nuc(S,\Q_p)\to \DFF(S,\Q_p),
\end{align*}
such that the following hold true:
\begin{thmenum}
    \item Both Riemann--Hilbert functors are compatible with pullback, i.e. they upgrade to natural transformations of functors from $\vStacks^\opp$ to the category of symmetric monoidal categories.
    \item \label{rslt:RH-identifies-dualizable-objects} The functor $\RH_{S,\Z_p}$ identifies dualizable objects on both sides.
    \item The functors $\RH_{S,\Z_p}$ and $\RH_{S,\Q_p}$ are compatible via the natural functors $\D_\nuc(S,\Z_p)\to \D_\nuc(S,\Q_p)$, $\DFF(S,\Z_p)\to \DFF(S,\Q_p)$.
\end{thmenum}
\end{theorem}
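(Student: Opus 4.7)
The plan is to exploit the fact that both $S \mapsto \D_\nuc(S, \Z_p)$ (by \cref{sec:an-overc-riem-3-v-descent-for-d-nuc}) and $S \mapsto \DFF(S, \Z_p)$ are hypercomplete v-sheaves of symmetric monoidal categories. I will construct $\RH_{S,\Z_p}$ first on strictly totally disconnected (STD) perfectoid spaces $S$ of characteristic $p$, then extend to arbitrary small v-stacks by hyper v-descent; the $\Q_p$-version will be obtained by inverting $p$ on the coefficients, making (i) and (iii) essentially automatic.

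On an STD space $S$, there are identifications $\D_\nuc(S, \Z_p) \cong \D_\nuc(C(S, \Z_p))$ and $\DFF(S, \Z_p) \cong \D_{\hat\solid}(\mathcal{Y}_{[0,\infty),S})^\varphi$. The fundamental computation of $p$-adic Hodge theory identifies
\[
R\Gamma(\mathcal{Y}_{[0,\infty),S}, \mathcal{O})^\varphi = C(\pi_0(S), \Z_p) = C(S, \Z_p),
\]
concentrated in degree zero (this is classical for a geometric point, and transfers to STD $S$ via the fibration of $\mathcal{Y}_{[0,\infty),S}$ over $\pi_0(S)$). This produces a Frobenius-equivariant $E_\infty$-map $\underline{C(S, \Z_p)} \to \mathcal{O}_{\mathcal{Y}_{[0,\infty),S}}$ (trivial Frobenius on the source), hence by base change a colimit-preserving symmetric monoidal functor
\[
\Mod_{C(S, \Z_p)}(\D_\solid(\Z_p)) \longrightarrow \D_{\hat\solid}(\mathcal{Y}_{[0,\infty),S})^\varphi;
\]
restricting to the nuclear subcategory yields $\RH_{S, \Z_p}$ in the STD case. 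Naturality ensures compatibility with pullbacks, so hyper v-descent globalizes the construction, establishing (i). For the $\Q_p$-version I repeat the construction with $\mathcal{Y}_{(0,\infty),S}$; the base change along $\Z_p \to \Q_p$ on coefficients gives (iii).

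Fully faithfulness of $\RH_{S,\Z_p}$ I will verify via \cref{rslt:automatic-fully-faithfulness-for-rigid-cat}. Rigidity of $\D_\nuc(S, \Z_p)$ is v-local and reduces to the rigidity of the nuclear category $\D_\nuc(C(S, \Z_p))$ over a nuclear $\Z_p$-algebra (\cite[Lemma~3.32]{anschuetz_mann_descent_for_solid_on_perfectoids}); compactness of the unit $1 \in \DFF(S, \Z_p)$ is proved locally in \cref{rslt:dualizable-object-on-FF-curve-is-compact} and propagated by v-descent. It then remains to verify that the canonical map $1 \to \Hom^{\D_\nuc(S, \Z_p)}_{\DFF(S, \Z_p)}(1, 1)$ is an isomorphism. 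This is v-local, and on STD $S$ the right-hand side is by construction $R\Gamma(\mathcal{Y}_{[0,\infty),S}, \mathcal{O})^\varphi = C(S, \Z_p)$, the unit of $\D_\nuc(S,\Z_p)$. Fully faithfulness of $\RH_{S,\Q_p}$ is then obtained by inverting $p$.

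For (ii), symmetric monoidality forces $\RH_{S,\Z_p}$ to send dualizable to dualizable objects, and by fully faithfulness I only need essential surjectivity. This reduces v-locally to the Kedlaya--Liu classification: dualizable objects in $\DFF(S, \Z_p)$ over STD $S$ correspond to Frobenius-equivariant finite projective $\mathcal{O}_{\mathcal{Y}_{[0,\infty),S}}$-modules, which by Kedlaya--Liu are exactly pro-\'etale $\Z_p$-local systems on $S$; these embed canonically as dualizable objects in $\D_\nuc(S, \Z_p)$, matching up on the nose with the image of $\RH_{S,\Z_p}$ via the local construction. The most delicate point of the whole plan is the fundamental identification $R\Gamma(\mathcal{Y}_{[0,\infty),S}, \mathcal{O})^\varphi = C(\pi_0(S), \Z_p)$ (with vanishing of higher cohomology) in full STD generality, since this single input drives both the construction and the fully-faithfulness check; once it is secured, descent, the rigidity criterion, and Kedlaya--Liu assemble the remainder.
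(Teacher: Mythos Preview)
Your approach is essentially the same as the paper's: reduce to strictly totally disconnected $S$ by v-descent, identify the enriched endomorphisms of $1 \in \DFF(S,\Z_p)$ with $C(S,\Z_p)$, construct $\RH$ from the resulting module structure, and verify fully faithfulness via \cref{rslt:automatic-fully-faithfulness-for-rigid-cat}.

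Three technical points deserve tightening. First, the identification $\Hom^{\D_\nuc(S,\Z_p)}_{\DFF(S,\Z_p)}(1,1) = C(S,\Z_p)$ is not literally ``by construction'': what the criterion requires is the \emph{solid} (or nuclear) enriched endomorphism object, not merely the underlying complex $R\Gamma(\mathcal{Y}_{[0,\infty),S},\mathcal{O})^{\varphi=1}$. The paper pins this down by computing $\Hom_{\D_\solid(\Z_p)}(\Z_{p,\solid}[T],A)$ for all profinite $T$, unwinding to $C(T\times\pi_0(S),\Z_p)$ via nuclearity of the $B_n$ and Artin--Schreier--Witt over the integral Robba ring. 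Second, compactness of the unit does not propagate along v-descent (limits of categories rarely preserve compact objects); the correct statement is that fully faithfulness itself is v-local, so you should invoke the criterion only after having already reduced to the case covered by \cref{rslt:dualizable-object-on-FF-curve-is-compact}. Third, in (ii) you assert that dualizable objects in $\DFF(S,\Z_p)$ are Frobenius-equivariant finite projective $\mathcal{O}$-modules, but they are a priori only perfect complexes with $\varphi$-structure; the paper closes this gap by citing \cite[Proposition~2.6]{anschutz2021fourier} to show such complexes are \emph{strictly} perfect (represented by bounded complexes of vector bundles with $\varphi$-compatible differentials), after which Kedlaya--Liu applies termwise.
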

\begin{proof}
By v-descent on both sides (see \Cref{sec:an-overc-riem-3-v-descent-for-d-nuc} and \Cref{rslt:v-descent-for-D-0-infty}) we may reduce to the case that $S$ is strictly totally disconnected. To simplify notation we set $\D:=\DFF(S,\Z_p)$ and $\D':=\D_{[0,\infty)}(S)$. Note that $\D$ is the category of $\varphi$-modules in $\D'$. Via pullback $\D$ is naturally a $\D_\solid(\Z_p)$-linear category, and thus enriched over $\D_\solid(\Z_p)$. Let $\mathrm{1}\in \D$ be the unit object, and $A:=\mathrm{Hom}^{\D_\solid(\Z_p)}_{\D}(\mathrm{1},\mathrm{1})\in \D_\solid(\Z_p)$ its enriched endomorphism object. Thus, for $M\in \D_\solid(\Z_p)$ there exists a natural isomorphism
\[
    \Hom_{\D_\solid(\Z_p)}(M,A) = \Hom_{\D}(M\otimes \mathrm{1},\mathrm{1}),
\]
where $M\otimes(-)$ denotes the $\D_\solid(\Z_p)$-action on $\D$. We claim that there exists a natural isomorphism $\Phi\colon C(S,\Z_p)\to A$ of solid $\Z_p$-algebras. Let $T$ be a profinite set. Choose a pseudo-uniformizer on $S$ with corresponding radius function $\kappa\colon \mathcal{Y}_{[0,\infty),S}\to [0,\infty)$. Choose a sequence $r_n\in [0,\infty)$ with $r_n\to \infty$ for $n\to \infty$, and set $U_n:=\mathcal{Y}_{[0,r_n],S}=\Spa(B_n,B_n^+)$. We calculate
\begin{align*}
    &\Hom_{\D_\solid(\Z_p)}(\Z_{p,\solid}[T],A) \\
    &\qquad\qquad= \Hom_{\D}(\Z_{p,\solid}[T]\otimes \mathrm{1},\mathrm{1}) \\
    &\qquad\qquad= \Hom_{\D'}(\Z_{p,\solid}[T]\otimes\mathrm{1}_{\D'},\mathrm{1}_{\D'})^{\varphi=1}, \\
\intertext{and by writing $\mathcal{Y}_{[0,\infty),S}=\bigcup\limits_{n} U_n$ and unraveling the definitions,}
    &\qquad\qquad= (\varprojlim_n \Hom_{\D_{\hat\solid}(U_n)}(\Z_{p,\solid}[T]\otimes_{\Z_{p,\solid}} \widehat{(B_n^+)_\solid}\otimes_{B_n^{+}}B_n,B_n))^{\varphi=1}, \\
\intertext{and using adjunction, nuclearity of $B_n$ and \cite[Proposition 2.17]{anschuetz_mann_descent_for_solid_on_perfectoids},}
    &\qquad\qquad= (\varprojlim_n \Hom_{\D_\solid(\Z_p)}(\Z_{p,\solid}[T],B_n))^{\varphi=1} \\
    &\qquad\qquad= (\varprojlim_n \Hom_{\D_\solid(\Z_p)}(\Z_p,\IHom_{\D_\solid(\Z_p)}(\Z_{p,\solid}[T],B_n)))^{\varphi=1}, \\
\intertext{and by nuclearity of $B_n$ in $\D_\solid(\Z_p)$,}
    &\qquad\qquad= (\varprojlim\limits_n \Hom_{\D_\solid(\Z_p)}(\Z_p,C(T,\Z_p)\otimes B_n))^{\varphi=1} \\
    &\qquad\qquad= (\varprojlim\limits_{n} \Hom_{\D_\solid(\Z_p)}(\Z_p, B_n'))^{\varphi=1}, \\
\intertext{where $U_n':=\underline{T}\times U_n=\Spa(B_n',B_n^{\prime,+})$,}
    &\qquad\qquad= \Gamma(\mathcal{Y}_{[0,\infty),\underline{T}\times S},\mathcal{O})^{\varphi=1}, \\
\intertext{and by Artin--Schreier--Witt theory over the integral Robba ring,}
    &\qquad\qquad\isom C(T\times \pi_0(S),\Z_p),
\end{align*}
as desired. This shows that $C(S,\Z_p) \isom A$ as solid $\Z_p$-algebras, which are concentrated in degree $0$. This morphism is natural in $S$ and independent of the choice of the radius function (by a cofinality argument); here we use the fact that both sides of the isomorphism are concentrated in degree $0$, hence the desired functoriality is really a \emph{1-categorical} statement and can thus easily be checked by hand. Furthermore, we note that replacing $\mathcal{Y}_{[0,\infty),S}$ by $\mathcal{Y}_{(0,\infty),S}$, the use of Artin--Schreier--Witt theory by \cite[Proposition II.2.5]{fargues-scholze-geometrization} and the $U_n$ by increasing annuli, the same calculation shows that $C(S,\Q_p)$ identifies with the $\D_\solid(\Q_p)$-linear endomorphism object of $\mathrm{1}\in \DY{S/\varphi^\Z}$.
The existence of $\RH_{S,\Z_p}, \RH_{S,\Q_p}$ is now formal: given a $C(S,\Z_p)$-module in $\D_\nuc(\Z_p)$, we can form the tensor product $M\otimes_{C(S,\Z_p)} 1_{\D}$ using the morphism of $\mathbb E_\infty$-rings $C(S,\Z_p)\to A$ to make $1_{\D}$ into a $C(S,\Z_p)$-module.\footnote{More precisely, for any nuclear $\Z_p$-$\mathbb E_\infty$-algebra $R$, $\D_\nuc(\Z_p)$-linear symmetric monoidal functors $\Mod_R(\D_\nuc(\Z_p))\to \mathcal{D}$ correspond to $\mathbb E_\infty$-algebra morphisms from $R$ into the $\D_\nuc(\Z_p)$-enriched endomorphisms of $1_{\D}$. In fact, \cite[Section 4.8.5]{lurie-higher-algebra}, more specifically \cite[Remark 4.8.5.12, Corollary 4.8.5.21]{lurie-higher-algebra} provide the symmetric monoidal functor $\CAlg(\D_\nuc(\Z_p))\to \CAlg(\Pr^L_{\D_\nuc(\Z_p)}),\ R\mapsto \Mod_R(\D_\nuc(\Z_p))$ with right adjoint sending $\D$ to $\End^{\D_\nuc(\Z_p)}(1_\D)$.} 

Now, fully faithfulness of $\RH_{S,\Z_p}$ and $\RH_{S,\Q_p}$ follow from \Cref{rslt:automatic-fully-faithfulness-for-rigid-cat} and \Cref{rslt:dualizable-object-on-FF-curve-is-compact}. We are left with showing that $\RH_{S,\Z_p}$ identifies dualizable objects. By \Cref{rmk:nuclear-objects-on-stably-uniform-adic-space} dualizable objects in $\D_{[0,\infty)}(S)$ are exactly the perfect complexes on $\mathcal{Y}_{[0,\infty),S}$. By \cite[Proposition 2.6]{anschutz2021fourier} (and the presentation in \Cref{rslt:dualizable-object-on-FF-curve-is-compact}) we can then conclude that each perfect complex with $\varphi$-module structure is actually strictly perfect, i.e., represented by a complex of vector bundles with $\varphi$-module structures (and differential respecting $\varphi$). This reduces the assertion to the case of vector bundles where it follows from Artin--Schreier--Witt theory over the integral Robba ring, \cite[Theorem 8.5.3]{relative-p-adic-hodge-1}.
\end{proof}

\begin{remark}
The classification of vector bundles on the Fargues--Fontaine curve implies that even for $S=\Spa(C,\mathcal{O}_C)$ with $C$ a non-archimedean algebraically closed extension of $\F_p$, the functor $\RH_{\Q_p}\colon \D_{\nuc}(S, \Q_p)\to \DFF(S,\Q_p)$ is not an equivalence on dualizable objects: each vector bundle, which is not semi-stable of slope $0$, does not lie in the essential image.
\end{remark}

Finally, we want to relate the Riemann--Hilbert functor $\RH_{\Z_p}$ to the one considered in \cite[Section 3.9]{mann-mod-p-6-functors}. To fix notation assume that $S$ admits a pseudo-uniformizer $\pi$ in the sense of \cite[Definition 3.2.2]{mann-mod-p-6-functors} dividing $p$. Let
\[
    \RH_{\F_p}\colon \D_\et(S,\F_p)^{\mathrm{oc}}\to \D^a_{\solid}(\mathcal{O}^+_S/\pi)^{\varphi}
\]
be the functor constructed in \cite[Definition 3.9.21]{mann-mod-p-6-functors}. We want to relate $\RH_{\F_p}$ to a mod-$p$-version of $\RH_{\Z_p}$. As noted before
\[
    \Mod_{\F_p}(\D_\nuc(S,\Z_p))\cong \D_\et(S,\F_p)^{\mathrm{oc}}.
\]
Moreover, we have
\[
    \DFF(S,\F_p) := \Mod_{\F_p}(\DFF(S,\Z_p))\cong \D_{\hat\solid}(\mathcal{O}_{S/\varphi^\Z}),
\]
which receives a functor from $\D_{\hat\solid}^a(\mathcal{O}^+_{S/\varphi^\Z})$ and this last category maps to $\D_{\solid}^a(\mathcal{O}^+_{S}/\pi)^\varphi$.
From \cite[Lemma 3.9.4]{mann-mod-p-6-functors} we can conclude that the functor
\[
    \RH_{\Z_p}\colon \D_{\et}(S,\F_p)^{\mathrm{oc}}\cong \mathrm{Mod}_{\F_p}\D_{\nuc}(S,\Z_p)\to  \DFF(S,\mathbb{F}_p) 
\]
factors naturally over a functor $\widetilde{\RH}_{\Z_p}$ with values in $\D_{\hat\solid}^a(\mathcal{O}^+_{S/\varphi^\Z})$.
We can derive that there exists the following commutative diagram:
\[\begin{tikzcd}
	& {\D_\et(S,\F_p)^{\mathrm{oc}}} \\
	{ \DFF(S,\mathbb{F}_p) } & {\D^a_{\hat\solid}(\mathcal{O}^+_{S/\varphi^\Z})} & {\D^a_{\hat\solid}(\mathcal{O}^+_{S}/\pi)^\varphi}
	\arrow["{\RH_{\Z_p}}"', from=1-2, to=2-1]
	\arrow["{\widetilde{\RH}_{\Z_p}}"', from=1-2, to=2-2]
	\arrow["{\RH_{\F_p}}", from=1-2, to=2-3]
	\arrow[from=2-2, to=2-1]
	\arrow[from=2-2, to=2-3]
\end{tikzcd}\]
As a warning we note that $\widetilde{\RH}_{\Z_p}$ does not take values in $\pi$-complete modules (e.g., on strictly totally disconnected perfectoid spaces), and hence does not agree with the composition of $\RH_{\F_p}$ with the equivalence constructed in \cite[Lemma 3.9.2]{mann-mod-p-6-functors}.

\subsection{Supplements on the primitive comparison theorem}
\label{sec:suppl-prim-comp}

We now want to provide some supplements to the ``primitive comparison theorem'' (\cite[Theorem 3.13]{perfectoid-spaces-survey}, \cite[Theorem 5.1]{rigid-p-adic-hodge}, \cite[Corollary 3.9.24]{mann-mod-p-6-functors}). Let $f\colon Y'\to Y$ be a morphism of small v-stacks and $\mathcal{F}\in \D_\nuc(Y,\Z_p)$. In the language of this paper, the primitive comparison theorem can be formulated as the question whether the natural morphism
\[
    \Phi_{\mathcal{F},f}\colon \RH_{\Z_p,Y}(f^\nuc_\ast(\mathcal{F}))\to f_\ast \RH_{\Z_p,Y'}(\mathcal{F})
\]
is an isomorphism, where $f^\nuc_\ast\colon \D_\nuc(Y',\Z_p)\to \D_\nuc(Y,\Z_p)$ and $f_\ast\colon \DFF(Y',\Z_p) \to \DFF(Y,\Z_p)$ are the natural pushforwards. If this is the case then this tells us that the (derived) pushforward of $\mathcal F$ along $f$ can be computed by the pushforward in $\DFF(-,\Z_p)$. If one takes $\F_p$-coefficients then $\DFF(-,\F_p) = \D_{\hat\solid}(\ri^\flat_{(-)})^\varphi$ and hence the pushforward on the FF-side computes $\ri^{+a}/p$-cohomology, resulting in the more classical formulations of the primitive comparison theorem.

Our first result towards the primitive comparison theorem simplifies the problem slightly by observing that $\Phi_{\mathcal F, f}$ is automatically an isomorphism if the right-hand side lies in the image of the Riemann--Hilbert functor:

\begin{lemma} \label{rslt:primitive-comparison-reduces-to-image-of-RH}
In the above setup, the morphism $\Phi_{\mathcal{F},f}$ is an isomorphism if and only if $f_\ast(\RH_{\Z_p,Y'}(\mathcal{F}))$ lies in the essential image of $\RH_{\Z_p,Y}$.  
\end{lemma}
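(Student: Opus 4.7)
The ``only if'' direction is immediate: if $\Phi_{\mathcal F, f}$ is an isomorphism, then $f_\ast \RH_{\Z_p, Y'}(\mathcal F) \cong \RH_{\Z_p, Y}(f^\nuc_\ast \mathcal F)$ is manifestly in the essential image of $\RH_{\Z_p, Y}$.

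For the converse, I would exploit the fact that $\RH_{\Z_p, Y}$ is a colimit-preserving, fully faithful functor between presentable stable categories, hence admits a right adjoint $R_Y$ by the adjoint functor theorem, and that full faithfulness is equivalent to $R_Y \circ \RH_{\Z_p, Y} \cong \id$. Under these conditions, an object $M \in \DFF(Y,\Z_p)$ lies in the essential image of $\RH_{\Z_p, Y}$ if and only if the counit $\RH_{\Z_p, Y} R_Y(M) \to M$ is an isomorphism.

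The core of the argument is to compute $R_Y$ of the target of $\Phi_{\mathcal F, f}$ by a Yoneda argument. For $N \in \D_\nuc(Y, \Z_p)$, chaining the $(\RH_{\Z_p,Y}, R_Y)$-adjunction, the $(f^\ast, f_\ast)$-adjunction, the compatibility $\RH_{\Z_p, Y'} \circ f^{\nuc, \ast} \cong f^\ast \circ \RH_{\Z_p, Y}$ from \cref{rslt:construction-of-RH-functors}(i), the full faithfulness of $\RH_{\Z_p, Y'}$, and the $(f^{\nuc, \ast}, f^\nuc_\ast)$-adjunction, one gets
\[
    \Hom(N, R_Y(f_\ast \RH_{\Z_p,Y'}(\mathcal F))) \cong \Hom(N, f^\nuc_\ast \mathcal F)
\]
naturally in $N$, hence by Yoneda an identification $\alpha \colon R_Y(f_\ast \RH_{\Z_p, Y'}(\mathcal F)) \isoto f^\nuc_\ast \mathcal F$. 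Unwinding the definition of $\Phi_{\mathcal F, f}$ as the mate of $\RH_{\Z_p, Y'}$ applied to the counit $f^{\nuc, \ast} f^\nuc_\ast \mathcal F \to \mathcal F$, one checks that $\alpha \circ R_Y(\Phi_{\mathcal F, f})$ is the identity of $f^\nuc_\ast \mathcal F$; in particular $R_Y(\Phi_{\mathcal F, f})$ is an isomorphism.

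To conclude, both source and target of $\Phi_{\mathcal F, f}$ lie in the essential image of $\RH_{\Z_p, Y}$ (the source trivially, the target by hypothesis). The commutative square
\[
\begin{tikzcd}
\RH_{\Z_p, Y} R_Y \RH_{\Z_p, Y}(f^\nuc_\ast \mathcal F) \arrow[r, "\RH_{\Z_p, Y} R_Y(\Phi)"] \arrow[d, "\sim"'] & \RH_{\Z_p, Y} R_Y(f_\ast \RH_{\Z_p, Y'}(\mathcal F)) \arrow[d, "\sim"] \\
\RH_{\Z_p, Y}(f^\nuc_\ast \mathcal F) \arrow[r, "\Phi_{\mathcal F, f}"] & f_\ast \RH_{\Z_p, Y'}(\mathcal F)
\end{tikzcd}
\]
with vertical counits (which are isomorphisms by the characterization of the essential image) and top horizontal arrow an isomorphism (as $R_Y \Phi_{\mathcal F, f}$ is and $\RH_{\Z_p, Y}$ preserves isos), forces $\Phi_{\mathcal F, f}$ to be an isomorphism.

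The only step requiring any care is the verification that $R_Y(\Phi_{\mathcal F, f})$ is the identity of $f^\nuc_\ast \mathcal F$ under the Yoneda identification $\alpha$; this is a routine but slightly tedious diagram chase through the chain of adjunction (co)units defining the mate $\Phi_{\mathcal F, f}$.
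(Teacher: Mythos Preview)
Your proof is correct and follows essentially the same route as the paper. Your right adjoint $R_Y$ is what the paper calls $\mathrm{Sol}_{\Z_p,Y}$, and your Yoneda computation of $R_Y(f_\ast \RH_{\Z_p,Y'}(\mathcal F)) \cong f^\nuc_\ast \mathcal F$ is exactly the content of the paper's one-line observation that ``$\mathrm{Sol}$ commutes with $\ast$-pushforward as $\RH$ commutes with $\ast$-pullback''; you are a bit more careful than the paper in explicitly tracking that the isomorphism so obtained agrees with $\Phi_{\mathcal F, f}$.
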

\begin{proof}
Necessity is clear, so assume that conversely $f_\ast(\RH_{\Z_p,Y'}(\mathcal{F}))$ lies in the essential image of $\RH_{\Z_p,Y}$.
Let $\mathrm{Sol}_{\Z_p,-}$ be the right adjoint to $\mathrm{RH}_{\Z_p,-}$ (which exists for formal reasons). By fully faithfulness of $\RH_{\Z_p}$ (see \Cref{rslt:construction-of-RH-functors}) we see that
\begin{align*}
    f_\ast(\RH_{\Z_p,Y'}(\mathcal{F})) & = \RH_{\Z_p,Y}\mathrm{Sol}_{\Z_p,Y} f_\ast(\RH_{\Z_p,Y'}(\mathcal{F})) \\
    & = \RH_{\Z_p,Y}(f^\nuc_{\ast}\circ \mathrm{Sol}_{\Z_p,Y'}\circ \RH_{\Z_p,Y'}(\mathcal{F})) \\
    & = \RH_{\Z_p,Y}f^\nuc(\mathcal{F}). 
\end{align*}
Here, we used that $\mathrm{Sol}$ commutes with $\ast$-pushforward as $\RH$ commutes with $\ast$-pullback.
\end{proof}
 
The cases in which $\Phi_{\mathcal{F},f}$ is an isomorphism require special assumptions. The next lemma is a $\Z_p$-version of \cite[Corollary 3.9.24]{mann-mod-p-6-functors} (which in turn implies \cite[Theorem 3.13]{rigid-p-adic-hodge}).

\begin{proposition} \label{rslt:primitive-comparison-for-smooth-proper-map}
Assume that $f$ is $\D_{[0,\infty)}$-proper and $\RH_{\Z_p,Y'}(\mathcal{F})$ is $f$-suave. Then $\Phi_{\mathcal{F},f}$ is an isomorphism. 
\end{proposition}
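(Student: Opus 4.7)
The plan is to follow the strategy of the analogous $\F_p$-primitive comparison \cite[Corollary 3.9.24]{mann-mod-p-6-functors}, namely to reduce the assertion to a purely formal consequence of the 6-functor formalism. First I would invoke \Cref{rslt:primitive-comparison-reduces-to-image-of-RH} to reduce the problem to showing that $f_\ast(\RH_{\Z_p,Y'}(\mathcal F))$ lies in the essential image of $\RH_{\Z_p,Y}$. Next, since \Cref{rslt:RH-identifies-dualizable-objects} asserts that $\RH_{\Z_p,Y}$ restricts to an equivalence between the full subcategories of dualizable objects on either side, the question reduces further to showing that $f_\ast(\RH_{\Z_p,Y'}(\mathcal F))$ is dualizable in $\DFF(Y,\Z_p)$.

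For this final step I would appeal to a general fact of 6-functor formalisms: whenever $f$ is cohomologically proper and $M$ is $f$-suave, the pushforward $f_\ast M = f_! M$ is dualizable, with dual $f_\ast(\DSuave_f(M))$. Concretely, setting $M := \RH_{\Z_p,Y'}(\mathcal F)$, the $f$-suaveness gives an isomorphism $M \otimes \DSuave_f(M) \isoto \IHom(M,M)$ by \cite[Lemma 4.4.5]{heyer-mann-6ff}, while the projection formula for the cohomologically proper map $f$ yields an isomorphism $f_\ast M \otimes f_\ast(\DSuave_f(M)) \isoto f_\ast(M \otimes \DSuave_f(M))$. The evaluation is then obtained by composing the suave counit $M\otimes \DSuave_f(M)\to f^!(1_Y)$ with the counit $f_!f^!(1_Y)\to 1_Y$, and the coevaluation by applying $f_\ast$ to the canonical map $1_{Y'}\to \IHom(M,M)$. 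The triangle identities are a formal consequence of the suave duality on $Y'$.

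The whole argument is therefore essentially formal, and the hard work has already been carried out elsewhere. The only non-trivial input is the identification of dualizable objects under $\RH_{\Z_p,Y}$, which is established in \Cref{rslt:RH-identifies-dualizable-objects} and ultimately rests on the classification of vector bundles on the Fargues--Fontaine curve. Consequently, no new computation should be needed here, and the main obstacle, if any, lies in making precise the general 6-functor formalism fact that cohomologically proper pushforward preserves dualizability of $f$-suave objects; but this is a standard consequence of the setup in \cite{heyer-mann-6ff}.
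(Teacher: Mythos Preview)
Your approach is exactly the paper's: reduce via \Cref{rslt:primitive-comparison-reduces-to-image-of-RH} to showing the pushforward lies in the image of $\RH_{\Z_p,Y}$, then via \Cref{rslt:RH-identifies-dualizable-objects} to showing it is dualizable, and finally use that cohomologically proper pushforward sends $f$-suave objects to dualizable ones. The paper simply cites \cite[Lemma~4.5.16(ii)]{heyer-mann-6ff} (together with \cite[Example~4.4.3]{heyer-mann-6ff}) for this last step rather than rederiving it.

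One genuine slip in your sketch of that step: the projection formula does \emph{not} give an isomorphism $f_\ast M \otimes f_\ast(\DSuave_f(M)) \isoto f_\ast(M \otimes \DSuave_f(M))$; it only gives $f_\ast M \otimes N \cong f_\ast(M \otimes f^\ast N)$. Lax monoidality of $f_\ast$ provides a map in the direction you need for the evaluation, but not an isomorphism, so your construction of the coevaluation as written does not go through. The correct argument (which is what \cite[Lemma~4.5.16]{heyer-mann-6ff} does) is 2-categorical: in the kernel 2-category $\mathcal{K}_{\D,Y}$, cohomological properness of $f$ means the functor $f_!$ is a left adjoint in $\mathcal{K}_{\D,Y}$, and composing left adjoints with left adjoints preserves adjointability, so $f_!M$ is $\id_Y$-suave, i.e.\ dualizable. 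Since you correctly identify this as standard and defer to \cite{heyer-mann-6ff} anyway, the proof is fine; just drop the faulty explicit construction and cite the lemma.
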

\begin{proof}
By \cite[Lemma~4.5.16(ii)]{heyer-mann-6ff} the object $\mathcal F' := f_* \RH_{\Z_p,Y'}(\mathcal{F})$ is suave over $Y$ and hence dualizable (see \cite[Example~4.4.3]{heyer-mann-6ff}). Since $\RH_{\Z_p,Y}$ induces an equivalence on dualizable objects (see \cref{rslt:RH-identifies-dualizable-objects}) we deduce that $\mathcal F'$ lies in the image of $\RH_{\Z_p,Y}$, hence the claim follows from \cref{rslt:primitive-comparison-reduces-to-image-of-RH}.  
\end{proof}

\begin{example}
The assumption of \cref{rslt:primitive-comparison-for-smooth-proper-map} is for example satisfied if $f$ is $\D_{[0,\infty)}$-smooth and $\D_{[0,\infty)]}$-proper and $\mathcal{F}$ is dualizable. By \cref{rslt:smooth-maps-of-adic-spaces-are-cohom-smooth} this is satisfied if $f$ comes from a smooth proper map of adic spaces over $\Q_p$, resulting in the usual formulation of the primitive comparison theorem. The same assertion holds modulo $p$, i.e. for $\RH_{\F_p}$.
\end{example}

We will now investigate in what generality (beyond the smooth proper case) one can expect a primitive comparison. The next result is useful to localize the question on the source via an excision argument:

\begin{lemma} \label{rslt:primitive-comparison-and-excision}
\begin{lemenum}
    \item Assume that $f=j\colon Y'\to Y$ is a partially proper \'etale morphism. Then the natural morphism
    \[
        j_!\circ \RH_{\Z_p,Y'}\to \RH_{\Z_p,Y}\circ j_!^\nuc
    \]
    is an isomorphism. Here, $j_!^\nuc\colon \D_\nuc(Y',\Z_p)\to \D_\nuc(Y,\Z_p)$ is the left adjoint to $j^\ast$ (which exists by partial properness of $j$\footnote{Indeed, the existence of $j_!^\nuc$, and that it is compatible with base change, can be checked in the strictly totally disconnected case, where $j$ is a union of clopen immersions.}). If $Y$ admits a pseudo-uniformizer $\pi$, the analogous assertion holds for $\RH_{\F_p}$.

    \item Assume that $f=i\colon Y'\to Y$ is a closed immersion and that $\pi$ is a pseudo-uniformizer on $Y$. Then the natural morphism
    \[
        \RH_{\F_p,Y'}\circ i_\ast^\et\to i_\ast\circ \RH_{\F_p,Y}
    \]
    is an isomorphism.
\end{lemenum}
\end{lemma}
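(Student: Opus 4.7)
The plan is to reduce to a very concrete local model using v-descent, and then exploit the compatibility of $\RH$ with pullback together with symmetric monoidality. By the hypercomplete v-sheaf properties of both $\D_\nuc(-,\Z_p)$ (\Cref{sec:an-overc-riem-3-v-descent-for-d-nuc}) and $\DFF(-,\Z_p)$ (\Cref{rslt:v-descent-for-D-0-infty}), we may assume $Y$ is a strictly totally disconnected perfectoid space. Any étale map to such a base is analytically locally on the source a disjoint union of open immersions; partial properness ensures these pieces are (quasi-compact) open immersions. Hence we reduce to the case where $j\colon U\hookrightarrow Y$ is a qcqs open immersion, which by \Cref{rslt:open-immersion-induces-categorical-open-immersion-on-hatsolid} (and its nuclear analogue) is in both categories described by an idempotent algebra: $j^\ast$ corresponds to tensoring with $\mathcal{A}_U\in\DFF(Y,\Z_p)$ (resp.\ $\mathcal{A}_U^\nuc\in\D_\nuc(Y,\Z_p)$) and $j_!$ (resp.\ $j_!^\nuc$) is the corresponding forgetful functor.

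Since $\RH_{\Z_p,Y}$ is symmetric monoidal and pullback-compatible (\Cref{rslt:construction-of-RH-functors}), it sends $\mathcal{A}_U^\nuc$ to $\mathcal{A}_U$. Passing to modules gives a $\RH$-functor on $U$ which, by naturality and uniqueness, must coincide with $\RH_{\Z_p,U}$. The comparison $j_!\circ \RH_{\Z_p,U}\to \RH_{\Z_p,Y}\circ j_!^\nuc$ then becomes the tautological identification of two forgetful functors along $\RH_{\Z_p,Y}$. The $\F_p$-version follows by exactly the same argument applied to $\RH_{\F_p}$, or alternatively by applying $-\otimes_{\Z_p}\F_p$ and using that this operation commutes with $j_!$ on both sides.

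\textbf{Plan for (ii).} Here the approach is to pass to right adjoints and reduce to verifying the claim on a generating class, using the fact that $\RH_{\F_p}$ factors as constructed in the diagram following \Cref{rslt:construction-of-RH-functors}, through the $6$-functor formalism of \cite{mann-mod-p-6-functors}. By v-descent we reduce to $Y$ an affinoid strictly totally disconnected perfectoid space with fixed pseudo-uniformizer $\pi$. A closed immersion $i\colon Y'\hookrightarrow Y$ is automatically proper, $p$-bounded, and representable in spatial diamonds, so $i_\ast$ in $\DFF(-,\F_p)$ satisfies base change and the projection formula. On the overconvergent side, the results of \cite[Section 3.9]{mann-mod-p-6-functors} establish the analogous compatibility of $i_\ast^\et$ with $\RH_{\F_p}$ in its $\mathcal{O}^{+a}/\pi$-formulation, i.e.\ one has an isomorphism $\RH_{\F_p,Y}\circ i_\ast^\et\isoto i_\ast\circ \RH_{\F_p,Y'}$ in $\D^a_\solid(\ri^+_Y/\pi)^\varphi$.

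The plan is to upgrade this known isomorphism through the factorization of $\RH_{\F_p}$ to an isomorphism in $\DFF(Y,\F_p)=\D_{\hat\solid}(\ri^\flat_{Y/\varphi^\Z})$, by showing that the image of both sides lies in the essential image of the comparison functor $\D^a_{\hat\solid}(\ri^+_{Y/\varphi^\Z})\to \D^a_\solid(\ri^+_Y/\pi)^\varphi$ and is determined by its value there. This uses \Cref{rslt:primitive-comparison-reduces-to-image-of-RH}-style arguments: fully faithfulness of $\RH_{\F_p,Y}$ together with the $i_\ast$-base-change (\Cref{rslt:Dsha-pushfoward-base-change}) shows that $i_\ast \RH_{\F_p,Y'}(\mathcal{F})$ already lies in the essential image, at which point identifying it with $\RH_{\F_p,Y}(i_\ast^\et \mathcal{F})$ amounts to matching solutions functors.

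\textbf{Main obstacle.} The hardest point is part (ii): one must carefully track the distinction between working with $\ri^+/\pi$-cohomology (where Mann's primitive comparison lives) and working with the non-$\pi$-truncated, $\varphi$-equivariant category on the Fargues--Fontaine curve. The comparison functor between these is not an equivalence, so simply invoking \cite[Corollary~3.9.24]{mann-mod-p-6-functors} is not enough; one also needs to verify that $i_\ast \RH_{\F_p,Y'}(\mathcal{F})$ is \emph{complete} in the sense of \Cref{def:complete-object-in-Dhsa}, so that the lift to $\DFF(Y,\F_p)$ is unique and matches the expected value.
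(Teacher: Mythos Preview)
Your approach to part (i) is essentially correct and follows the same reduction as the paper (pass to a strictly totally disconnected base via v-descent). However, you take a more elaborate route through idempotent algebras and categorical open immersions, whereas the paper simply observes that over a strictly totally disconnected base, a partially proper \'etale map is a filtered union of disjoint unions of \emph{clopen} immersions (this is exactly what the footnote in the statement says), and for a clopen immersion the compatibility $j_!\circ\RH \cong \RH\circ j_!^\nuc$ is trivial: both $j_!$'s are extension by zero on the complementary clopen, and $\RH$ commutes with pullback to the clopen pieces. Your idempotent-algebra argument should also work, but it is heavier than necessary and requires you to justify that $\RH$ carries $\mathcal{A}_U^\nuc$ to $\mathcal{A}_U$, which for genuinely non-clopen opens would need care.

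Your approach to part (ii) is genuinely different and substantially harder than what the paper does. The paper derives (ii) from (i) by \emph{excision}: if $i\colon Y'\hookrightarrow Y$ is a closed immersion with open complement $j\colon U\hookrightarrow Y$, then $j$ is partially proper \'etale, and one has the cofiber sequence $j_! j^\ast \to \id \to i_\ast i^\ast$ on both sides. Since $\RH$ commutes with $j^\ast$ and $i^\ast$ (pullback-compatibility) and with $j_!$ (by part (i)), it automatically commutes with $i_\ast$. This completely bypasses the completeness and lifting issues you identify as the ``main obstacle''. Your proposed route through \cite[Section~3.9]{mann-mod-p-6-functors} is not obviously available: the primitive comparison there is formulated for proper smooth pushforward, not for closed immersions per se, and you have not carried out the lifting from the $\ri^{+a}/\pi$-level to $\DFF(-,\F_p)$; the gap you flag is real for your strategy but simply does not arise in the paper's.
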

\begin{proof}
Part (ii) follows from (i) by excision (the proof of \cite[Lemma 6.2.1]{zavyalov-poincare-duality} applies as well as the complementary open immersion $j$ is partially proper).
We first note that (i) is clear if $j$ is a clopen immersion. The general case reduces to this by base changing to a strictly totally disconnected perfectoid space and writing $j$ as a filtered colimit of disjoint unions of clopen immersions as in \cite[Lemma 6.2.1, Step 1]{zavyalov-poincare-duality}.
\end{proof}

We now give some negative examples, showing that in general the primitive comparison theorem should only be asked for $\RH_{\F_p}$ and $f$ a proper morphism of rigid-analytic varieties.

\begin{examples}
\begin{examplesenum}
    \item In general, $\RH_{\Q_p}$ does not commute with proper, smooth pushforward. Indeed, \Cref{sec:pro-etale-cohomology-example-introduction-lubin-tate} yields a counterexample as there the pushforward does not lie in the essential image of $\RH_{\Q_p}$. Similarly, \Cref{sec:pro-etale-cohomology-example-introduction-affine-line} shows that if $f\colon (\mathbb{P}^1_{\mathbb{C}_p})^\diamond\to S:=\Spd(\mathbb{C}_p)$, then $f_\ast\RH_{\Z_p}(\mathcal{F})\ncong \RH_{\Z_p} f_\ast^\nuc(\mathcal{F})$ for $\mathcal{F}=j_!^\nuc\Z_p$ for the open immersion $(\A^{1,\an}_{\mathbb{C}_p})^\diamond\to (\mathbb{P}^1_{\mathbb{C}_p})^\diamond$. Namely, otherwise, then $f_\ast\circ j_!(\mathcal{O})\in \DFF(S,\Q_p)$ would be $\RH_{\Q_p}(f_\ast^\nuc\circ j_!^\nuc(\Q_p))$, which is not true. Thus, we see that \Cref{rslt:primitive-comparison-for-smooth-proper-map} only works for $\RH_{\Z_p}$ and some restriction on $\mathcal{F}$.

    \item The primitive comparison theorem should only be considered for proper morphisms of \textit{rigid-analytic varieties}, and not general proper morphisms of analytic adic spaces. Indeed, by a direct calculation the primitive comparison theorem fails for $\mathcal{F}=\F_p$ on the canonical compactification of the torus $\mathbb{T}_{\mathbb{C}_p}$ over $\mathbb{C}_p$ (as the $H^1$ with $\mathcal{O}^+/\pi$-coefficients is not free). From here one can deduce from excision (see \Cref{rslt:primitive-comparison-and-excision}) that the primitive comparison fails for the open unit disc by considering the complement of $\overline{\mathbb{T}_{\mathbb{C}_p}}$ in $\mathbb{P}^1_{\mathbb{C}_p}$. 
\end{examplesenum}
\end{examples}

The main case where the primitive comparison theorem holds is provided by the next assertion. To simplify notation, let us first introduce some terminology:

\begin{definition}
Given a $!$-able map $f\colon Y \to X$ of small v-stacks, we say that a sheaf $\mathcal F \in \D_\et(Y,\F_p)^\oc$ is \emph{$f$-ULA} if $\RH_{\F_p,Y}(\mathcal F)$ is $f$-suave. In the case where $X = \Spd K$ for some non-archimedean extension of $\Q_p$ and $f$ is the structure map, we abbreviate $f$-ULA as ULA.
\end{definition}

In the following result we use the notation of Zariski-constructible sheaves from \cite[Definition 3.1]{bhatt2022six}, and their bounded derived category $\D_{zc}^b(-,\F_p)\subseteq \D_\et((-)^\diamond,\F_p)^{\mathrm{oc}}$. The following result is based on the proof of \cite[Theorem 3.13]{perfectoid-spaces-survey}.

\begin{theorem} \label{sec:suppl-prim-comp-1-zariski-constructible-implies-ula}
Let $K$ be a non-archimedean field over $\Q_p$, let $f\colon Y\to X$ be a proper morphism of rigid-analytic varieties over $K$ and let $\mathcal F \in \D_{zc}^b(Y,\F_p)$ be given. Then $\mathcal F$ is ULA and the natural map
\[
    \RH_{\F_p,X^\diamond}f^\et_{\ast}(\mathcal{F})\to f_\ast\RH_{\F_p,Y^\diamond}(\mathcal{F})
\]
is an isomorphism.
\end{theorem}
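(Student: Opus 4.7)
Since proper morphisms of rigid analytic varieties give lpbc and proper maps on diamonds, $f^\diamond$ is $\D_{[0,\infty)}$-proper (in particular $\D_{[0,\infty)}$-$!$-able). The idea is to first reduce the isomorphism claim to the ULA property, then establish ULA by devissage from Zariski-constructible sheaves down to local systems on smooth strata, where we can invoke the cohomological smoothness of smooth rigid-analytic morphisms over $\Q_p$.

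\textbf{Paragraph 1: Reduction to ULA.} Assuming $\mathcal{F}$ is $f$-ULA, i.e.\ $\RH_{\F_p,Y^\diamond}(\mathcal{F})$ is $f$-suave, I will argue exactly as in \Cref{rslt:primitive-comparison-for-smooth-proper-map}: by \cite[Lemma~4.5.16(ii)]{heyer-mann-6ff}, properness gives that $f_\ast\RH_{\F_p,Y^\diamond}(\mathcal{F})$ is suave over $X^\diamond$ and hence dualizable in $\DFF(X^\diamond,\F_p)$. An $\F_p$-version of \Cref{rslt:RH-identifies-dualizable-objects} (obtained by reducing the identification of dualizable objects modulo $p$, noting that dualizable objects on the FF-side correspond to strictly perfect $\varphi$-modules, hence to $\F_p$-local systems on the nuclear side via Artin--Schreier theory) then places $f_\ast\RH_{\F_p,Y^\diamond}(\mathcal{F})$ in the essential image of $\RH_{\F_p,X^\diamond}$. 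The mod-$p$ analogue of \Cref{rslt:primitive-comparison-reduces-to-image-of-RH} (whose proof carries over verbatim) then upgrades this to the desired isomorphism $\Phi_{\mathcal{F},f}$.

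\textbf{Paragraph 2: Devissage for Zariski-constructible sheaves.} By definition of $\D_{zc}^b(Y,\F_p)$, there exists a finite stratification of $Y$ into Zariski locally closed subvarieties on which the cohomology sheaves $\mathcal{H}^i(\mathcal{F})$ restrict to local systems. Refining if necessary, I may assume each stratum is smooth over $K$ (by stratifying the singular locus further, using that singular loci are nowhere dense Zariski closed subsets). Using the open/closed decomposition $Y = U \sqcup Z$ with $j\colon U \injto Y$ open (necessarily partially proper since $Y$ is a rigid variety) and $i\colon Z \injto Y$ closed, I get the excision triangle $j_!j^\ast\mathcal{F} \to \mathcal{F} \to i_\ast i^\ast \mathcal{F}$. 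By \Cref{rslt:primitive-comparison-and-excision}, $\RH_{\F_p}$ commutes with both $j_!$ and $i_\ast$. Since $j_!$ and $i_\ast$ both preserve $!$-ability and suaveness along proper/étale maps (compose suave maps with cohomologically étale/proper ones, cf.\ \cite[Lemma~4.5.16]{heyer-mann-6ff}), ULA-ness of $\mathcal{F}$ on $Y$ is inherited from ULA-ness of $j^\ast\mathcal{F}$ on $U$ and $i^\ast\mathcal{F}$ on $Z$. By Noetherian induction on the dimension of the support, this reduces the problem to showing ULA-ness of local systems on smooth rigid varieties over $K$.

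\textbf{Paragraph 3: Base case and main obstacle.} For a local system $\mathbb{L}$ on a smooth rigid variety $W$ over $K$, the object $\RH_{\F_p,W^\diamond}(\mathbb{L})$ is dualizable in $\DFF(W^\diamond,\F_p)$ (by the $\F_p$-analogue of \Cref{rslt:RH-identifies-dualizable-objects} applied to the rank-one/locally free case). The structure morphism $W^\diamond \to \Spd K$ is $\D_{[0,\infty)}$-smooth by \Cref{rslt:smooth-maps-of-adic-spaces-are-cohom-smooth} with invertible dualizing complex, so dualizable objects are automatically suave (in any $6$-functor formalism, cohomological smoothness of the structure map makes suaveness equivalent to dualizability, since one can compute the Verdier dual via the invertible $!$-pullback of the unit). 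Hence $\mathbb{L}$ is ULA, completing the induction. The main obstacle I anticipate is Paragraph~1: one must verify that the $\F_p$-version of the identification of dualizable objects and of \Cref{rslt:primitive-comparison-reduces-to-image-of-RH} genuinely goes through. While the nuclear theory was developed primarily for $\Z_p$, the mod-$p$ statements reduce to a computation in $\varphi$-modules over the relevant perfect rings via Artin--Schreier, which is cleaner than the integral version, so I expect this obstacle to be real but surmountable.
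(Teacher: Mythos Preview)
Your devissage for the ULA claim (Paragraphs~2--3) is correct and close in spirit to the paper's: you stratify $Y$ into smooth pieces carrying local systems and invoke \Cref{rslt:smooth-maps-of-adic-spaces-are-cohom-smooth} for the base case, whereas the paper reduces via \cite[Proposition~3.6]{bhatt2022six} to $\mathcal{F}=\F_p$ and then applies resolution of singularities. Both arrive at a locally constant sheaf on a smooth variety.

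There is, however, a genuine gap in Paragraph~1. You assume there that $\RH_{\F_p}(\mathcal{F})$ is \emph{$f$-suave}, but Paragraphs~2--3 only establish that it is suave along the structure map $Y^\diamond\to\Spd K$ (this is what ``ULA'' means in the paper). These notions differ. From ULA and properness of $f$, \cite[Lemma~4.5.16]{heyer-mann-6ff} yields only that $f_\ast\RH_{\F_p}(\mathcal{F})$ is suave along $X^\diamond\to\Spd K$, not that it is dualizable in $\DFF(X^\diamond,\F_p)$; the two coincide only when $X$ is smooth over $K$, which is not assumed. Nor can your base case be upgraded to give $f$-suaveness: a smooth stratum $W\subset Y$ need not map smoothly to $X$, so dualizability of $\mathbb{L}$ on $W$ says nothing about suaveness along $W\to X$.

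The paper closes this gap by running a \emph{separate} devissage for the primitive comparison, organized on $X$ rather than on $Y$. After reducing (via resolution of singularities) to $\mathcal{F}=\F_p$ with $Y$ smooth, it invokes \cite[Theorem~2.29]{bhatt2022six} to find a Zariski-dense open $V\subset X$ over which $f$ becomes proper and smooth; there \Cref{rslt:primitive-comparison-for-smooth-proper-map} applies directly, and the complement is handled by excision (\Cref{rslt:primitive-comparison-and-excision}) together with the inductive hypothesis in lower dimension. So your ``reduce the isomorphism to ULA, then prove ULA'' strategy cannot work as stated; the two claims must be proved together, with the comparison requiring an extra geometric input on the base $X$.
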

\begin{proof}
By base change to an algebraic closure (using \cite[Lemma 4.4.9]{heyer-mann-6ff}) we may assume that $K$ is algebraically closed. We prove the assertion by induction on the dimension $d$ of the support of the Zariski-constructible complex (with support defined as the union of the supports of the cohomology objects). The assertion is local on $X$, so we may assume that $X$ (and therefore $Y$) is qcqs, even separated. We may moreover assume that $X,Y$ are reduced. Assume that $d=0$. Then $\mathcal{F}=i_\ast^\et\mathcal{G}$ for a dualizable object $\mathcal{G}\in \D_{zc}^b(Z,\F_p)$ for a finite set $Z$ of (reduced) $K$-rational points. In particular, $\mathcal{G}$ is ULA. From \Cref{rslt:primitive-comparison-and-excision} and \'etaleness of $Z^\diamond\to \Spd(K)$ we can conclude that $\mathcal{F}$ is ULA as the proper pushforward $i_\ast$ preserves ULA objects (see \cite[Lemma~4.5.16]{heyer-mann-6ff}). Furthermore, we see from \Cref{rslt:primitive-comparison-reduces-to-image-of-RH} and \Cref{rslt:primitive-comparison-and-excision} that
\[
    \RH_{\F_p,X^\diamond}f^\et_{\ast}(i^\et_\ast \mathcal{G})\cong f_\ast i_\ast \RH_{\F_p,Z^\diamond}(\mathcal{G}) \cong f_\ast \RH_{\F_p,Y^\diamond}(i_\ast^\et\mathcal{G}),
\]
which implies the second assertion. Now assume $d>0$ and that the claim is proven for all Zariski-constructible $\mathcal{F}$ with support of dimension $<d$. Let $\mathcal{F}\in \D^b_{zc}(Y,\F_p)$. From the reasoning above we see that we may assume that $\mathrm{supp}(\mathcal{F})=Y$. By \cite[Proposition 3.6]{bhatt2022six} we may assume that $\mathcal{F}=g_\ast^\et \F_p$ for a finite morphism $g\colon Y'\to Y$. Using the above reasoning for $f\circ g$ shows that it is sufficient to handle the case $\mathcal{F}=\F_p$ (by replacing $Y$ by $Y'$). We may again reduce to the case that $Y$ is reduced. By resolution of singularities for rigid-analytic varieties in characteristic $0$, we may find a proper morphism $g\colon Y'\to Y$ with $Y'$ smooth and an isomorphism over a Zariski-open subset $j\colon U\subseteq Y$ with (reduced) complement $i\colon Z\subseteq Y$.
We obtain a short distinguished triangle
\[
    \F_p\to g_\ast^\et(\F_p)\to \mathcal{G}
\]
with $\mathcal{G}$ Zariski-constructible (see \cite[2.4.iii]{hansen2020vanishing}, \cite[Theorem 3.10]{bhatt2022six}) and $\mathrm{supp}(\mathcal{G})<d$. To see that $\F_p$ is ULA it is therefore by induction enough to see that $g_\ast(\F_p)$ is ULA. This in turn is implied by \Cref{rslt:smooth-maps-of-adic-spaces-are-cohom-smooth} as $Y'$ is smooth and being ULA is stable under proper pushforward. We are left with checking that
\[
    \RH_{\F_p,X^\diamond}f^\et_\ast(\F_p) = f_\ast \RH_{\F_p,Y^\diamond}(\F_p),
\]
or equivalently (by induction and the above distinguished triangle) with $\F_p$ replaced by $g^\et_\ast(\F_p)$. We may therefore reduce to the case that $Y$ is smooth. By \cite[Theorem 2.29]{bhatt2022six} there exists a Zariski-open $j\colon V\to X$ with $f_V\colon W:=Y\times_X V\to V$ proper and smooth. Let $j'\colon W\to Y$ be the given open immersion. Using \Cref{rslt:primitive-comparison-and-excision} and \Cref{rslt:primitive-comparison-for-smooth-proper-map} we see that
\[
    \RH_{\F_p,X^\diamond}(f^\et_\ast(j_!^{\prime,\et}\F_p)) = \RH_{\F_p,X^\diamond}(j_!^\et(f^\et_{V,\ast}(\F_p))) = j_!f_{V,\ast}(\RH_{\F_p,Y^\diamond}(\F_p)), 
\]
which equals $f_\ast(\RH_{\F_p,Y^\diamond}(j^\et_{!}(\F_p))$ as desired. Thus, we may replace $\F_p$ on $Y$ by the cone of $j_!^\et\F_p\to \F_p$, which we also may assume to be supported in dimension $<d$ (e.g., by reducing to $Y$, $X$ Zariski-irreducible). Now, we apply induction and finish the proof.
\end{proof}

\begin{example}
The following example was communicated to us by David Hansen. Let $\ell\neq p$ be a prime. Then there exist a proper, smooth rigid-analytic variety $X$ over $\Q_p$ and an object $\D_\et(X^\diamond,\F_\ell)$, which is ULA, but not Zariski-constructible. In fact, let $(G,b,\{\mu\})$ be a local Shimura datum with $G=\mathrm{GL}_n$, $b$ basic and $\mu$ minuscule. Concretely, $b$ corresponds to a semistable vector bundle $\mathcal{E}_b=\mathcal{O}(\frac{d}{n})$ with $d=\kappa(b)$. Let $X=\mathcal{F}l(G,\mu)$ be the flag variety of type $\mu$, which we view as parametrizing modifications of the bundle $\mathcal{E}_b$ at infinity that are of type $\mu$. By \cite[Proposition A.12]{scholze2018p}, \cite[Lemma A.11]{scholze2018p} $X$ agrees with the full weakly admissible locus if and only if $G_b$ is the group of units in a division algebra, i.e., if $d$ is coprime to $n$. However, $X$ rarely agrees with its admissible locus (\cite[Corollary A.16]{scholze2018p}), and for example $d=2, n=5$ and $\mu=(1,1,0,0,0)$ yield a case with $X=\mathrm{Gr}(2,5)$ the Grassmannian of planes in $\Q_p^5$, where the admissible locus $U\subseteq X$ is strictly smaller than the weakly admissible locus. By ``weakly admissible implies admissible'' for $p$-adic fields, the complement $Z$ of $U$ in $X$ does not contain any $K$-valued points for $K$ a finite extension of $\breve{\Q}_p$. In particular, $U$ is not Zariski-constructible. If $d=2, n=5$, one checks that $Z$ is $\ell$-cohomologically smooth (being a union of quotients of a positive slope Banach--Colmez space by a $p$-adic Lie group). In particular, the pushfoward to $X$ of the constant sheaf on $Z$ is ULA, but not Zariski-constructible. It is likely that a similar counterexample also exists for $\F_p$-coefficients.
\end{example}

\subsection{Relation to v-sheaves: the functor \texorpdfstring{$\sigma$}{σ}} \label{sec:relation-v-sheaves-mod-p}

In this subsection and the next one, we investigate a relation between $\DFF(S,\Z_p)$ (resp. $\DFF(S,\Q_p)$) and v-sheaves of $\Z_p$-modules (resp. of $\Q_p$-modules) on a small v-stack $S$, which later will bring us closer to the formulation of $p$-adic Poincaré duality in the work of Colmez--Gilles--Nizio\l{}. The results presented here benefited a lot from conversations with Wies\l{}awa Nizio\l{}.

\begin{lemma} \label{rslt:construction-of-sigma-solid}
There exists a unique natural transformation 
\[
    \sigma=\sigma_{\Z_p}\colon \DFF(-,\Z_p) \to \D((-)_v,\Z_p)
\]
of functors $\vStacks^\opp \to \Cat$, such that for any $S\in \vStacks$ we have (naturally in $S, M$)
\[
    \Gamma(S_v,\sigma(M)) = \Hom_{\DFF(S,\Z_p)}(1,M)
\]
for any $M\in \DFF(S,\Z_p)$.
\end{lemma}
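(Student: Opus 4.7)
The plan is to define $\sigma(M)$ directly via the prescribed formula and check that it automatically yields a v-sheaf by v-descent of $\DFF(-,\Z_p)$.

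\textbf{Uniqueness.} Suppose $\sigma$ is any natural transformation satisfying the stated property. For any morphism $f\colon T\to S$ of small v-stacks and $M\in\DFF(S,\Z_p)$, naturality gives
\[
    \sigma_S(M)|_T \;\cong\; f^\ast \sigma_S(M) \;\cong\; \sigma_T(f^\ast M) \quad\text{in }\D(T_v,\Z_p),
\]
and the compatibility with global sections applied on $T$ then forces
\[
    \Gamma\bigl(T_v,\,\sigma_S(M)|_T\bigr) \;\cong\; \Hom_{\DFF(T,\Z_p)}(1,f^\ast M).
\]
Since a v-sheaf on $S$ is determined by its values on perfectoid (or even affinoid perfectoid) $T\to S$, this pins down $\sigma_S(M)$.

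\textbf{Existence.} Motivated by the uniqueness discussion, for $M\in\DFF(S,\Z_p)$ I define the $\D(\Z_p)$-valued presheaf $\sigma_S(M)$ on $S_v$ by
\[
    (f\colon T\to S) \;\longmapsto\; \Hom_{\DFF(T,\Z_p)}(1,\,f^\ast M),
\]
where the right-hand side denotes the $\D(\Z_p)$-enriched Hom (which makes sense because $\DFF(T,\Z_p)$ is $\D_\solid(\Z_p)$-linear by \cref{rslt:6ff-for-D-0-infty}). Functoriality in the arrow $T\to S$ is automatic. The key step is to verify that this presheaf is already a hypercomplete v-sheaf. Given a v-hypercover $T_\bullet\to T$ over $S$, hypercomplete v-descent for $\D_{[0,\infty)}(-)$ (\cref{rslt:v-descent-for-D-0-infty}) and the étale descent of Frobenius-equivariant objects imply
\[
    \DFF(T,\Z_p) \;\isoto\; \varprojlim_{[n]\in\Delta}\DFF(T_n,\Z_p),
\]
so that for all $X,Y\in\DFF(T,\Z_p)$ the enriched Hom satisfies
\[
    \Hom_{\DFF(T,\Z_p)}(X,Y) \;\isoto\; \varprojlim_{[n]\in\Delta}\Hom_{\DFF(T_n,\Z_p)}(X|_{T_n},Y|_{T_n}).
\]
Applied to $X=1$ and $Y=f^\ast M$ this is exactly the v-descent condition for $\sigma_S(M)$.

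\textbf{Naturality in $S$ and compatibility with global sections.} For a morphism $g\colon S'\to S$ and any $T\to S'$, the two v-sheaves $g^\ast\sigma_S(M)$ and $\sigma_{S'}(g^\ast M)$ both send $T$ to $\Hom_{\DFF(T,\Z_p)}(1,f^\ast M)$ where $f$ is the composite $T\to S'\to S$; this identification is natural and yields the required isomorphism $g^\ast\circ\sigma_S\cong\sigma_{S'}\circ g^\ast$. For the global sections formula, choose a v-hypercover $T_\bullet\to S$ by perfectoid spaces; then by construction and v-descent of the enriched Hom applied to $1,M\in\DFF(S,\Z_p)$,
\[
    \Gamma(S_v,\sigma_S(M)) \;=\; \varprojlim_{[n]\in\Delta}\Hom_{\DFF(T_n,\Z_p)}(1,M|_{T_n}) \;\isoto\; \Hom_{\DFF(S,\Z_p)}(1,M),
\]
as required.

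The only potentially subtle point is the v-descent step, but it is an immediate consequence of the descent results for $\D_{[0,\infty)}(-)$ established in \cref{sec:v-descent-d_ffs}, combined with the standard fact that enriched Hom out of a fixed object commutes with the limit defining a descent datum. Everything else is formal.
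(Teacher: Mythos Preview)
Your argument captures the essential mathematical content: uniqueness follows from the formula, and the v-sheaf property follows from hypercomplete v-descent of $\DFF(-,\Z_p)$. These are exactly the ingredients the paper uses as well.

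However, there is a genuine gap in your ``Naturality in $S$'' step. You write that the identification $g^\ast\sigma_S(M)\cong\sigma_{S'}(g^\ast M)$ ``is natural and yields the required isomorphism,'' but in the $\infty$-categorical setting, producing a natural transformation of functors $\vStacks^{\opp}\to\Cat$ means constructing a coherent system of equivalences compatible with all compositions and higher homotopies---not just checking that the values agree pointwise. Even the functoriality of the assignment $T\mapsto\Hom_{\DFF(T,\Z_p)}(1,f^\ast M)$ simultaneously in $T$, in $M$, and in $S$ requires coherence data that does not come for free from formula-matching. Your phrase ``functoriality in the arrow $T\to S$ is automatic'' hides exactly this issue.

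The paper's proof devotes most of its effort to precisely this point: it carries out a careful straightening/unstraightening argument, working with the cocartesian fibration $\mathcal E_\FF\to\mathcal C^{\opp}$ classifying $\DFF(-,\Z_p)$ and the flat fibration $\Fun([1],\mathcal C)^{\opp}\to\mathcal C^{\opp}$, in order to package the assignment $(M,[S'\to S])\mapsto\Hom(1,M_{S'})$ into a single functor and then read off the desired natural transformation from its fibrational structure. Your proposal skips this entirely, and without it you have not actually constructed $\sigma$ as a natural transformation in the sense required by the statement.
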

\begin{proof}
Uniqueness is implied by the formula. For existence, we first note that \cref{rslt:v-descent-for-D-0-infty} implies that for a given $M$ the formula
\[
    S'\mapsto \Hom_{\DFF(S',\Z_p)}(1,M_{S'})
\]
defines a v-sheaf on small v-stacks over $S$, where $M_{S'}$ denotes the pullback of $M$ to $S'$. It remains to construct the desired functoriality of $\sigma$ in order to establish it as a natural transformation as in the claim. By the above sheafiness, we do not need to worry about the sheaf property and can instead construct it as a map to v-presheaves. We can now perform a standard argument with straightening and unstraightening. To abbreviate notation, denote $\mathcal C := \vStacks$ and let $\mathcal E_\FF \to \mathcal C^\opp$ be the cocartesian unstraightening of the functor $\DFF(-,\Z_p)$. Let $\mathcal E \to \mathcal C^\opp$ be the cartesian unstraightening of the functor $\mathcal C \to \Cat$, $S \mapsto (\mathcal C_{/S})^\opp$. By \cite[Remark~3.78]{Heine.2023} this map is flat, hence the base-change $- \times_{\mathcal C^\opp} \mathcal E$ admits a right adjoint
\begin{align*}
    \Fun^{\mathcal C^\opp}(\mathcal E, -)\colon \Cat \to \Cat_{/\mathcal C^\opp}.
\end{align*}
We observe that $\Fun^{\mathcal C^\opp}(\mathcal E, \D(\Z_p)) \to \mathcal C^\opp$ is the cocartesian unstraightening of the functor $\mathcal C^\opp \to \Cat$,  $S \mapsto \Fun((\mathcal C_{/S})^\opp, \D(\Z_p))$. Therefore, the natural transformation $\sigma$ must be a map of cocartesian fibrations
\begin{align*}
    \mathcal E_\FF \to \Fun^{\mathcal C^\opp}(\mathcal E, \D(\Z_p))
\end{align*}
over $\mathcal C^\opp$. By adjointness, this map is equivalently a map $\mathcal E_\FF \times_{\mathcal C^\opp} \mathcal E \to \D(\Z_p)$. Now note that $\mathcal E = \Fun([1], \mathcal C)^\opp$. Moreover, it is enough to construct the composition of the above map with the Yoneda embedding of $\D(\Z_p)$. Altogether the construction of $\sigma$ reduces to the construction of the functor
\begin{align*}
    \mathcal E_\FF \times_{\mathcal C^\opp} \Fun([1], \mathcal C)^\opp \times \D(\Z_p)^\opp &\to \Ani,\\
    (M, [S' \to S], N) &\mapsto \Hom_{\DFF(S',\Z_p)}(N \tensor 1, M_{S'}).
\end{align*}
Let $\mathcal E_\FF^\vee \to \mathcal C^\opp$ be the cocartesian fibration classifying the functor $\DFF(-,\Z_p)^\opp$. Then by \cite[\S5]{Barwick-Glasman-Nardin.2018} there is a pairing $\mathcal E_\FF \times_{\mathcal C^\opp} \mathcal E_\FF^\vee \to \Ani$, given informally by $(M, N) \mapsto \Hom_{\DFF(S,\Z_p)}(N, M)$. This reduces the construction to the following two functors over $\mathcal C^\opp$:
\begin{align*}
    &\alpha\colon \mathcal E_\FF \times_{\mathcal C^\opp} \Fun([1], \mathcal C)^\opp \to \mathcal E_\FF, & (M, [S' \to S]) &\mapsto M_{S'},\\
    &\beta\colon \Fun([1], \mathcal C)^\opp \times \D(\Z_p)^\opp \to \mathcal E_\FF^\vee, & ([S' \to S], N) &\mapsto N \tensor 1_{S'}.
\end{align*}
Here in both cases the structure map to $\mathcal C^\opp$ is given by the source map on $\Fun([1], \mathcal C)^\opp$. Now $\alpha$ falls directly out of the definition of cartesian fibrations (as in \cite[Definition~A.2.4(b)]{heyer-mann-6ff}), which tells us that the opposite of the fiber product on the left is just the category of cartesian edges in $\mathcal E_\FF^\opp$---then simply take $\alpha$ to be the source map. To construct $\beta$, we first take the source map $\Fun([1], \mathcal C)^\opp \to \mathcal C^\opp$ in order to reduce the construction of $\beta$ to the functor $\mathcal C^\opp \times \D(\Z_p)^\opp \to \mathcal E_\FF^\vee$. This functor is then the map of cocartesian fibrations associated to the natural transformation of functors $\mathcal C^\opp \to \Cat$ from the constant functor $\D(\Z_p)^\opp$ to the functor $\DFF(-,\Z_p)^\opp$, sending $N \in \D(\Z_p)^\opp$ to $N \tensor 1$.
\end{proof}

\begin{remarks}
\begin{remarksenum}
    \item More precisely, we used here the notation $\D(S_v,\Z_p)$ for the category of v-sheaves with values in $\D(\Z_p)$. By v-hyperdescent (\Cref{rslt:v-descent-for-D-0-infty}) the image of $\sigma$ lands even in hypercomplete v-sheaves, and we will be implicitly use the notation $\D(S_v,\Z_p)$ also for the full subcategory of hypercomplete v-sheaves (as the difference between the two is not important for the results in this paper). By repleteness of $S_v$ and \cite[Theorem A]{mondal2022postnikov} hypercomplete v-sheaves are Postnikov complete and as a consequence the category of hypercomplete v-sheaves agrees with the derived category of static $\Z_p$-modules on $S_v$.
    
    \item Similarly, one can construct for a small v-stack $S$ a natural functor
    \[
        \sigma_{\Q_p}\colon \DFF(S,\Q_p)\to \D(S_v,\Q_p).
    \]
    By restricting \Cref{rslt:construction-of-sigma-solid} to modules over $\F_p$, we moreover obtain the functor
    \[
        \sigma_{\F_p}\colon \DFF(S,\mathbb{F}_p)\to \D(S_v,\F_p),
    \]
    where $\DFF(S,\mathbb{F}_p)=\mathrm{Mod}_{\mathcal{O}^\flat}\D_{\hat\solid}^a(\mathcal{O}^+_{S})^\varphi$ refers to the category defined in \Cref{def:Dhsa}. Here $S$ is viewed as an untilted small v-stack via $S\to \Spd(\F_p)\to \Spd(\Z_p)$, in particular $\DFF(S,\mathbb{F}_p)$ is $\F_p$-linear. If we want to stress that $\mathcal{O}$ refers to the structure sheaf in characteristic $p$, then we write $\mathcal{O}^\flat$. 
    
    \item In \cite[Corollary 3.11]{anschutz2021fourier} the functor $\sigma_{\Q_p}$ was denoted by $R\tau_\ast$ and it was shown that it is fully faithful on perfect complexes. The notation $R\tau_\ast$ was used in its relation to taking relative cohomology. We changed the notation because $\sigma$ is not a right adjoint as it does not commute with limits (because of the occuring pullbacks to small v-stacks over $S$). 

    \item \label{remark-rslt:sigma-commutes-countable-limits} For each small v-stack $S$ the functor $\sigma \colon \DFF(S,\Z_p) \to \D(S_v,\Z_p)$ commutes with colimits. Namely, this can be checked locally on $S_v$, and thus in the case that $S$ is a qcqs perfectoid space which admits a morphism of finite $\dimtrg$ to a totally disconnected perfectoid space. Then \Cref{rslt:dualizable-object-on-FF-curve-is-compact} and the formula for $\sigma$ in \Cref{rslt:construction-of-sigma-solid} show that $\sigma$ commutes with colimits. The same argument applies to $\sigma_{\Q_p}$ or $\sigma_{\F_p}$.
\end{remarksenum}
\end{remarks}

\begin{lemma} \label{rslt:sigma-is-lax-symmetric-monoidal}
Let $S$ be a small v-stack. Then
\[
   \sigma\colon \DFF(S,\Z_p)\to \D(S_v,\Z_p)
\]
is lax symmetric monoidal.
\end{lemma}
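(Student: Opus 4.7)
The plan is to upgrade the construction of $\sigma$ in \Cref{rslt:construction-of-sigma-solid} to the symmetric monoidal setting, invoking the general principle that for any presentably symmetric monoidal stable $\infty$-category $\mathcal R$ with unit $\mathbf 1$, the mapping spectrum $\Hom_{\mathcal R}(\mathbf 1, -)\colon \mathcal R \to \D(\Z)$ carries a canonical lax symmetric monoidal structure (arising, for instance, as the right adjoint to the symmetric monoidal functor $\D(\Z) \to \mathcal R$, $V \mapsto V\otimes \mathbf 1$). Applied pointwise to each $\mathcal R = \DFF(S', \Z_p)$ for $S' \to S$ in $\vStacks_{/S}$, this yields on sections the structural maps
\[
    \Hom_{\DFF(S',\Z_p)}(1, M_{S'}) \otimes_{\Z_p} \Hom_{\DFF(S',\Z_p)}(1, N_{S'}) \to \Hom_{\DFF(S',\Z_p)}(1, M_{S'}\otimes N_{S'}),
\]
informally sending $(f, g)$ to the composite $1 = 1\otimes 1 \xto{f\otimes g} M_{S'}\otimes N_{S'}$, together with a unit map $\Z_p \to \Hom_{\DFF(S',\Z_p)}(1, 1)$ given by the identity.

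The first step is to observe that all the required ingredients are symmetric monoidal in a compatible way. By \Cref{rslt:6ff-for-D-0-infty} the functor $\DFF(-, \Z_p)\colon \vStacks^\opp \to \Cat$ refines to a hypercomplete v-sheaf of presentably symmetric monoidal $\D(\Z_p)$-linear $\infty$-categories (using \Cref{rslt:v-descent-for-D-0-infty}); in particular, the pullback functors $f^*$ are all symmetric monoidal, and similarly on the target $S'\mapsto \D(S'_v, \Z_p)$. Consequently, the lax structure maps displayed above are natural in $S'$, because pullback preserves units and tensor products strictly in the relevant sense.

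The second step is to assemble these pointwise lax maps into globally defined maps $\sigma(M)\otimes \sigma(N) \to \sigma(M\otimes N)$ of hypercomplete v-sheaves on $S$, and to verify the higher coherences of a lax symmetric monoidal functor. This is done by enhancing the straightening/unstraightening argument in the proof of \Cref{rslt:construction-of-sigma-solid} to the symmetric monoidal setting: one replaces cocartesian fibrations of $\infty$-categories by cocartesian fibrations of symmetric monoidal $\infty$-operads, so that $\sigma$ is constructed directly as a natural transformation of functors $\vStacks^\opp \to \CAlg(\Pr^L_{\D(\Z_p)})^{\lax}$. Using that the sheafification functor to (hypercomplete) v-sheaves is symmetric monoidal, and that $\sigma(M)$ already lies in hypercomplete v-sheaves by construction, this produces the desired lax symmetric monoidal structure on $\sigma$.

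The main obstacle is bookkeeping the $\infty$-categorical coherences in the symmetric monoidal straightening/unstraightening. This is essentially a standard application of the machinery of \cite{lurie-higher-algebra} (in particular the construction of operadic (co)cartesian fibrations and the universal lax symmetric monoidal structure on $\Hom_{\mathcal R}(\mathbf 1, -)$ realized as an adjoint to $V\mapsto V\otimes \mathbf 1$). If a more elementary route is preferred, one may check everything on sections over strictly totally disconnected perfectoid spaces $S'$, where $\DFF(S', \Z_p)$ admits an explicit equalizer presentation as in the proof of \Cref{rslt:dualizable-object-on-FF-curve-is-compact} and the claim reduces to the lax symmetric monoidality of $\Hom(\mathbf 1, -)$ on the corresponding $\D_{\hat\solid}$-categories, followed by v-descent.
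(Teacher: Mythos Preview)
Your proposal is correct and follows essentially the same approach as the paper: both upgrade the straightening/unstraightening construction of $\sigma$ from \Cref{rslt:construction-of-sigma-solid} to the operadic setting by replacing cocartesian fibrations with cocartesian fibrations of symmetric monoidal $\infty$-operads over $\mathcal C^\opp \times \Comm^\tensor$. The paper spells out the construction somewhat more explicitly (using Day convolution and the concrete maps $\alpha$, $\beta$ analogous to those in the non-monoidal case), whereas you invoke the same machinery more abstractly via the general principle that $\Hom_{\mathcal R}(\mathbf 1,-)$ is lax symmetric monoidal as a right adjoint---but the underlying strategy is identical.
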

\begin{proof}
To show that $\sigma$ is lax symmetric monoidal, we upgrade the construction in \cref{rslt:construction-of-sigma-solid} in order to define $\sigma$ as a natural transformation of functors from $\vStacks^\opp$ to the category of operads. As before, we can reduce to constructing a functor to $\D(\Z_p)$-valued v-presheaves instead of v-sheaves, as even on the level of operads, sheaves embed fully faithfully into presheaves. Let again $\mathcal C = \vStacks$ and let $\mathcal E_\FF^\tensor \to \mathcal C^\opp \times \Comm^\tensor$ be the cocartesian fibration classifying the functor $S \mapsto \D_\FF(S,\Z_p)^\tensor$; here $\Comm^\tensor$ is the commutative operad (given by the category of finite pointed sets). We let $\mathcal E^\tensor := \mathcal E \times \Comm^\tensor$ with $\mathcal E$ as in the proof of \cref{rslt:construction-of-sigma-solid}. Then
\begin{align*}
   \Fun^{\mathcal C^\opp \times \Comm^\tensor}_{\Comm^\tensor}(\mathcal E^\tensor, \D(\Z_p)^\tensor) \to \mathcal C^\opp \times \Comm^\tensor
\end{align*}
is the cocartesian fibration corresponding to the functor $S \mapsto \Fun((\mathcal C_{/S})^\opp, \D(\Z_p))^\tensor$, where the decorations on $\Fun$ are understood in an operadic sense as introduced (in the non-symmetric case) right before \cite[Theorem~11.23]{Heine.2023}. More explicitly, by unraveling the definitions one sees that the fiber of the above functor category over some $S \in \mathcal C$ is given by the Day convolution operad on $\Fun((\mathcal C_{/S})^\opp, \D(\Z_p))$, where the first argument is the generalized operad $(\mathcal C_{/S})^\opp \times \Comm^\tensor$. Similarly to the proof of \cref{rslt:construction-of-sigma-solid} we can use the universal property of the $\Fun$-construction above in order to reduce the construction of $\sigma_\solid$ (with its lax symmetric monoidal structure) to constructing the map 
\begin{align*}
   \mathcal E_\FF^\tensor \times_{\mathcal C^\opp \times \Comm^\tensor} \mathcal E^\tensor \to \D(\Z_p)^\tensor
\end{align*}
over $\Comm^\tensor$. By expanding $\mathcal E^\tensor$ and using the Yoneda embedding for $\D(\Z_p)^\tensor$ (relative over $\Comm^\tensor$) we are reduced to constructing the functor
\begin{align*}
   \D(\Z_p)^{\opp,\tensor} \times_{\Comm^\tensor} \mathcal E_\FF^\tensor \times_{\mathcal C^\opp} \Fun([1], \mathcal C)^\opp &\to \Ani,\\
   (N_\bullet, M_\bullet, [S' \to S]) &\mapsto \prod_i \Hom(N_i \tensor 1, (M_i)_{S'}).
\end{align*}
As in the proof of \cref{rslt:construction-of-sigma-solid}, using the fibration $\mathcal E^{\tensor,\vee} \to \mathcal C^\opp \times \Comm^\tensor$ corresponding to the functor $S \mapsto \D_\FF(S,\Z_p)^{\opp,\tensor}$, we can reduce the construction to the following two functors over $\mathcal C^\opp \times \Comm^\tensor$:
\begin{align*}
   &\alpha\colon \mathcal E_\FF^\tensor \times_{\mathcal C^\opp} \Fun([1], \mathcal C)^\opp \to \mathcal E_\FF^\tensor, & ((M_i)_i, [S' \to S]) &\mapsto ((M_i)_{S'})_i,\\
   &\beta\colon \D(\Z_p)^{\tensor,\opp} \times \Fun([1], \mathcal C)^\opp \to \mathcal E_\FF^{\tensor,\vee}, & ((N_i)_i, [S' \to S]) &\mapsto (N_i \tensor 1_{S'})_i.
\end{align*}
These can be constructed as in the proof of \cref{rslt:construction-of-sigma-solid}.
\end{proof}

The next result concerns the compatibility of $\sigma$ with pushforwards.

\begin{lemma} \label{rslt:sigma-ast-compatible-with-pushforward}
Let $f\colon X\to S$ be a $p$-bounded morphism of small v-stacks, and let $f_{v,\ast}\colon \D(X_v,\Z_p)\to \D(S_v,\Z_p)$ be the pushforward. Then the natural transformation
\[
    \sigma f_\ast\to f_{v,\ast}\sigma
\]
of functors $\DFF(X, \Z_p) \to \D(S_v,\Z_p)$ is an isomorphism.
\end{lemma}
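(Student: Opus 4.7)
The plan is to verify the statement directly on sections over arbitrary small v-stacks $g\colon S'\to S$, using the base-change isomorphism for $p$-bounded pushforwards (\cref{rslt:D-0-infty-pushforward-base-change}) as essentially the only input.

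First, unwinding \cref{rslt:construction-of-sigma-solid} we have, for $N\in\DFF(S,\Z_p)$ and any $g\colon S'\to S$, a natural identification $\Gamma(S'_v,\sigma(N)) = \Hom_{\DFF(S',\Z_p)}(1, g^\ast N)$, compatible with $\ast$-pullback. Consequently, for $M\in\DFF(X,\Z_p)$,
\[
  \Gamma\bigl(S'_v,\,\sigma(f_\ast M)\bigr) = \Hom_{\DFF(S',\Z_p)}\bigl(1,\,g^\ast f_\ast M\bigr),
\]
while by definition of the v-pushforward,
\[
  \Gamma\bigl(S'_v,\,f_{v,\ast}\sigma(M)\bigr) = \Gamma\bigl(X'_v,\,\sigma(M)|_{X'}\bigr) = \Hom_{\DFF(X',\Z_p)}\bigl(1,\,g'^{\,\ast}M\bigr),
\]
where $f'\colon X'\to S'$ and $g'\colon X'\to X$ come from the cartesian square $X'=X\times_S S'$. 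So it suffices to produce a natural isomorphism between these two $\Hom$-groups identified with the map coming from the natural transformation $\sigma f_\ast\to f_{v,\ast}\sigma$.

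Second, since $f$ is $p$-bounded, the base-change isomorphism from \cref{rslt:D-0-infty-pushforward-base-change} gives $g^\ast f_\ast M \isoto f'_\ast g'^{\,\ast} M$ in $\DFF(S',\Z_p)$. Combined with the $(f'^{\,\ast}, f'_\ast)$-adjunction and the fact that $f'^{\,\ast}1 = 1$, this produces the desired equivalence
\[
  \Hom_{\DFF(S',\Z_p)}(1,\,g^\ast f_\ast M) \;\isoto\; \Hom_{\DFF(S',\Z_p)}(1,\,f'_\ast g'^{\,\ast}M) \;\isoto\; \Hom_{\DFF(X',\Z_p)}(1,\,g'^{\,\ast}M).
\]

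The main (and essentially only) obstacle is verifying that this chain of identifications is literally the map induced by the natural transformation $\sigma f_\ast\to f_{v,\ast}\sigma$, rather than some other a priori different isomorphism. The natural transformation is constructed by adjunction from the compatibility of $\sigma$ with $\ast$-pullback (built into its definition in \cref{rslt:construction-of-sigma-solid}) together with the counit $f^\ast f_\ast\to\id$. Tracing through the straightening/unstraightening construction used to define $\sigma$, one sees that the map on sections over $S'$ is precisely the composition of the pullback map $\Hom(1, g^\ast f_\ast M) \to \Hom(1, g^\ast f_\ast M|_{X'}) = \Hom(1, f'_\ast g'^{\,\ast}M|_{X'})$ (using functoriality of $\sigma$) with the adjunction morphism $\Hom(1, f'_\ast g'^{\,\ast}M) \isoto \Hom(1, g'^{\,\ast}M)$; and the composite agrees with the base-change identification above. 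Since both ingredients are isomorphisms, we conclude.
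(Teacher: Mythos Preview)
Your proof is correct and follows essentially the same approach as the paper: compute $\Gamma(S',-)$ of both sides via the defining formula for $\sigma$, then apply $p$-bounded base-change (\cref{rslt:D-0-infty-pushforward-base-change}) and the $(f'^\ast,f'_\ast)$-adjunction. If anything, you are slightly more careful than the paper in explicitly worrying about whether the resulting identification coincides with the given natural transformation.
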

\begin{proof}
Let $M\in \DFF(X,\Z_p)$ and $g\colon S'\to S$ a morphism. Let $X':=X\times_S S'$ with projections $g'\colon X'\to X, f'\colon X'\to S'$. Using qcqs base-change (see \cref{rslt:D-0-infty-pushforward-base-change}) we calculate
\begin{align*}
    \Gamma(S',\sigma(f_\ast(M)))
    &= \Hom_{\DFF(S',\Z_p)}(1, g^\ast f_\ast M) \\
    &= \Hom_{\DFF(S',\Z_p)}(1, f'_\ast g^{\prime,\ast}M)\\
    &= \Hom_{\DFF(X',\Z_p)}(1, g^{\prime,\ast}M) \\
    &= \Gamma(X',\sigma(M))\\
    &= \Gamma(S',f_{v,\ast}(\sigma (M))).
\end{align*}
This shows the claim.
\end{proof}

\begin{remark}
\label{remark-about-solid-version}
One a priori drawback of $\sigma$ as defined above is the lack of a $\D_\solid(\Z_p)$-linear structure on its target making it $\D_\solid(\Z_p)$-linear. We will however see in the next subsection a way to circumvent this problem when restricting to nuclear objects on the source. Another way, which was our original approach and which would not entail such a restriction, would be to enrich $\sigma$ into a natural transformation of functors 
\[
    \sigma_\solid=\sigma_{\solid,\Z_p}\colon \DFF(-,\Z_p) \to \D((-)_v,\D_\solid(\Z_p))
\]
of functors $\vStacks^\opp \to \Cat$, uniquely characterized by the fact that for any $S\in \vStacks$ and any profinite set $T$ we have (naturally in $S,T,M$)
\[
    \Hom_{\D_\solid(\Z_p)}(\Z_{p,\solid}[T],\Gamma(S_v,\sigma_\solid(M))) = \Hom_{\DFF(S,\Z_p)}(\Z_{p,\solid}[T]\otimes 1,M)
\]
for any $M\in \DFF(S,\Z_p)$. (Then $\sigma$ would be recovered as the composition of $\sigma_\solid$ with $\Hom_{\D_\solid(\Z_p)}(\Z_p,-)$.)

All the properties of $\sigma$ discussed above hold for $\sigma_\solid$. Since we do not need $\sigma_\solid$ in the rest of the paper, we do not discuss it further, but believe it could be useful for other purposes.
\end{remark}

\subsection{Relation to v-sheaves: the \texorpdfstring{$\Q_p$}{Q\_p}-case}
\label{sec:q-p-case}

In order to relate duality on the Fargues--Fontaine curve with duality for pro-\'etale cohomology, one needs to compare suitable internal duals via $\sigma_{\Q_p}$. More precisely, we aim at proving the following result.

\begin{theorem} \label{rslt:sigma-Qp-comparison-of-internal-hom}
Let $S$ be a small v-stack, and let $M,N\in \DFF(S,\Q_p)$. Assume that $M \isom \colim_{n \in \mathbb{N}} V_n \otimes P_n$ for basic nuclear objects $V_n$ in $\D_\solid(\Q_p)$ and dualizable objects $P_n$, and that $N$ is dualizable. Then the natural morphism
\[
    \sigma_{\Q_p}\IHom_{\DFF(S,\Q_p)}(M,N) \isoto \IHom_{\D(S_v,\Q_p)}(\sigma_{\Q_p}(M),\sigma_{\Q_p}(N))
\]
is an isomorphism.
\end{theorem}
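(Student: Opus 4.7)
The natural comparison map is constructed from the lax symmetric monoidal structure of $\sigma_{\Q_p}$ (\cref{rslt:sigma-is-lax-symmetric-monoidal}): by adjunction, the composite $\sigma_{\Q_p}(M) \otimes \sigma_{\Q_p}\IHom(M,N) \to \sigma_{\Q_p}(M \otimes \IHom(M,N)) \to \sigma_{\Q_p}(N)$ produces a map $\sigma_{\Q_p}\IHom(M,N) \to \IHom(\sigma_{\Q_p}M, \sigma_{\Q_p}N)$. A direct calculation using $\sigma_{\Q_p}(X)(T) = \Hom_{\DFF(T,\Q_p)}(1,X|_T)$ together with the identification $\Hom_{\DFF(T,\Q_p)}(1,1) = C(T,\Q_p)$ (obtained in the proof of \cref{rslt:construction-of-RH-functors}) shows that $\sigma_{\Q_p}(1_{\DFF})$ is the constant sheaf $\Q_p$, i.e.\ the unit of $\D(S_v, \Q_p)$. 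From lax monoidality and preservation of the unit, $\sigma_{\Q_p}$ automatically preserves dualizable objects and is strong monoidal on any pair when one factor is dualizable. Applied to the identity $\IHom(M,N) = \IHom(M,1) \otimes N$ (valid since $N$ is dualizable) and its counterpart on the target side, the claim reduces to the case $N = 1$, namely verifying $\sigma_{\Q_p}\IHom(M,1) \isoto \IHom(\sigma_{\Q_p}M, 1)$.

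Next, I compute $\sigma_{\Q_p}(M)$ explicitly. Since $\sigma_{\Q_p}$ commutes with colimits (Remark~\ref{remark-rslt:sigma-commutes-countable-limits}), $\sigma_{\Q_p}(M) \cong \colim_n \sigma_{\Q_p}(V_n \otimes P_n)$. For each $n$, write $V_n = \colim_\alpha K_{n,\alpha}$ as a sequential colimit of compact (hence perfect) solid $\Q_p$-modules along trace-class transitions, which is possible by the definition of basic nuclearity. For fixed $n,\alpha$ the object $K_{n,\alpha} \otimes P_n$ is dualizable, so strong monoidality on dualizable objects yields $\sigma_{\Q_p}(K_{n,\alpha} \otimes P_n) = K_{n,\alpha} \otimes \sigma_{\Q_p}(P_n)$, where $K_{n,\alpha}$ is interpreted as its associated constant v-sheaf. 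Commuting past the two colimits gives $\sigma_{\Q_p}(M) \cong \colim_n V_n \otimes \sigma_{\Q_p}(P_n)$, with $V_n$ now the constant v-sheaf of value $V_n$.

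With this in hand, I evaluate both sides at an arbitrary test v-stack $T \to S$ (by construction, it suffices to verify the isomorphism on sections). The left-hand side becomes
\[
    \sigma_{\Q_p}\IHom(M,1)(T) = \Hom_{\DFF(T,\Q_p)}(M|_T, 1) = \lim_n \Hom_{\DFF(T,\Q_p)}(V_n, P_n^\vee|_T),
\]
using the colimit presentation of $M|_T$ and dualizability of $P_n|_T$. The right-hand side becomes
\[
    \IHom(\sigma_{\Q_p}M,1)(T) = \lim_n \Hom_{\D(T_v,\Q_p)}(V_n, \sigma_{\Q_p}(P_n^\vee)|_T),
\]
using dualizability of $\sigma_{\Q_p}(P_n)|_T$ and the identification $\sigma_{\Q_p}(P_n)^\vee = \sigma_{\Q_p}(P_n^\vee)$ (coming from strong monoidality on dualizable objects). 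Thus the problem reduces, for each $n$, to matching $\Hom_{\DFF(T,\Q_p)}(V_n, Q)$ with $\Hom_{\D(T_v,\Q_p)}(V_n, \sigma_{\Q_p}(Q))$ for the dualizable object $Q := P_n^\vee|_T$. Writing $V_n = \colim_\alpha K_{n,\alpha}$, both sides become inverse limits over $\alpha$, and for each compact $K_{n,\alpha}$ one has $\Hom_{\DFF(T,\Q_p)}(K_{n,\alpha}, Q) = K_{n,\alpha}^\vee \otimes \sigma_{\Q_p}(Q)(T) = \Hom_{\D(T_v,\Q_p)}(K_{n,\alpha}, \sigma_{\Q_p}(Q))$ by strong monoidality on dualizable arguments.

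The main obstacle is the interaction of $\sigma_{\Q_p}$ with the two nested limits appearing on the left-hand side: $\sigma_{\Q_p}$ does not commute with limits in general, so the identification of $\sigma_{\Q_p}\IHom(M,1)$ with $\lim_n \Hom_{\DFF(T,\Q_p)}(V_n, P_n^\vee|_T)$ at the level of sections has to be controlled uniformly in $T$. Here it is essential that the transitions $K_{n,\alpha} \to K_{n,\alpha+1}$ are trace-class and that the indexing is sequential: these force $\lim^1$-vanishing and ensure that the natural maps from the enriched $\D_\solid(\Q_p)$-Hom on the source side into the ordinary $\D(\Q_p)$-Hom on the target side are isomorphisms on each level, hence pass to the limit. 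The core of the proof is therefore a careful bookkeeping of the interplay between basic nuclearity in $\D_\solid(\Q_p)$, the strong monoidal behaviour of $\sigma_{\Q_p}$ on dualizable objects, and the (solid versus $v$-sheaf) presentations of the relevant mapping spaces.
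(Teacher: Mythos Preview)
There are two genuine gaps.

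First, in your step~3 you write $V_n = \colim_\alpha K_{n,\alpha}$ with $K_{n,\alpha}$ compact in $\D_\solid(\Q_p)$ and then assert ``$K_{n,\alpha}\otimes P_n$ is dualizable.'' This is false: compact objects in $\D_\solid(\Q_p)$ are retracts of finite complexes of profinite modules $\Q_{p,\solid}[T]\cong\prod_T\Q_p$, which are almost never dualizable. Consequently your identification $\sigma_{\Q_p}(K_{n,\alpha}\otimes P_n)=K_{n,\alpha}\otimes\sigma_{\Q_p}(P_n)$ via ``strong monoidality on dualizable objects'' is unjustified, and so is the later identification $\Hom_{\DFF(T,\Q_p)}(K_{n,\alpha},Q)=K_{n,\alpha}^\vee\otimes\sigma_{\Q_p}(Q)(T)$, where you are silently mixing the solid and naive tensor products on the v-site. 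The paper handles the $V\otimes P$ case quite differently: it introduces \emph{big solid sheaves} on $S_{\Qproet}$ (\cref{rslt:basic-properties-of-big-solid-sheaves}), proves that $\sigma_{\Q_p}$ on nuclear objects factors through this category and is $\D_\nuc(\Q_p)$-linear there (\cref{sec:prel-subs-sigma-maps-nuclear-objects-to-big-nuclear-sheaves}, \cref{sec:prel-subs-linearity-of-sigma-to-solid}), and then proves two separate internal-Hom formulas (\cref{sec:prel-subs-internal-hom-formula-for-big-nuclear-sheaves}, \cref{sec:prel-another-internal-hom-formula-for-big-nuclear-sheaves}) that allow one to swap $V$ from inside $\sigma_{\Q_p}$ to outside. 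The dualizable case $M=P$ is handled by a separate reference (\cite[Corollary~3.10]{anschutz2021fourier}), which ultimately relies on Breen's resolution---an ingredient you do not mention.

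Second, the passage to the colimit $M=\colim_n M_n$ is the crux, and you acknowledge this but do not prove it. Your appeal to ``trace-class transitions force $\lim^1$-vanishing'' is not a proof: one needs to know that $\sigma_{\Q_p}$ commutes with countable limits of \emph{nuclear} objects. The paper establishes this (\cref{rslt:sigma-commutes-countable-limits}) via nontrivial solid functional analysis specific to $\Q_p$: nuclear $\Q_p$-modules are closed under countable products (\cref{rslt:D-nuc-Q-p-stable-under-countable-products}), solid base change between rings on the curve has finite Tor dimension, and hence preserves countable limits of nuclears (\cref{rslt:solid-base-change-preserves-countable-limits-of-nuclears}). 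None of this follows from sequential indexing or $\lim^1$-type arguments alone; indeed the analogous statements fail for $\Z_p$- or $\F_p$-coefficients.
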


This statement will only be proved at the end of this subsection. The case where $M$ is dualizable was, as we will recall below, already proved in \cite{anschutz2021fourier}, essentially as a consequence of Breen's computation of the (derived) endomorphisms of the structure sheaf on the (big) perfect site of $\F_p$. To obtain the theorem, one would like to first pass from the case where $M$ is dualizable to the case where $M$ is the tensor product of a dualizable object by a basic nuclear $\Q_p$-vector space, and then pass to colimits of such, but we need to overcome two issues:
\begin{itemize}
\item $\sigma_{\Q_p}$ is not symmetric monoidal if we endow $\D(S_v,\Q_p)$ with the ``naive'' tensor product of sheaves of $\Q_p$-modules on the v-site: on the source, the tensor product is a tensor product in solid quasi-coherent sheaves on the Fargues-Fontaine curve, but the one on the target is rather an ``algebraic''/``condensed'' tensor product. This is a problem when trying to pass from the case where $M$ is dualizable to the case where $M$ is the tensor product of a dualizable object by a basic nuclear $\Q_p$-vector space.  
\item $\sigma_{\Q_p}$ does not commute with arbitrary limits. Since by assumption, $M$ is presented as a colimit and $\sigma_{\Q_p}$ commutes with colimits, the right-hand side in \Cref{rslt:sigma-Qp-comparison-of-internal-hom} is naturally presented as a limit, but it is a priori not clear how to write the left-hand side in the same way due to the lack of commutation of $\sigma_{\Q_p}$ with limits.
\end{itemize}
Before getting to the proof of \Cref{rslt:sigma-Qp-comparison-of-internal-hom}, we therefore develop the necessary preliminary considerations to resolve these issues. Roughly:
\begin{itemize}
\item We introduce, based on the notion of solid sheaves introduced in \cite[\S VII]{fargues-scholze-geometrization}, a category of \textit{big solid sheaves} of $\Z_p$-modules or $\Q_p$-modules on the \textit{big} quasi-pro-\'etale site of any spatial diamond $S$ with a solid tensor product, into which the restriction of $\sigma_{\Q_p}$ to nuclear objects in $\DFF(S,\Q_p)$ naturally factors. This restriction of $\sigma_{\Q_p}$ commutes then with the tensor product with a nuclear $\Q_p$-vector space (understood as the solid tensor product on the target); in other words, this provides a $\D_\nuc(\Q_p)$-linear replacement of $\sigma_{\Q_p}$, rectifying the inconvenience mentioned in \cref{remark-about-solid-version}.
\item Even if $\sigma_{\Q_p}$ does not commute with arbitrary limits, we show that it commutes with countable limits of nuclear objects.
\end{itemize}

We start by the discussion of (big) solid sheaves. Let $S$ be a spatial diamond. We denote by $S_\qproet$ the small quasi-pro-\'etale site of $S$, and by $S_\Qproet$ the big quasi-pro-\'etale of all spatial diamonds over $S$ with the quasi-pro-\'etale topology. The category $\D(S_\qproet,\Z_p)$ admits the full subcategory $\D_\solid(S,\Z_p)$ of solid sheaves as defined in \cite[Definition VII.1.10]{fargues-scholze-geometrization} (and the paragraph following it), or \cite[Definition 3.14]{anschuetz_mann_descent_for_solid_on_perfectoids}.

We recall some properties of $\D_\solid(S,\Z_p)$.

\begin{lemma}
Let $f\colon S'\to S$ be a morphism of spatial diamonds.
\begin{lemenum}
    \item \label{rslt:solid-Zp-sheaves-stable-under-lim-colim-trunc} The subcategory $\D_\solid(S,\Z_p)\subseteq \D(S_\qproet,\Z_p)$ is stable under all colimits, all limits, all truncations and contains all \'etale sheaves, and $f^\ast_\qproet(\D_\solid(S,\Z_p))\subseteq \D_\solid(S',\Z_p)$.

    \item \label{rslt:solid-Zp-sheaves-stable-under-pushforward} The pushforward $f_{\qproet,\ast}\colon \D(S'_\qproet,\Z_p)\to \D(S_\qproet,\Z_p)$ restricts to a functor $$f_\ast\colon \D_\solid(S',\Z_p)\to \D_\solid(S,\Z_p).$$
\end{lemenum}
\end{lemma}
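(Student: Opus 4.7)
The plan is to argue directly from the definition of solid $\Z_p$-sheaves as recorded in \cite[Definition~3.14]{anschuetz_mann_descent_for_solid_on_perfectoids} (following \cite[\S VII.1]{fargues-scholze-geometrization}), which characterizes $\D_\solid(S,\Z_p) \subseteq \D(S_\qproet,\Z_p)$ by the requirement that the restrictions to strictly totally disconnected $T \to S$ take values in an appropriate subcategory of solid modules over $C(\lvert T\rvert,\Z_p)$, compatibly with restriction along quasi-pro-\'etale maps. I will treat the two parts separately but use part (i) in the proof of part (ii).

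For part (i), the stability properties will be verified one at a time. The defining solidity condition can be rephrased as asking that a specified class of arrows in $\D(S_\qproet,\Z_p)$ (controlled by the profinite structure of strictly totally disconnected spaces over $S$) become equivalences after applying $\Hom_{\D(S_\qproet,\Z_p)}(-,M)$. Such a condition is manifestly preserved by limits, colimits and truncations formed in the ambient derived category $\D(S_\qproet,\Z_p)$. The inclusion of \'etale sheaves is immediate: if $M$ is pulled back from $S_\et$, then on a strictly totally disconnected $T$ the value $M(T)$ depends only on the profinite set $\pi_0(T)$ and therefore satisfies the required solidity condition tautologically. For pullback stability, given $f\colon S'\to S$, any strictly totally disconnected $T'\to S'$ may be viewed as a map to $S$, and the solidity condition for $f^*_\qproet M$ at $T'$ is identified with the solidity condition for $M$ at $T'/S$, proving $f^*_\qproet(\D_\solid(S,\Z_p)) \subseteq \D_\solid(S',\Z_p)$.

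For part (ii), let $M \in \D_\solid(S',\Z_p)$ and test solidity of $f_{\qproet,\ast}M$ on an arbitrary strictly totally disconnected $T \to S$. By definition of pushforward, $(f_{\qproet,\ast}M)(T) = M(T\times_S S')$. Choose a quasi-pro-\'etale hypercover $T'_\bullet \to T\times_S S'$ by strictly totally disconnected spaces over $S'$, which exists by standard features of the quasi-pro-\'etale topology on spatial diamonds. Then $M(T\times_S S') = \varprojlim_{n\in\Delta} M(T'_n)$, and each $M(T'_n)$ is solid at $T'_n/S'$ by assumption. The required solidity of $(f_{\qproet,\ast}M)(T)$ now follows from the closure of solid $\Z_p$-sheaves under limits, already proved in part~(i), together with the observation that the relevant transition maps in the \v{C}ech nerve are compatible with the solid structure by naturality.

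The main obstacle will be the reduction in part~(ii), where one must check that the solid module structure on $M(T\times_S S')$ produced by the \v{C}ech descent is precisely the one demanded by the definition of $\D_\solid(S,\Z_p)$ at $T$, rather than merely an abstract limit in condensed $\Z_p$-modules. In particular this uses that the limit can be computed inside $\D_\solid(-,\Z_p)$ on the source site (via part~(i)) and transported through the formula $(f_{\qproet,\ast}M)(T) = M(T\times_S S')$ in a way compatible with pullbacks along further strictly totally disconnected spaces over $T$; everything else in the argument is bookkeeping of profinite sections and their functoriality.
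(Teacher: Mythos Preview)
Your treatment of part (i) is close to what the paper does (the paper simply cites \cite[Proposition~3.15]{anschuetz_mann_descent_for_solid_on_perfectoids} and \cite[Proposition~VII.1.8]{fargues-scholze-geometrization}), but the word ``manifestly'' for colimits is doing too much work. A condition of the form ``$\Hom(A,M)\to\Hom(B,M)$ is an isomorphism'' is stable under limits in $M$ for free, but stability under colimits requires knowing that the relevant test objects (or rather the cofibers $\Z_p[U]\to\widehat{\Z_p[U]}$) are compact, which is a nontrivial input from the solid theory. This is exactly the content hidden in the cited propositions, so your argument is not wrong, just under-justified at that one point.

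Part (ii) is where your approach diverges from the paper's and where there is a real gap. You correctly identify the obstacle in your final paragraph, but it is not bookkeeping: the solidity condition for $f_{\qproet,\ast}M$ on $S$ is a condition about how the sheaf interacts with certain test objects living over strictly totally disconnected $T$'s over $S$, and your limit $\varprojlim_n M(T'_n)$ exhibits solidity relative to the $T'_n$'s over $S'$, not relative to $T$ over $S$. Transporting one to the other would require, at minimum, knowing that pushforward along maps between strictly totally disconnected spaces preserves solidity and that qcqs base change holds for $f_{\qproet,\ast}$ on the quasi-pro-\'etale site --- neither of which you have established, and the first of which is essentially the special case of the claim you are trying to prove.

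The paper's argument bypasses this entirely. Following \cite[Proposition~VII.2.1]{fargues-scholze-geometrization}, one uses part (i) (stability under limits, colimits, truncations, and the fact that \'etale sheaves are solid) to reduce to the case where $\mathcal{F}\in\D^+(S'_\qproet,\Z_p)$ is pulled back from $S'_\et$ and is killed by some $p^n$. For such $\mathcal{F}$, \cite[Corollary~16.7]{etale-cohomology-of-diamonds} says that $f_{\qproet,\ast}\mathcal{F}$ is again pulled back from the \'etale site of $S$, hence solid. The key move is the reduction to a generating class on which the claim is visible, rather than an attempt to verify the defining condition directly.
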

\begin{proof}
  For the first assertion see \cite[Proposition 3.15]{anschuetz_mann_descent_for_solid_on_perfectoids} and \cite[Proposition VII.1.8]{fargues-scholze-geometrization}. As in \cite[Proposition VII.2.1]{fargues-scholze-geometrization} one can reduce the second assertion to the assertion that $f_{\qproet,\ast}\mathcal{F}\in \D_\solid(S,\Z_p)$ if $\mathcal{F}\in \D^+(S'_\qproet,\Z_p)$ is pulled back from the small \'etale site of $S'$ and killed by some $p^n$, $n\geq 0$. Then \cite[Corollary 16.7]{etale-cohomology-of-diamonds} implies that $f_{\qproet,\ast}\mathcal{F}$ is solid (even pulled back from the \'etale site) as desired. 
\end{proof}

We want to define a version of solid sheaves in $\D(S_\Qproet,\Z_p)$, i.e., on the big quasi-pro-\'etale site over $S$. We note that this full subcategory will be different from the pullback of solid sheaves along $S_\Qproet\to S_\qproet$. Given a spatial diamond $S'\in S_\Qproet$, we let
\[
  \varepsilon_{S'}\colon S_\Qproet/S'\to S'_\qproet
\]
be the natural projection of sites.

\begin{definition}
We call $\mathcal{F}\in \D(S_\Qproet,\Z_p)$ a \textit{big solid sheaf} if for all spatial diamonds $S'\in S_\Qproet$, the sheaf $\varepsilon_{S',\ast}(\mathcal{F}_{|S'})\in \D(S'_\qproet,\Z_p)$ lies in $\D_\solid(S',\Z_p)$.
Given a solid $\Z_p$-algebra $\Lambda$, e.g., $\Lambda=\Q_p$, we let $\D_\solid(S_\Qproet, \Lambda)\subseteq \D(S_\Qproet, \Lambda)$ be the full subcategory of sheaves whose underlying $\Z_p$-sheaf is big solid.
\end{definition}

Here $\D(S_\Qproet,\Z_p)$ denotes the usual derived category of static $\Z_p$-modules on $S_\Qproet$. As $S_\Qproet$ is replete, countable products are exact on static $\Z_p$-modules on $S_\Qproet$, and as a consequence $\D(S_\Qproet,\Z_p)$ is left-complete.

\begin{lemma} \label{rslt:basic-properties-of-big-solid-sheaves}
Let $f\colon S'\to S$ be a morphism of spatial diamonds and let $\Lambda$ be a solid $\Z_p$-algebra.
\begin{lemenum}
    \item The subcategory $\D_\solid(S_\Qproet, \Lambda)\subseteq \D(S_\Qproet, \Lambda)$ is stable under all limits, all colimits and all truncations. In particular, the inclusion $\D_\solid(S_\Qproet, \Lambda)\to \D(S_\Qproet,\Lambda)$ admits a left adjoint $M\mapsto M_\solid$.

    \item \label{rslt:pullback-of-big-solid-sheaves} The pullback $f^\ast_{\Qproet}$ sends $\D_\solid(S_\Qproet,\Lambda)$ to $\D_\solid(S'_\Qproet,\Lambda)$. Moreover, if $f$ is surjective and $M \in \D(S_\Qproet,\Lambda)$ satisfies $f_{\Qproet}^\ast M \in \D_\solid(S'_\Qproet,\Lambda)$, then $M \in \D_\solid(S_\Qproet,\Lambda)$.
    
    \item The pushforward $f_{\Qproet,\ast}\colon \D(S'_\Qproet,\Lambda)\to \D(S_\Qproet,\Lambda)$ preserves big solid sheaves.
    
    \item If $M\in \D(S_\Qproet,\Lambda)$ and $N\in \D_\solid(S_\Qproet,\Lambda)$, then $$\IHom_{\D(S_\Qproet,\Lambda)}(M,N)\in \D_\solid(S_\Qproet,\Lambda).$$ In particular, $\D_\solid(S_\Qproet,\Lambda)$ admits a unique symmetric monoidal structure $-\otimes_{\Lambda}^\solid -$ making the left adjoint $(-)_\solid$ and the pullback $f_{\Qproet}^\ast$ symmetric monoidal. 
\end{lemenum}
\end{lemma}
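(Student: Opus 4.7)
The plan is to characterize membership in $\D_\solid(S_\Qproet,\Lambda)$ as a local condition on the small quasi-pro-\'etale sites of spatial diamonds $S' \in S_\Qproet$, and to deduce each assertion from the corresponding property of small solid sheaves (\cref{rslt:solid-Zp-sheaves-stable-under-lim-colim-trunc}, \cref{rslt:solid-Zp-sheaves-stable-under-pushforward}) together with the following two compatibilities: for any morphism $g\colon T'\to T$ of spatial diamonds in $S_\Qproet$ there are natural equivalences
\begin{equation*}
    \varepsilon_{T,*}\comp g^*_\Qproet \isom g^*_\qproet\comp \varepsilon_{T',*}, \qquad \varepsilon_{T,*}\comp g_{\Qproet,*}\isom g_{\qproet,*}\comp\varepsilon_{T',*},
\end{equation*}
both of which hold because $(-)_\qproet$ sits inside $(-)_\Qproet$ as a full subsite whose covers and morphisms are preserved by the relevant base change. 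These are used throughout without further mention.

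For (i), stability under limits and truncations is immediate: the functors $\varepsilon_{S',*}$ and the restriction $j_{S'}^*$ both preserve limits (they are right adjoints, respectively admit right adjoints) and are $t$-exact, so the claim reduces to the same property of small solid sheaves by \cref{rslt:solid-Zp-sheaves-stable-under-lim-colim-trunc}. The existence of the left adjoint $(-)_\solid$ then follows from the adjoint functor theorem for presentable stable categories. Stability under colimits is reduced by the same local check to \cref{rslt:solid-Zp-sheaves-stable-under-lim-colim-trunc}, using that $\varepsilon_{S',*}\comp j_{S'}^*$ commutes with colimits since this can be tested on a basis of qcqs objects of $S'_\qproet$, where sheafification is unnecessary and restriction of presheaves is exact.

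For (ii), the first assertion is direct: if $T \to S'$ is spatial, then $\varepsilon_{T,*}((f^*_\Qproet M)|_T) = \varepsilon_{T,*}(M|_T)$ is solid by hypothesis on $M$, since the small qproet site of $T$ is the same whether $T$ is viewed over $S'$ or over $S$. For the descent half, fix a spatial $T\to S$ and set $T' := T\times_S S'\to T$; this is a surjection of spatial diamonds and hence a v-cover. By the pullback compatibility above, the pullback of $\varepsilon_{T,*}(M|_T)$ to $T'_\qproet$ agrees with $\varepsilon_{T',*}((f^*_\Qproet M)|_{T'})$, which is solid. Solidness on $T_\qproet$ then follows from v-descent of small solid sheaves (\cite[Proposition VII.1.8]{fargues-scholze-geometrization}). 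For (iii), given spatial $T\to S$ and $g\colon T'\to T$ as before, the pushforward compatibility gives $\varepsilon_{T,*}((f_{\Qproet,*}M)|_T) \isom g_{\qproet,*}\varepsilon_{T',*}(M|_{T'})$, which is solid by \cref{rslt:solid-Zp-sheaves-stable-under-pushforward}.

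For (iv), observe that $\D(S_\Qproet,\Lambda)$ is generated under colimits by the free sheaves $\Lambda[h_{S'}]$ on the Yoneda sheaves $h_{S'}$ for spatial $S'\to S$, with $\IHom(\Lambda[h_{S'}], N) \isom j_{S'*}(N|_{S'})$. By (ii) and (iii) the latter is big solid whenever $N$ is; the general case follows since $\IHom(-, N)$ turns colimits into limits and $\D_\solid(S_\Qproet,\Lambda)$ is closed under limits by (i). Having this, $(-)_\solid$ is a smashing localization: one defines $M \otimes^\solid_\Lambda N := (M\otimes_\Lambda N)_\solid$ and verifies the universal property through the adjunction $\Hom(M\otimes_\Lambda N, P) \isom \Hom(M, \IHom(N, P))$ for $P$ solid. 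This produces the unique symmetric monoidal structure on $\D_\solid(S_\Qproet,\Lambda)$ making both $(-)_\solid$ and all pullbacks $f^*_\Qproet$ symmetric monoidal.

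The main technical obstacle will be the rigorous verification of the two compatibilities for $\varepsilon_{(-),*}$ displayed above; while intuitively clear, they rest on a careful site-theoretic check that quasi-pro-\'etale morphisms and covers of spatial diamonds stay in the small subsite after base change, and that $\varepsilon_{(-),*}$ is genuinely the restriction-of-sheaves functor along this subsite inclusion.
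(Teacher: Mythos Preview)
Your approach coincides with the paper's: each item is reduced to the corresponding property of the small solid categories via the restriction functors $\varepsilon_{(-),*}$, and parts (i), (iii), (iv) are argued exactly as in the paper (the paper writes the pushforward identity you use for (iii) as $\varepsilon_{S'',*}\comp f_{\Qproet,*}=g_{\qproet,*}\comp\varepsilon_{S''\times_S S',*}$).

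There is, however, a real problem with your first displayed compatibility. First, the subscripts are swapped (as written, neither side composes); what you actually use in (ii) is $g^*_\qproet\comp\varepsilon_{T,*}\isom\varepsilon_{T',*}\comp g^*_\Qproet$. But this is \emph{not} an isomorphism in general---only a natural transformation, namely the mate of the valid identity $\varepsilon^*_{T'}\comp g^*_\qproet\isom g^*_\Qproet\comp\varepsilon^*_T$ (which is what gives the pushforward compatibility upon passing to right adjoints). For an explicit failure, take $T=\Spa(C)$ and $T'=\Spa(C')$ with $C\subsetneq C'$ an extension of algebraically closed perfectoid fields such that $g\colon T'\to T$ is not quasi-pro-\'etale, and let $N=\Lambda[h_{T'}]\in\D(T_\Qproet,\Lambda)$. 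Then $\varepsilon_{T,*}N=0$, since no object of $T_\qproet$ (all of whose residue fields are $C$) admits a $T$-map to $T'$; hence $g^*_\qproet\varepsilon_{T,*}N=0$. On the other hand $(\varepsilon_{T',*}g^*_\Qproet N)(T')=N(T')$ contains the class of $\id_{T'}$ and is nonzero. So your deduction of the descent half of (ii) from \cite[Proposition~VII.1.8]{fargues-scholze-geometrization} does not go through as written. The paper's own proof is no more explicit here---it simply cites VII.1.8 without spelling out the reduction---so you are not omitting a step that the paper supplies; but the equivalence you isolate as the ``main technical obstacle'' is not just delicate to verify, it is false.
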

\begin{proof}
The first assertion follows from \cref{rslt:solid-Zp-sheaves-stable-under-lim-colim-trunc} and the fact that for any spatial diamond $S'$ over $S$ the restriction functor along $S_{\Qproet/S'} \to S_\Qproet$ and the pushforward along $\varepsilon_{S'}\colon S_{\Qproet/S'}\to S'_\qproet$ commute with limits, colimits and truncations (we note that the functor $\varepsilon_{S',*}$ is t-exact). For the second assertion, preservation of big solid sheaves under pullback is clear and the second part follows from the corresponding statement for solid sheaves (\cite[Proposition VII.1.8]{fargues-scholze-geometrization}). Preservation under pushforward follows from \cref{rslt:solid-Zp-sheaves-stable-under-pushforward} because $\varepsilon_{S'',*} \comp f_{\Qproet,\ast} = g_{\qproet,\ast} \comp \varepsilon_{S''\times_S S',*}$ if $S''$ is a spatial diamond over $S$ with base change $g\colon S''\times_S S'\to S''$ of $f$.
Given $M, N$ as in the fourth assertion, we may write $M$ as a colimit of $\Lambda[U]$ with $h\colon U\to S$ a morphism from a spatial diamond. By stability of $\D_\solid(S_\Qproet,\Lambda)$ under limits, we may assume that $M=\Lambda[U]$. Then $\IHom_{\D(S_\Qproet,\Lambda)}(M,N)\cong h_{\Qproet,\ast}(N_{|U_\Qproet})$ is solid by stability of big solid sheaves under pullback and pushforward.
The existence of the symmetric monoidal structure making $(-)_\solid$ symmetric monoidal is formal, cf.\ \cite[Proposition VII.1.14]{fargues-scholze-geometrization}. It is clear that the pullback $f_{\Qproet}^\ast$ is naturally symmetric monoidal.
\end{proof}

\begin{lemma} \label{sec:prel-subs-pullback-and-solidification}
Let $S$ be a spatial diamond. Then $\varepsilon^\ast_S\colon \D(S_\qproet,\Z_p)\to \D(S_\Qproet,\Z_p)$ maps solid sheaves to big solid sheaves. Moreover, $\varepsilon^\ast_S$ commutes with solidification (i.e. the left adjoint $(-)_\solid$ on both sides) and is naturally symmetric monoidal for the solid tensor product.
\end{lemma}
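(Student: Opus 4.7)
The plan is to prove the three claims in sequence, each by a short formal argument reducing to the already-established stability properties of solid sheaves. The main input is that for any morphism $f\colon S'\to S$ with $S'$ a spatial diamond over $S$, the square of sites with horizontal arrows $f_\qproet\colon S'_\qproet\to S_\qproet$, $f_\Qproet\colon S_\Qproet/S'\to S_\Qproet$ and vertical arrows $\varepsilon_S,\varepsilon_{S'}$ commutes, yielding the identity $f_\Qproet^\ast\varepsilon_S^\ast \isom \varepsilon_{S'}^\ast f_\qproet^\ast$. Moreover $\varepsilon_{S',\ast}\varepsilon_{S'}^\ast\isom \mathrm{id}$: any sheaf $\mathcal{G}$ on the small site extends to the big site by $T\mapsto \mathcal{G}(T)$, and restriction back recovers the original sheaf. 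This identity is immediate at the level of static sheaves and propagates to the derived level using the repleteness of the sites (hence the Postnikov completeness of the derived categories).

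Granting these preliminaries, the first assertion follows at once: for $\mathcal{F}\in \D_\solid(S,\Z_p)$ and $f\colon S'\to S$ in $S_\Qproet$,
\[
    \varepsilon_{S',\ast}((\varepsilon_S^\ast\mathcal{F})|_{S'}) = \varepsilon_{S',\ast}\varepsilon_{S'}^\ast f_\qproet^\ast\mathcal{F} = f_\qproet^\ast\mathcal{F},
\]
which lies in $\D_\solid(S',\Z_p)$ by \Cref{rslt:solid-Zp-sheaves-stable-under-lim-colim-trunc}. Hence $\varepsilon_S^\ast\mathcal{F}$ is big solid. For commutation with solidification, applying the definition of big solid to $S'=S$ shows that $\varepsilon_{S,\ast}$ sends $\D_\solid(S_\Qproet,\Z_p)$ into $\D_\solid(S,\Z_p)$, so the $(\varepsilon_S^\ast,\varepsilon_{S,\ast})$-adjunction and the universal property of small-site solidification yield
\[
    \Hom(\varepsilon_S^\ast(\mathcal{F}_\solid),\mathcal{G}) = \Hom(\mathcal{F}_\solid,\varepsilon_{S,\ast}\mathcal{G}) = \Hom(\mathcal{F},\varepsilon_{S,\ast}\mathcal{G}) = \Hom(\varepsilon_S^\ast\mathcal{F},\mathcal{G})
\]
for all $\mathcal{F}\in \D(S_\qproet,\Z_p)$ and $\mathcal{G}\in \D_\solid(S_\Qproet,\Z_p)$. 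Since $\varepsilon_S^\ast(\mathcal{F}_\solid)$ is big solid by the first part, the universal property on the big side identifies it with $(\varepsilon_S^\ast\mathcal{F})_\solid$.

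Finally, the symmetric monoidality statement is purely formal: $\varepsilon_S^\ast$ is naturally symmetric monoidal for the ordinary tensor product as a pullback along a morphism of sites, and both solid tensor products are characterized as $(-\otimes-)_\solid$ (by \Cref{rslt:basic-properties-of-big-solid-sheaves} and its small-site analogue, cf.\ \cite[Proposition VII.1.14]{fargues-scholze-geometrization}), so the previous commutation promotes the symmetric monoidal structure through solidification. The main technical point throughout is the identity $\varepsilon_{S',\ast}\varepsilon_{S'}^\ast \isom \mathrm{id}$ at the unbounded derived level, but this is standard in the setting of replete sites and causes no issue.
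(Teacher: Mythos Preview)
Your proof is correct and follows essentially the same approach as the paper's (very terse) argument. The paper dispatches the first claim in one sentence by citing \cite[Proposition~VII.1.8]{fargues-scholze-geometrization}; your commutative-square computation together with $\varepsilon_{S',\ast}\varepsilon_{S'}^\ast \cong \mathrm{id}$ is exactly the unpacking of that citation, reducing big-solidness of $\varepsilon_S^\ast\mathcal{F}$ to the known small-site preservation $f_\qproet^\ast(\D_\solid(S,\Z_p))\subseteq \D_\solid(S',\Z_p)$. For the second claim the paper simply says ``can be checked on adjoints, where it is clear'', and your adjunction chain is precisely that check. The third claim is formal in both treatments.
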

\begin{proof}
Preservation of solid sheaves follows from \cite[Proposition VII.1.8]{fargues-scholze-geometrization}. The commutation with solidification can be checked on adjoints, where it is clear. Symmetric monoidality for the solid tensor product follows from symmetric monoidality for the usual tensor product and the commutation with solidification.
\end{proof}

The relevant examples for us of big solid, even ``big nuclear'', sheaves are the ones coming from quasi-coherent sheaves on the Fargues--Fontaine curve, as follows.

\begin{lemma} \label{sec:prel-subs-sigma-maps-nuclear-objects-to-big-nuclear-sheaves}
Let $S$ be a spatial diamond. Let $M\in \DFF(S,\Q_p)$ be nuclear. Then $\sigma_{\Q_p}(M)\in \D(S_\Qproet,\Q_p)$ is big solid.
\end{lemma}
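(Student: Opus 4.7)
The strategy is a three-step reduction, proceeding from the general nuclear $M$ down to a single line bundle $\mathcal{O}(\lambda)$ on the Fargues--Fontaine curve, where one has an explicit description.

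First I would reduce to a convenient class of $S$. Big solidity of $\sigma_{\Q_p}(M)$ means that for every spatial diamond $T \in S_\Qproet$ the pushforward $\varepsilon_{T,\ast}(\sigma_{\Q_p}(M)|_T)$ lies in $\D_\solid(T,\Q_p)$. By \cref{rslt:pullback-of-big-solid-sheaves}, big solidity may be tested after any quasi-pro-étale surjection, so I may replace $S$ by a perfectoid cover of $T$ and assume from the outset that $S$ is a qcqs perfectoid space which admits a morphism of finite $\dimtrg$ to a totally disconnected perfectoid space; in particular $\DFF(S,\Q_p) \cong \D_{\hat\solid}(\FF_S)^{\varphi}$ by \cref{rslt:v-descent-for-D-0-infty} and \cref{sec:an-overc-riem-2-d-nuc-for-a-diamond}.

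Second, I reduce to dualizable $M$. By the structure of nuclear objects in the rigid symmetric monoidal category $\D^{\nuc}_{\hat\solid}(\FF_S)^\varphi$ (see \cref{rslt:hat-solid-Nuc-same-as-solid-Nuc} and the arguments of \cite[Lemma~2.18]{anschuetz_mann_descent_for_solid_on_perfectoids}), any nuclear $M$ can be written as a filtered colimit $\colim_i P_i$ of dualizable (equivalently, perfect) objects $P_i$ with trace-class transitions. The functor $\sigma_{\Q_p}$ preserves colimits by the same argument as \cref{remark-rslt:sigma-commutes-countable-limits}, using that the unit $1 \in \DFF(S,\Q_p)$ is enriched compact by \cref{rslt:dualizable-object-on-FF-curve-is-compact}. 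The category $\D_\solid(S_\Qproet,\Q_p)$ is closed under colimits by \cref{rslt:basic-properties-of-big-solid-sheaves}(i). It therefore suffices to treat dualizable $M$, i.e.\ strict complexes of $\varphi$-modules on $\mathcal{Y}_{(0,\infty),S}$ (invoking \cite[Proposition~2.6]{anschutz2021fourier}).

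Third, I reduce to a single line bundle. Working v-locally on $S$, the Fargues--Fontaine classification decomposes any such $\varphi$-vector bundle into a direct sum of twists $\mathcal{O}_{\FF}(\lambda)$. Using stability of big solidity under finite limits and colimits, as well as v-local testing (\cref{rslt:pullback-of-big-solid-sheaves}), I am left with showing that $\sigma_{\Q_p}(\mathcal{O}(\lambda))$ is big solid for every $\lambda \in \Q$. By the formula characterizing $\sigma_{\Q_p}$ (the analogue of \cref{rslt:construction-of-sigma-solid}), this sheaf sends a spatial $T \to S$ to $R\Gamma(\FF_T,\mathcal{O}(\lambda))$, which is a shifted Banach--Colmez space and fits, via iterates of the fundamental exact sequence $0 \to \Q_p(n) \to (\mathcal B_e)^n \to \widehat{\ri} \to 0$ of $p$-adic Hodge theory in families, into an iterated extension of the étale Tate twists $\underline{\Q_p}(n)$ and the completed structure sheaf $\widehat{\ri}_S$; both are big solid on $S_\Qproet$ (étale sheaves trivially, and $\widehat{\ri}$ by \cite[\S VII]{fargues-scholze-geometrization}), so $\sigma_{\Q_p}(\mathcal{O}(\lambda))$ is big solid as well.

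The main obstacle will be verifying rigorously that the completed structure sheaf $\widehat{\ri}_S$ (and its twists appearing in the fundamental exact sequence) defines a big solid sheaf in the sense of this paper---that is, for each $T \to S$ the pushforward $\varepsilon_{T,\ast}\widehat{\ri}|_T$ lies in $\D_\solid(T,\Q_p)$---and that the identification of $\sigma_{\Q_p}(\mathcal{O}(\lambda))$ with the expected Banach--Colmez complex is compatible with this extension structure, so that the colimit/extension arguments really apply inside $\D_\solid(S_\Qproet,\Q_p)$ rather than merely in $\D(S_\Qproet,\Q_p)$.
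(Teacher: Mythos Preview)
Your reduction in step~2 contains a genuine gap: nuclear objects in $\DFF(S,\Q_p)$ are \emph{not} in general filtered colimits of dualizable (perfect) objects. The references you invoke do not say this. \cref{rslt:hat-solid-Nuc-same-as-solid-Nuc} only identifies nuclear objects in $\D_{\hat\solid}$ with those in $\D_\solid$, and \cite[Lemma~2.18]{anschuetz_mann_descent_for_solid_on_perfectoids} describes nuclear modules as colimits of \emph{bounded $\pi$-complete objects discrete mod $\pi$}, which are far from perfect in general. A concrete counterexample: for $S=\Spa(C)$ a geometric point and $V$ an infinite-dimensional $\Q_p$-Banach space, the object $M=V\otimes 1\in\DFF(S,\Q_p)$ is nuclear but cannot be a filtered colimit of dualizable objects $P_i$. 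Indeed, since $1$ is compact (\cref{rslt:dualizable-object-on-FF-curve-is-compact}), applying $\Hom(1,-)$ would express $V(\ast)$ as a filtered colimit of the perfect $\Q_p$-complexes $R\Gamma(\FF_C,P_i)$; but more to the point, the full subcategory generated under colimits by dualizable objects is $\Ind(\Perf(\FF_C))$, which does not contain nontrivially-topologized Banach-module sheaves. This is also why \cref{rslt:sigma-Qp-comparison-of-internal-hom} and \cref{prop:compactly-supported-coh-is-countable-colimit-compact-times-perfect-complex} impose the hypothesis $M\cong\colim_n V_n\otimes P_n$ as an \emph{additional} assumption rather than deriving it from nuclearity.

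The paper's proof avoids this entirely by working directly with an arbitrary nuclear $M$: after reducing to strictly totally disconnected $S$, it writes $\Gamma(S',\sigma_{\Q_p}(M))$ for quasi-pro-\'etale affinoid $S'\to S$ as an equalizer of sections of $M$ over the annuli $\mathcal Y_{[1,p],S'}$ and $\mathcal Y_{\{1\}\cup\{1/p\},S'}$, and observes that these are nuclear $C(S,\Q_p)$-modules because $\mathcal O(\mathcal Y_{I,S})$ is an adic (hence nuclear) $C(S,\Q_p)$-algebra. This shows $\varepsilon_{S,\ast}\sigma_{\Q_p}(M)$ is not just solid but \emph{nuclear} in $\D(S_\qproet,\Q_p)$---a stronger conclusion which is explicitly reused in the proof of \cref{sec:prel-subs-linearity-of-sigma-to-solid}. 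Your approach, even if the gap above were patched (say by restricting to $M$ of the form $\colim V_n\otimes P_n$), would also need more care in step~3: the Harder--Narasimhan decomposition into $\mathcal O(\lambda)$'s holds only over geometric points, not over a general strictly totally disconnected $S$, so the v-local testing would have to be organized more carefully.
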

\begin{proof}
As $\sigma_{\Q_p}$ is compatible with pullback we may assume by \cref{rslt:pullback-of-big-solid-sheaves} that $S$ is strictly totally disconnected. Fix a pseudo-uniformizer $\pi$ on $S$ with radius function $\kappa\colon \mathcal{Y}_{[0,\infty),S}\to [0,\infty)$. As in \cref{rslt:dualizable-object-on-FF-curve-is-compact} we use the notation $\mathcal{Y}_{I,S}$ for $I\subseteq [0,\infty)$ a finite union of compact intervals. For each quasi-pro-\'etale morphism $S'\to S$ with $S'$ affinoid we can write
\[
\Gamma(S',\sigma_{\Q_p}(M))=\mathrm{eq}(\Gamma(\mathcal{Y}_{[1,p],S'},M)\rightrightarrows \Gamma(\mathcal{Y}_{[1]\cup[1/p],S'},M)).
\]
We claim that $\varepsilon_{S,\ast}(\sigma_{\Q_p}(M))\in \D(S_\qproet,\Q_p)$ is even nuclear in the sense of \cite[Section 3.5]{anschuetz_mann_descent_for_solid_on_perfectoids}, and hence solid (even $\omega_1$-solid). By \cite[Lemma 3.29]{anschuetz_mann_descent_for_solid_on_perfectoids} the nuclear objects in ($\omega_1$-solid objects in) $\D(S_\qproet,\Q_p)$ are equivalent to $\D_\nuc(C(S,\Q_p))$ because $S$ is strictly totally disconnected. More precisely, to $N\in \D_\nuc(C(S,\Q_p))$ is associated the sheaf $S'\mapsto (C(S',\Q_p)\otimes_{C(S,\Q_p)} N)(\ast)$ (the tensor product refers to solid tensor product in $\D_\nuc(C(S,\Q_p))$) on strictly totally disconnected spaces $S'\in S_\qproet$.
We note that there exists a natural continuous map $\pi_0(\mathcal{Y}_{I,S})\to \pi_0(S)$, and thus a natural morphism $C(S,\Q_p)=C(\pi_0(S),\Q_p)\to \mathcal{O}(\mathcal{Y}_{I,S})$ of (solid) Banach algebras over $\Q_p$.
We furthermore note that $\mathcal{O}(\mathcal{Y}_{I,S'})\cong C(S',\Q_p)\otimes_{C(S,\Q_p)} \mathcal{O}(\mathcal{Y}_{I,S})\in \D_\nuc(C(S,\Z_p))$ for $S'\in S_\qproet$ strictly totally disconnected. Given $I\subseteq (0,\infty)$ a finite union of compact intervals, we let $M_I$ be the nuclear $\mathcal{O}(\mathcal{Y}_{I,S})$-module given by the restriction of $M$ to $\mathcal{Y}_{I,S}$. It follows easily from the above equalizer diagram that $\varepsilon_{S,\ast}(\sigma_{\Q_p}(M))$ is associated with the nuclear $C(S,\Q_p)$-module $\mathrm{eq}(M_{[1,p]}\rightrightarrows M_{[1]\cup [1/p]})$, where the two arrows are induced by restriction respectively Frobenius (we note that we use here that $\mathcal{O}(\mathcal{Y}_{I,S})$ is an adic, hence nuclear, $C(S,\Q_p)$-algebra, and thus the $C(S,\Q_p)$-module $M_I$ is nuclear).
\end{proof}

We note that for any spatial diamond $S$ the site $S_\qproet$ (and hence $S_\Qproet$) lives naturally over the pro-\'etale site $\ast_\proet$ of a point. Indeed, given a profinite set $T$ and $S'\in S_\qproet$, we can form $T\times S'\in S_\qproet$. In particular, $\D(S_\Qproet,\Z_p)$ is naturally linear over $\D(\ast_\proet,\Z_p)$. From \cref{sec:prel-subs-pullback-and-solidification} we can conclude that the $\D(\ast_\proet,\Z_p)$-linear structure on $\D_\solid(S_\Qproet,\Z_p)$ factors canonically through the quotient $\D(\ast_\proet,\Z_p)\to \D_\solid(\Z_p)$. We note that by rigidity of $\D_\nuc(\Z_p)$ the functor $\varepsilon_{S,\ast}\colon \D_\solid(S_\Qproet,\Z_p)\to \D_\solid(S_\qproet,\Z_p)$ is $\D_\nuc(\Z_p)$-linear as it commutes with colimits (see \cite[Lemma 3.33]{anschuetz_mann_descent_for_solid_on_perfectoids}).

\begin{lemma} \label{sec:prel-subs-linearity-of-sigma-to-solid}
Let $S$ be a spatial diamond. Then the functor
\[
    \sigma_{\Q_p}\colon \DFF^\nuc(S,\Q_p)\to \D_\solid(S_\Qproet,\Q_p)
\]
is $\D_\nuc(\Q_p)$-linear. 
\end{lemma}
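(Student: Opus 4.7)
The plan is to mimic the rigidity argument used for $\varepsilon_{S,\ast}$ in the paragraph immediately preceding the statement: by \cite[Lemma~3.33]{anschuetz_mann_descent_for_solid_on_perfectoids}, any colimit-preserving $\D_\solid(\Q_p)$-linear functor between $\D_\solid(\Q_p)$-linear presentable stable categories is automatically $\D_\nuc(\Q_p)$-linear, thanks to the rigidity of $\D_\nuc(\Q_p)$ as a symmetric monoidal presentable category. Informally, for a dualizable $V \in \D_\solid(\Q_p)$ the lax structure map $V \otimes F(M) \to F(V \otimes M)$ is forced to be an isomorphism (since tensoring with a dualizable object commutes with colimits and limits), and any nuclear $V$ is a filtered colimit of dualizables, so the general case is handled by colimit-preservation. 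It therefore suffices to verify that $\sigma_{\Q_p}|_{\DFF^\nuc(S,\Q_p)}$, viewed as a functor $\DFF^\nuc(S,\Q_p) \to \D_\solid(S_\Qproet,\Q_p)$, is colimit-preserving and $\D_\solid(\Q_p)$-linear.

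I would handle the colimit-preservation along the lines of the $\Q_p$-analogue of \cref{remark-rslt:sigma-commutes-countable-limits}. By v-descent one reduces to $S$ being an affinoid perfectoid space admitting a morphism of finite $\dimtrg$ to a totally disconnected perfectoid space. Then the $\Q_p$-version of \cref{rslt:dualizable-object-on-FF-curve-is-compact}---proved by an analogous gluing of $\mathcal{Y}_{(0,\infty),S}/\varphi^\Z$ from annuli $\mathcal{Y}_{[1,p],S}$ along Frobenius---gives that the unit $1 \in \DFF(S',\Q_p)$ is compact in the enriched sense for any $S' \to S$. Hence the defining formula $\Gamma(S',\sigma_{\Q_p}(M)) = \Hom_{\DFF(S',\Q_p)}(1, M_{S'})$ commutes with colimits in $M$. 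Combined with the stability of $\D_\solid(S_\Qproet,\Q_p)$ under colimits in $\D(S_\Qproet,\Q_p)$ (\cref{rslt:basic-properties-of-big-solid-sheaves}) and of nuclear objects under colimits in $\DFF(S,\Q_p)$, this will give the desired colimit-preservation.

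For the $\D_\solid(\Q_p)$-linearity, I would note that the source is $\D_\solid(\Q_p)$-linear by restriction along $\D_\solid(\Q_p) \to \D_\nuc(\Q_p)$, and the target is so by the discussion preceding the statement. The linearity of $\sigma_{\Q_p}$ itself should follow from the $\Q_p$-analogue of \cref{rslt:sigma-is-lax-symmetric-monoidal}: the lax structure map $V \otimes \sigma_{\Q_p}(M) \to \sigma_{\Q_p}(V \otimes M)$ is an isomorphism on the unit $V = \Q_p$ by construction, hence for dualizable $V$ by the standard dualizability arguments, and finally for arbitrary $V \in \D_\solid(\Q_p)$ by commutation of tensor products with colimits in both variables together with the colimit-preservation of $\sigma_{\Q_p}$ established above. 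Invoking \cite[Lemma~3.33]{anschuetz_mann_descent_for_solid_on_perfectoids} then completes the argument. The hardest part will be the careful matching of the various $\D_\solid(\Q_p)$-actions in play---in particular, verifying that the lax structure of $\sigma_{\Q_p}$ produces a genuine $\D_\solid(\Q_p)$-linear structure once restricted to nuclear objects and landing in the solid subcategory $\D_\solid(S_\Qproet,\Q_p)$ on the target, equipped with its \emph{solidified} tensor product.
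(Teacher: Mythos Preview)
Your underlying strategy---invoke rigidity of $\D_\nuc(\Q_p)$ via \cite[Lemma~3.33]{anschuetz_mann_descent_for_solid_on_perfectoids} once colimit-preservation is in hand---is a legitimate alternative to the paper's argument, and would in fact be shorter. But the route you take to get there contains a real gap.

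The attempt to first establish full $\D_\solid(\Q_p)$-linearity is both unnecessary and incorrect. It is unnecessary because the rigidity lemma, applied directly to $\D_\nuc(\Q_p)$, already promotes any colimit-preserving functor between $\D_\nuc(\Q_p)$-modules in $\Pr^L$ to a $\D_\nuc(\Q_p)$-linear one; there is no need to pass through $\D_\solid(\Q_p)$. It is incorrect for two reasons. First, the source $\DFF^\nuc(S,\Q_p)$ is not a $\D_\solid(\Q_p)$-module: tensoring a nuclear object by a non-nuclear $V\in\D_\solid(\Q_p)$ (say $V=\Q_{p,\solid}[T]$ for infinite profinite $T$) need not stay nuclear, and there is no symmetric monoidal functor $\D_\solid(\Q_p)\to\D_\nuc(\Q_p)$ along which to ``restrict''---the inclusion goes the other way. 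Second, the step ``for arbitrary $V\in\D_\solid(\Q_p)$ by colimits'' fails because dualizable objects do not generate $\D_\solid(\Q_p)$ under colimits: the compact generators $\Q_{p,\solid}[T]$ are not dualizable and are not colimits of perfect complexes. The fix is simply to drop the $\D_\solid(\Q_p)$ detour and apply the rigidity lemma directly over $\D_\nuc(\Q_p)$.

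For comparison, the paper takes a more explicit route: it constructs the lax comparison map $V\otimes^\solid_{\Q_p}\sigma_{\Q_p}(M)\to\sigma_{\Q_p}(V\otimes M)$ by hand (using the lax monoidal structure together with a map $V\otimes_{\Q_p}1\to\sigma_{\Q_p}(V\otimes 1)$ built from the defining formula for $\sigma_{\Q_p}$), and then checks it is an isomorphism after applying $\varepsilon_{S',\ast}$ for strictly totally disconnected $S'$ over $S$. The point is that $\varepsilon_{S',\ast}$ is already known to be $\D_\nuc(\Z_p)$-linear, and the equalizer description of $\varepsilon_{S',\ast}\circ\sigma_{\Q_p}$ established in the proof of the preceding lemma makes its $\D_\nuc(\Q_p)$-linearity transparent. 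Your (corrected) rigidity argument would avoid this local computation entirely.
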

\begin{proof}
Let $V\in \D_\nuc(\Q_p)$ and $M\in \DFF^\nuc(S,\Q_p)$. By construction of $\sigma_{\Q_p}$, we have a natural map $V \tensor_{\Q_p} 1 \to \sigma_{\Q_p}(V \tensor 1)$ of v-sheaves on $S$, because by adjunction such a map is the same as a map $V \to \Gamma(S, \sigma_{\Q_p}(V \tensor 1))$, which by definition of $\sigma_{\Q_p}$ is the same as a map $V \tensor 1 \to V \tensor 1$ in $\DFF(S,\Q_p)$. Thus by the lax symmetric monoidal structure on $\sigma_{\Q_p}$ (\cref{rslt:sigma-is-lax-symmetric-monoidal}), we obtain a natural map
\begin{align*}
    &V \tensor_{\Q_p} \sigma_{\Q_p}(M) = (V \tensor_{\Q_p} 1) \tensor_{\Q_p} \sigma_{\Q_p}(M) \to \sigma_{\Q_p}(V \tensor 1) \tensor \sigma_{\Q_p}(M) \to\\&\qquad\qquad\to \sigma_{\Q_p}((V \tensor 1) \tensor M) = \sigma_{\Q_p}(V \tensor M).
\end{align*}
Since the target is big solid, this supplies us with a natural morphism
$$
    \Phi\colon V\otimes_{\Q_p}^\solid \sigma_{\Q_p}(M)\to \sigma_{\Q_p}(V \otimes M)
$$
(which is compatible with pullback). It is sufficient to check that $\varepsilon_{S',\ast}(\Phi)$ is an isomorphism for any strictly totally disconnected perfectoid spaces $S'$ over $S$. As noted before this lemma, $\varepsilon_{S',\ast}$ is $\D_\nuc(\Z_p)$-linear. But from the proof of \cref{sec:prel-subs-sigma-maps-nuclear-objects-to-big-nuclear-sheaves} (more precisely, the equalizer diagram), it follows easily that $\varepsilon_{S',\ast}\circ \sigma_{\Q_p}$ is $\D_\nuc(\Q_p)$-linear as desired.
\end{proof}

\begin{remark}
\Cref{sec:prel-subs-linearity-of-sigma-to-solid} solves the problem of the missing $\D_\nuc(\Q_p)$-linear structure on the target $\D(S_v,\Q_p)$ of the functor $\sigma_{\Q_p}$. As mentioned in \Cref{remark-about-solid-version} another solution would have been to add an ``external condensed direction'' and to consider the functor $\sigma_{\solid,\Q_p}\colon \DFF(S,\Q_p)\to \D(S_v,\D_\solid(\Q_p))$ and the pointwise $\D_\solid(\Q_p)$-linear structure on $\D(S_v,\D_\solid(\Q_p))$. One can check that $\sigma_{\solid,\Q_p}$ is $\D_\nuc(\Q_p)$-linear (though probably not $\D_\solid(\Q_p)$-linear). As brought to our attention by Wies\l{}awa Nizio\l{} it is however difficult to apply the Breen--Deligne resolution in $\D(S_v,\D_\solid(\Q_p))$. For this reason, we work with the ``internal condensed direction'' in $\D(S_v,\Q_p)$. If $M\in \DFF(S,\Q_p)$ is nuclear one can check that the internal condensed structure $T\mapsto \Gamma(S\times T,\sigma_{\Q_p}(M))$ on $\Gamma(S,\sigma_{\Q_p}(M))=\Hom_{\D_\solid(\Q_p)}(\Q_p,\Gamma(S,\sigma_{\solid,\Q_p}(M)))$ is naturally equivalent to the external condensed structure $T\mapsto \Hom_{\D_\solid(\Q_p)}(\Q_{p,\solid}[T],\Gamma(S,\sigma_{\solid,\Q_p}(M)))$ provided by $\sigma_{\solid,\Q_p}$. On a heuristic level, this may explain why the functor $\sigma_{\Q_p}$ seems to have nice properties only when restricted to nuclear objects in $\DFF(S,\Q_p)$.
\end{remark}

\begin{proposition} \label{sec:prel-subs-internal-hom-formula-for-big-nuclear-sheaves}
Let $S$ be a small v-stack, and let $P, N\in \DFF(S,\Q_p)$ be nuclear objects. Let $V\in \D_\solid(\Q_p)$ be nuclear. Then there is a natural isomorphism
\begin{align*}
    &\IHom_{\D(S_v,\Q_p)}(\sigma_{\Q_p}(V\otimes P),\sigma_{\Q_p}(N))\\
    &\qquad\qquad= \IHom_{\D(S_v,\Q_p)}(\sigma_{\Q_p}(P),\IHom_{\D(S_v,\Q_p)}(V\otimes_{\Q_p} 1,\sigma_{\Q_p}(N))) 
\end{align*}
in $\D(S_v, \Q_p)$.
\end{proposition}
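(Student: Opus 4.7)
The plan is to combine tensor--hom adjunction in $\D(S_v,\Q_p)$ with the $\D_\nuc(\Q_p)$-linearity of $\sigma_{\Q_p}$ on nuclear objects established in \Cref{sec:prel-subs-linearity-of-sigma-to-solid}. First I would reduce to the case where $S$ is a spatial diamond: both sides of the claimed identity define hypercomplete v-sheaves in $S$ (using \Cref{sec:texorpdfstr-desc-tex-nuclear-sheaves-for-hypercomplete-v-sheaf} for the nuclearity of $P,N,V\otimes P$ and hyperdescent of $\D((-)_v,\Q_p)$), so the equality can be checked v-locally on $S$. After this reduction, \Cref{sec:prel-subs-sigma-maps-nuclear-objects-to-big-nuclear-sheaves} places $\sigma_{\Q_p}(P)$, $\sigma_{\Q_p}(V\otimes P)$ and $\sigma_{\Q_p}(N)$ all in $\D_\solid(S_\Qproet,\Q_p)$.

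Second, since $\D(S_v,\Q_p)$ is presentably symmetric monoidal, tensor--hom adjunction rewrites the right-hand side as
\begin{equation*}
    \IHom_{\D(S_v,\Q_p)}(\sigma_{\Q_p}(P)\otimes_{\Q_p}(V\otimes 1),\sigma_{\Q_p}(N)).
\end{equation*}
It therefore suffices to exhibit a natural isomorphism
\begin{equation*}
    \IHom_{\D(S_v,\Q_p)}(\sigma_{\Q_p}(V\otimes P),\sigma_{\Q_p}(N)) \isoto \IHom_{\D(S_v,\Q_p)}(\sigma_{\Q_p}(P)\otimes_{\Q_p}(V\otimes 1),\sigma_{\Q_p}(N)).
\end{equation*}

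Third, I would produce this isomorphism by passing through the tower
\begin{equation*}
    \D(S_v,\Q_p) \subset \D(S_\Qproet,\Q_p) \xto{(-)_\solid} \D_\solid(S_\Qproet,\Q_p).
\end{equation*}
Since v-sheaves are in particular quasi-pro-\'etale sheaves and $\IHom$ into a v-sheaf is again a v-sheaf (via tensor--hom adjunction combined with v-hyperdescent of the target), the two internal Homs above coincide with the corresponding internal Homs computed in $\D(S_\Qproet,\Q_p)$. Since $\sigma_{\Q_p}(N)$ is big solid, \Cref{rslt:basic-properties-of-big-solid-sheaves}(iv) combined with the left adjoint property of $(-)_\solid$ implies that $\IHom_{\D(S_\Qproet,\Q_p)}(-,\sigma_{\Q_p}(N))$ depends on its source only through its solidification. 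The problem thus reduces to producing an isomorphism $(\sigma_{\Q_p}(P)\otimes_{\Q_p}(V\otimes 1))_\solid \cong \sigma_{\Q_p}(V\otimes P)$ in $\D_\solid(S_\Qproet,\Q_p)$. By symmetric monoidality of $(-)_\solid$ the left-hand side identifies with $V\otimes^\solid_{\Q_p} \sigma_{\Q_p}(P)$, and the required isomorphism is then precisely the map $\Phi$ from the proof of \Cref{sec:prel-subs-linearity-of-sigma-to-solid}, whose invertibility is the content of that lemma.

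The main obstacle is the careful bookkeeping between the three categories above and the interplay between their tensor products and internal Homs; no genuinely new geometric or analytic input is required, as every step is a formal consequence of adjunctions combined with the preparatory lemmas already proved.
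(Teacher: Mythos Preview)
Your proposal is correct and follows essentially the same approach as the paper: reduce to a spatial diamond, rewrite via tensor--hom adjunction, pass from $\D(S_v,\Q_p)$ to $\D(S_\Qproet,\Q_p)$ and then to $\D_\solid(S_\Qproet,\Q_p)$ using that $\sigma_{\Q_p}(N)$ is big solid, and conclude by the $\D_\nuc(\Q_p)$-linearity of $\sigma_{\Q_p}$ from \Cref{sec:prel-subs-linearity-of-sigma-to-solid}. The only cosmetic difference is that the paper first constructs the comparison map (via $V\otimes_{\Q_p}\sigma_{\Q_p}(P)\to\sigma_{\Q_p}(V\otimes P)$) and then checks it is an isomorphism, whereas you identify both sides directly after solidification; the content is identical.
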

\begin{proof}
Arguing as in the proof of \Cref{sec:prel-subs-linearity-of-sigma-to-solid}, we obtain a natural map
\[
    V \tensor_{\Q_p} \sigma_{\Q_p}(P) \to \sigma_{\Q_p}(V \tensor P).
\]
in $\D(S_v,\Q_p)$. Together with the natural isomorphism
\begin{align*}
    &\IHom_{\D(S_v,\Q_p)}(V\otimes_{\Q_p}\sigma_{\Q_p}(P),\sigma_{\Q_p}(N))\\
    &\qquad\qquad= \IHom_{\D(S_v,\Q_p)}(\sigma_{\Q_p}(P),\IHom_{\D(S_v,\Q_p)}(V\otimes_{\Q_p} 1,\sigma_{\Q_p}(N)))
\end{align*}
this yields a natural morphism from the left to the right in the claim. To check that this morphism is an isomorphism, we may assume that $S$ is a spatial diamond. Moreover, the internal Hom's do not change if we pass to the big quasi-pro-\'etale site $S_{\Qproet}$ (as their second argument is a $v$-sheaf). By \cref{rslt:basic-properties-of-big-solid-sheaves} and \cref{sec:prel-subs-sigma-maps-nuclear-objects-to-big-nuclear-sheaves} we may pass to $\D_\solid(S_\Qproet,\Z_p)$ (implicitly solidifiying all tensor products in $\D(S_\Qproet,\Z_p)$). Then the assertion follows from \Cref{sec:prel-subs-linearity-of-sigma-to-solid}.
\end{proof}

We have seen how big solid sheaves relate to nuclear sheaves on the Fargues--Fontaine curve. We now investigate the behavior of $\sigma_{\Q_p}$ with respect to countable limits of nuclear sheaves. We start with some fundamental results on solid functional analysis over $\Q_p$.

\begin{proposition} \label{solid-functional-analysis-over-q-p}
\begin{propenum}
    \item If $V\in \D_\solid(\Q_p)$ is quasi-separated, then $V$ is flat for the non-derived solid tensor product $H_0(-\otimes_{\Q_{p\solid}} -)$. In particular, Fr\'echet spaces and Smith spaces are flat.\footnote{We use the terminology from \cite[Definition 3.2, Definition 3.22]{rodrigues2022solid}.}

    \item \label{rslt:D-nuc-Q-p-stable-under-countable-products} The category $\D_\nuc(\Q_p)\subseteq \D_\solid(\Q_p)$ is stable under countable products, and an object in $\D_\solid(\Q_p)$ is nuclear if and only if its cohomology objects are nuclear.
    
    \item If $W\in \D_\solid(\Q_p)$ is nuclear, then for any compact object $V\in \D_\solid(\Q_p)$, the morphism $V^\ast\otimes_{\Q_{p\solid}}W\to \IHom_{\D_\solid(\Q_p)}(V,W)$ is an isomorphism.
    
    \item \label{rslt:nuclear-tensor-over-Q-p-solid-preserves-countable-limits} Let $W\in \D_\solid(\Q_p)$ be a finite complex of Fr\'echet spaces. Then $W$ is nuclear and the functor
    \[
        -\tensor_{\Q_{p\solid}} W \colon \D_\nuc(\Q_p)\to \D_\nuc(\Q_p)
    \]
    commutes with countable limits.
\end{propenum}
\end{proposition}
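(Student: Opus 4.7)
The four assertions are standard results of solid functional analysis over $\Q_p$, and my plan is to derive them from the basic structure theory of $\D_\solid(\Q_p)$ (where the compact projective generators of the heart are the Smith-like spaces $\Q_{p,\solid}[S] = \Q_p \tensor_{\Z_p} \Z_{p,\solid}[S]$ for profinite $S$) together with the trace class characterisation of nuclearity recalled in \cite[Section~3.5]{anschuetz_mann_descent_for_solid_on_perfectoids}.

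For (i), the plan is to reduce to the observation that each compact generator $\Q_{p,\solid}[S]$ is a filtered colimit of finite free $\Q_p$-modules along split injections (since $S = \varprojlim S_i$ with $S_i$ finite), hence is flat; a general quasi-separated object in $\D_\solid(\Q_p)^{\heartsuit}$ is by definition a filtered colimit of such compact subobjects and so inherits flatness. Since the non-derived solid tensor product commutes with filtered colimits in both arguments, this suffices.

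For (ii), I would first show that a countable product of basic nuclear objects in the heart is again nuclear. Writing each factor as a filtered colimit of Banach spaces, one interchanges the countable product with the filtered colimit using that over the field $\Q_p$ products are exact and commute with sequential colimits of injections, reducing to the statement that a countable product of Banach spaces, i.e. a Fr\'echet $\Q_p$-vector space, is nuclear. This last point reduces via the Smith/Fr\'echet duality over $\Q_p$ to the Mittag-Leffler style observation that the dual of a Fr\'echet space is naturally a countable filtered colimit of Smith (hence compact) objects, from which the trace-class factorisation defining nuclearity can be verified. The derived statement then follows by noting that $\D_\nuc(\Q_p) \subset \D_\solid(\Q_p)$ is closed under truncations (as one sees from the explicit presentation of nuclear objects via \cite[Lemma~2.18]{anschuetz_mann_descent_for_solid_on_perfectoids} over $\Z_p$ after inverting $p$) and that countable products in $\D_\solid(\Q_p)$ compute cohomology termwise in each degree.

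For (iii) the map $V^\ast \tensor_{\Q_{p,\solid}} W \to \IHom(V,W)$ is essentially the definition of $W$ being nuclear: by compact generation and stability under finite colimits, we may reduce to $V$ a compact generator, where both sides commute with the filtered colimit expressing $W$ as a colimit of basic nuclear objects, and the basic nuclear case is immediate from the trace class factorisation. The main point of the proposition, which is (iv), then follows as follows. First, $W$ is nuclear by (ii) together with stability of nuclearity under finite limits. Writing $W$ as a finite complex of Fr\'echet spaces $W^\bullet$ and each Fr\'echet $W^i = \varprojlim_n B^i_n$ as a countable limit of Banach spaces, one obtains $W^\vee = \colim_n (B^{\bullet}_n)^\vee$ as a sequential filtered colimit of (finite complexes of) compact Smith objects, and $W = \IHom(W^\vee,\Q_p)$. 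For $M \in \D_\nuc(\Q_p)$, applying (iii) to each compact piece and passing to the colimit yields a natural isomorphism $M \tensor_{\Q_{p,\solid}} W \isom \IHom(W^\vee,M)$, after which limit preservation in the first variable is automatic since $\IHom(W^\vee,-)$ is a right adjoint and thus commutes with all limits. The main obstacle I anticipate is the compatibility in (ii) between countable products and filtered colimits of Banach spaces, where one must be careful that the relevant colimit presentations can be chosen so the interchange is an isomorphism in $\D_\solid(\Q_p)$ and not merely after forgetting structure.
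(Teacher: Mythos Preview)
The paper's own proof is essentially a sequence of citations to Bosco's appendix \cite{bosco2021p} (Corollary~A.28, Theorem~A.43, Proposition~A.55, Proposition~A.64, Proposition~A.66), so your attempt at a self-contained argument is a genuinely different route. Parts~(ii) and~(iii) of your sketch are along reasonable lines and roughly match the shape of Bosco's arguments. However, two of your steps contain actual errors.

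In~(i), your description of $\Q_{p,\solid}[S]$ is backwards: writing $S = \varprojlim S_i$ gives $\Q_{p,\solid}[S] = \varprojlim_i \Q_p[S_i]$ as an \emph{inverse} limit along surjections, not a filtered colimit along split injections. You have confused $\Q_{p,\solid}[S]$ with its dual Banach space $C(S,\Q_p) = \varinjlim_i C(S_i,\Q_p)$. Flatness of Smith spaces is still true, but it comes from the fact that they are compact projective generators with $\Q_{p,\solid}[S]\otimes\Q_{p,\solid}[T] = \Q_{p,\solid}[S\times T]$, not from your colimit description; and the reduction from general quasi-separated objects to these requires knowing the structure theorem for quasi-separated solid $\Q_p$-vector spaces (this is where Bosco's A.28 does the work).

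In~(iv) there is a genuine gap. Your strategy is to prove $M\otimes W \cong \IHom(W^\vee,M)$ and then invoke that the right side preserves limits in $M$. But to obtain this identification you write $W^\vee = \colim_n V_n$ with $V_n$ compact, apply~(iii) termwise, and ``pass to the colimit''. Unwinding, this gives $\IHom(W^\vee,M) = \varprojlim_n(V_n^\ast\otimes M) = \varprojlim_n(B_n\otimes M)$, and you need this to equal $(\varprojlim_n B_n)\otimes M = W\otimes M$. That last equality --- commuting $(-)\otimes M$ through the countable inverse limit of Banach spaces defining $W$ --- is precisely the nontrivial content (Bosco's Proposition~A.66, which uses the special structure of Fr\'echet spaces over $\Q_p$), and your argument assumes it rather than proving it. So the reduction to $\IHom$ is circular as written.
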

\begin{proof}
The first statement is \cite[Corollary A.28]{bosco2021p}. The second assertion follows from \cite[Theorem A.43]{bosco2021p} and the first statement. Indeed, it suffices to observe that if $T$ is a profinite set with associated Smith space $\Q_{p,\solid}[T]$, then the functor $((\Q_{p,\solid}[T])^\vee\otimes (-))(\ast)$ is exact on the heart of the natural $t$-structure on $\D_\solid(\Q_p)$. The third assertion is proven in \cite[Proposition A.55]{bosco2021p}, or follows from the assertion that compact objects in $\D_\solid(\Q_p)$ are stable under tensor products. The nuclearity of $W$ in the fourth assertion is \cite[Proposition A.64]{bosco2021p}. The claim on commutation with countable limits follows from \cite[Proposition A.66]{bosco2021p}, the flatness of Fr\'echet spaces, and the stability of $\D_\nuc(\Q_p)$ under countable limits.
\end{proof}

The commutation of the solid tensor product of a Fr\'echet space with countable inverse limits is specific to $\Q_p$, for example it fails for $-\otimes_{\Z_{p\solid}} \Z_p\langle U\rangle $ for the Tate algebra $\Z_p\langle U\rangle$ as it fails for $-\otimes_{\F_{p\solid}} \F_p[U]$. Similarly, the stability of nuclear objects under countable limits is specific to $\Q_p$ and fails for $\Z_p$ or $\F_p$. We now extend \Cref{solid-functional-analysis-over-q-p} to Huber pairs $(A,A^+)$ over $(\Q_p,\Z_p)$.

\begin{proposition} 
Let $(A,A^+)\to (A',A^{\prime,+})$ be a morphism of Huber pairs over $(\Q_p,\Z_p)$. 
\begin{propenum}
    \item \label{rslt:internal-hom-from-nuclear-stable-under-base-change} If $K\in \D_\solid(A,A^+)$ is $\omega_1$-compact and $N\in \D_\solid(A,A^+)$ nuclear, then $\IHom_{\D_\solid(A,A^+)}(K,N)$ is nuclear and its formation commutes with the base change $(A',A^{\prime,+})_\solid \otimes_{(A,A^+)_\solid}(-)$ when the latter has finite Tor dimension.

    \item \label{rslt:solid-base-change-preserves-countable-limits-of-nuclears} $\D_\nuc(A)\subseteq \D_\solid(A,A^+)$ is stable under countable limits, and
    \begin{align*}
        (A',A'^+)_\solid \tensor_{(A,A^+)_\solid} - \colon \D_\nuc(A)\to \D_\nuc(A')
    \end{align*}
    commutes with right-bounded countable products. If $(A',A'^+)_\solid \tensor_{(A,A^+)_\solid} -$ has finite Tor dimension then it commutes with all countable limits of nuclear modules.
\end{propenum}
If $A,A'$ are stably uniform, the same assertions holds with $\D_\solid$ replaced by $\D_{\hat\solid}$.
\end{proposition}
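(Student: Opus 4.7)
The plan is to prove (ii) first, deduce (i) from it, and finally transfer everything to the $\D_{\hat\solid}$-setting in the stably uniform case.

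For the stability of $\D_\nuc(A)$ under countable limits, fiber sequences reduce the claim to countable products, so let $(M_n)_{n\in\N}$ be nuclear $A$-modules and set $M := \prod_n M_n$. For every compact $K \in \D_\solid(A,A^+)$ we have
$$
\IHom_A(K,M) = \prod_n \IHom_A(K,M_n) \cong \prod_n (K^\vee \tensor_A M_n)
$$
by nuclearity of each $M_n$, so that the trace-class condition for $M$ reduces to the commutation $K^\vee \tensor_A \prod_n M_n \isoto \prod_n (K^\vee \tensor_A M_n)$. Viewed through the restriction $\D_\solid(A,A^+) \to \D_\solid(\Q_p,\Z_p)$, the module $K^\vee$ is a countable inverse limit of Banach $\Q_p$-vector spaces, hence Fr\'echet over $\Q_p$, and the required commutation is exactly \cref{rslt:nuclear-tensor-over-Q-p-solid-preserves-countable-limits}; the $A$-module structure on both sides propagates diagonally through the limit. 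The base-change statement in (ii) is proved by an analogous Fr\'echet-type argument, with right-boundedness providing the hypothesis for \cref{rslt:nuclear-tensor-over-Q-p-solid-preserves-countable-limits} to apply. Under the finite Tor amplitude assumption one removes the boundedness via a Postnikov tower argument, combined with the just-established stability of $\D_\nuc(A)$ under countable limits.

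For (i), write an $\omega_1$-compact object $K$, up to retracts, as a sequential colimit $K = \colim_n K_n$ of compact (hence dualizable) objects. By rigidity of the subcategory of dualizables, $\IHom_A(K_n, N) \cong K_n^\vee \tensor_A N$ for every nuclear $N$, and the tensor of a dualizable with a nuclear is again nuclear. Thus
$$
\IHom_A(K,N) \cong \lim_n (K_n^\vee \tensor_A N)
$$
is a countable limit of nuclears, hence nuclear by (ii). For base-change compatibility, note that $K \tensor_A A' \cong \colim_n (K_n \tensor_A A')$ remains $\omega_1$-compact, that the formula $(K_n)^\vee \tensor_A A' \cong (K_n \tensor_A A')^\vee$ holds formally for dualizables, and that by (ii) the base change $(A',A'^+)_\solid \tensor_{(A,A^+)_\solid} -$ commutes with the countable limit $\lim_n (K_n^\vee \tensor_A N)$ under the finite Tor dimension assumption. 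Combining these gives the desired base-change isomorphism.

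Finally, the stably uniform case with $\D_{\hat\solid}$ reduces to the $\D_\solid$-case: \cref{rslt:hat-solid-Nuc-same-as-solid-Nuc} identifies the nuclear subcategories on both sides, \cref{rslt:hat-solid-relative-tensor-in-PrL} identifies the base-change functors after passing to these subcategories, and the internal hom is unchanged on nuclear targets. The principal technical obstacle will be making precise the claim that the $A$-linear relative tensor of $K^\vee$ with a countable product of nuclears descends to an underlying $\Q_p$-linear computation, where \cref{rslt:nuclear-tensor-over-Q-p-solid-preserves-countable-limits} can be applied cleanly; this requires a careful description of $K^\vee$ for compact (resp.\ $\omega_1$-compact) $K$ in $\D_\solid(A,A^+)$ as a Fr\'echet object over $\Q_p$ equipped with its $A$-action, and a diagonal compatibility between the two tensor products.
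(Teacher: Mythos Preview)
Your overall architecture (prove (ii) first, then deduce (i), then transfer to $\D_{\hat\solid}$) matches the paper's, and your treatment of (i) as a countable limit of the compact case is exactly what the paper does. However, there is one genuine error and one genuine gap.

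\textbf{The error.} Compact objects in $\D_\solid(A,A^+)$ are \emph{not} dualizable in general: the compact generators $(A,A^+)_\solid[T]$ for profinite $T$ are dualizable only when $T$ is finite. So your parenthetical ``compact (hence dualizable)'' is wrong, and the formulas you justify by dualizability need a different argument. Fortunately they do hold: $\IHom(K_n,N)\cong K_n^\vee\otimes N$ is the internal form of the nuclearity of $N$ (as in \cref{solid-functional-analysis-over-q-p}(iii)), and the paper handles the base-change compatibility of $K_n^\vee$ by reducing to $K_n=(A,A^+)_\solid[T]$ and computing $\IHom(K_n,N)\cong (C(T,\Z_p)\otimes_{\Z_p}(A,A^+)_\solid)\otimes_{(A,A^+)_\solid} N$ explicitly.

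\textbf{The gap.} You correctly identify the obstacle: your argument for stability of $\D_\nuc(A)$ under countable products needs $K^\vee\otimes_A \prod_n M_n \to \prod_n(K^\vee\otimes_A M_n)$ to be an isomorphism, but \cref{rslt:nuclear-tensor-over-Q-p-solid-preserves-countable-limits} concerns tensors over $\Q_p$, not over $A$, and there is no automatic passage. The paper sidesteps this completely by a module-category reduction: after replacing $A^+$ by its minimal choice one has $\D_\solid(A,A^+)\cong\Mod_A\D_\solid(\Q_p)$, and then nuclearity of $A$ over $\Q_p$ gives $\D_\nuc(A)\cong\Mod_A\D_\nuc(\Q_p)$. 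Since products in a module category are computed on underlying objects, stability under countable products is then immediate from the $\Q_p$-case. For the base-change statement the paper uses the same idea in a different guise: on nuclear modules $(A',A'^+)_\solid\otimes_{(A,A^+)_\solid}-$ agrees with $A'\otimes_A-$, which one resolves via the bar complex
\[
A'\otimes_A M \;=\; \varinjlim_{n\in\Delta^\opp} A'\otimes_{\Q_{p\solid}} A^{\otimes n}\otimes_{\Q_{p\solid}} M,
\]
so each term is a $\Q_p$-linear tensor with a Fr\'echet space and preserves countable products by \cref{rslt:nuclear-tensor-over-Q-p-solid-preserves-countable-limits}; the simplicial colimit then commutes with right-bounded products by a spectral-sequence argument. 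This is the missing mechanism your proposal gestures at but does not supply.
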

\begin{proof}
For part (i), if $K$ is compact, we can moreover assume that $K=(A,A^+)_\solid[T]$ for some profinite set $T$. In this case,
$$\IHom_{\D_\solid(A,A^+)}(K,N) \cong (C(T,\Z_p) \otimes_{\Z_{p,\solid}} (A,A^+)_\solid) \otimes_{(A,A^+)_\solid} N,$$
which is nuclear and whose formation commutes with base change in $(A,A^+)$.

The case that $K$ is $\omega_1$-compact, i.e., a countable colimit of compact objects, follows from this case and part (ii). For (ii), we may assume that $A^+$ is minimal (because the inclusion $\D_\solid(A,A^+)\subseteq \D_\solid(A,\Z)\isom \mathrm{Mod}_A\D_\solid(\Q_p)$ preserves products). Then $\D_\nuc(A)\isom \mathrm{Mod}_A \D_\nuc(\Q_p)$ by nuclearity of $A$ in $\D_{\solid}(\Q_p)$ and the first part of (ii) follows from \cref{rslt:D-nuc-Q-p-stable-under-countable-products}. For the second part of (ii) we first note that $(A',A'^+)_\solid \tensor_{(A,A^+)_\solid} - = A' \tensor_A -$ on nuclear modules, because the right-hand side lands in nuclear $A'$-modules, which are automatically solid over $A'^+$ (e.g. by the explicit description of nuclear modules in \cite[Lemma 2.18]{anschuetz_mann_descent_for_solid_on_perfectoids}) Now note that for every nuclear $(A, A^+)_\solid$-module $M$ we have
\begin{align*}
    A' \tensor_A M = \varinjlim_{n\in\Delta^\opp} A' \tensor_{\Q_{p\solid}} A^{\tensor n} \tensor_{\Q_{p\solid}} M.
\end{align*}
By \cref{rslt:nuclear-tensor-over-Q-p-solid-preserves-countable-limits} it follows that $A' \tensor_{\Q_{p\solid}} A^{\tensor n} \tensor_{\Q_{p\solid}} -$ preserves countable products for each $n$, and clearly this functor is right-bounded. This implies that $A' \tensor_A -$ preserves right-bounded countable products, because a uniformly right-bounded colimit over $\Delta^\opp$ commutes with products (as it is computed by a spectral sequence, cf. \cite[Proposition~1.2.4.5]{lurie-higher-algebra}). To prove the last part of the claim, we note that in the case that $A' \tensor_A -$ has finite Tor dimension we can formally deduce that this functor preserves all countable products (and hence all countable limits) by passing through the canonical isomorphism $M = \varinjlim_n \tau^{\le n} M$.

It remains to settle the statements for $\D_{\hat\solid}$ if $A,A'$ are stably uniform. The argument for the first assertion is the same. For the second assertion it suffices to show that a countable product in $\D_{\hat\solid}(A,A^+)$ of nuclear objects $N_n\in \D_\nuc(A)\subseteq \D_{\hat\solid}(A,A^+)$, $n\in \N$ is again nuclear. This assertion follows from the second point in \Cref{sec:defin-d_hats--4-comparison-for-modified-modules} because the functor $\alpha_\ast\colon \D_{\hat\solid}(A,A^+)\to \D_\solid(A,A^+)$ preserves and reflects products and nuclearity.
\end{proof}

From the above results on $p$-adic functional analysis we can deduce the following consequence for nuclear sheaves on the Fargues--Fontaine curve.

\begin{proposition} \label{rslt:sigma-commutes-countable-limits}
Let $S$ be a small v-stack. The category $\D_{(0,\infty)}^{\mathrm{nuc}}(S)$ is stable under countable limits, and the functor $\sigma_{\Q_p}\colon  \DFF(S,\Q_p)\to \D(S_v,\Q_p)$ commutes with countable limits of nuclear objects.
\end{proposition}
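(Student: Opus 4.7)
My plan is to reduce both assertions to v-local statements about nuclear modules over the Huber pairs cutting out $\mathcal{Y}_{(0,\infty),S}$ and then to invoke the functional-analytic results of \Cref{solid-functional-analysis-over-q-p,rslt:solid-base-change-preserves-countable-limits-of-nuclears}.

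First, since $\D_{(0,\infty)}^\nuc(-)$ is a hypercomplete v-sheaf (by the argument in \Cref{sec:texorpdfstr-desc-tex-nuclear-sheaves-for-hypercomplete-v-sheaf}) and $\sigma_{\Q_p}$ is a natural transformation of v-sheaves of categories, both statements are v-local on $S$. I reduce to the case where $S$ is an affinoid perfectoid space admitting a morphism of finite $\dimtrg$ to a totally disconnected space, so that $\D_{(0,\infty)}(S) = \D_{\hat\solid}(\mathcal{Y}_{(0,\infty),S})$. For stability under countable limits, analytic descent of nuclear objects (\Cref{rmk:nuclear-objects-on-stably-uniform-adic-space}) further reduces to the affinoid opens $\mathcal{Y}_{I,S} = \Spa(A_I,A_I^+)$ for $I\subseteq (0,\infty)$ a compact interval; these are stably uniform Huber pairs over $(\Q_p, \Z_p)$, so the first part of \Cref{rslt:solid-base-change-preserves-countable-limits-of-nuclears} applies. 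Restriction along a rational open immersion $\mathcal{Y}_{I',S}\subseteq \mathcal{Y}_{I,S}$ is flat, hence commutes with the relevant countable limits, so the local picture glues to the global one.

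For the commutation of $\sigma_{\Q_p}$ with countable limits, let $M = \varprojlim_n M_n$ with $M_n$ nuclear; by the first part $M$ is nuclear. Evaluating the natural map $\sigma_{\Q_p}(M) \to \varprojlim_n \sigma_{\Q_p}(M_n)$ on sections over any $S' \to S$ and using that $\Hom_{\DFF(S',\Q_p)}(1,-)$ commutes with limits, the assertion reduces to showing that the pullback $f^\ast\colon \DFF(S,\Q_p) \to \DFF(S',\Q_p)$ commutes with countable limits of nuclear objects. A further v-local reduction lets me assume both $S$ and $S'$ are strictly totally disconnected. Analytic descent on both sides then reduces the problem to showing that the Huber-pair base changes $(A_I,A_I^+) \to (A'_I,A'^+_I)$ attached to $S'\to S$ commute with countable limits of nuclear modules; by the second part of \Cref{rslt:solid-base-change-preserves-countable-limits-of-nuclears}, it suffices to check that these base changes are of finite Tor dimension.

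The main obstacle is precisely this finite Tor dimension statement, which is where the characteristic-zero hypothesis is essential. By \Cref{sec:farg-font-curv-1-commutation-with-pullbacks} and \Cref{sec:farg-font-curv-1-base-change-agrees-with-base-change-in-analytic-rings}, the map $A_I \to A'_I$ is obtained by an underived completed base change along $S' \to S$; combining this with the flatness of Fréchet and Smith $\Q_p$-spaces (\Cref{solid-functional-analysis-over-q-p}(i)) and the commutation of nuclear tensor products over $\Q_p$ with countable limits (\Cref{solid-functional-analysis-over-q-p}(iv)) should yield the required finite Tor-dimension. I expect this last verification, rather than the abstract descent reductions, to be the principal technical point of the argument.
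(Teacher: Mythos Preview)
Your overall strategy matches the paper's: v-descent to a good affinoid $S$, analytic localization to the $\mathcal{Y}_{I,S}$, and then reduction of the commutation statement to showing that the base change $(A_I,A_I^+)_\solid \to (A'_I,A'^+_I)_\solid$ has finite Tor dimension so that \Cref{rslt:solid-base-change-preserves-countable-limits-of-nuclears} applies. You have correctly identified this Tor-dimension bound as the heart of the matter.

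The gap is your proposed verification of finite Tor dimension. Flatness of Fr\'echet and Smith spaces over $\Q_p$ (\Cref{solid-functional-analysis-over-q-p}(i),(iv)) controls tensor products over $\Q_{p\solid}$, not over $(A_I,A_I^+)_\solid$; these facts are already what go into the proof of \Cref{rslt:solid-base-change-preserves-countable-limits-of-nuclears} to obtain the right-bounded case, and they do not upgrade to a Tor-dimension bound for the relative tensor product over $A_I$. Likewise, \Cref{sec:farg-font-curv-1-commutation-with-pullbacks} and \Cref{sec:farg-font-curv-1-base-change-agrees-with-base-change-in-analytic-rings} concern fiber products and the specific base change to $\Z_{p,\infty}$, not the general map $A_I\to A'_I$ induced by an arbitrary $S'\to S$; they give you no Tor bound. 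The paper fills exactly this gap by a different route: one first base-changes to $\Z_{p,\infty}$ (descendable, $p$-completely faithfully flat, so finite Tor dimension can be checked there), where $\mathcal{Y}_{I,S,\infty}$ becomes perfectoid with an explicit integral model $\mathcal{O}^+(\mathcal{Y}_{I,S,\infty})\overset{a}{\cong} A_{\mathrm{inf}}(R^+)[(p/[\varpi])^{1/p^\infty}]^{\wedge_{[\varpi]}}$; then by \cite[Corollary~2.14]{anschuetz_mann_descent_for_solid_on_perfectoids} one reduces modulo $[\varpi]$, where the map becomes $(R^+/\varpi)[T^{1/p^\infty}]_\solid \to (R'^+/\varpi)[T^{1/p^\infty}]_\solid$, and the required bound is supplied by \cite[Proposition~3.1.20]{mann-mod-p-6-functors} (using that $S$ is totally disconnected). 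This reduction to the mod-$p$ perfectoid setting is the essential input you are missing.
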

\begin{proof}
By v-descent of both sides, we can assume that $S$ is strictly totally disconnected. By definition of $\sigma_{\Q_p}$ and \cref{rslt:solid-base-change-preserves-countable-limits-of-nuclears}, the claim is implied if we can show that for any $S'\in S_v$ and any compact interval $I\in [0,\infty)$ with rational ends, the morphism $\mathcal{Y}_{I,S'} \to \mathcal{Y}_{I,S}$ has finite Tor dimension for $\D_{\hat\solid}$. Using \cite[Corollary 2.26]{anschuetz_mann_descent_for_solid_on_perfectoids} one can check that the property of having finite Tor dimension is stable under base change, e.g.,  base change to rational opens by shrinking $I$. Moreover, having finite Tor dimension can be checked after base change to $\Z_{p,\infty}$ because $\Z_p\to \Z_{p,\infty}$ is $p$-completely faithfully flat and descendable in $\D_\solid(\Z_p)$. Write $I=[r,s]$, $S=\Spa(R,R^+)$, $S'=\Spa(R',R^{'+})$. As noted above, we may assume $r=0$. Let $\varpi$ be a pseudo-uniformizer of $S$. Replacing $\varpi$, we may assume $s=1$. Then one has an almost isomorphism (\cite[Proposition II.1.1]{fargues-scholze-geometrization})
$$
\mathcal{O}^+(\mathcal{Y}_{I,S,\infty}) \overset{a}{\cong} A_{\mathrm{inf}}(R^+)\left[(\frac{p}{[\varpi]})^{1/p^\infty}\right]^{\wedge_{[\varpi]}}.
$$
and similarly for $\mathcal{O}^+(\mathcal{Y}_{I,S',\infty})$. It suffices to show that the morphism of adic analytic rings (in the sense of \cite[Definition 2.1]{anschuetz_mann_descent_for_solid_on_perfectoids} for the ideal generated by $[\varpi]$)
$$
\mathcal{O}^+(\mathcal{Y}_{I,S,\infty})_\solid \to \mathcal{O}^+(\mathcal{Y}_{I,S',\infty})_\solid
$$
has finite Tor dimension in the almost version of $\D_{\hat\solid}$. By \cite[Corollary 2.14]{anschuetz_mann_descent_for_solid_on_perfectoids}, this can be checked modulo a power of $[\varpi]$.
Thanks to the explicit presentation above, one sees that one has an almost isomorphism
$$
\mathcal{O}^+(\mathcal{Y}_{I,S,\infty})_\solid/[\varpi] \overset{a}{\cong} (R^+/[\varpi]) [T^{1/p^\infty}]_\solid,
$$
and similarly for $S'$. Hence the desired finite Tor dimension follows by base change from \cite[Proposition 3.1.20]{mann-mod-p-6-functors} (which uses that $S$ is totally disconnected). 
\end{proof}

\begin{proposition}
\label{sec:prel-another-internal-hom-formula-for-big-nuclear-sheaves}
Let $S$ be a small v-stack, and let $N \in \DFF(S,\Q_p)$ be a nuclear object. Let $V\in \D_\solid(\Q_p)$ be basic nuclear. Then the natural morphism
$$
 \sigma_{\Q_p} (\IHom_{\DFF(S,\Q_p)}(V\otimes 1,N)) \to \IHom_{\D(S_v,\Q_p)}(V\otimes_{\Q_p} 1,\sigma_{\Q_p}(N))
 $$
 is an isomorphism.
\end{proposition}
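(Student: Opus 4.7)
The natural map of the claim factors through \Cref{sec:prel-another-internal-hom-formula-for-big-nuclear-sheaves}'s target in the following way. The lax symmetric monoidal structure of $\sigma_{\Q_p}$ (see \Cref{rslt:sigma-is-lax-symmetric-monoidal}), together with the identification $\sigma_{\Q_p}(1) = 1$ implicit in the construction of $\sigma$, supplies a canonical arrow $V \otimes_{\Q_p} 1 \to \sigma_{\Q_p}(V \otimes 1)$. Applying $\IHom_{\D(S_v,\Q_p)}(-,\sigma_{\Q_p}(N))$, and using \Cref{sec:prel-subs-internal-hom-formula-for-big-nuclear-sheaves} with $P = 1$, yields an isomorphism $\IHom(\sigma_{\Q_p}(V \otimes 1),\sigma_{\Q_p}(N)) \isoto \IHom(V \otimes_{\Q_p} 1,\sigma_{\Q_p}(N))$. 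Thus the proposition reduces to checking that the comparison map
\[
    \sigma_{\Q_p}\bigl(\IHom_{\DFF(S,\Q_p)}(V \otimes 1, N)\bigr) \to \IHom_{\D(S_v,\Q_p)}\bigl(\sigma_{\Q_p}(V \otimes 1), \sigma_{\Q_p}(N)\bigr)
\]
coming from lax monoidality is an isomorphism. By v-descent on both sides, we may assume $S$ is a strictly totally disconnected spatial diamond, and by \Cref{sec:prel-subs-sigma-maps-nuclear-objects-to-big-nuclear-sheaves} all terms live naturally in $\D_\solid(S_\Qproet,\Q_p)$ (where $\IHom$ agrees with that in $\D(S_v,\Q_p)$, since full subcategories closed under limits inherit the internal Hom).

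Suppose first that $V$ is compact, hence dualizable in $\D_\solid(\Q_p)$. The pullback $V' \mapsto V' \otimes 1$ from $\D_\solid(\Q_p)$ to $\DFF(S,\Q_p)$ is symmetric monoidal, so $V \otimes 1$ is dualizable with dual $V^\vee \otimes 1$, giving $\IHom(V \otimes 1, N) = V^\vee \otimes N$. Compact objects of $\D_\solid(\Q_p)$ are generated by $\Q_{p,\solid}[T]$ for profinite sets $T$, whose duals $C(T,\Q_p)$ are Banach; hence $V^\vee$ is nuclear, and so is $V^\vee \otimes N$. By the $\D_\nuc(\Q_p)$-linearity of $\sigma_{\Q_p}$ on nuclear objects (\Cref{sec:prel-subs-linearity-of-sigma-to-solid}) we obtain
\[
    \sigma_{\Q_p}(V^\vee \otimes N) = V^\vee \otimes_{\Q_p}^\solid \sigma_{\Q_p}(N).
\]
The same linearity identifies $\sigma_{\Q_p}(V \otimes 1)$ with $V \otimes_{\Q_p}^\solid 1$, which is dualizable in $\D_\solid(S_\Qproet,\Q_p)$ with dual $V^\vee \otimes_{\Q_p}^\solid 1$, so the right-hand side equals $V^\vee \otimes_{\Q_p}^\solid \sigma_{\Q_p}(N)$ as well, handling the compact case.

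For general basic nuclear $V$, present it as a countable sequential colimit $V = \colim_n V_n$ of compact objects with trace-class transition maps (this is the structural characterisation of basic nuclear objects in $\D_\solid(\Q_p)$ underlying the results collected in \Cref{solid-functional-analysis-over-q-p}). Then
\[
    \IHom_{\DFF(S,\Q_p)}(V \otimes 1, N) = \lim_n \IHom_{\DFF(S,\Q_p)}(V_n \otimes 1, N) = \lim_n (V_n^\vee \otimes N),
\]
a countable limit of nuclear objects of $\DFF(S,\Q_p)$. By \Cref{rslt:sigma-commutes-countable-limits}, $\sigma_{\Q_p}$ commutes with this limit, giving $\lim_n \sigma_{\Q_p}(V_n^\vee \otimes N)$. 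Dually, the right-hand side equals $\lim_n \IHom(V_n \otimes_{\Q_p} 1, \sigma_{\Q_p}(N))$, since $\IHom$ carries colimits in the first argument to limits. Comparing termwise via the already-established compact case finishes the proof.

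\textbf{Main obstacle.} The principal subtlety lies in the passage from compact to general basic nuclear $V$: one must write $V$ as a countable diagram of compacts whose associated diagram $\{V_n^\vee \otimes N\}$ consists of nuclear objects of $\DFF(S,\Q_p)$, and then invoke the countable-limit commutation \Cref{rslt:sigma-commutes-countable-limits}. The latter relies crucially on the $\Q_p$-specific functional-analytic facts from \Cref{solid-functional-analysis-over-q-p}—the flatness of Fréchet spaces, the stability of nuclear objects under countable products over $\Q_p$, and the Tor-finiteness of the base changes $\ri(\mathcal{Y}_{I,S}) \to \ri(\mathcal{Y}_{I,S'})$—none of which have analogues in characteristic $p$. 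This explains both why the hypothesis ``basic nuclear'' is imposed and why the statement is specific to $\Q_p$-coefficients.
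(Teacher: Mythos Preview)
Your overall architecture—write $V=\colim_n W_n$ along trace-class maps, invoke \Cref{rslt:sigma-commutes-countable-limits} to pull $\sigma_{\Q_p}$ through the resulting countable limit, and reduce to a termwise identity—is exactly the paper's. The gap is in your handling of the compact terms.

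The assertion ``$V$ compact, hence dualizable in $\D_\solid(\Q_p)$'' is false. The compact generators $\Q_{p,\solid}[T]$ for infinite profinite $T$ are not dualizable: the would-be coevaluation $\Q_p\to \Q_{p,\solid}[T]\otimes C(T,\Q_p)$ does not exist. This is precisely why \Cref{solid-functional-analysis-over-q-p}(iii) requires the \emph{target} to be nuclear rather than holding for all targets. Two consequences: first, your identity $\IHom_{\DFF}(W_n\otimes 1,N)=W_n^\vee\otimes N$ happens to be correct, but by nuclearity of $N$ (cf.\ \cref{rslt:internal-hom-from-nuclear-stable-under-base-change}), not by dualizability of $W_n$. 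Second, and fatally, your identification $\sigma_{\Q_p}(W_n\otimes 1)=W_n\otimes^\solid 1$ via \Cref{sec:prel-subs-linearity-of-sigma-to-solid} is illegitimate—that lemma gives $\D_\nuc(\Q_p)$-linearity, and compact non-dualizable $W_n$ are not nuclear. Nor is $W_n\otimes^\solid 1$ dualizable in $\D_\solid(S_\Qproet,\Q_p)$, so your computation of the right-hand side in the compact case collapses.

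The paper never attempts to evaluate $\sigma_{\Q_p}(W_n\otimes 1)$. Instead it keeps the original right-hand side $\IHom_{\D(S_v,\Q_p)}(V\otimes_{\Q_p}1,\sigma_{\Q_p}(N))$ (for the full basic nuclear $V$) and computes its sections over a strictly totally disconnected $S'$: pass to $S'_\qproet$, use that $\varepsilon_{S',\ast}\sigma_{\Q_p}(N)$ is solid (even nuclear) to replace the naive tensor $V\otimes_{\Q_p}1$ by its solidification, and identify the result with $\Hom_{\D_\nuc(\Q_p)}(V,\Gamma(S',\sigma_{\Q_p}(N)))=\varprojlim_n W_n^\vee\otimes^\solid_{\Q_p}\Gamma(S',\sigma_{\Q_p}(N))$. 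The left-hand side, computed as you do (but justified via nuclearity of $N$ and of $W_n^\vee$), has the same sections, and one concludes.
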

\begin{proof}
We may assume that $S$ is a spatial diamond, and replace $S_v$ by $S_{\Qproet}$. By \Cref{rslt:sigma-commutes-countable-limits}, the functor $\sigma_{\Q_p}$ commutes with countable limits. We write $V=\varinjlim_{n\in \N} W_n$ with $W_n\in \D_\solid(\Q_p)$ compact and each $W_n\to W_{n+1}$ of trace class (in $\D_\solid(\Q_p)$). We can conclude that
\begin{align*}
    &\sigma_{\Q_p} (\IHom_{\DFF(S,\Q_p)}(V\otimes 1,N)) \\
    &\qquad= \varprojlim\limits_{n\in \N} \sigma_{\Q_p}(\IHom_{\DFF(S,\Q_p)}(W_n\otimes 1, N))\\
    &\qquad= \varprojlim\limits_{n\in \N} \sigma_{\Q_p}(W_n^\vee\otimes N)\\
    &\qquad= \varprojlim\limits_{n\in \N} W_n^\vee\otimes_{\Q_p}^\solid\sigma_{\Q_p}(N),
\end{align*}
using that $N$ and $W_n^\vee$ are nuclear and \Cref{sec:prel-subs-linearity-of-sigma-to-solid}. Let $S'\in S_\Qproet$ be a spatial diamond. By \Cref{solid-functional-analysis-over-q-p} and the $\D_\nuc(\Q_p)$-linearity of $\varepsilon_{S',\ast}$ we can conclude that this last term has sections $\varprojlim_{n\in \N} W_n^\vee\otimes_{\Q_p}^\solid\Gamma(S',\sigma_{\Q_p}(N))$ over $S'$.

For the right-hand side of the claim of this lemma, we first note that $V\otimes_{\Q_p} 1$ denotes the naive tensor product on $\D(S_\Qproet,\Q_p)$ (and, by abuse of notation, $V$ denotes the pullback of $V\in \D_\solid(\Q_p)\subseteq \D(\ast_\proet,\Q_p)$ along $S_\Qproet\to \ast_\proet$).
Let $S'\in S_\Qproet$ be a strictly totally disconnected perfectoid space. Then
\begin{align*}
    &\Gamma(S',\IHom_{\D(S_\Qproet,\Q_p)}(V\otimes_{\Q_p} 1, \sigma_{\Q_p}(N))) \\
    &\qquad= \Hom_{\D(S'_\Qproet,\Q_p)}(\varepsilon^\ast_{S'}(V\otimes_{\Q_p} 1),\sigma_{\Q_p}(N)) \\
    &\qquad= \Hom_{\D(S'_\qproet,\Q_p)}(V\otimes_{\Q_p} 1,\varepsilon_{S',\ast} \sigma_{\Q_p}(N))
\end{align*}
with $V\otimes_{\Q_p} 1$ now denoting the naive tensor product on $S'_\qproet$. As $\varepsilon_{S',\ast}\sigma_{\Q_p}(N)$ is solid (even nuclear), we may replace $V\otimes_{\Q_p}1$ by the solid tensor product $V\otimes_{\Q_p}^\solid 1$, which is nuclear. We can conclude that the last $\Hom$ identifies with
\begin{align*}
    \Hom_{\D_\nuc(C(S',\Q_p))}(V\otimes^\solid_{\Q_p} C(S',\Q_p),\Gamma(S',\sigma_{\Q_p}(N))) = \Hom_{\D_\nuc(\Q_p)}(V,\Gamma(S',\sigma_{\Q_p}(N))).
\end{align*}
Here we abuse notation and identify denote by $\Gamma(S',\sigma_{\Q_p}(N))$ the nuclear $C(S',\Q_p)$-module associated to the nuclear sheaf $\varepsilon_{S',\ast}\sigma_{\Q_p}(N)$ on $S'_\qproet$. Now the above expression identifies with
\[
    \varprojlim\limits_{n\in \N} W_n^\vee\otimes^\solid_{\Q_p} \Gamma(S',\sigma_{\Q_p}(N))
  \]
  as desired.
\end{proof}

We can now put everything together and finally prove the main result of this subsection, providing a relation between $\sigma_{\Q_p}$ and internal Hom's.

\begin{proof}[Proof of \Cref{rslt:sigma-Qp-comparison-of-internal-hom}.]
Assume first that $M$ is dualizable. Then the assertion follows from \cite[Corollary 3.10]{anschutz2021fourier}. 
Next, assume that $M$ is of the form $M=V \otimes P$ with $V \in D_\solid(\Q_p)$ basic nuclear and $P\in \DFF(S,\Q_p)$ dualizable. Using \Cref{sec:prel-subs-internal-hom-formula-for-big-nuclear-sheaves}, we get
\begin{align*}
    & \IHom_{\D(S_v,\Q_p)}(\sigma_{\Q_p}(M),\sigma_{\Q_p}(N)) \\
    &\qquad= \IHom_{\D(S_v,\Q_p)}(\sigma_{\Q_p}(P),\IHom_{\D(S_v,\Q_p)}(V\otimes_{\Q_p} 1,\sigma_{\Q_p}(N))) \\
    &\qquad= \IHom_{\D(S_v,\Q_p)}(V\otimes_{\Q_p} 1,\IHom_{\D(S_v,\Q_p)}(\sigma_{\Q_p}(P),\sigma_{\Q_p}(N))).
\end{align*}
By the dualizable case just treated, this can be rewritten as 
\[ 
    \IHom_{\D(S_v,\Q_p)}(V\otimes_{\Q_p} 1,\sigma_{\Q_p}(\IHom_{\DFF(S,\Q_p)}(P,N))),
\]
which by \Cref{sec:prel-another-internal-hom-formula-for-big-nuclear-sheaves} is naturally isomorphic to
\[ 
    \sigma_{\Q_p} \IHom_{\DFF(S,\Q_p)}(V\otimes 1,\IHom_{\DFF(S,\Q_p)}(P,N)) = \sigma_{\Q_p} \IHom_{\DFF(S,\Q_p)}(V\otimes P,N), 
\]
as desired.
 
Finally we come to the general case. Write $M_n= V_n \otimes P_n$ for all $n$. Since $\sigma_{\Q_p}$ commutes with (arbitrary) colimits, the right-hand side in the statement of \Cref{rslt:sigma-Qp-comparison-of-internal-hom} identifies with
$$
    \varprojlim_n \IHom_{\D(S_v,\D_\solid(\Q_p))}(\sigma_{\Q_p}(M_n),\sigma_{\Q_p}(N)).
$$
The left-hand side can trivially be rewritten as 
$$
\sigma_{\Q_p}(\varprojlim_n \IHom_{\DFF(S,\Q_p)}(M_n,N)).
$$
Hence, because by the previous step we already know the statement for $M_n$ and $N$ for every $n$, it suffices to show that we can pull out the limit in the above expression. But we already noticed that $\sigma_{\Q_p}$ commutes with countable limits of nuclear objects (\Cref{rslt:sigma-commutes-countable-limits}), and each $\IHom_{\DFF(S,\Q_p)}(M_n,N)$ is nuclear (cf. \Cref{rslt:internal-hom-from-nuclear-stable-under-base-change}). This finishes the proof.  
\end{proof}

\subsection{Relation to v-sheaves: the mod \texorpdfstring{$p$}{p}-case} \label{sec:relation-to-v-sheaves-mod-p-case}

Although it will not be necessary for the rest of this paper, we briefly pause in this subsection and restrict our attention to the mod $p$-case and analyze the functor
\[
\sigma_{\F_p}\colon \DFF(S,\mathbb{F}_p) = \D_{\hat\solid}(\mathcal{O}_{S/\varphi^\Z})\to \D(S_v,\F_p)
\]
or variants thereof.

We start by calculating some examples.

\begin{example} \label{sec:relation-v-sheaves-3-examples-for-sigma-f-p}
Let $S=\Spa(R,R^+)$ be an affinoid perfectoid space over $\F_p$ which admits a morphism of finite $\dimtrg$ to a totally disconnected perfectoid space. Then $\DFF(S,\mathbb{F}_p)\cong \mathrm{Mod}_{R[F^{\pm 1}]} \D_{\hat\solid}(R,R^+)$ identifies with the category of (solid) $\varphi$-modules over $R$, i.e. with pairs $(M,\varphi_M)$ with $M\in \D_{\hat\solid}(R,R^+)$ and $\varphi_M\colon \varphi^\ast M\to M$ an isomorphism, or equivalently with $R[F^{\pm 1}]$-modules in $\D_{\hat\solid}(R,R^+)$. Here, $R[F^{\pm 1}]$ denotes the non-commutative $R$-algebra with $Fr=\varphi(r)F$ for $r\in R$. In the following we compute $\sigma_{\F_p}$ in several examples:
\begin{examplesenum}
    \item Assume $(M,\varphi_M) = (R,\varphi)$. Then we have
    \[
        \Hom_\varphi(1,M)=[R \xto{\varphi-1} R]=\Gamma(S_\et,\F_p).
    \]
    Varying $S$ this implies that $\sigma_{\mathbb{F}_p}(R,\varphi)=\F_p$ is the constant sheaf $\F_p$ on $S_v$. 
    \item Assume $(M,\varphi_M)=(R[F^{\pm 1}],F\cdot )$. Then
    \[
        \Hom_\varphi( 1,M)=[R[F^{\pm 1}]\xto{F-1}R[F^{\pm 1}]]\cong R[-1]
    \]
    by the map $\sum_{n\in \Z}r_nF^n\mapsto \sum_{n\in \Z}\varphi^{-n}(r_n)$ in cohomological degree $1$ (here $r_n\in R$ for $n\in \Z$). This implies that $\sigma_{\mathbb{F}_p}(R[F^{\pm 1}],F)$ is the structure sheaf $S'\mapsto \mathcal{O}(S')$.

    \item Assume $(M,\varphi_M)=(R\langle F^{\pm 1}\rangle,F\cdot)$, where $R\langle F^{\pm 1}\rangle=R^+[F^{\pm 1}]^\wedge_\pi[1/\pi]$ for a pseudo-uniformizer $\pi \in R$ (and the $\pi$-adic completion resp.\ localization is as a left module over $R^+$). Then
    \begin{align*}
        \Hom_\varphi(1,M) &= [R\langle F^{\pm 1}\rangle\xto{F-1}R \langle F^{\pm 1}\rangle]\\
        &\isom R/R^{\circ\circ})[-1]
    \end{align*}
    by \cite[Lemma 3.10]{anschutz2021fourier}. This implies that $\sigma_{\mathbb{F}_p}(R\langle F^{\pm 1}\rangle,F)$ is the quotient sheaf $\mathcal{O}/\mathcal{O}^{\circ\circ}$.
\end{examplesenum}
\end{example}

We now recall the following calculation on the v-site. It might look surprising at first, but we warn the reader that $R\langle F^{\pm 1}\rangle$ is not a ring.\footnote{For example, one cannot multiply $\sum_{n\geq 0}\pi^n F^n$ with $\pi^{-1}$: the product ought to be $\sum_{n\geq 0}\pi^n\varphi^n(\pi^{-1})F^n=\sum_{n\geq 0}\pi^{n-p^n}F^n$, which does not lie in $R\langle F^{\pm 1}\rangle$.}

\begin{proposition} \label{rslt:Hom-from-O+-to-O-on-v-site}
Let $S=\Spa(R,R^+)$ be an affinoid perfectoid space of characteristic $p$. Consider $\mathcal{O}^+, \mathcal{O} \in \D(S_v,\F_p)$. Then $\Hom_{\D(S_v,\F_p)}(\mathcal{O}^+,\mathcal{O})$ is given by $R\langle F^{\pm 1}\rangle$. Here, the element $F$ maps to the natural inclusion $\mathcal{O}^+\to \mathcal{O}$. 
\end{proposition}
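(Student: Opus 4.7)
The first step is to construct the natural map $\Psi\colon R\langle F^{\pm 1}\rangle \to \Hom_{\D(S_v,\F_p)}(\mathcal{O}^+,\mathcal{O})$. For $r \in R^+$ and $n \in \Z$, the element $r F^n$ should be sent to the v-sheaf morphism $\mathcal{O}^+ \to \mathcal{O}$ that acts on sections by $x\mapsto r\cdot \varphi^{n-1}(x)$, where we use that in characteristic $p$ the sheaf $\mathcal{O}$ is perfect, so $\varphi^{-1}$ (and hence $\varphi^{n-1}$ for every $n\in\Z$) is well-defined. This gives a map $R^+[F^{\pm 1}]\to \Hom(\mathcal{O}^+,\mathcal{O})$. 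To extend it across the $\pi$-adic completion and localization defining $R\langle F^{\pm 1}\rangle$, one observes that for $x\in \mathcal{O}^+$ the iterates $\varphi^n(x)\in \mathcal{O}^+$ are uniformly bounded (they lie in $\mathcal{O}^+$, regardless of $n$), so any series $\sum r_n F^n$ with $v(r_n)\to\infty$ as $|n|\to\infty$ converges to a well-defined map, $\pi$-adically in the target. Equivariance for the Frobenius-twisted $R$-algebra structure on both sides is immediate from $Fr=\varphi(r)F$ and the commutativity of Frobenius with multiplication. I would package this as a morphism in $\DFF(S,\F_p)$ from the $\varphi$-module $(R\langle F^{\pm 1}\rangle, F\cdot)$ to $\mathrm{Sol}_{\F_p,S}(\IHom_{\D(S_v,\F_p)}(\mathcal{O}^+,\mathcal{O}))$.

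The next step is to show that $\Psi$ is an isomorphism. Since both sides satisfy v-descent in $S$ (the right-hand side trivially, the left-hand side because $R\langle F^{\pm 1}\rangle$ is defined by a formula that respects affinoid perfectoid v-covers after solidification), one reduces to the case that $S$ is strictly totally disconnected, and in fact to the case $S=\Spa(C,\ri_C)$ for $C$ an algebraically closed perfectoid field of characteristic $p$ (using that one can take $\pi_0(S)$-valued constants and use the explicit description of solid structure sheaves). Under this reduction one wants to test the map against a generating class of v-sheaves. Here I would invoke the fiber sequence on $S_v$
\begin{equation*}
  \mathcal{O}^{\circ\circ} \longto \mathcal{O}^+ \longto \mathcal{O}^+/\mathcal{O}^{\circ\circ},
\end{equation*}
together with the dual sequence $\mathcal{O}^+\to \mathcal{O}\to \mathcal{O}/\mathcal{O}^+$, and apply $\IHom(-,\mathcal{O})$. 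The quotient $\mathcal{O}^+/\mathcal{O}^{\circ\circ}$ is the residue field sheaf, which is a perfect $\F_p$-algebra object and for which $\IHom$ into $\mathcal{O}$ is tractable by Artin--Schreier-type arguments (or directly from Example~\ref{sec:relation-v-sheaves-3-examples-for-sigma-f-p}).

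The crucial input is the calculation of Example~\ref{sec:relation-v-sheaves-3-examples-for-sigma-f-p}(c), which identifies $\sigma_{\F_p}(R\langle F^{\pm 1}\rangle, F\cdot)=\mathcal{O}/\mathcal{O}^{\circ\circ}$, together with (b), which identifies $\sigma_{\F_p}(R[F^{\pm 1}], F\cdot)=\mathcal{O}$. Using the lax symmetric monoidal structure of $\sigma_{\F_p}$ (the analog of \Cref{rslt:sigma-is-lax-symmetric-monoidal} in the mod-$p$ setting), a Breen--Deligne type resolution presents $\mathcal{O}^+$ as a colimit of terms built from $\mathcal{O}^{\circ\circ}$ and finite free $\mathcal{O}^{+a}/\pi^n$-modules whose internal homs into $\mathcal{O}$ can be computed directly, and the answers assemble into exactly the $\pi$-adic completion $R^+[F^{\pm 1}]^\wedge_\pi[1/\pi]=R\langle F^{\pm 1}\rangle$. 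This matches $\Psi$ by naturality, and compatibility with the lax structure forces $\Psi$ to be an isomorphism on $\mathrm{H}^0$.

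The main obstacle I expect is the vanishing of higher Ext's, $\Ext^i_{\D(S_v,\F_p)}(\mathcal{O}^+,\mathcal{O})=0$ for $i\neq 0$. For $i>0$, I would deduce this from the fiber sequences above by checking that after applying $\IHom(-,\mathcal{O})$ each term concentrates in degree $0$: for $\mathcal{O}/\mathcal{O}^+$ one reduces modulo powers of $\pi$ and uses vanishing statements for almost mathematics on $S_v$ (specifically, that $\mathcal{O}/\mathcal{O}^+$ is an $\F_p[1/\pi]$-module in a suitable sense whose extensions into $\mathcal{O}$ collapse), while for $\mathcal{O}^{\circ\circ}$ one exploits perfectness of the sheaves and the equality $\mathcal{O}^{\circ\circ}\cong \colim_n \pi^{1/p^n}\mathcal{O}^+$ to identify the corresponding $\IHom$ with a limit of copies of $R\langle F^{\pm 1}\rangle$ shifted by divisibility, which is again in degree $0$. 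The limit/colimit bookkeeping here is the technical heart of the proof; modulo this, the identification with $R\langle F^{\pm 1}\rangle$ is forced by matching the $\F_p[F^{\pm 1}]$-module structures on both sides and tracking the distinguished element $F$ to the inclusion $\mathcal{O}^+\to \mathcal{O}$.
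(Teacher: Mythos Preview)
Your proposal has a genuine gap: the argument is circular and does not supply the key input.

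The paper's proof is a two-line citation: \cite[Proposition~3.7]{anschutz2021fourier} computes $\Hom_{\D(S_v,\F_p)}(\mathcal{O}^+,\mathcal{O}^+)$ as almost isomorphic to $R^+\langle F^{\pm 1}\rangle$ via Breen's resolution, and then \cite[Corollary~3.8]{anschutz2021fourier} inverts $\pi$ on the right $\mathcal{O}^+$ to get the result. The substantial work is Breen's computation of $\Ext^*$ between abelian group objects on a site with enough perfect rings; this is what you must either invoke or replace.

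Your proposal does neither. First, the examples you cite from \Cref{sec:relation-v-sheaves-3-examples-for-sigma-f-p} compute $\sigma_{\F_p}(M)$, i.e.\ $\varphi$-invariants of sections of $\varphi$-modules --- they say nothing about $\Hom_{\D(S_v,\F_p)}(\mathcal{O}^+,-)$, which is a completely different functor (in fact, item (c) there itself relies on \cite[Lemma~3.10]{anschutz2021fourier}, drawing from the same well). Second, your fiber-sequence reduction is circular: you propose to compute $\IHom(\mathcal{O}^{\circ\circ},\mathcal{O})$ by writing $\mathcal{O}^{\circ\circ}=\colim_n \pi^{1/p^n}\mathcal{O}^+$ and identifying the result with ``a limit of copies of $R\langle F^{\pm 1}\rangle$'' --- but that last identification \emph{is} the statement you are proving, applied to $\mathcal{O}^+$. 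The residue-sheaf term $\mathcal{O}^+/\mathcal{O}^{\circ\circ}$ is no easier: computing $\IHom$ from it into $\mathcal{O}$ still requires the Breen-type input. Third, your v-descent claim for the assignment $S\mapsto R\langle F^{\pm 1}\rangle$ is not justified; this object is neither nuclear nor obviously descendable. Finally, the Breen--Deligne resolution is mentioned but never actually deployed: that resolution is precisely how one presents $\mathcal{O}^+$ by free abelian groups on powers of $\mathcal{O}^+$, and carrying out the resulting $\Hom$ computation \emph{is} the proof of \cite[Proposition~3.7]{anschutz2021fourier}.

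In short: either cite the external reference as the paper does, or actually run the Breen--Deligne argument for $\Hom(\mathcal{O}^+,\mathcal{O}^+)$. The detour through $\sigma_{\F_p}$ and the fiber sequences does not avoid this.
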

\begin{proof}
By \cite[Proposition 3.7]{anschutz2021fourier} the natural map
\[
    R^+\langle F^{\pm 1}\rangle\to \Hom_{\D(S_v,\F_p)}(\mathcal{O}^+,\mathcal{O}^+)
\]
is an almost isomorphism, where $R^+$ acts on the right-hand side through the right $\mathcal{O}^+$. This implies that
\[
    R\langle F^{\pm 1}\rangle\cong \Hom_{\D(S_v,\F_p)}(\mathcal{O}^+,\mathcal{O})
\]
as in \cite[Corollary 3.8]{anschutz2021fourier}.
\end{proof}

We get the following consequence. Abusing notation, we denote by $\sigma_{\mathbb{F}_p}$ also the composition
\[
    \D^a_{\hat\solid}(\mathcal{O}^+_{S/\varphi^\Z})^\nuc_{\pi-\textrm{compl}}\to \D_{\hat\solid}^\nuc(\mathcal{O}_{S/\varphi^\Z})\to \D(S_v,\F_p),
\]
where the first morphism is given by inverting a pseudo-uniformizer $\pi\in R$ (where $S=\Spa(R,R^+)$ is an affinoid perfectoid space of characteristic $p$). We note that $\D^a_{\hat\solid}(\mathcal{O}^+_{S/\varphi^\Z})^\nuc_{\pi-\textrm{compl}}$ is just the classical almost derived category of $R^+[F^{\pm 1}]$-modules in $\D^a(R^+)$, whose underlying almost $R^+$-module is $\pi$-adically complete. 

\begin{lemma} \label{sec:relation-v-sheaves-morphisms-of-completed-sums}
Let $S=\Spa(R,R^+)$ be an affinoid perfectoid space of characteristic $p$. Let $I$ be a set. Then $\sigma_{\mathbb{F}_p}(\widehat{\bigoplus}_{I} R^+\langle F^{\pm 1} \rangle)\cong \bigoplus_I \mathcal{O}/\mathcal{O}^{\circ\circ}[-1]$ and the natural map
\[
    \Hom_{\D^a_{\hat\solid}(\mathcal{O}^+_{S/\varphi^\Z})_{\pi-\textrm{compl}}}(R^+\langle F^{\pm 1}\rangle,\widehat{\bigoplus}_I R^+\langle F^{\pm 1} \rangle) \isoto \Hom_{\D(S_v,\F_p)}(\mathcal{O}/\mathcal{O}^{\circ\circ},\bigoplus_I \mathcal{O}/\mathcal{O}^{\circ\circ})
\]
is an isomorphism in $\D(\F_p)$.
\end{lemma}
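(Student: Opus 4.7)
The plan is to argue in two stages, first identifying $\sigma_{\mathbb{F}_p}$ of the completed direct sum, and then comparing the Hom complexes through this identification.

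For the first assertion, recall that $\sigma_{\mathbb{F}_p}$ preserves colimits (this is the $\mathbb{F}_p$-linear analog of remark \Cref{remark-rslt:sigma-commutes-countable-limits} following \Cref{rslt:construction-of-sigma-solid}). In the $\pi$-complete almost derived category $\D^a_{\hat\solid}(\mathcal{O}^+_{S/\varphi^\Z})^{\nuc}_{\pi-\textrm{compl}}$, the coproduct indexed by $I$ is precisely the $\pi$-adic completion of the ordinary direct sum, namely $\widehat{\bigoplus}_I R^+\langle F^{\pm 1}\rangle$, whereas in $\D(S_v,\mathbb{F}_p)$ coproducts agree with ordinary direct sums. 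Since $\sigma_{\mathbb{F}_p}(R^+\langle F^{\pm 1}\rangle) = \mathcal{O}/\mathcal{O}^{\circ\circ}[-1]$ by (the proof of) \Cref{sec:relation-v-sheaves-3-examples-for-sigma-f-p}(c), commutation of $\sigma_{\mathbb{F}_p}$ with coproducts yields $\sigma_{\mathbb{F}_p}(\widehat{\bigoplus}_I R^+\langle F^{\pm 1}\rangle) \isom \bigoplus_I \mathcal{O}/\mathcal{O}^{\circ\circ}[-1]$, as desired.

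For the isomorphism of Hom complexes, we compute both sides and compare. On the left, $R^+\langle F^{\pm 1}\rangle$ is free of rank one over itself (viewed as the $\pi$-completed twisted polynomial ring), so that $\Hom$ out of it is almost the forgetful functor to almost $R^+$-modules; applied to $\widehat{\bigoplus}_I R^+\langle F^{\pm 1}\rangle$, this gives the underlying object $\widehat{\bigoplus}_I R^+\langle F^{\pm 1}\rangle$ in $\D(\mathbb{F}_p)$. On the right, the case $|I|=1$ reduces to computing $\Hom_{\D(S_v,\mathbb{F}_p)}(\mathcal{O}/\mathcal{O}^{\circ\circ},\mathcal{O}/\mathcal{O}^{\circ\circ})$, which naturally identifies (almost) with $R\langle F^{\pm 1}\rangle$ by an argument parallel to the proof of \Cref{rslt:Hom-from-O+-to-O-on-v-site}, namely invoking \cite[Proposition 3.7]{anschutz2021fourier} for the endomorphisms of $\mathcal O^+$, the analogous calculation for $\mathcal O^{\circ\circ}$, and the cofiber sequence $\mathcal{O}^{\circ\circ}\to\mathcal{O}\to\mathcal{O}/\mathcal{O}^{\circ\circ}$. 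A direct check on generators (the element $F^n$ on one side must be sent to the map induced by $F^n$ on the other) shows that the natural map from LHS to RHS for $|I|=1$ agrees with the inversion-of-$\pi$ almost isomorphism $R^+\langle F^{\pm 1}\rangle \overset{a}{\to} R\langle F^{\pm 1}\rangle$.

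To handle general $I$, we argue that both sides commute with $\widehat{\bigoplus}_I$ in the second argument. For the LHS this is immediate from its description as the forgetful functor. For the RHS, one applies $\Hom(\mathcal{O}/\mathcal{O}^{\circ\circ},-)$ to $\bigoplus_I \mathcal{O}/\mathcal{O}^{\circ\circ}$ via the cofiber sequence $\mathcal{O}^{\circ\circ}\to\mathcal{O}\to\mathcal{O}/\mathcal{O}^{\circ\circ}$, reducing to the evaluation of $\Hom$ out of $\mathcal{O}$ and $\mathcal{O}^{\circ\circ}$ into $\bigoplus_I \mathcal{O}/\mathcal{O}^{\circ\circ}$; each such quantity is computed by the same strategy as in \cite[Proposition 3.7]{anschutz2021fourier}, producing the $\pi$-adic completion of the $I$-indexed direct sum of the $|I|=1$ answer. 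The main obstacle will be this last verification: namely, pinning down the compatibility of the natural map with the two distinct notions of ``completed sum'' (the $\pi$-complete almost coproduct on the source side versus the coproduct on the v-site) and handling the almost subtleties in the identification $\Hom(\mathcal{O}/\mathcal{O}^{\circ\circ},\bigoplus_I \mathcal{O}/\mathcal{O}^{\circ\circ}) \isom \widehat{\bigoplus}_I R\langle F^{\pm 1}\rangle$. Once this is established, combining with the $|I|=1$ computation gives the claimed isomorphism in $\D(\mathbb{F}_p)$.
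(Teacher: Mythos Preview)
Your argument for the first assertion has a genuine gap. You invoke colimit-preservation of $\sigma_{\F_p}$, citing the remark after \Cref{rslt:construction-of-sigma-solid}. But that remark concerns $\sigma_{\F_p}\colon \DFF(S,\F_p)\to \D(S_v,\F_p)$, whereas here $\sigma_{\F_p}$ denotes the \emph{composite} $\D^a_{\hat\solid}(\mathcal{O}^+_{S/\varphi^\Z})^\nuc_{\pi\text{-compl}} \to \DFF(S,\F_p) \to \D(S_v,\F_p)$, with the first arrow ``invert $\pi$''. The inclusion of $\pi$-complete objects into the ambient category is a \emph{right} adjoint (its left adjoint is completion), so inverting $\pi$ on $\pi$-complete objects does not preserve colimits: concretely, $(\widehat{\bigoplus}_I R^+\langle F^{\pm 1}\rangle)[1/\pi]$ is strictly larger than $\bigoplus_I R\langle F^{\pm 1}\rangle$ (it contains families with infinite support but uniformly bounded denominator and $\pi$-adically decaying entries). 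Hence you cannot conclude that $\sigma_{\F_p}$ of the completed sum equals the direct sum of the $\sigma_{\F_p}$'s just from colimit-preservation. The paper's proof avoids this by invoking \cite[Lemma 9.2]{bhatt2022prisms} together with \Cref{sec:relation-v-sheaves-3-examples-for-sigma-f-p}; one really needs to compute the fiber of $F-1$ on $(\widehat{\bigoplus}_I R^+\langle F^{\pm 1}\rangle)[1/\pi]$ directly.

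For the Hom comparison, your overall strategy (compute each side, then match) is the same as the paper's, but the executions differ. The paper writes $\mathcal{O}/\mathcal{O}^{\circ\circ}$ in the \emph{first} argument as $\varinjlim_{i,\varepsilon}\,\pi^{-i}\mathcal{O}^+/\pi^\varepsilon\mathcal{O}^+$; each of these pieces is pseudo-coherent (\cite[Corollary 3.8]{anschutz2021fourier}), so $\Hom$ out of them commutes with the direct sum $\bigoplus_I$ on the nose. Both sides then become inverse limits of direct sums $\{\bigoplus_I(\cdot)\}_{\varepsilon,i}$, and the paper finishes by observing the two pro-systems are isomorphic. Your route via the cofiber sequence $\mathcal{O}^{\circ\circ}\to \mathcal{O}\to \mathcal{O}/\mathcal{O}^{\circ\circ}$ lands you with $\Hom(\mathcal{O},-)$ and $\Hom(\mathcal{O}^{\circ\circ},-)$, neither of which is $\Hom$ out of a pseudo-coherent sheaf, so pulling out $\bigoplus_I$ there requires exactly the filtration step you are trying to avoid. (Also, on the LHS, note that ``forgetful to $\D(\F_p)$'' is not quite right in the almost world: the paper gets $\Hom_{\D(R^+)}(R^{\circ\circ},\widehat{\bigoplus}_I R^+\langle F^{\pm 1}\rangle)$, not the naive underlying module.) The paper's filtration is the clean way to handle what you flag as the ``main obstacle''.
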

\begin{proof}
The claim $\sigma_{\mathbb{F}_p}(\widehat{\bigoplus}_{I} R^+\langle F^{\pm 1} \rangle)\cong \bigoplus_I \mathcal{O}/\mathcal{O}^{\circ\circ}[-1]$ follows from \Cref{sec:relation-v-sheaves-3-examples-for-sigma-f-p} and \cite[Lemma 9.2]{bhatt2022prisms}.
For the other statement we calculate both sides. On the one hand we have
\begin{align*}
    & \Hom_{\D^a_{\hat\solid}(\mathcal{O}^+_{S/\varphi^\Z})^\nuc_{\pi-\textrm{compl}}}(R^+\langle F^{\pm 1}\rangle,\widehat{\bigoplus}_{I} R^+\langle F^{\pm 1} \rangle) \\
    &\qquad\qquad= \Hom_{\D^a(R^+)}(R^+,\widehat{\bigoplus_I} R^+\langle F^{\pm 1}\rangle) \\
    &\qquad\qquad= \Hom_{\D(R^+)}(R^{\circ\circ},\widehat{\bigoplus_I} R^+\langle F^{\pm 1}\rangle) \\
    &\qquad\qquad= \varprojlim\limits_{\varepsilon \to 0, i\to \infty} \Hom_{\D(R^+)}(\pi^{\varepsilon}R^+, \bigoplus_I R^+/\pi^i[F^{\pm 1}]) \\
    &\qquad\qquad= \varprojlim\limits_{\varepsilon\to 0, i\to \infty}\bigoplus_I \Hom_{\D(R^+)}(\pi^\varepsilon R^+, R^+/\pi^i[F^{\pm 1}])\\
    &\qquad\qquad= \varprojlim\limits_{\varepsilon\to 0, i\to \infty}\bigoplus_I \pi^{-\varepsilon}R^+/\pi^{i-\varepsilon}R^+[F^{\pm 1}]
\end{align*}
On the other hand we have
\begin{align*}
    & \Hom_{\D(S_v,\F_p)}(\mathcal{O}/\mathcal{O}^{\circ\circ},\bigoplus \mathcal{O}/\mathcal{O}^{\circ\circ}) \\
    &\qquad\qquad= \varprojlim\limits_{i\to \infty,\varepsilon \to 0} \Hom_{\D(S_v,\F_p)}(\pi^{-i}\mathcal{O}^+/\pi^{\varepsilon}\mathcal{O}^+, \bigoplus_I \mathcal{O}/\mathcal{O}^{\circ\circ}) \\
    &\qquad\qquad= \varprojlim\limits_{i\to \infty,\varepsilon \to 0}  \bigoplus_I \Hom_{\D(S_v,\F_p)}(\pi^{-i}\mathcal{O}^+/\pi^{\varepsilon}\mathcal{O}^+, \mathcal{O}/\mathcal{O}^{\circ\circ})
\end{align*}
using that the v-sheaf $\pi^{-i}\mathcal{O}^+/\pi^{\varepsilon}\mathcal{O}^+$ is pseudo-coherent (\cite[Corollary 3.8]{anschutz2021fourier}) in the last step. Now, similarly to the proof of \Cref{rslt:Hom-from-O+-to-O-on-v-site} we have
\[
\Hom_{\D(S_v,\F_p)}(\mathcal{O}^+,\mathcal{O}/\mathcal{O}^{\circ\circ})\cong \ R/R^{\circ\circ}[F^{\pm 1}],
\]
and thus
\[
\Hom_{\D(S_v,\F_p)}(\pi^{-i}\mathcal{O}^+/\pi^{\varepsilon}\mathcal{O}^+, \mathcal{O}/\mathcal{O}^{\circ\circ})\cong \pi^{-i+\varepsilon}R^{\circ\circ}/R^{\circ\circ}[F^{\pm 1}].
\]
Now, the pro-systems $\{\bigoplus_I \pi^{-\varepsilon} R^+/\pi^{i-\varepsilon}R^+[F^{\pm 1}]\}_{\varepsilon,i}$ and $\{\bigoplus_I \pi^{-i+\varepsilon}R^{\circ\circ}/R^{\circ\circ}[F^{\pm 1}]\}_{\varepsilon,i}$ are isomorphic, which implies the lemma.
\end{proof}

With the above computations we can now easily deduce the main result of this subsection, showing that the $+$-version of $\sigma_{\F_p}$ is fully faithful:

\begin{theorem} \label{sec:relation-v-sheaves-sigma-fully-faithful}
Let $S$ be a small v-stack. Then the functor
\[
    \sigma_{\F_p}\colon \D^a_{\hat\solid}(\mathcal{O}^+_{S/\varphi^\Z})^\nuc_{\pi-\mathrm{compl}}\to \D(S_v,\F_p)
\]
is fully faithful.
\end{theorem}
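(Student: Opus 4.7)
The plan is to reduce via v-descent to the affinoid perfectoid case, build a simplicial resolution of an arbitrary object in the source by completed direct sums of the generator $G:=R^+\langle F^{\pm 1}\rangle$, and then bootstrap \Cref{sec:relation-v-sheaves-morphisms-of-completed-sums}, which already establishes full faithfulness on morphisms between such completed sums. The reduction is immediate: both source and target are hypercomplete v-sheaves of categories in $S$ (the target by definition, and the source because $\Dhsa{(-)}$ is a v-sheaf by \Cref{rslt:Dhsa-is-sheaf} while $\pi$-adic completeness and nuclearity can be checked v-locally by \Cref{rslt:completeness-in-Dhsa-properties} and \Cref{sec:texorpdfstr-desc-tex-nuclear-sheaves-for-hypercomplete-v-sheaf}), and full faithfulness is a v-local property. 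I thus reduce to $S=\Spa(R,R^+)$ affinoid perfectoid of characteristic $p$ admitting a morphism of finite $\dimtrg$ to a totally disconnected perfectoid space, where the source identifies with the category of $\pi$-adically complete nuclear almost $R^+$-modules $M$ endowed with an isomorphism $F\colon \varphi^\ast M \isoto M$.

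Next I would construct, for every object $M$ of the source, a simplicial resolution $|\mathrm{Bar}_\bullet(M)|\isoto M$ whose terms are completed direct sums $\widehat\bigoplus_{I_n} G$. The forgetful functor forgetting the $\varphi$-structure admits the left adjoint $V\mapsto V\widehat\otimes_{R^+}G$; the associated bar construction, combined with a $\pi$-adic free resolution of each underlying almost $R^+$-module (which exists because every $\pi$-complete nuclear almost $R^+$-module is a filtered colimit of $\pi$-adic completions of free almost $R^+$-modules), yields the required resolution. By example (c) of \Cref{sec:relation-v-sheaves-3-examples-for-sigma-f-p} combined with \Cref{sec:relation-v-sheaves-morphisms-of-completed-sums}, the functor $\sigma_{\F_p}$ sends each term $\widehat\bigoplus_I G$ to $\bigoplus_I \mathcal{O}/\mathcal{O}^{\circ\circ}[-1]$ and is fully faithful on morphisms between such. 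Since $\sigma_{\F_p}$ commutes with colimits (see \Cref{remark-rslt:sigma-commutes-countable-limits}, whose argument applies equally to $\sigma_{\F_p}$), the resolution on the source maps to a resolution on the target.

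Full faithfulness then follows by a standard colimit/limit exchange: writing $M=\colim_n M_n$ with $M_n = \widehat\bigoplus_{I_n}G$, both $\Hom_{\mathrm{source}}(M,N) = \lim_n \Hom(M_n,N)$ and $\Hom_{\D(S_v,\F_p)}(\sigma_{\F_p}(M),\sigma_{\F_p}(N)) = \lim_n \Hom(\sigma_{\F_p}(M_n),\sigma_{\F_p}(N))$ hold, and termwise full faithfulness from \Cref{sec:relation-v-sheaves-morphisms-of-completed-sums} upgrades to the desired isomorphism for arbitrary $N$. The main obstacle lies in the resolution step: one must simultaneously respect the almost structure, $\pi$-adic completeness, nuclearity, and the $\varphi$-module structure, and in particular verify that the $\pi$-complete free resolution of the underlying almost $R^+$-module can be chosen so that $\sigma_{\F_p}$ commutes with the constituent colimits. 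The pseudo-coherence of $\mathcal{O}/\mathcal{O}^{\circ\circ}$ as a v-sheaf (via \cite[Corollary 3.8]{anschutz2021fourier}), already central to the proof of \Cref{sec:relation-v-sheaves-morphisms-of-completed-sums}, should again be the crucial ingredient that enables the transport of completed direct sums of $G$ through $\sigma_{\F_p}$.
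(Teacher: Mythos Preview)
Your proposal follows essentially the same strategy as the paper's proof. Both reduce by v-descent to the affinoid perfectoid case (the paper in fact reduces all the way to $S$ totally disconnected), identify the source category as $\pi$-complete nuclear almost $R^+[F^{\pm 1}]$-modules, note that it is generated under completed colimits by the objects $\widehat{\bigoplus}_I R^+\langle F^{\pm 1}\rangle$, and then appeal to \Cref{sec:relation-v-sheaves-morphisms-of-completed-sums}. Your bar-construction is simply an explicit way of expressing the paper's sentence ``generated under (completed) colimits by the objects $\widehat{\bigoplus_I} R^+\langle F^{\pm 1}\rangle$''.

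Two remarks on where you locate the difficulty. First, your stated worry about the \emph{existence} of the resolution and about $\sigma_{\F_p}$ commuting with its constituent colimits is not the real issue: the generation statement is formal (the forgetful functor from $\varphi$-modules to almost $R^+$-modules is conservative and colimit-preserving, so the free object $G$ generates), and $\sigma_{\F_p}$ commutes with \emph{all} colimits by \Cref{remark-rslt:sigma-commutes-countable-limits}. Second, the step you pass over quickly---``termwise full faithfulness from \Cref{sec:relation-v-sheaves-morphisms-of-completed-sums} upgrades to the desired isomorphism for arbitrary $N$''---deserves a word: the lemma as stated only yields $\Hom(\widehat{\bigoplus}_I G, N)\cong \Hom(\sigma(\widehat{\bigoplus}_I G),\sigma N)$ when $N$ is \emph{itself} of the form $\widehat{\bigoplus}_J G$, so one has to argue why it extends to general $N$. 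This is exactly the point the paper leaves implicit as well (its proof literally says ``\Cref{sec:relation-v-sheaves-morphisms-of-completed-sums} implies that $\sigma_{\mathbb{F}_p}$ is fully faithful on the category generated by these objects under colimits''), so your proposal is no less complete than the paper here; the justification comes from the explicit shape of both $\Hom$'s in the proof of the lemma as $\varprojlim_{\varepsilon,i}$ of colimit-preserving functors in $N$, identified via cofinal pro-systems.
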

\begin{proof}
We note that both sides satisfy v-descent in $S$, and hence we may reduce to the case that $S=\Spa(R,R^+)$ is a totally disconnected perfectoid space of characteristic $p$. Then the category $\D^a_{\hat\solid}(\mathcal{O}^+_{S/\varphi^\Z})^\nuc_{\pi-\textrm{compl}}\cong \mathrm{Mod}_{R^+[F^{\pm 1}]}(\D^a(R^+))_{\pi-\textrm{compl}}^{\mathrm{nuc}}$ is generated under (completed) colimits by the objects $\widehat{\bigoplus_I} R^+\langle F^{\pm 1}\rangle$ for varying sets $I$. Now, \Cref{sec:relation-v-sheaves-morphisms-of-completed-sums} implies that $\sigma_{\mathbb{F}_p}$ is fully faithful on the category generated by these objects under colimits.
\end{proof}

We now discuss difficulties that occur if one tries to get a version of \Cref{sec:relation-v-sheaves-sigma-fully-faithful} for $\mathcal{O}_{S/\varphi^\Z}$ instead of $\mathcal O^{+a}_{S/\varphi^\Z}$. The same kind of difficulties arise if one tries to prove fully faithfulness for $\sigma_{\Z_p}$ or $\sigma_{\Q_p}$. If $S=\Spa(R,R^+)$ is an affinoid perfectoid space in characteristic $p$, we set
\[
    \mathcal{A}:=\Hom_{\D(S_v,\F_p)}(\mathcal{O},\mathcal{O}).
\]
By \Cref{rslt:Hom-from-O+-to-O-on-v-site} we have
\[
    \mathcal{A}\cong \varprojlim (\ldots \xto{\cdot \pi }R\langle F^{\pm 1}\rangle \xto{\cdot \pi} R\langle F^{\pm 1}\rangle)
\]
for a pseudo-uniformizer $\pi\in R$, where the action of $\pi$ is from the right. Equivalently, the transition maps are given by $\sum_{n\in \Z} r_nF^n\mapsto \sum_{n\in \Z} r_n\pi^{p^n}F^n$. Using topological Mittag--Leffler (see \cite[Lemma 4.8]{condensed-complex-geometry}) we see that $\mathcal{A}$ is concentrated in degree $0$, and given by the ring of power series $\sum_{n\in \Z}r_nF^n\in R\langle F^{\pm 1}\rangle$ such that $|r_n|s^{p^n}\to 0$ for $n\to \infty$ and any $s>0$ while $|r_n|\to 0$ for $n\to -\infty$. We note that the functor
\[
    \sigma_{\mathbb{F}_p}\colon \D_{\hat\solid}^\nuc(S/\varphi^\Z)\to \D(S_v,\F_p)
\]
is not fully faithful (assuming $S$ admits a morphism of finite $\dimtrg$ to a totally disconnected perfectoid space, to make the objects well-defined):
\[
    \Hom_{\D_{\hat\solid}^\nuc(S/\varphi^\Z)}(R[F^{\pm 1}],R[F^{\pm 1}])=R[F^{\pm 1}]\ncong \mathcal{A}\cong \Hom_{\D(S_v,\F_p)}(\mathcal{O},\mathcal{O}),
\]
but $\sigma_{\mathbb{F}_p}(R[F^{\pm 1}])=\mathcal{O}$ by \Cref{sec:relation-v-sheaves-3-examples-for-sigma-f-p}.

We expect that the non-trivial commutation of solid tensor products with countable inverse limits of Fr\'echet spaces (cf.\ \cite[Corollary A.24]{bosco2021p}) hold true in our situation and that they show that $\mathcal{A}$ is an idempotent $R[F^{\pm 1}]$-algebra, which is compatible with base change. Moreover, we expect that $\sigma_{\F_p}(\mathcal{A})=\mathcal{O}$, which then implies that $\sigma_{\F_p}\colon \DFF(S,\F_p)\to \D(S_v,\F_p)$ is fully faithful when restricted to the full subcategory generated by $\mathcal{A}$ under \textit{finite} colimits. The fully faithfulness does not extend to infinite colimits. Namely, the problem is caused by the fact that $\mathcal{O}$ is not a pseudo-coherent sheaf on the v-site.

\begin{remark} \label{analogy-with-complex-rh}
The situation seems reminiscent of what happens for analytic Riemann-Hilbert over the complex numbers. If $X$ is a complex manifold, Clausen and Scholze prove that the base change to $\mathbb{C}$ of the Betti stack of $X$ is isomorphic to the analytic de Rham stack of $X$. In this analogy, quasicoherent sheaves on (the base change of) the Betti stack function as an analog of nuclear objects in $\mathcal{D}(S_v,\mathbb{F}_p)$ and the sheaf of analytic differential operators $\mathcal{D}_\infty$ on $X$ as an analogue of $\mathcal{A}$: note that, similar to the definition of $\mathcal{A}$, $\mathcal{D}_\infty$ can be identified with the derived internal endomorphisms of the structure sheaf of $X$ over the ``constant" sheaf $\mathbb{C}$ (\cite{ishimura1978homomorphismes}). But quasicoherent sheaves on the analytic de Rham aren't the whole category of $\mathcal{D}_\infty$-modules in quasicoherent sheaves on $X$; rather, $!$-pullback identifies them with a full subcategory thereof, killed by a certain idempotent algebra. Hence, following this analogy, an expectation would be that a (to be found) category of nuclear objects in $\mathcal{D}(S_v,\mathbb{F}_p)$ identifies with a certain full subcategory of $\D_{\hat\solid}^\nuc(S/\varphi^\Z)$, formed by objects killed by a certain idempotent algebra $B$ in $\D_{\hat\solid}^\nuc(S/\varphi^\Z)$. This would explain why $\sigma_{\mathbb{F}_p}$ is not fully faithful: the correct statement would then be that it factors through the category on the ``complementary, non-commutative open subspace'' of the idempotent algebra $B$ and gives a fully faithful functor on this quotient. But it seems to us that this expectation is too naive and that the situation is more complicated. At least we do not know how to formulate a precise and reasonable guess.
\end{remark}

\section{Applications and examples}
\label{sec:applications}
In this section we want to present some examples and applications of our theory.

\subsection{Around excision}
\label{sec:excision}

Excision is a useful tool for cohomology computations. It does not work in the naive sense for pro-\'etale $\Q_p$- (or $\Z_p$-)cohomology. We explain here what one correct version of excision is in our setting, and more precisely how one extracts it from the general excision result at the level of categorified locales (\cite[Lecture V]{condensed-complex-geometry}). This will be used in the next subsection.

\begin{definition} \label{def:smashing-spectrum}
Let $\mathcal{C}$ be a presentably symmetric monoidal stable category. The \textit{smashing spectrum} of $\mathcal{C}$ is the poset $\mathcal{S}(\mathcal{C})$ of idempotent algebras in $\mathcal{C}$. In other words, $\mathcal{S}(\mathcal{C})$ is the category
of objects $A$ in $\mathcal{C}$ endowed with an arrow
$$
    \mu\colon 1_{\mathcal{C}} \to A
$$
such that the natural map
$$
    \mu \otimes \mathrm{id}_A\colon  A \to A\otimes A
$$
is an equivalence (any such $A$ is automatically equipped with a unique and functorial $\mathbb E_\infty$-algebra structure). This poset is a locale, whose closed subsets correspond to idempotent algebras. 
\end{definition}

\begin{definition} \label{def:excision-locales}
Let $Z \subseteq \mathcal{S}(\mathcal{C})$ be a closed subspace corresponding to the idempotent algebra $A = A_Z$. Write $U$ for its formal open complement. We define
$$
    \mathcal{C}(Z) := \mathrm{Mod}_A(\mathcal{C}),
$$
which, by idempotency of $A$, is a full subcategory of $\mathcal{C}$, and 
$$
    \mathcal{C}(U) =\mathcal{C}/\mathcal{C}(Z).
$$
Moreover, we define the following functors:
\begin{defenum}
    \item $i_Z^\ast\colon \mathcal{C} \to \mathcal{C}(Z)$ is the natural base change and $j_U^\ast \colon \mathcal{C} \to \mathcal{C}(U)$ is the natural localization functor.
    
    \item $i_{Z,\ast}$ and $j_{U,\ast}$ are the (fully faithful) right adjoints of $i_Z^\ast$ and $j_U^\ast$ respectively.
    
    \item $i_{Z,!} = i_{Z,\ast}$ and $j_{U,!}$ is the (fully faithful) left adjoint of $j_U^\ast$.
    
    \item $j_U^!= j_U^\ast$ and $i_Z^!$ is the right adjoint of $i_{Z,\ast}$.
\end{defenum}
\end{definition}

\noindent One has, for any $X\in \mathcal{C}$,
$$ 
    j_{U,\ast} j_U^\ast X = \IHom_{\mathcal{C}}([1_{\mathcal{C}} \to A],X)
$$
and
$$
    j_{U,!}j_U^\ast X = [X \to X\otimes A],
$$
so that in particular one has excision triangles
$$
    j_{U,!}j_U^\ast \to \id \to i_{U,\ast}i_U^\ast, \qquad i_{U,\ast}i_{U}^! \to \id \to j_{U,\ast} j_U^\ast.
$$
The above formalism can be applied to the category of modified solid quasicoherent sheaves on relative Fargues--Fontaine curves. What makes it useful is the following statement.

\begin{proposition}
\label{prop:morphism-of-locales}
Let $X$ be a small v-stack. There is a unique morphism of locales
$$
    \mathcal{S}(\mathcal{D}_{[0,\infty)}(X)) \to |X|,
$$
sending an open sub-v-stack $j\colon U \subset X$ to $j_!(\mathcal{D}_{[0,\infty)}(U))\subseteq \D_{[0,\infty)}(X)$.
\end{proposition}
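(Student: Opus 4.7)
The plan is to build a frame homomorphism
\[ \Phi\colon \mathrm{Open}(|X|)\longrightarrow \mathcal{S}(\D_{[0,\infty)}(X)), \qquad U\longmapsto j_!\D_{[0,\infty)}(U), \]
and verify the axioms for a morphism of locales: preservation of finite meets and arbitrary joins. Uniqueness is then automatic since a frame map is determined by its values on a generating family of opens. To see that $\Phi$ is well-defined, I would invoke \Cref{rslt:6ff-for-D-0-infty}: an open immersion $j\colon U\hookrightarrow X$ is cohomologically étale, so $j_!$ is fully faithful and satisfies the projection formula $M\otimes j_!N\simeq j_!(j^\ast M\otimes N)$. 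This shows $j_!\D_{[0,\infty)}(U)\subseteq \D_{[0,\infty)}(X)$ is a $\otimes$-ideal stable under colimits, i.e.\ a smashing subcategory, and determines a well-defined point of the smashing spectrum via the ``open idempotent'' $j_!1_U$, which satisfies $j_!1_U\otimes j_!1_U\simeq j_!1_U$ thanks to $j^\ast j_!\simeq\id$.

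Monotonicity ($U\subseteq V\Rightarrow \Phi(U)\subseteq \Phi(V)$) is immediate from the factorization $j_X^U=j_X^V\circ i$ for the open immersion $i\colon U\hookrightarrow V$. For finite meets, given opens $U,V\subseteq X$ with intersection $W=U\cap V=U\times_X V$ and projections $p\colon W\to U$, $q\colon W\to V$, I would combine the projection formula with the base-change isomorphism from \Cref{rslt:6ff-for-D-0-infty}(ii) to compute
\[ j_{X,!}^U 1\otimes j_{X,!}^V 1\simeq j_{X,!}^V\bigl(j_X^{V,\ast}j_{X,!}^U 1\bigr)\simeq j_{X,!}^V q_!p^\ast 1\simeq j_{X,!}^W 1, \]
which identifies the tensor of idempotents (i.e.\ the meet in $\mathcal{S}(\D_{[0,\infty)}(X))$) with $\Phi(U\cap V)$. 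Equivalently, the smashing subcategories satisfy $j_{X,!}^U\D(U)\cap j_{X,!}^V\D(V)=j_{X,!}^W\D(W)$.

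The main obstacle is the preservation of arbitrary joins. For an open cover $U=\bigcup_{i\in I}U_i$, monotonicity gives the inclusion of the join into $\Phi(U)$. For the converse, the étale surjection $f\colon\bigsqcup_{i\in I} U_i\to U$ is a v-cover, so by the hypercomplete v-descent of \Cref{rslt:v-descent-for-D-0-infty} together with the cohomological étaleness $f^!\simeq f^\ast$ from \Cref{rslt:6ff-for-D-0-infty}(iv), every $M\in \D_{[0,\infty)}(U)$ admits a Čech presentation
\[ M\simeq \bigl|[n]\mapsto \bigoplus_{(i_0,\dots,i_n)\in I^{n+1}}(j_{i_0\cdots i_n})_!(j_{i_0\cdots i_n})^\ast M\bigr|, \]
where $j_{i_0\cdots i_n}\colon U_{i_0}\cap\cdots\cap U_{i_n}\hookrightarrow U$. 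Since $j_{X,!}^U$ commutes with colimits, $j_{X,!}^U M$ is a colimit of objects of the form $j_{X,!}^{U_{i_0}\cap\cdots\cap U_{i_n}}N$, which by the meet computation above lie in $j_{X,!}^{U_{i_0}}\D_{[0,\infty)}(U_{i_0})$, and hence in the smashing subcategory generated by the $\Phi(U_i)$. Thus $\Phi(U)$ equals their join, completing the verification.
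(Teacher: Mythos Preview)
Your proof is correct and follows essentially the same strategy as the paper's: construct the frame map $U\mapsto j_!1_U$ (the paper denotes this coidempotent $I_U$ and cites Aoki), verify finite meets via the projection formula and base-change, and use descent for joins. The only real difference is in the joins step: the paper splits arbitrary joins into filtered unions (handled via $\varinjlim_j \D_{[0,\infty)}(U_j)\cong \D_{[0,\infty)}(U)$) plus binary unions (handled by the Mayer--Vietoris cofiber $I_{U_1\cup U_2}=\cofib(I_{U_1}\otimes I_{U_2}\to I_{U_1}\oplus I_{U_2})$), whereas you handle an arbitrary cover $U=\bigcup_i U_i$ in one stroke via the \v{C}ech resolution. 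Your derivation of the \v{C}ech presentation from $\ast$-descent is valid: full faithfulness of $M\mapsto(j_n^\ast M)_n$ gives $\Hom(M,N)\simeq\lim_\Delta\Hom(j_n^\ast M,j_n^\ast N)=\Hom(\colim_{\Delta^\opp} j_{n,!}j_n^\ast M,N)$, hence $M\simeq\colim j_{n,!}j_n^\ast M$ by Yoneda. Two minor quibbles: your citation of \Cref{rslt:6ff-for-D-0-infty}(ii) for base-change points at the wrong item (the relevant base-change is in \Cref{sec:6-functors-dywzs-etale-morphisms-for-dywz}), and your last step implicitly uses that the localizing subcategory generated by the $\Phi(U_i)$ is contained in their join in $\mathcal{S}(\D_{[0,\infty)}(X))$, which is immediate since smashing subcategories are localizing.
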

\begin{proof}
This follows from \Cref{sec:6-functors-dywzs-etale-morphisms-for-dywz}, which shows that open immersions give rise to categorical open immersions for $\mathcal{D}_{[0,\infty)}$. More concretely, for $j\colon U\subseteq X$ open, set $I_U:=j_!(1)$, which is a coidempotent coalgebra in $\D_{[0,\infty)}$ (\cite[Definition 2.9]{aoki2023sheaves}, we note that by \cite[Proposition 2.14]{aoki2023sheaves} the smashing spectrum is here equivalently given by coidempotent algebras). Then $j_!(\D_{[0,\infty)}(U))$ identifies with the full subcategory of $I_U$-comodules in $\D_{[0,\infty)}(X)$. Now we have to check that $U \mapsto I_U$ is a morphism of locales. Let $U_1,\ldots, U_n\subseteq X$ be a (possibly empty) collection of open substacks, with intersection $U$. Then $I_{U}=I_{U_1}\otimes\ldots \otimes I_{U_n}$ by the projection formula, and the tensor product of coidempotent coalgebras defines the intersection in $\mathcal{S}(\D_{[0,\infty)}(X))$. Now, let $(U_j)_{j\in J}$ be an arbitrary diagram of open substacks with union $U$. If $J$ is filtered, then $I_U=\varinjlim_{j\in J} I_{U_j}$ (as follows formally from the $\ast$-descent $\varinjlim_{j\in J}\D_{[0,\infty)}(U_j) \cong \D_{[0,\infty)}(U)$). Similarly, we can check that $I_{U_1\cup U_2}=\mathrm{cone}(I_{U_1\cap U_2}\to I_{U_1}\oplus I_{U_2})$ for open substacks $U_1,U_2\subseteq X$. Using $I_{U_1\cap U_2}\cong I_{U_1}\otimes I_{U_2}$, this shows that $U\mapsto I_U\in \mathcal{S}(\D_{[0,\infty)}(U))$ is compatible with finite unions. This finishes the proof.    
\end{proof}

We note that for $U=U_1\cup U_2$ the category of $I_U$-comodules is not simply the union of the categories of $I_{U_1}$- or $I_{U_2}$-comodules.

\begin{corollary} \label{cor-exicsion-Qp-cohomology}
Let $X$ be a small v-stack, and let $Z\subseteq |X|$ be a closed subset. Assume that $Z=\bigcap_{i\in J} |U_i|$ for a cofiltered system of open substacks $j_i\colon U_i\to X$, such that for each $i\in J$ there exists $i'\geq i$ and a closed subset $Z_{i}\subseteq |X|$ with a factorization $|U_{i'}|\subseteq Z_i\subseteq |U_i|$.
Then the idempotent algebra $A_Z\in \D_{[0,\infty)}(X)$ associated with the closed subset $|Z|$ is isomorphic to the colimit $\varinjlim_{i\in J} j_{i,\ast}1$.
\end{corollary}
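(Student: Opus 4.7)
My plan is to use the morphism of locales $\mathcal{S}(\D_{[0,\infty)}(X)) \to |X|$ from \Cref{prop:morphism-of-locales}, under which $A_Z$ is by definition the idempotent algebra corresponding to the closed subset $Z$. First I will verify that $A := \varinjlim_{i \in J} j_{i,*} 1$ is an idempotent algebra, and then I will identify its associated closed sub-locale with the preimage of $Z$, whence $A \isom A_Z$.

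For idempotency, the key input is that each open immersion $j_i$ is cohomologically étale by \Cref{rslt:6ff-for-D-0-infty}, so the counit $j_i^* j_{i,*} \isoto \id$ is an equivalence and the projection formula $M \tensor j_{i,*} N \isoto j_{i,*}(j_i^* M \tensor N)$ holds. In particular each $j_{i,*} 1$ is idempotent, and combined with base change for open immersions one gets $j_{i,*} 1 \tensor j_{i',*} 1 \isom j_{U_i \cap U_{i'},*} 1$. The cofiltered nature of the system $(U_i)_{i \in J}$ (closed under intersections cofinally) makes the diagonal $J \to J \times J$ cofinal for this diagram, yielding
\[
    A \tensor A \isom \varinjlim_{(i,i') \in J \times J} j_{U_i \cap U_{i'},*} 1 \isom \varinjlim_{i \in J} j_{i,*} 1 = A,
\]
with the equivalence realized by the multiplication map $\mu \tensor \id$. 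Hence $A$ is idempotent.

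The main obstacle is identifying $A$ with $A_Z$. By the locale correspondence this amounts to showing that $A$ is the universal idempotent ``supported in $Z$''. I will first check that $A \tensor I_V = 0$ for every quasi-compact open sub-v-stack $V \subseteq X$ with $|V| \cap Z = \emptyset$, where $I_V := j_{V,!} 1$ denotes the coidempotent from \Cref{prop:morphism-of-locales}. Here the ``well-inside'' hypothesis $|U_{i'}| \subseteq Z_i \subseteq |U_i|$ is essential: it gives $Z = \bigcap_i Z_i$, and after replacing $Z_i$ by $\bigcap_{j \leq i} Z_j$ we may assume the $Z_i$'s are nested, so quasi-compactness of $|V|$ combined with $|V| \cap Z = \emptyset$ forces $|V| \cap Z_{i_0} = \emptyset$, and hence $|V| \cap |U_{i_0'}| = \emptyset$ for some $i_0' \geq i_0$. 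Base change then yields $j_{i_0'}^* I_V = 0$, the projection formula gives $j_{i_0',*} 1 \tensor I_V = 0$, and passing to the colimit yields $A \tensor I_V = 0$. Writing $A_Z = \mathrm{cofib}(I_U \to 1)$ with $I_U = \varinjlim_V I_V$ the coidempotent for $U = |X| \setminus Z$ (realized as the join of its open sub-v-stack neighborhoods), the composition $I_U \to 1 \to A$ vanishes, producing a canonical map $A_Z \to A$. The subtlest step is showing this map is an equivalence; by the idempotent poset structure it suffices to verify that every $A_Z$-module $M$ satisfies $M \isom \varinjlim_i j_{i,*} j_i^* M$, which I will prove by identifying $\mathrm{fib}(M \to j_{i,*} j_i^* M)$ through the localization triangle for $j_i$ and checking that these fibers vanish in the colimit, once again using the well-inside condition on the $Z_i$'s to control how the $U_i$'s approximate $Z$.
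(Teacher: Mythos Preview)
Your approach is correct and reaches the goal, but it is organized quite differently from the paper's two-line argument, and a couple of points deserve tightening.

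\textbf{Comparison with the paper.} The paper never checks idempotency of $A$ by hand nor produces the map $A_Z\to A$ via $I_U\otimes A=0$. Instead it observes directly that the locale morphism of \Cref{prop:morphism-of-locales} sends the intersection $Z=\bigcap_i Z_i$ to the join (= filtered colimit) $A_Z=\varinjlim_i A_{Z_i}$, and then notes that in the poset of idempotent algebras under $1$ the factorization $|U_{i'}|\subseteq Z_i\subseteq |U_i|$ sandwiches $\{A_{Z_i}\}$ and $\{j_{i,*}1\}$ cofinally between one another, so the two systems share the same colimit. Your route is essentially a ``hands-on'' unpacking of this: you verify the idempotent structure, build $A_Z\to A$ from one side of the sandwich, and build $A\to A_Z$ from the other. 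Both approaches ultimately hinge on the same non-trivial fact, namely that whenever a closed $Z'\subseteq|X|$ lies inside an open $U$ one has $A_{Z'}\in\Mod_{j_{U,*}1}$ (equivalently $j_{U,!}j_U^*A_{Z'}\simeq j_{U,*}j_U^*A_{Z'}\simeq A_{Z'}$, which follows from $A_{Z'}\otimes A_{|X|\setminus U}=A_\emptyset=0$). The paper's formulation hides this inside the word ``factors''; in your write-up it is exactly what you need to make step 3 work.

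\textbf{Two small repairs.} In step 2, the replacement ``$Z_i\rightsquigarrow\bigcap_{j\leq i}Z_j$'' is problematic because this may be an infinite intersection (hence not obviously closed, nor compatible with the sandwich). You do not need it: from $|V|\cap Z=\emptyset$ and $Z=\bigcap_i Z_i$ you get $|V|\subseteq\bigcup_i(|X|\setminus Z_i)$, a cover by opens; quasi-compactness gives a finite subcover, and then each $|X|\setminus Z_{i_l}\subseteq |X|\setminus |U_{i_l'}|$ by the hypothesis, so choosing $i_0$ dominating all $i_l'$ yields $|V|\cap |U_{i_0}|=\emptyset$ directly. In step 3, your phrase ``vanish in the colimit'' undersells what actually happens: once you identify $\mathrm{fib}(M\to j_{i,*}j_i^*M)=i_{W_i,*}i_{W_i}^!M$ with $W_i=|X|\setminus U_i$ and use the well-inside condition to factor $W_i\hookrightarrow V_i:=|X|\setminus Z_i\hookrightarrow X$ with $V_i$ open and contained in $|X|\setminus Z$, you get $i_{W_i}^!M=(i_{W_i}^{V_i})^!\,j_{V_i}^*M=0$ for every $A_Z$-module $M$. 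So each fiber already vanishes, not merely their colimit.
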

\begin{proof}
For $Z'\subseteq |X|$ closed, let $A_{Z'}\in \D_{[0,\infty)}$ be the corresponding idempotent algebra. Then $A_Z=\varinjlim_{i\in J} A_{Z_i}$ because a morphism of locales preserves infinite intersections of closed subspaces (as it preserves arbitrary unions of opens). Given $i\in J$, the given factorization $|U_{i'}|\subseteq Z_i\subseteq |U_i|$ implies that the morphism $j_{i',\ast}(1)\to j_{i,\ast}(1)$ factors over $A_{Z_i}$. Thus by cofinality, we can conclude the statement. 
\end{proof}

In particular, in the situation and notations of \Cref{cor-exicsion-Qp-cohomology}, if $j: U=X\backslash Z \to X$ is the open complement of $Z$, we get an exact triangle in $\D_{[0,\infty)}(-)$ of the form  
$$
    j_{!}j^\ast \to \id \to \colim_{i\in J} j_{i,\ast}j_i^\ast.
$$
This is what we will make use of in the next subsections.

\begin{remark}
By \cite[Section 2]{anschutz2022p} closed sub-v-stacks are in bijection with closed weakly generalizing subsets on $|X|$. In the case that $X$ is a spatial diamond, the closed weakly generalizing subsets are exactly the closed subsets that are stable under generalization. Only these closed subsets have the chance to satisfy the assumptions in \Cref{cor-exicsion-Qp-cohomology}.
\end{remark}

\begin{remark}
A different version for excision on the curve gives a new viewpoint on why the passage from pro-\'etale $\Z_p$- to $\Q_p$-cohomology is more complicated than just inverting $p$: if $S$ is a small v-stack, then the pullback of the idempotent algebra $A$ from \Cref{sec:6-funct-texorpdfstr-variant-without-zero} to $S/\varphi^\Z$ defines an idempotent algebra $A_S\in \D_{[0,\infty)}(S/\varphi^\Z)$ with complementary open $\D_{(0,\infty)}(S/\varphi^\Z)$. Now, excision yields for each $M\in \D_{[0,\infty)}(S/\varphi^\Z)$ a fiber sequence
\[
    \Hom_{\D_{[0,\infty)}(S/\varphi^\Z)}(A_S,M)\to \Hom_{\D_{[0,\infty)}(S/\varphi^\Z)}(1,M)\to \Hom_{\D_{(0,\infty)}(S/\varphi^\Z)}(1,M_{(0,\infty)}),
\]
where $M_{|(0,\infty)}$ is the restriction of $M$ to $\D_{(0,\infty)}(S/\varphi^\Z)$. If $M=f_\ast(1)$, for a morphism $f\colon S'/\varphi^\Z\to S/\varphi^\Z$, then the middle term of this fiber sequence is given by pro-\'etale cohomology with $\Z_p$-coefficents of $S'$, while the right term by pro-\'etale $\Q_p$-cohomology. Hence, their difference is governed by the object $\Hom_{\D_{[0,\infty)}(S/\varphi^\Z)}(A_S,M)$ (which might be difficult to access).
\end{remark}

\subsection{Pro-\'etale cohomology of smooth rigid spaces} \label{sec:misc}

An immediate corollary of \Cref{rslt:smooth-maps-of-adic-spaces-are-cohom-smooth} and \Cref{rslt:dualizing-complex-on-smooth-adic-space} is a version of Poincar\'e duality: 

\begin{theorem} \label{thm:poincare-duality-smooth-level-ff-curve}
Let $g\colon Y\to X$ be a smooth morphism of analytic adic spaces over $\Q_p$, which is equidimensional of dimension $d$. There is a canonical isomorphism in $\DFF(X,\Q_p)$
$$
    \underline{\mathrm{Hom}}_{\DFF(X,\Q_p)}(g_! M, 1) = g_\ast \underline{\mathrm{Hom}}_{\DFF(Y,\Q_p)}(M,1)(d)[2d].
$$
\end{theorem}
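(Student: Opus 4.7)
The statement is a purely formal consequence of cohomological smoothness together with the explicit identification of the dualizing complex, so my approach is to reduce the theorem to an application of the standard Hom--duality in the $6$-functor formalism $\DFF(-,\Q_p)$.

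First, I would transfer the cohomological smoothness results from $\D_{[0,\infty)}(-)$ to $\DFF(-,\Q_p)$. By definition $\DFF(-,\Q_p) = \D_{(0,\infty)}((-)/\varphi^\Z)$, and the map $\DFF(-,\Q_p)$ is obtained from $\D_{[0,\infty)}$ by base change along the idempotent algebra described in \Cref{sec:6-funct-texorpdfstr-variant-without-zero}, followed by descent along the étale map $S \to S/\varphi^\Z$. Both operations preserve cohomological smoothness and the dualizing complex (étale maps by \Cref{rslt:6ff-for-D-0-infty}, and base change along an idempotent algebra by the general formalism, cf.\ \Cref{sec:suppl-6-funct-base-change-for-6-functor-formalisms}). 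Thus by \Cref{rslt:smooth-maps-of-adic-spaces-are-cohom-smooth} and the remark following \Cref{rslt:dualizing-complex-on-smooth-adic-space}, the morphism $g\colon Y \to X$ is cohomologically smooth for $\DFF(-,\Q_p)$ with dualizing complex
\[
  \omega_g = g^!(1) \cong 1(d)[2d].
\]

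Next, I would apply the standard internal-Hom--adjunction, valid in any $6$-functor formalism on a $!$-able map $g$: for all $M \in \DFF(Y,\Q_p)$ and $N \in \DFF(X,\Q_p)$ one has a natural isomorphism
\[
  \IHom_{\DFF(X,\Q_p)}(g_! M, N) \isoto g_\ast \IHom_{\DFF(Y,\Q_p)}(M, g^! N).
\]
This follows formally from the $g_! \dashv g_\ast$ adjunction together with the projection formula $g_!(M \tensor g^\ast P) \cong g_! M \tensor P$ by applying $\Hom(P, -)$ to both sides and invoking Yoneda. (For the 6-functor formalism at hand, both the projection formula and the existence of $g^!$ are part of \Cref{rslt:6ff-for-D-0-infty}.) Specialising to $N = 1$ and plugging in $g^!(1) \cong 1(d)[2d]$ yields
\[
  \IHom_{\DFF(X,\Q_p)}(g_! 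M, 1) \cong g_\ast \IHom_{\DFF(Y,\Q_p)}(M, 1(d)[2d]).
\]

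Finally, I would pull the invertible object $1(d)[2d]$ out of the internal Hom. Since $1(d)[2d]$ is invertible and $g_\ast$ is $\D_\solid(\Q_p)$-linear (in particular commutes with the Tate twist and with shifts), one has
\[
  g_\ast \IHom_{\DFF(Y,\Q_p)}(M, 1(d)[2d]) \cong g_\ast \IHom_{\DFF(Y,\Q_p)}(M, 1)(d)[2d],
\]
which gives the desired formula. The verification that all these natural isomorphisms fit together coherently is essentially bookkeeping, relying on the general machinery of \cite{heyer-mann-6ff} and is the only point where some care is required; once the cohomological smoothness and dualizing complex identification are in place, the statement itself is automatic.
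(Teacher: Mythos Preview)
Your proposal is correct and takes essentially the same approach as the paper, which dispatches the claim in one line by invoking $g^!(-) = g^\ast(-)\otimes g^!(1) = g^\ast(-)(d)[2d]$ and the standard $\IHom$-adjunction. One small slip: you wrote ``the $g_! \dashv g_\ast$ adjunction'' when deriving $\IHom(g_! M, N) \cong g_\ast \IHom(M, g^! N)$, but the adjunction you actually use (and need) is $g_! \dashv g^!$, combined with $g^\ast \dashv g_\ast$.
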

\begin{proof}
This is formally implied by $g^!(-) = g^\ast(-)\otimes g^!(1) = g^\ast(-)(d)[2d]$.
\end{proof}

As a first consequence, we get finiteness results for proper, smooth pushforward of $\Q_p$-local systems. A similar assertion has been announced by Kedlaya--Liu in relation with \cite{kedlaya2016finiteness}. Recall that in \cite[Remark 3.12]{anschutz2021fourier} a theory of relative Banach--Colmez spaces over a small v-stack $S$ is introduced, namely, as the essential image in $\D(S_v,\Q_p)$ under the functor $\sigma_{\Q_p}$ (\Cref{sec:relation-v-sheaves-mod-p}) of the category of dualizable objects in $\DFF(S,\Q_p)$.

\begin{theorem} \label{proper-pushforward-of-q-p-local-systems}
Let $f\colon Y\to X$ be a proper smooth morphism of analytic adic spaces over $\Q_p$, equidimensional of dimension $d$, and let $\mathbb{L}$ be a $\Q_p$-local system on $Y$ (or more generally a relative Banach--Colmez space). Then the derived pushforward $f_{v,\ast}\mathbb{L}\in \D(X_v,\Q_p)$ is a relative Banach--Colmez space. Moreover, if $f$ is of constant relative dimension $d$ then there is natural isomorphism
\[
    \IHom_{\D(X_v,\Q_p)}(f_{v,\ast}\mathbb{L},\Q_p) = f_{v,\ast}(\mathbb{L}^\vee(d)[2d]).
\]
\end{theorem}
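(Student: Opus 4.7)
The plan is to reduce both claims to corresponding statements at the level of the Fargues--Fontaine curve and then transport them through the functor $\sigma_{\Q_p}$ of \Cref{sec:relation-v-sheaves-mod-p}. By \cite[Remark 3.12]{anschutz2021fourier} together with the Riemann--Hilbert functor $\RH_{\Q_p}$ from \Cref{rslt:construction-of-RH-functors} (which identifies dualizable objects with dualizable objects), we may write $\mathbb{L} = \sigma_{\Q_p}(M)$ for some dualizable object $M \in \DFF(Y^\diamond,\Q_p)$; in the case of a $\Q_p$-local system, $M = \RH_{\Q_p}(\mathbb{L})$.

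First I would prove that $f_{v,\ast}\mathbb{L}$ is a relative Banach--Colmez space. Since $f$ is a morphism of rigid-analytic varieties over $\Q_p$, the associated map of diamonds is $p$-bounded (using the remarks after \Cref{def:p-bounded-morphism}). Hence \Cref{rslt:sigma-ast-compatible-with-pushforward} yields a natural isomorphism
\[
    f_{v,\ast}\mathbb{L} = f_{v,\ast}\sigma_{\Q_p}(M) \cong \sigma_{\Q_p}(f_\ast M).
\]
By \Cref{rslt:smooth-maps-of-adic-spaces-are-cohom-smooth}, $f^\diamond$ is $\D_{[0,\infty)}$-smooth, and because $f$ is proper, \Cref{rslt:6ff-for-D-0-infty} shows $f^\diamond$ is $\D_{[0,\infty)}$-proper, so $f_\ast = f_!$. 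Combining cohomological smoothness and properness, $f_\ast$ preserves dualizable objects (a standard property in any 6-functor formalism, cf.\ \cite[Lemma~4.5.16]{heyer-mann-6ff}), so $f_\ast M$ is dualizable in $\DFF(X^\diamond,\Q_p)$, and its image $\sigma_{\Q_p}(f_\ast M) = f_{v,\ast}\mathbb{L}$ is by definition a relative Banach--Colmez space.

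For the duality statement, I would apply Poincar\'e duality \Cref{thm:poincare-duality-smooth-level-ff-curve} to $M$, obtaining in $\DFF(X^\diamond,\Q_p)$
\[
    \IHom_{\DFF(X^\diamond,\Q_p)}(f_! M, 1) \;\cong\; f_\ast\, \IHom_{\DFF(Y^\diamond,\Q_p)}(M, 1)(d)[2d].
\]
Applying $\sigma_{\Q_p}$ to both sides, I would use \Cref{rslt:sigma-Qp-comparison-of-internal-hom} (with the trivial colimit presentation $M = \Q_p \otimes M$ coming from dualizability of both arguments of the internal $\IHom$) to commute $\sigma_{\Q_p}$ with $\IHom$ twice:
\[
    \sigma_{\Q_p}\IHom(f_! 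M, 1) \cong \IHom(f_{v,\ast}\mathbb{L}, \Q_p),\qquad \sigma_{\Q_p}\IHom(M,1) \cong \mathbb{L}^\vee.
\]
Using once more \Cref{rslt:sigma-ast-compatible-with-pushforward} to commute $\sigma_{\Q_p}$ past $f_\ast$, and the fact that $\sigma_{\Q_p}$ is exact and commutes with Tate twists (which are pulled back from $\Spd(\Q_p)$), the right-hand side becomes $f_{v,\ast}(\mathbb{L}^\vee)(d)[2d]$, giving the desired formula.

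The main obstacle is really conceptual rather than technical: one needs \Cref{rslt:sigma-Qp-comparison-of-internal-hom} to intertwine internal Hom computed inside $\DFF$ with internal Hom computed inside $\D(X_v,\Q_p)$. In the present situation both arguments are dualizable, which is the easiest case of \Cref{rslt:sigma-Qp-comparison-of-internal-hom}, so no subtle functional-analytic input is needed beyond what has already been established. Similarly, the compatibility of $\sigma_{\Q_p}$ with $f_\ast$ is given for free by the $p$-boundedness of smooth proper maps of rigid spaces, so the proof is essentially a formal assembly of the results from \Cref{sec:the-6ff-formalism-D-0-infty} and \Cref{sec:an-overc-riem}.
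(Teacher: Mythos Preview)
Your proof is correct and follows essentially the same route as the paper: write $\mathbb{L}=\sigma_{\Q_p}(M)$ for a dualizable $M$, use \Cref{rslt:sigma-ast-compatible-with-pushforward} to pass to $f_\ast$ in $\DFF(-,\Q_p)$, invoke preservation of dualizability under smooth proper pushforward, and then transport \Cref{thm:poincare-duality-smooth-level-ff-curve} through \Cref{rslt:sigma-Qp-comparison-of-internal-hom} with both arguments dualizable. The paper's proof is just a condensed version of what you wrote.
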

\begin{proof}
By \cref{rslt:sigma-ast-compatible-with-pushforward}, the definition of relative Banach--Colmez spaces, the general property that pushforward along proper and smooth morphisms preserves dualizable objects in any $6$-functor formalism and \Cref{rslt:sigma-Qp-comparison-of-internal-hom} (applied here only for $M$ dualizable and $N$ being the unit), we may argue for the pushforward of dualizable objects along $f_\ast\colon \DFF(Y,\Q_p) \to \DFF(X,\Q_p)$ instead. Then the assertion follows from \Cref{thm:poincare-duality-smooth-level-ff-curve}.
\end{proof}

\begin{remark} \label{rmk:arithmetic-duality}
If in the previous discussion $X=\Spa(\Q_p)$ and the morphism $Y\to X$ is replaced by the morphism $Y^\diamond/\varphi^{\Z} \to \Spd(\Q_p)/\varphi^{\Z}$, one could also consider the composition $Y^\diamond/\varphi^{\Z} \to \Spd(\Q_p)/\varphi^{\Z} \to \Spd(\F_p)$ and use in addition \Cref{sec:case-texorpdfstr-smoothness-of-spd-q-p-mod-hi}. This would give Poincar\'e duality for ``arithmetic" pro-\'etale cohomology. Such results have been proved independently of ours by Zhenghui Li \cite{zhenghui_li_2024}.
\end{remark}

For the rest of this subsection, we fix once and for all a complete algebraically closed non-archimedean extension $C$ of $\Q_p$. We want to make \Cref{thm:poincare-duality-smooth-level-ff-curve} more explicit for certain partially proper smooth rigid spaces over $\Spa(C)$. Unless the notation makes it clear, six functors always refer to the $6$-functor formalism for $\DFF(-,\Q_p)$. We will also abuse notation and not distinguish between a morphism of rigid spaces and the induced morphism of diamonds.

\begin{definition} \label{def:generalized-stein-space}
A partially proper rigid space $X$ over $\Spa(C)$ is said to be \emph{countable at infinity} if one can write $X=\cup_{n \in \N} X_n$ as a countable increasing union of qcqs open subspaces $X_n$, such that for each $n$ the inclusion $X_n \subset X_{n+1}$ factors through the adic compactification $Y_n$ of $X_n$ over $\Spa(C)$.
\end{definition}

Examples of \cref{def:generalized-stein-space} are proper rigid varieties and Stein rigid varieties (for which by definition the qcqs rigid spaces in the countable union can be taken to be affinoid). Local Shimura varieties conjecturally provide examples of arithmetic and representation-theoretic interest.

Let us first observe that one can rewrite compactly supported cohomology (in the sense of the 6-functor formalism $\DFF(-,\Q_p)$) of a partially proper rigid space $X$ over $\Spa(C)$ that is countable at infinity using excision. Namely, by definition, we can find an increasing open cover $X=\cup_{n \in \mathbb{N}} X_n$ by qcqs rigid spaces $X_n$, such that for each $n$ the inclusion $X_n \subset X_{n+1}$ factors through the adic compactification $Y_n$ of $X_n$ over $\Spa(C)$. We set $f_n\colon X_n \to \Spa(C)$, $h_n\colon X\backslash Y_n \to \Spa(C)$.

\begin{proposition} \label{prop:description-compactly-supp-cohomology-via-excision}
Let $f: X \to \Spa(C) $ be a partially proper countable at infinity rigid space. Using the notations introduced above, we have
$$
    f_!  1= \varinjlim\limits_{n\in \N} f_{n,!} 1 = \fib(f_\ast 1 \to  \colim_n h_{n,\ast} 1),
$$
\end{proposition}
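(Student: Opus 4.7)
The plan is to establish the two equalities separately. The first equality $f_! 1 = \varinjlim_n f_{n,!} 1$ will follow from analytic $\ast$-descent for $\DFF(-,\Q_p)$ (a consequence of \Cref{rslt:v-descent-for-D-0-infty} combined with \Cref{rslt:6ff-for-D-0-infty}): since $\{X_n\}_{n\in\N}$ is a directed open cover of $X$, the identity of $\DFF(X,\Q_p)$ admits the colimit presentation $\id = \varinjlim_n j_{n,!}j_n^\ast$, with $j_n\colon X_n\hookrightarrow X$. Evaluating at $1_X$ gives $1_X = \varinjlim_n j_{n,!}1_{X_n}$, and applying the colimit-preserving functor $f_!$ yields the first equality.

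For the second equality, I would begin by factoring $j_n = \tilde{i}_n\circ\iota_n$, where $\iota_n\colon X_n\hookrightarrow Y_n$ is an open immersion and $\tilde{i}_n\colon Y_n\hookrightarrow X$ is a proper closed immersion (the latter using that $X_n \subset X_{n+1}$ factors through $Y_n$, which is proper over $\Spa C$). Both $\tilde{i}_n$ and $\bar{f}_n := f\circ\tilde{i}_n\colon Y_n\to\Spa C$ are proper and lpbc, hence satisfy $(-)_! = (-)_\ast$ by \Cref{rslt:6ff-for-D-0-infty}, so
\[
    f_{n,!}1 = \bar{f}_{n,\ast}\iota_{n,!}1 = f_\ast\tilde{i}_{n,\ast}\iota_{n,!}1 = f_\ast j_{n,!}1.
\]
Next, I would apply $f_\ast$ to the excision triangle $j_{n,!}1 \to 1 \to c_{n,\ast}1_{X\setminus X_n}$ in $\DFF(X,\Q_p)$, where $c_n\colon X\setminus X_n\hookrightarrow X$ is the closed complement, and identify $f_\ast c_{n,\ast} = (fc_n)_\ast$, giving $f_{n,!}1 = \fib(f_\ast 1 \to (fc_n)_\ast 1_{X\setminus X_n})$. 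Passing to the colimit over $n$ — and using that sequential colimits commute with finite limits in the stable presentable category $\DFF(\Spa C,\Q_p)$ — yields
\[
    f_!1 = \fib\bigl(f_\ast 1\to \varinjlim_n (fc_n)_\ast 1_{X\setminus X_n}\bigr).
\]
To finish, I would match this colimit with $\varinjlim_n h_{n,\ast}1$ via cofinality: the assumption $X_n\subset Y_n\subset X_{n+1}$ gives inclusions $X\setminus X_{n+1}\subset X\setminus Y_n\subset X\setminus X_n$, which interleave the two systems into a single directed system whose colimit is computed equally by either cofinal subsystem.

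The main obstacle lies in the availability of the excision triangle for the closed complement $c_n$: the formalism of \Cref{rslt:6ff-for-D-0-infty} a priori only produces such triangles for closed immersions lying in the class of $!$-able maps, whereas $c_n$ is only known to be a closed subset of $|X|$ stable under generalization. This is precisely where \Cref{prop:morphism-of-locales} and \Cref{cor-exicsion-Qp-cohomology} intervene: through the morphism of locales $\mathcal{S}(\DFF(X,\Q_p))\to|X|$, the excision triangle associated to the open $X_n$ and its closed complement can be interpreted intrinsically inside the smashing spectrum, producing the required triangle without demanding that $c_n$ itself be $!$-able.
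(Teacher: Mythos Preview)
Your approach is essentially the same as the paper's: analytic descent for the first equality, then excision plus a cofinality argument for the second. The one substantive difference is that the paper does not work directly with $X_n$ and the compactifications $Y_n$. Instead it first constructs, via an explicit spectral-space argument inside $Y_{n+1}$, auxiliary \emph{partially proper open} subspaces $U_n\subseteq X$ with $X_n\subseteq U_n$ and $Y_n\subseteq U_n\subseteq Y_{n+1}$, and runs the whole argument with the open immersions $U_n\hookrightarrow X$ in place of $X_n\hookrightarrow X$.

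The reason this matters is exactly the step you flag as unproblematic: the identity $f_{n,!}1=f_\ast j_{n,!}1$ via the factorization through $Y_n$. Your claim that $\tilde{i}_n\colon Y_n\to X$ is ``proper and lpbc'' is not immediate. The canonical compactification $Y_n$ of a qcqs rigid space is an analytic adic space, but $Y_n^\diamond$ is a priori only known to be \emph{prespatial}, not spatial (this is precisely the issue that motivates the introduction of prespatial diamonds, cf.\ the discussion before \cref{def:p-bounded-morphism}); and $Y_n\to X$ is not a closed immersion of adic spaces in the ring-theoretic sense (the structure sheaf of $Y_n$ is not a quotient of $\ri_X$). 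One can still argue that $\tilde{i}_n^\diamond$ is a proper monomorphism of v-sheaves, hence a closed immersion of diamonds, hence lpbc after all --- but this needs to be said. The paper's route through partially proper opens $U_n$ sidesteps this by keeping every object an honest open subspace of $X$; the price is the topological construction of $U_n$.

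The cofinality step is the same in both approaches: you interleave closed $X\setminus X_n$ with open $X\setminus Y_n$, the paper interleaves closed $X\setminus U_n$ with open $X\setminus Y_n$. Both produce the required identification $\colim_n f_\ast A_n=\colim_n h_{n,\ast}1$ by the mechanism in the proof of \cref{cor-exicsion-Qp-cohomology}. (Minor slip: $X\setminus X_n$ is stable under \emph{specialization}, not generalization.)
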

\begin{proof}
Set $Z_n:=Y_n\setminus X_n$. Then $Z_{n+1}\cap X_n=\emptyset$ (as $X_n\subseteq X_{n+1}$) and $Z_{n+1}$ is quasi-compact (being closed in $Y_{n+1}$). This implies that there exists a quasi-compact open subset $V_{n+1}\subseteq Y_{n+1}$ such that $Z_{n+1}\subseteq V_{n+1}$ and $Y_n\cap V_{n+1}=\emptyset$. As $X_n$ is open in $Y_{n+1}$ we get that $X_n\cap \overline{V_{n+1}}=\emptyset$, and thus $X_n\subseteq U_n:=Y_{n+1}\setminus \overline{V_{n+1}}\subseteq X_{n+1}$. As $V_{n+1}$ is quasi-compact (and hence pro-constructible in the spectral space $Y_{n+1}$), $\overline{V_{n+1}}$ is the set of specializations of $V_{n+1}$ in $Y_{n+1}$, and as specializations of analytic adic spaces don't change the residue field we see that $\overline{V_{n+1}}$ is stable under generalizations. Thus, $U_n\subseteq X_{n+1}$ is a partially proper open subspace containing $X_n$. Set $g_n\colon U_n\to \Spa(C)$ as the natural morphism, and $j_n\colon U_n \injto X$. Then (using that $X_n, U_n$ are open subspaces of $X$)
  \[
    f_!  1=\varinjlim\limits_{n\in \N} f_{n,!} 1=\varinjlim\limits_{n\in \N} g_{n,!} 1.
  \]
By \Cref{prop:morphism-of-locales}, the closed subset $X \backslash U_n \subseteq X$ defines an idempotent algebra $A_n \in \DFF(X,\Q_p)$. We see that 
$$
j_{n,!} 1 = \fib(1 \to A_n).
$$
As $Y_n\subseteq U_n\subseteq Y_{n+1}$, we see that the systems $\{A_n\}_{n\geq 0}$ and $\{\iota_{n,\ast}(1)\}_{n\geq 0}$ are cofinal among each other, where $\iota_n\colon X\setminus Y_n\to X$. 
Applying $f_\ast$ and passing to the colimit over $n$, we therefore get by cofinality
$$
\colim_n f_\ast j_{n,!} 1 = \fib(f_\ast 1 \to  \colim_n h_{n,\ast}1).
$$
Since the morphism $j_n: U_n \to X$ factors through the adic compactification $Y_n$ we have $f_\ast j_{n,!} = g_{n,!}$. Hence, we finally deduce that
$$
    f_! 1 = \varinjlim_{n\in \N} g_{n,!} 1 = \fib(f_\ast 1 \to  \colim_n h_{n,\ast} 1),
$$
as desired.
\end{proof}

Based on the description in \cref{prop:description-compactly-supp-cohomology-via-excision}, our next goal, \Cref{prop:compactly-supported-coh-is-countable-colimit-compact-times-perfect-complex}, is to show that compactly supported cohomology of smooth partially proper countable at infinity rigid spaces is not an arbitrary solid quasi-coherent complex on the Fargues--Fontaine curve for $C$, but has a rather specific shape. We start with some basic results about nuclear sheaves and pushforward.

\begin{lemma} \label{rslt:pushforward-for-DFF-Qp-preserves-nuclearity}
Let $f\colon S'\to S$ be a quasi-separated morphism of small v-stacks. Assume that $S'=\bigcup_{n\in \N} S'_n$ is a countable union in the analytic topology with $f_n\colon S'_n\to S$ $p$-bounded. Let $M\in \D_{(0,\infty)}^\nuc(S')$ be a nuclear object. Then $f_\ast(M)$ is nuclear. Moreover, in this case the formation of $f_\ast(M)$ commutes with base change in $S$.
\end{lemma}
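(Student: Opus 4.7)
The plan is to reduce to the case of a $p$-bounded morphism via an analytic Čech descent along the given countable cover, and then to conclude using stability of nuclear objects under countable limits (\Cref{rslt:sigma-commutes-countable-limits}). Concretely, for each finite nonempty $I\subset \N$ set $S'_I:=\bigcap_{i\in I}S'_i$ and let $f_I\colon S'_I\to S$ denote the restriction of $f$. Since $f$ is quasi-separated, each $S'_I$ is quasi-compact, and $f_I$ factors for any $i_0\in I$ as a qcqs open immersion $S'_I\injto S'_{i_0}$ followed by the $p$-bounded map $f_{i_0}$; hence $f_I$ is itself $p$-bounded by \Cref{sec:funct-da_h--1-stability-of-p-boundedness} together with \Cref{rslt:qcqs-qproet-map-is-p-bounded}. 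Analytic descent of $\D_{(0,\infty)}(-)$ then yields a countable Čech-style presentation
\[
  f_\ast M \isom \varprojlim_{\emptyset\ne I\subset\N,\ \#I<\infty} f_{I,\ast}(M|_{S'_I}).
\]

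Next I would settle the $p$-bounded case: if $g\colon T'\to T$ is $p$-bounded and $N\in \D^\nuc_{(0,\infty)}(T')$, then $g_\ast N$ is nuclear on $T$ and commutes with arbitrary base change in $T$. The base change and $\D_\nuc(\Q_p)$-linearity (hence colimit preservation) of $g_\ast$ are provided by \Cref{rslt:D-0-infty-pushforward-colim-and-base-change}. For the nuclearity claim, base change reduces us to $T$ strictly totally disconnected, where $\D_{(0,\infty)}(T)=\D_{\hat\solid}(\mathcal{Y}_{(0,\infty),T})$. Nuclearity can then be checked locally on the affinoid annuli $\mathcal{Y}_{I,T}\subset \mathcal{Y}_{(0,\infty),T}$ via the concrete description of \Cref{rmk:nuclear-objects-on-stably-uniform-adic-space}; writing $N$ as an ind-system of basic objects with trace-class transition maps and using the $\D_\nuc(\Q_p)$-linearity of $g_\ast$, one verifies that $g_\ast N$ inherits such a presentation annulus-by-annulus.

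Combining the two steps, $f_\ast M$ is exhibited as a countable limit of nuclear objects on $S$, and \Cref{rslt:sigma-commutes-countable-limits} (stability of $\D^\nuc_{(0,\infty)}(S)$ under countable limits) gives that $f_\ast M$ is nuclear. For the base change assertion, given $g\colon T\to S$ with pullback $f'\colon S'\times_S T\to T$ and $g'\colon S'\times_S T\to S'$, both $g^\ast f_\ast M$ and $f'_\ast g^{\prime,\ast}M$ admit compatible Čech presentations as countable limits whose individual terms agree by the $p$-bounded base change already established. It remains to commute $g^\ast$ with the countable limit on nuclear objects, which follows from \Cref{rslt:solid-base-change-preserves-countable-limits-of-nuclears}, applied after reducing to affinoid perfectoid bases and invoking the finite Tor-dimension input established during the proof of \Cref{rslt:sigma-commutes-countable-limits}.

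The main obstacle I expect is the preservation of nuclearity by $p$-bounded pushforward in Step 2: while the colimit preservation and $\D_\nuc(\Q_p)$-linearity of $g_\ast$ are formal consequences of \Cref{rslt:D-0-infty-pushforward-colim-and-base-change}, they do not directly encode that trace-class morphisms remain trace-class after pushforward. Handling this requires working concretely on the affinoid annuli of the Fargues–Fontaine curve and exploiting the explicit ind-presentations of nuclear modules recalled in \Cref{rmk:nuclear-objects-on-stably-uniform-adic-space}; this is the only non-formal input, all other ingredients being abstract manipulations of the 6-functor formalism and its limit-stability properties on nuclears.
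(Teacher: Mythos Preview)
Your overall architecture matches the paper's: reduce from the countable union to the $p$-bounded case via a countable limit presentation of $f_\ast M$, invoke stability of nuclear objects under countable limits (\Cref{rslt:sigma-commutes-countable-limits}), and handle the base-change compatibility by commuting $g^\ast$ with that countable limit using the finite-Tor-dimension input behind \Cref{rslt:sigma-commutes-countable-limits}. Whether one writes this as a \v{C}ech limit over finite intersections or as a sequential limit over an increasing exhaustion is immaterial.

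The gap is in your Step~2. You reduce the target $T$ to strictly totally disconnected and then propose to check nuclearity of $g_\ast N$ ``annulus-by-annulus'' by writing $N$ as an ind-system with trace-class transitions and invoking $\D_\nuc(\Q_p)$-linearity of $g_\ast$. The problem is that the source $T'$ is still only a $p$-bounded spatial diamond, so $\D_{(0,\infty)}(T')$ has no concrete description as sheaves on an adic space, and in particular there is no ``annulus'' over $T'$ on which to work. Moreover, $\D_\nuc(\Q_p)$-linearity of $g_\ast$ does not imply that $g_\ast$ sends trace-class maps to trace-class maps: a trace-class map $N_i\to N_{i+1}$ in $\D_{(0,\infty)}(T')$ is witnessed by an element of $(N_i^\vee\otimes N_{i+1})(*)$ with the internal dual taken \emph{in $\D_{(0,\infty)}(T')$}, and there is no general map from this to $((g_\ast N_i)^\vee\otimes g_\ast N_{i+1})(*)$.

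The paper sidesteps this entirely by applying the countable-limit trick a second time on the source: with $T$ already totally disconnected, one chooses a quasi-pro-\'etale hypercover $T'_\bullet\to T'$ by totally disconnected spaces, so that $g_\ast N \cong \varprojlim_{[n]\in\Delta} (T'_n\to T)_\ast(N|_{T'_n})$. This is again a countable limit, and each term is now a pushforward between totally disconnected perfectoid spaces, where nuclearity is genuinely clear (on each annulus it is restriction of scalars along a map of $\Q_p$-Banach algebras, and nuclearity over such algebras reduces to nuclearity over $\Q_p$). So the missing idea is simply: repeat the same reduction on the source rather than attempting a direct trace-class argument.
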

\begin{proof}
By \cref{rslt:D-0-infty-pushforward-base-change} and \cref{rslt:sigma-commutes-countable-limits} we may reduce to the case that $S$ is a totally disconnected space and that $f$ is qcqs and $p$-bounded. Indeed, the compatiblity with base change follows by the fact that base change of nuclear modules commutes with the countable limit $f_\ast(M)=\varprojlim_{n\in \N}f_{n,\ast}(M_{|S'_n})$. By stability of $\D_{(0,\infty)}^\nuc(S)$ under countable limits (this is proven in \Cref{rslt:sigma-commutes-countable-limits}) and by picking a quasi-pro-\'etale hypercover of $S'$ by totally disconnected spaces, we see that we may reduce to the case that $S'$ is a totally disconnected space. In this case, the claim is clear.
\end{proof}

Using \cref{rslt:pushforward-for-DFF-Qp-preserves-nuclearity} we can obtain the following strengthening of \Cref{rslt:sigma-ast-compatible-with-pushforward} for $\Q_p$-coefficients. 

\begin{corollary} \label{rslt:pushforward-commutes-with-sigma-nuclear-Qp-case}
In the situation of \Cref{rslt:pushforward-for-DFF-Qp-preserves-nuclearity}, the natural morphism
\begin{align*}
    \sigma_{\Q_p} f_\ast(M) \isoto f_{v,\ast} \sigma_{\Q_p}(M)   
\end{align*}
is an isomorphism for every nuclear $M\in \DFF(S',\Q_p)$.
\end{corollary}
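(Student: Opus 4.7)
The plan is to imitate the proof of \Cref{rslt:sigma-ast-compatible-with-pushforward} essentially verbatim, with the only change being the source of base change. In that earlier proof, one evaluated both sides on an arbitrary test morphism $g\colon T\to S$, used qcqs base change to rewrite $g^\ast f_\ast M$ as $\tilde f_\ast \tilde g^\ast M$ (for $\tilde f, \tilde g$ the two projections of the pullback square $T\times_S S'$), and concluded by the $(\tilde f^\ast, \tilde f_\ast)$-adjunction together with the definitions of $\sigma_{\Q_p}$ (via \Cref{rslt:construction-of-sigma-solid}) and of $f_{v,\ast}$ as the v-sheaf pushforward.

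In our present setting $f$ is merely quasi-separated and a countable union of $p$-bounded opens, so qcqs base change is no longer directly applicable. However, $M$ is assumed nuclear, and hence the last assertion of \Cref{rslt:pushforward-for-DFF-Qp-preserves-nuclearity} supplies exactly the base-change isomorphism $g^\ast f_\ast M \cong \tilde f_\ast \tilde g^\ast M$ that the previous argument needs. The remaining ingredients --- naturality of $\sigma_{\Q_p}$ under pullback, the adjunction between $\tilde f^\ast$ and $\tilde f_\ast$, and the tautological identity $\Gamma(T, f_{v,\ast} \mathcal F) = \Gamma(T\times_S S', \tilde g^\ast \mathcal F)$ for v-sheaves --- are formal and require no new input.

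Thus the whole proof reduces to the chain
\begin{align*}
\Gamma(T, \sigma_{\Q_p} f_\ast M)
&\isom \Hom_{\DFF(T,\Q_p)}(1, g^\ast f_\ast M)
\isom \Hom_{\DFF(T,\Q_p)}(1, \tilde f_\ast \tilde g^\ast M) \\
&\isom \Hom_{\DFF(T\times_S S',\Q_p)}(1, \tilde g^\ast M)
\isom \Gamma(T\times_S S', \tilde g^\ast \sigma_{\Q_p} M)
\isom \Gamma(T, f_{v,\ast}\sigma_{\Q_p} M),
\end{align*}
and a quick check that this composite coincides with the canonical natural transformation built as in \Cref{rslt:sigma-ast-compatible-with-pushforward} from the relevant unit/counit maps. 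The only non-formal step --- establishing base change under the weakened hypotheses on $f$ --- was the main obstacle, and it has already been handled in \Cref{rslt:pushforward-for-DFF-Qp-preserves-nuclearity} by combining analytic descent along the countable cover with the stability of $\D_{(0,\infty)}^{\nuc}$ under countable limits from \Cref{rslt:sigma-commutes-countable-limits}.
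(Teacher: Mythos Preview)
Your proposal is correct and follows essentially the same approach as the paper: the paper's proof simply states that it is identical to that of \Cref{rslt:sigma-ast-compatible-with-pushforward} except that qcqs base change (\Cref{rslt:D-0-infty-pushforward-colim-and-base-change}) is replaced by the base-change assertion from \Cref{rslt:pushforward-for-DFF-Qp-preserves-nuclearity}, which is exactly what you do. Your write-up is just a more explicit unpacking of this one-line argument.
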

\begin{proof}
The proof is identical to the one of \Cref{rslt:sigma-ast-compatible-with-pushforward} except that \Cref{rslt:D-0-infty-pushforward-colim-and-base-change} is replaced by \Cref{rslt:pushforward-for-DFF-Qp-preserves-nuclearity} (the critical assertion is the compatibility with base change).
\end{proof}

\begin{remark} \label{rmk:Bosco-nuclear-sheave-examples}
Let $f\colon S'\to S$ be a $p$-bounded morphism between small v-stacks. Assume that $S$ is an affinoid perfectoid space. Then the choice of a pseudo-uniformizer on $S$ defines a ``radius'' function $\mathrm{rad}$ on $|Y_{(0,\infty),S}|$, and for a closed interval $I\subseteq (0,\infty)$ we denote by $\mathcal{Y}_{I,S}=\Spa(B_{S,I},B_{S,I}^+)$ the corresponding quasi-compact open subspace (cf.\ \cite[Proposition II.1.16]{fargues-scholze-geometrization}). If $N\in\D^\nuc_{\hat\solid}(B_{S,I},B_{S,I}^+)$, then we obtain by $\ast$-pushforward to $\mathcal{Y}_{(0,\infty),S}$ a nuclear object $\widetilde{N}\in \D_{(0,\infty)}(S)$. Moreover, for a perfectoid space $T$ with a morphism $g\colon T\to S'$, we set $N_{T}$ as the $B_{T,I}$-module corresponding to $g^\ast f^\ast(\widetilde{N})$, in other words, $N_T=B_{T,I}\otimes_{B_{S,I}}N$, with the tensor product happening in $\D_\solid(\Q_p)$ (we use here that $N$ is nuclear). Calculating the pushforward $f_\ast(N_{S'})$ via a \v{C}ech nerve for a quasi-pro-\'etale surjection $T\to S'$ by an affinoid perfectoid space, we can conclude that $f_\ast(N_{S'})\in \D_{(0,\infty)}(S)$ lies in the full subcategory $\D^\nuc_{\hat\solid}(B_{S,I},B_{S,I}^+)$ (embedded via $\ast$-pushforward) and is given by the $B_{S,I}$-module $M_{I}:=\Gamma(S'_{\qproet},\mathcal{F}_N)$, where $\mathcal{F}_N$ is the pro-\'etale sheaf over $S'$ sending an affinoid perfectoid space $T\to S'$ to $N_T$. As $f_\ast(N_{S'})$ is a sheaf on $\mathcal{Y}_{(0,\infty),S}$, the collection $\{M_I\}_{I\subseteq (0,\infty) \textrm{ compact}}$ is a ``coadmissible'' $B_S=\mathcal{O}({Y_{(0,\infty),S}})$-module in the terminology of \cite[Construction 6.5]{bosco2023rational}.
In the special cases that $S=\Spa(C)$ and $N=B_{S,I}, B_{\mathrm{dR}}, B_{\mathrm{dR}}^+$ (the latter two are nuclear by \cref{rslt:solid-base-change-preserves-countable-limits-of-nuclears}) we therefore recover objects considered by Bosco in \cite{bosco2023rational}, \cite{bosco2021p}, e.g. \cite[Corollary 6.17]{bosco2021p}.
\end{remark}

We now come back to the problem of computing the cohomology with compact support. Before addressing the general case, we illustrate the situation by computing the example of the analytic affine line.

\begin{lemma}
Let $X:=\mathbb{A}^{1,\an}_{C}\subseteq \overline{X}:=\mathbb{P}^{1,\an}_{C}$ with complement the point at infinity $\infty$. Let $f: X \to \mathrm{Spa}(C)$ be the structure morphism, and $\overline{f}\colon \overline{X}\to \Spa(C)$. For an open quasi-compact subset $U\subseteq \overline{X}$, we let $f_U\colon U \to \Spa(C)$ be the induced morphism. Then
\[
    f_! 1_X=\fib(\overline{f}_\ast(1_{\overline{X}})\to \varinjlim_{U \ni \infty} f_{U,\ast}(1_U)) \isom i_{x_C,\ast}(\mathcal{O}_{\overline{X},\infty})[-2]\oplus 1[-2].
\]
Here, $i_{x_C}\colon \Spa(C) \injto \FF_{C^\flat}$ is the closed Cartier divisor defined by the untilt $C$, and this is a closed Cartier divisor on the Fargues--Fontaine curve $\FF_{C^\flat}$ for $C^\flat$.
\end{lemma}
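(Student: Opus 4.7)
My plan is to combine the fiber description of $f_!$ from \Cref{prop:description-compactly-supp-cohomology-via-excision} with an explicit computation of each term, using the primitive comparison in the proper smooth case for $\overline{X}$ and the well-known cohomology of a closed disc for the neighborhoods of $\infty$. The identification of the second description of the fiber (in terms of $\overline{f}_\ast$ and quasi-compact opens of $\overline{X}$ containing $\infty$) with the one of \Cref{prop:description-compactly-supp-cohomology-via-excision} will follow from Mayer--Vietoris applied to the cover $\overline{X} = X \cup V_n$, where $V_n = \{\abs t > r_n\} \cup \{\infty\}$ are closed discs around $\infty$ in the coordinate $s = 1/t$; this exhibits $X_n = \{\abs t \leq r_n\}$ and $V_n$ as a cofinal system with intersection $X \cap V_n = X \setminus Y_n$, and a colimit of the resulting cartesian squares identifies the two fibers.

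Second, I would compute $\overline{f}_\ast 1_{\overline{X}}$. Since $\overline{X} \to \Spa(C)$ is smooth and proper of dimension $1$, primitive comparison (\Cref{rslt:primitive-comparison-for-smooth-proper-map} applied to the dualizable sheaf $1_{\overline X}$, which is $\overline{f}$-suave by \Cref{rslt:smooth-maps-of-adic-spaces-are-cohom-smooth}) identifies $\overline{f}_\ast 1_{\overline{X}} = \RH_{\Q_p}(R\Gamma(\overline{X}_\proet, \Q_p))$. Using the classical computation $R\Gamma(\P^1_{C,\proet}, \Q_p) = \Q_p \oplus \Q_p(-1)[-2]$, this gives $\overline{f}_\ast 1_{\overline{X}} = \mathcal{O}_{\FF_{C^\flat}} \oplus \mathcal{O}_{\FF_{C^\flat}}(1)[-2]$ (in the convention $\RH_{\Q_p}(\Q_p(-1)) = \mathcal{O}_\FF(1)$). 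Third, for the colimit side I take cofinally affinoid neighborhoods $V_n$ of $\infty$, isomorphic to closed discs over $C$, and use Colmez's computation (as in \Cref{sec:pro-etale-cohomology-example-introduction-affine-line}) to get $f_{V_n,\ast}(1_{V_n}) = \mathcal{O}_{\FF_{C^\flat}} \oplus i_{x_C,\ast}(\mathcal{O}(V_n)/C)(-1)[-1]$. Passing to the filtered colimit along the restriction maps yields $\varinjlim_n f_{V_n,\ast}(1_{V_n}) = \mathcal{O}_{\FF_{C^\flat}} \oplus i_{x_C,\ast}(\mathcal{O}_{\overline{X},\infty}/C)(-1)[-1]$, since the colimit commutes with $i_{x_C,\ast}$ and $\varinjlim_n \mathcal{O}(V_n) = \mathcal{O}_{\overline{X},\infty}$.

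Third, I would plug these into the long exact sequence for the fiber. The map on $H^0$ is the identity $\mathcal{O}_\FF \to \mathcal{O}_\FF$, and $H^1, H^2$ of the target and source of the map respectively vanish, so the LES immediately yields $H^i(f_! 1_X) = 0$ for $i \neq 2$ and a short exact sequence
\[
0 \to i_{x_C,\ast}(\mathcal{O}_{\overline{X},\infty}/C)(-1) \to H^2(f_! 1_X) \to \mathcal{O}_{\FF_{C^\flat}}(1) \to 0.
\]
Finally, I would identify this extension using the fundamental exact sequence $0 \to \mathcal{O}_{\FF_{C^\flat}} \to \mathcal{O}_{\FF_{C^\flat}}(1) \to i_{x_C,\ast}(C) \to 0$ and the tautological extension $0 \to i_{x_C,\ast}(\mathcal{O}_{\overline{X},\infty}/C) \to i_{x_C,\ast}(\mathcal{O}_{\overline{X},\infty}) \to i_{x_C,\ast}(C) \to 0$. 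Concretely, I would construct a surjection $\mathcal{O}_{\FF_{C^\flat}} \oplus i_{x_C,\ast}(\mathcal{O}_{\overline{X},\infty}) \twoheadrightarrow \mathcal{O}_{\FF_{C^\flat}}(1)$ whose first component is a nonzero global section of $\mathcal{O}(1)$ (coming from the fundamental sequence) and whose second component is $i_{x_C,\ast}$ of the residue-at-$\infty$ map $\mathcal{O}_{\overline{X},\infty} \to C$; its kernel is precisely $i_{x_C,\ast}(\mathcal{O}_{\overline{X},\infty}/C)$ (up to the harmless twist $(-1)$ on a skyscraper at $x_C$). Matching this with the extension class of $H^2(f_! 1_X)$ coming from the LES of the fiber sequence will identify $H^2(f_! 1_X) \cong i_{x_C,\ast}(\mathcal{O}_{\overline{X},\infty}) \oplus \mathcal{O}_{\FF_{C^\flat}}$ as desired.

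The main obstacle is the last step: tracing through the construction of $\sigma_{\Q_p}$ / $\RH_{\Q_p}$ precisely enough to see that the edge maps in the long exact sequence for the fiber are identified with the natural ones coming from the fundamental exact sequence of period sheaves. In practice this amounts to checking that the restriction maps $\mathcal{O}(\overline{X}) \to \mathcal{O}(V_n)$ together with the section of $\mathcal{O}(1)$ on $\FF_{C^\flat}$ coming from $H^0(\overline{X}, \hat{\mathcal{O}}) = C$ are compatible, which is a calculation at the level of the fundamental sequence restricted to $V_n$ and its behavior in the colimit.
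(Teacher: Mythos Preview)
There is a genuine error. You write ``in the convention $\RH_{\Q_p}(\Q_p(-1)) = \mathcal{O}_\FF(1)$'', but this is wrong: over an algebraically closed field $C$, the cyclotomic character is trivial, so $\Q_p(-1)\cong\Q_p$ as a $\Q_p$-local system and $\RH_{\Q_p}(\Q_p(-1))=1$. More conceptually, $\RH_{\Q_p}$ identifies dualizable objects in $\D_\nuc(\Spa(C),\Q_p)$ with slope-$0$ vector bundles on $\FF_{C^\flat}$ (\Cref{rslt:RH-identifies-dualizable-objects}); the line bundle $\mathcal{O}_\FF(1)$ is not in its image. Thus $\overline{f}_\ast 1_{\overline{X}} = 1\oplus 1[-2]$, as the paper states, not $\mathcal{O}_\FF\oplus\mathcal{O}_\FF(1)[-2]$. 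The Tate twist in \Cref{rslt:dualizing-complex-on-smooth-adic-space} is the image under $\RH$ of the cyclotomic local system, not the FF-curve line-bundle twist; you are conflating the two. With the correct answer, your long exact sequence gives
\[
0\to i_{x_C,\ast}(\mathcal{O}_{\overline{X},\infty})\to H^2(f_!1_X)\to 1\to 0,
\]
which splits since $\Ext^1(\mathcal{O}_\FF,i_{x_C,\ast}(-))=0$. Your ``main obstacle'' is therefore self-inflicted; the Baer-sum gymnastics with $\mathcal{O}_\FF(1)$ and $\mathcal{O}_{\overline{X},\infty}/C$ are unnecessary (and in fact your proposed surjection $\mathcal{O}_\FF\oplus i_{x_C,\ast}(\mathcal{O}_{\overline{X},\infty})\twoheadrightarrow\mathcal{O}_\FF(1)$ cannot exist, as any map from a skyscraper to a line bundle is zero).

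There is also a gap in your computation of $f_{V_n,\ast}(1_{V_n})$. You cite the pro-\'etale cohomology of the disc, but that only gives $\sigma_{\Q_p}(f_{V_n,\ast}1)$, and $\sigma_{\Q_p}$ is not fully faithful, so you cannot read off the object on the curve from this. The paper flags exactly this point (``we want to argue for the compactly supported cohomology in the $\D_{(0,\infty)}$-formalism\ldots and this makes our life a bit more difficult'') and instead pushes forward the terms of the fundamental exact sequence $0\to 1\to\mathcal{O}(\infty\cdot x_C)\to\mathcal{F}\to 0$ separately. For $\mathcal{O}(\infty\cdot x_C)$ and $\mathcal{F}$ the pushforward on the curve is computed directly by Bosco's results on Hyodo--Kato and $B_{\mathrm{dR}}^+$-cohomology (via \Cref{rmk:Bosco-nuclear-sheave-examples}); the colimit over shrinking $U\ni\infty$ kills the higher de Rham contributions and one obtains $H^1(\varinjlim f_{U,\ast}1_U)=i_{x_C,\ast}(\mathcal{O}_{\overline{X},\infty})$---note, \emph{not} $i_{x_C,\ast}(\mathcal{O}_{\overline{X},\infty}/C)$ as your formula would predict. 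The discrepancy by $C$ is the shadow on the FF curve of your Tate-twist error.
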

The proof is very similar to calculating the compactly supported pro-\'etale cohomology of $\mathbb{A}^{1,\an}_{C}$. However, we want to argue for the compactly supported cohomology in the $\D_{(0,\infty)}$-formalism, i.e., on the curve, and this makes our life a bit more difficult. This is why we first had to add \Cref{rslt:pushforward-for-DFF-Qp-preserves-nuclearity} and \Cref{rmk:Bosco-nuclear-sheave-examples} before we could invoke the results of \cite{bosco2021p}.

\begin{proof}
The first isomorphism has been discussed in \Cref{prop:description-compactly-supp-cohomology-via-excision}. For the second, we use \Cref{rmk:Bosco-nuclear-sheave-examples} with $N$ running through the (non-zero) terms of the fundamental exact sequence
\[
    0\to \Q_p\to B_e\to B_{\mathrm{dR}}/B_{\mathrm{dR}}^+\to 0.
\]
To avoid confusion, the corresponding objects in $\DFF(\Spa(C),\Q_p)$ are denoted in this proof by $1$ (the monoidal unit), $\mathcal{O}(\infty \cdot x_C):=\varinjlim_{n\in \N} \mathcal{O}(n \cdot x_C)$ (with transition maps given by multiplication by Fontaine's element $t$) and $\mathcal{F}$, and similarly for the corresponding objects in $\DFF(U,\Q_p)$, with a subscript $U$ added. Then from \cite[Proposition 7.17]{bosco2021p} we can conclude that the natural morphism
\[
    \mathcal{O}(\infty \cdot x_C) \to \varinjlim_{U \ni \infty} f_{U,\ast}(\mathcal{O}(\infty \cdot x_C)_U)
\]
is an isomorphism, because the colimit of the higher de Rham cohomologies of $U$ vanishes. Similarly, \cite[Corollary 6.17]{bosco2021p} implies that the cofiber $M$ of the natural morphism
\[
     \mathcal{F} \to \varinjlim_{U \ni \infty} f_{U,\ast}(\mathcal{F}_U)
     \]
    is isomorphic to $\mathcal{O}_{\overline{X},\infty}$. By the fundamental exact sequence, we deduce that
$$
H^0(\varinjlim_{U \ni \infty} f_{U,\ast}(1_U))=1, ~~ H^1(\varinjlim_{U \ni \infty} f_{U,\ast}(1_U))=i_{x_C,\ast}(\mathcal{O}_{\overline{X},\infty}),
$$
and the other cohomology groups vanish. Since $\overline{f}_\ast(1_{\overline{X}})\cong 1\oplus 1[-2]$ (this is implied by \Cref{rslt:primitive-comparison-for-smooth-proper-map} and the classical calculation of pro-\'etale $\mathbb{Z}_p$-cohomology on $\overline{X}$), the result now follows from the first isomorphism of the lemma.
\end{proof}

This example suggests the following generalization, which is based on results of Bosco (\cite{bosco2021p}, \cite{bosco2023rational}). Note that in the statement of his results, Bosco assumes that the rigid spaces are paracompact. This is satisfied for partially proper rigid spaces that are countable at infinity. 

\begin{proposition} \label{prop:compactly-supported-coh-is-countable-colimit-compact-times-perfect-complex}
Let $C$ be the completed algebraic closure of a complete discretely valued non-archimedean extension of $\Q_p$ with perfect residue field. Let $f: X \to \mathrm{Spa}(C)$ be a smooth partially proper countable at infinity rigid space. Then $f_!1$ belongs to the full subcategory $\mathcal{C} \subset \DFF(\Spa(C),\Q_p)$ generated under \emph{countable} colimits by tensor products of dualizable objects with basic nuclear solid $\Q_p$-vector spaces.
\end{proposition}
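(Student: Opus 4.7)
First, by the excision description in \Cref{prop:description-compactly-supp-cohomology-via-excision},
\[
    f_!\, 1 \;=\; \fib\!\bigl(f_\ast 1 \to \colim_{n\in\N} h_{n,\ast} 1\bigr),
\]
with $h_n\colon X\setminus Y_n \to \Spa(C)$. Since $Y_n$ is proper (hence closed) in the partially proper $X$, each $X\setminus Y_n$ is again smooth partially proper, and the exhaustion $(X_m\setminus Y_n)_{m\ge n+1}$ together with compactifications $Y_m\setminus Y_n$ inside $X_{m+1}\setminus Y_n$ witnesses that it is again countable at infinity. Because $\mathcal C$ is a stable subcategory closed under countable colimits, it is automatically closed under fibers, so it suffices to prove that $f_\ast 1_X \in \mathcal C$ for every smooth partially proper countable-at-infinity rigid space $X$ over $\Spa(C)$.

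Second, the fundamental short exact sequence $0 \to \Q_p \to B_e \to B_{\mathrm{dR}}/B_{\mathrm{dR}}^+ \to 0$ corresponds, after pullback from $\DFF(\Spa(C),\Q_p)$ to $\DFF(X,\Q_p)$, to a fiber sequence $1_X \to \mathcal E_X \to \mathcal F_X$ with $\mathcal E = \mathcal O(\infty\cdot x_C)$ and $\mathcal F = B_{\mathrm{dR}}/B_{\mathrm{dR}}^+$. Applying $f_\ast$, it is enough to show that $f_\ast\mathcal E_X$ and $f_\ast\mathcal F_X$ lie in $\mathcal C$. Both $\mathcal E$ and $\mathcal F$ are nuclear on the Fargues--Fontaine curve for $C^\flat$ (using \Cref{rslt:sigma-commutes-countable-limits} to see nuclearity of countable limits/colimits of dualizables), and by \cref{rslt:pushforward-for-DFF-Qp-preserves-nuclearity} the pushforwards $f_\ast \mathcal E_X$ and $f_\ast \mathcal F_X$ are again nuclear and commute with base change in $\Spa(C)$. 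Via \cref{rmk:Bosco-nuclear-sheave-examples} (translating through $\sigma_{\Q_p}$ using \cref{rslt:pushforward-commutes-with-sigma-nuclear-Qp-case}), these pushforwards are then exactly the coadmissible Fargues--Fontaine sheaves whose sections over the annulus $\mathcal Y_{I,\Spa(C)}$ compute the v-cohomologies $R\Gamma_v(X, B_{e,I})$ and $R\Gamma_v(X, (B_{\mathrm{dR}}/B_{\mathrm{dR}}^+)_I)$ studied by Bosco.

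Third, I would invoke the explicit structural results of \cite[Proposition 7.17, Corollary 6.17]{bosco2021p}: for smooth partially proper paracompact (i.e.\ countable at infinity) $X$, these cohomologies fit into exact sequences controlled by the $B_{\mathrm{dR}}^+$-adic filtration, whose graded pieces are Hodge cohomology groups $H^i(X,\Omega^j_{X/C})$. Concretely, $f_\ast \mathcal F_X$ becomes a finite (bounded by $\dim X$) successive extension of twists of $i_{x_C,\ast}$ of such Hodge cohomology groups, and $f_\ast \mathcal E_X$ is obtained by forming the countable colimit against the inductive system $\mathcal O(n\cdot x_C)$ out of a de Rham cohomology-type complex on $X$.

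Finally, for $X$ smooth partially proper countable at infinity, an exhaustion argument (using the qcqs pieces $X_n$ together with their partially proper enlargements $U_n$) shows that each Hodge cohomology group $H^i(X,\Omega^j_{X/C})$ is a basic nuclear solid $C$-vector space --- it is a countable limit of the finite-dimensional $C$-vector spaces $H^i(U_n,\Omega^j)$ along Fréchet-type transition maps, and any such countable limit is a single basic nuclear object. Combining this with the fact that $i_{x_C,\ast}(1)(j)$ and $\mathcal O(n\cdot x_C)$ are dualizable in $\DFF(\Spa(C),\Q_p)$ puts both $f_\ast\mathcal F_X$ and $f_\ast\mathcal E_X$ into $\mathcal C$ as countable colimits of dualizables tensored with basic nuclear $\Q_p$-vector spaces, concluding the proof. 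The main obstacle I anticipate is the last step: making the identification of the Fargues--Fontaine-side pushforwards with Bosco's coadmissible modules fully precise, and justifying the Fréchet/Smith structure of the Hodge cohomology in the general countable-at-infinity case (as opposed to the Stein or proper case, where it is classical), presumably by a further excision reduction to those two regimes via the filtration $X=\bigcup X_n$.
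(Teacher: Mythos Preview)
Your approach differs from the paper's in a crucial way, and the difference is exactly the source of the obstacle you flag at the end.

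The paper does \emph{not} reduce to showing $f_\ast 1\in\mathcal C$. Instead it applies the fundamental exact sequence directly to $f_!$, proving the two claims
\[
  f_! f^\ast(\mathcal O(\infty\cdot x_C))\in\mathcal C
  \quad\text{and}\quad
  f_! f^\ast(\mathcal F)\in\mathcal C,
\]
and then concludes for $f_! 1$ by the triangle $f_!1\to f_!f^\ast\mathcal O(\infty\cdot x_C)\to f_!f^\ast\mathcal F$. The point of staying with $f_!$ is that $f_!=\varinjlim_n g_{n,!}$ is already a countable colimit of $!$-pushforwards along the partially proper opens $U_n$, and each $g_{n,!}$ can be computed by excision inside the qcqs space $X_{n+2}$ in terms of the idempotent algebras $A_{Y_{n+1}}$ and $A_{\overline{V_{n+1}}}$. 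What then enters is the \emph{overconvergent} cohomology of the qcqs pieces $X_{n+1}$ and $V_{n+1}$: overconvergent Hyodo--Kato cohomology of a qcqs rigid variety is finite-dimensional (this is \cite[Theorem~3.29]{bosco2023rational}, via \cite[Theorem~4.1]{bosco2023rational}), and overconvergent $\mathbb B_{\mathrm{dR}}/\mathbb B_{\mathrm{dR}}^+$-cohomology of a smooth affinoid is basic nuclear over $B_{\mathrm{dR}}^+/t^k$ by \cite[Corollary~6.17]{bosco2021p}. These are the inputs that directly place each $g_{n,!}g_n^\ast(\mathcal O(\infty\cdot x_C))$ and $g_{n,!}g_n^\ast(\mathcal F)$ in $\mathcal C$.

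Your reduction to $f_\ast 1$ creates real problems. First, for a non-qcqs $X$ the pushforward $f_\ast$ need not commute with the filtered colimits $\mathcal O(\infty\cdot x_C)=\varinjlim_n\mathcal O(n\cdot x_C)$ or $B_{\mathrm{dR}}/B_{\mathrm{dR}}^+=\varinjlim_k t^{-k}B_{\mathrm{dR}}^+/B_{\mathrm{dR}}^+$, so you cannot simply pull these colimits outside. Second, the assertion that $H^i(U_n,\Omega^j)$ is finite-dimensional is false in general (already $H^0$ of an affinoid is an infinite-dimensional Banach space), so your proposed description of $f_\ast\mathcal F_X$ via a countable limit of finite-dimensional spaces breaks down; the relevant finiteness is for \emph{overconvergent Hyodo--Kato} cohomology of qcqs pieces, which is precisely what the $f_!$-route isolates. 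Third, it is not clear that $X\setminus Y_n$ is again partially proper (the complement of a proper closed subset in a partially proper space is not automatically partially proper), so your induction on the shape of $X$ is not set up. The paper's argument avoids all three issues by never leaving $f_!$.
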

\begin{proof}
We will make use again of the fundamental exact sequence
  \[
    0\to \Q_p\to B_e\to B_{\mathrm{dR}}/B_{\mathrm{dR}}^+\to 0.
  \]
As in the previous proof, let $x_C$ be the point on the Fargues--Fontaine curve of $C^\flat$ corresponding to the untilt $C$, and let $\mathcal{O}(\infty\cdot x_{C}):=\varinjlim_{n\in \N}\mathcal{O}(n\cdot x_C)\in \DFF(\Spa(C),\Q_p)$, $\mathcal{F} \in \DFF(\Spa(C),\Q_p)$ be the objects corresponding to the middle and right terms. Note that $\mathcal{O}(\infty\cdot x_C)\in \mathcal{C}$ (it even belongs to the subcategory generated by countable colimits of perfect complexes). We now make the following claim:
\begin{itemize}
    \item[($*$)] We have $f_!f^\ast(\mathcal{O}(\infty\cdot x_C)) \in \mathcal{C}$.
\end{itemize}
For the proof of this claim, we make use of the notation introduced in the proof of \Cref{prop:description-compactly-supp-cohomology-via-excision}. We have
\[
    f_! f^\ast(\mathcal{O}(\infty\cdot x_C)) = \varinjlim_{n\in \N} f_{n,!}f^\ast_n(\mathcal{O}(\infty\cdot x_C)) = \varinjlim_{n\in \N} g_{n,!} g^\ast_n(\mathcal{O}(\infty\cdot x_C)),
\]
and thus it suffices to show that $g_{n,!} g_n^\ast(\mathcal{O}(\infty\cdot x_C)) \in \mathcal{C}$.
By \Cref{prop:morphism-of-locales} the closed subsets $Y_{n+1}, \overline{V_{n+1}}\subseteq X_{n+2}$ define idempotent algebras $A_{Y_{n+1}}, A_{\overline{V_{n+1}}}\in \DFF(X_{n+2},\Q_p)$. Using \Cref{cor-exicsion-Qp-cohomology} and \cite[Theorem 4.1]{bosco2023rational} we see that $f_{n+2,!}(A_{Y_{n+1}}\otimes f^\ast_{n+2}\mathcal{O}(\infty\cdot x_C))$ is calculated by overconvergent Hyodo-Kato cohomology of $X_{n+1}$, and thus $f_{n+2,!}(A_{Y_{n+1}}\otimes f^\ast_{n+2} \mathcal{O}(\infty\cdot x_C))\in \mathcal{C}$ as overconvergent Hyodo-Kato cohomology of the qcqs rigid-analytic variety $X_{n+1}$ is finite dimensional (\cite[Theorem 3.29]{bosco2023rational}). Similarly, $f_{n+2,!}(A_{\overline{V_{n+1}}}\otimes f^\ast_{n+2} \mathcal{O}(\infty\cdot x_C))$ is calculated by overconvergent Hyodo-Kato cohomology of $V_{n+1}$. As $V_{n+1}$ is as well qcqs, we also get $f_{n+2,!}(A_{\overline{V_{n+1}}}\otimes f^\ast_{n+2}\mathcal{O}(\infty\cdot x_C))\in \mathcal{C}$. By excision, $g_{n,!}g_n^\ast(\mathcal{O}(\infty\cdot x_C))$ is the fiber of the natural morphism
\[
    f_{n+2,!}(A_{Y_{n+1}}\otimes f^\ast_{n+2} \mathcal{O}(\infty\cdot x_C))\to f_{n+2,!}(A_{\overline{V_{n+1}}}\otimes f^\ast_{n+2}\mathcal{O}(\infty\cdot x_C)),
\]
and thus also in $\mathcal{C}$. This finishes the proof of ($*$). We note that the proof even shows that $f_! f^\ast(\mathcal{O}(\infty\cdot x_C))$ belongs to the subcategory generated under countable colimits by perfect complexes. We next prove the following claim:
\begin{itemize}
    \item[($**$)] We have $f_!f^\ast (\mathcal{F}) \in \mathcal{C}$.
\end{itemize}
Arguing like in the proof of ($*$), we see that it is enough to prove that the overconvergent pro-\'etale $\mathbb{B}_{\mathrm{dR}}/\mathbb{B}_{\mathrm{dR}}^+$-cohomology of a smooth affinoid rigid space is in $\mathcal{C}$ (where we slightly abuse notation, following \Cref{rmk:Bosco-nuclear-sheave-examples}). It follows from \cite[Corollary 6.17]{bosco2021p} that it is a basic nuclear solid $B_{\mathrm{dR}}^+/t^kB_{\mathrm{dR}}^+$-module, for some $k\geq 1$. But the compact generators in $\D_{\solid}(B_{\mathrm{dR}}^+/t^kB_{\mathrm{dR}}^+)$ are the base change of the solid generators in $\D_\solid(\Q_p)$ to $B_{\mathrm{dR}}^+/t^kB_{\mathrm{dR}}^+$, hence we get objects which are tensor products of an basic nuclear solid $\Q_p$-vector space with a perfect complex supported at $x_C$. This finishes the proof of ($**$).

By putting together the claims ($*$) and ($**$) together with the distinguished triangle (coming from the fundamental exact sequence)
$$
    f_! 1 \to f_!(f^\ast\mathcal{O}(\infty\cdot x_C)) \to f_!(f^\ast \mathcal{F}),
$$
we deduce that $f_! 1 \in \mathcal C$.
\end{proof}

Now we finally can come back to Poincar\'e duality. Let $f: X \to \mathrm{Spa}(C)$ be a smooth rigid space, of pure dimension $d$, with $C$ as in \Cref{prop:compactly-supported-coh-is-countable-colimit-compact-times-perfect-complex}. By \cref{thm:poincare-duality-smooth-level-ff-curve} we have
$$
    f_\ast 1(d)[2d] = \IHom_{\DFF(X,\Q_p)}(f_! 1, 1).
$$
We now want to translate this statement to a statement about v-sheaves on $\Spa(C)$. The results of this subsection and \cref{sec:an-overc-riem} culminate in the following duality statement:
 
\begin{theorem} \label{thm:poincare-duality-for-general-partially-proper-spaces}
Let $C$ be the completed algebraic closure of a complete discretely valued non-archimedean extension of $\Q_p$ with perfect residue field. Let $f\colon X \to \Spa(C)$ be a smooth partially proper rigid space over $\Spa(C)$ of pure dimension $d \ge 0$. Write $X$ as a filtered colimit of qcqs open subspaces $X_i$, indexed by a filtered partially ordered set $I$, such that if $i<i'$, the morphism $X_i \to X_{i'}$ factors through the adic compactification $Y_i$ of $X_i$. For each $i\in I$ denote $h_i\colon X \backslash Y_i \to \Spa(C)$. Then there is a natural isomorphism
$$ 
    f_{v,\ast}\Q_p(d)[2d] = \IHom(\fib(f_{v,\ast} \Q_p \to \colim_{i \in I} h_{i,v\ast} \Q_p),\Q_p).
$$
Here the $\IHom$ is computed in $\D(\Spa(C)_v, \Q_p)$.
\end{theorem}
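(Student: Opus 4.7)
The plan is to deduce the statement by applying the functor $\sigma_{\Q_p}\colon \DFF(\Spa(C),\Q_p)\to \D(\Spa(C)_v,\Q_p)$ to the Poincaré duality isomorphism at the level of the Fargues--Fontaine curve furnished by \Cref{thm:poincare-duality-smooth-level-ff-curve}, namely
\[
    f_\ast 1(d)[2d] \isom \IHom_{\DFF(\Spa(C),\Q_p)}(f_!1,1).
\]
The left-hand side will be handled using the compatibility of $\sigma_{\Q_p}$ with pushforward, and the right-hand side using the compatibility of $\sigma_{\Q_p}$ with internal Hom from \Cref{rslt:sigma-Qp-comparison-of-internal-hom}, combined with the excision description of $f_!1$.

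For the left-hand side, the map $f$ is quasi-separated and $X$, being partially proper and (as the statement implicitly requires, via the existence of such a nice exhaustion) paracompact, can be written as a \emph{countable} increasing union of qcqs opens. Thus \Cref{rslt:pushforward-commutes-with-sigma-nuclear-Qp-case} applies (to any nuclear object on $X$, in particular to $1$) and yields $\sigma_{\Q_p}(f_\ast 1(d)[2d]) \isom f_{v,\ast}\Q_p(d)[2d]$, using $\sigma_{\Q_p}(1) = \Q_p$ which follows from the very definition of $\sigma_{\Q_p}$ and the computation of the internal endomorphisms of the unit performed in the proof of \Cref{rslt:construction-of-RH-functors}.

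For the right-hand side, I will invoke \Cref{rslt:sigma-Qp-comparison-of-internal-hom} with $M = f_!1$ and $N = 1$. The required hypothesis on $M$ is precisely the content of \Cref{prop:compactly-supported-coh-is-countable-colimit-compact-times-perfect-complex}, which exhibits $f_!1$ as a countable colimit of tensor products of basic nuclear solid $\Q_p$-vector spaces with dualizable objects; the hypothesis on $N$ is automatic. This gives
\[
    \sigma_{\Q_p}\IHom_{\DFF(\Spa(C),\Q_p)}(f_!1,1) \isom \IHom_{\D(\Spa(C)_v,\Q_p)}(\sigma_{\Q_p}(f_!1),\Q_p).
\]
It then remains to identify $\sigma_{\Q_p}(f_!1)$ with the fiber appearing in the statement. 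By the excision description \Cref{prop:description-compactly-supp-cohomology-via-excision} we have $f_!1 \isom \fib(f_\ast 1 \to \colim_i h_{i,\ast} 1)$; applying $\sigma_{\Q_p}$, which is exact and commutes with arbitrary colimits (\cref{remark-rslt:sigma-commutes-countable-limits}), and using \Cref{rslt:pushforward-commutes-with-sigma-nuclear-Qp-case} on each of $f$ and the $h_i$ (where the complements $X\setminus Y_i$ inherit the countable-at-infinity property from $X$), we obtain $\sigma_{\Q_p}(f_!1) \isom \fib(f_{v,\ast}\Q_p \to \colim_i h_{i,v,\ast}\Q_p)$, as required.

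The main obstacle is ensuring the nuclearity and countability hypotheses needed to invoke \Cref{rslt:pushforward-commutes-with-sigma-nuclear-Qp-case} and \Cref{prop:compactly-supported-coh-is-countable-colimit-compact-times-perfect-complex}: these all rest on replacing the (a priori arbitrary) filtered index set $I$ of the exhaustion by a countable cofinal subsystem, which is possible precisely because $X$ is partially proper and hence paracompact. The remaining real mathematical content is already encapsulated in \Cref{rslt:sigma-Qp-comparison-of-internal-hom} (the commutation of $\sigma_{\Q_p}$ with internal Hom, which is the hard input of \Cref{sec:q-p-case}) and in \Cref{prop:compactly-supported-coh-is-countable-colimit-compact-times-perfect-complex} (the structural description of $f_!1$ via Bosco's results on overconvergent pro-étale and Hyodo--Kato cohomology); granted these, the rest of the argument is a matter of carefully threading compatibilities of $\sigma_{\Q_p}$ with fibers, colimits, and pushforwards of nuclear sheaves.
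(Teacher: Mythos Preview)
Your approach is essentially identical to the paper's: apply $\sigma_{\Q_p}$ to the duality on the Fargues--Fontaine curve from \Cref{thm:poincare-duality-smooth-level-ff-curve}, handle the left-hand side via \Cref{rslt:pushforward-commutes-with-sigma-nuclear-Qp-case}, rewrite $f_!1$ via excision (\Cref{prop:description-compactly-supp-cohomology-via-excision}), feed the structural description of $f_!1$ from \Cref{prop:compactly-supported-coh-is-countable-colimit-compact-times-perfect-complex} into \Cref{rslt:sigma-Qp-comparison-of-internal-hom}, and read off the claim. All of this is correct.

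The one genuine gap is in your reduction to the countable-at-infinity case. You write that one can replace $I$ by a countable cofinal subsystem ``because $X$ is partially proper and hence paracompact.'' Partial properness does \emph{not} imply paracompactness (or countability at infinity): an uncountable disjoint union of proper rigid spaces over $C$ is partially proper but admits no countable exhaustion by qcqs opens. The theorem as stated allows an arbitrary filtered $I$, so you cannot simply assert that a countable cofinal subset exists. All three of the key inputs you invoke --- \Cref{prop:description-compactly-supp-cohomology-via-excision}, \Cref{prop:compactly-supported-coh-is-countable-colimit-compact-times-perfect-complex}, and \Cref{rslt:pushforward-commutes-with-sigma-nuclear-Qp-case} --- require countability, so without this reduction the argument does not run.

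The paper's fix is not to find a single countable cofinal subsystem (which need not exist) but to consider the filtered poset $J$ of \emph{all} increasing maps $j\colon \N\to I$, set $X_j=\bigcup_n X_{j(n)}$, prove the isomorphism for each $X_j$ (which is countable at infinity by construction), and then recover the statement for $X$ by passing to the limit over $J$: since $X=\bigcup_{j\in J} X_j$, one has $f_{v,\ast}\Q_p=\varprojlim_j (f_j)_{v,\ast}\Q_p$, and dually the fiber on the right identifies with a filtered colimit over $J$ of the corresponding fibers for the $X_j$, so that the $\IHom(-,\Q_p)$ becomes the matching limit.
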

\begin{proof}
Let $J$ be the filtered partially ordered set of functors $j\colon \mathbb{N} \to I$ (one such functor is smaller than another one if it so pointwise). For $j\in J$, let $X_j=\colim_{k\in \mathbb{N}} X_{j(k)}$. By passing to the limit over $J$ on both sides of the claimed isomorphism, we can reduce to the case that $X = X_j$ for some $j$, i.e. we can assume that $X$ is countable at infinity and that $I = \N$. Now note that \Cref{rslt:pushforward-commutes-with-sigma-nuclear-Qp-case} and \cref{thm:poincare-duality-smooth-level-ff-curve} imply
$$
    f_{v,\ast} \Q_p(d)][2d] = \sigma_{\Q_p}(f_\ast 1(d)[2d]) = \sigma_{\Q_p} \IHom_{\DFF(X,\Q_p)}(f_! 1, 1).
$$
Applying \Cref{prop:description-compactly-supp-cohomology-via-excision} and using again \Cref{rslt:pushforward-commutes-with-sigma-nuclear-Qp-case}, we see that 
$$
    \sigma_{\Q_p}(f_! 1) = \fib (f_{v,\ast} \Q_p \to \colim_n  h_{n,v\ast} \Q_p)
$$
By \Cref{prop:compactly-supported-coh-is-countable-colimit-compact-times-perfect-complex}, $M := f_! 1$ is in $\mathcal{C}$, i.e. of the form of \Cref{rslt:sigma-Qp-comparison-of-internal-hom}. Then we conclude from this theorem (taking $N=1$) that
$$
    \sigma_{\Q_p} \IHom_{\DFF(X,\Q_p)}(f_! 1, 1) = \IHom_{\D(\Spa(C)_v,\Q_p)}(\sigma_{\Q_p}(f_! 1),\Q_p).
$$
Combining the above identities results in the claim.
\end{proof}

\begin{remarks}
\begin{remarksenum}
    \item \label{relation-colmez-gilles-niziol} \Cref{thm:poincare-duality-for-general-partially-proper-spaces} in particular gives a proof of \cite[Conjecture 1.20]{colmez2023arithmetic}. Recently, Colmez, Gilles and Nizio\l{} have announced a proof of this conjecture as well, see \cite{colmez2024duality}. As far as we understand, their strategy is similar: 1) prove a duality statement on the curve, and 2) use (a version of) $\sigma_{\Q_p}$ to pass to v-sheaves. The difference of the two approaches lies in the proof of 1). Namely, we obtain the duality by abstract means, while they use comparison theorems and duality for Hyodo--Kato/de Rham cohomology.

    \item We stress that our Poincaré duality result at the level of the Fargues--Fontaine curve, i.e. \Cref{thm:poincare-duality-smooth-level-ff-curve}, works for any smooth morphism of analytic adic spaces. It is only in the translation into a duality statement on the v-site that we were forced to restrict to the case of a partially proper rigid variety over a geometric point that is countable at infinity, and had to make use of some non-trivial input from $p$-adic Hodge theory.
\end{remarksenum}
\end{remarks}

\begin{remark}
The hypothesis that $C$ is the completed algebraic closure of a complete discretely valued non-archimedean extension of $\Q_p$ with perfect residue field is here to be able to quote \cite{bosco2023rational}, but should not be necessary (one should replace the comparison with Hyodo-Kato cohomology with motivic arguments to get the desired finiteness). We do not pursue this further.
\end{remark}

\subsection{Examples coming from geometric local Langlands} \label{sec:examples}

While the 6-functor formalism $\DFF(-,\Q_p)$ is the relevant one for questions related to pro-\'etale cohomology of rigid spaces, as the previous subsections illustrated, we believe that, as far as $\Q_p$-coefficients are concerned, the fundamental one is $\D_{(0,\infty)}(-)$. In particular, we hope that it can be useful to the investigation of the $p$-adic aspects of the work of Fargues--Scholze. In this final subsection, we would like to provide a few examples in this direction.

We start with the simplest possible example, $\Spd(\mathbb{F}_p)$. Due to the abstract definition of $\D_{(0,\infty)}(-)$, the following theorem is not obvious, even for $\Spd(\F_p)$. The argument in the proof is due to Felix Zillinger.

\begin{theorem} \label{rslt:compute-D-0-infty-on-Fp-algebra}
Let $R$ be perfect $\F_p$-algebra which is integral over an $\F_p$-algebra of finite type. Then the natural functor
\[
    \D_{\solid}(W(R)[1/p])\to \D_{(0,\infty)}(\Spd(R))
\]
is an equivalence.
\end{theorem}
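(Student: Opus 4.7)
To construct the natural functor $F\colon \D_\solid(W(R)[1/p]) \to \D_{(0,\infty)}(\Spd R)$, observe that for any $S = \Spa(A, A^+) \in \Perf_{\F_p}$ equipped with a map to $\Spd R$ --- that is, a ring map $R \to A^+$ --- one obtains $W(R) \to W(A^+)$ and, after restricting to $\mathcal{Y}_{(0,\infty),S}$, a map of solid rings $W(R)[1/p] \to \mathcal{O}(\mathcal{Y}_{(0,\infty),S})$. Tensoring gives $\D_{\hat\solid}(\mathcal{Y}_{(0,\infty),S})$-valued functors from $\D_\solid(W(R)[1/p])$ that are compatible with pullback in $S$, and hence glue via v-descent (\Cref{rslt:v-descent-for-D-0-infty}) to a symmetric monoidal, colimit-preserving functor $F$. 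The goal is to upgrade $F$ to an equivalence.

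The strategy is to mimic the proof of the Riemann--Hilbert construction \Cref{rslt:construction-of-RH-functors}. The first and decisive step is to compute the $\D_\solid(\Z_p)$-enriched endomorphism ring of the unit $1 \in \D_{(0,\infty)}(\Spd R)$ and show it equals $W(R)[1/p]$; equivalently, for every profinite set $T$ one must identify
\[
 \Hom_{\D_{(0,\infty)}(\Spd R)}(\Z_{p,\solid}[T] \tensor 1,\ 1) \isom \Hom_{\D_\solid(\Z_p)}(\Z_{p,\solid}[T],\ W(R)[1/p]).
\]
Unwinding the v-descent definition of the left-hand side, one picks an affinoid perfectoid cover $S = \Spa(A, A^+) \to \Spd R$, writes $\mathcal{Y}_{(0,\infty), S}$ as an increasing union $\bigcup_n U_n$ of affinoid opens, and runs the same sequence of computations as in the proof of \Cref{rslt:construction-of-RH-functors} --- nuclearity reduces the answer to an identification involving $\Gamma(\mathcal{Y}_{(0,\infty),\underline T \times S},\mathcal{O})$ compatibly along the \v{C}ech nerve of the chosen cover. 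The crucial new input, replacing the Artin--Schreier--Witt $\varphi$-invariants step in \emph{loc.\,cit.}, is the identification
\[
 \Gamma(\Spd(R) \times \underline T,\ \mathcal{O}_{\mathcal{Y}_{(0,\infty)}}) \isom W(R)[1/p] \tensor C(T, \Z_p),
\]
where the tensor product is formed in $\D_\solid(\Z_p)$. For $R$ integral over a finite type $\F_p$-algebra, the expectation is that one can reduce to the case of a polynomial algebra (or even to $R = \F_p$, using functoriality), where the computation becomes explicit. I anticipate this to be the main obstacle: it is a concrete v-cohomology computation for the structure sheaf on the relative Fargues--Fontaine locus over $\Spd R$, and it relies crucially on $R$ being ``small enough'' that the limit over all v-covers does not introduce extra Fontaine-ring contributions beyond $W(R)[1/p]$.

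Given the endomorphism identification, fully faithfulness of $F$ follows formally from \Cref{rslt:automatic-fully-faithfulness-for-rigid-cat}: the source is rigid as the category of modules over a commutative solid ring, and compactness of $1 \in \D_{(0,\infty)}(\Spd R)$ reduces via an affinoid perfectoid cover --- using that $\Spd R \to \Spd \F_p$ is lpbc by \Cref{sec:exampl-da_h-morphisms-of-weakly-finite-type} --- to \Cref{rslt:dualizable-object-on-FF-curve-is-compact}. For essential surjectivity, one shows that the right adjoint $G = \Hom(1, -)$ is conservative, which in turn reduces to the statement that $1$ generates $\D_{(0,\infty)}(\Spd R)$ under colimits in the $\D_\solid(\Z_p)$-enriched sense. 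This generation property is v-local on $\Spd R$ and thus follows from the corresponding statement on an affinoid perfectoid cover, which in turn is a consequence of the description of $\D_{\hat\solid}$ on affinoid perfectoid spaces via compact generators (\Cref{sec:defin-d_hats--1}). The technical content is concentrated entirely in the endomorphism computation of the first step; the remainder is abstract nonsense powered by \Cref{rslt:automatic-fully-faithfulness-for-rigid-cat} and the v-descent already in place.
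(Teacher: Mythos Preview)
Your approach, modeled on the Riemann--Hilbert argument of \Cref{rslt:construction-of-RH-functors}, breaks down at two points where the analogy with that proof fails. First, $\D_\solid(W(R)[1/p])$ is \emph{not} rigid: for a solid $\Q_p$-algebra $A$ the compact generators $A_\solid[T]$ of $\D_\solid(A)$ are not dualizable, so \Cref{rslt:automatic-fully-faithfulness-for-rigid-cat} does not apply as stated. (In the Riemann--Hilbert proof the source was $\D_\nuc(C(S,\Z_p))$, which \emph{is} rigid.) Second, your argument for compactness of $1\in\D_{(0,\infty)}(\Spd R)$ appeals to \Cref{rslt:dualizable-object-on-FF-curve-is-compact} after passing to a perfectoid cover, but that lemma concerns $\DFF(-,\Z_p)$ and $\DFF(-,\Q_p)$ --- i.e.\ categories \emph{with Frobenius} --- and the remark immediately following it states explicitly that the conclusion is \emph{false} for $\D_{(0,\infty)}(S)$ on a perfectoid $S$ (the space $\mathcal Y_{(0,\infty),S}$ is not quasi-compact). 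So your reduction lands precisely where the lemma fails. The essential surjectivity step has the same problem: the claimed generation by $1$ is simply false on a perfectoid cover, since $\D_{\hat\solid}(\mathcal Y_{(0,\infty),S'})$ is far larger than the $\D_\solid(\Z_p)$-span of the unit.

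The paper's proof takes a completely different route that avoids these issues. It chooses the explicit perfectoid cover $S'=\Spd(R((t^{1/p^\infty})))\to\Spd R$ and makes the geometric observation that
\[
\mathcal Y_{(0,\infty),\,S'\times_{\Spd R}S'}\;\cong\;\mathcal Y_{(0,\infty),S'}\times_{\Spa(W(R)[1/p])}\mathcal Y_{(0,\infty),S'}
\]
as analytic stacks (checked after base change to $\Q_{p,\infty}$ and on tilts, as in \Cref{sec:farg-font-curv-1-commutation-with-pullbacks}). This converts the v-descent defining $\D_{(0,\infty)}(\Spd R)$ into descent of $\D_\solid(-)$ along $\mathcal Y_{(0,\infty),S'}\to\Spa(W(R)[1/p])$, which is then verified by exhibiting a module-level section using an untilt of $S'$. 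The hypothesis on $R$ enters only to ensure $S'\to\Spd(\F_p((t^{1/p^\infty})))$ has finite $\dimtrg$, so that $\D_{(0,\infty)}(S')$ is given by the explicit formula.
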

\begin{proof}
Set $S:=\Spd(R)$ and $S':=\Spd(R((t^{1/p^\infty})))$. Then $S'$ is a perfectoid space, and the assumptions on $R$ imply that the natural morphism $S'\to \Spd(\F_p((t^{1/p^\infty})))$ is of finite $\dimtrg$. We can conclude that
\[
    \D_{(0,\infty)}(S')\cong \D_{\hat\solid}(\mathcal{Y}_{(0,\infty),S'}).
\]
Now the critical observation is that
\[
    \mathcal{Y}_{(0,\infty),S'\times_S S'}\cong \mathcal{Y}_{(0,\infty),S'}\times_{\Spa(W(R)[1/\pi])} \mathcal{Y}_{(0,\infty),S'}
\]
in analytic stacks, as one checks similarly to the proof of \Cref{sec:farg-font-curv-1-commutation-with-pullbacks}, i.e. after base change to $\Q_{p,\infty}$ and then on tilts (and similarly for the products $S'\times_S \ldots \times_S S'$), cf.\ as well \cite[Proposition 1.1]{zillinger}.
It is therefore sufficient to see that $\D_{\solid}(-)$ satisfies descent for the morphism $\mathcal{Y}_{(0,\infty),S'}\to \Spa(W(R)[1/\pi])$ (note that $\D_{\hat\solid}(-)=\D_{\solid}(-)$ here by \Cref{sec:defin-d_hats--4-comparison-for-modified-modules}). Picking an untilt $T'$ of $S'$, this reduces to the assertion that $\D_{\solid}(-)$ satisfies descent for the morphism $T'\to \Spa(W(R)[1/\pi])$. This is easy (e.g. by producing a section as modules).
\end{proof}

\begin{remark} \label{rmk:result-of-zillinger-bc(O(1))}
The result of \cite{zillinger} is more general, since it also applies to open sub-v-sheaves of v-sheaves attached to perfect schemes. For example, consider $\Spd(\mathbb{F}_p[[t]])$, where $\mathbb{F}_p[[t]]$ is endowed with its $t$-adic topology. It is an open subsheaf of $\Spd(\mathbb{F}_p[[t]]^{\mathrm{triv}})$, where $\mathbb{F}_p[[t]]^{\mathrm{triv}}$ is $\mathbb{F}_p[[t]]$ endowed with the discrete topology (namely, for any affinoid perfectoid space $\Spa(R,R^+)$ of characteristic $p$ with an element $t\in R$, the locus where $t$ is topologically nilpotent is open in $\Spa(R,R^+)$). Moreover, if
$$
    A:=W(\F_p[[t^{1/p^{\infty}}]])=\Z_p[q^{1/p^\infty}]^{\wedge_{(p,q)}}
$$
with the $(p,q)$-adic topology, where $q=[t^\flat]$, then the open subspace of $\mathcal{Y}_{(0,\infty),\Spd(\F_p[[t]]^{\mathrm{triv}})}:=\Spa(A')$ (here $A':=A$ with the $p$-adic topology), which corresponds on the tilt to $\Spd(\F_p[[t]])\times \Spd (\Q_p)$,  is 
$$
    \mathcal{Y}_{(0,\infty),\Spd(\F_p[[t]])}: = \Spa(A) \backslash V(p),
$$
and thus, by the argument of the proof of \Cref{rslt:compute-D-0-infty-on-Fp-algebra}, 
$$
    \D_{(0,\infty)}(\Spd(\mathbb{F}_p[[t]])) = \D_{\solid}(\mathrm{Spa}(A) \backslash V(p)).
$$
As an analytic stack, $\Spa(A) \backslash V(p)$ is an open substack of $\mathrm{AnSpec}(A[1/p])$ obtained by analytically inverting $p$ and thus, it has complementary idempotent algebra given by the perfect bounded Robba ring
$$
    \widetilde{\mathcal{R}}^b = \colim_{r>0} A\langle p/q^{1/r} \rangle [1/q,1/p].
$$
By excision, this for example implies that if $f\colon \Spd(\mathbb{F}_p[[t]]) \to \Spd(\mathbb{F}_p)$ is the structure morphism, then in the 6-functor formalism $\D_{(0,\infty)}(-)$, 
$$
    f_! 1 = \widetilde{\mathcal{R}}^b/A[1/p][-1].
$$
\end{remark}

\begin{remark} \label{D-0-infty-for-profinite-sets}
Let $T$ be a profinite set. One can apply the theorem to $\underline{T}=\mathrm{Spd}(R)$, with $R=C(T,\mathbb{F}_p)$. Since $W(R)=C(T,\Z_p)$, we obtain that
$$
    \D_{(0,\infty)}(T) \isom D_{\solid}(C(T,\Q_p)).
$$
\end{remark}

If $H$ is a locally profinite group, denote by $H^{\mathrm{cont}}$ the group object in analytic stacks obtained as the colimit of $K^{\mathrm{cont}}=\AnSpec(C(K,\Q_p))$ over compact open subgroups $K$ of $H$. As another consequence of \Cref{rslt:compute-D-0-infty-on-Fp-algebra}, we obtain:

\begin{corollary} \label{sec:examples-3-category-on-classifying-stack}
Let $H$ be a locally profinite group. Then
\[
    \D_{(0,\infty)}(\Spd(\F_p)/H) \isom \D(\AnSpec(\Q_{p\solid})/H^{\mathrm{cont}})
\]
\end{corollary}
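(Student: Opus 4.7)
The plan is to apply v-hyperdescent on both sides of the desired equivalence and then identify the resulting cosimplicial diagrams levelwise, reducing everything to the already-established profinite case.

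First, I would apply hypercomplete v-descent of $\D_{(0,\infty)}(-)$ (\cref{rslt:v-descent-for-D-0-infty}) to the \v{C}ech nerve of the cover $\Spd(\F_p)\to \Spd(\F_p)/H$. This gives
\[
    \D_{(0,\infty)}(\Spd(\F_p)/H) \simeq \lim_{[n]\in\Delta} \D_{(0,\infty)}(\underline{H}^n).
\]
By construction, $\AnSpec(\Q_{p\solid})/H^{\mathrm{cont}}$ is the colimit (in analytic stacks) of the bar complex of the group object $H^{\mathrm{cont}}$, so analogously
\[
    \D(\AnSpec(\Q_{p\solid})/H^{\mathrm{cont}}) \simeq \lim_{[n]\in\Delta} \D((H^n)^{\mathrm{cont}}).
\]
Thus it suffices to produce equivalences $\D_{(0,\infty)}(\underline{H}^n) \isom \D((H^n)^{\mathrm{cont}})$ natural in $[n]\in \Delta$.

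Second, in the profinite case $H=K$, the v-sheaf $\underline{K}^n$ is the one attached to the profinite set $K^n$, and \cref{D-0-infty-for-profinite-sets} supplies a natural equivalence $\D_{(0,\infty)}(\underline{K}^n) \isom \D_\solid(C(K^n, \Q_p)) = \D((K^n)^{\mathrm{cont}})$. Functoriality comes from the construction in \cref{rslt:compute-D-0-infty-on-Fp-algebra}, so the two cosimplicial diagrams are canonically equivalent in this case.

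Third, for general locally profinite $H$, I would fix a compact open subgroup $K\subseteq H$ and use the decomposition $\underline{H}^n \simeq \bigsqcup_{\vec g \in (H/K)^n}\vec g \cdot \underline{K}^n$ as a disjoint union of profinite v-sheaves. The v-sheaf property then gives
\[
    \D_{(0,\infty)}(\underline{H}^n) \simeq \prod_{\vec g\in (H/K)^n} \D_{(0,\infty)}(\vec g \cdot \underline{K}^n),
\]
and unwinding the definition of $H^{\mathrm{cont}}$ as the colimit of the $K^{\mathrm{cont}}$ along inclusions of compact open subgroups yields a matching decomposition $\D((H^n)^{\mathrm{cont}}) \simeq \prod_{\vec g} \D((\vec g \cdot K^n)^{\mathrm{cont}})$. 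Applying the profinite case factor-by-factor and taking the cosimplicial limit finishes the argument.

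The main obstacle I expect is the bookkeeping to promote the equivalences at each simplicial level to an equivalence of full cosimplicial objects compatibly with the group structures on both sides. In particular, interpreting $H^{\mathrm{cont}}$ in the non-compact case as a disjoint union (in analytic stacks) of translates of $K^{\mathrm{cont}}$, and matching this decomposition with the v-sheaf decomposition of $\underline{H}^n$, is essentially formal but requires the universal property of the colimit defining $H^{\mathrm{cont}}$ to be spelled out carefully.
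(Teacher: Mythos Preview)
Your proposal is correct and follows exactly the paper's approach: the paper's proof is the single sentence ``Both sides are obtained via the same descent, using \cref{D-0-infty-for-profinite-sets}'', and you have spelled out precisely what this means, including the reduction to profinite pieces via coset decomposition for non-compact $H$. The bookkeeping concern you raise about matching cosimplicial diagrams is real but routine, and the paper does not address it either.
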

\begin{proof}
Both sides are obtained via the same descent, using \cref{D-0-infty-for-profinite-sets}. 
\end{proof}

Therefore, $\D_{(0,\infty)}(\Spd(\F_p)/H)$  is a reasonable candidate for a ``derived category of continuous $H$-representations on solid $\Q_p$-vector spaces''. In particular, it contains fully faithfully the category of continuous representations of $H$ on $\Q_p$-Banach spaces, as well as the category of solid locally analytic representations of $H$, when $H$ is a $p$-adic Lie group (\cite[Proposition 6.1.5]{rodrigues2023solid}). 

\begin{remark}
Although most of the discussion of this section makes good sense for the $\Z_p$-linear category $\D_{[0,\infty)}(-)$ (and in particular its mod $p$ variant $\D_{[0,0]}(-)$), the previous statements are the reason we restricted to $\D_{(0,\infty)}(-)$. Indeed, in contrast with \Cref{rslt:compute-D-0-infty-on-Fp-algebra}, the natural functor $\Phi\colon \D_\solid(\F_p)\to \D_{[0,0]}(\Spd \F_p)$ is not an equivalence (and hence one also does not deduce an analogue of \Cref{sec:examples-3-category-on-classifying-stack}). Namely, assume that $\Phi$ is an equivalence, and consider the morphism $f\colon \Spd(\F_p[T],\F_p[T])\to \Spd(\F_p)$. Note that $f$ is qcqs and $p$-bounded, so that by \cref{rslt:Dsha-pushfoward-base-change} the formation of $f_\ast(1)$ commutes with base change. Base changing to $\Spd (C)$ with $C:=\F_p((t^{1/p^\infty}))$ shows that $f_\ast(1)$ is nuclear, and in fact this base change is given by $C\langle T^{1/p^\infty}\rangle$. If $\Phi$ were an equivalence, $f_\ast(1)$ would therefore be nuclear, i.e., discrete and thus a colimit of copies of $\F_p$. But this expresses $C\langle T^{1/p^\infty}\rangle$ as a colimit of relatively discrete solid $C$-modules, which is not possible.
It is conceivable that $\Phi$ is fully faithful, at least when restricting to $\omega_1$-compact objects in $\D_\solid(\F_p)$. We thank Dustin Clausen and Peter Scholze for related discussions.
\end{remark}

Let now $E$ be a finite extension of $\Q_p$\footnote{Since both the geometry and the coefficients are $p$-adic, one should keep in mind which is which. Here, the geometry is over $\mathrm{Spd}(E)$ (in the sense that the moduli stacks appearing such as $\Bun_G$, $\mathrm{Div}^1$, etc. are defined using Fargues--Fontaine curves for the local field $E$), while the coefficients are $\Q_p$ (in the sense of \Cref{rmk:choice-coefficients-Qp}), since the definition of $\D_{(0,\infty)}$ involves Fargues--Fontaine curves for the local field $\Q_p$.} of degree $d$, with residue field $\mathbb{F}_q$, let $G$ be a reductive group over $E$, and let $\Bun_G$ be the small v-stack of $G$-bundles on the Fargues--Fontaine curve relative to $E$. It contains as an open substack the classifying stack 
$$ 
    \Spd(\F_q)/G(E)
$$
which is $\D_{[0,\infty)}$-smooth, by \Cref{prop:classifying-stack-loc-profinite-coh-smooth}. In fact, this holds true for the whole stack $\Bun_G$:
  
\begin{proposition} \label{prop:BunG-is-D0-infty-coh-smooth}
The small v-stack $\Bun_G$ is $\D_{[0,\infty)}$-smooth over $\Spd(\F_q)$ of (cohomological) dimension $\dim \mathrm{Res}_{E/\Q_p}(G)$.
\end{proposition}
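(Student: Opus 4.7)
The approach is to exploit the Harder--Narasimhan stratification $\Bun_G = \bigsqcup_{b \in B(G)} \Bun_G^b$ of Fargues. Each stratum is isomorphic to a classifying stack $[\Spd(\F_q)/\widetilde G_b]$ for a group v-sheaf $\widetilde G_b$ sitting in an exact sequence
\[
    1 \longrightarrow \mathcal{BC}(\mathfrak g_b^{>0}) \longrightarrow \widetilde G_b \longrightarrow \underline{G_b(E)} \longrightarrow 1,
\]
where $G_b$ is the reductive inner form attached to $b$, and $\mathfrak g_b^{>0}$ denotes the positive-slope part of the adjoint isocrystal $\mathrm{ad}(\mathcal E_b)$, viewed as a bundle on the Fargues--Fontaine curve.

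For each stratum, $!$-ability and $\D_{[0,\infty)}$-smoothness over $\Spd(\F_q)$ follow by combining the building blocks established elsewhere in this paper. On the one hand, $[\Spd(\F_q)/\underline{G_b(E)}]$ is $!$-able by \Cref{sec:exampl-da_h-morphism-from-classifying-stack-shriek-able} and $\D_{[0,\infty)}$-smooth by \Cref{prop:classifying-stack-loc-profinite-coh-smooth}, since $G_b(E)$ is a $p$-adic Lie group. On the other hand, $[\Spd(\F_q)/\mathcal{BC}(\mathfrak g_b^{>0})]$ is $!$-able and $\D_{[0,\infty)}$-smooth by \Cref{prop:classifying-stacks-bc-spaces-coh-smooth}, applied with $\mathcal E_0 = \mathfrak g_b^{>0}$ (whose slopes are non-negative) and $\mathcal E_1 = 0$. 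The exact sequence above then propagates both properties to $\Bun_G^b$.

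The main obstacle is the passage from stratum-by-stratum smoothness to smoothness of the whole $\Bun_G$. The plan is first to establish $!$-ability of $\Bun_G \to \Spd(\F_q)$ via the $\ast$- and $!$-locality properties of the class of $!$-able maps recorded in \Cref{sec:definition--able-1-existence-of-class-of-shriekable-maps}, applied to the exhaustion of $\Bun_G$ by the quasi-compact open substacks of the HN-filtration. Once $!$-ability is in hand, one invokes \Cref{rslt:smooth-and-proper-for-D-0-infty-can-be-checked-mod-pi} to reduce the smoothness assertion to the $\D^a_\solid(\ri^+/\pi)$-formalism, in which the smoothness of $\Bun_G$ can be established by adapting the HN-stratification argument of \cite[\S V.3]{fargues-scholze-geometrization} together with the mod-$p$ stacky six-functor results of \cite{mod-p-stacky-6-functors} and \cite{mann-mod-p-6-functors}. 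A conceivable alternative is to construct $\D_{[0,\infty)}$-smooth charts via infinite-level local shtuka moduli $\mathcal M_{G,b,\mu,\infty}$, using \Cref{rslt:smooth-maps-of-adic-spaces-are-cohom-smooth} for their smoothness at finite level over $\Spd(\breve E)$, \Cref{sec:exampl-da_h-morphism-from-classifying-stack-shriek-able} for the passage to infinite level, and \Cref{sec:case-texorpdfstr-smoothness-of-spd-q-p} for the descent to $\Spd(\F_q)$.

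The dimension $\dim \Res_{E/\Q_p}(G)$ is finally read off from the open stratum $\Bun_G^1 = [\Spd(\F_q)/\underline{G(E)}]$ corresponding to the trivial bundle, using that $G(E)$ is a $p$-adic Lie group of $\Q_p$-dimension $\dim \Res_{E/\Q_p}(G)$ together with \Cref{sec:case-texorpdfstr-smoothness-of-spd-q-p-mod-hi} for the dualizing complex of $\Spd(\Q_p)/\varphi^\Z \to \Spd(\F_p)$. The stratum-wise analysis and the dimension computation are thus comparatively routine; the real technical heart of the proof is the gluing step described above.
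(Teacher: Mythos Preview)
Your stratum-by-stratum analysis is correct as far as it goes, but the gluing step you flag as ``the main obstacle'' is genuinely an obstacle, and your proposed resolution does not work. Smoothness of each locally closed HN stratum does \emph{not} imply smoothness of $\Bun_G$; this is not a property that glues along stratifications. Your plan to reduce modulo $\pi$ and then ``adapt the HN-stratification argument of \cite[\S V.3]{fargues-scholze-geometrization}'' runs into the problem that the Fargues--Scholze charts $\mathcal M_b$ are \emph{not} $\D_{[0,\infty)}$-smooth over $\Bun_G$ (the paper explicitly warns about this just after the proposition). So there is nothing to adapt: that argument relies on a smooth atlas that is unavailable here.

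The paper's actual proof bypasses the stratification entirely and instead uses Beauville--Laszlo uniformization
\[
\bigsqcup_{\mu} G(E)\backslash \mathrm{Gr}_{G,\mu,K} \longrightarrow \Bun_{G,K}
\]
as a $\D_{[0,\infty)}$-smooth v-cover. The source maps smoothly to $\Spd(K)/G(E)$ (via smoothness of Schubert cells, which reduces to \Cref{rslt:smooth-maps-of-adic-spaces-are-cohom-smooth}), and $\Spd(K)/G(E) \to \Spd(K)$ is smooth by \Cref{prop:classifying-stack-loc-profinite-coh-smooth}. Then \cite[Lemma~4.5.8(i)]{heyer-mann-6ff} formally gives smoothness of $\Bun_{G,K}$. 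This is essentially what your ``conceivable alternative'' via local shtuka moduli is gesturing at, and it is the right idea; you should have led with it rather than the HN strata. The dimension is then read off from any basic stratum $\Spd(\F_q)/G_b(E)$ using local constancy of the degree of the dualizing complex --- no need to invoke \Cref{sec:case-texorpdfstr-smoothness-of-spd-q-p-mod-hi}.
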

\begin{proof}
The claim (including the implicit claim of $!$-ability of $\Bun_G \to \Spd(\F_q)$) can be checked after base change to a suitable perfectoid field $K$, and we can take $K=C^\flat$, with $C$ a completed algebraic closure of $E$. Recall Beauville--Laszlo uniformization, which gives a v-cover
$$
    \bigsqcup_{\mu \in X^\ast(T)} G(E) \backslash Gr_{G,\mu,K} \to \Bun_{G,K}.
$$
This morphism is $!$-able and $\D_{[0,\infty)}$-smooth: this reduces as in the proof of \cite[Proposition IV.1.19]{fargues-scholze-geometrization} to the cohomological smoothness of open Schubert cells, which in turn reduces by the argument of \cite[Proposition VI.2.4]{fargues-scholze-geometrization} to \Cref{rslt:smooth-maps-of-adic-spaces-are-cohom-smooth}. \cite[Proposition V.2.4]{fargues-scholze-geometrization} and \Cref{rslt:smooth-maps-of-adic-spaces-are-cohom-smooth} also prove that for each $\mu$, 
$$
    G(E) \backslash Gr_{G,\mu,K}  \to \Spd(K)/G(E)
$$
is $\D_{[0,\infty)}$-smooth. Moreover, by \Cref{prop:classifying-stack-loc-profinite-coh-smooth}, 
$$
    \Spd(K)/G(E) \to \Spd(K) 
$$
is also $\D_{[0,\infty)}$-smooth. Altogether this shows that $\Bun_G$ admits a cohomologically smooth atlas by a cohomologically smooth small v-stack. Hence by \cite[Lemma~4.5.8(i)]{heyer-mann-6ff} we deduce that $\Bun_G\to \Spd(\F_q)$ is $\D_{[0,\infty)}$-smooth.
The cohomological dimension of $\Bun_G$ (i.e. the degree in which the invertible dualizing complex is concentrated) must agree with the cohomological dimension $\dim \mathrm{Res}_{E/\Q_p} G$ of $\Spd(\F_q)/G_b(E)$ for any $b\in B(G)$ basic because this degree is locally constant on $\Bun_G$ (as can be checked by pullback to strictly totally disconnected spaces).
\end{proof}

\begin{remark}
Note that contrary to what happens $\ell$-adically, $\Bun_G$ does not have dimension $0$, but (cohomological) dimension $\dim \mathrm{Res}_{E/\Q_p} G$. In the case $G=\mathrm{GL}_n$, this matches the expected dimension of the moduli stack of $(\varphi,\Gamma)$-modules over the Robba ring (see \cite[Conjecture 5.1.18]{emerton2022introduction}).
\end{remark} 

The small v-stack $\Bun_G$ is endowed with its Harder-Narasimhan filtration, with strata $\Bun_G^b$ indexed by elements $b\in B(G)$. Semi-stable strata are classifying stacks of locally profinite groups (which are the $E$-points of certain inner forms $G_b$ of $G$), for which the category $\D_{(0,\infty)}$ can be described thanks to \Cref{sec:examples-3-category-on-classifying-stack}. Other strata are also classifying stacks, but of more complicated group diamonds. To understand $\D_{(0,\infty)}(\Bun_G)$, one therefore needs to understand $\D_{(0,\infty)}(-)$ for such classifying stacks and how the resulting categories ``glue'' together. It seems interesting to understand both aspects in details. Rather than developing this here, we provide two instructive examples.

\begin{example} \label{ex:beauville-laszlo-unif-and-excision}
If $G=\GL_2$, Beauville--Laszlo uniformization gives a morphism
$$
    f\colon \mathbb{P}_E^{1,\diamond} \to \Bun_2
$$ 
factoring through the natural $\GL_2(E)$-action on the source, with image $\Bun_2^{\leq b_1}$, the union of the stratum $j\colon \Bun_2^{b_0} \to \Bun_2^{\leq b_1}$ corresponding to $\mathcal{E}_{b_0}=\mathcal{O}(1/2)$ and $i\colon \Bun_2^{b_1} \to \Bun_2^{\leq b_1}$ corresponding to $\mathcal{E}_{b_1} = \mathcal{O}\oplus \mathcal{O}(1)$. The inverse image of $\Bun_2^{b_0}$ by $f$ is Drinfeld's plane $\Omega_E^\diamond$, and the inverse image of $\Bun_2^{b_1}$ is $\mathbb{P}^1(E)$. To get an idea for what
$$
    \cofib(j_! j^\ast \to \id)
$$
on $\Bun_2^{\leq b_1}$ looks like, we can describe it after pullback along $f$. Let $j'\colon \Omega_E^\diamond \injto \mathbb{P}_E^{1,\diamond}$ be the open immersion. By proper base-change, it is enough to describe $\cofib(j'_! j^{' \ast} \to \id)$. But we know by \Cref{cor-exicsion-Qp-cohomology} that 
\begin{align*}
    \cofib(j'_! j^{' \ast} \to \id) = \colim_U j_{U,\ast} j_U^\ast,
\end{align*}
the colimit running over all open neighborhoods $j_U\colon U \injto \mathbb{P}_E^1$ of $\mathbb{P}^1(E)$ in $\mathbb{P}_E^1$. Note that actually $f$ factors through the quotient
$$
\mathbb{P}_E^{1,\diamond} \to \mathbb{P}_E^{1,\diamond}/\varphi^\Z,
$$
so for instance we deduce that after pullback $\mathrm{cofib}(j_! 1 \to1)$ is described as the overconvergent pro-\'etale $\Q_p$-cohomology of $\underline{\mathbb{P}^1(E)}$ in $\mathbb{P}_E^1$ (after applying $\sigma_{\solid,\Q_p}$ for $\mathbb{P}_E^{1,\diamond}/\varphi^\Z$). 
\end{example}

\begin{remark}
We warn the reader that Fargues--Scholze's remarkable atlas $(\mathcal{M}_b)_{b\in B(G)}$ (see \cite[\S V.3]{fargues-scholze-geometrization}) does not give a $\D_{(0,\infty)}$-smooth atlas of $\Bun_G$.
\end{remark}

\begin{example} \label{ex:classifying-stack-bc-O(1)}
Assume that $E=\Q_p$, $G = \GL_2$ and consider the stratum $\Bun_2^{b_1}$ of $\Bun_2$ associated to $\mathcal{E}_{b_1}=\mathcal{O}\oplus \mathcal{O}(1)$. This stratum is the classifying stack of the automorphism group $\underline{\Aut}(\mathcal{O}\oplus \mathcal{O}(1))$, which is an extension
$$
    1 \to \mathcal{BC}(\mathcal{O}(1)) \to \underline{\Aut}(\mathcal{O}\oplus \mathcal{O}(1)) \to \Q_p^\times \times \Q_p^\times \to 1.
$$
The quotient is a locally profinite group. The mysterious part is the unipotent part, and hence one can first try to describe
$$
    \D_{(0,\infty)}(\Spd(\F_p)/\mathcal{BC}(\mathcal{O}(1))).
$$
We would like to explain the construction of a ``Fourier--Mukai'' functor 
$$
    \mathcal{F}_{\Spd(\F_p)/\mathcal{BC}(\mathcal{O}(1))}\colon \D_{(0,\infty)}(\Spd(\F_p)/\mathcal{BC}(\mathcal{O}(1))) \to \D_{(0,\infty)}(\Q_p).
$$
First, observe that $\Spd(\F_p)/\mathcal{BC}(\mathcal{O}(1))$ is a $\Q_p$-module in v-stacks (in the terminology of \cite{anschutz2021fourier}, it is a stack in $\Q_p$-vector spaces), hence there is a natural action map
$$
    \alpha\colon \Q_p \times \Spd(\F_p)/\mathcal{BC}(\mathcal{O}(1)) \to \Spd(\F_p)/\mathcal{BC}(\mathcal{O}(1)). 
$$
We have $\mathcal{BC}(\mathcal{O}(1)) = \Spd(\mathbb{F}_p[[t]])$ and therefore, cf. \Cref{rslt:compute-D-0-infty-on-Fp-algebra}, \Cref{rmk:result-of-zillinger-bc(O(1))}, 
$$
    \D_{(0,\infty)}(\Spd(\mathbb{F}_p[[t]])) = \D_{\hat\solid}(\mathcal{Y}_{(0,\infty),\mathcal{BC}(\mathcal{O}(1))})).
$$
Moreover, by \cite[Example 10.1.8]{scholze-berkeley-lectures},
$$
    \mathcal{Y}_{(0,\infty),\mathcal{BC}(\mathcal{O}(1))} = (\widetilde{\mathbb{G}}_m)_\eta
$$
the generic fiber of the universal cover of $\mathbb{G}_m$, represented by the (pre)perfectoid open unit disk (centered at $1$) over $\Q_p$. Thus, arguing as in the proof of \Cref{rslt:v-descent-for-D-0-infty}, we see that
$$
    \D_{(0,\infty)}(\Spd(\F_p)/\mathcal{BC}(\mathcal{O}(1))) = \D_{\hat\solid}(\Spa(\Q_p)/(\widetilde{\mathbb{G}}_m)_\eta).
$$
Let $\mathcal{L}$ be the universal line bundle on $\Spa(\Q_p)/\mathbb{G}_m^{\mathrm{an}}$, and let us also denote by $\mathcal{L}$ the object of $\D_{(0,\infty)}(\Spd(\F_p)/\mathcal{BC}(\mathcal{O}(1)))$ obtained by pullback along the morphism $(\widetilde{\mathbb{G}}_m)_\eta \to \mathbb{G}_m^{\mathrm{an}}$ obtained by composing the first projection with the morphism $\mathbb{G}_{m,\eta} \to \mathbb{G}_m^{\mathrm{an}}$, which on $R$-points corresponds to the inclusion $1+R^{\circ \circ} \to R^\times$. Let also
\begin{align*}
    \pi &\colon \Q_p \times \Spd(\F_p)/\mathcal{BC}(\mathcal{O}(1)) \to \Spd(\F_p)/\mathcal{BC}(\mathcal{O}(1)),\\
    \pi^\vee &\colon \Q_p \times \Spd(\F_p)/\mathcal{BC}(\mathcal{O}(1)) \to \Q_p
\end{align*}
be the two projections. We define: 
$$
    \mathcal{F}_{\Spd(\F_p)/\mathcal{BC}(\mathcal{O}(1))}(-) := \pi^{\vee}_! (\pi^\ast(-) \otimes \alpha^\ast \mathcal{L}).
$$
In fact, the above construction can be done over the base $\Spd(\F_p)/(\Q_p^\times \times \Q_p^\times)$, and gives a $\D_{(0,\infty)}(\Spd(\F_p)/(\Q_p^\times \times \Q_p^\times))$-linear functor
$$
    \mathcal{F}\colon \D_{(0,\infty)}(\Bun_2^{b_1}) \to \D_{(0,\infty)}\left(\Q_p/(\Q_p^\times \times \Q_p^\times)\right).
$$
For an interesting geometric example inspired by \cite{howe2020unipotent}, consider the compact modular curve $\mathcal{X}_\infty$ with infinite level at $p$ and finite unspecified level away from $p$ and the Hodge-Tate period map
$$
    \pi_{\mathrm{HT}}\colon \mathcal{X}_\infty \to \mathbb{P}_{\Q_p}^1.
$$
By properness of $\pi_{\mathrm{HT}}$ and \cite[Theorem 1.3]{zhang2023pel}, the object $\pi_{\mathrm{HT},\ast} (1) \in \D_{(0,\infty)}(\Spd(\mathbb{F}_p)/(\Q_p^\times \times \Q_p^\times))$ descends along the Beauville--Laszlo map to an object of $\D_{(0,\infty)}(\Bun_2)$, supported on $\Bun_2^{\leq b_1}$, in the notations of \Cref{ex:beauville-laszlo-unif-and-excision}. Its pullback $A$ along $i\colon \Bun_2^{b_1} \to \Bun_2^{\leq b_1}$ descends $(\pi_{\mathrm{HT} |_{\mathcal{X}_\infty^{\mathrm{ord}}}})_{\ast} (1)$, since the (closure of the) ordinary locus is the inverse image of $\mathbb{P}^1(\Q_p)$. Pick $C$ a complete algebraically closed non-archimedean extension of $\Q_p$. The base-change 
$$
    \mathcal{F}_C(A) \in \D_{(0,\infty)}\left((\Q_p/(\Q_p^\times \times \Q_p^\times))_C\right)
$$
of $\mathcal{F}(A)$ to $C$ can be pulled back to an object of $\D_{\hat\solid}\left((\Q_p/(\Q_p^\times \times \Q_p^\times))_C\right)$, which one should be able to describe explicitly as in \cite{howe2020unipotent}. 
\end{example}

\begin{remark} \label{rmk:fourier-transform-does-not-work}
Exchanging the roles of $\pi$ and $\pi^\vee$, one can define as well 
$$
    \mathcal{F}_{\Q_p}\colon \D_{(0,\infty)}(\Q_p) \to \D_{(0,\infty)}(\Spd(\F_p)/\mathcal{BC}(\mathcal{O}(1))).
$$
We warn the reader that despite the terminology and notation used, $\mathcal{F}_{\Spd(\F_p)/\mathcal{BC}(\mathcal{O}(1))}$ and $\mathcal{F}_{\Q_p}$ do \emph{not} seem to satisfy the usual (almost) involutivity of a Fourier transform, and thus probably are \emph{not} equivalences of categories. Indeed, denoting $f\colon \mathcal{BC}(\mathcal{O}(1)) \to \Spd(\mathbb{F}_p)$ the structure morphism, such an equivalence would enforce an isomorphism 
$$
    f_! 1 \isom C_c(\Q_p, \Q_p).
$$
We have computed the left-hand side in \Cref{rmk:result-of-zillinger-bc(O(1))}. The right-hand side can also be described using the Amice transform (\cite{colmez2010fonctions}) but the two descriptions do not seem to match up. 
\end{remark}

\Cref{ex:classifying-stack-bc-O(1)} is a particular case of a more general picture which we will briefly discuss in the following remarks.

\begin{remark} \label{rmk:E-instead-of-Q_p}
If $\Q_p$ gets replaced by a finite extension of $\Q_p$, the discussion of \Cref{ex:classifying-stack-bc-O(1)} remains valid, as long as one replaces $\mathcal{BC}(\mathcal{O}(1))$ by $\mathcal{BC}(\mathcal{O}(d))$, with $d=[E:\Q_p]$. Indeed, it is $\mathcal{BC}(\mathcal{O}(d))$ which comes from the formal group $\mathbb{G}_m$. 
\end{remark}

\begin{remark} \label{fourier-transform-general-E-general-bc-spaces}
Even more generally, for $E$ a finite extension of degree $d$ of $\Q_p$ with residue field $\F_q$, and $\lambda \in \Q$, one can construct ``Fourier--Mukai'' functors\footnote{Strictly speaking, the $!$-ability of the necessary maps is not part of \Cref{sec:exampl-da_h-bc-space-shriekable}, but it can be checked to hold here as well (by similar methods and descent from the perfectoid case).}
$$
    \mathcal{F}_{\mathcal{BC}(\mathcal{O}(\lambda))}\colon \D_{(0,\infty)}(\mathcal{BC}(\mathcal{O}(\lambda))) \to \D_{(0,\infty)}(\Spd(\F_q)/\mathcal{BC}(\mathcal{O}(d-\lambda)))
$$
and 
$$
    \mathcal{F}_{\Spd(\F_q)/\mathcal{BC}(\mathcal{O}(\lambda))}\colon \D_{(0,\infty)}(\Spd(\F_q)/\mathcal{BC}(\mathcal{O}(\lambda))) \to \D_{(0,\infty)}(\mathcal{BC}(\mathcal{O}(d-\lambda)))
$$
if $\lambda \in [0,d]$ and\footnote{Note that if $\lambda$, resp. $d-\lambda$, is negative, $\mathcal{BC}(\mathcal{O}(\lambda))$, resp. $\mathcal{BC}(\mathcal{O}(d-\lambda))$, is what we denoted $\mathcal{BC}(\mathcal{O}(\lambda)[1])$, resp. $\mathcal{BC}(\mathcal{O}(d-\lambda)[1])$ in \Cref{sec:the-6ff-formalism-D-0-infty}.}
$$
    \mathcal{F}_{\mathcal{BC}(\mathcal{O}(\lambda))}\colon \D_{(0,\infty)}(\mathcal{BC}(\mathcal{O}(\lambda))) \to \D_{(0,\infty)}(\mathcal{BC}(\mathcal{O}(d-\lambda)))
$$
if $\lambda \notin [0,d]$. Indeed, if for a stack in $E$-vector spaces $\mathcal{G}$ (in the terminology of \cite{anschutz2021fourier}), one defines\footnote{Note that this notation differs from the notation of loc.cit.} 
$$
    \mathcal{G}^{\vee} = \mathcal{H}om(\mathcal{G}, \Spd(\F_q)/\mathcal{BC}(\mathcal{O}(d))),
$$
one has a natural pairing
$$
    \mathcal{G}\times \mathcal{G}^\vee \to \Spd(\F_q)/\mathcal{BC}(\mathcal{O}(d))
$$
and hence as in the example an associated Fourier--Mukai functor. Moreover, for $\mathcal{G}$ any of the examples above on the left, $\mathcal{G}^\vee$ is seen to be what appears on the right, using \cite[Corollary~3.10]{anschutz2021fourier}.
\end{remark}

\begin{remark}
This series of remarks highlights an interesting difference between the $\ell$-adic case and the $p$-adic case. With $\ell$-adic coefficients, the unipotent part of the automorphism groups of $G$-bundles plays a minor role: e.g., for $b\in B(G)$, pullback gives an equivalence $\D_{\mathrm{lis}}(\Bun_G^b,\Q_\ell) \isom \D_{\mathrm{lis}}(\Spd(\F_q)/G_b(E), \Q_\ell)$. This is not the case with $p$-adic coefficients, as illustrated above for $E=\Q_p$, $G=\GL_2$ and $b$ the isocrystal corresponding to $\mathcal{E}_b=\mathcal{O} \oplus \mathcal{O}(1)$. For general $E$, we would expect that pullback gives an equivalence
$$
    \D_{(0,\infty)}(\Spd(\F_q)/\mathcal{BC}(\mathcal{O}(\lambda))) \isom \D_{(0,\infty)}(\Spd(\F_q))
$$
whenever $\lambda \not\in [0,d]$. This is in line with expectations of Emerton--Gee--Hellmann, \cite[\S 7.5.3]{emerton2022introduction}. 
\end{remark}

The above considerations give a partial picture of what the $p$-adic sheaf theory on $\Bun_G$ looks like. On the Galois side, as advocated by the work of Colmez and Emerton--Gee, the objects one would like to see are not representations of the Galois or Weil group, but $(\varphi,\Gamma)$-modules. Recall that if $E$ is a finite extension of $\Q_p$, its (cyclotomic) Robba ring is
$$
    \mathcal{R}_E= \colim_{r \to 1} \lim_{s \to 1} B_E^{[r,s]},
$$
where $B_E^{[r,s]}$ stands for the ring of rigid-analytic functions on the affinoid annulus of radius $[r,s]$ over the maximal unramified extension of $E$ in its cyclotomic extension $E^{\mathrm{cycl}}$. It is endowed with continuous commuting actions of $\varphi$ and $\Gamma=\mathrm{Gal}(E^{\mathrm{cycl}}/E)$. A $(\varphi,\Gamma)$-module over $\mathcal{R}_E$ is then a finite projective $\mathcal{R}_E$-module, endowed with commuting continuous semi-linear actions of $\varphi$ and $\Gamma$, such that the linearization of $\varphi$ is an isomorphism. 

In the work of Fargues-Scholze, $\ell$-adic representations of the Weil group arise as as full subcategory in (the category of dualizable objects of) $\ell$-adic sheaves on (the base change to $\bar{\mathbb{F}}_p$) of $\mathrm{Div}_E^1$. Here, the same phenomenon happens (except that we do not base change to $\bar{\mathbb{F}}_p$!):

\begin{proposition} \label{relation-phi-gamma-modules}
Let $E$ be a finite extension of $\Q_p$. The category of $(\varphi,\Gamma)$-modules over $\mathcal{R}_E$ embeds fully faithfully into the category of dualizable objects of $\D_{(0,\infty)}(\mathrm{Div}_E^1)$. 
\end{proposition}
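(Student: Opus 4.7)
The plan is to descend the category $\D_{(0,\infty)}(\mathrm{Div}_E^1)$ along a pro-\'etale perfectoid cover on which the relative Fargues--Fontaine curve becomes an honest analytic adic space, and then to invoke the theorems of Cherbonnier--Colmez, Berger and Kedlaya--Liu identifying $\Gamma$-equivariant vector bundles on that curve with $(\varphi,\Gamma)$-modules over $\mathcal R_E$.

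Concretely, first I would use the presentation $\mathrm{Div}_E^1 = \Spd(\breve E)/\varphi^\Z$ (with $\breve E$ the completion of the maximal unramified extension and $\varphi$ the unique Frobenius lift), together with the pro-\'etale $\Gamma$-torsor $\Spa(K^\flat) \to \Spa(\breve E)$, where $K = \widehat{\breve E\cdot E^{\mathrm{cycl}}}$ and $\Gamma = \Gal(E^{\mathrm{cycl}}/E)$. Arguing as in the proof of \cref{sec:exampl-da_h-morphism-from-classifying-stack-shriek-able} to handle the pro-\'etale $\Gamma$-quotient, and combining this with hypercomplete v-descent (\cref{rslt:v-descent-for-D-0-infty}) and the analytic-descent identification $\D_{(0,\infty)}(\Spa(K^\flat))^{\varphi^\Z} \simeq \D_{\hat\solid}(\FF_{K^\flat})$, one obtains a symmetric-monoidal equivalence
$$
    \D_{(0,\infty)}(\mathrm{Div}_E^1) \;\simeq\; \D_{\hat\solid}(\FF_{K^\flat})^{\Gamma},
$$
where the right-hand side denotes objects equipped with a suitable continuous $\Gamma$-action.

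Next I would exploit \cref{rmk:nuclear-objects-on-stably-uniform-adic-space}: since $\FF_{K^\flat}$ is stably uniform, the dualizable objects of $\D_{\hat\solid}(\FF_{K^\flat})$ coincide with the perfect complexes on $\FF_{K^\flat}$, so that in particular the full subcategory of vector bundles on $\FF_{K^\flat}$ embeds into the dualizable objects. Restricting a $\Gamma$-equivariant vector bundle on $\FF_{K^\flat}$ to the overconvergent locus (rational subsets of $\mathcal Y_{(0,\infty),K^\flat}$ of the form $\mathcal Y_{[r,1)}$ modulo Frobenius) yields a $(\varphi,\Gamma)$-module over $\mathcal R_{\breve E}$, and descent along the Frobenius on $\breve E$ produces a $(\varphi,\Gamma)$-module over $\mathcal R_E$. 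By the theorems of Cherbonnier--Colmez, Berger and Kedlaya--Liu (see e.g.\ \cite[Theorem~8.5.3]{relative-p-adic-hodge-1}), this restriction functor is an equivalence between continuous $\Gamma$-equivariant vector bundles on $\FF_{K^\flat}$ and $(\varphi,\Gamma)$-modules over $\mathcal R_E$. Composing its inverse with the full inclusion into dualizable objects of $\D_{\hat\solid}(\FF_{K^\flat})^{\Gamma}$ and with the descent equivalence above produces the sought-after fully faithful embedding.

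The main obstacle will be matching the notion of \emph{continuous} $\Gamma$-action on the two sides: on the v-descent side it is abstract, inherited from functoriality on a pro-\'etale v-cover, while on the $(\varphi,\Gamma)$-module side it is a functional-analytic condition on a Fr\'echet $\mathcal R_E$-module. Reconciling the two should follow from the explicit affinoid presentation of $\D_{\hat\solid}(\FF_{K^\flat})$ used in the proof of \cref{rslt:dualizable-object-on-FF-curve-is-compact}, combined with the fact that $\Gamma$ is a $p$-adic Lie group of finite virtual $p$-cohomological dimension, for which descent in our formalism is well-behaved (compare \cref{prop:classifying-stack-loc-profinite-coh-smooth} and the proof of \cref{sec:exampl-da_h-morphism-from-classifying-stack-shriek-able}).
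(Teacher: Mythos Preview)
Your strategy is essentially the paper's, but with an unnecessary detour. The paper writes $\mathrm{Div}_E^1 = (\Spd(E^{\mathrm{cycl}})/\Gamma)/\varphi^{\Z}$ directly, uses that $E^{\mathrm{cycl}}$ is already perfectoid so that $\D_{(0,\infty)}(\Spd(E^{\mathrm{cycl}})) = \D_{\hat\solid}(\mathcal Y_{(0,\infty),(E^{\mathrm{cycl}})^\flat})$, and then invokes Berger and Kedlaya (via \cite[Theorem~5.1.5]{emerton2022introduction}) to identify $(\varphi,\Gamma)$-equivariant vector bundles on $\mathcal Y_{(0,\infty),(E^{\mathrm{cycl}})^\flat}$ with $(\varphi,\Gamma)$-modules over $\mathcal R_E$. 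Passing through $\breve E$ and $K = \widehat{\breve E \cdot E^{\mathrm{cycl}}}$ only adds an extra Frobenius-descent step with no gain; note also that your presentation $\mathrm{Div}_E^1 = \Spd(\breve E)/\varphi^\Z$ (with $\varphi$ the Frobenius lift on $\breve E$) is not the one used here---the paper works over $\Spd(\F_q)$, where $\mathrm{Div}_E^1 = \Spd(E)/\varphi^\Z$ with $\varphi$ the absolute Frobenius on the test object, and covering $\Spd(E)$ directly by the perfectoid $\Spd(E^{\mathrm{cycl}})$ is both simpler and avoids the need to disentangle arithmetic and geometric Frobenii. Your concern about matching the abstract and functional-analytic notions of continuous $\Gamma$-action is legitimate, but the paper leaves this implicit in the Berger--Kedlaya citation as well.
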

\begin{proof}
Since
$$
    \mathrm{Div}_E^1 = (\Spd(E^{\mathrm{cycl}})/\Gamma)/\varphi^{\Z}
$$
and $\D_{(0,\infty)}(\Spd(E^{\mathrm{cycl}}))= \D_{\hat\solid}(Y_{E^{\mathrm{cycl},\flat}})$, the category $\D_{(0,\infty)}(\mathrm{Div}_E^1)$ contains fully faithfully the category of $(\varphi,\Gamma)$-equivariant vector bundles on the analytic adic space $Y_{E^{\mathrm{cycl},\flat}}$, which is known by results of Berger and Kedlaya to be equivalent to the category of $(\varphi,\Gamma)$-modules over $\mathcal{R}_E$, see e.g. \cite[Theorem 5.1.5]{emerton2022introduction}. 
\end{proof}

Continuing the above remarks, we next point out that it is also interesting from the perspective of $p$-adic local Langlands to consider $\D_{(0,\infty)}$ for Banach-Colmez spaces rather than their classifying stacks, in relation to \Cref{relation-phi-gamma-modules}. 

\begin{remark} \label{rmk:drinfeld-laumon}
For $E=\Q_p$, as a special case of \Cref{fourier-transform-general-E-general-bc-spaces} we can produce a natural functor
$$
    \mathcal{F}_{\mathcal{BC}(\mathcal{O}(1))}\colon \D_{(0,\infty)}(\mathcal{BC}(\mathcal{O}(1))) \to \D_{(0,\infty)}(\Spd(\F_p)/\Q_p).
$$
Precomposing by !-pushforward along
$$
    j\colon \mathrm{Div}_{\Q_p}^1 = ( \mathcal{BC}(\mathcal{O}(1)) \backslash \{0\} )/\Q_p^\times \injto \mathcal{BC}(\mathcal{O}(1)) /\Q_p^\times
$$
and taking into account $\Q_p^\times$-equivariance, this gives rise to a functor
$$
    \D_{(0,\infty)}(\mathrm{Div}_{\Q_p}^1) \to \D_{(0,\infty)}(\Spd(\F_p)/M(\Q_p)) \isom \D_{\solid}(\AnSpec(\Q_p)/M(\Q_p)^{\mathrm{cont}})
$$
where $M \subset \GL_2$ is the mirabolic subgroup. (This construction is a very special case of the Drinfeld-Laumon construction in the geometric Langlands program, specialized to this setting.) In particular, by \Cref{relation-phi-gamma-modules}, this produces a functor from $(\varphi,\Gamma)$-modules over the Robba ring to continuous representations of the mirabolic subgroup. It can be made explicit, via the same kind of arguments as in \Cref{rmk:result-of-zillinger-bc(O(1))}. We expect this functor to be related to, but not the same as Colmez' functor. 
\end{remark}

\begin{remark}
Note, following \Cref{rmk:E-instead-of-Q_p}, that for $E$ a finite extension of $\Q_p$, one cannot go so directly from (Lubin-Tate) $(\varphi,\Gamma)$-modules  to representations of the mirabolic: one first needs to produce an object in $\D_{(0,\infty)}(\mathrm{Div}_E^d)$ out of a $(\varphi,\Gamma)$-module (in more classical terms, this is related to the passage from Lubin-Tate $(\varphi,\Gamma)$-modules to multivariable $(\varphi,\Gamma)$-modules). Such a construction has been investigated recently for mod $p$ coefficients by Fargues and (partly) ``explains'' the fact that $p$-adic local Langlands is more complicated for a general $p$-adic field $E$ than it is for $\Q_p$. 
\end{remark}

\begin{remark} \label{drawbacks}
In the setting of \Cref{rmk:drinfeld-laumon}, Colmez' construction furnishes, from a $(\varphi,\Gamma)$-module over the Robba ring, not just a continuous representation of the mirabolic, but a locally analytic one. This is not directly visible from the previous geometric construction. In fact, we consider this, together with \Cref{rmk:fourier-transform-does-not-work}, as a drawback of $\D_{(0,\infty)}(-)$. In forthcoming work with Rodr\'iguez Camargo and Scholze, the first two authors aim at formulating a different geometrization of $p$-adic Langlands, which is tailored to capture locally analytic representations and in which these drawbacks disappear.
\end{remark}

Finally, we briefly discuss Hecke functors. We note that the Hecke diagram consists of lpbc maps, if the modification is bounded, hence the Hecke functors are well-defined. Being able to consider these general Hecke functors was a major motivation for developing our formalism in the general context of small v-stacks. Again, rather than studying them in general, we discuss them through an example.

\begin{example} \label{ex:p-adic-jacquet-langlands}
We consider the restriction $\mathrm{JL}$ to the degree $1$ semi-stable stratum of the standard Hecke functor for $G=\mathrm{GL}_2$ for sheaves supported on the neutral stratum ($\mathrm{JL}$ stands for Jacquet--Langlands). Let us make this explicit.

Let $\mathcal{M}_\infty$ be the small v-stack parametrizing injective $\mathcal{O}$-linear maps from $\mathcal{O}^2$ to $\mathcal{O}(1/2)$ on the Fargues--Fontaine curve associated to $E$. Taking the cofiber of such a map gives a morphism
$$
    \mathcal{M}_\infty \to \mathrm{Div}_E^1
$$
Moreover, $\mathcal{M}_\infty$ is a $D^\times$-torsor over $\Omega_E^\diamond$ and a $\GL_2(E)$-torsor over $X_D^\diamond$, where $D$ is the non-split quaternion algebra over $E$ and $X_D$ its Brauer-Severi variety. Hence we have an isomorphism of small v-stacks
$$
    X_D^\diamond/D^{\times} \isom \Omega_E^\diamond / \mathrm{GL}_2(E).
$$
If $\Pi$ is a continuous $\mathrm{GL}_2(E)$-representation on a solid $\Q_p$-vector space, let $\mathcal{F}_\Pi$ be the object of $\D_{(0,\infty)}(X_D^\diamond/D^\times)$ coming via pullback along the morphism
$$
    X_D^\diamond/D^\times \to \Spd(\mathbb{F}_p)/\mathrm{GL}_2(E) 
$$
induced by the above isomorphism, together with \Cref{sec:examples-3-category-on-classifying-stack}. If $f$ denotes the natural morphism
$$
    f\colon X_D^\diamond/D^{\times} \to \mathrm{Div}_E^1 \times \Spd(\F_p)/D^{\times},
$$
we set 
$$
    \mathrm{JL}(\Pi) := f_\ast \mathcal{F}_\Pi
$$
(note that $f$ is proper). Fix $C$ a complete algebraic closure of $E$, and let $\F_q$ be the residue field of $E$, with $q=p^d$. We have a surjective map
$$
    g\colon \Spd(C)/\varphi^{d\Z} \to \mathrm{Div}_E^1 \times \Spd(\F_p)/D^{\times}
$$
(factoring through $\Spd(\mathbb{F}_p)$ on the second factor), and we can describe $g^\ast \mathrm{JL}(\Pi)$ using proper base change: it is given by pushforward of (the pullback of) $\mathcal{F}_\Pi$ along 
$$
X_D^\diamond/\varphi^{d\Z} \to \mathrm{Spd}(C)/\varphi^{d\Z},
$$
i.e. by pushforward in $\DFF(-,\Q_p)$. In other words, this is computing the $\Pi$-isotypic component of the pro-étale $\Q_{p^d}$-cohomology of the Lubin-Tate tower (after applying the functor $\sigma_{\solid,\Q_p}$).
\end{example}

\begin{remark} \label{rmk:hecke-preserves-dualizable}
The functor $\mathrm{JL}$ preserves dualizable objects, since it is obtained as pullback along a smooth morphism of rigid spaces followed by pushforward along a proper morphism of rigid spaces in characteristic $0$. As an illustration, take $E=\Q_p$ and $\Pi$ to be the standard representation of $\mathrm{GL}_2$, which we see as a representation of $\mathrm{GL}_2(\Q_p)$. Then, in the notations of the last example, $g^\ast \mathrm{JL}(\Pi)$ is given by the computation of \Cref{sec:pro-etale-cohomology-example-introduction-lubin-tate}, which was seen to be dualizable by a direct computation. In fact, if $G$ is a reductive group over $\Q_p$, then there exists a natural tensor functor
\[
    \Phi\colon \mathrm{Rep}^{\mathrm{alg}}_{\Q_p}G\to \D_{(0,\infty)}(\Bun_G)
\]
from the category of finite dimensional algebraic representations of $G$ towards $\D_{(0,\infty)}(\Bun_G)$. Indeed, let $V\in \mathrm{Rep}^{\mathrm{alg}}_{\Q_p}G$ and pick a perfectoid space $T$ with a morphism to $\Bun_G$, i.e., a $\varphi$-equivariant $G$-torsor $\mathcal{P}$ on the space $\mathcal{Y}_{(0,\infty),T}$. Then $\mathcal{P}\times^{G} V$ defines a vector bundle on $\mathcal{Y}_{(0,\infty),T}$, i.e., an object $\Phi(V)_T\in \D_{(0,\infty)}(T)$. As $T\mapsto \Phi(V)_T$ is compatible with pullback, descent yields the desired object $\Phi(V)\in \D_{(0,\infty)}(\Bun_G)$ and it is clear that $\Phi$ is a tensor functor. It is conceivable that Hecke functors associated with suitable kernels, e.g., to-be-defined Satake kernels, preserve dualizable objects. By applying this to $\Phi(V)$, we obtain an intriguing generalization of \Cref{sec:pro-etale-cohomology-example-introduction-lubin-tate}.
\end{remark} 
  
\begin{remark} \label{rmk:jl-preserves-suave-prim}
The functor $\mathrm{JL}$ also preserves suave objects. The functor defined as $\mathrm{JL}$ but reversing the role of the groups $\mathrm{GL}_2(E)$ and $D^\times$ (i.e. using the dual Hecke functor) preserves prim objects. It seems interesting to investigate further suaveness and primness on classifying stacks of $p$-adic Lie groups, to deduce representation-theoretic consequences of these facts (see e.g. \cite{scholze2018p} or \cite[Th\'eor\`eme 0.5]{colmez2023factorisation}).
\end{remark}

\begin{remark} \label{remark:why-d-0-infty-better}
The discussion of this subsection highlights some drawbacks of $\DFF(-,\Q_p)$ compared to $\D_{(0,\infty)}(-)$:
\begin{remarksenum}
    \item In the case of a classifying stack, if we had used $\DFF(-,\Q_p)$, we would not have obtained continuous $G$-representations on solid $\Q_p$-vector spaces, but such objects endowed with a Frobenius automorphism, which sounds artificial and inconvenient from the perspective of $p$-adic local Langlands. This is to be contrasted with (for example) \cite{mod-p-stacky-6-functors}: in \cite{mod-p-stacky-6-functors}, the $6$-functor formalism $\D_\solid^a(\mathcal{O}^+_{-}/\varpi)$ of the third author is used, which depends on the choice of a pseudo-uniformizer $\varpi$. Hence one cannot plug it in the classifying stack of a $p$-adic Lie group over $\mathrm{Spd}(\mathbb{F}_p)$, but only over a perfectoid base field. In this context, one must add a Frobenius in the picture to see a category of (smooth) representations on $\mathbb{F}_p$-vector spaces. Here, the formalism $\D_{(0,\infty)}(-)$ makes sense absolutely, and in the absolute case one sees representations on $\Q_p$-vector spaces, as desired.

    \item One might be worried that what seems relevant to the geometric realization of $p$-adic local Langlands is $p$-adic (pro)-\'etale $\Q_p$-cohomology of Rapoport-Zink spaces and their generalizations, which, as was exploited in previous subsections, is naturally computed in $\DFF(-,\Q_p)$, not in $\D_{(0,\infty)}(-)$. But, as we just saw in \Cref{ex:p-adic-jacquet-langlands}, this is not an issue: indeed, $\mathrm{Div}^1$ appears naturally when applying Hecke functors, and 
    $$
        \D_{(0,\infty)}(\mathrm{Div}^1) = \DFF(\mathrm{Spd}(\Q_p),\Q_p)
    $$ 
    so we still have the relation to $(\varphi,\Gamma)$-modules over the Robba ring and to pro-\'etale $\Q_p$-cohomology of local Shimura varieties (in other words, even if we do not put the quotient by Frobenius in the sheaf theory, it is forced on us here, by the geometry).

    \item Finally, the construction of the Fourier--Mukai functors discussed above only makes sense in $\D_{(0,\infty)}(-)$, since the kernel does not come from $\DFF(-,\Q_p)$. This is relevant for categorical aspects of $p$-adic local Langlands. 
\end{remarksenum}
\end{remark}

\clearpage
\appendix
\section{Appendix: Supplements on \texorpdfstring{$R$}{R}-linear 6-functor formalisms} \label{sec:abstract-results-6}

In the following we collect some abstract results on 6-functor formalisms. Let us recall from \cite[\S2.1]{heyer-mann-6ff} that a geometric setup $(\mathcal C,E)$ is a pair consisting of a category $\mathcal C$ with a class of morphisms $E$ stable under isomorphisms, compositions and diagonals and such that pullbacks of morphisms in $E$ exist and stay in $E$. We will additionally assume that $\mathcal C$ has all finite limits.

Given a geometric setup $(\mathcal C,E)$ the symmetric monoidal category $\Corr(\mathcal C,E)$ is defined in \cite[Definition~2.2.10]{heyer-mann-6ff}. Most 6-functor formalisms, in particular all the ones considered in this paper, are of the following form.

\begin{definition}
Let $R \in \CAlg(\Pr^L)$ be a presentably symmetric monoidal category and let $(\mathcal C,E)$ be a geometric setup. An \emph{$R$-linear 6-functor formalism} is a lax-symmetric monoidal functor
\[
    \D\colon \Corr(\mathcal C,E)\to \Pr_R^L.
\]
Here $\Pr_R^L$ denotes the category of $R$-linear presentable categories and is equipped with the relative Lurie tensor product $\tensor_R$.
\end{definition}

The above definition is made so that the right adjoints $f_\ast$, $f^!$, $\IHom$ do not necessarily commute with colimits, or are even $R$-linear, as this would impose severe constraints. If we want to highlight that the functors $f^\ast$, $f_\ast$ etc.\ depend on the 6-functor formalism $\D$, we write $f^\ast_\D$, $f_\ast^\D$, etc.

\begin{remark}
By \cite[Lemma~3.2.5]{heyer-mann-6ff} every presentable 6-functor formalism $\D$ is automatically $\D(*)$-linear, where $*$ is the final object in $\mathcal C$.
\end{remark}

The probably most common example is the case that $R = \D(\Z)$ is the derived category of abelian groups, or variants like $\D(\Z/n)$ for some $n\in \Z$. As is visible in this example, it is quite common to base change the coefficients. In abstract terms this amounts to the following construction. 

\begin{lemma} \label{sec:suppl-6-funct-base-change-for-6-functor-formalisms}
Let $R\to R'$ be a morphism in $\CAlg(\Pr^L)$, i.e. a symmetric monoidal functor of presentably symmetric monoidal categories. Let $\D\colon \Corr(\mathcal C,E) \to \Pr^L_R$ be an $R$-linear 6-functor formalism and let $f\colon Y\to X$ be a morphism in $\mathcal C$.
\begin{lemenum}
    \item \label{rslt:base-change-of-6ff-along-R-R'-induces-6ff} The composition
    \[
        \D' := R'\tensor_R \D\colon \Corr(\mathcal C,E)\to \Pr^L_R \xto{R'\otimes_R -} \Pr^L_{R'}
    \]
    is an $R'$-linear 6-functor formalism. 
    
    \item The diagram
    \[\begin{tikzcd}
        {\D'(Y)} & {\D(Y)} \\
        {\D'(X)} & {\D(X)}
        \arrow["{\mathrm{res}}", from=1-1, to=1-2]
        \arrow["{f_\ast^{\D'}}"', from=1-1, to=2-1]
        \arrow["{f_\ast^\D}", from=1-2, to=2-2]
        \arrow["{\mathrm{res}}", from=2-1, to=2-2]
    \end{tikzcd}\]
    commutes naturally, where $\mathrm{res}\colon \D'(-) \to \D(-)$ denotes the right adjoint of the functor $F\colon \D(-) = R\otimes_R \D(-)\to \D'(-)$. Similarly, with $f_\ast$ replaced by $f^!$ if $f \in E$.

    \item If $f_\ast^\D$ commutes with colimits and is $R$-linear\footnote{More precisely, this means that for $A\in R$ and $M\in \D(Y)$ the natural morphism $A\otimes f_\ast^\D(M)\to f_\ast^\D(A\otimes M)$ coming from $R$-linearity of $f^\ast_\D$ is an isomorphism.}, then the diagram
    \[\begin{tikzcd}
        {\D(Y)} & {\D'(Y)} \\
        {\D(X)} & {\D'(X)}
        \arrow["F", from=1-1, to=1-2]
        \arrow["{f_\ast^\D}"', from=1-1, to=2-1]
        \arrow["{f_\ast^{\D'}}", from=1-2, to=2-2]
        \arrow["F", from=2-1, to=2-2]
    \end{tikzcd}\]
    commutes naturally. Similarly, with $f_\ast$ replaced by $f^!$ if $f \in E$.

    \item Let $X\in \mathcal C$ and $M\in \D'(X)$. Then the diagram
    \[\begin{tikzcd}
        {\D'(X)} & {\D(X)} \\
        {\D'(X)} & {\D(X)}
        \arrow["{\mathrm{res}}", from=1-1, to=1-2]
        \arrow["{\IHom(F(M),-)}"', from=1-1, to=2-1]
        \arrow["{\IHom(M,-)}", from=1-2, to=2-2]
        \arrow["{\mathrm{res}}", from=2-1, to=2-2]
    \end{tikzcd}\]
    commutes naturally.

    \item \label{rslt:base-change-of-6ff-preserves-suave-and-prim} If $M \in \D(Y)$ is $f$-suave (resp.\ $f$-prim, resp.\ dualizable, resp.\ invertible) for $\D$, then $F(M) \in \D'(X)$ is $f$-suave (resp.\ $f$-prim, resp.\ dualizable, resp.\ invertible) for $\D'$. Moreover, $F$ preserves the respective ($f$-suave, resp.\ $f$-prim, resp.\ usual) duals.

    \item If $f$ is $\D$-smooth, then $f$ is $\D'$-smooth and $f^!_{\D'}(1) \isom F(f^!_\D(1))$.

    \item If $f$ is $\D$-prim, then $f$ $\D'$-prim and $F$ preserves the prim dual of $1$.
\end{lemenum}
\end{lemma}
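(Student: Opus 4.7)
The proof proceeds sequentially through the seven parts. The plan is that (i)--(iv) reduce to general properties of base change in $\Pr^L$ combined with standard adjunction arguments, while (v)--(vii) are specializations of the single principle that $F$ is a symmetric monoidal morphism of 6-functor formalisms and therefore preserves relative duality data.

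For (i), the functor $R' \tensor_R -\colon \Pr^L_R \to \Pr^L_{R'}$ is symmetric monoidal (cf.\ \cite[\S 4.8]{lurie-higher-algebra}), so its composition with the lax-symmetric monoidal $\D$ is again lax-symmetric monoidal, landing in $\Pr^L_{R'}$. Parts (ii) and (iii) hinge on the fact that the induced functor $F\colon \D(-) \to \D'(-)$ commutes by construction with $f^*$ and, when $f \in E$, with $f_!$. Taking right adjoints of the natural isomorphism $F \comp f^*_\D \isom f^*_{\D'} \comp F$ produces the square of (ii) for $f_*$; running the same argument with $f_!$ in place of $f^*$ gives the $f^!$-version. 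For (iii), the assumption that $f_*^\D$ is colimit-preserving and $R$-linear makes it a morphism in $\Pr^L_R$, so its base change $R' \tensor_R f_*^\D$ is right adjoint to $f^*_{\D'}$ in $\Pr^L_{R'}$ and hence agrees with $f_*^{\D'}$; the argument for $f^!$ goes through without extra hypothesis since $f_!$ is always in $\Pr^L_R$. Part (iv) is the direct adjunction chain
\begin{align*}
    \Hom_{\D(X)}(K, \mathrm{res}\,\IHom_{\D'(X)}(F(M), N)) &\isom \Hom_{\D'(X)}(F(K) \tensor F(M), N)\\
    &\isom \Hom_{\D'(X)}(F(K \tensor M), N)\\
    &\isom \Hom_{\D(X)}(K, \IHom_{\D(X)}(M, \mathrm{res}(N))),
\end{align*}
using symmetric monoidality of $F$ together with the $(F, \mathrm{res})$-adjunction.

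For (v), I will use the characterization of $f$-suaveness via the existence of a relative dual pair in the $\D(X)$-linear category $\D(Y)$ satisfying triangle identities involving only $f^*$, $f_!$, $\Delta_!$, and $\tensor$ (and dually for $f$-primness in terms of $f^*$, $f_*$, and the projection formula, via \cite[Lemma~4.4.5, Lemma~4.4.6]{heyer-mann-6ff}). Because $F$ is symmetric monoidal and compatible with $f^*$, $f_!$, and $\Delta_!$ at the level of the morphism of 6-functor formalisms, it sends such a dual pair in $\D$ to a dual pair in $\D'$, preserving the dual object. Dualizability and invertibility are the cases $f = \id$. Parts (vi) and (vii) then follow: $\D$-smoothness means $1 \in \D(Y)$ is $f$-suave with invertible suave dual $f^!_\D(1)$, and both properties transport along $F$ by (v), with the identification $f^!_{\D'}(1) \isom F(f^!_\D(1))$ coming from the preservation of the suave dual. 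Part (vii) is the dual argument for primness.

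The main obstacle is part (v): $F$ does \emph{not} in general commute with $f^!$ or $f_*$ (this would require the hypotheses of (iii)), and yet the definition of the suave dual $\DSuave_f(M) = \IHom(M, f^!(1))$ appears to involve $f^!$. The resolution is to reformulate $f$-suaveness and the characterization of $\DSuave_f(M)$ in purely ``left-adjoint'' terms (the universal property of a relative left dual in the $\D(X)$-module category $\D(Y)$, which uses only $f^*$, $f_!$, $\Delta_!$, and $\tensor$), at which point the transport along the symmetric monoidal, $!$-compatible functor $F$ becomes formal.
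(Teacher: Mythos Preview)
Your argument for (i)--(iv), (vi), (vii) and the suave and dualizable parts of (v) is correct and matches the paper's approach (the paper phrases (ii) and (iv) as ``pass to left adjoints'' and (iii) as ``the adjunction $f^*_\D \dashv f_*^\D$ lives in $\Pr^L_R$ and is therefore preserved by $R' \tensor_R -$'', which is exactly what you wrote).

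There is one wrinkle in your treatment of the prim case of (v). You invoke \cite[Lemma~4.4.6]{heyer-mann-6ff}, whose characterization of $f$-primness involves $f_*$, and then assert that $F$ transports prim data because it is compatible with $f^*$, $f_!$, $\Delta_!$, $\tensor$; but $f_*$ is not in that list, so as written the argument does not close for primness. The fix is precisely the point you identify for suaveness: primness, too, can be expressed purely in ``left-adjoint'' terms, namely as the property of being a \emph{right} adjoint in the kernel 2-category $\mathcal K_{\D,X}$, whose composition is built only from $f^*$, $f_!$, $\tensor$. The paper makes this step explicit by citing \cite[Proposition~4.2.1(i)]{heyer-mann-6ff}: the morphism $F\colon \D \to \D'$ of 6-functor formalisms induces a 2-functor $\mathcal K_{\D,X} \to \mathcal K_{\D',X}$, and any 2-functor preserves both left and right adjunctions together with their adjoints. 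This handles suave and prim (and their duals) uniformly and avoids ever unpacking the Lemma~4.4.6 criterion. Your resolution of the ``main obstacle'' is exactly this principle; you just need to apply it symmetrically to primness rather than routing through the $f_*$-based criterion.
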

\begin{proof}
Part (i) is clear, as $R'\otimes_R - \colon\Pr^L_R\to \Pr^L_{R'}$ is a symmetric monoidal functor. To prove the other claims, note that composing $\D'$ with the (lax symmetric monoidal) forgetful functor $\Pr^L_{R'} \to \Pr^L_R$ produces another $R$-linear 6-functor formalism (which we still denote $\D'$) and the unit of the adjunction between $R' \tensor_R -$ and the forgetful functor induces the morphism $F\colon \D \to \D'$ of $R$-linear 6-functor formalisms. This immediately proves (ii) and (iv) by passing to left adjoints. For (iii) note that by the assumption on $f_\ast^\D$ the adjunction between $f^\ast_\D$ and $f_\ast^\D$ is an adjunction in $\Pr^L_R$ (see \cite[Remark~7.3.2.9]{lurie-higher-algebra}) and is hence preserved by the 2-functor $R' \tensor_R -$.

For the remaining claims we note that by \cite[Proposition~4.2.1(i)]{heyer-mann-6ff} for every $S \in \mathcal C$ the morphism $F\colon \D \to \D'$ induces a 2-functor $\mathcal K_{\D,S} \to \mathcal K_{\D',S}$ which induces $F$ on the morphism categories. This immediately implies the claims about suave and prim objects in (v), from which (vi) and (vii) follow. Finally, for fixed $X \in \mathcal C$ the functor $F\colon \D(X) \to \D'(X)$ is symmetric monoidal (this follows again from the fact that $F$ is a morphism of 6-functor formalisms), which immediately implies that it preserves dualizable and invertible objects.
\end{proof}

\begin{remark}
The $R$-linearity condition in \Cref{sec:suppl-6-funct-base-change-for-6-functor-formalisms}, which amounts to commutation with colimits and projection formulae, might be conditions which can be difficult to guarantee in practice. If $R$ is rigid, then by \cite[Lemma 4.20]{anschuetz_mann_descent_for_solid_on_perfectoids} the right adjoint of an $R$-linear functor is $R$-linear if and only if it commutes with colimits. 
\end{remark}

Some of our constructions can be conceptually viewed as a ``descent from $R'$ to $R$''. Instead of discussing inverse limits of $6$-functor formalisms in general we therefore focus on this case.

\begin{definition} \label{sec:supplements-r-linear-descent-for-r-r-prime}
Let $R\to R'$ be a morphism in $\CAlg(\Pr^L)$. We say that it satisfies 2-descent if the natural functor
\[
    \Pr^L_R \to \varprojlim_{n \in \Delta} \Pr^L_{{R'}^{\otimes_R n}}
\]
is an equivalence.
\end{definition}

More concretely, we see that 2-descent along $R \to R'$ implies that for every $\mathcal{M}\in \Pr^L_R$ the natural functor
\[
    \mathcal{M} \isoto \varprojlim_{n \in \Delta} {R'}^{\otimes_R n}\otimes_R \mathcal{M}
\]
is an equivalence.

\begin{example}
Assume that $R\in \CAlg(\Pr^L)$ is stable. If $R'=\mathrm{Mod}_A(R)$ for some $\mathbb E_\infty$-algebra $A\in R$, then by \cite[Proposition 3.43, Proposition 3.45]{akhil-galois-group-of-stable-homotopy} $R\to R'$ admits $2$-descent if and only if $A$ is a descendable algebra object in $R$. We note that there exist a lot of examples for descendable algebra objects, e.g., $\Z/4\to \Z/2$ is descendable or any countably generated faithfully flat morphism of $\mathbb E_\infty$-rings.
\end{example}

We can use 2-descent along $R \to R'$ in order to deduce properties of an $R$-linear 6-functor formalism from its base-change to $R'$. This observation plays a crucial role in this paper. Before we come to the actual result, we first need a certain functoriality claim:

\begin{lemma} \label{rslt:functor-from-CAlg-to-lax-monoidal-endofun}
Let $\mathcal V$ be a symmetric monoidal category. Then there is a natural functor
\begin{align*}
    \CAlg(\mathcal V) \to \Fun^{\lax,\tensor}(\mathcal V, \mathcal V), \qquad A \mapsto - \tensor A.
\end{align*}
Here $\Fun^{\lax,\tensor}$ denotes the category of lax symmetric monoidal functors.
\end{lemma}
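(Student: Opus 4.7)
The plan is to construct the functor by routing through the intermediate $\infty$-category of modules. For a fixed $A\in \CAlg(\mathcal V)$, the symmetric monoidal $\infty$-category $\Mod_A(\mathcal V)$ comes equipped with (a) a canonical symmetric monoidal base-change functor $i_A\colon \mathcal V\to \Mod_A(\mathcal V)$, $X\mapsto X\tensor A$, and (b) a canonical lax symmetric monoidal forgetful functor $f_A\colon \Mod_A(\mathcal V)\to \mathcal V$. Their composite $f_A\comp i_A$ is the lax symmetric monoidal endofunctor $X\mapsto X\tensor A$, whose laxness data
\[
    (X\tensor A)\tensor(Y\tensor A)\to (X\tensor Y)\tensor A, \qquad \unit \to A
\]
come from the multiplication and unit of $A$. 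This settles the construction on individual objects.

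To make the assignment $A\mapsto f_A\comp i_A$ into a functor of $\infty$-categories, I would invoke Lurie's universal family of module categories from \cite[\S3.4.3, \S4.5]{lurie-higher-algebra}, namely the cocartesian fibration
\[
    \Mod^{\opComm}(\mathcal V)^{\tensor}\to \opComm\times \CAlg(\mathcal V)
\]
whose fiber over $A\in \CAlg(\mathcal V)$ is $\Mod_A(\mathcal V)^{\tensor}$. Straightening in the $\CAlg(\mathcal V)$-direction gives a functor $\CAlg(\mathcal V)\to \CAlg(\Cat_\infty)$, $A\mapsto \Mod_A(\mathcal V)$. The base-change functors $i_A$ assemble into a natural transformation from the constant functor $\underline{\mathcal V}$ to this functor (in $\CAlg(\Cat_\infty)$), built out of the universal unit $\unit\to A$, while the forgetful functors $f_A$ assemble into a natural transformation in the opposite direction (valued in lax symmetric monoidal functors) via the forgetful map $\Mod^{\opComm}(\mathcal V)^{\tensor}\to \mathcal V^{\tensor}\times \CAlg(\mathcal V)$ of $\infty$-operads. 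Composing the two yields the desired functor $\CAlg(\mathcal V)\to \Fun^{\lax,\tensor}(\mathcal V,\mathcal V)$.

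The hard part is exactly this $\infty$-categorical bookkeeping; on the nose, the construction is entirely classical. As a cleaner alternative (when $\mathcal V$ is sufficiently well-behaved, e.g.\ presentable), one can instead invoke Day convolution: the tensor bifunctor $\mu\colon \mathcal V\times \mathcal V\to \mathcal V$ is symmetric monoidal, so currying produces a symmetric monoidal functor
\[
    \mathcal V \longrightarrow \Fun(\mathcal V,\mathcal V)_{\mathrm{Day}}, \qquad A\mapsto -\tensor A,
\]
where the target carries the Day convolution symmetric monoidal structure (one verifies $(-\tensor A)\star(-\tensor B)\simeq (-\tensor(A\tensor B))$ via the coYoneda lemma). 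Applying $\CAlg(-)$ and combining with the standard identification $\CAlg(\Fun(\mathcal V,\mathcal V)_{\mathrm{Day}})\simeq \Fun^{\lax,\tensor}(\mathcal V,\mathcal V)$ yields the same functor, avoiding any direct manipulation of the $\Mod$-fibration.
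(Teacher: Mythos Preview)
Your ``cleaner alternative'' via Day convolution is exactly the route the paper takes, and the paper uses it as the \emph{primary} argument rather than a fallback. The one refinement worth noting: the paper avoids your presentability hypothesis on $\mathcal V$ by treating the Day convolution on $\Fun(\mathcal V,\mathcal V)$ as merely an $\infty$-operad (since the relevant colimits need not exist), and constructing $\mathcal V \to \Fun(\mathcal V,\mathcal V)$ as a map of $\infty$-operads rather than a symmetric monoidal functor. The universal property of Day convolution still identifies such a map with the symmetric monoidal bifunctor $\mathcal V\times\mathcal V\to\mathcal V$, and the identification $\CAlg(\Fun(\mathcal V,\mathcal V)_{\mathrm{Day}})\simeq \Fun^{\lax,\tensor}(\mathcal V,\mathcal V)$ holds at the operad level, so the statement goes through for arbitrary symmetric monoidal $\mathcal V$. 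Your first approach via $\Mod_A(\mathcal V)$ is a genuine alternative, but it carries its own hidden hypotheses: for $\Mod_A(\mathcal V)$ to be a symmetric monoidal $\infty$-category with the expected base-change and forgetful functors one typically needs $\mathcal V$ to admit geometric realizations compatible with the tensor product (cf.\ \cite[\S4.5]{lurie-higher-algebra}), so it is not obviously more general and the bookkeeping is heavier.
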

\begin{proof}
We equip the category $\Fun(\mathcal V, \mathcal V)$ with the Day symmetric monoidal structure from \cite[Example~2.2.6.9]{lurie-higher-algebra} (which is in general only an operad, as the necessary colimits may not exist in $\mathcal V$). There is a map of operads $\mathcal V \to \Fun(\mathcal V, \mathcal V)$ given by sending $X \in \mathcal V$ to the endofunctor $- \tensor X\colon \mathcal V \to \mathcal V$. Indeed, by the universal property of Day convolution the desired map of operads is equivalently given a map of operads $\mathcal V \times \mathcal V \to \mathcal V$ sending $(X, Y) \mapsto X \tensor Y$; but this functor exists and is even (strictly) symmetric monoidal. Passing to commutative algebra objects, we obtain a functor
\begin{align*}
    \CAlg(\mathcal V) \to \CAlg(\Fun(\mathcal V, \mathcal V)) = \Fun^{\tensor,\lax}(\mathcal V, \mathcal V),
\end{align*}
where the identity on the right follows again from the universal property of Day convolution (see \cite[Example~2.2.6.9]{lurie-higher-algebra}).
\end{proof}

\begin{proposition} \label{rslt:descent-of-6ff}
Let $R\to R'$ be a morphism in $\CAlg(\Pr^L_{\Sp})$ satisfying 2-descent. Let $\D$ be an $R$-linear 6-functor formalism on a geometric setup $(\mathcal C,E)$, let $f\colon Y \to X$ be a morphism in $E$ and $M \in \D(Y)$. Let $F\colon \D(-) \to \D'(-):=R'\otimes_R \D(-)$ be the natural transformation.
\begin{propenum}
    \item $M$ is $f$-suave for $\D$ (resp.\ $f$-prim for $\D$, resp.\ dualizable, resp.\ invertible) if and only if $F(M)\in \D'(Y)$ has the same property.

    \item $f$ is $\D$-smooth (resp.\ $\D$-prim) if and only if $f$ is $\D'$-smooth (resp.\ $\D'$-prim).
\end{propenum}
\end{proposition}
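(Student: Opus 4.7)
The ``only if'' direction of (i) is already contained in \cref{rslt:base-change-of-6ff-preserves-suave-and-prim}, and likewise (ii) in the ``only if'' direction follows from parts (vi) and (vii) of \cref{sec:suppl-6-funct-base-change-for-6-functor-formalisms}. Thus, the plan is to establish the converse directions using the hypothesis of 2-descent.

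First, I would set up the cosimplicial tower of 6-functor formalisms. For each $[n] \in \Delta$ let $R_n := R'^{\tensor_R (n+1)}$ (with $R_0 = R'$) and $\D_n := R_n \tensor_R \D$, which by \cref{rslt:base-change-of-6ff-along-R-R'-induces-6ff} is an $R_n$-linear 6-functor formalism on $(\mathcal{C}, E)$; the coface/codegeneracy maps of the cosimplicial algebra $[n] \mapsto R_n$ turn $n \mapsto \D_n$ into a cosimplicial object in 6-functor formalisms, and we denote by $F_n\colon \D \to \D_n$ the resulting comparison (so $F_0 = F$). By the 2-descent hypothesis, applied object-by-object, for every $S \in \mathcal C$ the natural functor
\[
    \D(S) \isoto \varprojlim_{[n] \in \Delta} \D_n(S)
\]
is an equivalence; in particular, the family $(F_n)_{n \ge 0}$ is jointly conservative, and a morphism in $\D(S)$ is an isomorphism iff each $F_n$ sends it to one. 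Moreover, by \cref{rslt:base-change-of-6ff-preserves-suave-and-prim} applied to the base change $R' \to R_n$ for the $R'$-linear formalism $\D_0 = \D'$, if $F(M)$ is $f$-suave (resp.\ $f$-prim, resp.\ dualizable, resp.\ invertible) in $\D'$ then $F_n(M)$ has the same property in $\D_n$ for every $n \ge 0$.

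Next, I would use the characterizations of the listed properties as isomorphism conditions on canonical natural transformations, and show these are detected by the family $(F_n)_n$. Concretely, by \cite[Lemma~4.4.5]{heyer-mann-6ff}, $M \in \D(Y)$ is $f$-suave iff the canonical map
\[
    \alpha_M\colon p_1^\ast M \tensor p_2^\ast \DSuave_f(M) \to \IHom(p_2^\ast M, p_1^! M)
\]
is an isomorphism in $\D(Y \times_X Y)$, where $\DSuave_f(M) = f_\ast \IHom(M, f^! 1)$; the hypothesis that $F(M)$ is $f$-suave gives $F_n(M)$ is $f$-suave, and by \cref{sec:suppl-6-funct-base-change-for-6-functor-formalisms} the functors $F_n$ commute with $p_i^\ast$, $f^!$, $\tensor$, and (after restricting to suave objects) with $\IHom$ and $f_\ast$, so they carry $\alpha_M$ to the analogous comparison map $\alpha_{F_n(M)}$, which is an isomorphism by $f$-suaveness of $F_n(M)$. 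Joint conservativity then forces $\alpha_M$ to be an isomorphism, so $M$ is $f$-suave in $\D$. The case of $f$-primness is entirely analogous using the criterion of \cite[Lemma~4.4.6]{heyer-mann-6ff}, and dualizability and invertibility reduce in the same way to checking that the coevaluation and evaluation maps are isomorphisms, which descends from $\D'$ via the same argument. Part (ii) is then a formal consequence of (i): $\D$-smoothness of $f$ is by definition $f$-suaveness of $1_Y$ together with invertibility of $f^! 1_X$, and similarly $\D$-primness is $f$-primness of $1_Y$, so both properties transfer between $\D$ and $\D'$.

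The main obstacle in carrying out this plan is the compatibility statement that the base-change functors $F_n$ genuinely send the canonical comparison maps $\alpha_M$ (and their primness/dualizability analogues) to the corresponding maps constructed from $F_n(M)$ in $\D_n$; in other words, one must argue carefully that $F_n$ is a morphism of 6-functor formalisms that intertwines all the relevant units, counits, projection formulas, and internal-Hom exchange maps. This is essentially built into the statement of \cref{sec:suppl-6-funct-base-change-for-6-functor-formalisms}: the identifications $F_n \comp f^!_\D = f^!_{\D_n} \comp F_n$ and $F_n(\IHom(M,-)) = \IHom(F_n(M),-) \comp F_n$ on suave objects are exactly what is needed, so the verification is ultimately bookkeeping. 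Once granted, the descent argument proceeds uniformly for all five properties listed in (i) and (ii).
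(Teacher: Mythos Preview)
Your approach is essentially the ``direct'' route that the paper itself sketches in the remark immediately following the proof, and the paper explicitly flags the weak point you gloss over. The gap is in your sentence ``by \cref{sec:suppl-6-funct-base-change-for-6-functor-formalisms} the functors $F_n$ commute with $p_i^\ast$, $f^!$, $\tensor$, and (after restricting to suave objects) with $\IHom$ and $f_\ast$.'' That lemma does \emph{not} say $F$ commutes with $f^!$ or $\IHom(M,-)$ in general: parts (ii) and (iv) concern the right adjoint $\mathrm{res}$, not $F$, and part (iii) requires the right adjoint in question to already be colimit-preserving and $R$-linear. The compatibility you need, namely that the natural map $F_n(\IHom(M,f^!N)) \to \IHom(F_n(M),f^!F_n(N))$ is an isomorphism, is only known once $M$ is $f$-suave in $\D$---which is what you are trying to prove. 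So your resolution ``this is essentially built into the statement of \cref{sec:suppl-6-funct-base-change-for-6-functor-formalisms}'' is circular as written.

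The paper's remark explains how to close this gap in the direct approach: since $F(M)$ \emph{is} $f$-suave in $\D'$, the analogous compatibility holds for the transitions $\D' \to \D_n$ applied to $F(M)$; one then uses 2-descent a second time to bootstrap the compatibility for $F\colon \D \to \D'$ itself (cf.\ the reference to \cite[Corollary~7.8(ii)]{mann-nuclear-sheaves} in the remark). This two-step descent is the missing idea in your outline. The paper's actual proof avoids this bookkeeping entirely by passing to the 2-category of kernels $\mathcal{K}_\D$: it shows that 2-descent makes $\mathcal{K}_\D \to \varprojlim_n \mathcal{K}_{\D'_n}$ 2-fully faithful, and then invokes \cite[Proposition~D.2.16]{heyer-mann-6ff} to conclude that adjointability of a morphism (which encodes suaveness and primness) descends through such a limit. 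This packages all the coherence verifications into a single abstract statement, at the cost of relying on the 2-categorical machinery of \cite{heyer-mann-6ff}.
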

\begin{proof}
The map $R \to R'$ induces a functor $\Delta \to \CAlg(\Pr^L)$, $n \mapsto R'^{\tensor_R n}$ by the usual \v{C}ech nerve construction. Composing this functor with the one from \cref{rslt:functor-from-CAlg-to-lax-monoidal-endofun} and then pointwise precomposing the result with $\D$ produces a functor from $\Delta$ to $R$-linear 6-functor formalisms which sends $n \in \Delta$ to $\D'_n := R'^{\tensor_R n} \tensor_R \D$. By the functoriality of the category of kernels (more precisely, \cite[Lemma~C.3.3]{heyer-mann-6ff}) we obtain the functor
\begin{align*}
    \Delta \to \Cat_2, \qquad n \mapsto \mathcal K_{\D'_n}.
\end{align*}
In particular there is a natural 2-functor $\mathcal K_D \to \varprojlim_{n\in\Delta} \mathcal K_{D'_n}$ and from 2-descent it follows that this 2-functor is 2-fully faithful: This amounts to the observation that for any $Z \in \mathcal C$ the natural functor
\[
    \D(Z) \isoto \varprojlim_{n \in \Delta} \D'_n(Z)
\]
is an equivalence. Thus by \cite[Proposition~D.2.16]{heyer-mann-6ff} we deduce that a morphism in $\mathcal K_\D$ is left adjoint if and only if its image in each $\mathcal K_{\D'_n}$ is left adjoint, which in turn is equivalent to its image in $\mathcal K_{\D'}$ being left adjoint. The same works relative to $X$, i.e. for $\mathcal K_{\D,X}$ (in the sense of \cite[Definition 4.1.3.(b)]{heyer-mann-6ff}). This immediately implies the claims about suave and prim objects.

It only remains to verify the claim about dualizable and invertible objects in (i). But this immediately follows from the above limit descrition for $\D(Z)$, because dualizable and invertible objects descend along any limit (e.g. by applying \cite[Proposition~D.2.16(ii)]{heyer-mann-6ff} to the associated diagram of 2-categories with single object and endomorphisms given by the symmetric monoidal categories).
\end{proof}

\begin{remark}
The proof of \cref{rslt:descent-of-6ff} might be a bit intimidating, so let us sketch a more direct approach to proving it (albeit less rigorous). In the setup of the result, suppose we are given an object $M \in \D(Y)$ such that $F(M) \in \D'(Y)$ is $f$-suave, where $F\colon \D \to \D'$ is the natural morphism of 6-functor formalisms considered above. We wish to show that $M$ is $f$-suave, which by \cite[Lemma~4.4.5]{heyer-mann-6ff} amounts to showing that the natural map
\begin{align*}
    \pi_1^* \IHom(M, f^! 1) \tensor \pi_2^* M \isoto \IHom(\pi_1^* M, \pi_2^! M)
\end{align*}
is an isomorphism in $\D(Y \times_X Y)$, where $\pi_i\colon Y \times_X Y \to Y$ are the projections. By 2-descent, the functor $F\colon \D(Y \times_X Y) \to \D'(Y \times_X Y)$ is conservative and by assumption on $M$, the above isomorphism holds for $F(M)$ in place of $M$. Thus it is enough to show that the above map for $M$ gets transformed to the analogous map for $F(M)$ under $F$. Since $F$ commutes naturally with pullback and tensor products, the claim then essentially reduces to showing that for all $N \in \D(X)$ the natural map
\begin{align}
    F(\IHom(M, f^! N)) \isoto \IHom(F(M), f^! F(N)) \label{eq:change-of-6ff-compatible-with-IHom}
\end{align}
is an isomorphism (then apply the same also to $\pi_1^* M$, which is $\pi_2$-suave). Now we observe that if $M$ is $f$-suave then \cref{eq:change-of-6ff-compatible-with-IHom} is indeed an isomorphism, which one can deduce by writing $\IHom(F(M), f^!) = \DSuave_f(M) \tensor f^*$ (see \cite[Lemma~4.4.17(i)]{heyer-mann-6ff}) and using \cref{rslt:base-change-of-6ff-preserves-suave-and-prim}. In particular the analogous statement of \cref{eq:change-of-6ff-compatible-with-IHom} is true for $M$ replaced by $F(M)$ and $F$ replaced by the analog for $\D' \to R'^{\tensor_R n} \tensor_R \D$. By 2-descent one can then deduce that \cref{eq:change-of-6ff-compatible-with-IHom} holds as well (see the proof of \cite[Corollary~7.8(ii)]{mann-nuclear-sheaves} for a similar argument).

While the above sketch is more down-to-earth (and in particular avoids 2-categories), it is also less clean than the original proof of \cref{rslt:descent-of-6ff}, because it contains a lot of implicit claims about certain ``natural maps'' being the expected ones. In fact, the proof of \cite[Proposition~D.2.16]{heyer-mann-6ff} (which was used in \cref{rslt:descent-of-6ff}) is essentially the abstract version of the same argument!
\end{remark}

\bibliography{bibliography}
\addcontentsline{toc}{section}{References}

\end{document}